\newtheorem{thm}{Theorem}[section]
\newtheorem{cor}[thm]{Corollary}
\newtheorem{lem}[thm]{Lemma}
\newtheorem{prop}[thm]{Proposition}
\newtheorem*{thm*}{Theorem}
\newtheorem*{prop*}{Proposition}
\newcounter{theoremalph}
\newtheorem{thmAlph}[theoremalph]{Theorem}
\newtheorem{corAlph}[theoremalph]{Corollary}
\newtheorem{propAlph}[theoremalph]{Proposition}
\theoremstyle{definition}
\newtheorem{defn}[thm]{Definition}
\newtheorem*{defn*}{Definition}
\theoremstyle{remark}
\newtheorem{rem}[thm]{Remark}
\newtheorem*{rem*}{Remark}
\newtheorem{exmp}[thm]{Example}
\newtheorem{quesIntro}{Question}
\newtheoremstyle{custom}{}{}{\itshape}{}{\bfseries}{.}{.5em}{#1 \thmnote{#3}}
\theoremstyle{custom}
\DeclareMathOperator{\Aut}{Aut}
\DeclareMathOperator{\Comm}{Comm}
\DeclareMathOperator{\id}{Id}
\DeclareMathOperator{\im}{Im}
\DeclareMathOperator{\lcm}{lcm}
\DeclareMathOperator{\Isom}{Isom}
\DeclareMathOperator{\stab}{stab}
\DeclareMathOperator{\CAT}{CAT}
\DeclareMathOperator{\Sym}{Sym}
\DeclareMathOperator{\GL}{GL}
\DeclareMathOperator{\Orth}{O}
\DeclareMathOperator{\Helly}{Helly}
\DeclareMathOperator{\PSL}{PSL}
\DeclareMathOperator{\QI}{QI}
\DeclareMathOperator{\qa}{{\overset{\scriptscriptstyle\text{q.a.}}{\curvearrowright}}}
\newcommand{\cc}[1]{\overset{#1}{\rightsquigarrow}}
\newcommand{\close}[1]{\overset{#1}{\sim}}
\newcommand{\nclose}[1]{\overset{#1}{\not\sim}}
\newcommand{\Haus}{{\text{Haus}}}
\DeclareMathOperator{\QZ}{QZ}
\newfont{\cusfont}{alnorm}
\newcommand{\alnorm}{\text{\;\cusfont Q\; }}
\newcommand{\bbH}{\mathbb{H}}
\newcommand{\bbN}{\mathbb{N}}
\newcommand{\bbQ}{\mathbb{Q}}
\newcommand{\bbR}{\mathbb{R}}
\newcommand{\bbZ}{\mathbb{Z}}
\newcommand{\cB}{\mathcal{B}}
\newcommand{\cC}{\mathcal{C}}
\newcommand{\cU}{\mathcal{U}}
\author{Alex Margolis}
\title[Discretisable quasi-actions I]{Discretisable quasi-actions I:\\ Topological completions and hyperbolicity}
\address{Alex Margolis, Department of Mathematics, Vanderbilt University, 1326 Stevenson Center, Nashville, TN 37240, USA}
\email{alexander.margolis@vanderbilt.edu}
\thanks{This research was partially supported by the Israel Science Foundation (grant No. \texttt{1562/19}).}
\begin{document}
\begin{abstract}
	We define and develop the notion  of a  discretisable quasi-action. It is shown that a cobounded quasi-action on a proper non-elementary hyperbolic space $X$ not fixing a point of $\partial X$ is   quasi-conjugate to an isometric action on either a rank one symmetric space or a locally finite graph.  Topological completions of quasi-actions are also introduced.
	Discretisable quasi-actions are used to give several instances where  commensurated subgroups  are  preserved by quasi-isometries. For example, the class of $\bbZ$-by-hyperbolic groups is shown to be quasi-isometrically rigid. We characterise  the class of finitely generated groups quasi-isometric to either $\bbZ^n\times \Gamma_1$ or $\Gamma_1\times \Gamma_2$, where $\Gamma_1$ and $\Gamma_2$ are  non-elementary hyperbolic groups.
\end{abstract}
\maketitle
\section{Introduction}
 One of the central problems of geometric group theory is to  understand to what extent the large-scale geometry of a finitely generated group  determines its  algebraic structure. 
 A \emph{$\bbZ$-by-hyperbolic group} is a finitely generated group $G$ containing  an infinite cyclic normal subgroup $\bbZ\cong H\vartriangleleft G$ such that the quotient $G/H$ is a non-elementary hyperbolic group. The following theorem is an application of the  methods developed in this article:
\begin{thmAlph}[Corollary \ref{cor:zbyhyp}]\label{thm:z-by-hyp}
	A finitely generated group that is quasi-isometric to a $\bbZ$-by-hyperbolic group is also $\bbZ$-by-hyperbolic.
\end{thmAlph}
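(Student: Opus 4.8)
The plan is to show that the infinite cyclic normal subgroup $H\vartriangleleft G$ is, up to passing to a commensurable subgroup, visible in the coarse geometry of $G$, to transport it across the quasi-isometry, and then to repackage the result as a $\bbZ$-by-hyperbolic extension of $G'$. Fix a short exact sequence $1\to H\to G\to Q\to 1$ with $H\cong\bbZ$ and $Q$ a non-elementary hyperbolic group, and let $f\colon G'\to G$ be a quasi-isometry with $G'$ finitely generated. I would first record the standard facts that $H$ is undistorted in $G$ and that $H$, being normal, is commensurated; thus the left cosets $\{gH\}$ are uniform quasi-lines, and collapsing this partition produces a proper non-elementary hyperbolic space $\bar X$ (a Cayley graph of $Q$ will do) on which $G$ acts coboundedly with no fixed point on $\partial\bar X$, the coarse quotient map $G\to\bar X$ being a coarsely $G$-equivariant coarse fibration with two-ended fibres.

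Next I would conjugate the left translation action of $G'$ on itself through $f$ to a cobounded, metrically proper quasi-action of $G'$ on $G$, and then prove that this quasi-action coarsely permutes the family $\{gH\}$. This is the heart of the matter and the step I expect to be the main obstacle. It is exactly here that the theory of discretisable quasi-actions and the resulting quasi-isometric rigidity of commensurated subgroups are used: the hypothesis that the coarse quotient $\bar X$ is non-elementary hyperbolic pins down the cosets $gH$ intrinsically --- up to coarse equivalence they are the only coarsely normal family of quasi-lines in $G$ (i.e.\ one coarsely preserved by the left translations) whose collapse is non-elementary hyperbolic --- so that any self-quasi-isometry of $G$, and in particular each map in the $G'$-quasi-action, coarsely preserves the family. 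Equivalently one obtains a commensurated subgroup $H'\le G'$ with $f$ carrying left cosets of $H'$ to within bounded Hausdorff distance of left cosets of $H$. Granting this, the quasi-action of $G'$ on $G$ descends to a cobounded quasi-action of $G'$ on the proper non-elementary hyperbolic space $\bar X$, and one checks it fixes no point of $\partial\bar X$ (otherwise, after the discretisation below, a non-elementary hyperbolic group would act cocompactly on a hyperbolic space fixing a boundary point, which is impossible).

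Now I would invoke the theorem on cobounded quasi-actions on proper non-elementary hyperbolic spaces not fixing a point of the boundary: the quasi-action of $G'$ on $\bar X$ is quasi-conjugate to an isometric action of $G'$ on a proper hyperbolic space $Y$ which is a rank one symmetric space or a locally finite graph, with $Y$ quasi-isometric to $\bar X$, hence to $Q$. Let $\rho\colon G'\to\Isom(Y)$ be the associated homomorphism and $K=\ker\rho\vartriangleleft G'$. Tracing the quasi-conjugacy, every element of $K$ moves each point of $\bar X$ a uniformly bounded amount, so the $K$-orbits in $G$ stay within a bounded neighbourhood of a single coset $gH$; combined with metric properness of the quasi-action of $G'$ on $G$, this shows $K$ acts properly and coboundedly on a copy of $\bbZ$, hence $K$ is virtually infinite cyclic. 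A parallel argument shows that, modulo $K$, the point stabilisers of $\rho$ are finite, so $G'/K$ acts properly and coboundedly on $Y$; by the Milnor--Schwarz lemma $G'/K$ is quasi-isometric to $Q$, and a finitely generated group quasi-isometric to a non-elementary hyperbolic group is itself non-elementary hyperbolic.

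Finally, since $K$ is virtually $\bbZ$ it contains a characteristic subgroup $K_0\cong\bbZ$ of finite index, and $K_0$ is then normal in $G'$; the quotient $G'/K_0$ is a finite extension of the non-elementary hyperbolic group $G'/K$, hence is itself non-elementary hyperbolic. Thus $1\to K_0\to G'\to G'/K_0\to 1$ exhibits $G'$ as $\bbZ$-by-hyperbolic, completing the argument. Every step apart from the quasi-isometry invariance of the coset fibration in the second paragraph is essentially formal once the hyperbolic quasi-action theorem is available; supplying that invariance is precisely what the machinery of discretisable quasi-actions is built to do. (Alternatively, one could first observe that $G$ is quasi-isometric to $\bbZ\times\Gamma$ for a non-elementary hyperbolic $\Gamma$ and appeal to the classification, obtained elsewhere in this paper, of finitely generated groups quasi-isometric to $\bbZ^n\times\Gamma_1$.)
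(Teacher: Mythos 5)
Your overall blueprint is the paper's: descend to a quasi-action on the hyperbolic quotient, straighten it, and extract a two-ended normal subgroup. But the two places where you wave at the machinery are exactly the places where the real work lives, and your justifications there do not hold up. First, the coset-preservation step. Discretisability is not what shows that the conjugated quasi-action of $G'$ on $G$ coarsely permutes the family $\{gH\}$; in the paper this is the Kapovich--Kleiner--Leeb product rigidity theorem (Theorem \ref{thm:kkl}, via Proposition \ref{prop:coarsebundle_fibrepres} and Lemma \ref{lem:prod_induced}), an external input about quasi-isometries of $\bbR^n\times Q$, and the discretisation machinery only enters \emph{afterwards}, to convert the induced quasi-action $G'\qa Q$ into an action on a locally finite graph and hence a commensurated subgroup. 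Your proposed intrinsic characterisation (``the unique coarsely normal family of quasi-lines whose collapse is non-elementary hyperbolic'') is unproved, and it does not even apply to the image family: for a single self-quasi-isometry $\phi$ of $G$, the family $\{\phi(gH)\}$ is coarsely invariant under the conjugated maps $\phi L_g\phi^{-1}$, not under the left translations, so the characterisation gives no purchase on it. (Your closing parenthetical --- quote the classification of groups quasi-isometric to $\bbZ^n\times\Gamma$ --- is in fact the paper's proof, but then nothing of your direct argument remains to be checked.)

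Second, even granting coset preservation, two later assertions are precisely the nontrivial content and are stated without proof. (a) That $G'\qa\bar X$ fixes no point of $\partial\bar X$: your parenthetical reason is not an argument, and the statement is not a property of the space --- e.g.\ $\bbH^2$ admits a simply transitive isometric action fixing a boundary point --- so it must be ruled out using the $\bbZ$-direction of $G'$; the paper needs the growth estimate of Lemma \ref{lem:growth_homspace} (after Kleiner--Leeb), Lemma \ref{lem:tree_fixedpt} for the tree case, and Proposition \ref{prop:millefeuille}. (b) That $K=\ker\rho$ acts \emph{coboundedly} on a fibre (hence is virtually $\bbZ$ rather than finite) and that point stabilisers are finite modulo $K$, so that $G'/K$ acts properly on $Y$: this is equivalent to the two-ended commensurated coarse stabiliser of $G'\qa\bar X$ being commensurable to a normal subgroup, which is the crux of the theorem. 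It genuinely can fail for actions of this shape --- $BS(1,2)$ acts on its locally finite Bass--Serre tree with two-ended commensurated vertex stabilisers and trivial kernel --- and what excludes such behaviour here is the bounded fibre-distortion/modular-homomorphism argument (the fibre-preserving quasi-isometry to the product forces $\Delta(G')\leq\Comm(\bbZ)\cong\bbQ^{\times}$ to be bounded, hence contained in $\{\pm 1\}$; see Example \ref{exmp:fibre_abelian}, Corollary \ref{cor:fibre-distortion_bounded} and the proofs of Theorems \ref{thm:qitocent_main} and \ref{thm:zbyhyp_main}), together with, in the symmetric-space case, the Kleiner--Leeb growth argument to exclude non-discrete image. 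None of this appears in your proposal, so as written the argument would ``prove'' too much.
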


\noindent Theorem \ref{thm:z-by-hyp} is complemented by earlier work of Gersten \cite{gersten92bounded} and Kapovich--Kleiner--Leeb  \cite{kapovich1998derham}, from which it can be deduced that two $\bbZ$-by-hyperbolic groups are quasi-isometric if and only if their hyperbolic quotients are quasi-isometric. 

Special cases of Theorem \ref{thm:z-by-hyp} were already known. Kapovich--Leeb and  Rieffel independently proved that a finitely generated group is quasi-isometric to $\bbR\times \bbH^2$ if and only if it is of the form $\bbZ$-by-$Q$, where $Q$ is a finite extension of a cocompact Fuchsian group \cite{kapovichleeb97qidecomp,rieffel01}. This was an important step in understanding  finitely generated groups  quasi-isometric to  the fundamental groups of  3-manifolds. Kleiner--Leeb later  proved a special case of Theorem \ref{thm:z-by-hyp} for groups of the form $\bbZ$-by-$Q$, where $Q$ is a finite extension of a uniform lattice in a symmetric space \cite{kleinerleeb2001symmetric}. 
Mosher--Sageev--Whyte  showed that a finitely generated group quasi-isometric to $\bbZ\times F_2$ is virtually $\bbZ\times F_m$ for some $m\geq 2$  \cite{whyte2001baumslag,mosher2003quasi}, proving another special case of Theorem \ref{thm:z-by-hyp}.

The common thread needed to prove these  special cases of Theorem \ref{thm:z-by-hyp} is the notion of  a quasi-action.  A \emph{quasi-action} of a group $G$ on a metric space $X$, denoted $G\qa X$, consists of a uniform collection of quasi-isometries $\{f_g\}_{g\in G}$ of $X$ such that $f_{\id}$ is uniformly close to the identity map, and the distance between $f_{gh}$ and $f_g\circ f_h$ in the sup metric is uniformly bounded for all $g,h\in G$. Two quasi-actions $G\qa X$ and $G\qa Y$ are   \emph{quasi-conjugate} if there is a coarsely $G$-equivariant quasi-isometry from $X$ to $Y$. 
To characterise groups quasi-isometric to $\bbR\times \bbH^2$, Kapovich--Leeb and  Rieffel use the fact that a quasi-action on $\bbH^2$ can be quasi-conjugated to an isometric action on $\bbH^2$.  Mosher--Sageev--Whyte prove that any cobounded quasi-action on a locally finite bushy tree can be quasi-conjugated to an isometric action on a possibly different locally finite bushy tree.

This  article introduces a framework for studying  quasi-actions on more general spaces, including all proper non-elementary hyperbolic spaces and all Cayley graphs of finitely presented groups of cohomological dimension two. We introduce the notion of a discretisable quasi-action and  give numerous applications, including Theorem \ref{thm:z-by-hyp}.
 The central definition of this article is the following:
\begin{defn*}%
		Let $X$ be a  metric space. A cobounded quasi-action $G\qa X$ is said to be \emph{discretisable} if  it is  quasi-conjugate to an action $G\curvearrowright Y$, where $Y$ is a connected locally finite graph and $G$ acts on $Y$ by graph automorphisms. We say the space $X$ is \emph{discretisable} if every cobounded quasi-action on $X$ is discretisable.
\end{defn*}

 As we  shortly see, the notion of a discretisable quasi-action is much more natural, robust and useful than the previous definition might initially suggest. To explain why this is the case, we first note that in general, there is nothing particularly remarkable about the  graph $Y$ in the preceding definition --- all we know about $Y$ is that it is a locally finite graph quasi-isometric to $X$.  The crux of  the matter  is not so much the intrinsic local geometry of $Y$, but rather what one can say about $G$ by knowing that  it acts on a locally finite graph.  
  Here are two such properties  that will motivate the subsequent discussion:
 \begin{enumerate}[(I)]
 	\item \label{item:topcomp} Corestricting $\phi:G\rightarrow \Aut(Y)$ to the closure of the image gives a homomorphism  $G\rightarrow \hat G$ with dense image, where $\hat G$ is a totally disconnected locally compact group.
 	\item \label{item:cstab} The stabiliser $H$ of a vertex of $Y$ is a commensurated subgroup of $G$, i.e.\ every conjugate of $H$ is commensurable to $H$.
 \end{enumerate}  The homomorphism $G\rightarrow \hat G$ in \ref{item:topcomp} is an example of what we call a   topological completion, whilst 
the subgroup $H$ in \ref{item:cstab} is an example of what we call a  coarse stabiliser.  
Discretisability can be reformulated  in terms of topological completions and coarse stabilisers, which we formally define shortly.
\newcommand{\discequivs}{	
	Let $X$ be a quasi-geodesic metric space and let $G\qa X$ be a cobounded quasi-action. The following are equivalent:
	\begin{enumerate}
		\item $G\qa X$ is discretisable;
		\item $G\qa X$ has a coarse stabiliser;
		\item the topological completion of $G\qa X$ exists and  is compact-by-(totally disconnected).
	\end{enumerate}
}
\begin{propAlph}[Proposition \ref{prop:disc_equiv}]
	\label{prop:disc_equiv_intro}
	\discequivs
\end{propAlph}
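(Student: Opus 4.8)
The plan is to prove the two equivalences $(1)\Leftrightarrow(2)$ and $(2)\Leftrightarrow(3)$, with the coarse stabiliser serving as the bridge between the combinatorial picture (the locally finite graph) and the topological picture (the totally disconnected completion). Throughout, I treat a coarse stabiliser of $G\qa X$ as a subgroup $H\leq G$ for which there is a basepoint $x_0\in X$ with $\sup_{h\in H}d(f_hx_0,x_0)<\infty$ and with $\{gH:d(f_gx_0,x_0)\leq R\}$ finite for every $R\geq 0$; since $X$ is quasi-geodesic and $G\qa X$ is cobounded, this is precisely the data needed to manufacture a locally finite graph.

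For $(1)\Rightarrow(2)$: if $G\curvearrowright Y$ is an action on a connected locally finite graph by automorphisms, quasi-conjugate to $G\qa X$ via a coarsely $G$-equivariant quasi-isometry with coarse inverse $q\colon Y\to X$, I set $H=\stab_G(v)$ for a vertex $v$ and $x_0=q(v)$. Coarse equivariance turns ``$H$ fixes $v$'' into ``$H$ coarsely fixes $x_0$'', and since $q$ is a quasi-isometric embedding, $d(f_gx_0,x_0)\leq R$ forces $d_Y(gv,v)$ to be bounded in terms of $R$, so that $\{gH:d(f_gx_0,x_0)\leq R\}$ injects into a ball $B_{R'}(v)$, which is finite by local finiteness. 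Conversely, for $(2)\Rightarrow(1)$, given a coarse stabiliser $H$ with basepoint $x_0$, I let $Y$ be the graph with vertex set $G/H$ and an edge joining $gH$ to $g'H$ whenever $d(f_gx_0,f_gx_0)\leq R$ — here $R$ is chosen large enough that coboundedness and quasi-geodesicity of $X$ make $Y$ connected. Then $G$ acts on $Y$ by left translation, the finiteness condition is exactly local finiteness of $Y$, and $gH\mapsto f_gx_0$ is a coarsely $G$-equivariant quasi-isometry $Y\to X$: it is well-defined up to bounded error because $H$ coarsely fixes $x_0$ and the $f_g$ are uniform quasi-isometries, coarsely Lipschitz from the definition of the edge relation, a quasi-isometric embedding by the usual chaining argument in the cobounded quasi-geodesic space $X$, and coarsely surjective by coboundedness.

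For $(2)\Rightarrow(3)$: building $Y$ from the coarse stabiliser as above, I take $\hat G$ to be the closure of the image of $\phi\colon G\to\Aut(Y)$, where $\Aut(Y)$ carries the permutation topology. Since $Y$ is connected and locally finite, $\Aut(Y)$ is totally disconnected and locally compact with compact open vertex stabilisers; hence so is the closed subgroup $\hat G$. As $\hat G$ acts on $Y$ coboundedly (extending the cobounded action of $G$) with compact open point stabilisers, transporting through the quasi-conjugacy shows $\hat G$ acts coarsely properly and coboundedly on $X$, so $(\hat G,\phi)$ is a topological completion of $G\qa X$; appealing to the uniqueness of topological completions, \emph{the} topological completion is this $\hat G$, which is totally disconnected and in particular compact-by-(totally disconnected). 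For $(3)\Rightarrow(2)$: a locally compact group is compact-by-(totally disconnected) exactly when it has a compact open subgroup — if $V$ is compact open then $\hat G^0\subseteq V$ is compact, and conversely if $\hat G^0$ is compact then $\hat G/\hat G^0$ is totally disconnected and locally compact, so van Dantzig's theorem supplies a compact open subgroup there whose preimage is compact and open in $\hat G$. Given such a $V\leq\hat G$, I set $H=\phi^{-1}(V)$. Compactness of $V$ together with coarse properness of the completion's action by uniform quasi-isometries gives $V$ bounded orbits, so $H$ coarsely fixes some $x_0$; and since $\{\hat g\in\hat G:d(\hat g x_0,x_0)\leq R\}$ is relatively compact by coarse properness, it meets only finitely many cosets of the open subgroup $V$, which — using that $G$ is dense, so $\phi$ descends to a bijection $G/H\to\phi(G)V/V\subseteq\hat G/V$ — yields the required finiteness condition. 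Hence $H$ is a coarse stabiliser.

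The main obstacle is the passage $(2)\Leftrightarrow(3)$. On the forward side one needs the structure theory of automorphism groups of connected locally finite graphs (totally disconnected, locally compact, compact open vertex stabilisers) and must verify that the closure $\hat G$ genuinely satisfies the axioms of a topological completion of $G\qa X$, i.e.\ that coarse properness and coboundedness survive transport through the quasi-conjugacy $q$. On the reverse side, the work is in converting the purely topological hypothesis ``$\hat G^0$ is compact'' into a \emph{quantitative} finiteness statement about $G$: van Dantzig produces a compact open $V$, but one still has to see that pulling $V$ back along the dense homomorphism $\phi$ yields a subgroup whose orbit counting in $X$ is finite at every scale, and this is exactly where coarse properness of the completion's action does the essential work. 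I also rely on the uniqueness of topological completions, so that ``\emph{the} completion'' is well defined and the property of being compact-by-(totally disconnected) is intrinsic to $G\qa X$.
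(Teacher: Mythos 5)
Your overall strategy is the same as the paper's: prove $(1)\Leftrightarrow(2)$ via vertex stabilisers in one direction and a graph on $G/H$ in the other, get $(2)\Rightarrow(3)$ by closing up the image of $G$ in the automorphism group of that locally finite graph and invoking uniqueness of completions, and get $(3)\Rightarrow(2)$ by pulling back a compact open subgroup supplied by van Dantzig, using the bounded-iff-relatively-compact axiom to verify (CS1) and (CS2). Those three steps match the paper's Corollary \ref{cor:discvscstab}, its use of Proposition \ref{prop:TCvsQC} together with Theorem \ref{thm:topcomp unique}, and its $(3)\Rightarrow(2)$ argument almost verbatim, and your sketches of them are sound.

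However, there is a genuine gap in your $(2)\Rightarrow(1)$ construction. You define $Y$ on the vertex set $G/H$ with an edge between $gH$ and $g'H$ whenever $d(f_g x_0,f_{g'}x_0)\leq R$, and then assert ``$G$ acts on $Y$ by left translation.'' It does not: since $G\qa X$ is only a quasi-action, Lemma \ref{lem:qaction_gp} gives $d(f_{kg}x_0,f_{kg'}x_0)\leq K\,d(f_gx_0,f_{g'}x_0)+3A$, so left translation by $k$ distorts the metrically defined edge relation instead of preserving it (it also fails to be well defined on cosets, since changing representatives within $H$-cosets moves orbit points by a bounded but nonzero amount). What you obtain this way is only a quasi-action of $G$ on a locally finite graph, which is vacuous (cf.\ Corollary \ref{cor:qa, qconj to isoma} and the discussion after it) --- the entire content of discretisability is getting an honest action by graph automorphisms. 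The repair is to make the edge relation depend only on $g^{-1}g'$: take a bounded, symmetric, $H$-bi-invariant set $S$ (e.g.\ built from $\{g: d(f_gx_0,x_0)\leq R\}$, enlarged using (CS1)) and join $gH$ to $kH$ when $g^{-1}k\in HSH$. This is exactly the relative Cayley graph route of the paper: Proposition \ref{prop:cstab homspace} provides a finite relative generating set (and shows $H\alnorm G$), Proposition \ref{prop:relcg_locfinite} gives local finiteness of $\Gamma_{G,H,S}$ --- where your finiteness condition (CS2) enters --- and left multiplication is by automorphisms by construction, after which Proposition \ref{prop:cstabinvariant} identifies $G\curvearrowright\Gamma_{G,H,S}$ with $G\qa X$ up to quasi-conjugacy. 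With this correction your $(2)\Rightarrow(3)$, which relies on the graph built in $(2)\Rightarrow(1)$, also goes through; the rest of your argument needs no change.
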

Switching between these equivalent formulations  is essential for both proving and applying discretisability. We now define and discuss the terms in Proposition \ref{prop:disc_equiv_intro}, indicating how  discretisability can be used to  prove Theorem \ref{thm:z-by-hyp} and similar theorems.

\subsection*{Topological completions of quasi-actions}
 Given a quasi-action $G\qa X$, we say a subset $T\subseteq G$ is \emph{bounded} if for some (hence every)  $x\in X$, the quasi-orbit $T\cdot x\coloneqq \{t\cdot x\mid t\in T\}$  is bounded. If a topological group $G$ acts continuously, properly and isometrically on a proper metric space $X$, then $T\subseteq G$ has compact closure if and only if it has bounded orbits.  This motivates the following definition:
 
\begin{defn*}
	A \emph{topological completion of a quasi-action} $G\qa X$ is a pair $(\hat G , \rho)$ where:
	\begin{enumerate}
		\item $\hat G$ is a  locally compact  group; 
		\item  $\rho:G\rightarrow \hat G$  is a homomorphism with dense image;
		\item for every $T\subseteq G$,  $T$ is bounded if and only if $\overline{\rho(T)}$ is compact.
	\end{enumerate}
\end{defn*}
 The following theorem shows that topological completions are unique. We refer  the reader  to Theorem \ref{thm:topcomp unique} for  a more precise statement.
\begin{thmAlph}[Theorem \ref{thm:topcomp unique}]\label{thm:topcomp unique_intro}
	Topological completions are unique modulo a compact normal subgroup.
\end{thmAlph}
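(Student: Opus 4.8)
The plan is to build, from any two topological completions $(\hat G_1,\rho_1)$ and $(\hat G_2,\rho_2)$ of $G\qa X$, a common refinement and to show each refines back to the original with compact kernel. Concretely, let $\rho=\rho_1\times\rho_2\colon G\to\hat G_1\times\hat G_2$ and let $\Delta=\overline{\rho(G)}$, a closed (hence locally compact) subgroup, with coordinate projections $p_i\colon\Delta\to\hat G_i$; note $p_i\circ\rho=\rho_i$ and $p_i(\Delta)\supseteq\rho_i(G)$ is dense in $\hat G_i$. I claim $(\Delta,\rho)$ is itself a topological completion: $\rho$ has dense image by construction, and for $T\subseteq G$, $\overline{\rho(T)}$ is compact iff $T$ is bounded --- if $\overline{\rho(T)}$ is compact then so is $p_i(\overline{\rho(T)})\supseteq\overline{\rho_i(T)}$, whence $T$ is bounded since $\hat G_i$ is a completion; conversely if $T$ is bounded then $\overline{\rho_i(T)}$ is compact for $i=1,2$, so $\overline{\rho(T)}$ is a closed subset of the compact set $\overline{\rho_1(T)}\times\overline{\rho_2(T)}$. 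Granting the main step below applied to $p_1$ and $p_2$, we get $\hat G_i\cong\Delta/\ker p_i$ with $\ker p_i$ compact and normal, and then
\[
\hat G_1/p_1(\ker p_2)\;\cong\;\Delta/(\ker p_1\cdot\ker p_2)\;\cong\;\hat G_2/p_2(\ker p_1),
\]
an isomorphism of topological groups intertwining the (composites of the) maps from $G$, with $p_i(\ker p_j)$ compact normal in $\hat G_i$; this is the precise uniqueness statement of Theorem~\ref{thm:topcomp unique}.

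The main step is: if $(\Delta,\rho)$ and $(\hat G,\rho')$ are topological completions and $p\colon\Delta\to\hat G$ is a continuous homomorphism with dense image and $p\circ\rho=\rho'$, then $p$ is proper and $\ker p$ is compact. To prove properness, fix a compact $C\subseteq\hat G$ and a relatively compact open identity neighbourhood $U$, and set $T=(\rho')^{-1}(CU)$. Then $\rho'(T)\subseteq\overline{CU}$ is relatively compact, so $T$ is bounded, and hence $\overline{\rho(T)}$ is compact in $\Delta$ because $\Delta$ is a topological completion. Now for any $x\in p^{-1}(C)$, density of $\rho(G)$ gives a net $\rho(g_\alpha)\to x$; then $\rho'(g_\alpha)=p(\rho(g_\alpha))\to p(x)\in C\subseteq CU$, so eventually $\rho'(g_\alpha)\in CU$, i.e.\ eventually $g_\alpha\in T$, whence $x\in\overline{\rho(T)}$. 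Thus $p^{-1}(C)$ is a closed subset of the compact set $\overline{\rho(T)}$, so compact. Properness yields that $p$ is a closed map (a proper continuous map into a locally compact Hausdorff space is closed), so $p(\Delta)$ is closed and, being dense, equals $\hat G$; a continuous surjective closed homomorphism is a quotient map, giving $\hat G\cong\Delta/\ker p$, and $\ker p=p^{-1}(\{e\})$ is compact.

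I expect the properness argument to be the crux. It is the only point where the defining axiom of a topological completion --- coarse boundedness in $G$ matching relative compactness of the image --- is used in an essential way, and the only point where density of the images is genuinely exploited (to pull a point of $p^{-1}(C)$ into the closure of a bounded set of group elements). Everything else is bookkeeping: verifying $(\Delta,\rho)$ is a completion (done above), the point-set topology facts that proper maps into locally compact Hausdorff spaces are closed and that closed continuous surjective homomorphisms are quotient maps, and the elementary observations that products of compact normal subgroups are compact and normal and that the displayed isomorphisms commute with the structure maps from $G$. A minor technical caveat worth flagging in the write-up is Hausdorffness of $\hat G_1$, $\hat G_2$ (and hence of $\Delta$), which one should confirm is part of the standing conventions on locally compact groups in the paper; with that in hand the argument goes through verbatim.
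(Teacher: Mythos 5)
Your proposal is correct, but it takes a genuinely different route from the paper's proof of Theorem \ref{thm:topcomp unique}. You pass to the closed subgroup $\Delta=\overline{(\rho_1\times\rho_2)(G)}\leq \hat G_1\times\hat G_2$, verify that $(\Delta,\rho_1\times\rho_2)$ is itself a topological completion, and then prove a general statement: a continuous homomorphism with dense image between two completions of the same quasi-action that intertwines the maps from $G$ is proper, hence a closed surjection with compact kernel and therefore a topological quotient map. Applying this to the coordinate projections and quotienting by $\ker p_1\cdot\ker p_2$ gives the commuting square of the theorem with $K_i=p_i(\ker p_j)$; your properness argument (pulling a point of $p^{-1}(C)$ into $\overline{\rho(T)}$ for $T=(\rho')^{-1}(CU)$, with $\overline{CU}\subseteq C\overline U$ compact) is sound, and the remaining bookkeeping checks out. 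The paper instead first reduces to the second-countable metrisable case via Kakutani--Kodaira (Corollary \ref{cor:kakutanikodaira}), defines $K_1$ directly as the set of ``$\rho_2$-trivial'' limits --- which is precisely your $p_1(\ker p_2)$ --- and constructs the isomorphism by hand through spaces of convergent sequences, using the diagonal-convergence Lemma \ref{lem:diagonalconv} to show it is open and continuous. Your approach buys a shorter, coordinate-free argument with nets that avoids the metrisability reduction and the sequence bookkeeping, at the price of invoking the standard facts that proper maps into locally compact Hausdorff spaces are closed and that closed continuous surjective homomorphisms are quotient maps; the paper's argument is longer but more elementary and self-contained. The Hausdorffness caveat you flag is covered by the paper's standing convention (Section \ref{sec:topgps}) that topological groups are Hausdorff.
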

It thus makes sense to talk about \emph{the} topological completion of a quasi-action. Topological completions need not exist. Indeed, in Section \ref{sec:qmorph} we show that non-trivial quasi-morphisms  give rise to  cobounded quasi-actions on $\bbR$ that do not have topological completions. Nonetheless, we demonstrate that for a very large class of spaces, including proper non-elementary hyperbolic spaces,  cobounded quasi-actions \emph{always} have topological completions. The following theorem is inspired by work of Furman \cite{furman2001mostowmargulis} and Whyte \cite{whyte01coarse}.
\begin{thmAlph}[Corollary \ref{cor:morse_topcomp_exist}]\label{thm:topcomp_acyl_intro}
	If $X$ is a cocompact proper geodesic metric space whose Morse boundary contains at least three points,   then any cobounded quasi-action on $X$ has a topological completion.
\end{thmAlph}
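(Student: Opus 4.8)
The plan is to bootstrap the statement from its hyperbolic case --- an earlier result of this paper, namely that every cobounded quasi-action on a proper non-elementary hyperbolic space admits a topological completion --- by replacing $X$ with an auxiliary proper hyperbolic space assembled from its Morse geodesics. This is modelled on the constructions of locally compact ``quasi-isometry groups'' of Furman and Whyte.

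First I would record that $G\qa X$ acts canonically on the Morse boundary. Since the self-homeomorphism of $\partial_{*}X$ induced by a quasi-isometry of $X$ depends only on the latter's class modulo bounded distance, the quasi-action axioms make $g\mapsto\partial_{*}f_{g}$ a genuine homomorphism $\bar\rho\colon G\to\Homeo(\partial_{*}X)$. Next, fix a Morse gauge $\mathfrak N$ whose stratum $\partial_{*}^{\mathfrak N}X$ already contains three points, and let $\cW=\cW_{\mathfrak N}(X)$ be a geodesic thickening of the union of all $\mathfrak N$-Morse bi-infinite geodesics of $X$. Then $\cW$ is a proper geodesic hyperbolic space with $|\partial\cW|\geq 3$ --- the coarse-geometric counterpart of the fact that stable subsets of a metric space are hyperbolic --- and, because the quasi-isometry constants of the family $\{f_{g}\}_{g\in G}$ are uniform and $\cW$ is coarsely preserved by each $f_{g}$ (see below), the quasi-action descends to a cobounded quasi-action $G\qa\cW$ whose boundary action is a restriction of $\bar\rho$.

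Applying the hyperbolic case to $G\qa\cW$ produces a locally compact group $\hat G$ that is a topological completion of $G\qa\cW$; this includes the possibility that $\bar\rho$ fixes a point of $\partial\cW$, in which case $\hat G$ is affine-group-like rather than, say, the automorphism group of a locally finite graph, and in particular need not be compact-by-(totally disconnected), so Proposition~\ref{prop:disc_equiv_intro} does not apply. It remains to promote $\hat G$ to a topological completion of $G\qa X$ itself. A subset $T\subseteq G$ that is bounded for $G\qa X$ is plainly bounded for $G\qa\cW$; for the converse one argues, using the cocompactness of $X$, that a bounded quasi-orbit in $\cW$ forces a bounded quasi-orbit in $X$. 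Modulo a possible compact normal subgroup --- precisely the ambiguity allowed by Theorem~\ref{thm:topcomp unique_intro} --- this identifies $\hat G$ as the topological completion of $G\qa X$.

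The step I expect to be the main obstacle is the passage to a single Morse gauge used above. A uniform quasi-isometry of $X$ a priori sends $\mathfrak N$-Morse geodesics only to $\mathfrak N'$-Morse geodesics for some strictly larger gauge $\mathfrak N'$, so $\cW_{\mathfrak N}$ need not be coarsely preserved by the quasi-action, and in general there is no maximal Morse gauge. The fix I have in mind is to use the cocompactness of $X$ to extract, via limits of isometric translates of Morse geodesics, a gauge that is extremal in a suitable sense and is therefore coarsely preserved by every $f_{g}$. Making this precise, and carrying out the companion bookkeeping that matches bounded subsets of $G$ across $X$ and $\cW$, is the heart of the argument; granting it, the result follows from the hyperbolic case.
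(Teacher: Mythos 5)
Your reduction is not the paper's argument (the paper proves the statement directly, by showing in Proposition~\ref{prop:morsebdry_tame} that such an $X$ is \emph{tame} and then invoking Theorem~\ref{thm:tame_topcomp}, which builds the completion via Whyte-style coarse convergence), and as it stands it has genuine gaps. First, a circularity: the ``earlier result of this paper'' you bootstrap from --- existence of topological completions for cobounded quasi-actions on proper non-elementary hyperbolic spaces --- is not proved earlier in the paper; it is deduced in the proof of Theorem~\ref{thm:trichotomyhyp_main} \emph{from} Corollary~\ref{cor:morse_topcomp_exist}, i.e.\ from the very statement you are proving. Your scheme is only viable if you supply an independent proof of the hyperbolic case, and the natural independent proof (tameness of non-elementary hyperbolic spaces plus the coarse-convergence machinery) already gives the Morse-boundary statement directly, which is exactly what the paper does.

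Second, the construction at the heart of your reduction is unsupported. The union $\cW_{\mathfrak N}$ of all $\mathfrak N$-Morse bi-infinite geodesics is not a ``stable subset'': stability requires that \emph{pairs of points of the subset} be joined by uniformly Morse geodesics, whereas two points lying on different $\mathfrak N$-Morse geodesics of $X$ can be joined only by badly non-Morse geodesics, and the union need not be coarsely connected, undistorted, or hyperbolic in its intrinsic metric; none of the claimed properties of $\cW$ (proper, geodesic after thickening, hyperbolic, $\lvert\partial\cW\rvert\geq 3$, coarsely embedded) follows from known results, and without undistortedness even your transfer of boundedness of subsets $T\subseteq G$ between $X$ and $\cW$ breaks. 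Third, the obstacle you flag yourself is fatal as you propose to handle it: a $(K,A)$-quasi-isometry enlarges the Morse gauge by an amount depending on $(K,A)$, isometric translates preserve the gauge \emph{exactly}, so ``limits of isometric translates'' cannot produce a gauge absorbing this enlargement, and Morse strata do not stabilise under cocompactness (iterating $\mathfrak N\mapsto\mathfrak N'$ has no fixed point in general), so $\cW_{\mathfrak N}$ is genuinely not coarsely invariant under the quasi-action. Note how the paper avoids this issue entirely: in the proof of Proposition~\ref{prop:morsebdry_tame} the only maps applied to Morse data are isometries of $X$ (gauge-preserving) and the single quasi-isometry $f$ close to the identity, which is handled by Lemma~\ref{lem:qimorse} and the coarse-centre Lemma~\ref{lem:coarse_centre} with uniform constants; no invariant stratum or auxiliary hyperbolic space is ever needed.
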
	
\noindent More generally, cobounded quasi-actions on tame metric spaces always have topological completions; see Section \ref{sec:tame} for details. Armed with the structure theory of locally compact groups,  topological completions are a powerful new tool in geometric group theory.

 \subsection*{Coarse stabilisers of quasi-actions}
Discretisability can be characterised  in terms of coarse stabilisers, which are coarse analogues of compact open subgroups. 
 \begin{defn*}
 	A subgroup  $H\leq G$ is a \emph{coarse stabiliser} of $G\qa X$ if:
 \begin{enumerate}
 	\item $H$ is bounded;
 	\item every bounded subset of $G$ is contained in finitely many left $H$-cosets.
 \end{enumerate}
 \end{defn*}
 If $G\qa X$ has a coarse stabiliser, then it is  unique up to commensurability (Proposition \ref{prop:coarsestabsarecomm}).  
 A subgroup $H\leq G$ is said to be \emph{commensurated}, denoted $H\alnorm G$,  if every conjugate of $H$ is commensurable to $H$. Commensurated subgroups are also known as almost normal, inert  or near normal subgroups. If $G$ is  finitely generated  relative to a commensurated subgroup $H\alnorm G$,  the \emph{quotient space} is the set of left cosets $G/H$ equipped with the relative word metric; see Section \ref{sec:alnorm} for details. The quotient space is a proper discrete metric space that is  well-defined up to quasi-isometry. Moreover, if $H$ is normal, then the quotient space coincides with the quotient group when equipped with the word metric.
 
 	A basic fact of Lie theory is that if a Lie group $G$ acts smoothly and transitively on a smooth manifold $M$ with isotropy subgroup $H$, then the homogeneous space $G/H$   is  diffeomorphic to $M$ via a diffeomorphism that conjugates the action of $G$ on $G/H$ to the action of $G$ on $M$.
The following proposition is a coarse geometric analogue of this fact in the setting of cobounded quasi-actions admitting a  coarse stabiliser, where the associated homogeneous space $G/H$ is discrete. This proposition also generalises  the  Milnor--Schwarz lemma, which can be recovered in the case  $H$ is trivial.
 \begin{propAlph}[Proposition \ref{prop:cstab homspace}]\label{prop:relms_intro}
 	Let $X$ be a quasi-geodesic metric space and let $G\qa X$ be a cobounded quasi-action with coarse stabiliser $H$. Then:
 	\begin{enumerate}
 		\item $G$ is finitely generated relative to $H$;
 		\item $H\alnorm G$;
 		\item $G\qa X$ is quasi-conjugate to the natural  left action $G\curvearrowright G/H$. In particular, $G/H$ is quasi-isometric to $X$.
 	\end{enumerate}	
 \end{propAlph}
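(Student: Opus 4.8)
The plan is to run a relative version of the Milnor--Schwarz argument, using the coarse stabiliser axiom precisely where the classical argument uses local finiteness of a genuine action. Fix a basepoint $x_0\in X$; write $g\cdot y$ for $f_g(y)$; let $(\lambda,c)$ be uniform quasi-isometry constants for the $f_g$, and let $C$ bound the sup-distance between $f_{gh}$ and $f_g\circ f_h$, and between $f_{\id}$ and $\id$. For $r\geq 0$ set $B_r=\{g\in G:d(g\cdot x_0,x_0)\leq r\}$; each $B_r$ is bounded, and whether a subset of $G$ is bounded does not depend on $x_0$. For (1), coboundedness gives $R$ so that $G\cdot x_0$ is $R$-dense. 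Given $g\in G$, join $x_0$ to $g\cdot x_0$ by a quasi-geodesic and subdivide it as $x_0=y_0,y_1,\dots,y_m=g\cdot x_0$ with $d(y_{i-1},y_i)\leq 1$ and $m\leq \lambda' d(x_0,g\cdot x_0)+c'$; pick $g_i\in G$ with $d(g_i\cdot x_0,y_i)\leq R$, taking $g_0=\id$ and $g_m=g$. A bookkeeping estimate with the quasi-action axioms shows $d((g_{i-1}^{-1}g_i)\cdot x_0,x_0)\leq \rho$ for a constant $\rho$ independent of $g,i$, so each $g_{i-1}^{-1}g_i$ lies in the bounded set $B_\rho$. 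By the coarse stabiliser axiom, $B_\rho\subseteq s_1H\cup\dots\cup s_NH$ for a finite set $S_0=\{s_1,\dots,s_N\}$, which we may take inside $B_\rho$. Then $g=(g_0^{-1}g_1)\cdots(g_{m-1}^{-1}g_m)\in\langle S_0\cup H\rangle$, so $G=\langle S_0\cup H\rangle$: this proves (1). The same subdivision, combined with left-invariance of the relative word metric and the inclusion $B_\rho\subseteq\bigcup_j s_jH$, also yields $d_{G/H}(gH,g'H)\leq C'd(g\cdot x_0,g'\cdot x_0)+C'$; we record this for (3).

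For (2), fix $g\in G$. Replacing $f_{ghg^{-1}}$ by $f_g\circ f_h\circ f_{g^{-1}}$ up to bounded error and using that $\{h\cdot(g^{-1}\cdot x_0):h\in H\}$ is bounded (boundedness of $H$ is basepoint-independent), one checks that $gHg^{-1}\cdot x_0$ lies in a bounded neighbourhood of $x_0$, so $gHg^{-1}$ is a bounded subgroup of $G$. By the coarse stabiliser axiom, $gHg^{-1}\subseteq a_1H\cup\dots\cup a_kH$. Since $a_iH\cap gHg^{-1}$ is empty or a single left coset of $H\cap gHg^{-1}$ in $gHg^{-1}$, this gives $[gHg^{-1}:gHg^{-1}\cap H]\leq k<\infty$; applying the same to $g^{-1}$ and conjugating by $g$ yields $[H:H\cap gHg^{-1}]<\infty$. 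Hence $H$ and $gHg^{-1}$ are commensurable for all $g$, i.e.\ $H\alnorm G$.

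For (3), define $\Phi\colon G/H\to X$ by $gH\mapsto g\cdot x_0$. This is coarsely well defined because $d(gh\cdot x_0,g\cdot x_0)\leq \lambda\diam(H\cdot x_0)+\text{const}$ uniformly in $g$ (as $f_g$ is a $(\lambda,c)$-quasi-isometry and $f_h(x_0)$ stays near $x_0$); it is coarsely $G$-equivariant since $d((g'g)\cdot x_0,g'\cdot(g\cdot x_0))\leq C$; and it is coarsely surjective by coboundedness. For cosets $gH$ and $gsH$ at relative-word distance $1$ (so $s\in S_0\cup S_0^{-1}$, which lies in some $B_{\rho'}$), one has $d(g\cdot x_0,gs\cdot x_0)\leq \lambda d(x_0,s\cdot x_0)+c+C\leq \lambda\rho'+c+C$, so $\Phi$ sends relative-word geodesics to chains of boundedly spaced points and is therefore coarsely Lipschitz. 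Conversely the inequality recorded at the end of the proof of (1) shows $\Phi$ does not contract distances too much, so $\Phi$ is a quasi-isometric embedding, hence a quasi-isometry, and being coarsely $G$-equivariant it is a quasi-conjugacy from $G\curvearrowright G/H$ to $G\qa X$; in particular $G/H$ is quasi-isometric to $X$.

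I expect the only delicate point to be the estimate invoked repeatedly above: that the quasi-action axioms force $(g_{i-1}^{-1}g_i)\cdot x_0$ and $gHg^{-1}\cdot x_0$ to remain within a uniformly bounded distance of $x_0$. This is pure bookkeeping with the constants $(\lambda,c,C)$ together with the basepoint-independent boundedness of $H$-orbits, but it is the hinge of the whole argument, so I would isolate it as a short preliminary lemma on quasi-actions before proving the proposition.
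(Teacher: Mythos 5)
Your proposal is correct and follows essentially the same route as the paper's: a relative Milnor--Schwarz argument in which the coarse-stabiliser axiom converts bounded sets of group elements (the chain increments $g_{i-1}^{-1}g_i$, the bounded subgroup $gHg^{-1}$) into finitely many left $H$-cosets. The differences are only organisational: the paper packages your ``bookkeeping'' estimate as Lemma \ref{lem:qaction_gp}, runs the chain argument once in Proposition \ref{prop:generalms} to get a bounded generating set $S\supseteq H$ with $(G,d_S)$ quasi-conjugate to $X$, deduces item (2) from Proposition \ref{prop:coarsestabsarecomm}, and then compares $d_S$ with the relative word metric algebraically, whereas you inline the chain argument and map $G/H$ to $X$ directly via the quasi-orbit map.
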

 Proposition \ref{prop:relms_intro} furnishes us with a blueprint for proving Theorem \ref{thm:z-by-hyp} and similar theorems. Indeed, suppose $G$ is a finitely generated group quasi-isometric to a group of the form $\bbZ$-by-$Q$, where $Q$ is a non-elementary hyperbolic group. A theorem of Kapovich--Kleiner--Leeb ensures $G$ admits a cobounded  quasi-action on $Q$ \cite{kapovich1998derham}. If this quasi-action  were discretisable (see Theorem \ref{thm:trichotomyhyp_intro}), then Propositions \ref{prop:disc_equiv_intro} and \ref{prop:relms_intro} would  imply that $G\qa Q$ has a coarse stabiliser $H\alnorm G$ such that the associated quotient space $G/H$ is quasi-isometric to $Q$. All that remains is to deduce that  $H$ --- which is necessarily a  two-ended  commensurated subgroup --- is commensurable to a normal subgroup. 
 
 \subsection*{Discretisable quasi-actions and spaces}
 A hyperbolic metric space is said to be \emph{non-elementary} if its Gromov boundary contains at least three points.  We  exhibit the following trichotomy for 	quasi-actions on proper hyperbolic spaces,  partially  resolving  a conjecture of  Gromov \cite[0.3.C]{gromov1987hyperbolic}.
 \newcommand{\trichotomyhyp}{	Let $X$ be a proper non-elementary  hyperbolic metric space. If $G\qa X$ is a cobounded quasi-action, then one of the following holds:
 	\begin{enumerate}
 		\item \textbf{(connected case)} $G\qa X$ is quasi-conjugate to a cocompact isometric action on a homogeneous
 		simply-connected negatively curved manifold of dimension at least two.
 		\item \textbf{(mixed case)} $G\qa X$  is quasi-conjugate to a cocompact isometric action on a pure millefeuille space.
 		\item \textbf{(totally-disconnected case)} $G\qa X$  is discretisable. 
 \end{enumerate}}
 
 \begin{thmAlph}[Theorem \ref{thm:trichotomyhyp_main}]\label{thm:trichotomyhyp_intro}
 \trichotomyhyp
 \end{thmAlph}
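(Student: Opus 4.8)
The plan is to reduce Theorem \ref{thm:trichotomyhyp_intro} to the structure theory of hyperbolic locally compact groups, with the topological completion serving as the bridge. Since $X$ is a proper non-elementary hyperbolic space it is tame, so by the results of Section \ref{sec:tame} the cobounded quasi-action $G\qa X$ admits a topological completion $(\hat G,\rho)$. As $G\qa X$ is cobounded, the bounded set $\{g\in G: d(g\cdot x_0,x_0)\le C\}$ generates $G$ for $C$ large, and its $\rho$-image has compact closure; hence $\hat G$ is compactly generated. Using the constructions of Section \ref{sec:tame} together with a Milnor--Schwarz argument for locally compact groups, I would arrange that $\hat G$ acts continuously, properly and cocompactly by isometries on a proper geodesic hyperbolic space $Y$ quasi-isometric to $X$, with $\rho$ coarsely equivariant; replacing $Y$ by a quasi-isometric model if necessary handles the discrepancy between coboundedness and cocompactness. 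Since $Y$ is quasi-isometric to $X$ it is again non-elementary, so $\hat G$ is a non-elementary hyperbolic locally compact group, and every quasi-conjugacy produced below for $\hat G\curvearrowright Y$ composes with $\rho$ and the quasi-isometry $X\to Y$ to give the required quasi-conjugacy of $G\qa X$.

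Next I invoke the classification of hyperbolic locally compact groups due to Caprace--Cornulier--Monod--Tessera: a non-elementary such group $\hat G$ is either \emph{focal} --- fixing a unique $\xi\in\partial Y$, acting transitively on $\partial Y\setminus\{\xi\}$, and of the form $N\rtimes H$ with $H\in\{\bbZ,\bbR\}$ and $N$ the horocyclic subgroup --- or of \emph{general type}, having no nonempty finite invariant subset of $\partial Y$. I then analyse the identity component $\hat G^\circ$, which is normal with $\hat G/\hat G^\circ$ totally disconnected:
\begin{enumerate}[(a)]
\item If $\hat G^\circ$ is compact, then $\hat G$ is compact-by-(totally disconnected), so by Proposition \ref{prop:disc_equiv_intro} the quasi-action $G\qa X$ is discretisable; this is case (3), and it subsumes both the focal and general-type subcases with compact connected part.
\item If $\hat G^\circ$ is non-compact and $\hat G$ is of general type, then by the rigidity part of the classification $Y$ --- hence $X$ --- is quasi-isometric to a rank one symmetric space $M$, and $\hat G\curvearrowright Y$ is quasi-conjugate to a cocompact isometric action on $M$; as $M$ is a homogeneous simply-connected negatively curved manifold of dimension at least two, this is case (1).
\item If $\hat G^\circ$ is non-compact and $\hat G$ is focal, then by the classification $Y$ is quasi-isometric --- via a quasi-conjugacy of the $\hat G$-action to a cocompact isometric action --- either to a homogeneous simply-connected negatively curved manifold (a Heintze group, when $N$ is a connected Lie group) or to a pure millefeuille space (when $N$ has both non-compact connected part and non-trivial totally disconnected part); these are cases (1) and (2) respectively.
\end{enumerate}

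Since (a)--(c) exhaust the possibilities for $\hat G$, transporting each conclusion back to $G\qa X$ along $\rho$ and $X\to Y$ shows one of (1), (2), (3) holds. The principal obstacle is the appeal to the Caprace--Cornulier--Monod--Tessera structure theory, especially the rigidity statements in (b) and (c) upgrading ``$\hat G$ is hyperbolic with non-compact connected part'' to an explicit standard model together with a conjugacy of the action. The genuinely new work lies in the first step: showing that the abstract topological completion --- which a priori satisfies only ``bounded $\Leftrightarrow$ relatively compact'' --- can be realised as a group acting geometrically on a hyperbolic space, and tracking coarse equivariance carefully enough that the conclusion concerns the original quasi-action $G\qa X$ and not merely $\hat G\curvearrowright Y$; reconciling coboundedness with the cocompactness hypothesis of the classification is a minor but necessary further point.
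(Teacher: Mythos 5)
Your proposal follows essentially the same route as the paper: use tameness (Section \ref{sec:tame}/Corollary \ref{cor:morse_topcomp_exist}) to obtain a topological completion $\hat G$, observe via Proposition \ref{prop:topcompquasiconj} that $\hat G$ is a compactly generated non-elementary hyperbolic locally compact group, feed it into the Caprace--Cornulier--Monod--Tessera structure theorems (Theorems \ref{thm:hypisom_general} and \ref{thm:hypisom_amen}) to get a geometric action on a symmetric space/homogeneous manifold, pure millefeuille space, or locally finite graph, and transport back to $G\qa X$ by Corollary \ref{cor:qconj_geomaction}. The only differences are organisational — you sort cases by focal/general type and compactness of $\hat G^\circ$ and invoke Proposition \ref{prop:disc_equiv} for the totally disconnected case, where the paper simply notes that an action on a locally finite graph is discretisable by definition — and these do not change the substance of the argument.
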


 We briefly elaborate on the continuous and mixed cases of Theorem \ref{thm:trichotomyhyp_intro}. Homogeneous negatively curved manifolds were classified by Heintze as a particular family of connected solvable Lie groups equipped with left-invariant metrics \cite{heintze74homogeneous}.   Caprace--Cornulier--Monod--Tessera    define a class of $\CAT(-1)$ spaces known as \emph{millefeuille spaces}, which are a certain sort of warped product of a tree with a homogeneous negatively curved manifold \cite{caprace2015amenable,cornulier2018quasi}. A millefeuille space is said to be \emph{pure} if it is neither  a tree nor a homogeneous negatively curved manifold. Such spaces exhibit a mixture of continuous behaviour coming from the transitive action on the  manifold, and discrete behaviour coming from the action on the tree with discrete orbits.

A quasi-action $G\qa X$ on a proper hyperbolic space $X$ naturally extends to an action of $G$  on the Gromov boundary $\partial X$. Using work of Caprace--Cornulier--Monod--Tessera \cite{caprace2015amenable}, we have a complete understanding of the case where $G$ fixes a point of $\partial X$, which can occur only when $X$  is quasi-isometric to a millefeuille space (see Corollary \ref{cor:trichot_hyp_fixpt}).  Under the additional hypothesis that $G$ does not fix a point of $\partial X$, we can sharpen the conclusion of  Theorem \ref{thm:trichotomyhyp_intro}:
 
 \newcommand{\dichothyp}{Let $X$ be a proper non-elementary  hyperbolic metric space. If $G\qa X$ is a cobounded quasi-action that does not fix a point of $\partial X$, then one of the following holds:
 	\begin{enumerate}
 		\item \textbf{(connected case)} $G\qa X$ is quasi-conjugate to a cocompact isometric action on a rank one symmetric space of non-compact type.
 		\item \textbf{(totally-disconnected case)} $G\qa X$  is discretisable. 
 \end{enumerate}	}
 
 \begin{corAlph}[Corollary \ref{cor:dichothyp_main}]\label{cor:dichothyp_intro}
 	\dichothyp
\end{corAlph}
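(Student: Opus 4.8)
The plan is to derive the corollary from the trichotomy of Theorem~\ref{thm:trichotomyhyp_intro} by showing that the hypothesis ``$G$ does not fix a point of $\partial X$'' eliminates the mixed case and forces the manifold in the connected case to be symmetric. The first ingredient is a functoriality observation: if $G\qa X$ is quasi-conjugate to a cocompact isometric action $G\curvearrowright M$ via a coarsely $G$-equivariant quasi-isometry $q\colon X\to M$, then $q$ induces a homeomorphism $\partial q\colon\partial X\to\partial M$ which is genuinely $G$-equivariant (bounded errors are invisible at infinity), intertwining the boundary action of $G$ on $\partial X$ coming from the quasi-action with the boundary action of $G$ on $\partial M$ coming from the isometric action. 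Hence $G$ fixes a point of $\partial X$ if and only if the image of $G$ in $\Isom(M)$ fixes a point of $\partial M$; in particular $G$ fixes a point of $\partial X$ whenever $\Isom(M)$ itself does.

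The second ingredient records the relevant structure of the model spaces. I would use the fact that an amenable group acting cocompactly by isometries on a proper non-elementary hyperbolic space fixes a (unique) point at infinity: cocompactness makes the limit set all of $\partial M$, hence infinite, ruling out the bounded, horocyclic and lineal types, while amenability rules out actions of general type (which contain non-abelian free subgroups by ping-pong on two independent loxodromics), leaving only the focal type. Combined with Heintze's classification of homogeneous simply-connected negatively curved manifolds, this gives that such a manifold $M$ of dimension at least two is \emph{either} a rank one symmetric space of non-compact type \emph{or} a non-symmetric manifold whose isometry group is amenable and hence fixes a point of $\partial M$. Likewise a pure millefeuille space is $\CAT(-1)$ and, being assembled from the (solvable) isometries of a Heintze group and the end-stabiliser of a regular tree, has amenable isometry group, so $\Isom(\mathcal{M})$ fixes a point of $\partial\mathcal{M}$; alternatively one quotes directly that the construction of \cite{caprace2015amenable,cornulier2018quasi} equips a millefeuille space with a canonical, hence isometry-invariant, point at infinity.

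Putting these together: assume $G\qa X$ is cobounded and does not fix a point of $\partial X$, and apply Theorem~\ref{thm:trichotomyhyp_intro}. In the mixed case $G\qa X$ would be quasi-conjugate to a cocompact isometric action on a pure millefeuille space $\mathcal{M}$, whose isometry group fixes a point of $\partial\mathcal{M}$; by the first step $G$ would then fix a point of $\partial X$, a contradiction. In the connected case $G\qa X$ is quasi-conjugate to a cocompact isometric action on a homogeneous simply-connected negatively curved manifold $M$ of dimension at least two; if $M$ were not symmetric, $\Isom(M)$ would fix a point of $\partial M$ and again $G$ would fix a point of $\partial X$, so $M$ must be a rank one symmetric space of non-compact type. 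Hence only the (symmetric) connected case and the totally-disconnected case survive, which is precisely the claimed dichotomy.

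The substantive input — citation rather than new argument — is the second step: that the isometry groups of non-symmetric Heintze manifolds and of pure millefeuille spaces fix a point at infinity. I would make this precise by combining Heintze's description of the isometry group of a negatively curved homogeneous manifold with the classification of cocompact amenable actions on non-elementary hyperbolic spaces developed by Caprace--Cornulier--Monod--Tessera, together with the explicit construction of millefeuille spaces. The only routine checks are the $G$-equivariance of the induced boundary homeomorphism and the elementary remark that subgroups of amenable groups are amenable, so that the image of $G$ in $\Isom(M)$ inherits amenability when needed.
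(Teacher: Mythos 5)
Your argument is correct, but it follows a genuinely different route from the paper. You black-box the trichotomy (Theorem \ref{thm:trichotomyhyp_main}) and then eliminate the mixed case and the non-symmetric connected case by citing the fact that the full isometry group of a non-symmetric homogeneous negatively curved manifold, and of a pure millefeuille space, fixes a point at infinity, transporting that fixed point back to $\partial X$ via the $G$-equivariant boundary homeomorphism induced by the quasi-conjugacy. The paper instead does not deduce the corollary from the trichotomy at all: it returns to the topological completion $\rho\colon G\to\hat G$, uses density of $\rho(G)$ and continuity of the $\hat G$-action on $\partial\hat G$ to show that $G$ fixes a point of $\partial X$ if and only if $\hat G$ does, hence (by Lemma \ref{lem:amen_focal_char}) if and only if $\hat G$ is amenable, and then in the non-amenable case applies the Caprace--Cornulier--Monod--Tessera structure theorem for non-amenable hyperbolic locally compact groups (Theorem \ref{thm:hypisom_general}) directly, with Corollary \ref{cor:trichot_hyp_fixpt} falling out of the same dichotomy via Theorem \ref{thm:hypisom_amen}. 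Your route is more modular but imports extra facts about the model spaces; these are legitimate (the paper itself asserts and uses them in the proof of Theorem \ref{thm:hyp_productmain}), but they should be cited to Heintze and to the millefeuille construction of Caprace--Cornulier--Monod--Tessera/Cornulier rather than derived on the fly. The paper's route avoids analysing $\Isom$ of the model spaces altogether and treats both corollaries uniformly through amenability of $\hat G$.

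One caveat on your sketch of the second ingredient: the ping-pong argument that ``amenability rules out general type'' is not quite enough as stated for topological groups, since an amenable locally compact group can contain non-abelian free subgroups abstractly (e.g.\ compact groups do); you need the free subgroup produced by ping-pong to be discrete (which metric properness of the action does give), or, more cleanly, simply quote Lemma \ref{lem:amen_focal_char} applied to the closed cocompact isometry group in question. Since you indicate you would in any case cite the Caprace--Cornulier--Monod--Tessera classification for this step, this is a presentational point rather than a gap.
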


An iconic theorem of Mosher--Sageev--Whyte  asserts  one can quasi-conjugate a cobounded  quasi-action on a finite valance bushy tree to an isometric action on such a tree \cite[Theorem 1]{mosher2003quasi}.  
Mosher--Sageev--Whyte's proof proceeds in two stages. In the first stage, which is the heart of their argument,  they use \emph{quasi-edges} to quasi-conjugate  the quasi-action to an isometric   action on a locally finite graph. In other words, they show precisely that cobounded quasi-actions on finite valance bushy trees are discretisable.  For the second stage, which is more classical,  they add 2-cells to this graph  and then apply Dunwoody's tracks technology   to obtain an action on a tree \cite{dunwoody1985accessibility}. 
Theorem \ref{thm:trichotomyhyp_intro} thus provides a far-reaching generalisation  of the crucial first step of Mosher--Sageev--Whyte's argument.

 Discretisability is not a phenomenon particular to groups exhibiting coarse negative curvature, and can often be deduced via  group cohomology. We will not pursue this line of investigation in this article, but  state a result, to appear in future work, that further illustrates the ubiquity of discretisable groups. A  \emph{generalised Baumslag--Solitar  group} is the fundamental group of a finite graph of groups in which every vertex and edge group is infinite cyclic, and the associated Bass--Serre tree is infinite-ended.
 
 \begin{thmAlph}[\cite{margolis2022discretisableII}]\label{thm:trichcodim2}
 	Let  $\Gamma$ be a  finitely presented group  of cohomological dimension two. Then exactly one of the following holds:
 	\begin{enumerate}
 		\item \textbf{(connected case)} $\Gamma$ is the fundamental group of a closed surface of non-positive Euler characteristic.
 		\item \textbf{(mixed case)} $\Gamma$ is a generalised Baumslag--Solitar group.
 		\item \textbf{(totally-disconnected case)} every cobounded quasi-action $G\qa \Gamma$ is discretisable.
 	\end{enumerate}
 \end{thmAlph}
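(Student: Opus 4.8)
The three cases are pairwise incompatible --- a closed surface group has Euler characteristic $\le 0$ and, when this is negative, is hyperbolic, whereas GBS groups have Euler characteristic $0$ and are never hyperbolic, while the torus and Klein bottle groups are not GBS because the Bass--Serre tree of any cyclic graph-of-groups presentation is a line rather than infinite-ended --- and both closed surface groups and GBS groups admit cobounded quasi-actions that are not discretisable (pulled back from non-discrete homomorphisms into $\PSL_2(\bbR)$ or $\Isom(\bbR^2)$ in the first case, and from the warped $\bbR$-fibre geometry of treebolic/millefeuille-type spaces, whose topological completions have a non-trivial connected part, in the second). So the substance of the theorem is the following: if $\Gamma$ is finitely presented with $\cd\Gamma=2$ and is neither a closed surface group nor a GBS group, then every cobounded quasi-action $G\qa\Gamma$ is discretisable.

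The plan is to run everything through topological completions. By Proposition~\ref{prop:disc_equiv_intro} it suffices to prove that the topological completion $(\hat G,\rho)$ of $G\qa\Gamma$ exists and is compact-by-(totally disconnected). For existence, the first task is to promote ``finitely presented of cohomological dimension two'' to the tameness hypothesis of Section~\ref{sec:tame}: from a finite presentation together with a control on the coarse second homology one builds a locally finite, coarsely aspherical $2$-complex on which $\Gamma$ acts geometrically, which is tame, and then the tame-space machinery supplies $\hat G$. This is precisely the place where $\cd\Gamma=2$ enters --- a $2$-dimensional coarsely aspherical model is what makes the construction of a topological completion go through.

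It then remains to show that the identity component $\hat G^{\circ}$ is compact; suppose not. By Gleason--Yamabe, modulo a maximal compact normal subgroup $\hat G^{\circ}$ has a non-trivial connected Lie group quotient $L$; and since $\hat G$ acts geometrically on a space quasi-isometric to $\Gamma$ with $\cd\Gamma=2$, a dimension count --- comparing a cohomological dimension for the locally compact group $\hat G$ with that of $\Gamma$ --- forces $L$, up to covers and compact factors, to be one of $\bbR$, $\bbR^{2}$, the affine group $\bbR\rtimes\bbR$, or $\PSL_2(\bbR)$. If $\hat G$ has no essential totally disconnected part, then $L\neq\bbR$ (since $\Gamma$, having $\cd\Gamma=2$, is not quasi-isometric to $\bbR$), so $\hat G$, hence $\Gamma$, is quasi-isometric to $\bbR^{2}$ or $\bbH^{2}$; quasi-isometric rigidity of $\bbR^{2}$ (Gromov) and of $\bbH^{2}$ (Tukia, Gabai, Casson--Jungreis), with torsion-freeness and $\cd\Gamma=2$, then make $\Gamma$ a closed surface group, a contradiction. (That a genuine totally disconnected part cannot coexist with $L\in\{\bbR^2,\PSL_2(\bbR)\}$ is itself a dimension argument: a warped product with $\bbR^2$-fibres, or with full $\PSL_2(\bbR)$-symmetry over a bushy tree, has cohomological dimension $3$.) In the remaining case $L\in\{\bbR,\bbR\rtimes\bbR\}$ and $\hat G$ carries a genuine totally disconnected part: $\hat G$ then has a closed normal subgroup that is an extension of a compact group by $\bbR$, and quotienting by it leaves a totally disconnected group acting cocompactly on a space quasi-isometric to the $\bbR$-collapse of $\Gamma$, which has cohomological dimension one and so is quasi-isometric to a tree. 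Unwinding this, using the commensurated-subgroup techniques that pervade this paper, one extracts a commensurated infinite cyclic subgroup $\bbZ\alnorm\Gamma$ whose quotient group is virtually free; a torsion-free group that is (commensurated $\bbZ$)-by-(virtually free) is the fundamental group of a finite graph of infinite cyclic groups, and coboundedness makes the Bass--Serre tree infinite-ended, so $\Gamma$ is a GBS group --- again a contradiction. Hence $\hat G^{\circ}$ is compact, and $G\qa\Gamma$ is discretisable.

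I expect the main obstacle to be the existence step together with the dimension bound for $\hat G$: turning ``$\cd\Gamma=2$'' into a workable tameness statement, and setting up and using a notion of cohomological dimension for the locally compact group $\hat G$ in order to pin down its connected part. The $L\in\{\bbR,\bbR\rtimes\bbR\}$ case will also require genuine work --- promoting the commensurated $\bbZ$ to an honest graph-of-cyclic-groups decomposition of $\Gamma$ and verifying that its Bass--Serre tree is infinite-ended --- though this should be routine given the JSJ-type and commensurated-subgroup machinery already at hand.
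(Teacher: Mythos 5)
You should first be aware that this paper contains no proof of Theorem \ref{thm:trichcodim2} at all: it is stated as a result of the companion article \cite{margolis2022discretisableII}, with only the remark that discretisability in this setting ``can often be deduced via group cohomology''. So there is no in-paper argument to compare your route against; your proposal has to stand on its own, and as written it has genuine gaps rather than being a proof with a different strategy.

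The most serious gap is your existence step. You propose to obtain the topological completion of an arbitrary cobounded quasi-action $G\qa\Gamma$ by showing $\Gamma$ is tame, on the grounds that a locally finite, coarsely aspherical $2$-complex ``is tame''. That inference is a non sequitur: tameness (Definition \ref{defn:tame}) demands that every self-quasi-isometry coarsely equivalent to the identity be \emph{uniformly} close to it, a strong rigidity property that local finiteness and coarse asphericity do not provide; in this paper tameness is only verified when the Morse boundary has at least three points (Proposition \ref{prop:morsebdry_tame}), and establishing a tameness-type statement precisely for the groups in case (3) is essentially the theorem itself, so the step begs the question --- the paper's own hint is that the real argument is cohomological, not via tameness. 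Second, your dimension count appeals to ``a cohomological dimension for the locally compact group $\hat G$'' and a classification of its connected quotients as $\bbR$, $\bbR^2$, $\bbR\rtimes\bbR$ or $\PSL_2(\bbR)$; no such invariant or comparison theorem is set up here, and making $\cd\Gamma=2$ bound the connected part of a completion that only acts on a space quasi-isometric to $\Gamma$ is exactly where the work lies. Third, in the mixed case you slide from a closed normal (compact-by-$\bbR$) subgroup of $\hat G$ to a commensurated $\bbZ$ in $\Gamma$; but $\hat G$ is the completion of $G$, not of $\Gamma$, and $\Gamma$ enters only up to quasi-isometry, so what you actually get is that $\Gamma$ is quasi-isometric to a line-by-(bushy tree) geometry, and upgrading this to ``$\Gamma$ contains a commensurated infinite cyclic subgroup with virtually free quotient, hence is a generalised Baumslag--Solitar group'' is a quasi-isometric rigidity theorem in the spirit of \cite{mosher2003quasi}, not a routine unwinding. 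Finally, for exclusivity your lattice argument via Corollary \ref{cor:unif_lattice_not_disc} does handle surface groups, but for an arbitrary GBS group --- say $BS(2,3)$, which is not residually finite and is not obviously a uniform lattice in any locally compact group with non-compact connected part --- the existence of a non-discretisable cobounded quasi-action is asserted via ``warped millefeuille-type geometry'' without any construction, so even the pairwise incompatibility of the three cases is not established by your sketch.
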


A natural problem in geometric group theory, raised by the author in \cite{margolisxer2021geometry}, is to find examples of groups $G$ and subgroups $H$ such that the following question has a positive solution:
\begin{quesIntro}\label{ques:comm}
Suppose a finitely generated group $G$ contains an infinite normal (or commensurated) subgroup $H$. If $G'$ is a finitely generated group quasi-isometric to $G$, does $G'$ contain a normal (or commensurated) subgroup $H'$ such that $H'$ is quasi-isometric to $H$ and $G/H$ is quasi-isometric to $G'/H'$? 
\end{quesIntro}
Normal and commensurated subgroups give rise to a \emph{coarse bundle} structure on the group. A key step in finding positive solutions to Question  \ref{ques:comm} is showing that fibres of this coarse bundle structure are preserved by quasi-isometries.
Positive answers to special cases of Question \ref{ques:comm}  formed a key step in several quasi-isometric rigidity results, including work of Farb--Mosher, Mosher--Sageev--Whyte, Eskin--Fisher--Whyte, Peng and the author \cite{farbmosher1998bs1,farbmosher1999bs2,farbmosher2000abelianbycyclic,farbmosher2002surfacebyfree,mosher2003quasi,whyte2001baumslag,whyte2010coarse,eskinfisherwhyte12coarse,eskinfisherwhyte2013coarse,peng2011coarse,peng2011coarse2,margolisxer2021geometry}. 
Although Question \ref{ques:comm}  is of course false in full generality, discretisability provides a framework to prove there are many further instances where this question has a positive answer. In particular, we use discretisability to prove Theorems  \ref{thm:qialcent_intro} and \ref{thm:hypprod_comm_intro}, providing a positive solution to Question \ref{ques:comm} for groups quasi-isometric to central extensions of hyperbolic groups and products of hyperbolic groups respectively.

\subsection*{Groups quasi-isometric to central extensions of hyperbolic groups}
By a \emph{central extension of a hyperbolic group} we mean a finitely generated group $G$ that is a central extension of the form $H$-by-$Q$, where $H$ is a  finitely generated abelian central subgroup of $G$, and $Q=G/H$ is a non-elementary hyperbolic group. Without loss of generality, we can always assume   $H$ is free abelian. These groups are a well-studied class of groups that were shown to be biautomatic by Neumann--Reeves \cite{neumannreeves97central}. 

The ``internal'' quasi-isometry classification of central extensions of hyperbolic groups is already well-understood. Indeed,  work of Neumann--Reeves  \cite{neumannreeves97central} shows a central extension of a hyperbolic group  of the form $\bbZ^n$-by-$Q$ is quasi-isometric to $\bbR^n\times Q$. Combined with a result of   Kapovich--Kleiner--Leeb \cite{kapovich1998derham}, this implies  that if $Q$ and $Q'$ are non-elementary hyperbolic groups, then the central extensions  $\bbZ^n$-by-$Q$ and $\bbZ^m$-by-$Q'$ are quasi-isometric if and only if $n=m$ and $Q$ is quasi-isometric to $Q'$. We thus investigate the  class of  finitely generated  groups quasi-isometric to central extensions of hyperbolic groups, or equivalently, the class of finitely generated groups quasi-isometric to $\bbR^n\times Q$.

We first focus on  $\bbZ$-by-hyperbolic groups, which --- up to passing to an index two subgroup if necessary --- are central extensions. The  class is a rich source of groups  that are quasi-isometric but have different algebraic properties. For instance, Gersten showed that uniform lattices in $\bbR\times \bbH^2$ and $\widetilde{\PSL}(2,\bbR)$ are quasi-isometric but not commensurable, demonstrating that two of Thurston's eight model geometries are quasi-isometric \cite{gersten92bounded}. The class of  $\bbZ$-by-hyperbolic groups also contains  two quasi-isometric groups $G$ and $G'$  such that $G$ has Kazhdan's property (T) but $G'$ does not, thus demonstrating property (T) is not a quasi-isometry invariant \cite[\S 19.9]{drutu2018geometric}. 

Theorem \ref{thm:z-by-hyp} says the class of $\bbZ$-by-hyperbolic groups is  quasi-isometrically rigid, giving many instances in which Question \ref{ques:comm} has a positive solution.
Somewhat surprisingly, the technology developed in this article ensures the proof of Theorem \ref{thm:z-by-hyp}  is actually easier in the generic case when the  hyperbolic quotient is \textbf{not} virtually isomorphic to a  uniform lattice in a rank one symmetric space. This is because in the discretisable case, after applying \cite{kapovich1998derham}, one gets a two-ended commensurated subgroup for free via  Propositions \ref{prop:disc_equiv_intro}, \ref{prop:relms_intro} and Corollary \ref{thm:trichotomyhyp_intro}. Such a subgroup can then be shown to be commensurable to a  normal subgroup. For the case in which the quotient is not discretisable, one has to work much harder to construct a two-ended normal subgroup via a growth estimate \cite[\S 5--6]{kleinerleeb2001symmetric}.

For higher rank free abelian central subgroups, we obtain an analogous result if we  assume residual finiteness. We recall that a group is \emph{residually finite} if the intersection of all finite index subgroups is trivial.
\begin{thmAlph}[Theorem \ref{thm:resfinite_cent}]\label{thm:qicent_resfin_intro}
A  finitely generated residually finite group that is quasi-isometric to a central extension of a hyperbolic group of the form $\bbZ^n$-by-$Q$ is virtually a central extension of a hyperbolic group $\bbZ^n$-by-$Q'$, where $Q$ is quasi-isometric to $Q'$.
\end{thmAlph}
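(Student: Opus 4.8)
The plan is to upgrade the quasi-isometry hypothesis to a cobounded quasi-action on $Q$ and then follow the blueprint indicated after Proposition \ref{prop:relms_intro}. By the theorem of Neumann--Reeves \cite{neumannreeves97central}, a central extension $\bbZ^n$-by-$Q$ is quasi-isometric to $\bbR^n\times Q$, so $G'$ is quasi-isometric to $\bbR^n\times Q$. Since $Q$ is a non-elementary hyperbolic group, it does not split as a non-trivial product and is not quasi-isometric to $\bbR$, so by the theorem of Kapovich--Kleiner--Leeb \cite{kapovich1998derham} on quasi-isometries of products, $G'$ admits a cobounded quasi-action $G'\qa Q$; the same result provides a compatible cobounded quasi-action $G'\qa\bbR^n$ whose quasi-orbits are, coarsely, the fibres of the coarse projection $G'\to Q$.

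Next I would feed $G'\qa Q$ into the trichotomy of Theorem \ref{thm:trichotomyhyp_intro}. In the connected case $Q$ is quasi-isometric to a homogeneous negatively curved manifold; since $Q$ is a non-amenable finitely generated group, this manifold must be a rank one symmetric space of non-compact type (as in the deduction of Corollary \ref{cor:dichothyp_intro}), and then the theorem follows from the work of Kapovich--Leeb, Rieffel and Kleiner--Leeb on finitely generated groups quasi-isometric to $\bbR^n\times Y$ with $Y$ such a space \cite{kapovichleeb97qidecomp,rieffel01,kleinerleeb2001symmetric}. The mixed case is impossible, because the Gromov boundary of a pure millefeuille space has exactly one global cut point, whereas the boundary of a non-elementary hyperbolic group never does --- it has none when the group is one-ended (Swarup), and at least two when the group has infinitely many ends. (The non-amenability of $Q$ used here may be verified through the topological completion of $G'\qa Q$, which exists by Theorem \ref{thm:topcomp_acyl_intro}.) We may therefore assume $G'\qa Q$ is discretisable.

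By Propositions \ref{prop:disc_equiv_intro} and \ref{prop:relms_intro}, the discretisable quasi-action $G'\qa Q$ has a coarse stabiliser $H$, which is a commensurated subgroup of $G'$, and the quotient space $G'/H$ is quasi-isometric to $Q$. By construction $H$ is coarsely equal to a fibre of the coarse projection $G'\to Q$, and through the compatible quasi-action $G'\qa\bbR^n$ this fibre is quasi-isometric to $\bbR^n$; since it is moreover a quasi-geodesic subspace of $G'$, a routine argument shows that $H$ is finitely generated, quasi-isometrically embedded in $G'$, and quasi-isometric to $\bbR^n$. Hence $H$ has polynomial growth of degree $n$, and so is virtually $\bbZ^n$ by Gromov's theorem.

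The hard part will be to promote the commensurated virtually-$\bbZ^n$ subgroup $H$ to a subgroup normal in a finite-index subgroup of $G'$; this is the step where residual finiteness is indispensable, and I expect it to be the main obstacle. Since $H$ is abelian, the commensuration data amounts to a homomorphism $c\colon G'\to\GL_n(\bbQ)$ whose image preserves the commensurability class of the lattice $H$ in $H\otimes\bbQ$, and one wants a finite-index subgroup of $c(G')$ to stabilise a genuine lattice. Undistortedness of $H$ in $G'$ forces every element of $c(G')$ to be semisimple with eigenvalues of modulus one; residual finiteness of $G'$ is then used to control the action of $G'$ on the completions $H\otimes\bbZ_p$ and to rule out the remaining obstruction --- an infinite-order element of $\GL_n(\bbQ)$ stabilising no lattice --- thereby yielding a finite-index subgroup $G_0\leq G'$ and a normal subgroup $N\trianglelefteq G_0$ commensurable to $H$. (This should be isolated as a lemma on commensurated virtually polycyclic subgroups of residually finite groups.) After a further finite-index passage we may take $N\cong\bbZ^n$; the quotient $G_0/N$ is quasi-isometric to $G'/H$, hence to $Q$, so it is a non-elementary hyperbolic group quasi-isometric to $Q$. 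Finally, the conjugation action $G_0\to\GL_n(\bbZ)$ on $N$ is read off from the quasi-action $G'\qa\bbR^n$, which one checks is quasi-conjugate to an action by affine isometries whose linear parts lie in a compact group; its image is therefore a discrete subgroup of a compact group and hence finite, so the centraliser of $N$ has finite index in $G_0$ and is a central extension $\bbZ^n$-by-$Q'$ with $Q'$ a non-elementary hyperbolic group quasi-isometric to $Q$. Thus $G'$ is virtually $\bbZ^n$-by-$Q'$, as required.
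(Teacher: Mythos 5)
Your reduction to the discretisable case has a genuine gap at the connected case of the trichotomy. The assertion that ``since $Q$ is a non-amenable finitely generated group, this manifold must be a rank one symmetric space'' is not justified: the deduction of Corollary \ref{cor:dichothyp_intro} uses the hypothesis that the quasi-action $G'\qa Q$ fixes no point of $\partial Q$ (equivalently, that its topological completion is non-amenable), not non-amenability of $Q$ itself. An amenable (focal) completion is perfectly compatible with the underlying space being quasi-isometric to a non-amenable group --- compare the stabiliser of an end of a regular tree, an amenable closed cocompact subgroup of the automorphism group of a space quasi-isometric to $F_2$ --- and whether a finitely generated group can be quasi-isometric to a negatively curved homogeneous manifold other than a symmetric space is precisely Cornulier's conjecture, explicitly flagged as open after Corollary \ref{cor:dischyp_qitohom}. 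The paper avoids this open problem by exploiting the $\bbR^n$-factor: Proposition \ref{prop:qitocenthyp} shows the induced quasi-action fixes no point of $\partial Q$ via a growth estimate (Lemma \ref{lem:growth_homspace}, adapted from Kleiner--Leeb): the set of elements of $G'$ coarsely preserving an $\bbR^n$-fibre has polynomial growth, whereas near-identity elements of a cocompact isometry group of a negatively curved homogeneous space fixing a boundary point grow super-polynomially; the millefeuille case is excluded by Proposition \ref{prop:millefeuille} (your ``exactly one global cut point'' claim is an unproven substitute for this), the tree case by Lemma \ref{lem:tree_fixedpt}, and a further Kleiner--Leeb growth argument rules out a non-discrete completion in the symmetric case. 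None of this machinery appears in your proposal, so the reduction to a discretisable quasi-action (and hence everything after it) is unsupported.

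The residual finiteness step is also only a sketch, and the lemma you hope for is delicate: residual finiteness alone does not make a commensurated $\bbZ^n$ virtually normal (take $\langle a\rangle\leq \bbZ[1/p]\rtimes\bbZ$), and while your modulus-one condition excludes that example, you give no actual mechanism for ruling out the remaining obstruction of an infinite-order element of $\Orth_n(\bbQ)$ --- exactly the Leary--Minasyan phenomenon --- beyond an appeal to ``the completions $H\otimes\bbZ_p$.'' The paper's argument is different and complete: by \cite[Corollary 6]{caprace_kropholler_reid_wesolek_2020} the profinite closure $\overline H$ contains a finite-index subgroup $N$ normal in $G'$; residual finiteness makes the profinite topology Hausdorff, so $\overline H$, and hence $N$, is abelian; and because the $G'$-action on the hyperbolic quotient space $G'/H$ fixes no boundary point (again Proposition \ref{prop:qitocenthyp}), every normal abelian subgroup is bounded in $G'/H$, forcing $N$ to be commensurable with $H$. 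Proposition \ref{prop:qitocent_main} then gives finiteness of the modular image (it is discrete and conjugate into $\Orth_n(\bbR)$ by Theorem \ref{thm:qitocent_main}), hence a finite-index subgroup centralising a finite-index subgroup of $H$ and the virtually central extension. Your closing step (``one checks'' the linear parts lie in a compact group) corresponds to the paper's bounded fibre-distortion argument and is fine in outline, but the two gaps above are genuine and the second is the crux you yourself identify without resolving.
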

Leary--Minasyan recently described a family of   CAT(0) groups that are not biautomatic \cite{learyminasyan2021commensurating}. Such groups are quasi-isometric to $\bbZ^n\times F_2$,   but are not virtually central extensions of hyperbolic groups and are not residually finite. These groups demonstrate that  the residual finiteness hypothesis in Theorem \ref{thm:qicent_resfin_intro} is necessary.  The groups constructed in \cite{learyminasyan2021commensurating} are  HNN extensions of $\bbZ^n$ such that the stable letter acts on a finite index subgroup of $\bbZ^n$ as  an orthogonal matrix in $\Orth_n(\bbQ)$. Theorem \ref{thm:qialcent_intro} demonstrates that similar behaviour occurs for all groups quasi-isometric to central extensions of hyperbolic groups, thus providing a complete algebraic characterisation of such groups.

A subgroup $H\leq G$ is said to be \emph{weakly separable} if for every $g\in G\setminus H$, there exists a homomorphism $\phi:G\rightarrow Q$ such that $\phi(H)$ is finite and $\phi(g)\notin \phi(H)$;  we note  $Q$ is not assumed to be finite. A separable subgroup is necessarily weakly separable. The notion of a  weakly separable subgroup is due to Caprace--Kropholler--Reid--Wesolek \cite{caprace_kropholler_reid_wesolek_2020}.
If $H\vartriangleleft G$ is a normal subgroup, the action of $G$ by conjugation on $H$ induces a homomorphism $G\rightarrow \Aut(H)$. More generally, if $H\alnorm G$ is a commensurated subgroup, then conjugation  induces the \emph{modular homomorphism}  $G\rightarrow \Comm(H)$, where $\Comm(H)$ is the abstract commensurator  of $H$. The abstract commensurator of $\bbZ^n$ is $\GL_n(\bbQ)$.
\begin{thmAlph}[Theorem \ref{thm:qitocent_main} and Proposition \ref{prop:qitocent_main}]\label{thm:qialcent_intro}
	Let $Q$ be a non-elementary locally finite vertex-transitive   hyperbolic graph and let  $n\geq 1$. A finitely generated group $G$ is quasi-isometric to $\bbR^n\times Q$ if and only if both the following hold:
	\begin{enumerate}
		\item there is a subgroup $\bbZ^n\cong H\alnorm G$ such that the quotient space $G/H$ is quasi-isometric to $Q$;
		\item the image of the modular homomorphism $G\rightarrow \Comm(H)\cong \GL_n(\bbQ)$ is conjugate to a subgroup of $\Orth_n(\bbR)$.
	\end{enumerate} 
Moreover, the following are equivalent:
\begin{itemize}
	\item $H$  is weakly separable;
	\item $H$ is commensurable to a normal subgroup of $G$;
	\item a finite index subgroup of $G$ centralises a finite index subgroup of $H$;
	\item $G$ is biautomatic.
\end{itemize}
\end{thmAlph}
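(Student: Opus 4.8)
The plan is to prove the characterisation by a quasi-action/discretisability argument and then reduce the four ``Moreover'' equivalences to a single statement, namely the finiteness of the modular image $\delta(G)$, where $\delta\colon G\to\Comm(H)=\GL_n(\bbQ)$.

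\emph{Forward direction of the characterisation.} Suppose $G$ is quasi-isometric to $\bbR^n\times Q$. Since $Q$ is non-elementary hyperbolic, the de Rham-type decomposition of Kapovich--Kleiner--Leeb \cite{kapovich1998derham} yields a cobounded quasi-action $G\qa Q$ (coboundedness comes from the geometric action on $\bbR^n\times Q$ and the projection to the $Q$-factor). Apply the trichotomy, Theorem \ref{thm:trichotomyhyp_intro}. In the totally-disconnected case the quasi-action is discretisable, so by Proposition \ref{prop:disc_equiv_intro} it has a coarse stabiliser $H_0$, and by Proposition \ref{prop:relms_intro} we get $H_0\alnorm G$, $G/H_0$ quasi-isometric to $Q$, and $G\qa Q$ quasi-conjugate to $G\curvearrowright G/H_0$. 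The subgroup $H_0$, with the subspace metric from a word metric on $G$, is the coarse fibre of $G$ over $G/H_0$, and under any quasi-isometry $G\to\bbR^n\times Q$ this matches the genuine fibre $\bbR^n$ of $\bbR^n\times Q\to Q$; hence $H_0$ is finitely generated and quasi-isometric to $\bbR^n$, so virtually $\bbZ^n$ by Gromov's polynomial growth theorem. Passing to a free abelian finite-index subgroup $H\cong\bbZ^n$ (still commensurated, with $G/H$ quasi-isometric to $Q$) gives item (1). In the remaining cases $Q$ is quasi-isometric to a rank one symmetric space (or a pure millefeuille space), the quasi-action has no coarse stabiliser, and one instead produces $\bbZ^n\cong H\alnorm G$ (in fact normal) from the $\bbR^n$-directions by growth-estimate arguments in the spirit of Kleiner--Leeb \cite[\S5--6]{kleinerleeb2001symmetric}; I expect this classical branch to be the most laborious part of the direction. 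For item (2): the fibre-matching shows $H$ is undistorted in $G$. For $g\in G$ and $h$ in the finite-index subgroup $H\cap g^{-1}Hg$ of $H$, $ghg^{-1}\in H$ is represented by the vector $\delta(g)\vec h$, and undistortion gives $\|\delta(g)\vec h\|\asymp|ghg^{-1}|_G\le 2|g|_G+|h|_G\asymp|g|_G+\|\vec h\|$; replacing $\vec h$ by $N\vec h$ and letting $N\to\infty$ kills the additive term, so $\|\delta(g)\vec h\|\preceq\|\vec h\|$ uniformly in $g$ on $H\cap g^{-1}Hg\supseteq[\bbZ^n:H\cap g^{-1}Hg]\cdot\bbZ^n$, forcing a uniform bound $\|\delta(g)^{\pm1}\|\le C$. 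Thus $\delta(G)$ is bounded in $\GL_n(\bbR)$, so $\overline{\delta(G)}$ is compact, hence conjugate into $\Orth_n(\bbR)$: item (2).

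\emph{Reverse direction.} Assume (1) and (2); by (2) choose coordinates with $\delta(G)\subseteq\Orth_n(\bbR)$, so $H$ sits as a lattice $\Lambda$ in a $\delta(G)$-invariant Euclidean $\bbR^n$. Form the associated bundle $Z=G\times_H\bbR^n$, realised as a proper geodesic complex fibred over a locally finite graph quasi-isometric to $G/H$: one Euclidean fibre $F_{gH}\cong\bbR^n$ per coset, glued over the edges of $G/H$ by the linear maps $\delta(s)^{\pm1}$, with $h\in H$ acting on $F_{gH}$ by translation by $\vec h\in\Lambda$. One checks $G$ acts on $Z$ geometrically (trivial stabilisers on lattice points; cocompactness since $H$ acts cocompactly on each fibre), so $G$ is quasi-isometric to $Z$; and since $\delta(G)\subseteq\Orth_n(\bbR)$ the transition maps of $Z\to G/H$ are isometries of $\bbR^n$, i.e.\ the bundle has compact structure group, hence is quasi-isometrically trivial over its base $G/H$, giving $Z$ quasi-isometric to $\bbR^n\times Q$. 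Establishing that last step --- quasi-isometric triviality of a Euclidean bundle with compact structure group --- is the point requiring genuine care and is, I expect, the main obstacle here; the Leary--Minasyan examples \cite{learyminasyan2021commensurating}, where the structure group is an infinite compact subgroup of $\Orth_n(\bbR)$ yet $G$ is still quasi-isometric to $\bbZ^n\times F_2$, show it must hold and signal the subtlety.

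\emph{The ``Moreover'' equivalences.} Everything pivots on: $\delta(G)$ is finite $\iff$ (b). If $H$ is commensurable to a normal $N\vartriangleleft G$, then $dN$ is characteristic in $N\cong\bbZ^n$, hence normal in $G$, contained in $H$ with finite index, and the image of $G\to\Aut(dN)=\GL_n(\bbZ)$ is commensurable with $\delta(G)$, so bounded by (2), hence finite; thus $\delta(G)$ is finite. Conversely, if $\delta(G)$ is finite then $\ker\delta$ has finite index, is finitely generated, and its generators each centralise a finite-index subgroup of $H$, so $\ker\delta$ centralises a finite-index $H_*\le H$; then $N:=\bigcap_{g\in G}gH_*g^{-1}$ (a finite intersection of subgroups commensurable with $H_*$, since $\ker\delta\le N_G(H_*)$) is normal in $G$ and commensurable with $H$. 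The same bookkeeping gives $\delta(G)$ finite $\iff$ (c), with $G_0=\ker\delta$, $H_0=H_*$; and (b) $\Rightarrow$ (a) because the normal $N\subseteq H$ of finite index makes $G\to G/N$ witness weak separability of $H$ ($\phi(H)=H/N$ is finite and $\phi(g)\in\phi(H)\iff g\in HN=H$). For (b)/(c) $\Rightarrow$ (d): a finite-index $G_0\le G$ has central $\bbZ^n\cong H_0$ with $G_0/H_0$ quasi-isometric to $Q$, hence hyperbolic, so $G_0$ is biautomatic by Neumann--Reeves \cite{neumannreeves97central}, and biautomaticity passes to the finite-index overgroup $G$. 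The remaining implications (a) $\Rightarrow$ (b) and (d) $\Rightarrow$ (b) are the crux: arguing contrapositively, if $\delta(G)$ is infinite then (a torsion subgroup of $\GL_n(\bbQ)$ being finite) some $\delta(g)$ has infinite order with eigenvalues on the unit circle, and then $\{g^k h g^{-k}\}$ for a suitable $h\in H$ generates a Baumslag--Solitar-like subgroup whose distortion obstructs both weak separability of $H$ and biautomaticity of $G$ --- precisely the mechanism isolated by Leary--Minasyan \cite{learyminasyan2021commensurating}. Making this step precise, and matching it to the weak-separability and biautomaticity hypotheses, is where the real work of the ``Moreover'' part lies.
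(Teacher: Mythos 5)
Your skeleton for the discretisable branch of the forward direction matches the paper, and your undistortion argument for item (2) (bounding the modular image via $\lVert\Delta(g)\vec h\rVert\preceq \lvert g\rvert+\lVert\vec h\rVert$ and letting $N\to\infty$) is a perfectly good substitute for the paper's fibre-distortion function. But the two places you yourself flag as ``the main obstacle'' and ``the real work'' are genuine gaps, and in both cases the mechanism you sketch is not the one that works. For the reverse direction, ``a Euclidean bundle with compact structure group over $G/H$ is quasi-isometrically trivial'' is false as a general principle: the integral Heisenberg group is a coarse bundle over $\bbZ^2$ with fibre $\bbZ$ and trivial (hence compact) structure group, yet it is not quasi-isometric to $\bbR\times\bbR^2$. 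The missing ingredient is a coarse Lipschitz section of $G\to G/H$, and producing one genuinely uses hyperbolicity of $Q$: the paper gets it from Mineyev's vanishing of $\ell_\infty$-cohomology $H^2_{(\infty)}$ for hyperbolic complexes (Theorem \ref{thm:mineyev_iso}, Lemma \ref{lem:lipsection}), and only then does ``section plus orthogonal modular image'' give quasi-isometric triviality (Proposition \ref{prop:qitrivial}). The Leary--Minasyan examples you invoke as evidence do not support your general claim --- their base is a free group, i.e.\ hyperbolic, exactly the situation Mineyev's theorem covers.

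For the ``Moreover'' part, your reduction to finiteness of the modular image correctly handles (b)$\Leftrightarrow$(c), (b)$\Rightarrow$(a) and (c)$\Rightarrow$(d), but the two hard implications (a)$\Rightarrow$(b) and (d)$\Rightarrow$(c) are left open, and the contrapositive mechanism you propose cannot work: by item (2) the modular image is conjugate into $\Orth_n(\bbR)$, so conjugation by any $g$ does not distort $H$ at all --- $\lvert g^khg^{-k}\rvert$ grows linearly --- and there is no Baumslag--Solitar-like distorted subgroup to obstruct weak separability or biautomaticity (indeed the Leary--Minasyan groups contain no such distortion either). The paper instead imports two external theorems: weakly separable finitely generated commensurated subgroups are virtually normal (Caprace--Kropholler--Reid--Wesolek \cite{caprace_kropholler_reid_wesolek_2020}, via Lemma \ref{lem:weaksep}) for (a)$\Rightarrow$(b), and Valiunas' theorem \cite{valiunas2021leary} that a biautomatic group virtually centralises any finitely generated commensurated abelian subgroup for (d)$\Rightarrow$(c). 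Finally, in the forward direction your ``classical branch'' is also deferred rather than proved, and the paper's route there is different and shorter: Proposition \ref{prop:qitocenthyp} shows the non-discretisable cases simply cannot occur when $G$ is quasi-isometric to $\bbR^n\times Q$, because the preimage of a ball in $Q$ is quasi-isometric to $\bbR^n$ and hence of polynomial growth, contradicting the superpolynomial growth in identity neighbourhoods forced by a non-discrete completion (Lemma \ref{lem:growth_homspace}, after Kleiner--Leeb \cite{kleinerleeb2001symmetric}), while the millefeuille case is excluded outright by Proposition \ref{prop:millefeuille}; running Kleiner--Leeb's construction of a normal $\bbZ^n$ instead, as you suggest, is unproved extra work and does not even apply in the millefeuille case.
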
 
\begin{rem*}
	It follows from Lemma \ref{lem:weaksepvssep} that weak separability of $H$ in Theorem \ref{thm:qialcent_intro}  is equivalent to separability of $H$ under the additional hypothesis that   all finitely generated  groups quasi-isometric to $Q$ are residually finite.  It is a well-known open question whether all hyperbolic groups are residually finite.
\end{rem*}
We briefly  outline the proof of Theorem \ref{thm:qialcent_intro}. If $G$ is a finitely generated group quasi-isometric to $\bbR^n\times Q$, then a result of Kapovich--Kleiner--Leeb implies the quasi-action of $G$ on $\bbR^n\times Q$  descends to a quasi-action of $G$ on $Q$. We show that this quasi-action is discretisable, hence has some coarse stabiliser $H$. We now apply the formalism of \emph{coarse bundles} as described in Section \ref{sec:cbundles}. In particular,  Proposition \ref{prop:quasi-conj_coarse bundle} can be applied to show the coarse bundle $p:G\to G/H$ is fibre-preserving quasi-isometric to $\bbR^n\times Q\to Q$. This implies $H$ is quasi-isometric to $\bbR^n$, hence has a finite index subgroup isomorphic to $\bbZ^n$. Properties  $(1)$ and $(2)$ readily follow.   
Conversely, if $(1)$ and $(2)$ hold, we can apply a result of Mineyev concerning $\ell_\infty$-cohomology to deduce the coarse bundle $p:G\to G/H$ has a coarse Lipschitz section \cite{mineyev00isoperimetric}. Together with  $(1)$ and $(2)$, this can be used to show  $G$ is quasi-isometric to  $H\times G/H$, hence to $\bbR^n\times Q$.

\subsection*{Groups quasi-isometric to products of hyperbolic spaces}
We characterise finitely generated groups quasi-isometric to the product of finitely many non-elementary hyperbolic spaces. A group acting isometrically on a product $\Pi_{i=1}^nY_i$ is said to   \emph{preserve the product structure} if every isometry splits as a product of isometries, possibly permuting the factors. The following theorem generalises results of Kleiner--Leeb \cite{kleinerleeb97rigidity,kleinerleeb09quasiactions} and Ahlin \cite{ahlin02producttrees}, who prove  special cases where the hyperbolic factors are quasi-isometric to symmetric spaces or trees.
  \begin{thmAlph}[Theorem \ref{thm:hyp_productmain}]\label{thm:hyp_productintro}
 	Let $\Gamma$ be a finitely generated group quasi-isometric to $\Pi_{i=1}^nX_i$, where each $X_i$ is a cocompact proper non-elementary hyperbolic metric space. Then $\Gamma$ acts geometrically on $\Pi_{i=1}^nY_i$, preserving the product structure, where each $Y_i$ is quasi-isometric to $X_i$ and is either a rank one symmetric space of non-compact type or a locally finite graph.
 \end{thmAlph}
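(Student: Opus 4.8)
The plan is to bootstrap from the single-factor trichotomy (Corollary \ref{cor:dichothyp_intro}) via the theory of quasi-actions on products. First I would replace $\Gamma$ by its left multiplication quasi-action on a Cayley graph, which by hypothesis is quasi-conjugate to a cobounded quasi-action $\Gamma \qa \Pi_{i=1}^n X_i$. Since each $X_i$ is a cocompact proper non-elementary hyperbolic space, the Morse boundary of each $X_i$ contains at least three points, and the product $\Pi_i X_i$ is a cocompact proper geodesic space whose relevant boundary structure is controlled; the key input here is a product-rigidity statement for quasi-isometries of products of non-elementary hyperbolic spaces (in the spirit of Kleiner--Leeb \cite{kleinerleeb09quasiactions}), namely that every self-quasi-isometry of $\Pi_{i=1}^n X_i$ coarsely preserves the product decomposition up to permutation of the factors. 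Using that the $X_i$ are pairwise distinguished (or grouping isometric factors together and allowing the permutation), one deduces that the quasi-action $\Gamma \qa \Pi_i X_i$ induces, after passing to a finite-index subgroup $\Gamma_0 \leq \Gamma$ that fixes each factor, coordinate quasi-actions $\Gamma_0 \qa X_i$ for each $i$, each of which is cobounded.

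Next I would apply Corollary \ref{cor:dichothyp_intro} to each coordinate quasi-action $\Gamma_0 \qa X_i$ — provided one first rules out the boundary-fixing case. A point fixed in $\partial X_i$ would force $X_i$ to be quasi-isometric to a millefeuille space, and one would need to argue this is incompatible with $\Gamma_0$ acting coboundedly on the product by amenability/growth considerations, or else absorb it; more carefully, I would invoke the full trichotomy (Theorem \ref{thm:trichotomyhyp_intro}) and then argue the mixed case does not arise in the product setting, so that each $\Gamma_0 \qa X_i$ is either quasi-conjugate to a cocompact isometric action on a rank one symmetric space of non-compact type (connected case) or is discretisable (totally-disconnected case). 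In the discretisable case, by Proposition \ref{prop:disc_equiv_intro} and Proposition \ref{prop:relms_intro} the quasi-action $\Gamma_0 \qa X_i$ is quasi-conjugate to an action on a connected locally finite graph $Y_i$ with $Y_i$ quasi-isometric to $X_i$; in the connected case set $Y_i$ to be the rank one symmetric space. Either way we obtain an isometric $\Gamma_0$-action on $Y_i$ quasi-conjugate to $\Gamma_0 \qa X_i$.

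It then remains to assemble these into a single geometric action of $\Gamma$ (not just $\Gamma_0$) on $\Pi_{i=1}^n Y_i$ preserving the product structure. I would take the diagonal isometric action $\Gamma_0 \curvearrowright \Pi_{i=1}^n Y_i$; since each factor action is quasi-conjugate to the corresponding coordinate quasi-action and these are simultaneously quasi-conjugated (the product of the coarse equivariant quasi-isometries is a coarse equivariant quasi-isometry $\Pi_i Y_i \to \Pi_i X_i$), the diagonal action is quasi-conjugate to $\Gamma_0 \qa \Pi_i X_i$, hence cobounded and proper, hence geometric since $\Pi_i Y_i$ is proper. Finally, to upgrade from $\Gamma_0$ to $\Gamma$: the finite-index subgroup $\Gamma_0$ was obtained by intersecting the kernels of the permutation action on the set of factors; a standard induction/coinduction construction lets one extend a geometric product-preserving action of a finite-index subgroup to one of the whole group at the cost of permuting the (now possibly repeated, grouped) factors, which is exactly what the statement allows. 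The main obstacle I anticipate is the product-rigidity step — establishing that quasi-isometries of $\Pi_{i=1}^n X_i$ respect the product structure when the factors are arbitrary cocompact proper non-elementary hyperbolic spaces rather than symmetric spaces or trees — together with ruling out the boundary-fixing and mixed cases; these are where the hyperbolic-space input and the results of Section \ref{sec:cbundles} and Corollary \ref{cor:trichot_hyp_fixpt} must be combined carefully.
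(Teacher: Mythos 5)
Your outline follows the same architecture as the paper — Kapovich--Kleiner--Leeb product rigidity to get coordinate quasi-actions of a finite-index subgroup, the trichotomy applied factorwise, and Kleiner--Leeb's induction (Theorem \ref{thm:kl_qaction}, packaged in the paper as Proposition \ref{prop:qaction_prod}) to reassemble an action of all of $\Gamma$ on the product — but the step you yourself flag as the main obstacle is a genuine gap, and it is the heart of the theorem. You need to show that the factor actions do not fix points at infinity: this is what excludes both pure millefeuille spaces \emph{and} non-symmetric homogeneous negatively curved manifolds (the connected case of the trichotomy only gives a homogeneous manifold; the upgrade to a rank one symmetric space comes solely from the no-fixed-point hypothesis in Corollary \ref{cor:dichothyp_intro}). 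Your proposed mechanisms do not work as stated. The amenability gesture fails because the coordinate quasi-action $\Gamma_0\qa X_i$ is not proper: a fixed point of $\partial X_i$ only makes the topological completion of that single-factor quasi-action amenable (Lemma \ref{lem:amen_focal_char}), and a non-amenable $\Gamma_0$ can map densely to an amenable focal group, so there is no contradiction. The growth argument in the style of Kleiner--Leeb is what the paper uses for the special case $\bbR^n\times Q$ (Lemma \ref{lem:growth_homspace} together with the polynomial growth of the coarse fibre $\bbR^n$), but it does not transfer here because the complementary factors are hyperbolic and have exponential growth, so counting elements with small displacement in one factor gives no contradiction.

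The paper's actual route (Proposition \ref{prop:hyp_fixptbdry}) is different in kind and runs in the opposite order from yours: first quasi-conjugate to an isometric action on $\Pi Y_i$ allowing the possibly bad factors (homogeneous manifolds, millefeuille spaces, trees — all $\CAT(-1)$), then use Lemma \ref{lem:product_graph_stab} to take stabilisers of vertices in the locally finite graph factors and obtain a subgroup acting \emph{geometrically} on the subproduct $Y'$ of the $\CAT(-1)$ factors, and finally invoke Caprace--Monod's fixed-point result for geometric actions on proper $\CAT(0)$ spaces without Euclidean de Rham factor (Theorem \ref{thm:cat0_bdry}) to conclude that no boundary point of any factor is fixed; since the isometry groups of pure millefeuille spaces and non-symmetric homogeneous manifolds do fix a boundary point, those factors are excluded a posteriori. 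None of this CAT(0)/vertex-stabiliser machinery appears in your proposal, and without it (or a genuinely different substitute) the exclusion of the mixed case and of non-symmetric homogeneous factors is unproven, so the proof is incomplete at precisely the step the theorem turns on. The remaining parts of your plan (product rigidity, factorwise dichotomy once fixed points are excluded, and the induction step back up to $\Gamma$) are correct and coincide with the paper's argument.
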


It is well-known that there are finitely generated groups quasi-isometric to the direct product of two infinite hyperbolic  groups that do not virtually split as a direct product. Examples include uniform irreducible lattices in $\bbH^2\times \bbH^2$ and  $\Aut(T)\times\Aut(T)$ for $T$ a finite valence regular tree. Such examples demonstrate that the conclusion of Theorem \ref{thm:hyp_productintro} cannot in general be strengthened to conclude $\Gamma$ virtually splits as a direct product.  The reason why such lattices exist is that both $\bbH^2$ and $T$  are spaces with a large amount of symmetry. We expect most hyperbolic groups and spaces do not possess such symmetry, and in such situations, Theorem \ref{thm:hyp_productintro} would imply  direct products of hyperbolic groups are virtually preserved by quasi-isometries; see also Theorem \ref{thm:hypprod_comm_intro}.

A graph is \emph{Helly} if any family of pairwise intersecting combinatorial balls have non-trivial intersection. A group is \emph{Helly} if it acts geometrically on a Helly graph \cite{chalopin2020helly}. Helly groups have a robust nonpositive-curvature-like  structure, possessing  many properties also possessed by $\CAT(0)$ groups. Moreover, Helly groups are biautomatic even though $\CAT(0)$ groups might not be \cite{learyminasyan2021commensurating}.  We refer the reader to \cite{chalopin2020helly} for additional  properties of Helly groups. An isometric action on  a locally finite hyperbolic graph can be conjugated to an action on a Helly graph; see Proposition \ref{prop:helly}. Combining this observation with  Theorem \ref{thm:hyp_productintro}, we  show the following:
\begin{thmAlph}[Theorem \ref{thm:hypprod_helly}]\label{thm:hypprod_helly_intro}
If $\Gamma$ is a finitely generated group quasi-isometric to $\Pi_{i=1}^nX_i$, where each $X_i$ is a cocompact proper non-elementary hyperbolic metric space \textbf{not} quasi-isometric to a rank one symmetric space,  then $\Gamma$ is Helly. In particular, $\Gamma$ is  biautomatic. 
\end{thmAlph}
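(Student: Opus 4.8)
The plan is to deduce this as a corollary of Theorem~\ref{thm:hyp_productintro} and Proposition~\ref{prop:helly}, together with two permanence properties from \cite{chalopin2020helly}: that the strong product of finitely many Helly graphs is again a Helly graph, and that Helly groups are biautomatic. First I would apply Theorem~\ref{thm:hyp_productintro} to obtain a geometric, product-structure-preserving action of $\Gamma$ on $\Pi_{i=1}^n Y_i$, where each $Y_i$ is quasi-isometric to $X_i$ and is either a rank one symmetric space of non-compact type or a locally finite graph. Since $Y_i$ is quasi-isometric to $X_i$, which by hypothesis is not quasi-isometric to a rank one symmetric space of non-compact type, $Y_i$ cannot itself be such a symmetric space; hence each $Y_i$ is a locally finite graph, and as it is quasi-isometric to the proper non-elementary hyperbolic space $X_i$ it is a locally finite hyperbolic graph.

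Next, to dispose of the possible permutation of factors, I would pass to the finite-index subgroup $\Gamma_0 = \ker(\Gamma\to\Sym(n))$, which acts on $\Pi_{i=1}^n Y_i$ without permuting coordinates and hence yields, for each $i$, a homomorphism $\Gamma_0\to\Aut(Y_i)$, i.e.\ an isometric action of $\Gamma_0$ on the locally finite hyperbolic graph $Y_i$. By Proposition~\ref{prop:helly} this action is conjugate to an action $\Gamma_0\curvearrowright Z_i$ by automorphisms of a locally finite Helly graph $Z_i$; in particular there is a coarsely $\Gamma_0$-equivariant quasi-isometry $Y_i\to Z_i$. Let $Z$ be the strong product $Z_1\boxtimes\cdots\boxtimes Z_n$: this is a locally finite, proper geodesic graph whose metric is the $\ell_\infty$-product of the metrics on the $Z_i$, the coordinatewise automorphisms give an action of $\Gamma_0$ on $Z$ by graph automorphisms, and the product of the maps $Y_i\to Z_i$ is a coarsely $\Gamma_0$-equivariant quasi-isometry from $\Pi_{i=1}^n Y_i$ onto $Z$. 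Since $\Gamma_0\curvearrowright\Pi_{i=1}^n Y_i$ is metrically proper and cobounded, these two properties transfer along this coarsely equivariant quasi-isometry to $\Gamma_0\curvearrowright Z$; as $Z$ is proper and geodesic, this action is geometric.

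Finally, $Z$ is a strong product of finitely many Helly graphs and is therefore a Helly graph, so $\Gamma_0$ is a Helly group; since $\Gamma_0$ has finite index in $\Gamma$, the co-induced action of $\Gamma$ on a suitable strong power of $Z$ is a geometric action on a Helly graph, so $\Gamma$ is Helly, and biautomaticity of $\Gamma$ follows because Helly groups are biautomatic. I do not anticipate a genuine difficulty, as the substance is packaged into Theorem~\ref{thm:hyp_productintro} and Proposition~\ref{prop:helly}; the steps demanding the most care are the bookkeeping around permutations of the factors, the verification that metric properness and coboundedness transfer along the product quasi-isometry $\Pi_{i=1}^n Y_i\to Z$, and the correct choice of graph product (the strong, not the Cartesian, product) needed to preserve the Helly condition. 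Alternatively, if the Helly graph supplied by Proposition~\ref{prop:helly} can be taken to depend functorially on $Y_i$, then an isometry $Y_i\to Y_j$ induces an isometry $Z_i\to Z_j$, the $\Gamma$-action on $\Pi_{i=1}^n Y_i$ transports directly to a product-structure-preserving action on $Z$, and one avoids passing to $\Gamma_0$ altogether.
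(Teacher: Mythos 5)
Your overall strategy (Theorem \ref{thm:hyp_productintro} to get a geometric product-structure-preserving action, Proposition \ref{prop:helly} to replace each graph factor by a Helly graph, then the strong product $\boxtimes$ and \cite[Proposition 5.1]{chalopin2020helly}) is exactly the paper's, but your main route has a genuine gap at the very last step: passing from ``$\Gamma_0$ is Helly'' back to ``$\Gamma$ is Helly'' via a ``co-induced action of $\Gamma$ on a suitable strong power of $Z$''. Co-induction from a finite-index subgroup is proper but essentially never cocompact: $\Gamma$ is quasi-isometric to $Z$, whereas $Z^{\boxtimes[\Gamma:\Gamma_0]}$ is quasi-isometric to $Z^{[\Gamma:\Gamma_0]}$, so for index $>1$ no geometric $\Gamma$-action on that strong power can exist (already for $\bbZ$ of index two in the infinite dihedral group acting on a line, the co-induced action on the strong product of two lines is not cocompact). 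Nor do you cite any permanence property of Helly groups covering finite-index overgroups, so as written the argument does not close.

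The fix is precisely the ``alternative'' you mention in your last sentence, and it is unconditional: Proposition \ref{prop:helly}(3) already provides the functoriality you ask for, namely a monomorphism $\Psi:\Aut(Y_i)\to\Aut(\Helly(Y_i))$ with $\sigma\circ\alpha=\Psi(\alpha)\circ\sigma$, coming from the canonical construction of the discrete injective hull. Hence the full $\Gamma$-action on $\Pi_{i=1}^nY_i$ (splitting as graph isomorphisms of the factors, possibly permuting isomorphic factors) transports directly to an action by graph automorphisms on $\boxtimes_{i=1}^n\Helly(Y_i)$: a coordinate permutation composed with isomorphisms of the factors is an automorphism of the strong product, so no passage to $\Gamma_0$ is needed. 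This is the paper's proof (via Corollary \ref{cor:hyp_producthelly}): $\Gamma$ acts geometrically on a strong product of locally finite Helly graphs, which is Helly by \cite[Proposition 5.1]{chalopin2020helly}, and biautomaticity follows from \cite[Theorem 1.5]{chalopin2020helly}. So promote your afterthought to the main argument and drop the co-induction step.
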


	Intriguingly, Theorem \ref{thm:hypprod_helly_intro} is false if we drop  the hypothesis that no $X_i$ is quasi-isometric to a symmetric space. Hughes and Valiunas have an  example of a group that is  not biautomatic   acting geometrically on $\bbH^2\times T$, where $T$ is a finite valence regular tree  \cite{hughes2022commensurating}. This example illustrates an essential difference between discretisable and non-discretisable hyperbolic groups. In addition to the existence of coarse stabilisers, Theorem \ref{thm:hypprod_helly_intro} gives compelling further  evidence why  in some circumstances,  the discrete and  combinatorial nature of    locally finite graphs --- a priori  with rather  unexciting  local geometry ---  has concrete advantages over spaces with  richer local geometry such as symmetric spaces.

We now refine Theorem \ref{thm:hyp_productintro} by deducing the  existence of natural  commensurated subgroups and quotient spaces that  capture  the algebraic structure of  groups quasi-isometric to products of hyperbolic spaces, providing further instances in which Question \ref{ques:comm} (virtually)  has a positive solution.   For simplicity, we restrict to products of two factors. The case where both factors are quasi-isometric to symmetric spaces is well-understood by work of Kleiner--Leeb \cite{kleinerleeb97rigidity}, so we  assume at least one factor is not quasi-isometric a symmetric space.

\begin{thmAlph}[Theorem \ref{thm:qiprod_comm}]\label{thm:hypprod_comm_intro}
	Let $X$ and $Y$ be cocompact  non-elementary proper hyperbolic metric spaces such that  $Y$ is \textbf{not} quasi-isometric to a rank one symmetric space. Suppose $\Gamma$ is a finitely generated group quasi-isometric to $X\times Y$. 
		Then there exists a finitely generated subgroup $\Lambda\leq \Gamma$,  commensurated by a finite index subgroup  $\Gamma^*\leq \Gamma$ such that:
	\begin{enumerate}
		\item $\Lambda$ is quasi-isometric to $X$;
		\item $\Gamma^*/\Lambda$ is quasi-isometric to $Y.$
	\end{enumerate} Moreover,  $\Lambda$ is weakly separable in $\Gamma^*$ if and only if  $\Gamma$ is virtually isomorphic to the direct product $\Gamma_X\times \Gamma_Y$, where $\Gamma_X$ and $\Gamma_Y$ are finitely generated groups quasi-isometric to $X$ and $Y$ respectively.
\end{thmAlph}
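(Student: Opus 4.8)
The plan is to run the blueprint laid out after Proposition \ref{prop:relms_intro}, using Theorem \ref{thm:hyp_productintro} to put $\Gamma$ into product form and then extracting a commensurated subgroup from the discretisable factor. First I would invoke Theorem \ref{thm:hyp_productintro}: after passing to a finite index subgroup $\Gamma^*\leq\Gamma$, we obtain a geometric action of $\Gamma^*$ on $Y_X\times Y_Y$ that preserves the product structure and does not permute the factors, where $Y_X$ is quasi-isometric to $X$ (either a rank one symmetric space or a locally finite graph) and $Y_Y$ is quasi-isometric to $Y$ and, since $Y$ is not quasi-isometric to a rank one symmetric space, must be a \emph{locally finite graph}. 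Projecting the $\Gamma^*$-action to the second factor gives a cobounded action, hence a cobounded quasi-action $\Gamma^*\qa Y_Y$ which is discretisable (it is literally an action on a locally finite graph). By Proposition \ref{prop:disc_equiv_intro} this quasi-action has a coarse stabiliser $\Lambda\leq\Gamma^*$, and by Proposition \ref{prop:relms_intro}, $\Lambda\alnorm\Gamma^*$, the quotient space $\Gamma^*/\Lambda$ is quasi-isometric to $Y_Y$, hence to $Y$, giving (2).

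For (1), I would use the coarse bundle formalism of Section \ref{sec:cbundles}. The projection $\Gamma^*\to\Gamma^*/\Lambda$ is a coarse bundle, and the quasi-action being quasi-conjugate to $\Gamma^*\curvearrowright\Gamma^*/\Lambda$ (again Proposition \ref{prop:relms_intro}), together with the product structure on $Y_X\times Y_Y\simeq\Gamma^*$, lets me apply Proposition \ref{prop:quasi-conj_coarse bundle} to conclude that $p:\Gamma^*\to\Gamma^*/\Lambda$ is fibre-preserving quasi-isometric to the trivial bundle $Y_X\times Y_Y\to Y_Y$. The fibres of the latter are copies of $Y_X$, so $\Lambda$ — a fibre of $p$ up to finite Hausdorff distance — is quasi-isometric to $Y_X$, hence to $X$, establishing (1). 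That $\Lambda$ is finitely generated follows from its being a commensurated subgroup relative to which $\Gamma^*$ is finitely generated (part (1) of Proposition \ref{prop:relms_intro}) combined with the fact that its coarse geometry is that of the proper space $X$.

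It remains to prove the "moreover" equivalence. For the forward direction, suppose $\Lambda$ is weakly separable in $\Gamma^*$. Then I would argue as in the proof of Theorem \ref{thm:qialcent_intro}: weak separability should promote the commensurated subgroup $\Lambda$ to one commensurable with an honest normal subgroup $\Lambda_0\vartriangleleft\Gamma_0$ for some finite index $\Gamma_0\leq\Gamma^*$ — the point being that weak separability gives enough homomorphisms to a (not necessarily finite) quotient $Q$ with $\phi(\Lambda)$ finite to cut $\Lambda$ out as the kernel on a finite index subgroup. Once $\Lambda_0\vartriangleleft\Gamma_0$ is normal with $\Lambda_0\simeq X$ and $\Gamma_0/\Lambda_0\simeq Y$, the short exact sequence $1\to\Lambda_0\to\Gamma_0\to\Gamma_0/\Lambda_0\to1$ has hyperbolic quotient; using that $\Gamma_0\simeq X\times Y$ coarsely splits as a product (the coarse bundle has a coarse Lipschitz section, since $\Gamma_0\simeq X\times Y$ literally), one deduces the extension is virtually trivial — here I would either invoke a splitting-of-central-type-extensions argument or directly that a quasi-isometry $\Gamma_0\to X\times Y$ respecting the bundle forces the outer action $\Gamma_0/\Lambda_0\to\Out(\Lambda_0)$ to have finite image after passing to finite index — yielding $\Gamma$ virtually $\Gamma_X\times\Gamma_Y$. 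The converse is easy: if $\Gamma$ is virtually $\Gamma_X\times\Gamma_Y$ then on a finite index subgroup $\Lambda$ is commensurable to $\Gamma_X\times 1$, which is normal, hence separable (being a direct factor, its preimages under the projection $\Gamma_X\times\Gamma_Y\to\Gamma_Y$ to any quotient witness weak separability), so $\Lambda$ is weakly separable in $\Gamma^*$.

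I expect the main obstacle to be the forward direction of the "moreover": turning weak separability of the commensurated subgroup $\Lambda$ into an actual virtual direct product decomposition of $\Gamma$. The subtlety is that, unlike the abelian case of Theorem \ref{thm:qialcent_intro} where the modular homomorphism lands in $\GL_n(\bbQ)$ and one argues via $\Orth_n$, here $\Lambda\simeq X$ has a general (possibly rigid) hyperbolic-type geometry, so I must control the modular homomorphism $\Gamma^*\to\Comm(\Lambda)$ and show that weak separability forces its image into something "compact-like" (a subgroup commensurating $\Lambda$ with bounded distortion) — and then that such a bounded modular homomorphism, combined with the coarse product splitting coming from $\Gamma^*\simeq X\times Y$, yields a genuine virtual splitting. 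Getting the precise statement about $\Comm(\Lambda)$ and the relevant "bounded" condition right, and citing or adapting the correct result (likely Lemma \ref{lem:weaksepvssep} together with the coarse bundle machinery of Section \ref{sec:cbundles}), is where the real work lies.
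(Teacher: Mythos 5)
The first half of your proposal is correct and is essentially the paper's argument: Theorem \ref{thm:hyp_productmain} puts $\Gamma^*$ geometrically on $Y_X\times Y_Y$ with $Y_Y$ a locally finite graph, the projected (quasi-)action on $Y_Y$ is discretisable, and a coarse stabiliser $\Lambda$ combined with Proposition \ref{prop:quasi-conj_coarse bundle} gives finite generation of $\Lambda$, commensuration, $\Lambda$ quasi-isometric to $X$ and $\Gamma^*/\Lambda$ quasi-isometric to $Y$. (The paper instead takes $\Lambda$ to be the stabiliser of $Y_X\times\{v\}$ and uses Lemma \ref{lem:product_graph_stab} plus Proposition \ref{prop:cstab homspace}; the two routes are interchangeable.)

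The genuine gap is the forward direction of the ``moreover''. After invoking Lemma \ref{lem:weaksep} to replace $\Lambda$ by a commensurable normal subgroup $\Lambda_0\vartriangleleft\Gamma_0$, you assert that the coarse product structure on $\Gamma_0$ forces the outer action $\Gamma_0/\Lambda_0\to\operatorname{Out}(\Lambda_0)$ to have finite image (or that one can bound a modular homomorphism to $\Comm(\Lambda)$), and that this yields a virtual splitting. None of this is proved, and it is exactly the crux: a priori the extension $1\to\Lambda_0\to\Gamma_0\to Q\to 1$ could be twisted even though $\Gamma_0$ is quasi-isometric to $X\times Y$, and ruling that out \emph{is} the rigidity statement, not a consequence of having a coarse Lipschitz section or a fibre-preserving quasi-isometry. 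The paper's mechanism is entirely different and goes through topological completions: commensurability of $\Lambda$ with a normal subgroup implies, by Proposition \ref{prop:disc_topcomp}, that every topological completion of $\Gamma^*\qa Y_Y$ is compact-by-discrete; the completion $G_Y$ built as in Corollary \ref{cor:hyp_product_lattice} (closure of the image of $\Gamma^*$ in $\Isom(Y_Y)$ modulo its maximal compact normal subgroup) has trivial amenable radical, hence is genuinely discrete; and then Proposition \ref{prop:irred} --- a lattices-in-products argument resting on open centralisers (Lemma \ref{lem:open_cen}) and trivial centralisers (Corollary \ref{cor:triv_cen}) --- shows the projection to the other factor is discrete as well and that $\Gamma^*$, modulo a finite kernel, virtually splits as $\Gamma_X\times\Gamma_Y$ with $\Lambda$ commensurable to $\Gamma_X$. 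Your sketch never reaches an argument of this kind, so the forward implication remains unproven. Your converse is fine in outline, but you still need to identify $\Lambda$, up to commensurability and a finite normal subgroup, with the direct factor $\Gamma_X$ (e.g.\ via Theorem \ref{thm:kkl} and uniqueness of coarse stabilisers, Proposition \ref{prop:coarsestabsarecomm}); you assert this without argument, whereas the paper sidesteps it by phrasing item (4) of Theorem \ref{thm:qiprod_comm} with an explicit finite normal subgroup $F$.
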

\begin{rem*}
	By Lemma \ref{lem:weaksepvssep}, the weak separability of $\Lambda$ in Theorem \ref{thm:hypprod_comm_intro}  is equivalent to separability of $\Lambda$ under the additional hypothesis that  all finitely generated  groups quasi-isometric to $Y$ are residually finite. 
\end{rem*}
We recall that subgroup separability was used by  Wise to study irreducible lattices in products of trees. Wise showed that such lattices are virtually products if and only if certain ``horizontal'' and ``vertical'' subgroups are separable  \cite[Theorem 5.5]{wise96thesis}; see also \cite[Corollary 32]{caprace_kropholler_reid_wesolek_2020}.  The subgroup $\Lambda$ in the preceding theorem plays an analogous role. 
We note that in  the case $X$  is also not quasi-isometric to a rank one symmetric space, we can reverse the roles of $X$ and $Y$ in Theorem \ref{thm:hypprod_comm_intro} to obtain a commensurated subgroup quasi-isometric to $Y$.

We state two theorems that allow the conclusion of Theorem \ref{thm:hyp_productintro} to be strengthened under additional algebraic hypotheses. These results rely on ``abstract arithmeticity'' theorems of Caprace--Monod \cite[\S 5]{capracemonod2009isometry} which assert that under suitable hypotheses, irreducible lattices in products of locally compact groups are  arithmetic lattices in semisimple algebraic groups.   
In order to rule out irreducible arithmetic lattices, we say a hyperbolic space is \emph{of coarse algebraic type} if it is quasi-isometric to either  a rank one symmetric space of non-compact type, or to a rank one Bruhat--Tits building, i.e. to a $k$-regular tree for $k\geq 3$.
\begin{thmAlph}[Theorem \ref{thm:mixedlattice_main}]\label{thm:mixedlattice_intro}
	Let $\Gamma$ be a  finitely generated residually finite group quasi-isometric to $X\times Y$, where  $X$ and $Y$ are cocompact  non-elementary proper  hyperbolic metric spaces. If $X$ is quasi-isometric to an irreducible rank one symmetric space and $Y$ is \textbf{not} of coarse algebraic type, then  $\Gamma$ virtually splits as a product $\Gamma_X\times \Gamma_Y$, where $\Gamma_X$ is quasi-isometric to $X$ and $\Gamma_Y$ is quasi-isometric to $Y$.
\end{thmAlph}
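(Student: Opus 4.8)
The strategy is to realise $\Gamma$, up to finite index, as a cocompact lattice in a product of (what is essentially) a rank one simple Lie group and a totally disconnected group, and then to deploy the abstract arithmeticity theorems of Caprace--Monod \cite{capracemonod2009isometry} against the constraint that $Y$ is not of coarse algebraic type. First I would apply Theorem \ref{thm:hyp_productintro} with $n=2$ to obtain a geometric action of $\Gamma$ on $Y_1\times Y_2$ preserving the product structure, where $Y_1$ is quasi-isometric to $X$ and $Y_2$ to $Y$, and each $Y_i$ is either a rank one symmetric space of non-compact type or a locally finite graph. Since $Y$ is not quasi-isometric to a rank one symmetric space, $Y_2$ must be a locally finite graph, so $\Isom(Y_2)=\Aut(Y_2)$ is totally disconnected. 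After passing to a subgroup of index at most two we may assume $\Gamma$ does not interchange the factors, so the action is induced by a homomorphism $\Gamma\to\Isom(Y_1)\times\Isom(Y_2)$ with finite kernel. Writing $H_i$ for the closure of the image of $\Gamma$ in $\Isom(Y_i)$, the product $\Isom(Y_1)\times\Isom(Y_2)$ acts properly cocompactly on $Y_1\times Y_2$, so $\Gamma$ is a cocompact lattice in $H_1\times H_2$, irreducible by the choice of the $H_i$, with each $H_i$ compactly generated, non-compact, and acting cocompactly on the non-elementary hyperbolic space $Y_i$.

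Next comes the dichotomy between reducible and irreducible behaviour. If some projection $\Gamma\to H_i$ fails to have dense image then $\Gamma$ is a reducible lattice, and the standard structure theory of reducible lattices in products (here residual finiteness of $\Gamma$ is used to dispose of finite normal subgroups coming from compact normal subgroups of the $H_i$, and to pass from a virtual to an honest splitting) produces a finite index subgroup isomorphic to $\Gamma_X\times\Gamma_Y$, with $\Gamma_X$ a cocompact lattice in $H_1$ and $\Gamma_Y$ a cocompact lattice in $H_2$; since $\Gamma_X$ acts geometrically on $Y_1$ and $\Gamma_Y$ on $Y_2$, these are quasi-isometric to $X$ and to $Y$ respectively, which is the desired conclusion. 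Suppose instead $\Gamma$ is irreducible. I would then verify the hypotheses of Caprace--Monod's abstract arithmeticity theorem --- triviality of the amenable radicals of the $H_i$ (again using residual finiteness of $\Gamma$ to pass to the relevant faithful quotients), compact generation, and the simplicity/irreducibility conditions on the factors, noting that $H_2$ acts cocompactly on a non-elementary hyperbolic graph and that $H_1$ is essentially $\Isom(M)$ for a rank one symmetric space $M$, since if $Y_1$ were itself a locally finite graph the argument below would force $X$ to be quasi-isometric to a tree, impossible as $X$ is quasi-isometric to an irreducible rank one symmetric space. Arithmeticity then identifies $H_1\times H_2$, modulo compact kernels, with $\prod_{v\in S}\mathbf{G}(k_v)$ for a semisimple group $\mathbf{G}$ over a number field $k$ and a finite set of places $S$. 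Splitting off the non-archimedean places, the totally disconnected factor $H_2$ becomes, up to finite index and compact kernel, $\prod_{v\in S_f}\mathbf{G}(k_v)$, so $Y_2$ is quasi-isometric to a product of Bruhat--Tits buildings; as $Y_2$ is hyperbolic and non-elementary this product must reduce to a single tree of valence at least three, whence $Y$ is of coarse algebraic type, contrary to hypothesis. The degenerate possibilities ($H_2$ or $H_1$ compact) are excluded since $\Gamma$ would then be quasi-isometric to a single non-elementary hyperbolic factor rather than a product, so the irreducible case cannot occur and the proof is complete.

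The main obstacle is the middle of this argument. One must check that the concretely assembled group $H_1\times H_2$ --- the isometry group of a rank one symmetric space times the \emph{full} automorphism group of an arbitrary locally finite non-elementary hyperbolic graph --- actually meets the standing hypotheses of the Caprace--Monod arithmeticity machinery; one must handle the case where some projection is discrete yet the lattice is not visibly a direct product; and one must pin down precisely where residual finiteness of $\Gamma$ is indispensable in converting the structural output into an honest virtual direct product. The examples of Leary--Minasyan and Hughes--Valiunas show that a hypothesis of this kind is genuinely needed, so this step cannot be bypassed; everything else is a matter of assembling known structural inputs.
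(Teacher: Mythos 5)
Your overall route is the same as the paper's: realise $\Gamma$ (mod a finite kernel, then a finite-index subgroup) as a uniform lattice in $H_1\times H_2$ with dense projections via the product structure theorem, split into the case where a projection is discrete and the case of an irreducible lattice, and in the latter case invoke Caprace--Monod arithmeticity to contradict the assumption that $Y$ is not of coarse algebraic type (the paper packages the last step as Lemma \ref{lem:prodhyp_qitoalgebraicgroup}). However, there are three concrete gaps at exactly the points where the work lies. First, your dichotomy is miscast: since you define $H_i$ as the closure of the projection, the projections are dense by construction, so ``fails to have dense image'' never occurs; the genuine alternative is that a projection (equivalently a factor) is discrete, and the implication ``one factor discrete $\Rightarrow$ the other projection discrete $\Rightarrow$ virtual splitting'' is not generic reducible-lattice theory. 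It is Proposition \ref{prop:irred}, which rests on the triviality of centralisers of cocompact subgroups of hyperbolic locally compact groups with trivial amenable radical (Lemma \ref{lem:triv_centraliser}, Corollary \ref{cor:triv_cen}) together with openness of centralisers of normalised discrete subgroups (Lemma \ref{lem:open_cen}); you flag this case in your closing paragraph but never resolve it.

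Second, you misplace the essential use of residual finiteness. Killing the finite kernel is the easy use; the indispensable one is injectivity of the projection of $\Gamma$ to the Lie factor, obtained from Caprace--Kropholler--Reid--Wesolek (Lemma \ref{lem:inj_proj}), which is a standing hypothesis of the arithmeticity statement actually applied (Theorem \ref{thm:abstract_mixedlattice}); without it your appeal to Caprace--Monod does not go through as stated. (Trivial amenable radical of the factors, by contrast, comes from quotienting out the compact amenable radical as in Corollary \ref{cor:hyp_product_lattice}, not from residual finiteness.) Third, your justification that $H_1$ is ``essentially $\Isom(M)$'' is circular: if $Y_1$ were a locally finite graph, $H_1$ would be totally disconnected, and ``the argument below'' could not even be run, since arithmeticity for a residually finite lattice in a product of two totally disconnected groups is not available without a linearity hypothesis (that is the content of Theorem \ref{thm:splitlinear_main}). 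The paper instead shows directly that the non-discrete completion of the $X$-factor, having trivial amenable radical and being quasi-isometric to a rank one symmetric space, acts faithfully, continuously, properly and cocompactly on that symmetric space, hence is a closed subgroup of a Lie group with no small subgroups, so by van Dantzig it cannot be totally disconnected and non-discrete; combined with Theorem \ref{thm:hypisom_general} this forces it to be a virtually connected rank one simple Lie group. Filling these three points is exactly what the paper's proof consists of.
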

We can remove the assumption that $X$ is quasi-isometric to a symmetric space if we strengthen the residual finiteness hypothesis. For the purposes of the following theorem, a group is \emph{linear}  if it has a faithful finite-dimensional linear representation over a field of characteristic $\neq 2,3$. This hypothesis on the characteristic is needed to induce a representation that is admissible in the sense of Margulis \cite[\S IX]{margulis1991discrete}.
 \begin{thmAlph}[Theorem \ref{thm:splitlinear_main}]\label{thm:splitlinear_intro}
 		Let $\Gamma$ be a  finitely generated linear group quasi-isometric to $X\times Y$, where  $X$ and $Y$ are cocompact non-elementary proper  hyperbolic metric spaces. If at least one of $X$ or  $Y$ is \emph{not} of coarse algebraic type, then  $\Gamma$ virtually splits as a product $\Gamma_X\times \Gamma_Y$, where $\Gamma_X$ is quasi-isometric to $X$ and $\Gamma_Y$ is quasi-isometric to $Y$.
 \end{thmAlph}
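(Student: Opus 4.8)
The plan is to realise a finite-index subgroup of $\Gamma$ as a cocompact lattice in a product $G_1\times G_2$ of totally disconnected locally compact groups acting on the two hyperbolic factors, and then to use the abstract arithmeticity theory of Caprace--Monod \cite{capracemonod2009isometry} to show that such a lattice is either reducible --- so that $\Gamma$ visibly virtually splits --- or $S$-arithmetic, in which case both factors would be of coarse algebraic type, against hypothesis. Here are the reductions. Relabelling if necessary, assume $Y$ is not of coarse algebraic type; in particular $Y$ is not quasi-isometric to a rank one symmetric space. If $X$ \emph{is} quasi-isometric to a rank one symmetric space then, since rank one symmetric spaces are irreducible and finitely generated linear groups are residually finite (Mal'cev), the conclusion is precisely Theorem \ref{thm:mixedlattice_intro}; so assume $X$ is not quasi-isometric to a rank one symmetric space. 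By Theorem \ref{thm:hyp_productintro}, $\Gamma$ acts geometrically on $Y_1\times Y_2$ preserving the product structure, with $Y_1$ quasi-isometric to $X$, $Y_2$ quasi-isometric to $Y$, and each $Y_i$ a rank one symmetric space of non-compact type or a locally finite graph; by the two assumptions just made, both $Y_i$ are locally finite graphs, non-elementary, with $Y_2$ not quasi-isometric to a tree. After a harmless quasi-isometric modification we may assume each $Y_i$ has no vertex of degree $\le 2$, so that $\Isom(Y_i)=\Aut(Y_i)$ is totally disconnected. As a finitely generated linear group is virtually torsion-free, pass to a finite-index subgroup $\Gamma^*\le\Gamma$ acting faithfully and without permuting the factors; it suffices to prove that $\Gamma^*$ virtually splits. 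Then $\Gamma^*$ embeds as a cocompact lattice in $G:=G_1\times G_2$, where $G_i:=\overline{p_i(\Gamma^*)}\le\Aut(Y_i)$ is a compactly generated totally disconnected locally compact group acting continuously, properly and cocompactly on the non-elementary locally finite hyperbolic graph $Y_i$.

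Next comes the dichotomy. By the structure theory of isometry groups of hyperbolic spaces \cite{caprace2015amenable,cornulier2018quasi}, a compactly generated locally compact group acting properly cocompactly on a non-elementary hyperbolic space cannot fix a point of its boundary; hence each $G_i$ is non-amenable with compact amenable radical, and its quotient by that radical is (virtually) topologically simple. Now consider the cocompact lattice $\Gamma^*\le G_1\times G_2$. If some $G_i$ is discrete --- equivalently, $\Gamma^*$ is not irreducible --- then the splitting theorem for lattices in products \cite{capracemonod2009isometry} furnishes a finite-index subgroup $\Lambda_1\times\Lambda_2\le\Gamma^*$ with each $\Lambda_i$ a cocompact lattice in $G_i$; by the Milnor--Schwarz lemma $\Lambda_i$ is quasi-isometric to $Y_i$, hence to $X$ for $i=1$ and to $Y$ for $i=2$, and $\Gamma$ virtually splits as desired. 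So assume from now on that $\Gamma^*$ is an irreducible lattice with both $G_i$ non-discrete.

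It remains to derive a contradiction from irreducibility. Since $\Gamma$, hence $\Gamma^*$, is linear over a field of characteristic $\neq 2,3$, it carries a faithful representation that is admissible in the sense of Margulis \cite[\S IX]{margulis1991discrete}. Feeding this representation, together with the structure of $G_1\times G_2$ as a product of two compactly generated non-amenable totally disconnected groups with compact amenable radical and topologically simple simple-quotients, into the abstract arithmeticity theorems of Caprace--Monod \cite[\S 5]{capracemonod2009isometry}, we conclude that $\Gamma^*$ is commensurable to an $S$-arithmetic lattice: there are local fields $k_1,k_2$ --- non-archimedean since the $G_i$ are totally disconnected --- and absolutely almost simple isotropic $k_i$-groups $\mathbf H_i$ such that, modulo compact kernels and finite index, $G_i\cong\mathbf H_i(k_i)$ and $\Gamma^*$ is an irreducible arithmetic lattice in $\mathbf H_1(k_1)\times\mathbf H_2(k_2)$. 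The Bruhat--Tits building of $\mathbf H_i(k_i)$ is quasi-isometric to $Y_i$: the arithmetic lattice acts geometrically on the building and $\Gamma^*$ acts geometrically on $Y_i$, and these are matched up by the fact --- used also in the proof of Theorem \ref{thm:hyp_productintro} --- that a quasi-isometry between products of non-elementary hyperbolic spaces coarsely preserves the factors \cite{kleinerleeb97rigidity}. As $Y_i$ is hyperbolic, $\mathbf H_i$ has $k_i$-rank one, so its building is a locally finite bushy tree, which is quasi-isometric to a $k$-regular tree for some $k\ge 3$. In particular $Y_2$, and hence $Y$, is quasi-isometric to a regular tree, i.e. $Y$ is of coarse algebraic type --- contradicting our choice of $Y$. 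Hence $\Gamma^*$ is not irreducible, and by the preceding paragraph $\Gamma$ virtually splits as claimed. (In the case where $X$ is quasi-isometric to a rank one symmetric space one can bypass Theorem \ref{thm:mixedlattice_intro} and run the same argument with $G_1$ a rank one simple Lie group, using Margulis's superrigidity at the archimedean place in place of its non-archimedean analogue.)

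The main obstacle is confirming that the Caprace--Monod arithmeticity machinery genuinely applies to $G_1$ and $G_2$: one must invoke the structure theory of \cite{caprace2015amenable,cornulier2018quasi} to see that a totally disconnected locally compact group acting geometrically on a non-elementary hyperbolic graph is, up to a compact kernel, of the ``algebraic'' type for which superrigidity is available (in particular has no proper cocompact subgroup fixing a boundary point), and one must verify that characteristic-$\neq 2,3$ linearity of $\Gamma$ yields a representation admissible enough to run that superrigidity at each non-archimedean place. A secondary technical point is reading off the quasi-isometry type of the factor $Y_i$ from the arithmetic conclusion, which rests on the quasi-isometric non-interaction of the two hyperbolic factors.
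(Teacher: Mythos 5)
Your overall strategy is the same as the paper's: realise a finite-index subgroup of $\Gamma$ as a uniform lattice in a product $G_1\times G_2$ of hyperbolic locally compact groups, split in the reducible case, and in the irreducible case invoke Caprace--Monod arithmeticity and rule it out because arithmeticity would force both factors to be of coarse algebraic type. However, two steps are genuinely unjustified as written. First, your structural claim that ``a compactly generated locally compact group acting properly cocompactly on a non-elementary hyperbolic space cannot fix a point of its boundary'' is false: focal hyperbolic groups do exactly this (e.g.\ the stabiliser of an end of a regular tree acts vertex-transitively, hence geometrically, on the tree and fixes a boundary point), so it does not follow from your setup that the closures $G_i=\overline{p_i(\Gamma^*)}$ are non-amenable with compact amenable radical. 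What is true, and what the argument needs, is that the finitely generated lattice $\Gamma^*$ itself fixes no point of $\partial Y_i$; this is the moreover clause of Theorem \ref{thm:hyp_productmain} (Proposition \ref{prop:hyp_fixptbdry}, proved via Caprace--Monod's $\CAT(0)$ results) and is packaged in Corollary \ref{cor:hyp_product_lattice} --- it is not contained in the introduction statement of Theorem \ref{thm:hyp_productintro} that you cite, so your write-up is missing the input that makes the $G_i$ have trivial amenable radical after quotienting by a compact subgroup.

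Second, the irreducible case is handled by a black-box appeal whose hypotheses you neither state correctly nor verify. The theorem you need (Caprace--Monod Theorem 5.1, recorded here as Theorem \ref{thm:arithm_linear}) requires, besides a faithful Zariski-dense representation of $\Gamma^*$ into a semisimple group over a field of characteristic $\neq 2,3$, that $G_1\times G_2$ have trivial amenable radical \emph{and trivial quasi-centre}; your substitute condition of ``(virtually) topologically simple'' quotients is neither established nor true in general, and is not what the theorem asks for. Verifying triviality of the quasi-centre is precisely where the paper works: Proposition \ref{prop:qztriival}, which uses residual finiteness and injectivity of the projection to one factor (Lemma \ref{lem:inj_proj}) together with Lemma \ref{lem:qztopfinite} and the triviality of centralisers of cocompact subgroups (Corollary \ref{cor:triv_cen}). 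Likewise, one must actually produce the faithful Zariski-dense semisimple representation from your linear representation (take the Zariski closure, quotient by the radical, and use triviality of the amenable radical of $\Gamma^*$ to retain faithfulness); you gesture at admissibility but omit this reduction. You flag all of this yourself as ``the main obstacle'', but it is left unresolved, and it is the mathematical heart of the proof rather than a routine verification. (A small further point: ``finitely generated linear implies virtually torsion-free'' fails in positive characteristic, which your definition of linear permits; the finite kernel of the action should instead be avoided using residual finiteness, as the paper does. Your reduction of the case where $X$ is quasi-isometric to a rank one symmetric space to Theorem \ref{thm:mixedlattice_intro} via Mal'cev is fine.)
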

\subsection*{Boundary rigidity of groups quasi-isometric to products of hyperbolic spaces}
A $\CAT(0)$ group $\Gamma$ is said to be \emph{boundary rigid} if the boundaries of any two proper $\CAT(0)$ spaces admitting a geometric $\Gamma$-action are homeomorphic.  Croke--Kleiner show there exists a finitely generated group $\Gamma$ acting geometrically on proper $\CAT(0)$ spaces $Y_1$ and $Y_2$ such that the visual boundaries $\partial Y_1$ and $\partial Y_2$ are not homeomorphic \cite{crokekleiner2000spaces}. In particular, such a group $\Gamma$ is not boundary rigid. 

 Ruane showed that direct products of hyperbolic groups are boundary rigid \cite{ruane1999boundaries}. Recent work of Jankiewicz--Karrer--Ruane--Sathaye shows that a group acting freely and  vertex-transitively on the product of two trees  is boundary rigid \cite{jankiewicz2021boundary}.  Using Theorem \ref{thm:hyp_productintro} and work of Monod and Caprace--Monod \cite{monod2006superrigidity,capracemonod2009strucure}, we generalise these results by showing boundary rigidity for groups quasi-isometric to products of hyperbolic spaces.

\begin{thmAlph}[Theorem \ref{thm:boundary_rigidity}]\label{thm:boundary_rigidity_intro}
	Let $\Gamma$ be a finitely generated quasi-isometric to $\Pi_{i=1}^nX_i$, where each $X_i$ is a cocompact proper  non-elementary hyperbolic metric space. Suppose $\Gamma$ acts geometrically on a proper $\CAT(0)$ space $Y$. Then the visual boundary $\partial Y$ is homeomorphic to the join  $\partial X_1*\dots*\partial X_n$, where $\partial X_i$ is the Gromov boundary of $X_i$. In particular, $\partial Y$ depends only on the quasi-isometry type of $\Gamma$ and not the choice of  $\CAT(0)$ space $Y$.
\end{thmAlph}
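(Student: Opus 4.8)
The plan is to combine Theorem~\ref{thm:hyp_productintro} with the structure theory of $\CAT(0)$ groups and the superrigidity of lattices in products due to Monod and Caprace--Monod \cite{monod2006superrigidity,capracemonod2009strucure}. Since the visual boundary of $Y$ is independent of the geometric $\Gamma$-action, passing to a finite-index subgroup changes neither the hypothesis nor the conclusion, so I work throughout up to finite index. First I would apply Theorem~\ref{thm:hyp_productintro} to assume that $\Gamma$ acts geometrically on $Z=\prod_{i=1}^{n}Y_i$, preserving the product structure, where each $Y_i$ is quasi-isometric to $X_i$ and is either a rank one symmetric space of non-compact type or a locally finite graph; in particular each $Y_i$ is a proper, unbounded, non-elementary hyperbolic space with $\partial Y_i\cong\partial X_i$. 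Since the action preserves the product structure, after a further finite-index subgroup $\Gamma^{*}$ I obtain a discrete, cocompact embedding $\Gamma^{*}\hookrightarrow\prod_{i=1}^{n}G_i$, where $G_i\coloneqq\Isom(Y_i)$ is a compactly generated locally compact group acting continuously, properly and cocompactly on $Y_i$, and each $G_i$ is non-compact and non-amenable, being a cocompact group of isometries of a non-elementary hyperbolic space. Thus $\Gamma^{*}$ is a cocompact lattice in a product of the sort to which \cite{monod2006superrigidity,capracemonod2009strucure} apply.

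Next I would show that $Y$ has no Euclidean factor. Letting $\Gamma$, hence $\Gamma^{*}$, act geometrically on the proper $\CAT(0)$ space $Y$, I invoke the canonical decomposition of \cite{capracemonod2009strucure}, writing $Y=\bbE^{a}\times Y^{(1)}\times\dots\times Y^{(m)}$, where $\bbE^{a}$ is the maximal Euclidean factor, each $Y^{(l)}$ is an irreducible non-Euclidean proper $\CAT(0)$ space, and a finite-index subgroup of $\Gamma^{*}$ preserves this splitting with a product action. By the results of \cite{capracemonod2009strucure} on Euclidean factors of cocompact $\CAT(0)$ spaces, together with the flat torus theorem, a nontrivial Euclidean factor would force a finite-index subgroup of $\Gamma^{*}$ to have infinite centre. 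But $\Gamma^{*}$ has finite centre: if $\gamma\in\Gamma^{*}$ were central of infinite order, its image in each $G_i$ would be centralised by $\overline{\operatorname{proj}_i(\Gamma^{*})}$, which acts cocompactly, hence non-elementarily, on $Y_i$; a loxodromic image would have its pair of fixed points preserved by $\operatorname{proj}_i(\Gamma^{*})$, contradicting non-elementarity, so the image of $\gamma$ in each $G_i$ is elliptic, whence $\gamma$ has bounded orbits in $Z$ and, by properness of $\Gamma^{*}\curvearrowright Z$, finite order. The same reasoning shows $Z(\Gamma^{*})$ is a torsion group with bounded orbits, so it fixes a point of the proper space $Z$ and is therefore finite. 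Hence $a=0$ and $Y=\prod_{l=1}^{m}Y^{(l)}$.

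Then I would match the factors $Y^{(l)}$ with the spaces $X_i$. For each $l$, the $\Gamma^{*}$-action on the irreducible factor $Y^{(l)}$ is cocompact with unbounded orbits, so the superrigidity of \cite{monod2006superrigidity}, together with the decomposition theory of \cite{capracemonod2009strucure} (which handles the possibly reducible case), extends it to a continuous action of $\prod_i G_i$ on $Y^{(l)}$; as $Y^{(l)}$ is irreducible and non-Euclidean, this action factors through the projection to a single $G_{\sigma(l)}$. Using properness of $\Gamma^{*}\curvearrowright Y$ and coboundedness of $\operatorname{proj}_{\sigma(l)}(\Gamma^{*})$ in $Y^{(l)}$, one checks that $G_{\sigma(l)}$ acts properly and cocompactly on $Y^{(l)}$, so by the Milnor--Schwarz lemma for locally compact groups $Y^{(l)}$ is quasi-isometric to $Y_{\sigma(l)}$, hence to $X_{\sigma(l)}$. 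In particular $Y^{(l)}$ is a proper non-elementary hyperbolic $\CAT(0)$ space, so $\partial Y^{(l)}\cong\partial X_{\sigma(l)}$, since the Gromov boundary is a quasi-isometry invariant of proper hyperbolic spaces and coincides with the visual boundary for proper $\CAT(0)$ hyperbolic spaces. A standard quasi-isometry argument then gives $m=n$ and shows that $\sigma$ is a bijection. Finally, using the classical identification of the visual boundary of a finite product of proper $\CAT(0)$ spaces with the spherical join of the factor boundaries,
\[
\partial Y=\partial\!\left(\prod_{l=1}^{n}Y^{(l)}\right)\cong\partial Y^{(1)}*\dots*\partial Y^{(n)}\cong\partial X_1*\dots*\partial X_n,
\]
which is independent of the choice of $Y$, as required.

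The step I expect to be the main obstacle is the matching argument: correctly invoking the superrigidity of \cite{monod2006superrigidity,capracemonod2009strucure} for a (possibly reducible) cocompact lattice in a product of Lie and totally disconnected groups, verifying its hypotheses --- non-amenability of the $G_i$, and irreducibility, non-elementarity and coboundedness of the factor actions --- and then extracting that each $Y^{(l)}$ is proper, cocompactly acted on by a single $G_{\sigma(l)}$, and hence quasi-isometric to exactly one $X_i$, together with checking that $\sigma$ is a bijection. By contrast, ruling out the Euclidean factor and computing the join of the boundaries should be routine.
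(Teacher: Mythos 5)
The step you describe as routine is in fact the one that breaks. Your argument that $Y$ has no Euclidean de Rham factor rests on the claim that a nontrivial factor $\bbE^{a}$ would force a finite-index subgroup of $\Gamma^{*}$ to have infinite centre. That implication is false: a geometric action on $\bbE^{a}\times Y'$ only forces the projection of $\Gamma^{*}$ to $\Isom(\bbE^{a})$ to be cocompact, not discrete, and the original Caprace--Monod statement in this direction was corrected precisely on this point in \cite{caprace2019erratum} --- the corrected conclusion is only that $\Gamma$ contains a \emph{commensurated} subgroup isomorphic to $\bbZ^{a}$, not a virtually central or virtually normal one. The Leary--Minasyan groups \cite{learyminasyan2021commensurating}, which act geometrically on $\bbR^{n}\times T$ with trivial centre and no virtually normal $\bbZ^{n}$, are explicit counterexamples to the implication you use (they are even discussed in this paper). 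So your finite-centre computation, while correct, does not rule out the Euclidean factor. The paper's Lemma \ref{lem:noeuc_derham} closes exactly this gap: it invokes \cite[Theorem 2]{caprace2019erratum} to get a commensurated $\bbZ^{a}\leq\Gamma$, and then uses that the commensurated amenable subgroup must either have bounded orbits on every hyperbolic factor $Y_i$ (contradicting properness of $\Gamma^{*}\curvearrowright\Pi Y_i$) or fix a set of at most two points of some $\partial Y_i$ that is then $\Gamma^{*}$-invariant, contradicting the ``Moreover'' clause of Theorem \ref{thm:hyp_productmain} that $\Gamma^{*}$ fixes no point of $\partial Y_i$. That fixed-point-free input is essential and does not appear in your outline.

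Two further points where your route is thinner than it needs to be. First, Monod's splitting and Caprace--Monod's superrigidity (Theorems \ref{thm:monod_split} and \ref{thm:monod_superrigid}) require a \emph{minimal} action with no fixed point at infinity and, for the latter, finite-dimensional boundary; the paper secures these by passing to the canonical $\Isom(Y)$-minimal convex subset $Y'\subseteq Y$ of Theorem \ref{thm:cat0_bdry} (which has $\partial Y'=\partial Y$ by cocompactness) and by combining the absence of a Euclidean factor with Theorem \ref{thm:cat0_bdry} to rule out fixed points in $\partial Y'$; you apply these theorems to $Y$ and its de Rham factors directly, where their hypotheses are not verified. Second, superrigidity only applies to irreducible lattices, and your claim that the extended action on each irreducible factor ``factors through a single $G_{\sigma(l)}$'' is asserted rather than proved; the paper instead runs an induction on $n$ with a dichotomy: if the lattice is irreducible it extends the action via Theorem \ref{thm:monod_superrigid} and splits $Y'$ equivariantly by Theorem \ref{thm:monod_split}, and if not it splits $\Gamma$ virtually as $\Gamma_1\times\Gamma_2$ by Proposition \ref{prop:irred} and applies the inductive hypothesis. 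Your matching step would need to be restructured along these lines (or the per-factor factorisation proved) before the final join computation, which is indeed the easy part.
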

\subsection*{Outline}
Section \ref{sec:preliminaries} contains preliminary results. Section \ref{sec:coarse geom}  consists of elementary definitions and properties concerning coarse geometry and quasi-actions. In Section \ref{sec:alnorm}, we prove some basic facts about commensurated subgroups and the geometry of pairs of groups. In Section \ref{sec:cbundles}, we define coarse bundles as used in \cite{whyte2010coarse,margolisxer2021geometry}, and reformulate results of Kapovich--Kleiner--Leeb and Kleiner--Leeb in the language of coarse bundles \cite{kapovichleeb97qidecomp,kleinerleeb09quasiactions}; this material will not be used till Sections \ref{sec:hyp_prod} and \ref{sec:qicent}.  In Section \ref{sec:topgps},  we summarise  some  facts  about the metric geometry of locally compact groups.

In Sections \ref{sec:disc_quasi_actions} and \ref{sec:topcomp} we develop the general theory of discretisable quasi-actions. In Section \ref{sec:disc_quasi_actions} we prove some basic results about  coarse stabilisers, proving Proposition \ref{prop:relms_intro} and showing the equivalence of the first two items of Proposition \ref{prop:disc_equiv_intro}. In Section \ref{sec:topcomp}, we throw topological groups into the mix,  defining and developing the notion of a topological completion of a quasi-action. The basic theory of topological completions is developed in Section \ref{sec:geom_topcomp}, whilst Section \ref{sec:topcomp_unique} shows topological completions are essentially unique. In Section \ref{sec:topcomp_disc} we give a  topological characterisation of discretisable quasi-actions, thus completing our proof of Proposition \ref{prop:disc_equiv_intro}. In Section \ref{sec:tame}, we  prove Theorem \ref{thm:topcomp_acyl_intro}. Section \ref{sec:qmorph}  demonstrates that non-trivial quasi-morphisms give rise to  quasi-actions on $\bbR$ that do not have topological completions. 

	In Sections \ref{sec:qaction_hyp} -- \ref{sec:qicent} we specialize to  quasi-actions on hyperbolic spaces. In Section \ref{sec:qaction_hyp} we prove Theorem \ref{thm:trichotomyhyp_intro} and its corollaries. We make use of results of Caprace--Cornulier--Monod--Tessera on the structure of locally compact hyperbolic groups \cite{caprace2015amenable}. In Section \ref{sec:hyp_prod} we investigate the structure of groups quasi-isometric to the product of hyperbolic spaces, proving Theorems \ref{thm:hyp_productintro}--\ref{thm:boundary_rigidity_intro}. In Section \ref{sec:qicent} we characterise groups quasi-isometric to the products of a hyperbolic group with $\bbR^n$, proving Theorems \ref{thm:z-by-hyp}, \ref{thm:qicent_resfin_intro} and \ref{thm:qialcent_intro}.

\subsection*{Acknowledgements} The author would like to thank Michah Sageev,  Sam Shepherd, Emily Stark and Daniel Woodhouse for  helpful   discussions relating to this work.

\section{Preliminaries}\label{sec:preliminaries}

\subsection{Coarse geometric preliminaries}\label{sec:coarse geom}
In this subsection we give some basic definitions of coarse metric geometry. We develop  terminology and notation relating to quasi-actions, and prove  a generalisation of the Milnor--Schwarz lemma for non-proper quasi-actions.

\begin{defn}
	Let $(X,d)$ be a metric space. For any $r\geq 0$ and $A\subseteq X$, let \[N_r(A)\coloneqq \{x\in X\mid d(x,y)\leq r \text{ for some $y\in A$}\}.\] The \emph{Hausdorff distance} between any two subsets $A,B\subseteq X$ is defined to be \[d_\Haus(A,B)\coloneqq\inf\{r\mid A\subseteq N_r(B)\text{ and } B\subseteq N_r(A)\}.\]
\end{defn}

There are various notions of coarse maps between metric spaces that we use. 
\begin{defn}
	Let $f:X\rightarrow Y$ be a map between metric spaces.
\begin{itemize}
	\item Given constants $K\geq 1$ and $A\geq 0$, we say that $f$ is \emph{$(K,A)$-coarse Lipschitz} if 
	\[ d(f(x),f(x'))\leq Kd(x,x')+A\] 
	for all $x,x'\in X$. We say that $f$ is  \emph{coarse Lipschitz} if there exist constants $K\geq 1$ and $A\geq 0$ such that $f$ is  $(K,A)$-coarse Lipschitz.
	\item We say that $f$ is a \emph{$(K,A)$-quasi-isometric-embedding} if 
			\[\frac{1}{K}d(x,x')-A\leq d(f(x),f(x'))\leq Kd(x,x')+A\] 
		for all $x,x'\in X$.
	We say that $f$ is a \emph{quasi-isometric embedding} if there exist constants $K\geq 1$ and $A\geq 0$ such that $f$ is a $(K,A)$-quasi-isometric embedding.
	\item We say that $f$ is a \emph{$(K,A)$-quasi-isometry} if $f$ is a $(K,A)$-quasi-isometric embedding such that $Y=N_A(f(X))$. We say $f$ is a \emph{quasi-isometry} if there exist constants $K\geq 1$ and $A\geq 0$ such that $f$ is a $(K,A)$-quasi-isometry.
	\item We say that $f$ is a \emph{$K$-bi-Lipschitz equivalence}, if it is a $(K,0)$-quasi-isometry. We say that $f$ is a \emph{bi-Lipschitz equivalence} if it is a $K$-bi-Lipschitz equivalence for some $K\geq 1$.
	\item Given a proper non-decreasing function $\eta:\bbR_{\geq 0}\rightarrow \bbR_{\geq 0}$ and constants $K\geq 1$ and $A\geq 0$, we  say that $f$ is an \emph{$(\eta,K,A)$-coarse embedding} if  
		\[\eta(d(x,x'))\leq d(f(x),f(x'))\leq Kd(x,x')+A \] for all $x,x'\in X$.  We say that $f$ is a \emph{coarse embedding} if there exist $\eta$, $K$ and $A$  as above such that $f$ is an $(\eta,K,A)$-coarse embedding.
\end{itemize}
\end{defn}

\begin{rem}\label{rem:inv_control}
 For each proper non-decreasing function $\eta : \bbR_{\geq 0}\to \bbR_{\geq 0}$, we define
 $\tilde{\eta} : \bbR_{\geq 0}\to \bbR_{\geq 0}$ by $\tilde \eta(S) :=\sup(\eta^{-1} ([0, S]))$. We observe that
whenever $\eta(S) \leq R$, then $S \leq \tilde \eta(R)$. Conversely, if $S < \eta (R)$, then $\tilde \eta(S) \leq R$.
\end{rem}

Being quasi-isometric is an equivalence relation among metric spaces. If $G$ is a finitely generated group, then any two word metrics on $G$ with respect to arbitrary finite generating sets are bi-Lipschitz equivalent. Moreover, any two Cayley graphs of $G$ with respect to finite generating sets are quasi-isometric. Unless explicitly  stated otherwise, we assume finitely generated groups are equipped with a word metric with respect to a  finite generating set. If $G$ is finitely generated and $H\leq G$ is finitely generated, then the inclusion $H\to G$ is a coarse embedding when $H$ and $G$ are equipped with word metrics with respect to finite generating sets.

If $X$ and $Y$ are metric spaces, two functions $f,g:X\to Y$  are said to be \emph{$A$-close} if $\sup_{x\in X}d(f(x),g(x))\leq A$, and are \emph{close} if they are $A$-close for some $A$. An \emph{$A$-coarse inverse} to $f:X\to Y$ is a function $h:Y\to X$ such that $h\circ f$ and $f\circ h$ are $A$-close to $\id_X$ and $\id_Y$ respectively. We say that $h$ is a \emph{coarse inverse} to $f$ if it is an $A$-coarse inverse for some $A$.
If $X$ is a metric space, then its \emph{quasi-isometry group} $\QI(X)$ consists of equivalence classes of quasi-isometries from $X$ to $X$, where $f,g:X\to X$ are equivalent if they are close. The quasi-isometry group $\QI(X)$ forms a group with multiplication defined by $[f][g]=[f\circ g]$. Moreover, if $h$ is a coarse inverse  to $f$, then $[f]^{-1}=[h]$.

 In a metric space $X$, an \emph{$A$-chain of length $n$} between $x,x'\in X$ consists of a sequence $x=x_0, x_1, \dots, x_n=x'$ in $X$ such that $d(x_{i-1},x_{i})\leq A$ for $i=1,\dots, n$. The following classes of  metric spaces  will be used throughout this article.
\begin{defn}
	Let $X$ be a metric space. 
	\begin{itemize}
		\item We say that $X$ is \emph{proper} if closed balls in $X$ are compact.
		\item We say that $X$ is \emph{locally finite} if for every $r\geq 0$ and $x\in X$, $\lvert N_r(x)\rvert<\infty$.
		\item   We $X$ is \emph{coarsely connected} if there exists a constant $A$ such that every $x,x'\in X$ can be joined by an $A$-chain.
		\item We say that $X$ is  \emph{$(K,A)$-quasi-geodesic} if any $x,x'\in X$ can be joined by a $A$-chain of length at most $Kd(x,x')+A$.   We say that $X$ is   \emph{quasi-geodesic} if there exists  $K,A\geq 1$ such that $X$ is  $(K,A)$-quasi-geodesic.
	\end{itemize}
\end{defn}
A metric space is  quasi-geodesic if and only if it is quasi-isometric to a geodesic metric space, i.e.\ to a space in which any pair of points can be joined by an isometrically embedded arc. The following lemma is well-known; see e.g.\ \cite[Proposition 1.A.1.]{cornulierdlH2016metric}.
\begin{lem}\label{lem:fg<->cc}
	Let $G$ be a finitely generated group equipped with the word metric with respect to a finite generating set. Let $H\leq G$ be a subgroup equipped with the subspace metric. Then $H$ is finitely generated if and only if $H$ is coarsely connected.
\end{lem}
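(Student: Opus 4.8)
The statement to prove is Lemma~\ref{lem:fg<->cc}: a subgroup $H \leq G$ of a finitely generated group $G$ (with word metric) is finitely generated if and only if it is coarsely connected in the subspace metric.

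The plan is to prove both implications directly from the definitions. First I would handle the easy direction: if $H$ is finitely generated, say by a finite set $S_H$, then any $h \in H$ can be written as a product $s_1 \cdots s_n$ of elements of $S_H^{\pm 1}$, and the partial products $h_0 = e, h_1 = s_1, \dots, h_n = h$ form a chain in $H$ whose consecutive elements satisfy $d_G(h_{i-1}, h_i) = |s_i|_G \leq A$ where $A := \max_{s \in S_H} |s|_G$. Since $d_G$ restricted to $H$ is the subspace metric, this exhibits an $A$-chain between $e$ and $h$, and by translation (left multiplication is an isometry of $G$) between any two elements of $H$. Hence $H$ is coarsely connected.

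For the converse, suppose $H$ is $A$-coarsely connected for some constant $A$. Let $S_H := \{h \in H : |h|_G \leq A\}$; this is a finite set since $G$ is finitely generated (closed balls in the word metric are finite). I claim $S_H$ generates $H$. Given $h \in H$, pick an $A$-chain $e = x_0, x_1, \dots, x_n = h$ with each $x_i \in H$ and $d_G(x_{i-1}, x_i) \leq A$. Then $s_i := x_{i-1}^{-1} x_i \in H$ and $|s_i|_G = d_G(x_{i-1}, x_i) \leq A$, so $s_i \in S_H$, and $h = x_n = s_1 s_2 \cdots s_n \in \langle S_H \rangle$. Thus $H = \langle S_H \rangle$ is finitely generated.

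There is no real obstacle here — the result is standard and the argument is a matter of unwinding definitions and using that left translations are isometries and that balls in a finitely generated group are finite. The only point requiring a modicum of care is making sure the $A$-chain realizing coarse connectedness can be taken \emph{inside} $H$ (which is exactly what the definition of coarsely connected applied to the metric space $H$ provides) so that the intermediate elements $x_i$ lie in $H$ and the quotients $x_{i-1}^{-1}x_i$ are genuinely elements of $H$. Since the paper explicitly cites \cite[Proposition 1.A.1.]{cornulierdlH2016metric} for this fact, I would present the short self-contained argument above and optionally remark that it also follows from that reference.
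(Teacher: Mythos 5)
Your proof is correct, and both directions are handled with the right amount of care (in particular, noting that the $A$-chains lie inside $H$, so the consecutive quotients $x_{i-1}^{-1}x_i$ are elements of $H$, and that the identity lies in $H$). The paper gives no proof of its own here --- it simply cites \cite[Proposition 1.A.1]{cornulierdlH2016metric} --- and your argument is precisely the standard unwinding of definitions that the cited reference records, so there is nothing substantive to compare.
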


A connected graph can be considered a metric space when equipped  with the induced path metric in which each edge has length one. A graph is \emph{locally finite} if each vertex is incident to only finitely many edges. A connected graph is a locally finite graph if and only if it is a proper metric space. We caution that a locally finite graph with at least one edge is not locally finite as a metric space. 
\begin{prop}[{\cite[Proposition 3.D.11]{cornulierdlH2016metric}}]\label{prop:coarse_proper}
	Let $X$ be a metric space. The following are equivalent:
	\begin{enumerate}
		\item $X$ is quasi-isometric to a proper  quasi-geodesic metric space;
		\item $X$ is quasi-isometric to a quasi-geodesic locally finite metric space;
		\item $X$ is quasi-isometric to a locally finite connected graph.
	\end{enumerate}
\end{prop}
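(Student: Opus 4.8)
The plan is to prove the cycle of implications $(1)\Rightarrow(2)\Rightarrow(3)\Rightarrow(1)$. The implication $(3)\Rightarrow(1)$ I would dispose of immediately: a locally finite connected graph, with its path metric, is a proper geodesic---hence proper quasi-geodesic---metric space, and quasi-isometry is transitive.

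For $(1)\Rightarrow(2)$, I would start from a proper $(K,A)$-quasi-geodesic space $Z$ quasi-isometric to $X$ and replace it by a net. Fix $\varepsilon>0$ and, via Zorn's lemma, let $Y\subseteq Z$ be a maximal $\varepsilon$-separated subset, equipped with the restricted metric. Maximality forces $Z=N_\varepsilon(Y)$, so the inclusion $Y\hookrightarrow Z$ is an isometric embedding with $\varepsilon$-dense image, hence a $(1,\varepsilon)$-quasi-isometry, and $Y$ is quasi-isometric to $X$. To see $Y$ is locally finite I would argue: for $y\in Y$ and $r\ge 0$, the set $N_r(y)\cap Y$ consists of points of $Z$ lying in the compact---hence totally bounded---ball $\overline{B}(y,r)$ and pairwise at distance at least $\varepsilon$, so it is finite. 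Finally $Y$ is quasi-geodesic by a chain-pushing argument: given $y,y'\in Y$, take an $A$-chain $y=z_0,\dots,z_n=y'$ in $Z$ with $n\le Kd(y,y')+A$, choose $\tilde z_i\in Y$ with $d(z_i,\tilde z_i)\le\varepsilon$ and $\tilde z_0=y$, $\tilde z_n=y'$; then $\tilde z_0,\dots,\tilde z_n$ is an $(A+2\varepsilon)$-chain in $Y$ of length at most $Kd(y,y')+A$.

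For $(2)\Rightarrow(3)$, starting from a locally finite $(K,A)$-quasi-geodesic space $Y$, I would build the graph $\Gamma$ with vertex set $Y$, joining distinct $y,y'$ by an edge whenever $d_Y(y,y')\le A$. Local finiteness of $Y$ bounds the valence of $\Gamma$, and the quasi-geodesic hypothesis supplies an $A$-chain between any two points, so $\Gamma$ is connected. The vertex inclusion $\iota:Y\hookrightarrow\Gamma$ is then a quasi-isometry: an edge-path of combinatorial length $m$ has $d_Y$-length at most $mA$, giving $d_Y(y,y')\le A\,d_\Gamma(\iota y,\iota y')$; a shortest $A$-chain in $Y$ of length $\le Kd_Y(y,y')+A$ is an edge-path in $\Gamma$, giving the reverse inequality; and every point of $\Gamma$ is within $\tfrac{1}{2}$ of a vertex. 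So $\Gamma$ is a locally finite connected graph quasi-isometric to $Y$, hence to $X$.

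The one genuinely load-bearing step will be the local finiteness of the net $Y$ in $(1)\Rightarrow(2)$: this is exactly where properness of $Z$ gets used, via total boundedness of its closed balls. The chain-pushing argument and the verification that $\Gamma$ is quasi-isometric to $Y$ are routine bookkeeping, and $(3)\Rightarrow(1)$ is trivial.
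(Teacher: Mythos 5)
The paper does not prove this proposition at all --- it is quoted verbatim from Cornulier--de la Harpe \cite[Proposition 3.D.11]{cornulierdlH2016metric} --- so there is no in-paper argument to compare against. Your proof is correct and is essentially the standard discretisation argument (a maximal $\varepsilon$-separated net in the proper space, made locally finite by total boundedness of compact balls, followed by the scale-$A$ graph on the net), which is also how the cited reference proceeds; the only point worth noting is the degenerate case $A=0$ in $(2)\Rightarrow(3)$, which you can avoid by taking edges at scale $\max(A,1)$.
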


\begin{lem}\label{lem:coarse_lip}
	Let $X,Y$ be metric spaces such that $X$ is $(K,A)$-geodesic. Suppose $f:X\to Y$ is a map and $B$ is a constant  such that  $d_Y(f(x),f(x'))\leq B$ whenever $d_X(x,x')\leq A$. Then $f$ is coarse Lipschitz.
\end{lem}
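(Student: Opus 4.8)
The plan is to prove Lemma \ref{lem:coarse_lip} directly from the definition of $(K,A)$-quasi-geodesic by counting steps along a chain. First I would fix $x,x'\in X$ and invoke the hypothesis that $X$ is $(K,A)$-quasi-geodesic to obtain an $A$-chain $x=x_0,x_1,\dots,x_n=x'$ with $n\leq Kd_X(x,x')+A$. Since consecutive points satisfy $d_X(x_{i-1},x_i)\leq A$, the hypothesis on $f$ gives $d_Y(f(x_{i-1}),f(x_i))\leq B$ for each $i$. Then the triangle inequality yields
\[
d_Y(f(x),f(x'))\leq \sum_{i=1}^n d_Y(f(x_{i-1}),f(x_i))\leq nB\leq B\bigl(Kd_X(x,x')+A\bigr)=BK\,d_X(x,x')+BA.
\]
Hence $f$ is $(BK,BA)$-coarse Lipschitz (after, if one wishes, replacing $BK$ by $\max\{BK,1\}$ so that the leading constant is at least $1$), and in particular coarse Lipschitz.

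There is essentially no obstacle here: the only subtlety is the degenerate case $x=x'$, where one may take the trivial chain of length $n=0$, or simply note $d_Y(f(x),f(x))=0$ directly, and the bound holds vacuously. One should also be mildly careful that the constant $A$ appearing in ``$(K,A)$-quasi-geodesic'' is the same $A$ that bounds the chain steps and that is assumed in the hypothesis ``$d_Y(f(x),f(x'))\leq B$ whenever $d_X(x,x')\leq A$''; this matching of constants is exactly how the statement is phrased, so no adjustment is needed. The entire argument is a one-line chain estimate, so I would present it compactly rather than belabour the bookkeeping.
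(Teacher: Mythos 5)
Your proof is correct and is essentially identical to the paper's: both take the $A$-chain of length $n\leq Kd_X(x,x')+A$ guaranteed by the $(K,A)$-quasi-geodesic hypothesis and bound $d_Y(f(x),f(x'))\leq nB\leq BKd_X(x,x')+AB$ via the triangle inequality. Nothing further is needed.
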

\begin{proof}
	Let $x,x'\in X$. Let $x=x_0,x_1,\dots, x_n=x'$ be an $A$-chain from $x$ to $x'$ with $n\leq Kd_X(x,x')+A$. Then  $d_Y(f(x),f(x'))\leq n B\leq BKd_X(x,x')+AB$. Thus $f$ is coarse Lipschitz.
\end{proof}

The following lemma is  a straightforward generalisation of \cite[Lemma 2.1]{farbmosher2000abelianbycyclic} from geodesic to quasi-geodesic metric spaces:
\begin{lem}[{ \cite[Lemma 2.1]{farbmosher2000abelianbycyclic}}]\label{lem:qi_subspace}
	Let $X$, $Y$, $Z$ and $W$ be proper quasi-geodesic metric spaces and let $s:Z\to X$ and $s':W\to Y$ be coarse embeddings. If $f:X\to Y$ is a quasi-isometry such that $d_\Haus(f(\im(s)),\im(s'))<\infty$, then there exists a quasi-isometry $h:Z\to W$ such that \[\sup_{z\in Z}d(f(s(z)),s'(h(z)))<\infty.\]
\end{lem}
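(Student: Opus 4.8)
The plan is to reduce to the geodesic case \cite[Lemma 2.1]{farbmosher2000abelianbycyclic} by composing with quasi-isometries to geodesic spaces. First I would invoke Proposition \ref{prop:coarse_proper} (or rather the standard fact it is built on) to fix geodesic metric spaces $X', Y', Z', W'$ together with quasi-isometries $\alpha\colon X\to X'$, $\beta\colon Y\to Y'$, $\gamma\colon Z\to Z'$, $\delta\colon W\to W'$; concretely one may take each primed space to be a Rips-type graph or a geodesic space quasi-isometric to the given one, using that all four spaces are proper quasi-geodesic. Choose coarse inverses $\bar\alpha,\bar\beta,\bar\gamma,\bar\delta$ for these maps.

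Next I would transport the given data across these quasi-isometries. Set $s_0 \coloneqq \alpha\circ s\circ \bar\gamma\colon Z'\to X'$ and $s_0'\coloneqq \beta\circ s'\circ\bar\delta\colon W'\to Y'$; since a composition of a coarse embedding with quasi-isometries on either side is again a coarse embedding (the upper bound is preserved because quasi-isometries are coarse Lipschitz, and the lower bound is preserved by composing the control functions appropriately, cf.\ Remark \ref{rem:inv_control}), both $s_0$ and $s_0'$ are coarse embeddings between geodesic spaces. Likewise $f_0\coloneqq \beta\circ f\circ \bar\alpha\colon X'\to Y'$ is a quasi-isometry, and the hypothesis $d_\Haus(f(\im s),\im s')<\infty$ together with the fact that quasi-isometries distort Hausdorff distance by a bounded amount gives $d_\Haus\!\big(f_0(\im s_0),\im s_0'\big)<\infty$. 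Here one uses that $\im s_0$ and $\alpha(\im s)$ are at finite Hausdorff distance (because $s\circ\bar\gamma\circ\gamma$ is close to $s$, as $\bar\gamma\circ\gamma$ is close to $\id_Z$ and $s$ is coarse Lipschitz), and similarly for the other images.

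Now apply \cite[Lemma 2.1]{farbmosher2000abelianbycyclic} to $f_0, s_0, s_0'$ to obtain a quasi-isometry $h_0\colon Z'\to W'$ with $\sup_{z'\in Z'} d\big(f_0(s_0(z')), s_0'(h_0(z'))\big)<\infty$. Finally I would pull back, setting $h\coloneqq \bar\delta\circ h_0\circ\gamma\colon Z\to W$, which is a quasi-isometry as a composition of quasi-isometries, and check that $\sup_{z\in Z} d\big(f(s(z)), s'(h(z))\big)<\infty$. This last estimate is a diagram-chase: starting from $d(f_0(s_0(\gamma z)), s_0'(h_0(\gamma z)))$ bounded, one applies $\bar\beta$ (coarse Lipschitz) to both sides, uses that $\bar\beta\circ f_0\circ s_0\circ\gamma$ is close to $f\circ s$ and $\bar\beta\circ s_0'\circ h_0\circ\gamma$ is close to $s'\circ h$, where each "closeness" follows by repeatedly inserting the near-identities $\bar\alpha\circ\alpha\sim\id$, $\bar\gamma\circ\gamma\sim\id$, $\bar\delta\circ\delta\sim\id$ and using that $f$, $s$, $s'$ are coarse Lipschitz. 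I expect the main (if entirely routine) obstacle to be the careful bookkeeping in this last step, ensuring that every composition one needs to call "close" genuinely is, given that the maps involved are only coarse Lipschitz rather than Lipschitz; there is no conceptual difficulty, since the whole point is that the quasi-geodesic hypothesis makes all the relevant maps coarse Lipschitz (Lemma \ref{lem:coarse_lip}), so closeness is preserved under post-composition with them.
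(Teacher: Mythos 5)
Your reduction is correct, but it is a genuinely different route from what the paper does: the paper gives no argument at all, simply citing \cite[Lemma 2.1]{farbmosher2000abelianbycyclic} and asserting that the generalisation from geodesic to quasi-geodesic spaces is straightforward — the implicit proof being the direct one, in which $h(z)$ is chosen so that $s'(h(z))$ lies uniformly close to $f(s(z))$ (possible by the Hausdorff-distance hypothesis), and the quasi-isometry property of $h$ is then checked from the coarse-embedding control functions, with the quasi-geodesicity of $Z$ and $W$ entering exactly through Lemma \ref{lem:coarse_lip} to upgrade ``controlled by a proper function'' to an affine bound. You instead invoke Proposition \ref{prop:coarse_proper} to replace all four spaces by quasi-isometric locally finite connected graphs (which are proper and geodesic), transport $s$, $s'$, $f$ and the Hausdorff-distance hypothesis across the chosen quasi-isometries, apply the geodesic-case lemma verbatim, and pull the resulting $h_0$ back; each transport step you describe (coarse embeddings are stable under pre- and post-composition with quasi-isometries, coarse Lipschitz maps preserve finite Hausdorff distance, near-identities can be inserted because all maps in sight are coarse Lipschitz) is sound, and $h=\bar\delta\circ h_0\circ\gamma$ is a quasi-isometry as a composition of such. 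What your approach buys is that you never have to reopen Farb--Mosher's argument — quasi-geodesicity is used only once, packaged inside Proposition \ref{prop:coarse_proper} — at the cost of the bookkeeping you acknowledge; the direct generalisation the paper has in mind is self-contained, avoids choosing auxiliary models and coarse inverses, and makes visible where quasi-geodesicity is genuinely needed (without it the induced map $h$ would a priori be only a coarse equivalence rather than a quasi-isometry, a point your route sidesteps because the graph models are geodesic).
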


We now prove  the  following coarse analogue of the Arzel\`a--Ascoli Theorem.

\begin{lem}[Coarse Arzel\`a--Ascoli Theorem]\label{lem:coarse AA}
	Let $X$ be a locally finite metric space. Suppose that for some $K\geq 1$ and $A\geq 0$, there exist a sequence of $(K,A)$-quasi-isometries $(f_i)$
	of $X$  such that $\{f_i(x_0)\mid i\in \bbN\}$ is bounded for some $x_0\in X$.
	Then there exists a $(K,A)$-quasi-isometry $f:X\rightarrow X$ and a subsequence $(f_{n_i})$  such that for all $x\in X$, there exists an $N_x$ such that $f_{n_i}(x)=f(x)$ for all $i\geq N_x$. In particular, $(f_{n_i})$ converges pointwise to $f$.
\end{lem}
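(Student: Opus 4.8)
The plan is to run a diagonal argument, exploiting local finiteness to make the "converges pointwise" conclusion into the stronger "eventually constant on each point" conclusion. First I would fix a basepoint $x_0$ and an enumeration $x_0, x_1, x_2, \dots$ of $X$ (permissible since a locally finite metric space that supports a $(K,A)$-quasi-isometry with the stated boundedness hypothesis is automatically countable — alternatively one may simply work with the countable subset generated by the relevant orbits and balls, but enumerating all of $X$ is cleanest). The hypothesis gives that $\{f_i(x_0)\}$ lies in some ball $N_R(x_0)$, which by local finiteness is a \emph{finite} set. Hence some value is attained infinitely often: there is an infinite subsequence along which $f_i(x_0)$ is constant. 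This is the base case of the diagonal extraction.

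Next I would iterate. Suppose we have extracted a nested sequence of infinite index sets $\bbN \supseteq S_0 \supseteq S_1 \supseteq \dots \supseteq S_k$ such that along $S_j$ the value $f_i(x_j)$ is constant. To pass to $S_{k+1}$: for $i \in S_k$ we have $d(f_i(x_{k+1}), f_i(x_0)) \leq K d(x_{k+1},x_0) + A$, and since $f_i(x_0)$ is a fixed point $y_0$ for $i \in S_k \subseteq S_0$, all the points $f_i(x_{k+1})$ ($i \in S_k$) lie in $N_{Kd(x_{k+1},x_0)+A}(y_0)$, again a finite set by local finiteness. So some value is attained infinitely often within $S_k$; let $S_{k+1} \subseteq S_k$ be the corresponding infinite set. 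Taking the diagonal sequence $n_i := \min S_i$ (or the $i$-th element of $S_i$), we get that $(f_{n_i})$ is a subsequence of the original sequence, and for each fixed $k$, once $i \geq k$ we have $n_i \in S_i \subseteq S_k$, so $f_{n_i}(x_k)$ equals the constant value along $S_k$. Define $f(x_k)$ to be that value; then $f_{n_i}(x_k) = f(x_k)$ for all $i \geq k =: N_{x_k}$, giving the "eventually equal" and hence pointwise-convergence statements.

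It remains to check that the pointwise limit $f$ is itself a $(K,A)$-quasi-isometry. For the quasi-isometric-embedding inequalities: given $x,x' \in X$, choose $i$ large enough that both $f_{n_i}(x) = f(x)$ and $f_{n_i}(x') = f(x')$ (take $i \geq \max\{N_x, N_{x'}\}$); then
\[
\frac{1}{K} d(x,x') - A \leq d(f_{n_i}(x), f_{n_i}(x')) = d(f(x), f(x')) \leq K d(x,x') + A,
\]
since each $f_{n_i}$ is a $(K,A)$-quasi-isometric embedding. For coarse density: fix $y \in X$; for each $i$ there is $x^{(i)}$ with $d(f_{n_i}(x^{(i)}), y) \leq A$. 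Here I must be a little careful, as the preimages $x^{(i)}$ need not stabilise. The fix is to note $d(x^{(i)}, x_0) \leq K d(f_{n_i}(x^{(i)}), f_{n_i}(x_0)) + KA \leq K(A + d(y, y_0)) + KA$ is bounded independently of $i$, so all $x^{(i)}$ lie in a fixed finite ball; passing to a further subsequence along which $x^{(i)}$ is a constant $x$, and then using $f_{n_i}(x) = f(x)$ for large $i$, we get $d(f(x), y) \leq A$. Hence $Y = X = N_A(f(X))$, so $f$ is a $(K,A)$-quasi-isometry.

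The main obstacle is this last density point: the naive "limit of surjective-up-to-$A$ maps is surjective-up-to-$A$" fails without an argument, precisely because the preimage points can wander; the resolution is the boundedness-of-preimages estimate above combined with local finiteness and a further subsequence extraction (which is harmless since we only need one preimage point for the fixed $y$). Everything else is a routine diagonal argument, and the quasi-isometric-embedding inequalities pass to the limit trivially because convergence is \emph{eventually exact} rather than merely asymptotic.
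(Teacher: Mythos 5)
Your proof is correct and follows essentially the same route as the paper: a diagonal extraction using local finiteness to get eventual pointwise equality with a $(K,A)$-quasi-isometric embedding $f$, followed by the same coarse-density argument bounding the preimages $x^{(i)}$ in a finite ball around $x_0$ and pigeonholing. The only cosmetic difference is that you interleave the finiteness of the orbit sets with the nested subsequence extraction, whereas the paper establishes finiteness of each $\{f_i(x)\}$ first and then extracts; the substance is identical.
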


\begin{proof}
	We first show that for every $x\in X$,  $T_x\coloneqq \{f_i(x)\mid i\in \bbN\}$ is finite. By hypothesis $T_{x_0}\subseteq B_R(f_0(x_0))$ for some sufficiently large $R$. For each $x\in X$, we thus see that for all $i\in \bbN$ \[d(f_0(x_0),f_i(x))\leq d(f_0(x_0),f_i(x_0))+d(f_i(x_0),f_i(x))\leq R+Kd(x,x_0)+A,\] and so   $T_x$ is bounded. Since $X$ is locally finite, each $T_x$ is  finite.
	
	As $X$ is locally finite and hence is countable,  we can pass to a subsequence $(f_{n_i})$ such that for each $x\in X$, the sequence $(f_{n_i}(x))$ is eventually constant and so eventually equal to some $f_x\in X$.  We define $f:X\to X$ by $f(x)=f_x$ and  claim that $f$ is a $(K,A)$-quasi-isometry. Indeed, for any $x,y\in X$,  $f(x)$ and $f(y)$ are equal to $f_{n_i}(x)$ and $f_{n_i}(y)$ respectively for $i$ sufficiently large. Since  each $f_{n_i}$ is a $(K,A)$-quasi-isometry, it follows that $f$ is a $(K,A)$-quasi-isometric embedding.
	
	Suppose $y\in X$ and let $y_0\coloneqq f(x_0)$.  Since each $f_{n_i}$ is a $(K,A)$-quasi-isometry, there exists a $z_i\in X$ such that $d(f_{n_i}(z_i),y)\leq A$. Thus for $i$ sufficiently large, $f_{n_i}(x_0)=y_0$ and so
	\begin{align*}d(z_i,x_0)\leq 
		K\Big( d(f_{n_i}(z_i),f_{n_i}(x_0))+A\Big) 
		\leq K\Big( d(y,y_0)+2A\Big)\eqqcolon R_y. 			
	\end{align*}
	Since $N_{R_y}(x_0)$ is finite, there exists an $x\in N_{R_y}(x_0)$ such that $d(f_{n_i}(x),y)\leq A$ for all $i$ sufficiently large.  Thus $d(f(x),y)\leq A$ and so $f$ is a $(K,A)$-quasi-isometry.
\end{proof}

\begin{defn}
	Let $G$ be a group and $X$ be a metric space. For $K\geq 1$ and $A\geq 0$, a  \emph{$(K,A)$-quasi-action} of $G$ on $X$ is a function $\phi:G\rightarrow X^X$ such that 
	\begin{enumerate}
		\item for all $g\in G$,  $\phi(g)$ is a $(K,A)$-quasi-isometry;
		\item for all $g,h\in G$ and $x\in X$, $d\big(\phi(h)(\phi(g)(x)),\phi(hg)(x)\big)\leq A$;
		\item for all  $x\in X$, $d(\phi(e)(x),x)\leq A$.
	\end{enumerate}
	We say that $\phi:G\rightarrow X^X$ is a \emph{quasi-action}  if it is a $(K,A)$-quasi-action for some $K$ and $A$.
	
	We say that a quasi-action $\phi:G\rightarrow  X^X$ is
	\emph{$r$-cobounded} if for every $x,y\in X$, there exists a $g\in G$ such that $d(g\cdot x,y)\leq r$. We say a quasi-action is \emph{cobounded} if it is $r$-cobounded for some $r\geq 0$. We say $\phi$ is \emph{proper} if for every $x\in X$ and $r\geq 0$, the set $\{g\in G\mid d(g\cdot x,x)\leq r\}$ is finite.
\end{defn}

To limit cumbersome notation, we will frequently suppress the actual quasi-action $\phi:G\rightarrow X^X$, simply denoting $\phi(g)(x)$ as $g\cdot x$.  Since function composition is associative, there is no need to use  brackets with this notation, but we emphasise that $h\cdot g\cdot x=\phi(h)(\phi(g)(x))$ should not be confused with $hg\cdot x=\phi(hg)(x)$. When we do not need to specify the quasi-action $\phi:G\rightarrow X^X$ explicitly, we simply write $G\qa X$ to denote a quasi-action of $G$ on $X$.

The following lemma follows easily from the definition of a quasi-action:
\begin{lem}\label{lem:qaction_gp}
	Suppose $G\qa X$ is a $(K,A)$-quasi-action. Then for any $g,g_1,g_2\in G$ and $x,y\in X$, 
	\[d(gg_1\cdot x,gg_2\cdot y)\leq d(g\cdot g_1\cdot x,g\cdot g_2\cdot y) +2A \leq Kd(g_1\cdot x, g_2\cdot y)+3A. \] 
\end{lem}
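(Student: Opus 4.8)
The plan is to unwind the definition of a $(K,A)$-quasi-action directly; no clever ideas are needed. The only subtlety is bookkeeping the difference between $gg_i\cdot x = \phi(gg_i)(x)$ and $g\cdot g_i\cdot x = \phi(g)(\phi(g_i)(x))$, which is precisely what axiom (2) controls.

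First I would apply axiom (2) of the definition of a quasi-action twice: once with the pair $(g,g_1)$ at the point $x$, giving $d(g\cdot g_1\cdot x,\, gg_1\cdot x)\leq A$, and once with the pair $(g,g_2)$ at the point $y$, giving $d(g\cdot g_2\cdot y,\, gg_2\cdot y)\leq A$. A single application of the triangle inequality then yields
\[
d(gg_1\cdot x,\, gg_2\cdot y)\;\leq\; d(g\cdot g_1\cdot x,\, g\cdot g_2\cdot y) + 2A,
\]
which is the first claimed inequality.

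For the second inequality, I would invoke axiom (1): $\phi(g)$ is a $(K,A)$-quasi-isometry, so in particular it is $(K,A)$-coarse Lipschitz. Applying this to the points $g_1\cdot x$ and $g_2\cdot y$ gives
\[
d(g\cdot g_1\cdot x,\, g\cdot g_2\cdot y) \;=\; d\big(\phi(g)(g_1\cdot x),\, \phi(g)(g_2\cdot y)\big)\;\leq\; Kd(g_1\cdot x,\, g_2\cdot y)+A.
\]
Chaining this with the previous estimate produces the bound $Kd(g_1\cdot x, g_2\cdot y)+3A$, completing the proof. There is no real obstacle here — the statement is essentially a formal consequence of the axioms, and the ``hard part'' is merely keeping the notation $g\cdot g_i\cdot x$ versus $gg_i\cdot x$ straight.
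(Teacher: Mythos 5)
Your proof is correct and is exactly the argument the paper has in mind: the paper states this lemma without proof, noting that it "follows easily from the definition of a quasi-action," and your two applications of axiom (2) plus the triangle inequality, followed by the $(K,A)$-quasi-isometry bound for $\phi(g)$, are precisely that routine verification.
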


\begin{defn}
	A  \emph{quasi-conjugacy} between two quasi-actions $G\qa X$ and $G'\qa Y$ consists of a  homomorphism $\rho:G\rightarrow G'$ and  a $(K,A)$-quasi-isometry $f:X\rightarrow Y$  such that for every $x\in X$ and $g\in G$, we have \[d(\rho(g)\cdot f(x),f(g\cdot x))\leq A.\]
	\end{defn}

If we wish to specify the homomorphism $\rho$ and the  constants $K$ and $A$ as above, we say that $f$ is a   \emph{$(\rho,K,A)$-quasi-conjugacy}. If we wish to specify exactly one of either the function $\rho$ or  the constants $K$ and $A$,  we  use the terms $\rho$-quasi-conjugacy and $(K,A)$-quasi-conjugacy as appropriate. If  $G=G'$ and no homomorphism $\rho:G\to G$ is explicitly mentioned, a quasi-conjugacy is always assumed to be an $\id_G$-quasi-conjugacy.

Quasi-conjugacies arise naturally in the context of group actions. Suppose $\rho:G\rightarrow \Isom(X)$ is an isometric  action and $f:X\rightarrow Y$ is a quasi-isometry with coarse inverse $\overline f$. Then $f$ is easily seen to be a quasi-conjugacy from the action $\rho:G\rightarrow \Isom(X)$ to the quasi-action $g\mapsto f\circ \rho(g)\circ \overline f$ on $Y$. 
Moreover, quasi-conjugacy is an equivalence relation among the collection of quasi-actions of a fixed group $G$.

The following proposition is a variant of  the well-known Milnor--Schwarz lemma, which has been called the \emph{fundamental observation of geometric group theory}. Similar generalisations to  actions that are not proper are present in the literature ---  see \cite[Theorem 4.C.5]{cornulierdlH2016metric} and \cite[Lemma 3.11]{aboottetal2019hypstructures} --- although the following theorem is  more  general as it is phrased in terms of quasi-actions.
\begin{prop}\label{prop:generalms}
	Suppose $(X,d)$ is a quasi-geodesic metric space and $G\qa X$ is a cobounded quasi-action. Fix  a basepoint $b\in X$. Then there exists an $R_0$ such that the following holds: 
	\begin{enumerate}
		\item For any $R\geq R_0$,  $G$ is generated by any set $S\subseteq G$ such that   \[\{g\in G\mid d(g\cdot b,b)\leq R_0\}\subseteq S\subseteq \{g\in G\mid d(g\cdot b,b)\leq R\}.\]
		\item Let $d_S$ denote the word metric on $G$ with respect to some $S$ as above. 
		Then the isometric left action $G\curvearrowright (G,d_S)$ is quasi-conjugate to  $G\qa X$ via the \emph{quasi-orbit map} $g\mapsto g\cdot b$. 
		In particular, $(G,d_S)$ is quasi-isometric to $X$.
	\end{enumerate} 
\end{prop}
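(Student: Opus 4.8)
The plan is to mimic the classical Milnor--Schwarz argument, taking care at each point where properness of the action would normally be invoked, since here we have only coboundedness of a quasi-action on a quasi-geodesic space. Fix the basepoint $b$ and suppose $G\qa X$ is a $(K,A)$-quasi-action that is $r$-cobounded, and that $X$ is $(K',A')$-quasi-geodesic; enlarge constants so that a single $A$ bounds everything. The first step is to choose $R_0$ large enough. I would set $R_0$ to be a constant (depending only on $K$, $A$, $r$ and the quasi-geodesic constants) that is at least $2r + 2A + 1$ and also large enough that the ``chain-lifting'' estimate below works; the precise value will emerge from the computation. Let $S$ be any set with $\{g : d(g\cdot b, b)\le R_0\}\subseteq S\subseteq \{g : d(g\cdot b, b)\le R\}$ for some $R\ge R_0$.

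For part (1), I would show $S$ generates $G$. Given $g\in G$, consider the quasi-orbit points $b$ and $g\cdot b$. Since $X$ is quasi-geodesic, join them by an $A$-chain $b = x_0, x_1, \dots, x_n = g\cdot b$ with $n\le Kd(g\cdot b, b)+A$. By coboundedness, pick $g_i\in G$ with $d(g_i\cdot b, x_i)\le r$, taking $g_0 = e$ and $g_n = g$ (the latter is legitimate since $d(g\cdot b, x_n) = 0\le r$; for $g_0$ note $d(e\cdot b, b)\le A\le r$ after enlarging $r$). Then $d(g_{i-1}\cdot b, g_i\cdot b)\le 2r + A$, and using Lemma \ref{lem:qaction_gp} with $g = g_{i-1}^{-1}$ one estimates $d(g_{i-1}^{-1}g_i \cdot b, b) \le Kd(g_i\cdot b, g_{i-1}\cdot b) + 3A + \text{(error from }g_{i-1}^{-1}\cdot g_{i-1}\cdot b \approx b)$, which is bounded by a constant depending only on $K, A, r$; choosing $R_0$ at least this constant, each $h_i \coloneqq g_{i-1}^{-1}g_i$ lies in $S$, and $g = g_n = h_1 h_2 \cdots h_n$. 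Hence $S$ generates $G$.

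For part (2), the quasi-orbit map $q: (G, d_S)\to X$, $g\mapsto g\cdot b$, is the candidate quasi-conjugacy (it is trivially coarsely equivariant for the left actions, since $q(hg) = hg\cdot b$ is within $A$ of $h\cdot(g\cdot b) = h\cdot q(g)$). I would check it is a quasi-isometry. The chain argument above shows $d_S(e, g)\le n\le Kd(g\cdot b, b) + A$, and applying this to $g_1^{-1}g_2$ and translating by Lemma \ref{lem:qaction_gp} gives $d_S(g_1, g_2)\le K'' d(g_1\cdot b, g_2\cdot b) + A''$ for suitable constants — this is the lower bound on $q$. The upper bound is easy: if $d_S(g_1, g_2) = n$ then $g_1^{-1}g_2 = s_1\cdots s_n$ with each $s_j\in S$, so $d(s_j\cdot b, b)\le R$, and iterating Lemma \ref{lem:qaction_gp} (or the quasi-action axiom) bounds $d(g_1^{-1}g_2\cdot b, b)$, hence $d(g_1\cdot b, g_2\cdot b)$, linearly in $n$. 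Coarse surjectivity of $q$ onto $X$ is exactly $r$-coboundedness. So $q$ is a quasi-isometry, giving the quasi-conjugacy and the final ``in particular'' statement.

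The main obstacle is bookkeeping rather than conceptual: without properness one cannot say the $R_0$-ball $\{g : d(g\cdot b, b)\le R_0\}$ is finite, so ``$S$ generates $G$'' must be extracted purely from the chain-and-coboundedness construction, and one must be vigilant that translating distance estimates between $X$ and $G$ via the non-associative quasi-action (recall $h\cdot g\cdot x \neq hg\cdot x$ in general) only ever introduces additive errors controlled by $A$ — this is precisely what Lemma \ref{lem:qaction_gp} is designed to handle, and the proof amounts to invoking it carefully at each translation step. The choice of $R_0$ must be made once, uniformly, before $S$ is chosen, so I would isolate the required lower bound on $R_0$ from the chain computation first and fix it.
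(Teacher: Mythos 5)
Your proposal is correct and follows essentially the same route as the paper: chains in the quasi-geodesic space combined with coboundedness to produce generators bounded by a uniform $R_0$, Lemma \ref{lem:qaction_gp} to control the translation errors, and the two-sided estimates on the quasi-orbit map together with coboundedness and the trivial equivariance to obtain the quasi-conjugacy. The only cosmetic difference is that the paper runs the chain argument directly between $g\cdot b$ and $k\cdot b$ for arbitrary $g,k$, obtaining generation and the bound $d_S(g,k)\leq Kd(f(g),f(k))+A$ in one step, whereas you chain from $b$ to $g\cdot b$ and then translate.
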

\begin{proof}
	We can choose constants $K\geq 1$ and $A\geq 0$ sufficiently large  such that the following hold:
	\begin{enumerate}
		\item $X$ is a $(K,A)$-quasi-geodesic metric space;
		\item $G\qa X$ is a $(K,A)$-quasi-action;
		\item for every $x\in X$, there exists a $g_x\in G$ such that $d(g_x\cdot b,x)\leq A$.
	\end{enumerate} 
	Set $R_0\coloneqq 3KA+3A$ and fix $R\geq R_0$ and $S\subseteq G$ as in the statement. Let $f:G\rightarrow X$ be the quasi-orbit map $g\mapsto g\cdot b$.
	
	Let $g,k\in G$. As $X$ is quasi-geodesic, there exists an $A$-chain $g\cdot b=x_0,x_1, \dots, x_m=k\cdot b$, where $m\leq Kd(g\cdot b,k\cdot b)+A$. 
	Let $g_0=g$, $g_m=k$ and $g_i\coloneqq g_{x_i}$ for $i\neq 0,m$. Thus for every $0\leq i\leq m$, we have $d(g_i\cdot b,x_i)\leq A$.  
	Using Lemma \ref{lem:qaction_gp}, we see that
	\begin{align*}
	d(b,g_{i-1}^{-1}g_i\cdot b)& \leq  Kd(g_{i-1}\cdot b, g_i\cdot b)+3A\\
	& \leq Kd(x_{i-1},x_i)+2KA+3A \leq 3KA+3A= R_0,
	\end{align*}
	and so $g_{i-1}^{-1}g_i\in S$. Since $g^{-1}k=(g_0^{-1}g_1)(g_1^{-1}g_2)\dots (g_{m-1}^{-1}g_m)$, we  deduce that $G$ is generated by $S$. 
	Moreover, since $m\leq Kd(g\cdot b,k\cdot b)+A$, we see  \begin{align}
	d_S(g,k)\leq K d(f(g),f(k))+A,\label{eqn:genms_qi1}
	\end{align} for all $g,k\in G$. 
	
	Now suppose $d_S(g,k)=n$, so we can write $g^{-1}k=s_1\dots s_n$ for some $s_1,\dots, s_n\in S$. Setting $t_i=s_1s_2\dots s_i$ for each $i$, we see by another application of Lemma \ref{lem:qaction_gp} that \[d(gt_{i-1}\cdot b, gt_{i}\cdot b) \leq Kd(b,s_i\cdot b)+3A\leq KR+3A\eqqcolon R'\] for every $1\leq i\leq n$. Thus  \begin{align}
		d(f(g),f(k))=d( g\cdot b,k\cdot b)\leq \sum_{i=1}^n d(gt_{i-1}\cdot b, gt_{i}\cdot b)\leq  R'n=R'd_S(g,k)\label{eqn:genms_qi2}
	\end{align} for all $g,k\in G$. Together, (\ref{eqn:genms_qi1}) and (\ref{eqn:genms_qi2}) imply that  $f:G\rightarrow X$ is a quasi-isometric embedding. 
	
	Since for every $x\in X$, $d(f(g_x),x)\leq A$, we see $f$ is a quasi-isometry.
	Finally, for all $g,k\in G$, we have $d(k\cdot f(g),f(kg))=d(k\cdot g\cdot b,kg\cdot b)\leq A.$ Therefore, $f$ is a quasi-conjugacy.
\end{proof}
By considering the action of $G$ on its Cayley graph with respect to a generating set $S$ as in Proposition \ref{prop:generalms}, we deduce:
\begin{cor}\label{cor:qa, qconj to isoma}
	An arbitrary cobounded quasi-action $G\qa X$ on a quasi-geodesic metric space is quasi-conjugate to an isometric and vertex-transitive action of $G$ on a connected (perhaps locally infinite) graph.
\end{cor}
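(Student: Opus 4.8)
The plan is to derive this directly from Proposition \ref{prop:generalms}. Fix a basepoint $b\in X$ and let $R_0$ be the constant provided by that proposition. Take $R=R_0$ and set $S\coloneqq\{g\in G\mid d(g\cdot b,b)\leq R_0\}$; by part (1) of Proposition \ref{prop:generalms}, $S$ generates $G$, and by part (2) the quasi-orbit map $g\mapsto g\cdot b$ is a quasi-conjugacy from the isometric left action $G\curvearrowright(G,d_S)$ to the given quasi-action $G\qa X$. Note that no properness hypothesis is assumed, so $S$ may well be infinite, which is why the resulting graph will only be claimed to be \emph{perhaps locally infinite}.

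Next I would pass from $(G,d_S)$ to the Cayley graph $Y$ of $G$ with respect to $S$, equipped with the path metric in which each edge has length one. Left multiplication gives an action of $G$ on $Y$ by graph automorphisms, hence by isometries; and since $G$ acts simply transitively on the vertex set of $Y$, this action is vertex-transitive. The inclusion of the vertex set $\iota\colon(G,d_S)\hookrightarrow Y$ is $G$-equivariant, is an isometric embedding (the word metric $d_S$ agrees with the restriction of the path metric to vertices), and is $1$-dense in $Y$ (every point of an edge lies within distance $\tfrac12$ of a vertex); hence $\iota$ is a $G$-equivariant quasi-isometry, i.e.\ a quasi-conjugacy from $G\curvearrowright(G,d_S)$ to $G\curvearrowright Y$.

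Finally, since quasi-conjugacy is an equivalence relation among quasi-actions of a fixed group $G$ (as recorded just after the definition of quasi-conjugacy), composing a coarse inverse of $\iota$ with the quasi-orbit map exhibits $G\curvearrowright Y$ as quasi-conjugate to $G\qa X$. As $Y$ is a connected — possibly locally infinite — graph on which $G$ acts isometrically and vertex-transitively, this proves the corollary. There is no genuine obstacle here: the only thing to verify by hand is the standard fact that the vertex inclusion into a Cayley graph is a $G$-equivariant quasi-isometry, and all the substantive content is already packaged inside Proposition \ref{prop:generalms}.
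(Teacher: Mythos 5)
Your argument is correct and is essentially the paper's own proof: the paper deduces the corollary by considering the action of $G$ on its Cayley graph with respect to a generating set $S$ as in Proposition \ref{prop:generalms}, exactly as you do. The only extra content in your write-up is the routine verification that the vertex inclusion into the Cayley graph is a coarsely $G$-equivariant quasi-isometry, which the paper leaves implicit.
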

Corollary \ref{cor:qa, qconj to isoma} is a general nonsense argument that cannot --- as far as the author is aware --- be used to say anything nontrivial about either the group $G$ or the space $X$. 
A more general statement of this sort was proven by Kleiner--Leeb \cite[Corollary 1.1]{kleinerleeb09quasiactions}. In contrast,  quasi-conjugating a quasi-action to an isometric action on a \emph{locally finite} graph is a   non-trivial matter, and as this article demonstrates,   has several non-trivial consequences.

\subsection{Commensurated subgroups and quotient spaces}\label{sec:alnorm}
In this subsection, we define and investigate the relative word metric and relative Cayley graphs of pairs $(G,H)$, where $G$ is a group and $H$ is a subgroup.   In the case $G$ is finitely generated relative to $H$, the relative Cayley graph  is well-defined up to quasi-isometry. The relative Cayley graph is shown to be locally finite if and only if $H$ is commensurated. We also characterise when a commensurated subgroup is commensurable to normal subgroup. 
The results and proofs in this subsection are minor variations of statements already in the literature, typically  stated under additional hypothesis that $G$ is  finitely generated or has the structure of a topological group; see e.g. \cite{kronmoller08roughcayley,connermihalik2013}. We do not make these assumptions here.

\begin{defn}
If $G$ is a group and $H\leq G$ is a subgroup, a  \emph{relative  generating set} of the pair $(G,H)$ is a set $S\subseteq G$ such that $S\cup H$ generates $G$. We say that \emph{$G$ is finitely generated relative to $H$} if $(G,H)$ has a finite relative generating set.
\end{defn}
\begin{rem}
If $G$ is  finitely generated, then it is finitely  generated relative to any subgroup $H\leq G$.
\end{rem}

Given a group $G$, a subgroup $H$ and a non-empty set $S\subseteq G$, we define   a simplicial  graph  $\Gamma_{G,H,S}$ with vertex  set
$\{gH\mid g\in H\}$ and edge set \[\{(gH,gsH)\mid g\in G, s\in S\}.\] In particular, $gH$ and $kH$ are joined by an edge if and only there exist $h,h'\in H$ and $s\in S$ such that either $ghs=kh'$ or $gh=kh's$. It is clear  $G$ acts on $\Gamma_{G,H,S}$ by graph automorphisms.

\begin{prop}
The graph $\Gamma=\Gamma_{G,H,S}$ is connected if and only if $S$ is a relative generating set of $(G,H)$.
\end{prop}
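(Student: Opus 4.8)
The statement to prove is:

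\begin{prop*}
The graph $\Gamma=\Gamma_{G,H,S}$ is connected if and only if $S$ is a relative generating set of $(G,H)$.
\end{prop*}

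The plan is to prove both implications by relating paths in $\Gamma$ starting at the base vertex $H$ to expressions of group elements as words in $S\cup H$. The key observation is that two cosets $gH$ and $kH$ are adjacent in $\Gamma$ precisely when $k \in gH S^{\pm 1} H$ (allowing $s$ or $s^{-1}$ depending on whether one reads the edge relation symmetrically; since the graph is undirected I would first note $gH \sim gsH$ also means $gsH \sim (gs)s^{-1}H = gH$, so adjacency is the relation $kH \in \{gH,\, gsH,\, gs^{-1}H : s\in S\}$, equivalently $g^{-1}k \in HSH \cup HS^{-1}H \cup H$). From this, by induction on path length, $kH$ is in the connected component of $H$ if and only if $k \in \langle H \cup S\rangle$, i.e.\ $k H \subseteq \langle H\cup S\rangle$.

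First I would prove the ``if'' direction: assume $S$ is a relative generating set, so every $g\in G$ can be written $g = h_0 s_1^{\epsilon_1} h_1 s_2^{\epsilon_2}\cdots s_n^{\epsilon_n} h_n$ with $h_i \in H$, $s_i \in S$, $\epsilon_i \in \{\pm1\}$. I would construct an explicit edge-path from $H$ to $gH$ by taking the vertices $v_j = h_0 s_1^{\epsilon_1}\cdots h_{j-1}s_j^{\epsilon_j} H$ for $j=0,\dots,n$ (with $v_0 = h_0 H = H$), and check that consecutive vertices $v_{j-1}$ and $v_j$ are joined by an edge: indeed $v_{j-1} = (h_0s_1^{\epsilon_1}\cdots h_{j-1})H$ and $v_j = (h_0 s_1^{\epsilon_1}\cdots h_{j-1}) s_j^{\epsilon_j} H$, which is exactly an edge of the form $(gH, gs^{\pm1}H)$ — using that $(gsH, gH)$ is an edge whenever $(g'H,g's H)$ is, via $g' = gs$. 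Finally $v_n = (h_0 s_1^{\epsilon_1}\cdots s_n^{\epsilon_n})H = gH$ since right-multiplying by $h_n\in H$ does not change the coset. Hence every vertex is connected to $H$, so $\Gamma$ is connected.

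For the ``only if'' direction: assume $\Gamma$ is connected. Let $G' = \langle H \cup S\rangle \leq G$. I claim the set $\mathcal{C} = \{gH : g \in G'\}$ is both open and closed (a union of connected components) — more simply, I would show $\mathcal{C}$ contains the component of $H$ and is closed under adjacency, hence equals $V(\Gamma)$. The vertex $H = eH$ lies in $\mathcal{C}$. If $gH \in \mathcal{C}$ (so $g\in G'$, as $gH\subseteq G'$) and $kH$ is adjacent to $gH$, then $g^{-1}k \in H S^{\pm1} H \cup H \subseteq G'$, so $k \in G'$ and $kH \in \mathcal{C}$. By connectedness $\mathcal{C} = V(\Gamma)$, so for every $g\in G$ we have $gH \in \mathcal{C}$, i.e.\ $g \in G'$. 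Therefore $G = \langle H\cup S\rangle$ and $S$ is a relative generating set.

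The argument is entirely elementary; the only minor subtlety — and the one place to be careful — is the bookkeeping of the edge relation, since the edges $(gH, gsH)$ are indexed by $g\in G$ and $s\in S$ but one must treat the graph as undirected, so that both "appending $s$" and "appending $s^{-1}$" to a word correspond to traversing an edge. I would handle this once at the outset by recording the symmetric description of adjacency: $v \sim w$ in $\Gamma$ iff $v = gH$, $w = kH$ with $g^{-1}k \in H(S\cup S^{-1}\cup\{e\})H$, with the convention that a genuine edge requires $v\neq w$. With that in hand both directions are a one-line induction on path length / word length.
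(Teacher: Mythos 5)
Your proof is correct and follows essentially the same route as the paper: both directions are the standard correspondence between edge-paths in $\Gamma_{G,H,S}$ and expressions of group elements as words in $H\cup S^{\pm 1}$, with the paper absorbing inverses by symmetrising $S$ at the outset and phrasing your closure-under-adjacency step as a telescoping product $g=(g_0^{-1}g_1)\cdots(g_{n-1}^{-1}g_n)$ along a path. The differences are only cosmetic bookkeeping, and your handling of the undirected edge relation is accurate.
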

\begin{proof}
Without loss of generality, we may assume  that $S$ is symmetric, i.e.\  $S=S\cup S^{-1}$.
We first suppose that $\Gamma$ is connected. For every $g\in G$, $H$ and $gH$ can be joined by an edge path $g_0H,g_1H,\dots, g_nH$ in $\Gamma$ where $g_0=1_G$ and $g_n=g$. For each $i$, there exist $h_i\in H$ and $s\in S$ such that $g_ih_isH=g_{i+1}H$, and so $g_i^{-1}g_{i+1}$ is contained in the subgroup $\langle H,S\rangle_G$. Thus $g=(g_0^{-1}g_1)(g_1^{-1}g_2)\dots (g_{n-1}^{-1}g_n)$ is also contained in $\langle H,S\rangle_G$.

Conversely, suppose  $G=\langle H,S\rangle_G$. Then every $g\in G$ can be written in the form  $h_1s_1h_2s_2\dots h_ns_nh_{n+1}$, where for each $i$, $h_i\in H$ and $s_i\in S$. Setting $g_i=h_1s_1h_2s_2\dots h_is_i$, we see that \[H,g_1H, g_2H, \dots, g_nH=gH\] is a path in $\Gamma$ from $H$ to $gH$, so $\Gamma$ is connected.
\end{proof}
If $S$ is a relative generating set, we call $\Gamma_{G,H,S}$ the \emph{relative Cayley graph} of the pair $(G,H)$.
The \emph{relative word metric} on $G/H$ is the metric $d_S$ such that $d_S(gH,kH)$ is the minimal number of edges from  $gH$ to $kH$ in the graph $\Gamma_{G,H,S}$.
Although the relative Cayley graph is sometimes called a (left) Schreier graph,  we do not use this terminology here to avoid confusion with the graph whose edge set is $\{(gH,sgH)\mid s\in S, g\in G\}$, also called the Schreier graph.  

\begin{prop}\label{prop:word metric bilip}
If $S$ and $S'$ are two finite relative  generating sets of the pair $(G,H)$, then $(G/H,d_S)$ and $(G/H,d_{S'})$ are bi-Lipschitz equivalent. Moreover, the relative Cayley graphs $\Gamma_{G,H,S}$ and $\Gamma_{G,H,S'}$ are quasi-isometric.
\end{prop}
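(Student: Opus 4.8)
The plan is to show that the identity map on the coset space $G/H$ is the asserted bi-Lipschitz equivalence, and then to obtain the statement about the relative Cayley graphs by comparing each graph with its vertex set. To begin, I would observe that replacing a relative generating set $S$ by $S\cup S^{-1}$ changes neither its finiteness nor the graph $\Gamma_{G,H,S}$ (the pair $(gs^{-1}H,gH)$ is already the edge $\big((gs^{-1})H,(gs^{-1})sH\big)$), so I may assume that both $S$ and $S'$ are symmetric. I also record that in $\Gamma_{G,H,S'}$ two cosets $aH$ and $bH$ are adjacent exactly when $bH=ahs'H$ for some $h\in H$ and $s'\in S'$.

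The key step is the uniform bound: there is a constant $M$ with $d_{S'}(gH,gsH)\le M$ for all $g\in G$ and $s\in S$. Since $S'\cup H$ generates $G$ and $S$ is finite, I fix for each $s\in S$ an expression $s=h_0s'_1h_1s'_2\cdots s'_{k_s}h_{k_s}$ with $h_i\in H$ and $s'_j\in S'$, and put $M:=\max_{s\in S}k_s$. For a fixed $g$ and $s$ I would then consider the sequence of cosets
\[gH=gh_0H,\quad gh_0s'_1H,\quad gh_0s'_1h_1s'_2H,\quad\ldots,\quad gh_0s'_1h_1\cdots s'_{k_s}H=gsH\]
and verify that it is an edge path of length $k_s\le M$ in $\Gamma_{G,H,S'}$: consecutive cosets differ by right multiplication of a representative by some element of $H$ followed by some element of $S'$, which is the adjacency relation recorded above, while the last coset equals $gsH$ because $h_{k_s}\in H$. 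The subtlety here — the step I expect to demand the most care, although it is not deep — is exactly the bookkeeping with left cosets in this path, since the argument breaks if an $H$-factor is absorbed on the wrong side.

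Granting this bound, the bi-Lipschitz claim follows by a standard argument. Given $gH$ and $kH$ with $d_S(gH,kH)=n$, choose an edge path $g_0H,\ldots,g_nH$ from $gH$ to $kH$ in $\Gamma_{G,H,S}$; since $S$ is symmetric each step can be written as $g_{i+1}H=g_ih_is_iH$ with $h_i\in H$ and $s_i\in S$, so the bound gives $d_{S'}(g_iH,g_{i+1}H)=d_{S'}(g_ih_iH,g_ih_is_iH)\le M$, and the triangle inequality yields $d_{S'}(gH,kH)\le M\,d_S(gH,kH)$. Exchanging the roles of $S$ and $S'$ produces a constant $M'$ with $d_S\le M'\,d_{S'}$, so the identity map is an $L$-bi-Lipschitz equivalence $(G/H,d_S)\to(G/H,d_{S'})$ with $L=\max(M,M')$.

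For the final assertion, $(G/H,d_S)$ is by definition the vertex set of the connected graph $\Gamma_{G,H,S}$ with its combinatorial metric, and the inclusion of this vertex set into $\Gamma_{G,H,S}$ equipped with its path metric is a $(1,\tfrac12)$-quasi-isometry, being isometric on vertices with $\tfrac12$-dense image. Hence $\Gamma_{G,H,S}$ is quasi-isometric to $(G/H,d_S)$ and, likewise, $\Gamma_{G,H,S'}$ is quasi-isometric to $(G/H,d_{S'})$; composing these with the bi-Lipschitz equivalence above shows $\Gamma_{G,H,S}$ and $\Gamma_{G,H,S'}$ are quasi-isometric.
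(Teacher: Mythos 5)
Your argument is correct and follows essentially the same route as the paper's proof: a uniform bound on the $d_{S'}$-distance across each $S$-edge (which the paper obtains by choosing $K$ with $d_{S'}(H,sH)\le K$ and $d_S(H,s'H)\le K$, implicitly using left-invariance, where you derive it explicitly from word expressions for the elements of $S$), followed by the triangle inequality, and then the observation that each relative Cayley graph is quasi-isometric to its vertex set with the relative word metric. The extra bookkeeping you do with the $H$-factors is exactly the content hidden in the paper's appeal to invariance, so the two proofs coincide in substance.
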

\begin{proof}
We pick $K\geq 1$ sufficiently large such that $d_S(H,s'H)\leq K$ for every $s'\in S'$ and  $d_{S'}(H,sH)\leq K$ for every $s\in S$. It follows from the triangle inequality that \[\frac{1}{K}d_{S'}(gH,kH)\leq d_S(gH,kH)\leq Kd_{S'}(gH,kH)\] for every $gH,kH\in G/H$.
Since $(G/H,d_S)$ and $(G/H,d_{S'})$ are bi-Lipschitz equivalent nets in  $\Gamma_{G,H,S}$ and $\Gamma_{G,H,S'}$ respectively, the relative Cayley graphs are quasi-isometric.
\end{proof}
One consequence of Proposition \ref{prop:word metric bilip} is that geometric properties of the relative word metric that are invariant under bi-Lipschitz equivalence depend only on the pair $(G,H)$ and not the choice of relative generating set.

We recall two subgroups $H,K\leq G$ are  \emph{commensurable} if the intersection $H\cap K$ has finite index in both $H$ and $K$. Commensurability is an equivalence relation among subgroups of $G$.  A subgroup $H\leq G$ is said to be \emph{commensurated}, denoted $H \alnorm G$, if every conjugate of $H$ is commensurable to $H$. A subgroup commensurable to a commensurated subgroup is also commensurated.
 Commensurated subgroups can be characterised as follows:
\begin{prop}[cf.\ {\cite[Theorem 4.5]{connermihalik2013}}]\label{prop:relcg_locfinite}
Suppose $S$ is a finite relative generating set of the pair $(G,H)$. Then $\Gamma=\Gamma_{G,H,S}$ is locally finite if and only if $H\alnorm G$. 
\end{prop}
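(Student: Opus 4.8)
The plan is to prove both implications by analysing the structure of the relative Cayley graph $\Gamma = \Gamma_{G,H,S}$ near the base vertex $H$. Observe first that since $G$ acts on $\Gamma$ by graph automorphisms and this action is transitive on vertices, local finiteness of $\Gamma$ is equivalent to the single condition that the vertex $H$ has finite valence, which in turn is equivalent to requiring that for every $s \in S$ (without loss of generality $S$ symmetric), the set of vertices $\{hsH \mid h \in H\}$ is finite. So the whole statement reduces to: $H$ is commensurated if and only if for each $s \in S$, the collection $\{hsH \mid h \in H\}$ of $H$-translates of the coset $sH$ is finite.

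For the direction ``$H \alnorm G \Rightarrow \Gamma$ locally finite'', I would fix $s \in S$ and count the cosets $hsH$ as $h$ ranges over $H$. Two elements $h, h' \in H$ give the same coset $hsH = h'sH$ precisely when $s^{-1}(h^{-1}h')s \in H$, i.e.\ when $h^{-1}h' \in H \cap sHs^{-1}$. Hence the number of distinct cosets of the form $hsH$ with $h \in H$ equals the index $[H : H \cap sHs^{-1}]$. Since $H$ is commensurated, $H$ and $sHs^{-1}$ are commensurable, so this index is finite; thus $H$ has finite valence in $\Gamma$ and, by vertex-transitivity of the $G$-action, $\Gamma$ is locally finite.

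For the converse ``$\Gamma$ locally finite $\Rightarrow H \alnorm G$'', suppose every vertex of $\Gamma$ has finite valence. Running the computation above in reverse, finite valence at $H$ forces $[H : H \cap sHs^{-1}] < \infty$ for every $s \in S$; applying the same argument with $s^{-1}$ (legitimate since $S$ is symmetric) gives $[H : H \cap s^{-1}Hs] < \infty$, and conjugating by $s$ shows $[sHs^{-1} : H \cap sHs^{-1}] < \infty$. Hence $H$ is commensurable to $sHs^{-1}$ for every $s \in S$. It then remains to propagate this from the relative generators to arbitrary elements of $G$. Since $S \cup H$ generates $G$, every $g \in G$ can be written as a product $g = h_0 s_1^{\pm 1} h_1 s_2^{\pm 1} \cdots s_n^{\pm 1} h_n$ with $h_i \in H$ and $s_i \in S$; I would argue by induction on $n$, using two elementary closure properties of the commensurability relation on subgroups of $G$: (i) if $H$ is commensurable to $K$ then $gHg^{-1}$ is commensurable to $gKg^{-1}$ for any $g$, and (ii) commensurability is transitive (and conjugation by an element of $H$ fixes $H$). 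Combining these, $gHg^{-1}$ is commensurable to $H$ for all $g$, so $H \alnorm G$.

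The main obstacle I anticipate is not any single step but getting the bookkeeping of the index computation exactly right — in particular being careful that local finiteness of $\Gamma$ genuinely translates into finiteness of \emph{both} $[H : H \cap sHs^{-1}]$ and $[H : H \cap s^{-1}Hs]$ (one needs the symmetry of $S$ here, or equivalently that edges in $\Gamma$ are undirected), since commensurability of $H$ with $sHs^{-1}$ requires control of the index on both sides. Once that symmetric statement is in hand, the inductive propagation along words in $S \cup H$ is routine, using only that the class of subgroups commensurable to $H$ is conjugation-equivariant and closed under the (transitive) commensurability relation.
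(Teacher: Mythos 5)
Your proof is correct and follows essentially the same route as the paper: reduce to the valence of the vertex $H$ by vertex-transitivity, identify the neighbour count with the index $[H:H\cap sHs^{-1}]$ for $s$ in the symmetric set $S$, and propagate commensurability from $S$ to all of $G$ using that $S\cup H$ generates. Your version merely makes explicit the index bookkeeping and the inductive propagation that the paper compresses into its final sentence.
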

\begin{proof}
We  assume without loss of generality that $S$ is symmetric. The set $\Lambda\coloneqq \{hsH\mid h\in H, s\in S\}$ is the set of  vertices of $\Gamma$ that are adjacent to $H$. Since $G$ acts transitively on $\Gamma$, we see that $\Gamma$ is locally finite if and only if  $\Lambda_s\coloneqq \{hsH\mid h\in H\}$ is finite for every $s\in S$.  Now $\Lambda_s$ is finite if and only if  $s^{-1}Hs$ is contained in finitely many left $H$-cosets. We thus see $\Gamma$ is locally finite if and only if $s^{-1}Hs$ and $H$ are commensurable for every $s\in S$. Since $H$ and $S$ generate $G$, $H \alnorm G$ if and only if $s^{-1}Hs$ is commensurable to $H$ for every $s\in S$. Thus $H \alnorm G$ if and only if $\Gamma$ is locally finite.
\end{proof}

\begin{rem}
	When $G$ is finitely generated, $\Gamma_{G,H}$ is quasi-isometric to the \emph{coned--off Cayley graph} as defined by Farb \cite{farb1998relhyp}.  The relative Cayley graph $\Gamma_{G,H}$ is thus hyperbolic precisely when $G$ is weakly hyperbolic relative to $H$, i.e.\ $G$ is  hyperbolic relative to $H$ in the sense of \cite{farb1998relhyp}. If $H\alnorm G$, then the bounded coset penetration property of \cite{farb1998relhyp} fails, so $G$ is never hyperbolic relative to $H$ in the sense of \cite{bowditch2012relhyp}.
\end{rem}

\begin{defn}
	If $H\alnorm G$ and $G$ is finitely generated relative to $H$, we call the left coset space $G/H$, equipped with the relative word metric, a \emph{quotient space}. We call the map $G\to G/H$ given by $g\mapsto gH$ the \emph{quotient map}.
\end{defn}
\noindent By Proposition \ref{prop:word metric bilip}, the quotient space is well-defined up to bi-Lipschitz equivalence.   We caution the reader that  the quotient map is not a homomorphism unless $H$ is  normal.

Given a commensurated subgroup $H$, a natural question to consider is how close $H$ is to a normal subgroup.  A subgroup $H\leq G$ is \emph{weakly separable} if it is the intersection of virtually normal subgroups of $G$ containing $H$.  Using a result of  Caprace et al.\ \cite{caprace_kropholler_reid_wesolek_2020}, we  show the following:
\begin{lem}\label{lem:weaksep}
	Let $G$ and $H\alnorm G$ be finitely generated groups. The following are equivalent:
	\begin{enumerate}
		\item $H$ contains a finite index subgroup that is normal in $G$;\label{item:weaksep_1}
		\item  $H$ is commensurable to a normal subgroup of $G$;\label{item:weaksep_2}
		\item $H$ is commensurable to a subgroup that is normal in a finite index subgroup of $G$;\label{item:weaksep_2.5}
		\item  $H$ is weakly separable in $G$. \label{item:weaksep_3}
	\end{enumerate}
\end{lem}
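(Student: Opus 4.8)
The plan is to establish the cycle of implications $(\ref{item:weaksep_1}) \Rightarrow (\ref{item:weaksep_2}) \Rightarrow (\ref{item:weaksep_2.5}) \Rightarrow (\ref{item:weaksep_3}) \Rightarrow (\ref{item:weaksep_1})$. The first two implications are immediate: a finite index subgroup of $H$ that is normal in $G$ is in particular a normal subgroup commensurable to $H$, and a normal subgroup is certainly normal in a finite index subgroup (namely $G$ itself). For $(\ref{item:weaksep_2.5}) \Rightarrow (\ref{item:weaksep_3})$, suppose $H$ is commensurable to $N$, where $N \vartriangleleft G^*$ for some finite index $G^* \leq G$. Since $N$ has finite index in $N \cap G^*$'s normaliser business aside, the core $\bigcap_{g\in G} gNg^{-1}$ taken over coset representatives is a finite-index subgroup of $N$ that is normal in $G$; then $H$ contains a finite index subgroup commensurable to this core. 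One checks that a subgroup containing a finite-index normal-in-$G$ subgroup is weakly separable: if $N_0 \vartriangleleft G$ has finite index and $N_0 \leq H$, then for $g \notin H$ the subgroup $HN_0 = H$ does not contain $g$, and finite-index subgroups are intersections of virtually normal subgroups via their normal cores, so $H$ is the intersection of the virtually normal subgroups $\{gH_0 : \text{suitable } H_0\}$ — more carefully, $H$ is itself virtually normal (it is commensurated and contains a finite-index normal-in-$G$ subgroup, hence has finitely many conjugates), so $H$ trivially equals the intersection of the single virtually normal subgroup $H$ containing $H$.

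The substantive implication is $(\ref{item:weaksep_3}) \Rightarrow (\ref{item:weaksep_1})$, and this is where the cited result of Caprace--Kropholler--Reid--Wesolek \cite{caprace_kropholler_reid_wesolek_2020} enters. Their theorem states (roughly) that a weakly separable commensurated subgroup of a group is commensurated ``in a uniform way'' — concretely, that weak separability forces the modular/commensuration data to be controlled by finite quotients, and in the finitely generated setting this upgrades to the conclusion that $H$ contains a finite-index subgroup normal in $G$. I would quote the precise statement from their paper (the relevant statement is their characterisation of when a weakly separable commensurated subgroup is ``co-normal'' up to finite index) and apply it directly: since $G$ and $H$ are finitely generated and $H \alnorm G$ is weakly separable, their result yields a finite-index $H_0 \leq H$ with $H_0 \vartriangleleft G$, which is exactly $(\ref{item:weaksep_1})$.

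The main obstacle is purely bookkeeping: matching the exact hypotheses of \cite{caprace_kropholler_reid_wesolek_2020} (their definition of weak separability, whether they require finite generation of $G$ or of $H$, whether their conclusion is phrased in terms of the relative profinite completion or directly in terms of a normal finite-index subgroup) with the statement here, and filling in the elementary normal-core arguments in $(\ref{item:weaksep_2.5}) \Rightarrow (\ref{item:weaksep_3})$ so that ``virtually normal'' is used consistently with the definition given just before the lemma. No genuinely new geometric input is needed beyond that citation; all other steps are routine group theory with finite-index cores.
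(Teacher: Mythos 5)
Your overall architecture matches the paper's: the easy implications are handled by elementary core/commensurability arguments, and the implication $(\ref{item:weaksep_3})\Rightarrow(\ref{item:weaksep_1})$ is delegated to Caprace--Kropholler--Reid--Wesolek (the paper cites their Corollary~7), exactly as you propose. The problem is your step $(\ref{item:weaksep_2.5})\Rightarrow(\ref{item:weaksep_3})$, which contains a genuine gap. Taking the core $\bigcap_{g\in G}gNg^{-1}$ does give a subgroup $N_0$ of finite index in $N$ (each conjugate is commensurable to $N$ because $N$, being commensurable to the commensurated subgroup $H$, is itself commensurated, and only finitely many conjugates occur) which is normal in $G$. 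But this only shows that $H$ is \emph{commensurable} to a normal subgroup of $G$; it does not show that $H$ \emph{contains} a finite-index subgroup normal in $G$. Your argument then silently upgrades the former to the latter: you write ``if $N_0\vartriangleleft G$ has finite index and $N_0\leq H$'' and later ``it is commensurated and contains a finite-index normal-in-$G$ subgroup,'' but the containment $N_0\leq H$ was never established, and there is no reason for it --- $H\cap N_0$ has finite index in both $H$ and $N_0$, yet $N_0$ need not lie in $H$. This containment is precisely the nontrivial content of passing from commensurable-to-normal to weakly separable: without a normal subgroup of $G$ \emph{inside} $H$, the quotient $G\to G/N_0$ fails to separate $H$ from elements of $HN_0\setminus H$, so weak separability does not follow.

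The paper closes this gap with a characteristic-subgroup trick that uses the finite generation hypothesis, and this is the ingredient your proposal is missing: given a normal subgroup $K\vartriangleleft G$ commensurable to $H$, note $K$ is finitely generated (being commensurable to the finitely generated $H$), and let $K_0$ be the intersection of all subgroups of $K$ of index $[K:H\cap K]$. Finite generation ensures there are only finitely many such subgroups, so $K_0$ has finite index in $K$; it is characteristic in $K$, hence normal in $G$; and it is contained in $H\cap K\leq H$. Thus $H$ really does contain a finite-index subgroup normal in $G$, and from there your concluding observation (the quotient by that subgroup witnesses weak separability, since now $HN_0=H$) goes through. A secondary, smaller issue: your parenthetical claim that such an $H$ ``has finitely many conjugates'' also does not follow in general (a finite subgroup of $G/N_0$ may have infinitely many conjugates), but it is not needed once the containment is in hand. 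With the characteristic-core step inserted, your proof coincides with the paper's.
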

\begin{proof}
	(\ref{item:weaksep_1}) $\implies$ (\ref{item:weaksep_2}) and (\ref{item:weaksep_2}) $\implies$ (\ref{item:weaksep_2.5}) are obvious.
	
	(\ref{item:weaksep_2.5}) $\implies$ (\ref{item:weaksep_2}): Suppose $H$ is commensurable to a subgroup $H'$ that is normal in some finite index subgroup of $G$. Then $H'$ has finitely many conjugates in $G$, each of which is commensurable to $H'$ as $H'$ is commensurated in $G$. Therefore $H''\coloneqq \cap_{g\in G} gH'g^{-1}$ is a finite index subgroup of $H'$ that is normal in $G$, hence  $H$ is commensurable to a normal subgroup.
	
	 (\ref{item:weaksep_2}) $\implies$ (\ref{item:weaksep_3}):	Suppose $K$ is a normal subgroup of $G$ commensurable to $H$. As $H$ is finitely generated, so is $K$. Moreover, $H\cap K$ is a finite index subgroup of $K$. Let $K_0$ be the intersection  of all subgroups of $K$ of index  $[K:H\cap K]$.  Since $K$ is finitely generated, $K_0$ is a finite index characteristic subgroup of $K$, hence is normal  in $G$. Moreover, $H$ contains  $K_0$ as a finite index subgroup, so that $H$ is virtually normal. Thus $H$ is weakly separable.
	 
	 (\ref{item:weaksep_3}) $\implies$ (\ref{item:weaksep_1}): If $H$ is weakly separable, then \cite[Corollary 7]{caprace_kropholler_reid_wesolek_2020} ensures $H$ contains a finite index subgroup that is normal in $G$. 
\end{proof}
Recall that a subgroup  $H\leq G$ is \emph{separable} if it is equal to the intersection of finite index subgroups of $G$ containing $H$. Moreover, $H$ is separable if and only if it is closed in the profinite topology of $G$, i.e. the topology on $G$ with basis the cosets of finite index normal subgroups.
The relation between weak separability and separability in the present context  is encapsulated in the following lemma:
\begin{lem}\label{lem:weaksepvssep}
	Let $G$ and $H\alnorm G$ be finitely generated groups.
	\begin{enumerate}
		\item If $H$ is separable, then it is weakly separable.\label{item:wsepvssep1}
		\item Suppose $H$ is weakly separable. Then $H$ is separable if and only if $G/H_0$ is residually finite, where $H_0$ is the  normal core of $H$.\label{item:wsepvssep2}
	\end{enumerate}
\end{lem}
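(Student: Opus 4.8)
The plan is to reduce both parts to elementary properties of the profinite topology, the only non-trivial input being Lemma~\ref{lem:weaksep}, which I will use to guarantee that $H_0$ has finite index in $H$.

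For part~(\ref{item:wsepvssep1}) I would first observe that every finite index subgroup $U\leq G$ is virtually normal: its normal core $\bigcap_{g\in G}gUg^{-1}$ is an intersection of finitely many finite index subgroups, hence is a finite index subgroup of $G$ that is normal in $G$ and contained in $U$. Thus if $H$ is separable, writing $H=\bigcap_i U_i$ with each $U_i$ a finite index subgroup containing $H$ exhibits $H$ as an intersection of virtually normal subgroups of $G$ containing $H$, which is precisely the definition of weak separability.

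For part~(\ref{item:wsepvssep2}) the first move is to record that, since $H\alnorm G$ is finitely generated and weakly separable, Lemma~\ref{lem:weaksep} supplies a finite index subgroup $N\leq H$ normal in $G$; then $N\leq H_0\leq H$, so $H_0$ is a finite index normal subgroup of $G$ contained in $H$ and $H/H_0$ is a \emph{finite} subgroup of $G/H_0$. For the ``if'' direction, assuming $G/H_0$ is residually finite, I would invoke the standard fact that a finite subgroup $F$ of a residually finite group $Q$ is separable: given $y\in Q\setminus F$, the finite set $\{f^{-1}y\mid f\in F\}$ avoids the identity, so there is a finite index normal subgroup $M\trianglelefteq Q$ missing it, and then $FM$ is a finite index subgroup of $Q$ containing $F$ but not $y$. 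Applying this to $F=H/H_0$ in $Q=G/H_0$ and pulling back along the quotient map $q\colon G\to G/H_0$ --- using that preimages of finite index subgroups have finite index, that $q^{-1}$ commutes with intersections, and that $q^{-1}(H/H_0)=H$ since $H_0\leq H$ --- writes $H$ as an intersection of finite index subgroups of $G$ containing it, so $H$ is separable. For the ``only if'' direction, assume $H$ is separable; as $H_0\trianglelefteq G$ it suffices to separate each $g\in G\setminus H_0$ from $H_0$ by a finite index subgroup containing $H_0$. Writing $H_0=\bigcap_{x\in G}xHx^{-1}$, pick $x$ with $x^{-1}gx\notin H$, use separability of $H$ to get a finite index subgroup $U\geq H$ with $x^{-1}gx\notin U$, and set $V:=xUx^{-1}$: this has finite index in $G$, does not contain $g$, and contains $xH_0x^{-1}=H_0$. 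Hence the intersection of the finite index subgroups of $G$ containing $H_0$ equals $H_0$, i.e.\ $G/H_0$ is residually finite.

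I do not expect a genuine obstacle: the argument is bookkeeping around the profinite topology. The one step that must not be glossed over is the appeal to Lemma~\ref{lem:weaksep} to ensure $[H:H_0]<\infty$; without it $H/H_0$ need not be finite and the separability-transfer step in the ``if'' direction breaks down, so the weak separability hypothesis is genuinely used.
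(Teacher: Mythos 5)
Your proof is correct and follows essentially the same route as the paper: reduce everything to the normal core $H_0$, which has finite index in $H$ by Lemma~\ref{lem:weaksep}, and then transfer separability between $H$, $H_0$ and residual finiteness of $G/H_0$ by standard profinite-topology arguments. The only difference is cosmetic --- you unpack steps the paper dismisses as immediate (e.g.\ passing through the finite subgroup $H/H_0$ of $G/H_0$ rather than noting that $H$ is a finite union of closed $H_0$-cosets), and your explicit flagging of where $[H:H_0]<\infty$ is needed matches the remark preceding the paper's proof.
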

Note that by Lemma \ref{lem:weaksep}, the normal core $H_0$ in (\ref{item:wsepvssep2}) is a finite index subgroup of $H$.
\begin{proof}
(\ref{item:wsepvssep1}): This is immediate from the definitions.

(\ref{item:wsepvssep2}):  It follows from the definitions that  $H_0$ is separable if and only if $G/H_0$ is residually finite. Thus it is enough to show $H$ is separable if only if $H_0$ is. First suppose $H$ is separable.  Since $H_0$ is the intersection of conjugates of $H$, it follows $H_0$ is separable. Conversely, suppose $H_0$ is separable. As $H_0$ is a finite index subgroup of $H$ and $H_0$ is closed in the profinite topology, $H$ is also closed in the profinite topology, hence is separable. \end{proof}

If $H$ is a group then a \emph{commensuration} of $H$ is an isomorphism $\phi:H_1\rightarrow H_2$ between finite index subgroups of $H$. Two commensurations are \emph{equivalent} if they agree on some finite index subgroup of $H$. Let $\Comm(H)$ be the set of equivalence classes of commensurations. We give $\Comm(H)$ the structure of a group by defining  for $[\phi],[\psi]\in \Comm(H)$, $[\phi][\psi]=[\phi'\circ \psi']$ for suitable representatives $\phi'\in[\phi]$ and $\psi'\in[\psi]$ such that $\phi'\circ \psi'$ is defined. 

There is a  homomorphism $\Delta:G\to \Comm(H)$ associated to a group $G$ containing a commensurated subgroup $H\alnorm G$, defined as follows.  For every $g\in G$, there exist finite index subgroups $H_1,H_2\leq H$ such that $gH_1g^{-1}=H_2$, and so conjugation by $g$ induces an element $\Delta(g)\in \Comm(H)$. The map $\Delta:G\rightarrow \Comm(H)$ is readily verified to be a well-defined homomorphism called the \emph{modular homomorphism}.

\subsection{Coarse bundles and quasi-isometries}\label{sec:cbundles}
In this subsection we recall the definition and basic properties of coarse bundles as used in  \cite{whyte2010coarse,margolisxer2021geometry}. 
\begin{defn}\label{defn:coarse bundle}
	Let $(X,d_X)$,  $(B,d_B)$ and $(F,d_F)$ be proper quasi-geodesic metric spaces. We say  \emph{$X$ is a coarse bundle over $B$ with fibre $F$} if there exist constants $K\geq 1$, $A\geq 0$, proper non-decreasing functions $\eta,\phi:\bbR_{\geq 0}\rightarrow \bbR_{\geq 0}$ and a map $p:X\rightarrow B$ such that:
	\begin{enumerate}
		\item \label{defn:coarse bundle 1} $p$ is $(K,A)$-coarse Lipschitz, i.e.\ for all $x,x'\in X$, \[d_B(p(x),p(x'))\leq Kd_X(x,x')+A;\]
		\item \label{defn:coarse bundle 2} for all $b\in B$, $D_b\coloneqq p^{-1}(b)$ is the \emph{fibre} of $X$ at $b$ and there is an $(\eta,\phi)$-coarse embedding $s_b:F\rightarrow X$ with $\im (s_b)= D_b$;
		\item \label{defn:coarse bundle 3} for all $b,b'\in B$,  $d_\Haus(D_b,D_{b'})\leq K d_B(b,b')+A$.
	\end{enumerate}
	When the above holds, we say that $p:X\rightarrow B$ is a \emph{$(K,A)$-coarse bundle}, or simply a \emph{coarse bundle}. We call $X$ the \emph{total space}, $B$  the \emph{base space} and $F$  the \emph{fibre}. 
\end{defn}

Our main source of coarse bundles is the following:

\begin{prop}[{\cite[Proposition 3.14]{margolisxer2021geometry}}]\label{prop:alnorm_cbundle}
	Let $G$ and $H\alnorm G$ be finitely generated groups equipped with the word metric. Let $G/H$ be the quotient space equipped with the relative word metric. Then the quotient map  $p:G\rightarrow G/H$ is a coarse bundle with fibre $H$.
\end{prop}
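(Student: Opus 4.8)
The plan is to verify the three axioms in Definition \ref{defn:coarse bundle} for the quotient map $p: G \to G/H$, $g \mapsto gH$, where $G$ carries a word metric with respect to a finite generating set $\Sigma$ and $G/H$ carries the relative word metric $d_S$ for a finite relative generating set $S$; since $G$ is finitely generated we may take $S = \Sigma$, so that a single generating set serves both metrics. First I would observe that $p$ is \textbf{$1$-Lipschitz}: if $d_\Sigma(g,g') = n$ then a word $g^{-1}g' = s_1\cdots s_n$ gives an edge path $gH, gs_1H, \dots, gs_1\cdots s_nH = g'H$ in $\Gamma_{G,H,S}$, so $d_S(gH,g'H) \le n$. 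This establishes axiom (\ref{defn:coarse bundle 1}) with $K=1$, $A=0$.

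Next, for axiom (\ref{defn:coarse bundle 2}), the fibre over the coset $gH$ is $D_{gH} = p^{-1}(gH) = gH \subseteq G$, which is precisely the left coset. I would define $s_{gH}: H \to G$ by $h \mapsto gh$, where $H$ is equipped with the subspace metric from $G$ (equivalently, by Lemma \ref{lem:fg<->cc}, a word metric on $H$, which is finitely generated since it is commensurated hence — wait, one needs $H$ finitely generated, which is a hypothesis). Left translation by $g$ is an isometry of $(G,d_\Sigma)$, so $s_{gH}$ is an isometric embedding of $(H,d_\Sigma|_H)$ onto $D_{gH}$; in particular it is a coarse embedding with the same control functions as the inclusion $H \hookrightarrow G$, uniformly in $gH$. (If instead one wants $H$ with its own word metric, the inclusion $H \to G$ is a coarse embedding by the remark following Lemma \ref{lem:fg<->cc}, and composing with the isometry $L_g$ keeps the control functions uniform.) This gives axiom (\ref{defn:coarse bundle 2}).

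The remaining point, axiom (\ref{defn:coarse bundle 3}), is the only one requiring genuine use of the commensurated hypothesis, and I expect it to be the main obstacle; this is where local finiteness of $\Gamma_{G,H,S}$ (Proposition \ref{prop:relcg_locfinite}) enters. I would argue as follows. It suffices to show: if $d_S(gH, g'H) \le 1$, then $d_\Haus(D_{gH}, D_{g'H}) \le C$ for a constant $C$ depending only on $(G,H,S)$; the general case $d_S(gH,g'H) = n$ then follows by the triangle inequality for Hausdorff distance, giving $d_\Haus(D_{gH}, D_{g'H}) \le C\, d_S(gH,g'H)$, i.e.\ axiom (\ref{defn:coarse bundle 3}) with constant $C$. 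Since $G$ acts on $\Gamma_{G,H,S}$ by graph automorphisms and on $G$ by isometries in a compatible way ($L_g$ maps $D_{kH}$ to $D_{gkH}$), it is enough to treat the case $gH = H$, $d_S(H, g'H) \le 1$, i.e.\ $g'H$ ranges over the (finitely many, by local finiteness) neighbours of $H$ in $\Gamma_{G,H,S}$, say $g'H \in \{t_1H, \dots, t_mH\}$. For each such neighbour, $d_\Haus(H, t_jH)$ is finite: indeed $t_jH$ is within $d_\Sigma(1,t_j)$ of $H$ on one side, and conversely every element $h t_j h'$ — no, more carefully: $t_j H$ and $H$ being neighbours means $t_j \in HsH$ for some $s\in S$, so $t_j = h_1 s h_2$ with $h_i \in H$; then for any $h \in H$, $\mathrm{dist}_\Sigma(h, t_jH) \le \mathrm{dist}_\Sigma(h, h h_2^{-1} s^{-1} h_1^{-1}\cdot\text{(something in }t_jH))$ — the clean statement is that $H$ and $t_jH = h_1 s h_2 H = h_1 s H$ have $d_\Haus(H, h_1 s H) = d_\Haus(h_1^{-1}H, sH) = d_\Haus(H, sH)$ (using $h_1^{-1}H = H$ and left-isometry), and $d_\Haus(H, sH) < \infty$ because $sH \subseteq N_{|s|}(H)$ trivially, while $H \subseteq N_{r}(sH)$ with $r = |s| + \mathrm{diam}_\Sigma(\text{coset reps})$... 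Let me restate: taking $C := \max_{s \in S} d_\Haus(H, sH)$, which is finite since each $sH$ is commensurable-coset-related — actually $d_\Haus(H,sH)$ is finite simply because $H \subseteq sH \cdot \{s^{-1}h : h \in H \cap s^{-1}Hs\text{-type set}\}$; the genuinely needed input is that $H$ lies in a bounded neighbourhood of $sH$, equivalently $s^{-1}H s$ meets $H$ in finitely many cosets "near" $1$, which holds because $s^{-1}Hs$ and $H$ are commensurable (as $H \alnorm G$), so $s^{-1}Hs \subseteq N_D(H)$ for $D$ the max word-length of coset representatives of $H \cap s^{-1}Hs$ in $s^{-1}Hs$. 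Thus $C = \max_{s\in S}(|s| + D_s) < \infty$ and the inductive triangle-inequality argument completes axiom (\ref{defn:coarse bundle 3}). Finally I would remark that all constants and control functions chosen above depend only on $(G, H, S)$ and not on the particular cosets, so $p$ is a genuine coarse bundle in the sense of Definition \ref{defn:coarse bundle}, completing the proof.
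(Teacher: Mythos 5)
The paper itself offers no proof of this proposition --- it is imported verbatim from \cite[Proposition 3.14]{margolisxer2021geometry} --- so there is no in-paper argument to compare against; I will assess your proposal on its own terms. Your route is the natural one and is essentially correct: $p$ is $1$-Lipschitz via edge paths in $\Gamma_{G,H,S}$; the fibre over $gH$ is the left coset $gH$, and $L_g$ composed with the coarse embedding $H\hookrightarrow G$ gives the maps $s_{gH}$ of item (\ref{defn:coarse bundle 2}) with uniform control; and item (\ref{defn:coarse bundle 3}) reduces, by left translation and the triangle inequality for Hausdorff distance along a geodesic in the relative word metric, to bounding $d_\Haus(H,sH)$ for $s$ in the finite symmetric set $S$.

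Two corrections are needed. First, the claim that ``$sH\subseteq N_{|s|}(H)$ trivially'' is false: what is trivial is $Hs\subseteq N_{|s|}(H)$ for \emph{right} cosets, since right multiplication by $s$ moves each point by $|s|$. Both inclusions $sH\subseteq N_C(H)$ and $H\subseteq N_C(sH)$ require $H\alnorm G$, and by exactly the mechanism you invoke for the second: writing $H=\bigcup_{i}\bigl(H\cap s^{-1}Hs\bigr)c_i$ with finitely many representatives $c_i$, every $h=kc_i$ gives $sh=(sks^{-1})(sc_i)$ with $sks^{-1}\in H$, so $sH\subseteq N_{|s|+\max_i|c_i|}(H)$; since $S$ is symmetric, your argument deriving $H\subseteq N_{D+|s|}(sH)$ from $s^{-1}Hs\subseteq N_D(H)$ is the same statement applied to $s^{-1}$. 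So the gap closes in one line, but as written one of the two inclusions is dismissed as trivial at precisely the point where commensurability is used. Second, local finiteness of $\Gamma_{G,H,S}$ is not what item (\ref{defn:coarse bundle 3}) needs: your own observation that a neighbour of $H$ has the form $t_jH=h_1sH$, followed by translation by $h_1^{-1}$, makes the number of neighbouring cosets irrelevant. Where Proposition \ref{prop:relcg_locfinite} genuinely enters is in the standing hypothesis of Definition \ref{defn:coarse bundle} that the base be a proper quasi-geodesic space: properness of $G/H$ in the relative word metric is exactly local finiteness of the relative Cayley graph, which again uses $H\alnorm G$. With these two adjustments your proof is complete.
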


Another  (trivial) source of coarse bundles are spaces that split as direct products. We assume the direct product of metric spaces $X$ and $Y$ is equipped   with the $\ell_2$-metric $d((x,y),(x',y'))=\sqrt{d_X(x,x')^2+d_Y(y,y')^2}$ unless stated otherwise. The following is easy to verify:
\begin{lem}\label{lem:dirprod_cbundle}
	Suppose $B$ and $F$ are proper geodesic metric spaces, and $\pi_B:F\times B\rightarrow B$ is the projection onto the $B$ factor. Then $\pi_B:F\times B\rightarrow B$ is a coarse bundle with fibre $F$.
\end{lem}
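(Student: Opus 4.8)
The plan is to verify the three conditions of Definition \ref{defn:coarse bundle} directly, taking total space $X = F\times B$, base space $B$, fibre $F$, map $p=\pi_B$, constants $K=1$ and $A=0$, and control functions $\eta$ and $\phi$ both equal to the identity function. First I would note that $F\times B$, equipped with the $\ell_2$-metric, is indeed a proper quasi-geodesic metric space: a closed ball in $F\times B$ is a closed subset of a product of two closed balls, which is compact by Tychonoff's theorem, so $F\times B$ is proper; and the $\ell_2$-metric is bi-Lipschitz equivalent to the $\ell_1$-metric $d_F+d_B$, which is geodesic (concatenate a geodesic in each factor), so $F\times B$ is quasi-geodesic.

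For condition \ref{defn:coarse bundle 1}, I would observe that for all $(x,y),(x',y')\in F\times B$ we have $d_B(\pi_B(x,y),\pi_B(x',y')) = d_B(y,y') \leq \sqrt{d_F(x,x')^2+d_B(y,y')^2} = d((x,y),(x',y'))$, so $\pi_B$ is $(1,0)$-coarse Lipschitz. For condition \ref{defn:coarse bundle 2}, the fibre over $b\in B$ is $D_b = \pi_B^{-1}(b) = F\times\{b\}$, and the map $s_b\colon F\to F\times B$ given by $s_b(f)=(f,b)$ is an isometric embedding onto $D_b$, hence an $(\eta,\phi)$-coarse embedding with $\eta$ and $\phi$ the identity function. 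For condition \ref{defn:coarse bundle 3}, given $b,b'\in B$, every point $(f,b)\in D_b$ satisfies $d((f,b),(f,b'))=d_B(b,b')$ with $(f,b')\in D_{b'}$, and symmetrically, so $d_\Haus(D_b,D_{b'})=d_B(b,b')\leq d_B(b,b')$, which is the desired estimate with $K=1$, $A=0$.

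Since all constants and control functions are uniform in $b$, this shows $\pi_B\colon F\times B\to B$ is a $(1,0)$-coarse bundle with fibre $F$, proving the lemma. There is no real obstacle here: the only point deserving a line of justification is that the $\ell_2$-product of proper geodesic spaces remains proper and quasi-geodesic, which follows immediately from the bi-Lipschitz comparison with the $\ell_1$-metric together with Tychonoff's theorem.
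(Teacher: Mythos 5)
Your verification is correct and is precisely the routine check the paper omits (the lemma is introduced with ``The following is easy to verify''), so your argument coincides with the intended proof. One cosmetic remark: the $\ell_2$-product of two geodesic spaces is itself geodesic (traverse geodesics in both factors simultaneously with proportional parametrisations), so the detour through the bi-Lipschitz comparison with the $\ell_1$-metric is unnecessary, though harmless.
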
 

We are particularly interested in quasi-isometries of the total space that preserve fibres in the following sense.
\begin{defn}
	Let $p:X\rightarrow B$ and $p':X'\to B'$  be coarse bundles. We say that a quasi-isometry $f:X\rightarrow X'$ is \emph{$A$-fibre-preserving} if for every $b\in B$, there exists a $b'\in B'$ such that $d_\Haus(f(D_b),D'_{b'})\leq A$, where $D_b$ and $D'_{b'}$ are fibres of $p:X\rightarrow B$ and $p':X'\to B'$ respectively.  
	We say a quasi-action $\phi:G\to X^X$ is \emph{fibre-preserving} if there exists an $A$ such that every $\phi(g)$ is an $A$-fibre-preserving quasi-isometry.
\end{defn}

 \begin{prop}\label{prop:inducedqa}
	If $p:X\rightarrow B$ is a coarse bundle and  $G\qa X$ is a fibre-preserving quasi-action, then there is an \emph{induced quasi-action} $G\qa B$ satisfying  \[\sup_{b\in B}d_\Haus(D_{g\cdot b},g\cdot D_b)<\infty.\] Moreover, if $G\qa X$ is cobounded, then the induced quasi-action $G\qa B$ is also cobounded.
\end{prop}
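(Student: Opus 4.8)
The plan is to define the induced quasi-action on $B$ by transporting the quasi-action on $X$ through the fibres, using the coarse bijection between $B$ and the set of fibres $\{D_b : b\in B\}$. Since $G\qa X$ is $A_0$-fibre-preserving for some $A_0$, for each $g\in G$ and each $b\in B$ there exists a point of $B$ — call it $g\ast b$ — such that $d_\Haus(g\cdot D_b, D_{g\ast b})\leq A_0$; we make a choice of such a point for every pair $(g,b)$, so that $\psi(g)\colon b\mapsto g\ast b$ defines a function $B\to B$. First I would check that each $\psi(g)$ is a quasi-isometry of $B$ with uniform constants. This follows from condition (\ref{defn:coarse bundle 3}) in Definition \ref{defn:coarse bundle}: the Hausdorff distance between fibres is comparable to the base distance (bounded above by $Kd_B+A$), and combining this with a lower bound of the same shape — which I would extract by a standard coarse-bundle argument using that $p$ is coarse Lipschitz and the $D_b$ are uniformly coarsely separated — shows $d_\Haus(D_b,D_{b'})\asymp d_B(b,b')$. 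Since $g\cdot(\cdot)$ is a $(K_1,A_1)$-quasi-isometry of $X$, it distorts Hausdorff distances between the $D_b$ by a controlled amount, hence $\psi(g)$ distorts $d_B$ by a controlled amount; that $\psi(g)$ is coarsely onto follows because $g\cdot(\cdot)$ is coarsely onto and every point of $X$ lies in some fibre.

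Next I would verify the quasi-action axioms for $\psi\colon G\to B^B$. For $g,h\in G$ and $b\in B$, we have $d_\Haus(h\cdot g\cdot D_b,\, hg\cdot D_b)\leq (\text{const})$ because $G\qa X$ is a quasi-action (apply the quasi-action inequality pointwise and take Hausdorff distance over $D_b$, using that the relevant maps are uniform quasi-isometries so close on $D_b$ in the pointwise sense gives close in the Hausdorff sense). On the other hand $d_\Haus(h\cdot g\cdot D_b, h\cdot D_{g\ast b})\leq K_1 A_0 + A_1$ and $d_\Haus(h\cdot D_{g\ast b}, D_{h\ast(g\ast b)})\leq A_0$, while $d_\Haus(hg\cdot D_b, D_{(hg)\ast b})\leq A_0$. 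Chaining these, $D_{h\ast(g\ast b)}$ and $D_{(hg)\ast b}$ are within uniformly bounded Hausdorff distance, so by the equivalence $d_\Haus(D_b,D_{b'})\asymp d_B(b,b')$ the points $h\ast(g\ast b)$ and $(hg)\ast b$ are uniformly close in $B$; that is, $d_B(\psi(h)\psi(g)(b),\psi(hg)(b))$ is uniformly bounded. The identity axiom is similar and easier. The defining property $\sup_{b\in B}d_\Haus(D_{g\cdot b}, g\cdot D_b)<\infty$ holds by construction, since $d_\Haus(D_{g\ast b}, g\cdot D_b)\leq A_0$. For the final claim, if $G\qa X$ is $r$-cobounded, then given $b,b'\in B$ pick $x\in D_b$, $x'\in D_{b'}$; coboundedness gives $g$ with $d_X(g\cdot x, x')\leq r$, whence $d_B(p(g\cdot x), b')$ is bounded, and since $g\cdot x\in g\cdot D_b$ which is $A_0$-Hausdorff-close to $D_{g\ast b}$, we get $d_B(g\ast b, b')$ bounded; thus $G\qa B$ is cobounded.

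The main obstacle is the bookkeeping around the two-sided comparison $d_\Haus(D_b, D_{b'})\asymp d_B(b,b')$: Definition \ref{defn:coarse bundle} only directly supplies the upper bound in (\ref{defn:coarse bundle 3}), and a matching lower bound $d_\Haus(D_b,D_{b'})\geq \eta'(d_B(b,b'))$ for some proper $\eta'$ must be derived. I would obtain it from the coarse Lipschitz property of $p$ together with properness: if $d_\Haus(D_b,D_{b'})$ were small but $d_B(b,b')$ large, one could find a point $x'\in D_{b'}$ close to some $x\in D_b$, and then $d_B(b,b') = d_B(p(x),p(x'))\leq Kd_X(x,x')+A$ would be small, a contradiction. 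Once this comparison is in hand, all the quasi-action inequalities for $\psi$ are routine translations of the corresponding facts for $G\qa X$ via Hausdorff distance, and the coboundedness transfer is immediate. One should also note that $\psi$ is well-defined as a function only after the (non-canonical) choices of $g\ast b$; the quasi-action and quasi-conjugacy conclusions are insensitive to these choices precisely because different choices differ by a bounded amount in $d_B$, again by the comparison above.
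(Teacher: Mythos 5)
Your proposal is correct and follows essentially the same route as the paper: choose for each $(g,b)$ a point $g\ast b$ with $D_{g\ast b}$ uniformly Hausdorff-close to $g\cdot D_b$, and convert bounded Hausdorff distance between fibres into bounded distance in $B$ using that $p$ is coarse Lipschitz, which is exactly how the paper verifies the quasi-action identities and coboundedness. The only difference is that the paper outsources the existence and uniform quasi-isometry constants of the induced maps $\lambda(g)$ to Lemma 3.6 of \cite{margolisxer2021geometry}, whereas you verify this directly via the two-sided comparison $d_\Haus(D_b,D_{b'})\asymp d_B(b,b')$ (and your correct derivation of the needed inequality $d_B(b,b')\leq K\,d_\Haus(D_b,D_{b'})+A$ uses only the coarse Lipschitz property of $p$; the remark about the fibres being coarsely separated is unnecessary).
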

\begin{proof}
	We pick constants $K$ and $A$ such that $p:X\rightarrow B$ is a $(K,A)$-coarse bundle, $G\qa X$ is a $(K,A)$-quasi-action and every $g\in G$ is an $A$-fibre-preserving quasi-isometry. Lemma 3.6 of \cite{margolisxer2021geometry} ensures there exist constants $K_1$ and $A_1$ such that for every $g\in G$, there exists a $(K_1,A_1)$-quasi-isometry $\lambda(g):B\rightarrow B$ such that $d_\Haus(D_{\lambda(g)(b)},g\cdot D_b)\leq A_1$ for all $g\in G$ and $b\in B$. 
		 Using Lemma \ref{lem:qaction_gp}, this implies  
		\begin{align*}
			d_\Haus(D_{\lambda({gh})(b)},D_{(\lambda(g)\circ\lambda(h))(b)})
			&\leq d_\Haus(gh\cdot D_{b},g\cdot D_{\lambda(h)(b)})+2A_1\\
			&\leq Kd_\Haus(h\cdot D_{b},D_{\lambda(h)(b)})+2A_1+3A\\
			&\leq KA_1+2A_1+3A\eqqcolon A_3
		\end{align*}
		for all $g,h\in G$ and $b\in B$. It follows from the definition of a $(K,A)$-coarse bundle that $\sup_{b\in B}d_B((\lambda(g)\circ\lambda(h))(b),\lambda(gh)(b))\leq KA_3+A$, and so $\lambda$ defines a quasi-action of $G$ on $B$.
	
	Now suppose $G\qa X$ is cobounded and choose $r$ such that for all $x,y\in X$, there exists $g\in G$ with $d_X( x,g\cdot y)\leq r$. Let $b,b'\in B$ and pick $x\in D_b$ and $y\in D_{b'}$. Choose $g\in G$ such that $d_X(x,g\cdot y)\leq r$, and note by the definition of $\lambda(g)$ there exists $z\in D_{\lambda(g)(b')}$ such that $d(z,g\cdot y)\leq A_1$. Since $d_X(x,z)\leq r+A_1$, $x\in D_b$ and $z\in D_{\lambda(g)(b')}$, we have $d_B(b,\lambda(g)(b'))\leq K(r+A_1)+A$. Thus the induced quasi-action $G\qa B$ is cobounded.
\end{proof}
\begin{defn}\label{defn:qitrivial}
	A coarse bundle $p:X\rightarrow B$ with fibre $F$ is \emph{quasi-isometrically trivial} if there is a quasi-isometry $X\rightarrow B\times F$ that is $A$-fibre preserving with respect to the projection $\pi:B\times F\rightarrow B$.
\end{defn}
We note that  if  $p:X\rightarrow B$ is a quasi-isometrically trivial coarse bundle with fibre $F$, then $X$ is quasi-isometric to $F\times B$.
A useful invariant to  determine whether a coarse bundle is quasi-isometrically trivial is the \emph{fibre-distortion function} of a coarse bundle, introduced by the author  and summarised below \cite{margolisxer2021geometry}. The fibre-distortion function is encoded in a more sophisticated coarse invariant called the \emph{holonomy} due to  Whyte \cite{whyte2010coarse}.

Let $p:X\rightarrow B$ be a coarse bundle with fibre $F$. For each $b\in B$, it follows from Definition \ref{defn:coarse bundle} that there exist coarse embeddings $s_b:F\rightarrow D_b$ and $t_b:D_b\rightarrow F$, with uniform distortion, such that $t_b\circ s_b$ and $s_b\circ t_b$ are uniformly close to the identity on $F$ and $D_b$ respectively. 
For each $b_0,b\in B$, since $D_{b_0}$ and $D_b$ are at finite Hausdorff distance, there is a closest point projection map $p_{b_0}^b:D_{b_0}\rightarrow D_b$.  We  thus define a map $\lambda_{b_{0}}^b:F\rightarrow F$ given by $t_{b}\circ p_{b_0}^b\circ s_{b_0}$.
  We now define the fibre-distortion function as follows:
\begin{defn}\label{defn:fibdis}
	If $f:X\rightarrow Y$ is a quasi-isometry, let \[\kappa(f)\coloneqq \inf \{K\geq 1 \mid \textrm{$f$ is a $(K,A)$-quasi-isometry for some $A\geq 0$} \}.\] 
		Suppose $p:X\rightarrow B$ is a coarse bundle with fibre $F$. Pick a basepoint $b_0\in B$.  For each $b\in B$, we define a function $\Lambda:B\rightarrow \mathbb{R}$ by $b \mapsto\log(\kappa( \lambda_{b_0}^b))$, which we call the \emph{fibre distortion function}.
\end{defn}
Before proving properties about the fibre-distortion function, we first give some examples.

\begin{exmp}\label{exmp:prod_trivial_fibre}
	Let $X=F\times B$ be a direct product of proper geodesic metric spaces. As in Lemma \ref{lem:dirprod_cbundle}, the projection $X\rightarrow B$ defines a coarse bundle structure. The fibre distortion function is trivial, since the closest point projection $p_{b}^{b'}:D_b\rightarrow D_{b'}$ is of the form $(f,b)\mapsto (f,b')$ and so $\kappa(p_{b}^{b'})=1$.
\end{exmp}

\begin{exmp}
	Let $G$ be a finitely generated group with $\mathbb{Z}\cong H=\lvert a\rvert \alnorm G$.  We can define a homomorphism $h:G\rightarrow \bbR$ such that for each $g\in G$, if there exist non-zero integers $m,n$ such that  $ga^ng^{-1}=a^m$, then  $h(g)=\log(\lvert\frac{m}{n}\rvert)$. This gives a well-defined homomorphism on $G$ called the \emph{height function}.  (See \cite{farbmosher1998bs1}, \cite{whyte2001baumslag}.) It follows from  \cite[Lemma 5.8]{margolisxer2021geometry} that the height function  descends to a well-defined map $G/H\rightarrow \bbR$ that is equal to the fibre distortion function of $G\rightarrow G/H$.
\end{exmp}

\begin{exmp}\label{exmp:fibre_abelian}
	Let $G$ be a finitely generated group with $\mathbb{Z}^n\cong H\alnorm G$. Let $B$ be a free abelian basis of $H$. Since $\Comm_G(H)\cong \text{GL}_n(\mathbb{Q})$, for each $g\in G$, there is a matrix $A_g\in \text{GL}_n(\mathbb{Q})$ such that conjugation by $g$ is equal to multiplication by $A_g$ on some finite index subgroup of $H$. Moreover, $A_g=A_{g'}$ if $g$ and $g'$ are in the same (left or right) $H$-coset.
	By Proposition \ref{prop:alnorm_cbundle}, the quotient map $q:G\rightarrow G/H$ is a coarse bundle. As shown in \cite[\S 5]{margolisxer2021geometry}, the fibre-distortion map is given by $gH\mapsto \lvert \log (\lVert A_g\rVert)\rvert $, where $\lVert \cdot \rVert$  is the $\ell_1$-operator norm.
	
	A subgroup of $\GL_n(\bbR)$ is bounded in the operator norm if and only if it is conjugate to a subgroup of $\Orth_n(\bbR)$. 
	Thus if $G$ is a group, $\bbZ^n\cong H\alnorm G$ is a commensurated subgroup and $\Delta:G\rightarrow \GL_n(\bbR)$ is the modular homomorphism, then $G\rightarrow G/H$ has bounded fibre distortion if and only if $\im(\Delta)$ is conjugate to a subgroup of $\Orth_n(\bbR)$.
\end{exmp}

We now justify  the use of the definite article in Definition \ref{defn:fibdis}.
\begin{prop}\label{prop:fib_dis_welldef}
	If $p:X\rightarrow B$ is a coarse bundle with fibre $F$, the fibre-distortion function $\Lambda$  is well-defined up to uniform additive error, independently of the choice of the basepoint  $b_0$ or the maps $\lambda_{b_{0}}^b$.
\end{prop}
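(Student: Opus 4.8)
The plan is to show that changing either the basepoint or the auxiliary maps alters $\Lambda$ only by a bounded additive amount, using two elementary observations about the quantity $\kappa$: first, that $\kappa(f\circ g)\le \kappa(f)\kappa(g)$ whenever $f,g$ are quasi-isometries (which follows immediately from the definition, since the composition of a $(K,A)$-quasi-isometry with a $(K',A')$-quasi-isometry is a $(KK',\,\cdot\,)$-quasi-isometry), and second, that if $f$ and $g$ are close then $\kappa(f)=\kappa(g)$ — indeed, boundedness of $\sup_x d(f(x),g(x))$ means that for any $(K,A)$ with $K>\kappa(g)$, $f$ is a $(K,A')$-quasi-isometry for a suitable $A'$, and vice versa, so the two infima agree. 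Taking $\log$, these become $\log\kappa(f\circ g)\le\log\kappa(f)+\log\kappa(g)$ and invariance of $\log\kappa$ under closeness. Since every coarse embedding $s_b,t_b$ and every closest-point projection $p_{b_0}^b$ appearing in Definition \ref{defn:fibdis} has distortion constants drawn from a uniform family (by Definition \ref{defn:coarse bundle} and the fact that $d_\Haus(D_b,D_{b'})$ grows linearly in $d_B(b,b')$), the auxiliary bijection-up-to-closeness $\lambda_{b_0}^b$ is a $(K_1,A_1)$-quasi-isometry of $F$ for constants $K_1,A_1$ not depending on $b$ or $b_0$; in particular $\log\kappa(\lambda_{b_0}^b)\le \log K_1$ is already uniformly bounded, but what we need is the comparison between two such choices.

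First I would handle the choice of the maps $s_b,t_b,p_{b_0}^b$ for a fixed basepoint $b_0$. Suppose $(\lambda_{b_0}^b)$ and $(\mu_{b_0}^b)$ are built from two such systems. One checks directly that $\mu_{b_0}^b$ and $\lambda_{b_0}^b$ differ by pre- and post-composition with maps of the form $t_b\circ s_b'$ and $t_{b_0}'\circ s_{b_0}$ (or their coarse inverses), each of which is a quasi-isometry of $F$ with uniformly bounded distortion constants, because both systems are drawn from the uniform families guaranteed by Definition \ref{defn:coarse bundle}; moreover different closest-point projections $D_{b_0}\to D_b$ are uniformly close to one another since $D_b$ has finite, uniformly controlled Hausdorff distance from $D_{b_0}$. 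Applying submultiplicativity of $\kappa$ (equivalently subadditivity of $\log\kappa$) and the closeness-invariance of $\log\kappa$, we get
\[
\bigl|\log\kappa(\mu_{b_0}^b)-\log\kappa(\lambda_{b_0}^b)\bigr|\le C
\]
for a constant $C$ depending only on the coarse-bundle data. This shows $\Lambda$ is independent of the auxiliary maps up to additive error $C$.

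Next I would handle the change of basepoint from $b_0$ to $b_0'$. The key point is the cocycle-type relation: $\lambda_{b_0'}^b$ is uniformly close to $\lambda_{b_0}^b\circ\lambda_{b_0'}^{b_0}$, because the closest-point projection $D_{b_0'}\to D_b$ is uniformly close to the composite $D_{b_0'}\to D_{b_0}\to D_b$ (all three fibres being at uniformly controlled Hausdorff distance, and closest-point projections between fibres at bounded Hausdorff distance being well-defined up to bounded error). Hence, using subadditivity of $\log\kappa$ in both directions (writing also $\lambda_{b_0}^b$ as close to $\lambda_{b_0'}^b\circ\lambda_{b_0}^{b_0'}$ and noting $\lambda_{b_0}^{b_0'}$ is a quasi-isometry of $F$ with uniformly bounded distortion since $b_0,b_0'$ are a fixed bounded distance apart — its $\log\kappa$ is some fixed finite number $c(b_0,b_0')$),
\[
\bigl|\log\kappa(\lambda_{b_0'}^b)-\log\kappa(\lambda_{b_0}^b)\bigr|\le c(b_0,b_0')+C'
\]
for all $b$. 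Combining the two cases gives the proposition. The main obstacle — really the only substantive point — is verifying the ``uniform distortion'' claims: that all the coarse embeddings $s_b,t_b$ and closest-point projections occurring in the construction can be chosen from families with distortion constants independent of $b$, and that two valid choices of any of them are uniformly close or differ by uniformly-bounded-distortion maps. This is exactly where the linear control $d_\Haus(D_b,D_{b'})\le Kd_B(b,b')+A$ in Definition \ref{defn:coarse bundle}(\ref{defn:coarse bundle 3}) and the $(\eta,\phi)$-coarse embedding hypothesis in (\ref{defn:coarse bundle 2}) are used; once these are in hand, everything reduces to the two formal properties of $\log\kappa$ noted above.
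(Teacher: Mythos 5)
Your argument is essentially the paper's: for a fixed basepoint, any two admissible choices of the auxiliary maps yield versions of $\lambda_{b_0}^b$ that are close (so Lemma \ref{lem:close_kappa} gives the same $\kappa$), up to pre/post-composition with uniformly controlled quasi-isometries of $F$; and for a change of basepoint you use exactly the paper's key relation, namely that $\lambda_{b_0'}^{b}$ is close to $\lambda_{b_0}^{b}\circ\lambda_{b_0'}^{b_0}$, together with the fact that composing with the fixed quasi-isometry $\lambda_{b_0'}^{b_0}$ changes $\kappa$ by a bounded multiplicative factor (your symmetric use of submultiplicativity is just Lemma \ref{lem:qi bounds composition} in disguise).

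One remark, because it signals a misconception even though you flag it as not needed and never use it: your parenthetical claim that $\lambda_{b_0}^b$ is a $(K_1,A_1)$-quasi-isometry with constants independent of $b$, so that $\log\kappa(\lambda_{b_0}^b)\leq\log K_1$ uniformly, is false. Only $s_b$ and $t_b$ have uniform distortion; the closest-point projection $p_{b_0}^b:D_{b_0}\rightarrow D_b$ does not, and its distortion typically grows with $d_B(b_0,b)$ --- that growth is precisely what $\Lambda$ measures. If your claim were true, the fibre-distortion function would always be bounded, contradicting Example \ref{exmp:fibre_abelian} (and emptying Corollary \ref{cor:fibre-distortion_bounded} of content). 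Relatedly, the various ``uniformly close'' assertions are overstatements: the closeness constants depend on $b$ via $d_\Haus(D_{b_0},D_b)$; this is harmless since Lemma \ref{lem:close_kappa} needs only closeness for each fixed $b$, not a uniform bound, but it should be stated that way.
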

To prove this, we make use of the following lemmas. The first follows easily from the definition of $\kappa$ above.
\begin{lem}\label{lem:close_kappa}
	If $f,g:X\rightarrow Y$ are quasi-isometries that are close,  then $\kappa(f)=\kappa(g)$.
\end{lem}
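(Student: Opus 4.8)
The statement to prove is Lemma~\ref{lem:close_kappa}: if $f,g:X\rightarrow Y$ are close quasi-isometries, then $\kappa(f)=\kappa(g)$.

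\textbf{Plan of proof.} The plan is to exploit the fact that $\kappa$ is defined as an infimum of multiplicative constants over all choices of additive constant, and to show that being $(K,A)$-close does not affect which multiplicative constants $K$ are achievable once we are allowed to enlarge the additive constant freely. First I would unwind the definition: $\kappa(f)=\inf\{K\geq 1 \mid f \text{ is a } (K,A)\text{-quasi-isometry for some } A\geq 0\}$. So it suffices to prove that for every $K\geq 1$, $f$ is a $(K,A)$-quasi-isometry for some $A\geq 0$ if and only if $g$ is a $(K,A')$-quasi-isometry for some $A'\geq 0$; by symmetry only one direction is needed.

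\textbf{Key steps.} Suppose $f$ and $g$ are $C$-close, i.e.\ $\sup_{x\in X}d_Y(f(x),g(x))\leq C$, and suppose $f$ is a $(K,A)$-quasi-isometry. For the upper bound, for any $x,x'\in X$ the triangle inequality gives
\[
d_Y(g(x),g(x'))\leq d_Y(g(x),f(x))+d_Y(f(x),f(x'))+d_Y(f(x'),g(x'))\leq Kd_X(x,x')+A+2C.
\]
For the lower bound, similarly
\[
d_Y(g(x),g(x'))\geq d_Y(f(x),f(x'))-2C\geq \tfrac{1}{K}d_X(x,x')-A-2C.
\]
Finally, for coarse surjectivity: since $Y=N_A(f(X))$ and $g$ is $C$-close to $f$, every point of $f(X)$ is within $C$ of a point of $g(X)$, so $Y=N_{A+C}(g(X))$. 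Setting $A'\coloneqq A+2C$ shows $g$ is a $(K,A')$-quasi-isometry. Hence every $K$ witnessing $\kappa(f)$ also witnesses $\kappa(g)$, giving $\kappa(g)\leq\kappa(f)$; by symmetry $\kappa(f)\leq\kappa(g)$, so $\kappa(f)=\kappa(g)$.

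\textbf{Main obstacle.} There is essentially no obstacle here: the argument is a routine three-line triangle-inequality estimate, which is why the lemma is stated as following ``easily from the definition of $\kappa$.'' The only point requiring the slightest care is remembering that $\kappa$ quantifies over the additive constant $A$, so the extra additive error $2C$ introduced by closeness is harmless and does not change the infimum over $K$; one should state this explicitly rather than leave it implicit.

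\begin{proof}
Suppose $f$ and $g$ are $C$-close, so $d_Y(f(x),g(x))\leq C$ for all $x\in X$. By the symmetry of the hypothesis, it suffices to show that $\kappa(g)\leq \kappa(f)$, and for this it is enough to show that whenever $f$ is a $(K,A)$-quasi-isometry for some $A\geq 0$, then $g$ is a $(K,A')$-quasi-isometry for some $A'\geq 0$.

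So suppose $f$ is a $(K,A)$-quasi-isometry. For all $x,x'\in X$, the triangle inequality gives
\[
d_Y(g(x),g(x'))\leq C+d_Y(f(x),f(x'))+C\leq Kd_X(x,x')+A+2C
\]
and
\[
d_Y(g(x),g(x'))\geq d_Y(f(x),f(x'))-2C\geq \frac{1}{K}d_X(x,x')-A-2C.
\]
Moreover, since $Y=N_A(f(X))$ and every point of $f(X)$ lies within $C$ of a point of $g(X)$, we have $Y=N_{A+C}(g(X))$. Setting $A'\coloneqq A+2C$, it follows that $g$ is a $(K,A')$-quasi-isometry.

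Therefore $\kappa(g)\leq \kappa(f)$, and by symmetry $\kappa(f)\leq \kappa(g)$, so $\kappa(f)=\kappa(g)$.
\end{proof}
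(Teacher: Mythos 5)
Your proof is correct, and it is exactly the routine triangle-inequality argument the paper has in mind when it states the lemma "follows easily from the definition of $\kappa$" (the paper gives no written proof). Nothing is missing: showing that closeness only perturbs the additive constant, hence leaves the achievable multiplicative constants and their infimum unchanged, is the whole content.
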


\begin{lem}[{\cite[Lemma 5.6]{margolisxer2021geometry}}]\label{lem:qi bounds composition}
	Suppose that $f:X\rightarrow Y$ is a quasi-isometry. If $g:W\rightarrow X$ and $h:Y\rightarrow Z$ are $(K,A)$-quasi-isometries, then \[\frac{\kappa (f)}{K^2}\leq \kappa(h\circ  f\circ g))\leq \kappa (f) K^2.\]
\end{lem}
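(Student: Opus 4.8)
\textbf{Plan for the proof of Proposition \ref{prop:fib_dis_welldef}.}
The goal is to show that if $p:X\to B$ is a coarse bundle with fibre $F$, then the fibre-distortion function $\Lambda:B\to\bbR$, $b\mapsto \log(\kappa(\lambda_{b_0}^b))$, depends on the auxiliary data (the basepoint $b_0$, the coarse embeddings $s_b,t_b$, and the closest-point projections $p_{b_0}^b$) only up to a uniform additive constant. Since $\Lambda$ is defined through $\kappa$, the natural strategy is to track how each piece of auxiliary data enters $\lambda_{b_0}^b = t_b\circ p_{b_0}^b\circ s_{b_0}$, and to use Lemmas \ref{lem:close_kappa} and \ref{lem:qi bounds composition} to control the resulting change in $\log\kappa$.

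First I would fix two sets of auxiliary data. Write $\lambda_{b_0}^b = t_b\circ p_{b_0}^b\circ s_{b_0}$ and $\tilde\lambda_{\tilde b_0}^b = \tilde t_b\circ \tilde p_{\tilde b_0}^b\circ \tilde s_{\tilde b_0}$, where the tilded objects are a second choice of basepoint $\tilde b_0$, coarse embeddings $\tilde s_b:F\to D_b$, $\tilde t_b:D_b\to F$, and closest-point projections. The key observations are: (i) since $F$ embeds with uniform distortion onto each fibre, the maps $s_b,t_b,\tilde s_b,\tilde t_b$ are all $(K',A')$-quasi-isometries for uniform constants (here I use that a coarse embedding with the same fibre $F$ onto the same fibre $D_b$, which are mutually uniformly quasi-isometric via $s_b$, is in fact a quasi-isometry — this follows from Definition \ref{defn:coarse bundle}(2)); (ii) any two closest-point projections $D_{b_0}\to D_b$ are uniformly close, since fibres are at finite Hausdorff distance and $X$ is proper quasi-geodesic; and (iii) $t_b\circ\tilde s_b$ and $\tilde t_b\circ s_b$ are uniformly close to the identity on $F$, since both $t_b\tilde s_b s_b$-type compositions are coarse inverses of the same quasi-isometry up to bounded error.

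The main computation then proceeds in two steps. \emph{Changing the embeddings and projections with $b_0$ fixed}: I would write $\tilde\lambda_{b_0}^b = (\tilde t_b\circ s_b)\circ\lambda_{b_0}^b\circ(t_{b_0}\circ\tilde s_{b_0})$ up to uniform closeness — more precisely, $\tilde\lambda_{b_0}^b$ is uniformly close to $(\tilde t_b\circ s_b)\circ(t_b\circ p_{b_0}^b\circ s_{b_0})\circ(t_{b_0}\circ \tilde s_{b_0})$ because $s_b t_b$ is close to $\id_{D_b}$ and $s_{b_0}t_{b_0}$ is close to $\id_{D_{b_0}}$, and because the two choices of closest-point projection are uniformly close. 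Now $\tilde t_b\circ s_b$ and $t_{b_0}\circ\tilde s_{b_0}$ are $(K'',A'')$-quasi-isometries with uniform constants, so Lemma \ref{lem:qi bounds composition} gives $\kappa(\lambda_{b_0}^b)/(K'')^4 \leq \kappa(\tilde\lambda_{b_0}^b)\cdot(\text{something})$... more carefully, applying Lemma \ref{lem:qi bounds composition} twice (or the version for pre- and post-composition) together with Lemma \ref{lem:close_kappa} yields
\[
\frac{\kappa(\lambda_{b_0}^b)}{(K'')^{4}}\;\leq\;\kappa(\tilde\lambda_{b_0}^b)\;\leq\;\kappa(\lambda_{b_0}^b)\,(K'')^{4},
\]
hence $|\log\kappa(\tilde\lambda_{b_0}^b) - \log\kappa(\lambda_{b_0}^b)| \leq 4\log K''$, a uniform additive constant. \emph{Changing the basepoint}: for $\tilde b_0$ in place of $b_0$, the closest-point projections satisfy $p_{\tilde b_0}^b$ uniformly close to $p_{b_0}^b\circ p_{\tilde b_0}^{b_0}$ (again since all fibres are pairwise at finite Hausdorff distance), so $\tilde\lambda_{\tilde b_0}^b$ is uniformly close to $\lambda_{b_0}^b\circ(t_{b_0}\circ p_{\tilde b_0}^{b_0}\circ s_{\tilde b_0})$; the inner map $t_{b_0}\circ p_{\tilde b_0}^{b_0}\circ s_{\tilde b_0}$ is a quasi-isometry $F\to F$ with constants depending only on $d_B(b_0,\tilde b_0)$ and the bundle data, so Lemma \ref{lem:qi bounds composition} again bounds the change in $\log\kappa$ by a constant (depending on $b_0,\tilde b_0$, but not on $b$). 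Combining the two steps gives the claim.

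The main obstacle I anticipate is bookkeeping: making precise the various "uniformly close" claims — in particular (ii) and (iii) above, and the basepoint-composition estimate $p_{\tilde b_0}^b\sim p_{b_0}^b\circ p_{\tilde b_0}^{b_0}$ — and then feeding them correctly into Lemmas \ref{lem:close_kappa} and \ref{lem:qi bounds composition}, which are stated for genuine quasi-isometries and for closeness rather than for the coarse-embedding-level maps directly. The conceptual content is entirely in recognizing that every modification of the auxiliary data amounts to pre- or post-composing $\lambda_{b_0}^b$ with a fixed (uniform-constant) quasi-isometry of $F$, up to bounded error; once that is in place, Lemma \ref{lem:qi bounds composition} does all the work and the additive ambiguity in $\log\kappa$ is exactly $4\log$ of the relevant quasi-isometry constant.
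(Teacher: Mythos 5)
Your proposal does not address the statement at hand. The statement to be proved is Lemma \ref{lem:qi bounds composition} itself: for a quasi-isometry $f:X\to Y$ and $(K,A)$-quasi-isometries $g:W\to X$, $h:Y\to Z$, one has $\kappa(f)/K^2\leq\kappa(h\circ f\circ g)\leq \kappa(f)K^2$. What you have written is a plan for Proposition \ref{prop:fib_dis_welldef} (well-definedness of the fibre-distortion function), and throughout that plan you invoke Lemma \ref{lem:qi bounds composition} as a black box; so the target statement is assumed, not proved. Nothing in your text engages with the quantity $\kappa$ directly or with why pre- and post-composition by $(K,A)$-quasi-isometries can change the multiplicative constant by at most a factor of $K^2$ in each direction. (For what it is worth, the paper does not reprove this lemma either; it is quoted from an earlier paper of the author. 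But that does not make your text a proof of it.)

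For the record, the missing argument is short. For the upper bound, if $f$ is a $(K_0,A_0)$-quasi-isometry, chain the three quasi-isometry inequalities to see that $h\circ f\circ g$ is a $(K^2K_0,A')$-quasi-isometry for some $A'$ depending on $K,A,K_0,A_0$; taking the infimum over admissible $K_0$ gives $\kappa(h\circ f\circ g)\leq K^2\kappa(f)$. For the lower bound, let $\bar g$ and $\bar h$ be coarse inverses of $g$ and $h$, which are again quasi-isometries with multiplicative constant $K$ (and some additive constant); then $\bar h\circ(h\circ f\circ g)\circ\bar g$ is close to $f$, so by Lemma \ref{lem:close_kappa} it has the same $\kappa$ as $f$, and applying the already-proved upper bound to the quasi-isometry $h\circ f\circ g$ sandwiched between $\bar h$ and $\bar g$ yields $\kappa(f)\leq K^2\kappa(h\circ f\circ g)$, i.e.\ $\kappa(h\circ f\circ g)\geq\kappa(f)/K^2$. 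Your sketch of Proposition \ref{prop:fib_dis_welldef} is broadly in the spirit of the paper's argument for that proposition, but it is not the statement you were asked to prove.
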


\begin{proof}[Proof of Proposition \ref{prop:fib_dis_welldef}]
	It is easy to verify  $p_{b_0}^b$ and $t_b$  are  coarsely well-defined up to some additive error,  so Lemma \ref{lem:close_kappa} ensures  $\kappa( \lambda_{b_0}^b)$ and hence $\Lambda$,  depends only on $b_0$ and $b$ and not on the choice of $t_b$ and $p_{b_0}^b$.
	Now let $\Lambda_0$ and $\Lambda_1$ be fibre distortion functions with respect to basepoints $b_0$ and $b_1$. Then for all $b\in B$,  $\lambda_{b_1}^b=t_{b}\circ p_{b_1}^b\circ s_{b_1}$ is close to \[\lambda_{b_0}^b\circ \lambda_{b_1}^{b_0}=t_{b}\circ p_{b_0}^b \circ s_{b_0} \circ t_{b_0}\circ  p_{b_1}^{b_0}\circ s_{b_1}.\] 
	Therefore, Lemma \ref{lem:qi bounds composition} implies $\frac{\kappa (\lambda_{b_0}^b)}{K^2}\leq \kappa(\lambda_{b_1}^b)\leq \kappa (\lambda_{b_0}^b)K^2$, where $K\coloneqq \kappa(\lambda_{b_1}^{b_0})+1$. Thus for all $b\in B$, $\lvert \Lambda_0(b)-\Lambda_1(b)\rvert\leq 2\log(K)$.
\end{proof}

We now show fibre-preserving quasi-isometries preserve the fibre-distortion function, which is a slight generalisation of \cite[Proposition 5.5]{margolisxer2021geometry}.
\begin{prop}\label{prop:fib qi invariant}
	Suppose that $p:X\rightarrow B$ and $p':X'\rightarrow B'$ are coarse fibre bundles with fibres $F$ and $F'$, and with fibre distortion functions $\Lambda:B\rightarrow \bbR$ and $\Lambda':B\rightarrow \bbR$ respectively.  If  $f:X\rightarrow X'$ is a fibre-preserving quasi-isometry  and $\hat f:B\rightarrow B'$ is the induced quasi-isometry as in Lemma 3.6 of \cite{margolisxer2021geometry},  there is a constant $C\geq 0$ such that 
	\[\lvert \Lambda'(\hat{f}(b) )-\Lambda(b)\rvert\leq  C.\]
\end{prop}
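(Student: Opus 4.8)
The plan is to compare the fibre-distortion maps by showing that the relevant ``holonomy'' maps $\lambda_{b_0}^b$ for $X$ and $\lambda_{\hat f(b_0)}^{\hat f(b)}$ for $X'$ are, up to pre- and post-composition with uniformly bi-Lipschitz maps, intertwined by a quasi-isometry of the fibre induced by $f$. Concretely, fix a basepoint $b_0\in B$ and set $b_0'\coloneqq \hat f(b_0)$, using $b_0'$ to compute $\Lambda'$ (legitimate by Proposition \ref{prop:fib_dis_welldef}, which allows an arbitrary basepoint at the cost of a uniform additive error). Since $f$ is fibre-preserving, for each $b\in B$ we have $d_\Haus(f(D_b),D'_{\hat f(b)})$ bounded uniformly, so the restriction of $f$ to $D_b$ is, after moving points a bounded distance, a quasi-isometry $D_b\to D'_{\hat f(b)}$; composing with the coarse embeddings $s_b,t_b$ and their primed analogues, we get quasi-isometries $\mu_b\coloneqq t'_{\hat f(b)}\circ f|_{D_b}\circ s_b : F\to F'$ with constants that can be taken uniform in $b$ (this uniformity is exactly the content of the fibre-preserving hypothesis together with the bounded distortion of the $s_b,t_b$).

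The key computation is that the diagram
\[
\begin{tikzcd}
F \arrow[r,"\lambda_{b_0}^b"] \arrow[d,"\mu_{b_0}"'] & F \arrow[d,"\mu_b"]\\
F' \arrow[r,"\lambda_{b_0'}^{\hat f(b)}"'] & F'
\end{tikzcd}
\]
commutes up to uniformly bounded error. This follows because $\lambda_{b_0}^b=t_b\circ p_{b_0}^b\circ s_{b_0}$ and the closest-point projections $p_{b_0}^b$ are, up to bounded error, just the inclusion-then-nearest-point maps between the fibres $D_{b_0}$ and $D_b$, which are at uniformly bounded Hausdorff distance from $f^{-1}(D'_{b_0'})$ and $f^{-1}(D'_{\hat f(b)})$ respectively; thus $f$ conjugates $p_{b_0}^b$ to $p_{b_0'}^{\hat f(b)}$ up to bounded error, and after inserting the (coarsely invertible) maps $s_\bullet,t_\bullet,s'_\bullet,t'_\bullet$ the stated square commutes coarsely. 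With that in hand, write $\mu_b\circ \lambda_{b_0}^b$ close to $\lambda_{b_0'}^{\hat f(b)}\circ \mu_{b_0}$, apply Lemma \ref{lem:close_kappa} to pass to $\kappa$, and then apply Lemma \ref{lem:qi bounds composition} twice (once to strip $\mu_b$ off the left, once to strip $\mu_{b_0}$ off the right), using that $\mu_b$ and $\mu_{b_0}$ are $(K_1,A_1)$-quasi-isometries for uniform $K_1$. This yields
\[
\frac{\kappa(\lambda_{b_0}^b)}{K_1^{2}}\ \leq\ \kappa\!\left(\lambda_{b_0'}^{\hat f(b)}\right)\ \leq\ \kappa(\lambda_{b_0}^b)\,K_1^{2}
\]
for all $b\in B$, hence $\lvert \Lambda'(\hat f(b))-\Lambda(b)\rvert \leq 2\log K_1$ after accounting for the basepoint-change error; taking $C$ to be this bound plus the additive constant from Proposition \ref{prop:fib_dis_welldef} completes the proof.

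The main obstacle is the uniformity claim: establishing that the induced fibrewise maps $\mu_b:F\to F'$ have quasi-isometry constants independent of $b$, and that the coarse commutation of the square above holds with a single additive constant for all $b$. This requires unwinding the definitions of the coarse embeddings $s_b,t_b$ (whose distortion is uniform by Definition \ref{defn:coarse bundle}), the bound $d_\Haus(D_b,D_{b'})\le Kd_B(b,b')+A$, and the fibre-preserving constant $A$ for $f$, and carefully tracking how these feed into the constants for $f|_{D_b}$ and for the nearest-point projections. This is essentially the same bookkeeping as in \cite[Proposition 5.5]{margolisxer2021geometry}, of which the present statement is a mild generalisation, so I would cite that argument and indicate the (routine) modifications rather than reproduce it in full.
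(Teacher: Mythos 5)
Your proposal is correct and follows essentially the same route as the paper's proof: define fibrewise maps $\mu_b \coloneqq t_{\hat f(b)}\circ f_b\circ s_b$ from the fibre-preserving hypothesis, observe that the square relating $\lambda_{b_0}^{b}$ and $\lambda_{\hat f(b_0)}^{\hat f(b)}$ commutes coarsely because the closest-point projections between nearby fibres are intertwined by $f$ up to bounded error, and then conclude via Lemma \ref{lem:close_kappa} and Lemma \ref{lem:qi bounds composition}. The only (minor) difference is that the uniformity you flag as the main obstacle is needed only for the quasi-isometry constants of the maps $\mu_b$ entering Lemma \ref{lem:qi bounds composition}; the additive constant in the coarse commutation need not be uniform in $b$, since Lemma \ref{lem:close_kappa} requires mere closeness for each fixed $b$.
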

\begin{proof}
	Suppose $f:X\rightarrow X'$ is $A$-fibre-preserving. The induced quasi-isometry $\hat f:B\rightarrow B'$ can therefore be chosen so that $d_\Haus(f(D_b),D_{\hat f(b)})\leq A$. For each $b\in B$, pick $ f_b:D_b\rightarrow D_{\hat f(b)}$ such that $d(f(x), f_b(x))\leq A$ for all $x\in D_b$. Define $\mu_{b}\coloneqq t_{\hat f(b)} f_b s_{b}:F\rightarrow F'$. Since $p_{b_0}^{b}$ and $p_{\hat f(b_0)}^{\hat f(b)}$ are closest point projections, it follows that 
	\[\sup_{x\in F}d((t_{\hat f (b)} f_b p_{b_0}^{b} s_{b_0})(x),( t_{\hat f (b)} p_{\hat f(b_0)}^{\hat f(b)} f_{b_0}  s_{b_0})(x))<\infty.\] 
	Lemma \ref{lem:close_kappa} ensures $\kappa(t_{\hat f (b)}f_bp_{b_0}^{b}s_{b_0})=\kappa(t_{\hat f (b)} p_{\hat f(b_0)}^{\hat f(b)} f_{b_0}  s_{b_0})$. Since $t_{\hat f (b)}f_bp_{b_0}^{b}s_{b_0}$ is close to $\mu_b\lambda_{b_0}^{b}$ and $t_{\hat f (b)} p_{\hat f(b_0)}^{\hat f(b)} f_{b_0}  s_{b_0}$ is close to $\lambda_{\hat f(b_0)}^{\hat f (b)} \mu_{b_0}$, we use Lemma \ref{lem:qi bounds composition} to deduce \[\frac{1}{A}\kappa(\lambda_{b_0}^{b})\leq \kappa(\lambda_{\hat f(b_0)}^{\hat f (b)})\leq A\kappa(\lambda_{b_0}^{b})\] for some constant $A$. The result now follows from  the definition of $\Lambda$ and $\Lambda'$.
	\end{proof}

We are particularly interested in the case where the fibre-distortion function is bounded:
\begin{cor}\label{cor:fibre-distortion_bounded}
	Suppose that $p:X\rightarrow B$ and $p':X'\rightarrow B'$ are coarse fibre bundles with fibres $F$ and $F'$, and with fibre distortion functions $\Lambda:B\rightarrow \bbR$ and $\Lambda':B\rightarrow \bbR$. If  $f:X\rightarrow X'$ is a fibre-preserving quasi-isometry, then $\Lambda$ is bounded if and only if $\Lambda'$ is.
\end{cor}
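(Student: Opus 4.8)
The plan is to deduce the corollary from Proposition \ref{prop:fib qi invariant}, applied both to $f$ and to a coarse inverse of $f$. One implication needs only $f$ itself: Proposition \ref{prop:fib qi invariant} supplies a constant $C$ and the induced quasi-isometry $\hat f\colon B\to B'$ with $\lvert \Lambda'(\hat f(b))-\Lambda(b)\rvert\leq C$ for all $b\in B$, so if $\Lambda'$ is bounded by some $M'$, then $\lvert\Lambda(b)\rvert\leq M'+C$ for every $b$, and $\Lambda$ is bounded. The remaining implication is obtained by running the same argument with the roles of $X$ and $X'$ exchanged, which requires knowing that a coarse inverse of a fibre-preserving quasi-isometry is again fibre-preserving.

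To see the latter, I would fix a coarse inverse $\bar f\colon X'\to X$ of $f$ together with a coarse inverse $\hat g\colon B'\to B$ of $\hat f$. Since $f$ is fibre-preserving, $f(D_{\hat g(b')})$ lies at bounded Hausdorff distance from $D'_{\hat f(\hat g(b'))}$, and since $\hat f\circ\hat g$ is close to $\id_{B'}$, Definition \ref{defn:coarse bundle}(\ref{defn:coarse bundle 3}) shows $D'_{\hat f(\hat g(b'))}$ lies at bounded Hausdorff distance from $D'_{b'}$; hence $f(D_{\hat g(b')})$ is at bounded Hausdorff distance from $D'_{b'}$, with bound uniform in $b'$. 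Applying the coarse Lipschitz map $\bar f$ preserves boundedness of Hausdorff distance, and $\bar f\circ f$ is close to $\id_X$, so $\bar f(D'_{b'})$ lies at bounded Hausdorff distance from $D_{\hat g(b')}$. Thus $\bar f$ is fibre-preserving.

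Now Proposition \ref{prop:fib qi invariant} applied to $\bar f$ yields a constant $C'$ and an induced quasi-isometry $\widehat{\bar f}\colon B'\to B$ with $\lvert \Lambda(\widehat{\bar f}(b'))-\Lambda'(b')\rvert\leq C'$ for all $b'\in B'$. If $\Lambda$ is bounded by some $M$, then $\lvert\Lambda'(b')\rvert\leq M+C'$ for all $b'$, so $\Lambda'$ is bounded; together with the first paragraph this proves the equivalence.

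The only non-formal step is the verification that a coarse inverse of a fibre-preserving quasi-isometry is again fibre-preserving, and even that is a short chase using only the defining inequalities of a coarse bundle; everything else is bookkeeping with the estimate from Proposition \ref{prop:fib qi invariant}. (A variant avoiding coarse inverses would combine the single estimate $\lvert \Lambda'(\hat f(b))-\Lambda(b)\rvert\leq C$ with coarse surjectivity of $\hat f$ and the fact that $\Lambda'$ changes by a bounded amount over bounded distances in $B'$, but invoking Proposition \ref{prop:fib qi invariant} twice is cleaner.)
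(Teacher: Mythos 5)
Your argument is correct and follows the route the paper intends: Corollary \ref{cor:fibre-distortion_bounded} is stated as an immediate consequence of Proposition \ref{prop:fib qi invariant}, applied to $f$ and (implicitly) to its coarse inverse. Your verification that a coarse inverse of a fibre-preserving quasi-isometry is again fibre-preserving is exactly the small detail the paper leaves unstated, and it is carried out correctly.
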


We mention a theorem of Kapovich--Kleiner--Leeb which says that if spaces split as direct products and the factors satisfy a form of coarse non-positive curvature, then quasi-isometries preserve the  direct product structure \cite{kapovich1998derham}. Kapovich--Kleiner--Leeb define two classes of spaces that they call  \emph{coarse type I} and \emph{coarse type II} (\cite[Definition 3.5]{kapovich1998derham}) which include:
\begin{enumerate}
	\item cocompact non-elementary hyperbolic spaces;
	\item cocompact geodesically complete  CAT(0) spaces containing a rank one geodesic;
	\item irreducible symmetric spaces of non-compact type;
	\item thick Euclidean buildings.
\end{enumerate}
We will not give the definition of spaces of coarse type I or coarse type II, and will only use the fact that the previous four  examples are of coarse type I or II; see \cite{kapovich1998derham} for more details.

\begin{thm}[{\cite[Theorem B]{kapovich1998derham}}]\label{thm:kkl}
	Let $X=Z\times \Pi_{i=1}^n X_i$, where each $X_i$ is either of coarse type I  or coarse type II and $Z$ is a geodesic metric space all of whose asymptotic cones are homeomorphic to $\bbR^n$. For each $K,A$, there exist constants $K',A',D$ such that the following hold.
	If $f:X\rightarrow X$ is a $(K,A)$-quasi-isometry, there is a homomorphism $\sigma:G\rightarrow \Sym(n)$ so that for every $i$, there exists a  $(K',A')$-quasi-isometry $f_i:X_i\rightarrow X_{\sigma(g)(i)}$ such that the following diagram commutes up to error at most $D$
	\[
	\begin{tikzcd}  
		X\arrow{r}{f}\arrow{d}{\pi_i}& X\arrow{d}{\pi_{\sigma(g) (i)}}\\
		X_i\arrow{r}{f_i}& X_{\sigma(g) (i)}
	\end{tikzcd}
	\]
	where $\pi_i:X\rightarrow X_i$ is the projection onto $X_i$.
\end{thm}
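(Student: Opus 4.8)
Since Theorem \ref{thm:kkl} is essentially \cite[Theorem B]{kapovich1998derham}, the point is not to prove something new but to recall why the statement holds in the slightly packaged form used here. The plan rests on a de Rham type decomposition theorem for geodesic metric spaces, also due to Kapovich--Kleiner--Leeb. First I would recall that a space of coarse type I or coarse type II has the feature that each of its asymptotic cones is a geodesic space with no Euclidean de Rham factor and no non-trivial metric product decomposition --- it is ``coarsely irreducible''. Next I would invoke their decomposition theorem: for a broad class of geodesic metric spaces the splitting into a maximal Euclidean factor times a product of coarsely irreducible non-Euclidean factors is canonical, and any bi-Lipschitz homeomorphism between two such spaces preserves the Euclidean factor and permutes the irreducible factors, compatibly with the canonical coordinate projections.

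Applying this to $X=Z\times\Pi_{i=1}^nX_i$: because every asymptotic cone of $Z$ is bi-Lipschitz to $\bbR^n$, an arbitrary asymptotic cone $X_\omega$ splits as $\bbR^n\times\Pi_{i=1}^n(X_i)_\omega$, with $\bbR^n$ the full Euclidean de Rham factor and the $(X_i)_\omega$ the irreducible factors. Given a $(K,A)$-quasi-isometry $f\colon X\to X$, the induced bi-Lipschitz homeomorphism $f_\omega$ of asymptotic cones permutes the factors $(X_i)_\omega$ by some permutation $\sigma\in\Sym(n)$ and restricts to bi-Lipschitz maps $(X_i)_\omega\to(X_{\sigma(i)})_\omega$ for which the projection squares commute. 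The remaining step is to descend this data from the cone back to $X$: one shows $f$ must coarsely map each fibre $\pi_i^{-1}(\ast)$ into a uniformly bounded neighbourhood of a fibre of $\pi_{\sigma(i)}$ --- otherwise a rescaling limit would contradict the product structure of $X_\omega$ --- and this produces the quasi-isometries $f_i$ together with the diagram commuting up to bounded error.

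The hard part will be the uniformity of the constants $K',A',D$ in terms of $(K,A)$: the cone argument, run for a single $f$, only yields \emph{some} bound. The way around this, carried out in \cite{kapovich1998derham}, is a compactness/contradiction argument: if no uniform choice existed one could take $(K,A)$-quasi-isometries $f_j$ whose optimal constants blow up, rescale, and pass to an ultralimit to obtain a bi-Lipschitz homeomorphism of some asymptotic cone of $X$ violating the qualitative decomposition theorem. Everything apart from this uniformity step is a bookkeeping translation between the asymptotic cone and $X$, and the mild extra generality here --- permitting a general factor $Z$ rather than taking $Z$ literally Euclidean --- requires no change, since $Z$ is absorbed into the Euclidean de Rham factor of every asymptotic cone.
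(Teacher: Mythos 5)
The paper itself gives no proof of this statement: it is quoted verbatim from \cite[Theorem B]{kapovich1998derham}, and your proposal rightly treats it as a citation while recalling the Kapovich--Kleiner--Leeb strategy, which is indeed the one you describe --- a splitting theorem at the level of asymptotic cones, descent to $X$ by showing fibres of $\pi_i$ are carried uniformly close to fibres of $\pi_{\sigma(i)}$ (with a rescaling/ultralimit contradiction otherwise), and a compactness argument to make the constants $K',A',D$ depend only on $(K,A)$.

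One substantive caveat on your sketch: the hypothesis on $Z$ is that its asymptotic cones are \emph{homeomorphic} to Euclidean space, not bi-Lipschitz to it, and this is deliberate --- it allows factors such as nilpotent groups, whose cones are Carnot groups homeomorphic but not bi-Lipschitz to $\bbR^k$ and which carry no metric Euclidean de Rham factor at all. Consequently the cone-level input in \cite{kapovich1998derham} is a \emph{topological} splitting/recognition theorem for products of a topologically Euclidean factor with type I/II factors, not the metric de Rham decomposition; your phrasing ``$Z$ is absorbed into the Euclidean de Rham factor of every asymptotic cone'' would only justify the weaker statement in which the cones of $Z$ are bi-Lipschitz Euclidean. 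This does not affect how the present paper uses the theorem (in its applications $Z$ is literally $\bbR^n$ or absent), but as a account of the cited result it is the one point that needs correcting.
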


We first reformulate Theorem \ref{thm:kkl} in the language of quasi-actions and coarse bundles.

\begin{prop}\label{prop:coarsebundle_fibrepres}
	Let $X=Z\times \Pi_{i=1}^n X_i$ be as in Theorem \ref{thm:kkl}.  Suppose $I\subseteq \{1,\dots, n\}$, $X_I\coloneqq \Pi_{i\in I} X_i$, and $\pi_I:X\rightarrow X_I$ is the projection onto $X_I$.  Let   $G\qa X$ be a cobounded quasi-action, and let $\sigma:G\rightarrow \Sym(n)$ be as in Theorem \ref{thm:kkl}. Let $G_I=\{g\in G\mid \sigma(g)(I)=I\}$, which is a finite index subgroup of $G$. Then the restricted quasi-action $G_I\qa X$ is fibre-preserving with respect to the projection $\pi_I:X\rightarrow X_I$.
\end{prop}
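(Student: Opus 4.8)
The plan is to apply Theorem \ref{thm:kkl} to the quasi-action and extract from the permutation data a fibre-preserving structure over $X_I$. First I would fix constants $K,A$ so that $G\qa X$ is a $(K,A)$-quasi-action, and let $K',A',D$ be the output constants of Theorem \ref{thm:kkl} applied with these $K,A$. For each $g\in G$, the quasi-isometry $\phi(g):X\to X$ then yields a permutation $\sigma(g)\in\Sym(n)$ and quasi-isometries $f_i^g:X_i\to X_{\sigma(g)(i)}$ making the relevant squares commute up to error $D$. It is standard (using the quasi-action relations and the uniqueness of the permutation data up to bounded error) that $\sigma:G\to\Sym(n)$ is a genuine homomorphism; since $\Sym(n)$ is finite, $G_I=\{g\in G\mid \sigma(g)(I)=I\}$ is a finite index subgroup of $G$.

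Next I would show that for $g\in G_I$, the map $\phi(g)$ is $A''$-fibre-preserving with respect to $\pi_I:X\to X_I$, for a uniform constant $A''$. The key point: a fibre $D_b=\pi_I^{-1}(b)$ is, up to bounded Hausdorff error, the set of points of $X$ whose $X_i$-coordinates for $i\in I$ are (coarsely) prescribed by $b$, while the $Z$ and $X_j$ coordinates for $j\notin I$ range freely. Because $g\in G_I$ permutes the index set $I$ to itself, the commuting squares of Theorem \ref{thm:kkl} show that $\phi(g)$ sends the coordinates indexed by $I$ to coordinates again indexed by $I$ (via the $f_i^g$), up to error $D$; hence $\phi(g)(D_b)$ has uniformly bounded Hausdorff distance from $D_{b'}$, where $b'\in X_I$ is determined by applying the appropriate $f_i^g$ to the coordinates of $b$ and permuting according to $\sigma(g)|_I$. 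Concretely, one defines $b'=\big(f^g_{\sigma(g)^{-1}(i)}(b_{\sigma(g)^{-1}(i)})\big)_{i\in I}\in X_I=\Pi_{i\in I}X_i$, and checks $d_\Haus(\phi(g)(D_b),D_{b'})\leq A''$ using that the squares commute up to error $D$ together with coboundedness to ensure fibres are genuinely coarsely onto.

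The main obstacle I anticipate is not conceptual but bookkeeping: one must carefully relate the abstract fibres $D_b$ of the product coarse bundle $\pi_I:X\to X_I$ (as in Lemma \ref{lem:dirprod_cbundle}, where $F=Z\times\Pi_{j\notin I}X_j$) to the coordinatewise description, and then track how the error constant $D$ from Theorem \ref{thm:kkl}, the quasi-isometry constants $K',A'$ of the $f_i^g$, and the coboundedness constant combine into a single uniform fibre-preservation constant $A''$ valid for all $g\in G_I$ simultaneously. The uniformity is guaranteed precisely because Theorem \ref{thm:kkl} provides $K',A',D$ depending only on $K,A$ and not on the individual quasi-isometry; so once the geometric picture is set up, the estimate $d_\Haus(\phi(g)(D_b),D_{b'})\leq A''$ follows by a routine triangle-inequality argument. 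Finally, by Definition of a fibre-preserving quasi-action, this shows $G_I\qa X$ is fibre-preserving with respect to $\pi_I:X\to X_I$, and we may invoke Proposition \ref{prop:inducedqa} to note the induced quasi-action $G_I\qa X_I$ is cobounded, completing the proof.
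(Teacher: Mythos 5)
Your proposal is correct and follows essentially the same route as the paper: apply Theorem \ref{thm:kkl} to each $\phi(g)$ with uniform constants, define $b'\in X_I$ by pushing the coordinates of $b$ through the factor quasi-isometries $f_i^g$ (permuted by $\sigma(g)|_I$), and bound $d_\Haus(\phi(g)(D_b),D_{b'})$ by a triangle-inequality estimate using the error constant $D$. The only minor imprecision is your appeal to ``coboundedness'' for the reverse inclusion: what is actually needed is the coarse surjectivity of the individual quasi-isometry $\phi(g)$ (equivalently, as the paper does, one runs the same one-sided estimate for $g^{-1}$ applied to $D_{b'}$ and uses the quasi-action relations), not coboundedness of the quasi-action $G\qa X$.
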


\begin{proof}
	After reordering the factors, we assume $I=\{1,\dots, r\}$.
	We show $G_I\qa X$ is fibre-preserving with respect to the coarse bundle $\pi_I:X\rightarrow X_I$. Suppose $G_I\qa X$ is a $(K,A)$-quasi-action and fix some $g\in G_I$. By Theorem \ref{thm:kkl}, for each  $i\in \{1,\dots, n\}$, there is a quasi-isometry $g_i:X_i\rightarrow X_{\sigma(g)(i)}$, such that $d(\pi_{\sigma(g)(i)}(g\cdot x),g_i\cdot \pi_i(x))\leq D$ for all $x\in X$, where $D$ is a constant  depending only on $X$, $K$ and $A$.
	
	For each $1\leq i\leq r$, let $p_i:X_I\to X_i$ be the projection map.  Let $b=(b_1,\dots, b_r)\in X_I$ and $g\in G_I$. We define $b'\in X_I$ so that $p_{\sigma(g) (i)}(b')=g_i\cdot b_i$.  Let  $x\in \pi_I^{-1}(b)$, so that $\pi_i(x)=b_i$ for $1\leq i\leq r$. 
	Then \[d_{X_{\sigma(g)(i)}}(\pi_{\sigma(g)(i)}(g\cdot x),p_{\sigma(g) (i)}(b'))=d(\pi_{\sigma(g)(i)}(g\cdot x),g_i\cdot \pi_i(x))\leq D\] for $1\leq i\leq r$. This implies $d_{X_I}(b', \pi_I(g\cdot x))\leq r D$ and so $g\cdot D_b\subseteq N_{rD} (D_{b'})$, where $D_b=\pi_I^{-1}(b)$ and $D_{b'}=\pi_I^{-1}(b')$ are the fibres over $b$ and $b'$ respectively. A similar argument,  replacing $b$ with $b'$ and $g$ with $g^{-1}$ ensures $g\cdot D_b$ and $D_{b'}$ are uniformly finite Hausdorff distance apart. This demonstrates that every $g\in G_I$ is $A'$-fibre-preserving for some uniform $A'$.
\end{proof}

We will now use Theorem \ref{thm:kkl} to quasi-conjugate quasi-actions on  products to isometric actions on products. 
We say a quasi-action  $G\qa \Pi_{i=1}^n X_i$ \emph{coarsely preserves the product structure}  if up to uniformly bounded error, each $g\in G$ splits as a product $(g_1,\dots, g_n)$, possibly permuting factors. Note that Theorem \ref{thm:kkl} ensures if each $X_i$ is coarse type I  or coarse type II (in particular,  there is no ``$Z$'' factor) then any $G\qa \Pi_{i=1}^n X_i$ preserves the product structure.

\begin{prop}\label{prop:qaction_prod}
	Let $X=\Pi_{i=1}^n X_i$, where each $X_i$ is either  of coarse type I or of coarse type II. Suppose $G\qa X$ is a (cobounded) quasi-action and $G_i$ is the finite index subgroup of $G$ that stabilises the factor $X_i$, i.e.\ $G_i=\{g\in G\mid \sigma(g)(i)=i\}$ where $\sigma$ is as in Theorem \ref{thm:kkl}. Suppose every $G_i\qa X_i$ is quasi-conjugate to an isometric action $G_i\curvearrowright Y_i$. Then $G\qa \Pi_{i=1}^n X_i$ is quasi-conjugate to an isometric action $G\curvearrowright \Pi_{i=1}^n Z_i$, where each $Z_i$ is isometric to some $Y_{j}$ and the quasi-conjugacy $\Pi_{i=1}^n X_i\rightarrow \Pi_{i=1}^n Z_i$ preserves the product structure.
\end{prop}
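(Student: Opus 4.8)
The plan is to combine the product rigidity theorem (Theorem \ref{thm:kkl}) with the given quasi-conjugacies on the factors, being careful about how the permutation action of $G$ interacts with the splitting. First I would set notation: by Theorem \ref{thm:kkl} there is a homomorphism $\sigma: G\rightarrow \Sym(n)$ and, for each $g\in G$ and each $i$, a quasi-isometry $g_i: X_i\rightarrow X_{\sigma(g)(i)}$ with $d(\pi_{\sigma(g)(i)}(g\cdot x), g_i\cdot \pi_i(x))\leq D$ for all $x$, where $D$ is uniform. The first observation is that $\sigma$ partitions $\{1,\dots,n\}$ into orbits, and factors in the same $\sigma$-orbit are quasi-isometric; so after reordering we may group the $X_i$ into blocks, and it suffices to treat a single $\sigma$-orbit, say (after reindexing) $\{1,\dots,k\}$, on which $G$ acts transitively via $\sigma$. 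For this orbit, $G_1=\{g\mid \sigma(g)(1)=1\}$ has index $k$ in $G$, and by hypothesis $G_1\curvearrowright X_1$ is quasi-conjugate to an isometric action $G_1\curvearrowright Y_1$.

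The heart of the argument is an induction/co-induction construction: from the isometric action $G_1\curvearrowright Y_1$ I would build an isometric action of $G$ on $\Pi_{i=1}^k Z_i$ with each $Z_i$ isometric to $Y_1$, quasi-conjugate to $G\curvearrowright \Pi_{i=1}^k X_i$. Concretely, fix coset representatives $g^{(1)}=e, g^{(2)},\dots, g^{(k)}$ for $G/G_1$ with $\sigma(g^{(i)})(1)=i$. Let $f_1: X_1\rightarrow Y_1$ be the given quasi-conjugacy (with coarse inverse $\bar f_1$). For $i\geq 2$, define $f_i: X_i\rightarrow Y_1$ by $f_i := f_1\circ (g^{(i)})_1^{-1}$, using the factor quasi-isometry $(g^{(i)})_1: X_1\rightarrow X_i$ from Theorem \ref{thm:kkl}; these are quasi-isometries with uniform constants. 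Set $Z_i$ to be a copy of $Y_1$ and define $f := \Pi f_i : \Pi X_i\rightarrow \Pi Z_i$; this is a quasi-isometry that preserves the product structure by construction. The action of $G$ on $\Pi Z_i$ is then forced: for $g\in G$, the component map $Z_i\rightarrow Z_{\sigma(g)(i)}$ must be $f_{\sigma(g)(i)}\circ g_i\circ \bar f_i$, and the point is to check that this composite is \emph{close} (with uniform constant) to an honest isometry. Indeed, writing $g_i$ in terms of the cocycle structure of Theorem \ref{thm:kkl} and unwinding the definitions of $f_i$, one finds $f_{\sigma(g)(i)}\circ g_i\circ \bar f_i$ is close to $f_1\circ h\circ \bar f_1$ where $h = (g^{(\sigma(g)(i))})_1^{-1}\circ g_i\circ (g^{(i)})_1 : X_1\rightarrow X_1$ represents an element of $G_1$ acting on $X_1$ up to bounded error; since $G_1\curvearrowright X_1$ is quasi-conjugate to $G_1\curvearrowright Y_1$ via $f_1$, this is close to the corresponding isometry of $Y_1$. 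One then defines the $G$-action on $\Pi Z_i$ by declaring $g$ to act by these isometries (permuting factors by $\sigma(g)$), and checks it is an honest isometric action: the homomorphism property follows because the approximating isometries are uniquely determined by being close to the given quasi-isometries, and the cocycle relation for the $g_i$ (which holds up to bounded error) forces the exact cocycle relation for the isometries. Finally, quasi-conjugacy of $G\curvearrowright \Pi X_i$ and $G\curvearrowright \Pi Z_i$ via $f$ is exactly the statement that each component approximation holds, which we have arranged. Reassembling the blocks gives the general statement with each $Z_i$ isometric to some $Y_j$.

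The main obstacle I anticipate is the bookkeeping needed to promote the ``up to bounded error'' cocycle identity for the quasi-isometries $g_i$ (which is all Theorem \ref{thm:kkl} gives) into an \emph{exact} homomorphism $G\rightarrow \Isom(\Pi Z_i)$. The mechanism is standard: a quasi-isometry of a geodesic (or quasi-geodesic) space that is close to an isometry is close to a \emph{unique} isometry, so the assignment $g\mapsto$ (the unique isometry close to $f_{\sigma(g)(i)}\circ g_i\circ \bar f_i$ on each factor) is well-defined, and since $g\mapsto g_i$ satisfies the quasi-action cocycle relation with uniform constant, uniqueness forces the composites of these isometries to agree exactly, giving a genuine action. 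One must also verify that the bounded errors do not accumulate — this is where using a \emph{fixed} finite set of coset representatives $g^{(i)}$ and the uniformity of the constants in Theorem \ref{thm:kkl} is essential — and that coboundedness is preserved, which follows since $f$ is a quasi-isometry and coboundedness is a quasi-conjugacy invariant (Proposition \ref{prop:generalms} and the discussion following it). The remaining verifications — that $f_i$ are uniform quasi-isometries, that $f$ preserves the product structure, that the approximating isometries on $Y_1$ exist by the hypothesised quasi-conjugacy $G_1\curvearrowright X_1 \sim G_1\curvearrowright Y_1$ — are routine given the tools already assembled in the excerpt.
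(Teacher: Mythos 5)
Your skeleton matches the paper's: reduce via $\sigma$ to a single orbit of factors, and promote the given isometric action of the stabiliser $G_1\curvearrowright Y_1$ to an isometric action of $G$ on a product of copies of $Y_1$. The difference is that the paper disposes of the single-orbit step by quoting Kleiner--Leeb (Theorem \ref{thm:kl_qaction}), which says exactly that a quasi-action of a finite-index subgroup induces, uniquely up to product-preserving quasi-conjugacy, a quasi-action of $G$ on the product indexed by $G/G_1$, and that this can be taken isometric when the input is isometric. You instead re-derive this induction by hand with coset representatives $g^{(i)}$, which is a legitimate alternative route and is essentially the classical induced-action construction.

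However, the mechanism you invoke to get an honest homomorphism $G\to\Isom(\Pi Z_i)$ has a gap: it is not true in general that a quasi-isometry close to an isometry is close to a \emph{unique} isometry. Nothing in the hypotheses prevents $\Isom(Y_1)$ from containing nontrivial elements at bounded sup-distance from the identity ($Y_1$ is only assumed to be some metric space carrying an isometric $G_1$-action quasi-conjugate to $G_1\qa X_1$), so ``the unique isometry close to $f_{\sigma(g)(i)}\circ g_i\circ \bar f_i$'' need not be well defined, and the argument that the approximate cocycle identity for the $g_i$ ``forces'' the exact one collapses at that point. The fix is easy and makes the uniqueness claim unnecessary: do not select isometries by proximity at all, but \emph{define} the action directly by the exact group-level cocycle, letting $g$ send the $i$-th coordinate to the $\sigma(g)(i)$-th coordinate via $\rho\bigl((g^{(\sigma(g)(i))})^{-1}\,g\,g^{(i)}\bigr)$, where $\rho:G_1\to\Isom(Y_1)$ is the given action. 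The cocycle identity $c(gg',i)=c(g,\sigma(g')(i))\,c(g',i)$ for $c(g,i)=(g^{(\sigma(g)(i))})^{-1}gg^{(i)}$ is an exact algebraic identity, so this is an honest isometric action; your closeness estimates (uniform because the $g^{(i)}$ form a fixed finite set and the constants in Theorem \ref{thm:kkl} are uniform) then show precisely that $f=\Pi f_i$ is a product-preserving quasi-conjugacy. With that repair your argument goes through and amounts to an explicit proof of the special case of Theorem \ref{thm:kl_qaction} that the paper cites.
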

 The reason for the somewhat technical phrasing of the previous proposition is to account for quasi-actions that permute factors.  
The proof of Proposition \ref{prop:qaction_prod} will follow from the following   theorem of Kleiner--Leeb; see also  an argument of  Ahlin \cite{ahlin02producttrees}.
\begin{thm}[\cite{kleinerleeb09quasiactions}]\label{thm:kl_qaction}
	Let $G$ be a group, $H\leq G$ be a finite index subgroup and $\beta:H\qa X$ be a quasi-action. Then there is a quasi-action $\alpha:G\qa \Pi_{i\in G/H} X_i$ preserving the product structure such that
	\begin{itemize}
		\item each $X_i$ is quasi-isometric to $X$;
		\item the induced action of $G$ on $G/H$ is by left multiplication;
		\item the restriction of $H$ to $X_H$ is quasi-conjugate to  $\beta$.
	\end{itemize} 
	Such a quasi-action is unique up to a quasi-conjugacy preserving the product structure. Moreover, if $\beta$ is an isometric action, then we may assume each $X_i$ is isometric to $X$ and $\alpha$ is an isomeric action.
\end{thm}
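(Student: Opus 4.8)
The plan is to realise $\alpha$ as a coarse analogue of the coinduced action from $H$ to $G$ (induction and coinduction coincide here since $[G:H]<\infty$). Fix a transversal $g_1=e,g_2,\dots,g_n$ for $G/H$, so that every $g\in G$ and every $i\in\{1,\dots,n\}$ determine unique $\sigma(g)(i)\in\{1,\dots,n\}$ and $h_{g,i}\in H$ with $gg_i=g_{\sigma(g)(i)}h_{g,i}$. A direct check shows $\sigma\colon G\to\Sym(n)$ is a homomorphism realising the left-multiplication action on $G/H$ and that the cocycle identity $h_{g'g,i}=h_{g',\sigma(g)(i)}\,h_{g,i}$ holds. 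Set $X_i:=X$ for all $i$ (so each $X_i$ is trivially quasi-isometric to $X$, and isometric to $X$ in the isometric case), and define $\alpha(g)\colon\prod_i X_i\to\prod_i X_i$ by declaring the $\sigma(g)(i)$-th coordinate of $\alpha(g)(x)$ to be $\beta(h_{g,i})(x_i)$; equivalently, $\alpha(g)$ permutes the factors by $\sigma(g)$ and then applies $\beta(h_{g,i})$ in the $i$-th slot.

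First I would verify $\alpha$ is a quasi-action. Since $\beta$ is a $(K,A)$-quasi-action, each $\beta(h)$ is a $(K,A)$-quasi-isometry, so $\alpha(g)$ — a permutation of factors composed with coordinatewise $(K,A)$-quasi-isometries — is a $(K,A')$-quasi-isometry of the $\ell_2$-product, with $A'$ depending only on $K,A,n$. The cocycle identity together with the quasi-action axiom for $\beta$ (which bounds $d(\beta(h')\beta(h)(y),\beta(h'h)(y))$) shows that in each of the $n$ coordinates $\alpha(g')\circ\alpha(g)$ is $A$-close to $\alpha(g'g)$, hence they are uniformly close in the product; similarly $\alpha(e)$ is uniformly close to the identity. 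By construction $\alpha$ preserves the product structure and the induced $G$-action on the index set $G/H$ is left multiplication; since $g_1=e$ we have $\sigma(h)(1)=1$ and $h_{h,1}=h$ for $h\in H$, so the factor $X_H=X_1$ is $H$-invariant and the restriction of $\alpha|_H$ to $X_H$ is literally $\beta$ (in particular quasi-conjugate to $\beta$). If $\beta$ is an isometric action then each $\beta(h)$ is an isometry, so each $\alpha(g)$ is an isometry of $\prod_i X_i$, which gives the isometric statement.

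For uniqueness, suppose $\alpha'\colon G\qa\prod_i X_i'$ is a second quasi-action preserving the product structure, with induced index-set action the left multiplication on $G/H$, and with the restriction of $\alpha'|_H$ to $X_H'$ quasi-conjugate to $\beta$, hence to $\alpha|_H$ on $X_H$; let $f_H\colon X_H\to X_H'$ be the corresponding $\id_H$-quasi-conjugacy. For the factor labelled by the coset $g_iH$, the component $\alpha(g_i)_H\colon X_H\to X_{g_iH}$ of $\alpha(g_i)$ is a quasi-isometry, well-defined up to bounded error, with coarse inverse a component of $\alpha(g_i^{-1})$; likewise for $\alpha'$. Set $f_{g_iH}:=\alpha'(g_i)_H\circ f_H\circ\alpha(g_i^{-1})_{g_iH}\colon X_{g_iH}\to X'_{g_iH}$ and $f:=\prod_i f_{g_iH}$, a product-preserving quasi-isometry $\prod_i X_i\to\prod_i X_i'$. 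To see $f$ is a quasi-conjugacy, fix $g\in G$ and a factor $g_iH$, write $gg_i=g_jh$ with $h=h_{g,i}$ and $g_jH=\sigma(g)(g_iH)$; then in the $g_iH$-coordinate $\alpha(g)_{g_iH}$ is uniformly close to $\alpha(g_j)_H\circ\alpha(h)_H\circ\alpha(g_i^{-1})_{g_iH}$, and similarly for $\alpha'$. Chaining these identifications with the defining property of $f_H$ and the quasi-action axioms shows $f\circ\alpha(g)$ is uniformly close to $\alpha'(g)\circ f$. The point making the error uniform in $g$ is that the transversal $\{g_1,\dots,g_n\}$ is finite, so only the (uniform) quasi-action constants of $\alpha,\alpha',\beta$ and finitely many fixed maps $\alpha(g_i)_H,\alpha'(g_i)_H,f_H$ enter the estimates. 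I expect this last bookkeeping — organising the coordinatewise estimates so that the bound is visibly independent of $g$ — to be the main technical obstacle; the existence half is essentially formal once the cocycle $h_{g,i}$ is in hand, and an alternative to this whole argument is Ahlin's direct treatment in the case of trees, which adapts to the general setting.
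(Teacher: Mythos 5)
This theorem is not proved in the paper at all: it is imported verbatim from Kleiner--Leeb \cite{kleinerleeb09quasiactions} (with a pointer to Ahlin's argument for trees), so there is no internal proof to measure you against. Judged on its own, your argument is correct and is essentially the standard construction that Kleiner--Leeb's note formalises: the coinduced quasi-action built from a transversal $g_1=e,\dots,g_n$ and the cocycle $gg_i=g_{\sigma(g)(i)}h_{g,i}$, with $\alpha(g)$ permuting the factors by $\sigma(g)$ and acting by $\beta(h_{g,i})$ coordinatewise. The points you flag as bookkeeping really are routine: the quasi-action axioms for $\alpha$ follow from the cocycle identity plus the uniform $(K,A)$ constants of $\beta$, with errors multiplied only by the fixed number $n$ of factors; the restriction to $X_H$ is literally $\beta$ because $g_1=e$ forces $h_{h,1}=h$; and in the exact (isometric) case the cocycle identity upgrades all coarse identities to equalities, giving an honest isometric action. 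In the uniqueness step the one structural point worth making explicit is that, because both $\alpha$ and $\alpha'$ are assumed to induce left multiplication on $G/H$, each $\alpha(g)$ and $\alpha'(g)$ permutes the factors by the same permutation, so comparing $f\circ\alpha(g)$ with $\alpha'(g)\circ f$ reduces to the componentwise estimates you describe; uniformity in $g$ then comes from the uniform quasi-action constants of $\alpha$ and $\alpha'$, the single quasi-conjugacy $f_H$ (whose defining bound is uniform over all of $H$, which matters since $h_{g,i}$ ranges over all of $H$), and the finitely many component maps attached to the transversal. So your write-up would serve as a self-contained proof of a statement the paper only cites.
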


\begin{proof}[Proof of Proposition \ref{prop:qaction_prod}]
	By Theorem \ref{thm:kkl}, the quasi-action $G\qa X=\Pi_{i=1}^n X_i$ induces a homomorphism $\sigma:G\rightarrow \Sym(n)$ determined by the way in which $G$ permutes  the factors of $X$. Let $I\subseteq \{1,\dots, n\}$ be a set of representatives of orbits of $\sigma(G)$, and for each $i\in I$, let $G(i)$ be the $\sigma(G)$-orbit containing $i$. Let $G_i\leq G$ be the finite index subgroup $\{g\in G\mid \sigma(g)(i)=i\}$.  
	 	
	Let $i\in I$. By Theorem \ref{thm:kkl} and Proposition \ref{prop:coarsebundle_fibrepres}, $G\qa X$ induces quasi-actions $G\qa \Pi_{j\in G(i)} X_j$ and $G_i\qa X_i$. By the hypotheses,  $G_i\qa X_i$ is quasi-conjugate to an isometric action $G_i\curvearrowright Y_i$. We  now apply Theorem \ref{thm:kl_qaction} to quasi-conjugate $G\qa \Pi_{j\in G(i)} X_j$ to an isometric action $G\curvearrowright \Pi_{j\in G(i)}Z_j$, where each $Z_j$  is isometric to $Y_i$.  Since the original quasi-action $G\qa X$ preserves the product structure, we can quasi-conjugate it to the isometric action $G\curvearrowright \Pi_{i\in I}(\Pi_{j\in G(i)}Z_j)$. Finally, observe that  $\Pi_{i\in I}(\Pi_{j\in G(i)}Z_j)$ and $\Pi_{i=1}^n Z_i$ are isometric via an isometry that permutes factors, and so $G\qa X$ is quasi-conjugate to $G\curvearrowright \Pi_{i=1}^n Z_i$.
\end{proof}

We conclude with an application of Theorem \ref{thm:kkl} that will be used when  studying groups quasi-isometric to products of hyperbolic spaces. We recall a hyperbolic metric space is of \emph{coarse algebraic type} if it is quasi-isometric to a rank one symmetric space or a locally finite  regular tree of valence at least three. 
\begin{lem}\label{lem:prodhyp_qitoalgebraicgroup}
	Let $\Gamma$ be a finitely generated group quasi-isometric to $\Pi_{i=1}^nX_i$, where each $X_i$ is a cocompact proper  geodesic non-elementary hyperbolic metric space. If $\Gamma$ is virtually a uniform lattice in a semisimple algebraic group, then each of the  $X_i$ is of coarse algebraic type.
\end{lem}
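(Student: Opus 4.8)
The plan is to combine the quasi-isometric classification of uniform lattices in semisimple algebraic groups with the rigidity of quasi-isometries between products of coarse type I and coarse type II spaces provided by \cite{kapovich1998derham} (the de Rham decomposition underlying Theorem \ref{thm:kkl}). First I would unwind the hypothesis on $\Gamma$: if a finite index subgroup of $\Gamma$ is a uniform lattice in a semisimple algebraic group $G$, then $\Gamma$ is quasi-isometric to $G$ by the Milnor--Schwarz lemma, and $G$ acts geometrically on the product of the symmetric spaces of non-compact type attached to its archimedean factors and the Bruhat--Tits buildings attached to its non-archimedean factors. Factors of rank zero are compact and contribute only a bounded space, so may be discarded; passing to the de Rham decomposition of the symmetric space and the simple-factor decomposition of each building, I obtain a quasi-isometry $\Pi_{i=1}^n X_i\to \Pi_{l=1}^m M_l$ in which each $M_l$ is an irreducible symmetric space of non-compact type or a thick irreducible Euclidean building; in particular each $M_l$ is coarsely indecomposable and of coarse type I or coarse type II. Note $m\geq 1$, since $\Gamma$ is quasi-isometric to the unbounded space $\Pi_i X_i$ and is therefore infinite.

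Next I would invoke the uniqueness of coarse product decompositions into coarse type I and coarse type II factors \cite{kapovich1998derham}. Each $X_i$ is a non-elementary hyperbolic space, hence coarsely indecomposable and of coarse type I, so both sides of the above quasi-isometry are products of coarsely indecomposable coarse type I or coarse type II spaces. The decomposition result then gives $m=n$ and, after reindexing, a quasi-isometry $X_i\to M_i$ for every $i$.

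Finally, I would analyse each $M_i$. Since hyperbolicity and non-elementariness of the Gromov boundary are quasi-isometry invariants of quasi-geodesic metric spaces, $M_i$ is a non-elementary hyperbolic space that is either an irreducible symmetric space of non-compact type or a thick irreducible Euclidean building. A symmetric space of rank at least two, or a Euclidean building of dimension at least two, contains an isometrically embedded flat $\bbE^2$ and so is not hyperbolic; hence $M_i$ has rank (respectively dimension) one. Thus $M_i$ is either a rank one symmetric space of non-compact type, or a one-dimensional Euclidean building, i.e.\ a tree, in which case thickness forces every vertex to have valence at least three, and such a locally finite tree is quasi-isometric to the trivalent regular tree. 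In either case $X_i$ is quasi-isometric to a rank one symmetric space of non-compact type or to a regular tree of valence at least three, i.e.\ $X_i$ is of coarse algebraic type.

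The main obstacle is the middle step: one must check carefully that \emph{all} factors occurring on both sides of the quasi-isometry genuinely satisfy the hypotheses of the coarse de Rham decomposition --- coarse indecomposability, and membership in coarse type I or coarse type II --- so that an a priori arbitrary quasi-isometry between the two products can be promoted to a factor-by-factor quasi-isometry. The reduction of the lattice's quasi-isometry type to a product of irreducible symmetric spaces and thick Euclidean buildings, namely discarding compact factors and decomposing into simple factors, although standard, is the other place where some care is needed.
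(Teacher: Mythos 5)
Your proposal is correct and follows essentially the same route as the paper: both pass to the symmetric-space/Bruhat--Tits model space of the lattice, apply the Kapovich--Kleiner--Leeb product rigidity theorem (Theorem \ref{thm:kkl}) to match factors with the $X_i$, and then use hyperbolicity (no embedded flat) to force each matched factor to be a rank one symmetric space or a thick tree, i.e.\ of coarse algebraic type. The only cosmetic difference is that you discard Euclidean/compact factors at the source using semisimplicity, whereas the paper allows for a possible Euclidean factor and lets Theorem \ref{thm:kkl} rule it out.
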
 
\begin{proof}
	If $\Gamma$ is a uniform lattice in a semisimple algebraic group, it acts geometrically on a space $\Pi_{i=1}^mY_i$, where each $Y_i$ is either an irreducible  symmetric space  or an irreducible  thick Euclidean building. Note that all the $Y_i$ and $X_i$ are either of course type I or II, or are Euclidean spaces. Since $\Pi_{i=1}^nX_i$ and $\Pi_{i=1}^mY_i$ are quasi-isometric, we can thus apply Theorem \ref{thm:kkl} to deduce   $\Pi_{i=1}^mY_i$ contains no Euclidean factors, $n=m$ and after reordering factors, each $Y_i$ is quasi-isometric to $X_i$. Thus $Y_i$ cannot contain an isometrically embedded Euclidean plane, hence has rank one. Since rank one Euclidean buildings are locally finite infinite-ended trees, each $X_i$ is either quasi-isometric to  such a tree, or is quasi-isometric to a  rank one symmetric space. 
\end{proof}

\subsection{Topological groups and metric spaces}\label{sec:topgps}
In this section we recall some definitions and conventions regarding topological groups. Classical references for the theory of topological groups are \cite{hewittross1979abstract,bourbaki98gentop1-4}. We refer to  \cite{cornulierdlH2016metric} for a thorough exposition of the metric geometry of locally compact groups.

A \emph{topological group} is a group which is simultaneously a topological space, such that the multiplication and inverse maps are continuous. We  assume topological groups are Hausdorff unless stated otherwise.
A topological group is \emph{locally compact} if it is  locally compact as a topological space, i.e.\ every point has a compact neighbourhood.
Under some mild hypotheses, locally compact groups can be thought of as geometric objects in their own right.  A locally compact group $G$ is \emph{compactly generated} if there exists a compact subset $K\subseteq G$ that generates $G$. Note that a discrete group is  finitely generated if and only if it is compactly generated.

If $G$ is a compactly generated locally compact  group, then the word metric with respect to any compact generating set is unique up to quasi-isometry. More generally, Cornulier--{de la Harpe}  define the notion of a geodesically adapted metric on a compactly generated locally compact group $G$.
A metric $d$ on $G$ is said to be \emph{geodesically adapted} if it is:
\begin{enumerate}
	\item left-invariant;
	\item proper, i.e.\  all balls $N_r(1)\coloneqq \{g\in G\mid d(1,g)\leq r\}$ have compact closure;
	\item locally bounded, i.e.\  $N_r(1)$ is a neighbourhood of $1$ for $r$ sufficiently large;
	\item quasi-geodesic.
\end{enumerate}
For any  two geodesically adapted metrics on $G$, the identity map is a  quasi-isometry (\cite[Corollary 4.B.11]{cornulierdlH2016metric}). We remark that a geodesically adapted metric is in general  not assumed to be \emph{compatible}, i.e.\ the topology on $G$ is not necessarily the topology induced by the metric. For instance, the word metric with respect to a compact generating set is always  discrete, so cannot induce the original topology of $G$ unless $G$ is also discrete.

A theorem of Kakutani--Kodaira   implies that one can quotient out a compactly generated group by a compact normal subgroup to obtain a second-countable metrisable group \cite{KakutaniKodaira1944Uber}; see also \cite[Theorems 2.A.10 and 2.B.6]{cornulierdlH2016metric}.
\begin{cor}\label{cor:kakutanikodaira}
	Let $G$ be a locally compact compactly generated group.  Then there exists a compact normal subgroup $K\vartriangleleft G$ such that $G/K$ is  metrisable and second-countable.
\end{cor}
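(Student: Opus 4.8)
The plan is to deduce this corollary directly from the Kakutani--Kodaira theorem, which states that any locally compact, $\sigma$-compact group $G$ possesses a descending sequence of compact normal subgroups $K_1 \supseteq K_2 \supseteq \cdots$ with trivial intersection such that each quotient $G/K_n$ is metrisable and second-countable. So the first step is to verify the hypotheses of Kakutani--Kodaira are met: $G$ is assumed locally compact, and since $G$ is compactly generated (say by a compact set $C$ containing $1$), we have $G = \bigcup_{n\geq 1} C^n$, a countable union of compact sets, so $G$ is $\sigma$-compact. Hence Kakutani--Kodaira applies and produces such a sequence $(K_n)$.

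The second step is to extract a single $K$ from the sequence, using compact generation in an essential way. Here I would argue as follows. The quotient maps $q_n : G \to G/K_n$ are continuous and open. Consider the image $q_n(C)$ of the compact generating set; this is a compact (hence, in the second-countable group $G/K_n$, certainly manageable) generating set of $G/K_n$, so each $G/K_n$ is itself compactly generated. I expect the key point is that the topology on $G$ is already determined modulo one of the $K_n$: more precisely, I claim there exists $n$ such that $K_n$ is open in $K_1$, equivalently $K_1/K_n$ is discrete, equivalently (since $K_1$ is compact) $K_1/K_n$ is finite. Indeed, $K_1$ is a compact group, so it is the inverse limit of the finite quotients $K_1/K_n$ only if... — actually the cleanest route avoids this and instead simply takes $K := K_1$: then $G/K_1$ is metrisable and second-countable by Kakutani--Kodaira, $K_1$ is compact normal, and we are done. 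In other words, we do not need the full descending sequence at all; a single compact normal $K$ with metrisable second-countable quotient is exactly the first term (or any term) of the Kakutani--Kodaira sequence.

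The third step is therefore just bookkeeping: set $K = K_1$, note $K \vartriangleleft G$ is compact, and $G/K$ is metrisable and second-countable as asserted. One may optionally remark that $G/K$ inherits compact generation, since the image of a compact generating set of $G$ is a compact generating set of $G/K$, although this is not required by the statement.

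The main obstacle — if one insists on citing only the bare statement "$\sigma$-compact locally compact groups have a metrisable second-countable quotient by a compact normal subgroup" — is essentially none: the content is entirely in verifying $\sigma$-compactness from compact generation, which is the one-line observation $G = \bigcup_n C^n$. If instead one only has access to the sequence form of Kakutani--Kodaira (a descending chain with trivial intersection), the only subtlety is recognising that any single term of the chain already does the job, so no genuine work with the inverse limit structure of compact groups is needed. The cited references \cite{KakutaniKodaira1944Uber} and \cite[Theorems 2.A.10 and 2.B.6]{cornulierdlH2016metric} supply exactly what is needed.
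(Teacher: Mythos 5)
Your proof is correct and takes essentially the same route as the paper, which offers no argument beyond citing Kakutani--Kodaira (\cite{KakutaniKodaira1944Uber}, \cite[Theorems 2.A.10 and 2.B.6]{cornulierdlH2016metric}): compact generation gives $\sigma$-compactness via $G=\bigcup_{n\geq 1}C^n$, and a single compact normal subgroup $K$ with $G/K$ metrisable and second-countable is exactly what that theorem provides. One minor caveat: the form of Kakutani--Kodaira you quote, with a descending sequence of compact normal subgroups of \emph{trivial intersection}, is stronger than the theorem holds for general $\sigma$-compact locally compact groups (trivial intersection requires the identity to be a countable intersection of neighbourhoods, which can fail, e.g.\ for uncountable products of compact groups); since you only use a single term of the sequence, this does not affect the validity of your argument.
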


If $G$ is a locally compact group and $X$ is a proper quasi-geodesic space, an isometric action  $\rho: G\to \Isom(X)$  is said to be \emph{geometric} if:
\begin{enumerate}
	\item $\rho$ is continuous;
	\item it is \emph{cocompact}: there is a compact subset $K\subseteq X$ such that $\rho(G)K=X$;
	\item it is \emph{proper}: for all compact $K\subseteq X$, the set $\{g\in G\mid \rho(g)K\cap K\neq \emptyset\}$ has compact closure.
\end{enumerate}  We note that a less restrictive notion of geometric action --- which applies to actions that are not necessarily continuous --- is considered in \cite{cornulierdlH2016metric}.

\begin{lem}[{\cite[Theorem 4.C.5]{cornulierdlH2016metric}}]\label{lem:geom_qi}
	Suppose $G$ is a locally compact group, $X$ is a proper quasi-geodesic metric space and $\rho:G\to \Isom(X)$ is geometric. Then $G$ is compactly generated and the orbit map $g\mapsto \rho(g)x_0$ is a quasi-conjugacy from $G\curvearrowright G$ to $G\curvearrowright X$.
\end{lem}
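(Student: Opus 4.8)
The plan is to deduce the statement from the general Milnor--Schwarz-type Proposition \ref{prop:generalms}, which already handles cobounded quasi-actions on quasi-geodesic spaces. The point is that a geometric action $\rho:G\to\Isom(X)$ of a locally compact group is in particular an isometric action, hence a quasi-action (with constants $K=1$, $A=0$), and cocompactness gives coboundedness. So the main content not already supplied by Proposition \ref{prop:generalms} is: (i) identifying the discrete group-theoretic statement ``$G$ is generated by a ball $\{g:d(g\cdot b,b)\le R\}$'' with the topological-group statement ``$G$ is compactly generated'', and (ii) checking that the word metric $d_S$ coming out of Proposition \ref{prop:generalms} is quasi-isometric to a geodesically adapted metric on $G$, so that the conclusion can be phrased as a quasi-conjugacy from $G\curvearrowright G$ (with a geodesically adapted metric) to $G\curvearrowright X$.

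First I would fix a basepoint $b\in X$ and let $R_0$ be the constant furnished by Proposition \ref{prop:generalms} applied to the isometric (hence $(1,0)$-quasi-) action $G\qa X$, which is cobounded because $\rho$ is cocompact. Set $S\coloneqq\{g\in G\mid d(g\cdot b,b)\le R_0\}$. The key observation is that $S$ has compact closure: properness of the action implies $\{g\in G\mid \rho(g)\overline{N_{R_0}(b)}\cap \overline{N_{R_0}(b)}\ne\emptyset\}$ has compact closure, and $S$ is contained in this set (if $d(g\cdot b,b)\le R_0$ then $g\cdot b\in \overline{N_{R_0}(b)}\cap \rho(g)\overline{N_{R_0}(b)}$, using that $X$ is proper so closed balls are compact). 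Moreover $S$ is a neighbourhood of $1$ for $R_0$ large enough, by continuity of $\rho$ and of the orbit map $g\mapsto g\cdot b$; enlarging $R_0$ if necessary we may assume this. By Proposition \ref{prop:generalms}(1), $S$ generates $G$, so $\overline S$ is a compact generating set and $G$ is compactly generated.

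Next I would compare metrics. Proposition \ref{prop:generalms}(2) gives that the left action $G\curvearrowright(G,d_S)$, where $d_S$ is the word metric with respect to $S$, is quasi-conjugate to $G\qa X$ via the orbit map $g\mapsto g\cdot b$, and in particular $(G,d_S)$ is quasi-isometric to $X$; since $X$ is quasi-geodesic, so is $(G,d_S)$. Now I claim $d_S$ is (quasi-isometric to) a geodesically adapted metric on $G$: it is left-invariant by construction; it is proper as a metric on the topological group $G$ because balls $N_r^{d_S}(1)$ are finite unions of translates of $S$ under elements of $S$, hence have compact closure since $\overline S$ is compact; it is locally bounded because $S\subseteq N_1^{d_S}(1)$ is a neighbourhood of $1$; and it is quasi-geodesic as just noted. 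Hence $d_S$ is geodesically adapted, so by \cite[Corollary 4.B.11]{cornulierdlH2016metric} the identity map $(G,d_S)\to(G,d)$ is a quasi-isometry for any geodesically adapted metric $d$ on $G$; since both metrics are left-invariant this identity map is $G$-equivariant, hence a quasi-conjugacy. Composing with the quasi-conjugacy of Proposition \ref{prop:generalms}(2) yields that the orbit map $g\mapsto \rho(g)b$ is a quasi-conjugacy from $G\curvearrowright G$ (with any geodesically adapted metric) to $G\curvearrowright X$, completing the proof.

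The step I expect to be the main obstacle is the careful bookkeeping in passing between the purely metric/coarse formalism of Proposition \ref{prop:generalms} and the topological-group world: namely verifying that $S$ (or its closure) is genuinely a compact generating set --- this is where properness and local compactness of $G$, together with properness of $X$ and continuity of $\rho$, all have to be combined --- and then confirming that the resulting word metric really is geodesically adapted rather than merely quasi-geodesic. Everything else is a direct invocation of Proposition \ref{prop:generalms} and \cite[Corollary 4.B.11]{cornulierdlH2016metric}.
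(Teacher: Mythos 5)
Your proposal is correct, but note that the paper does not actually prove this lemma at all: it is imported wholesale from Cornulier--de la Harpe (their locally compact Milnor--Schwarz theorem, \cite[Theorem 4.C.5]{cornulierdlH2016metric}), so the "paper's proof" is just the citation. What you have done instead is re-derive the statement internally from Proposition \ref{prop:generalms} together with \cite[Corollary 4.B.11]{cornulierdlH2016metric}, and this route works: cocompactness does give coboundedness of the isometric action viewed as a $(1,0)$-quasi-action; properness of the action plus properness of $X$ does show that $S=\{g\in G\mid d(g\cdot b,b)\le R_0\}$ sits inside $\{g\mid \rho(g)\overline{N_{R_0}(b)}\cap\overline{N_{R_0}(b)}\neq\emptyset\}$ and hence has compact closure (in fact $S$ is already closed, being the preimage of $[0,R_0]$ under the continuous map $g\mapsto d(\rho(g)b,b)$, so it is compact); and continuity of the orbit map makes $S$ an identity neighbourhood, so $d_S$ is left-invariant, locally bounded, proper (the $d_S$-ball of radius $n$ lies in $\overline{S}^{\,n}$, which is compact as a continuous image of $\overline{S}^{\times n}$ -- this is the cleaner way to phrase your "finite unions of translates" step), and quasi-geodesic, i.e.\ geodesically adapted. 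Uniqueness of geodesically adapted metrics up to equivariant quasi-isometry then transfers the quasi-conjugacy of Proposition \ref{prop:generalms}(2) to the regular representation, exactly as you say. The trade-off between the two approaches: the citation keeps the paper short and leans on the standard reference, whereas your argument is self-contained modulo Corollary 4.B.11 and makes explicit precisely where continuity, properness of $\rho$, and properness of $X$ enter -- which is the content of CdlH's proof anyway, so there is no gap and no circularity (Proposition \ref{prop:generalms} is proved in the paper independently of the cited theorem).
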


Every locally compact group $G$ can be equipped with a left-invariant Radon measure $\mu$, unique up to rescaling, called the (left) \emph{Haar measure}. A \emph{lattice} is a discrete subgroup $\Gamma\leq G$ such that $\Gamma V =G$ for some Borel set $V\subseteq G$ with $\mu(V)<\infty$. A lattice $\Gamma\leq G$ is said to be  \emph{uniform} if $\Gamma V =G$  for some compact $V\subseteq G$.   If $\Gamma$ is a  discrete group, $X$ is a proper metric space and  $\rho:\Gamma\to \Isom(X)$ is a  geometric action, then $\im(\rho)$ is a uniform lattice and $\ker(\rho)$ is finite. 

\begin{prop}[{\cite[Remark 4.C.13]{cornulierdlH2016metric}}]\label{prop:copci}
	Let $G$ and $H$ be compactly generated locally compact groups and let $\phi:G\to H$ be a continuous proper homomorphism with cocompact image. Then $\phi$ is a quasi-isometry. In particular:
	\begin{enumerate}
		\item if $G$ is a uniform lattice  in $H$, then the inclusion $G\to H$ is a quasi-isometry;
		\item if $K\vartriangleleft G$ is a compact normal subgroup, the quotient map $G\to G/K$ is a quasi-isometry.
	\end{enumerate} 
\end{prop}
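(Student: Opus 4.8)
The plan is to realise $\phi$ as the orbit map of a geometric action and then invoke Lemma~\ref{lem:geom_qi}. First I would equip $G$ and $H$ with geodesically adapted metrics; these exist because $G$ and $H$ are compactly generated locally compact groups, and by \cite[Corollary 4.B.11]{cornulierdlH2016metric} any two choices differ by a quasi-isometry, so the statement is independent of the choices. Writing $d_H$ for the (left-invariant) metric on $H$, the formula $\rho(g)(h)\coloneqq \phi(g)h$ defines an isometric action $\rho: G\to \Isom(H)$, since left translations of $H$ are $d_H$-isometries.

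Next I would check that $\rho$ is a geometric action in the sense used before Lemma~\ref{lem:geom_qi}. Continuity of the action map $(g,h)\mapsto \phi(g)h$ follows from continuity of $\phi$ together with continuity of multiplication in $H$. Cocompactness is immediate: if $L\subseteq H$ is compact with $\phi(G)L=H$ (such $L$ exists because $\phi$ has cocompact image), then $\rho(G)L=H$. Properness holds because, for any compact $L\subseteq H$, one has $\{g\in G\mid \rho(g)L\cap L\neq \emptyset\}\subseteq \phi^{-1}(LL^{-1})$, and $\phi^{-1}(LL^{-1})$ has compact closure since $LL^{-1}$ is compact and $\phi$ is proper. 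Lemma~\ref{lem:geom_qi} then says the orbit map $g\mapsto \rho(g)(1_H)=\phi(g)$ is a quasi-conjugacy from $G\curvearrowright G$ to $G\curvearrowright H$; in particular $\phi$ is a quasi-isometry, which is the main assertion.

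For the two special cases it only remains to verify the hypotheses. In (1), a uniform lattice $G\leq H$ is discrete, hence closed in $H$, so the inclusion $\iota: G\hookrightarrow H$ is continuous, and $\iota^{-1}(L)=G\cap L$ is compact (closed in the compact set $L$) and discrete, hence finite, for every compact $L\subseteq H$; thus $\iota$ is proper, and it has cocompact image by definition of a uniform lattice. In (2), the quotient map $q: G\to G/K$ is a continuous surjective homomorphism, so has cocompact image, and it is proper: covering a compact $L\subseteq G/K$ by finitely many open sets $q(U_i)$ with each $U_i\subseteq G$ relatively compact gives $q^{-1}(L)\subseteq \bigcup_i U_iK$, a relatively compact set, while $q^{-1}(L)$ is closed; hence $q^{-1}(L)$ is compact.

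The main thing to be careful about is matching the two notions of properness --- the ``proper homomorphism'' hypothesis and the ``proper action'' clause of Lemma~\ref{lem:geom_qi} --- together with the point-set fact used in case (2) that the quotient by a compact normal subgroup is a proper map. Both are routine once the action $\rho$ is in place, so I do not anticipate any serious obstacle.
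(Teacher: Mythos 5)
Your overall strategy (realise $\phi$ as the orbit map of the left-translation action of $G$ on $H$ and invoke a Milnor--Schwarz-type statement) is the natural one, and it is essentially how the cited source argues; the paper itself gives no internal proof of Proposition \ref{prop:copci}, it simply quotes \cite[Remark 4.C.13]{cornulierdlH2016metric}. However, as written your argument has a genuine gap at the step where you ``check that $\rho$ is a geometric action in the sense used before Lemma~\ref{lem:geom_qi}''. That lemma requires $X$ to be a \emph{proper} quasi-geodesic metric space, and the continuity, cocompactness and properness clauses in the definition of a geometric action are all taken with respect to the \emph{metric} topology of $X$ (compact subsets of $X$, the pointwise-convergence topology on $\Isom(X)$). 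A geodesically adapted metric $d_H$ on $H$ is in general not compatible with the group topology and not proper as a metric space --- the paper warns about exactly this right after the definition: the word metric for a compact generating set is discrete, so its closed balls are infinite discrete sets. Your verifications (continuity of $(g,h)\mapsto\phi(g)h$, and $\phi(G)L=H$ with $L$ compact in the group topology) are carried out in the group topology of $H$, not in the metric space $(H,d_H)$. Concretely, for $\bbZ\hookrightarrow\bbR$ with $d_H$ the word metric on $\bbR$ relative to $[-1,1]$, there is \emph{no} subset of $(\bbR,d_H)$ that is compact in the metric topology and whose $\bbZ$-translates cover $\bbR$, and the action is not continuous into $\Isom(\bbR,d_H)$; so the hypotheses of Lemma~\ref{lem:geom_qi} simply fail for that choice of adapted metric, even though the conclusion of the proposition is of course true there.

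The repair is not merely cosmetic. You need an adapted metric on $H$ that is in addition compatible and proper (closed balls compact); such a metric exists precisely when $H$ is second countable (this is \cite[Proposition 4.B.9]{cornulierdlH2016metric}, which the paper invokes elsewhere, e.g.\ in the proof of Proposition \ref{prop:TCvsQC}). For general compactly generated locally compact $H$ one must first quotient by a compact normal subgroup to reach the second-countable case via Corollary \ref{cor:kakutanikodaira} --- but beware that relating $H$ to $H/K$ by a quasi-isometry is exactly item (2) of the proposition you are proving, so that reduction needs a separate direct argument (for instance a hands-on comparison of word metrics, using properness of the homomorphism to bound word length of preimages of a compact generating set). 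Alternatively, one can avoid the issue entirely by using the full pseudo-metric version of the Milnor--Schwarz lemma in \cite[Theorem 4.C.5]{cornulierdlH2016metric}, whose notion of geometric action is purely coarse and does not require the metric to be compatible or proper; that is in effect what the citation \cite[Remark 4.C.13]{cornulierdlH2016metric} does. Your treatment of the two special cases (properness of the inclusion of a uniform lattice, and of the quotient by a compact normal subgroup) is fine once the main assertion is in place.
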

The following lemma shows a continuous extension of a geometric action of a uniform lattice is geometric.
\begin{lem}\label{lem:lattice_extension_geom}
	Let $X$ be quasi-geodesic metric space, let $\Gamma$ be a finitely generated group,  and let $\rho:\Gamma\to \Isom(X)$ be a geometric action. If $\Gamma\leq G$  is a uniform lattice and $\rho$ extends to a continuous map $\hat\rho:G\to \Isom(X)$, then $\hat \rho$ is also  geometric.
\end{lem}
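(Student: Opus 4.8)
The plan is to verify the three defining properties of a geometric action --- continuity, cocompactness, and properness --- for $\hat\rho:G\to\Isom(X)$. Continuity is free, since it is part of the hypothesis. For cocompactness, first I would observe that $\Gamma\leq G$ is a uniform lattice, so there is a compact $V\subseteq G$ with $\Gamma V=G$; then $\hat\rho(G)=\hat\rho(\Gamma V)=\rho(\Gamma)\hat\rho(V)$. Since $\rho$ is cocompact there is a compact $K\subseteq X$ with $\rho(\Gamma)K=X$, and since $\hat\rho$ is continuous, $\hat\rho(V)$ lies in a bounded (hence, by properness of $X$, relatively compact) subset of $\Isom(X)$ in the compact-open topology; I would choose a point $x_0\in X$ and note that $\{\hat\rho(v)x_0\mid v\in V\}$ is contained in some ball $N_r(x_0)$. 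A short computation then shows that the compact set $N_{r+\diam(K)}(x_0)$ (or rather its preimage under the orbit map, which is compact since $X$ is proper) is a fundamental domain for $\hat\rho(G)$, giving cocompactness.

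The main work is properness. Fix a compact $K\subseteq X$; I must show $\{g\in G\mid \hat\rho(g)K\cap K\neq\emptyset\}$ has compact closure. Let $S\coloneqq\{g\in G\mid \hat\rho(g)K\cap K\neq\emptyset\}$. Using the decomposition $G=\Gamma V$ with $V$ compact, write each $g\in S$ as $g=\gamma v$ with $\gamma\in\Gamma$, $v\in V$. The set $\hat\rho(V)K$ is contained in a compact set $K'\supseteq K$ (enlarging $K$ by the bounded displacement coming from the relatively compact family $\hat\rho(V)$, as above, and using properness of $X$). Then $\hat\rho(g)K\cap K\neq\emptyset$ forces $\rho(\gamma)K'\cap K\neq\emptyset$, hence $\rho(\gamma)K'\cap K'\neq\emptyset$; by properness of the \emph{discrete} action $\rho$, the set $F\coloneqq\{\gamma\in\Gamma\mid \rho(\gamma)K'\cap K'\neq\emptyset\}$ is finite. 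Therefore $S\subseteq F\cdot V$, which is a finite union of compact sets, hence compact. This gives properness, and completing the verification of all three axioms finishes the proof.

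The step I expect to be the main obstacle is the careful passage between the compact-open topology on $\Isom(X)$ and the metric geometry of $X$ --- specifically, justifying that a continuous image of a compact set in $\Isom(X)$ moves a basepoint only a bounded amount, so that $\hat\rho(V)K$ is relatively compact in $X$. This is where properness of $X$ is essential: a continuous map from the compact set $V$ into $\Isom(X)$ composed with evaluation at $x_0$ is a continuous map $V\to X$, so its image is compact, hence bounded. Once this ``bounded displacement'' fact is in hand, both the cocompactness and properness arguments are routine reductions to the corresponding properties of the lattice action $\rho$, exactly as in the standard proof that a uniform lattice is quasi-isometric to the ambient group (Proposition \ref{prop:copci}).
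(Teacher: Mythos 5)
Your proposal is correct and follows essentially the same route as the paper: decompose $G=\Gamma V$ with $V$ compact, use continuity to see that $\hat\rho(V)$ moves the compact set $K$ into a larger compact set $K'$, and then reduce properness of $\hat\rho$ to properness of the lattice action $\rho$, with cocompactness inherited from $\rho(\Gamma)$. The only cosmetic difference is that your cocompactness step is more elaborate than needed --- since $\rho(\Gamma)\subseteq\hat\rho(G)$ and $\rho(\Gamma)K=X$, cocompactness of $\hat\rho(G)$ is immediate --- and the paper gets compactness of $\hat\rho(V)(K)$ in one stroke from joint continuity of $G\times X\to X$ rather than via your basepoint-displacement argument, but these are equivalent in substance.
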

\begin{proof}
	Since $\Gamma\leq G$ is a uniform lattice, $G=\Gamma L$ for some compact set $L$. Let $K\subseteq X$ be compact.  Since $\hat \rho$ is continuous, the  natural map $G\times X\to X$ given by $(g,x)\mapsto \hat\rho(g)(x)$ is continuous. Therefore $Q\coloneqq \hat \rho(L)(K)$ is also compact. Since $\rho$ is proper, the set $F\coloneqq \{g\in \Gamma\mid \rho(g)(Q)\cap Q\neq \emptyset\}$ is finite. Thus $\{g\in G\mid \hat\rho(g)(K)\cap K\neq \emptyset \}$ is contained in $FL$, hence has compact closure. It follows that $\hat \rho$ is proper. Since $\rho(\Gamma)$ is cocompact, so is $\hat\rho(G)$, hence $\hat \rho$ is geometric. 
\end{proof}

In Section \ref{sec:hyp_prod} we will make use of the following elementary lemma; see e.g. \cite[\S 2.C]{capracemonod2012lattice}.
\begin{lem}\label{lem:lattice_int_open}
	If $G$ is a locally compact group,  $\Gamma\leq G$ is a (uniform) lattice in $G$ and $U\leq G$ is an open subgroup, then $\Gamma\cap U$ is a (uniform) lattice in $U$.
\end{lem}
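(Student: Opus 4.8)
The plan is to reduce both assertions to a topological comparison of the coset spaces $\Gamma\backslash G$ and $(\Gamma\cap U)\backslash U$. First I would record the easy preliminaries: an open subgroup is also closed, so $\mu|_U$ is a (left) Haar measure on $U$ for any Haar measure $\mu$ on $G$, and $\Gamma\cap U$ is a discrete subgroup of $U$ since it is contained in the discrete group $\Gamma$. The first substantive step is to show that $\Gamma U$ is clopen in $G$: it is open as the union of the open cosets $\gamma U$, and it is a union of \emph{left} cosets of $U$ (if $x\in\Gamma U$ then $xU=\gamma U\subseteq\Gamma U$ for the coset $\gamma U$ containing $x$), so its complement is likewise a union of open cosets of $U$, hence open.

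Next I would build, from the inclusion $U\hookrightarrow G$, the map $\iota\colon(\Gamma\cap U)\backslash U\to\Gamma\backslash G$, $(\Gamma\cap U)u\mapsto\Gamma u$, and check that it is well defined and injective: if $u_1,u_2\in U$ satisfy $\Gamma u_1=\Gamma u_2$ then $u_1u_2^{-1}\in\Gamma\cap U$. Its image is $\Gamma\backslash\Gamma U$. Since $U$ is open and the quotient map $\pi\colon G\to\Gamma\backslash G$ is open, the restriction $\pi|_U\colon U\to\Gamma\backslash\Gamma U$ is an open continuous surjection whose fibres are exactly the cosets of $\Gamma\cap U$; therefore $\iota$ is a homeomorphism onto the clopen subset $\Gamma\backslash\Gamma U$ of $\Gamma\backslash G$, and it is equivariant for the right $U$-actions on both sides.

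With this identification in hand, the uniform case is immediate: if $\Gamma\backslash G$ is compact, then its clopen subset $\Gamma\backslash\Gamma U$, and hence $(\Gamma\cap U)\backslash U$, is compact, so $\Gamma\cap U$ is a uniform lattice in $U$. For the general case I would use that, $\Gamma$ being discrete, $\Gamma\backslash G$ carries a $G$-invariant Radon measure $\nu$, unique up to scaling, which has finite total mass precisely because $\Gamma$ is a lattice. Restricting $\nu$ to the open set $\Gamma\backslash\Gamma U$ and transporting it through $\iota$ yields a $U$-invariant Radon measure on $(\Gamma\cap U)\backslash U$ of total mass $\nu(\Gamma\backslash\Gamma U)\le\nu(\Gamma\backslash G)<\infty$; it is nonzero since $\nu$ has full support and $\Gamma\backslash\Gamma U$ is nonempty and open. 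By the standard correspondence between finite $U$-invariant measures on $(\Gamma\cap U)\backslash U$ and Borel sets $V\subseteq U$ with $(\Gamma\cap U)V=U$ and $\mu|_U(V)<\infty$, it follows that $\Gamma\cap U$ is a lattice in $U$.

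The only mildly technical point is the translation in the last step between ``finite invariant covolume'' and the Borel-set definition of a lattice, together with the existence and uniqueness of the invariant measure on $\Gamma\backslash G$ for discrete $\Gamma$; all of this is classical (Weil's integration formula), so I do not expect any real obstacle. The geometric content is entirely contained in the clopen-ness of $\Gamma U$ and the resulting homeomorphism $(\Gamma\cap U)\backslash U\cong\Gamma\backslash\Gamma U$, after which both the compact and the finite-measure conclusions are transferred for free.
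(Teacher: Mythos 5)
Your argument is correct, but note that the paper itself offers no proof of this lemma at all: it is quoted as an elementary fact with a citation to Caprace--Monod (\S 2.C of \cite{capracemonod2012lattice}), so there is no in-paper argument to compare against. Your route --- $\Gamma U$ is clopen because it is a union of (left) cosets of the open subgroup $U$, the induced map $(\Gamma\cap U)\backslash U\to\Gamma\backslash\Gamma U$ is a right-$U$-equivariant homeomorphism onto a clopen subset of $\Gamma\backslash G$, and then compactness (uniform case) or restriction of the finite invariant measure (general case) transfers across --- is exactly the standard proof, and each step checks out: injectivity of $\iota$, openness of $\pi|_U$, full support of the invariant measure, and $U$-invariance of the restricted measure are all fine.

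The one place where you are leaning on slightly more than you acknowledge is the translation between your measure-theoretic formulation and the paper's definition of a lattice, which is stated via a Borel set $V$ with $\Gamma V=G$ and $\mu(V)<\infty$ rather than via a finite $G$-invariant measure on $\Gamma\backslash G$. Both directions of that equivalence (existence of the invariant measure, i.e.\ triviality of $\Delta_G$ on $\Gamma$, and the construction of a Borel transversal for the discrete subgroup $\Gamma\cap U$ in $U$ so as to convert finite invariant covolume back into a finite-measure Borel set) are classical, but they are precisely the ``Weil formula'' input you defer to, and the transversal construction uses some countability ($\sigma$-compactness or second countability), which is harmless in the paper's applications since all groups there are compactly generated. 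If you wanted an argument entirely inside the paper's definition, you could instead transport the Borel set directly: take $V$ with $\Gamma V=G$, $\mu(V)<\infty$, note $V\cap\Gamma U$ satisfies $\Gamma(V\cap\Gamma U)=\Gamma U$, and translate its intersection with each coset $\gamma U$ (for coset representatives $\gamma$ of $\Gamma/(\Gamma\cap U)$) by $\gamma^{-1}$ into $U$; left invariance of Haar measure then produces a Borel subset of $U$ of finite measure whose $(\Gamma\cap U)$-translates cover $U$, with the same clopen decomposition doing the real work and no invariant measure on the quotient needed.
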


We also require the following lemma concerning centralisers in topological groups:
\begin{lem}\label{lem:open_cen}
	Let $G$ be a topological group let $\Gamma\leq G$ be a finitely generated discrete subgroup. If $\Gamma$ is   normalised by a dense subgroup of $G$, then its centraliser $C_G(\Gamma)$  is open.
\end{lem}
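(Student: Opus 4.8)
The statement to prove is: if $G$ is a topological group and $\Gamma \leq G$ is a finitely generated discrete subgroup normalised by a dense subgroup of $G$, then $C_G(\Gamma)$ is open. The key point is that each element $\gamma \in \Gamma$ is an \emph{isolated point} of $\Gamma$ (by discreteness), so there is an open neighbourhood $V_\gamma$ of $1$ in $G$ with $V_\gamma \gamma V_\gamma^{-1} \cap \Gamma = \{\gamma\}$; informally, no element of $G$ close to $1$ can conjugate $\gamma$ to a \emph{different} element of $\Gamma$. The plan is to first produce, for a finite generating set $\{\gamma_1,\dots,\gamma_k\}$ of $\Gamma$, a single open neighbourhood $V$ of $1$ such that $g\gamma_i g^{-1} \in \Gamma \implies g\gamma_i g^{-1} = \gamma_i$ for every $g \in V$ and every $i$. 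Then I would use the density hypothesis to upgrade ``$g\gamma_i g^{-1} \in \Gamma$'' from a hypothesis to something automatic on a possibly smaller open set, and finally conclude $V' \subseteq C_G(\Gamma)$ for a suitable open $V'$, whence $C_G(\Gamma) = \bigcup_{c \in C_G(\Gamma)} cV'$ is open.

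Here is the sequence of steps in more detail. \textbf{Step 1 (discreteness gives separation).} Since $\Gamma$ is discrete in $G$, for each $i$ there is an open $W_i \ni 1$ with $W_i \cap \Gamma = \{1\}$; by continuity of $(g,h)\mapsto ghg^{-1}h^{-1}$ at $(1,\gamma_i)$ choose open $U_i \ni 1$ with $U_i \gamma_i U_i^{-1}\gamma_i^{-1} \subseteq W_i$, so that for $g \in U_i$ we have $g\gamma_i g^{-1} \in W_i\gamma_i$, and hence $g\gamma_i g^{-1} \in \Gamma \iff g\gamma_i g^{-1} = \gamma_i \iff g \in C_G(\gamma_i)$. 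Set $U := \bigcap_{i=1}^k U_i$, an open neighbourhood of $1$. \textbf{Step 2 (density forces conjugates into $\Gamma$).} Let $D \leq G$ be the dense subgroup normalising $\Gamma$. The set $N_G(\Gamma) = \{g \in G : g\Gamma g^{-1} = \Gamma\}$ contains $D$, hence is dense; I would like it to be open, but it need not be \emph{a priori}. Instead, observe that the set $A := \{g \in G : g\gamma_i g^{-1} \in \Gamma \text{ for all } i\}$ is a subgroup containing $D$, so $A$ is dense. \textbf{Step 3 (combining).} On $U \cap A$ we have $g\gamma_i g^{-1} \in \Gamma$ (because $g \in A$) \emph{and} $g\gamma_i g^{-1} \in W_i \gamma_i$ (because $g \in U \subseteq U_i$), forcing $g\gamma_i g^{-1} = \gamma_i$; thus $U \cap A \subseteq C_G(\Gamma)$ since the $\gamma_i$ generate $\Gamma$. \textbf{Step 4 (from dense intersection to open set).} Now I claim $U \cap C_G(\Gamma)$ is already ``large'' in the right way. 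The cleanest finish: note $C_G(\Gamma) = C_G(\gamma_1) \cap \dots \cap C_G(\gamma_k)$, and for each $i$ the set $C_G(\gamma_i) \cap U_i$ equals $\{g \in U_i : g\gamma_i g^{-1}\gamma_i^{-1} = 1\}$; since on $U_i$ the continuous map $g \mapsto g\gamma_i g^{-1}$ takes values in the discrete-in-$G$ set $W_i\gamma_i$... \emph{wait}, that needs the image to be discrete, which I get because $W_i\gamma_i \cap \Gamma = \{\gamma_i\}$ only controls $\Gamma$-valued points. Let me instead argue: the preimage $f_i^{-1}(\gamma_i)$ under the continuous map $f_i\colon g \mapsto g\gamma_i g^{-1}$ is closed; and $f_i^{-1}(\Gamma) \supseteq A$ is dense, while $f_i^{-1}(\Gamma) \cap U_i = f_i^{-1}(\gamma_i) \cap U_i$ by Step 1. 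So $f_i^{-1}(\gamma_i) \cap U_i$ is dense in $U_i$ (it contains the dense set $A \cap U_i$... no, $A \cap U_i$ dense in $G$ hence dense in $U_i$ — yes), and it is also closed in $U_i$, hence $f_i^{-1}(\gamma_i) \cap U_i = U_i$, i.e. $U_i \subseteq C_G(\gamma_i)$. Therefore $U = \bigcap U_i \subseteq \bigcap C_G(\gamma_i) = C_G(\Gamma)$, so $C_G(\Gamma)$ contains the open neighbourhood $U$ of $1$, and being a subgroup it is open.

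\textbf{Main obstacle.} The delicate point is Step 4: $C_G(\gamma_i)$ is automatically closed, and the density hypothesis pins it down, but one must be careful that the ``separation'' neighbourhood $U_i$ from Step 1 genuinely lets us identify $\{g \in U_i : g\gamma_i g^{-1} \in \Gamma\}$ with $\{g \in U_i : g\gamma_i g^{-1} = \gamma_i\}$ — this is exactly where discreteness of $\Gamma$ (not of its image under conjugation) is used, via $W_i \cap \Gamma = \{1\}$. Once that identification holds, the argument ``closed $+$ dense in a connected-enough... no, just in an open set equals the whole open set'' is immediate because $A \cap U_i$ is dense in $U_i$ and $C_G(\gamma_i) \cap U_i$ is closed in $U_i$ and contains $A \cap U_i$. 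The rest is routine: a subgroup of a topological group containing an open set is open, so $C_G(\Gamma)$ is open.
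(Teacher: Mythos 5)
Your proof is correct, and it takes a somewhat different route from the paper's, which is worth comparing. The paper's proof is global and very short: since a discrete subgroup of a Hausdorff group is closed, the continuous map $\phi_h(g)=ghg^{-1}$ satisfies $\phi_h(G)=\phi_h(\overline{\Lambda})\subseteq\overline{\phi_h(\Lambda)}\subseteq\overline{\Gamma}=\Gamma$ for every $h\in\Gamma$ (here $\Lambda$ is the dense normalising subgroup), so $\phi_h$ maps all of $G$ into the discrete space $\Gamma$, whence $C_G(h)=\phi_h^{-1}(h)$ is the preimage of an open subset of $\Gamma$ and is open; then $C_G(\Gamma)$ is a finite intersection of these over a generating set. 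You never prove that all of $G$ conjugates $\Gamma$ into $\Gamma$; instead you argue locally near the identity: discreteness at $1$ plus continuity of the commutator give a neighbourhood $U_i$ on which ``$g\gamma_ig^{-1}\in\Gamma$'' forces ``$g\gamma_ig^{-1}=\gamma_i$'', and then the density of $A\supseteq D$ together with closedness of $C_G(\gamma_i)=f_i^{-1}(\gamma_i)$ (Hausdorffness) gives $U_i\subseteq C_G(\gamma_i)$ by the closed-and-dense-in-$U_i$ argument, after which both proofs finish identically (a subgroup containing an open identity neighbourhood is open). What each buys: the paper's version is shorter because the standard fact ``discrete implies closed in a Hausdorff group'' makes the global inclusion $\phi_h(G)\subseteq\Gamma$ immediate; yours avoids invoking that fact, at the cost of the extra local bookkeeping, and uses Hausdorffness only through closedness of point preimages. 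One cosmetic slip: $A=\{g:g\gamma_ig^{-1}\in\Gamma\ \forall i\}=\{g:g\Gamma g^{-1}\subseteq\Gamma\}$ is a submonoid but not obviously closed under inverses, so calling it a subgroup is unjustified; this is harmless, since all you actually use is $D\subseteq A$ and hence the density of $A$.
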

\begin{proof}
	Let $\Lambda\leq G$ be a dense subgroup normalising $\Gamma$. For each $h\in \Gamma$, we define  $\phi_h:G\rightarrow G$ by $g\mapsto ghg^{-1}$. As $\phi_h$ is continuous, $\phi_h(G)=\phi_h(\overline{\Lambda})\subseteq \overline{\phi_h(\Lambda)}\subseteq \overline{\Gamma}=\Gamma.$ As $\Gamma$ is discrete, the centraliser of any $h\in \Gamma$, which is equal to $\phi_h^{-1}(h)$, is open. Thus the centraliser of $\Gamma$, which is equal to the intersection of  $\phi_s^{-1}(s)$ for all $s$ in some finite generating set of $\Gamma$, is also open.
\end{proof}

\section{Discretisable quasi-actions and coarse stabilisers}\label{sec:disc_quasi_actions}
In this section we develop the basic theory of coarse stabilisers. We characterise discretisability in terms of coarse stabilisers, and prove a generalisation of the Milnor--Schwarz lemma for  quasi-actions admitting a coarse stabiliser.

\begin{defn}
	
Given a quasi-action $G\qa X$, a subset $T\subseteq G$ is \emph{bounded} if $\{t\cdot x_0\mid t\in T\}\subseteq X$ is bounded for some (equivalently any) $x_0\in X$.

\end{defn}
\begin{rem}
	Let $G\qa X$ be a quasi-action and let $S\subseteq G$ be a generating set that satisfies the conclusions of Proposition \ref{prop:generalms}. Then $T\subseteq G$ is bounded in  the preceding sense  if and only if it is bounded as a subspace of  the metric space $(G,d_S)$.
\end{rem}

 We recall the definition of a  coarse stabiliser:
\begin{defn}
	A subgroup  $H\leq G$ is a \emph{coarse stabiliser} of $G\qa X$ if:
\begin{enumerate}[(CS1)]
	\item\label{item:CS1} $H$ is bounded;
	\item\label{item:CS2} every bounded subset of $G$ is contained in finitely many left $H$-cosets.
\end{enumerate}	
\end{defn}
The motivating example of a coarse stabiliser is the following:
\begin{exmp}\label{exmp:stabvertex,cstab}
Suppose $Y$ is a connected locally finite graph and $G\curvearrowright Y$ is an action by graph automorphisms. Let $v$ be a vertex of $Y$ and let $H\coloneqq \stab_G(v)$. We claim that $H$ is a  coarse stabiliser of $G\curvearrowright Y$. Firstly, as $H\cdot v=v$, $H$ is bounded, so \ref{item:CS1} holds. We now show \ref{item:CS2} holds. 
If $T\subseteq G$ is bounded, then $T\cdot v$ is bounded in $Y$, hence consists of finitely many vertices. We note if $g,k\in G$, then $gv=kv$  if and only if $k\in gH$. Thus $T$ consists of finitely many left $H$-cosets.
\end{exmp}
 We now demonstrate the robustness of  coarse stabilisers:  they are well-defined  up to commensurability.
\begin{prop}\label{prop:coarsestabsarecomm}
Let $H$ be a coarse stabiliser of the quasi-action $G\qa X$. Then $H'\leq G$ is a  coarse stabiliser of $G\qa X$ if and only if $H$ and $H'$ are commensurable.
\end{prop}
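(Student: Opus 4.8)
The plan is to prove both implications directly from the two defining axioms (CS1) and (CS2). For the forward direction, suppose both $H$ and $H'$ are coarse stabilisers of $G\qa X$. Since $H'$ is bounded (by (CS1) applied to $H'$), axiom (CS2) applied to $H$ tells us that $H'$ is contained in finitely many left $H$-cosets, say $H' \subseteq g_1 H \cup \dots \cup g_n H$. Intersecting with $H'$, we get $H' = \bigcup_{i} (H' \cap g_i H)$, a finite union of cosets of $H \cap H'$ inside $H'$ (the nonempty ones among $H' \cap g_i H$ are each a single coset of $H\cap H'$ in $H'$). Hence $[H' : H \cap H'] < \infty$. By symmetry, swapping the roles of $H$ and $H'$, we get $[H : H \cap H'] < \infty$ as well, so $H$ and $H'$ are commensurable.

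For the converse, suppose $H$ is a coarse stabiliser and $H'$ is commensurable to $H$; I must verify that $H'$ satisfies (CS1) and (CS2). For (CS1): since $H \cap H'$ has finite index in $H$, pick coset representatives so that $H = \bigcup_{j=1}^m h_j (H \cap H')$; as $H$ is bounded and each $h_j$ acts as a $(K,A)$-quasi-isometry, the quasi-orbit $H \cdot x_0$ is bounded, and hence so is the subset $(H\cap H')\cdot x_0$. Then since $H'$ is covered by finitely many left cosets of $H \cap H'$, and left translates of a bounded set by the uniformly-quasi-isometric maps $\{f_g\}$ remain within a uniformly bounded neighbourhood (using Lemma \ref{lem:qaction_gp} to control $d(g k \cdot x_0, g k' \cdot x_0)$ in terms of $d(k\cdot x_0, k'\cdot x_0)$), the quasi-orbit $H' \cdot x_0$ is bounded, giving (CS1). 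For (CS2): let $T \subseteq G$ be bounded. By (CS2) for $H$, $T$ is contained in finitely many left $H$-cosets; since each left $H$-coset is a finite union of left $(H\cap H')$-cosets, $T$ is contained in finitely many left $(H \cap H')$-cosets, hence in finitely many left $H'$-cosets. This establishes (CS2) for $H'$.

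The main point requiring care — the closest thing to an obstacle — is the boundedness bookkeeping in the converse direction: one must check that "bounded" is genuinely insensitive to passing between commensurable subgroups, which requires using the quasi-action axioms (via Lemma \ref{lem:qaction_gp}) rather than naive set-theoretic containment, since the maps $f_g$ are only quasi-isometries and not isometries. Once one observes that a finite union of uniformly-quasi-isometric translates of a bounded set is bounded, everything else is elementary coset counting. I would write the boundedness step first as a small observation and then deploy it symmetrically in both directions.
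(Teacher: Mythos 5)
Your proof is correct and follows essentially the same route as the paper's: the forward direction applies (CS2) of each coarse stabiliser to the boundedness of the other to get mutual finite coset coverage (hence commensurability), and the converse covers $H'$ by finitely many translates of the bounded quasi-orbit of $H$ (controlled via the quasi-action constants, as in Lemma \ref{lem:qaction_gp}) and uses elementary coset counting for (CS2). The only cosmetic differences — you pass through $H\cap H'$ explicitly and spell out the coset-intersection argument that the paper leaves implicit — do not change the substance.
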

\begin{proof}
First suppose $H'$ is a coarse stabiliser. Let $x\in X$. By \ref{item:CS1},  we can choose $R$ sufficiently large such that $H,H'\subseteq \{g\in G\mid d(g\cdot x,x)\leq R \}\eqqcolon B$.  As $B$ is bounded, \ref{item:CS2} implies $B$ is contained in both finitely many left $H$-cosets and in finitely many left $H'$-cosets. Since $H,H'\subseteq B$,  $H$ is contained in finitely many left $H'$-cosets, and  $H'$ is contained in finitely many left $H$-cosets; thus $H$ and $H'$ are commensurable.

For the converse, suppose $H$ and $H'$ are commensurable. It follows from \ref{item:CS2} that every bounded subset of $G$ is contained in finitely many left $H$-cosets, and hence is contained in finitely many left $H'$-cosets. Thus we need only show $H'$ is bounded. Since $H'\subseteq F H$ for some finite $F=\{x_1,\dots, x_n\}\subseteq G$, we see that for any $x\in X$,  $H'\cdot x\subseteq x_1H\cdot x\cup \dots \cup x_nH\cdot x$. As $H\cdot x$ is bounded, so is $H'\cdot x$. Thus $H'$ is a coarse stabiliser of $G\qa X$.
\end{proof}

\begin{cor}
	If $H\leq G$ is a coarse stabiliser of  $G\qa X$, then it is commensurated.
\end{cor}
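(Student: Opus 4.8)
The plan is to show that \emph{every} conjugate $gHg^{-1}$ is again a coarse stabiliser of $G\qa X$; the conclusion then follows at once from Proposition \ref{prop:coarsestabsarecomm}, which says that any two coarse stabilisers of the same quasi-action are commensurable.

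First I would record the (purely formal) fact that boundedness of subsets of $G$ is invariant under conjugation. By the Remark following the definition of boundedness, $T\subseteq G$ is bounded for the quasi-action if and only if it is bounded in $(G,d_S)$ for a generating set $S$ as in Proposition \ref{prop:generalms}. Since $d_S$ is left-invariant, left multiplication by a fixed element is an isometry of $(G,d_S)$, and from $d_S(tg_0,t'g_0)\le d_S(t,t')+2d_S(1,g_0)$ we see right multiplication by $g_0$ is a quasi-isometry of $(G,d_S)$; composing the two, conjugation by any $g\in G$ is a quasi-isometry of $(G,d_S)$ and hence sends bounded sets to bounded sets. (Alternatively one can argue directly from the quasi-action axioms together with Lemma \ref{lem:qaction_gp}: if $b$ is a basepoint and $T\cdot b$ is bounded then $\{g^{-1}tg\cdot b\mid t\in T\}$ lies within uniformly bounded distance of $g^{-1}\cdot\big(t\cdot(g\cdot b)\big)$, i.e.\ of the image under $\phi(g^{-1})$ of the bounded set $T\cdot(g\cdot b)$.)

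Now I would check (CS\ref{item:CS1}) and (CS\ref{item:CS2}) for $gHg^{-1}$. For (CS\ref{item:CS1}): $H$ is bounded by hypothesis, so $gHg^{-1}$ is bounded by the previous paragraph. For (CS\ref{item:CS2}): given a bounded $T\subseteq G$, the set $g^{-1}Tg$ is bounded, so (CS\ref{item:CS2}) for $H$ gives $g^{-1}Tg\subseteq\bigcup_{i=1}^n f_iH$ for finitely many $f_i\in G$; conjugating back yields $T\subseteq\bigcup_{i=1}^n (gf_ig^{-1})(gHg^{-1})$, so $T$ lies in finitely many left $gHg^{-1}$-cosets. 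Hence $gHg^{-1}$ is a coarse stabiliser of $G\qa X$, and Proposition \ref{prop:coarsestabsarecomm} shows $gHg^{-1}$ is commensurable to $H$. As $g\in G$ was arbitrary, $H$ is commensurated.

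There is no genuine obstacle here; the only point requiring care is that ``bounded'' is a conjugation-invariant notion on $G$, which is precisely what the reduction to a word metric $(G,d_S)$ (or a short computation with the quasi-action axioms) supplies.
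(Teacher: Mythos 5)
Your proposal is correct and follows essentially the same route as the paper: show that boundedness of subsets of $G$ is conjugation-invariant, deduce that every conjugate $gHg^{-1}$ satisfies (CS\ref{item:CS1}) and (CS\ref{item:CS2}), and conclude via Proposition \ref{prop:coarsestabsarecomm}. The only cosmetic difference is how conjugation-invariance of boundedness is verified (the paper compares $gVg^{-1}\cdot x$ with $g\cdot V\cdot g^{-1}\cdot x$ directly, while you reduce to the word metric $(G,d_S)$ or argue from the quasi-action axioms), which amounts to the same estimate.
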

\begin{proof}
	 Fix $x\in X$. For all $V\subseteq G$ and $g\in G$, note that $gVg^{-1}\cdot x$ has finite Hausdorff distance from $g\cdot V\cdot g^{-1} \cdot x$. Thus $V$ is bounded if and only if $gVg^{-1}$ is  bounded.
	 	 Suppose $G\qa X$ is a $(K,A)$-quasi-action with coarse stabiliser $H$. By Proposition \ref{prop:coarsestabsarecomm}, to show $H$ is commensurated   it is sufficient to show that for every $g\in G$,   $gHg^{-1}$ is a coarse stabiliser of $G\curvearrowright X$. 	Fix $g\in G$. Since $H$ is bounded, so is  $gHg^{-1}$.  If  $T\subseteq G$ is bounded, then $g^{-1}Tg$ is also bounded. Since $H$ is a coarse stabiliser,  $g^{-1}Tg\subseteq FH$ for some finite $F\subseteq G$. Thus $T\subseteq (gFg^{-1})gHg^{-1}$ and so every bounded subset is contained in finitely many left $gHg^{-1}$-cosets. Therefore $gHg^{-1}$ is a coarse stabiliser.
\end{proof}

We now prove a relative version of the  Milnor--Schwarz lemma for quasi-actions with coarse stabilisers. 
\begin{prop}\label{prop:cstab homspace}
	Suppose $X$ is a quasi-geodesic metric space and $G\qa X$ is a cobounded quasi-action with coarse stabiliser $H$.
	Then:
	\begin{enumerate}
		\item $G$ is finitely generated relative to $H$.
		\item $H$ is a commensurated subgroup of $G$.
		\item The natural left action $G\curvearrowright G/H$  is quasi-conjugate to $G\qa X$.
	\end{enumerate} 
\end{prop}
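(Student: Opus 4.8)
The plan is to leverage the generalised Milnor--Schwarz lemma (Proposition \ref{prop:generalms}) together with the fact that a coarse stabiliser $H$ lets us replace a word metric on $G$ by a relative word metric on $G/H$. First I would record that $H$ is commensurated: this is exactly the corollary following Proposition \ref{prop:coarsestabsarecomm}, so item (2) is immediate. Next I would produce a relative generating set. Fix a basepoint $b\in X$ and let $R_0$ be the constant from Proposition \ref{prop:generalms}; put $S_0 \coloneqq \{g\in G \mid d(g\cdot b, b)\leq R_0\}$. Since $G\qa X$ is cobounded, $S_0$ is bounded, so by (CS\ref{item:CS2}) there is a finite set $F\subseteq G$ with $S_0\subseteq FH$. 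I claim $S\coloneqq F\cup (S_0\cap H)$, or more simply any finite $F$ with $S_0\subseteq FH$, is a relative generating set of $(G,H)$: indeed $\langle S\cup H\rangle \supseteq \langle S_0\rangle = G$ by Proposition \ref{prop:generalms}(1). This gives item (1), that $G$ is finitely generated relative to $H$, and also equips $G/H$ with a well-defined (up to bi-Lipschitz) relative word metric $d_S$ by Proposition \ref{prop:word metric bilip} and Proposition \ref{prop:relcg_locfinite}.

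For item (3), the map to analyse is the ``quasi-orbit map'' $\bar f\colon G/H \to X$ defined by $gH \mapsto g\cdot b$; I would first check this is well-defined up to bounded error, since if $gH = g'H$ then $g' = gh$ for some $h\in H$ and $d(g\cdot b, g'\cdot b) = d(g\cdot b, gh\cdot b) \leq K d(b, h\cdot b) + 3A$ by Lemma \ref{lem:qaction_gp}, which is bounded because $H$ is bounded. Then I would show $\bar f$ is a quasi-isometry. The orbit map $f\colon g\mapsto g\cdot b$ from $(G, d_{S_0})$ to $X$ is already a quasi-isometry by Proposition \ref{prop:generalms}(2); the content is to compare $d_S$ on $G/H$ with $d_{S_0}$ on $G$. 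Upper bound: a word of length $n$ in $S\cup H$ representing $g$ descends, after collecting $H$-letters, to show $d_S(H, gH) \leq C\cdot d_{S_0}(1,g)$ for a constant $C$ depending on $\max_{s\in S} d_{S_0}$-translates — more carefully, each generator $s\in S$ has $d(s\cdot b, b)$ bounded, giving $d(g\cdot b, k\cdot b) \leq R' d_S(gH, kH)$ exactly as in inequality (\ref{eqn:genms_qi2}) of Proposition \ref{prop:generalms}'s proof. Lower bound: given $g,k\in G$, build an $A$-chain in $X$ from $g\cdot b$ to $k\cdot b$ and lift it to a sequence of cosets as in the proof of Proposition \ref{prop:generalms}, but now observing that consecutive cosets differ by an element of $S_0 \subseteq SH$, hence by a bounded $d_S$-distance; this yields $d_S(gH, kH) \leq K' d(g\cdot b, k\cdot b) + A'$. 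Coboundedness of $G\qa X$ gives that $\bar f$ is coarsely surjective. Finally, $G$-equivariance up to bounded error is the computation $d\big(k\cdot \bar f(gH), \bar f(kgH)\big) = d(k\cdot g\cdot b, kg\cdot b) \leq A$, directly from the quasi-action axioms, so $\bar f$ is a quasi-conjugacy from $G\curvearrowright G/H$ to $G\qa X$.

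The main obstacle I anticipate is the lower-bound direction in comparing $d_S$ with the geometry of $X$: one must be careful that when lifting an $A$-chain in $X$ to a chain of cosets, the ``jumps'' between consecutive cosets are controlled in the \emph{relative} metric $d_S$ and not merely in $d_{S_0}$ — this is where (CS\ref{item:CS2}) (every bounded set meets finitely many $H$-cosets, hence $S_0 \subseteq FH$ with $F$ finite, hence $S_0$-steps are bounded $d_S$-steps) is essential, and it is the one place where the argument genuinely uses that $H$ is a coarse stabiliser rather than an arbitrary bounded subgroup. A clean way to organise this is to first prove directly that $f\colon G\to X$, $g\mapsto g\cdot b$ descends to a bi-Lipschitz-up-to-additive-error map $(G/H, d_S)\to (G/H', d_{S'})$ for any two choices, and then simply cite Proposition \ref{prop:generalms} applied to $(G, d_{S_0})$ together with the observation that the quotient map $(G, d_{S_0})\to (G/H, d_S)$ is a quasi-isometry; this last fact is essentially Proposition \ref{prop:alnorm_cbundle} restricted to the base, or can be proven by hand using that $H$ is bounded and coarsely stabilising. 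Everything else is routine bookkeeping with the constants $K, A$ and Lemma \ref{lem:qaction_gp}.
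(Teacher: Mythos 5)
Your proposal is correct and takes essentially the same approach as the paper: item (2) via the corollary to Proposition \ref{prop:coarsestabsarecomm}, item (1) by using \ref{item:CS2} to cover the Milnor--Schwarz generating set $S_0$ by finitely many cosets $FH$, and item (3) by comparing the relative word metric with the (quasi-)orbit map --- indeed your closing ``clean organisation'' (show the quotient map $(G,d_{S_0})\to (G/H,d_S)$ is a quasi-conjugacy and cite Proposition \ref{prop:generalms}) is precisely what the paper does, after enlarging $R$ so that $H\subseteq S$. The only slip is cosmetic: $S_0$ is bounded directly from its definition, not because the quasi-action is cobounded.
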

\begin{proof}
	Let $x\in X$. Since $H$ is a coarse stabiliser, there exists an $R$ such that $H\cdot x \subseteq N_R(x)$. 
	By increasing $R$ if necessary, Proposition \ref{prop:generalms} ensures we may assume  $G$ is generated by $S\coloneqq \{g\in G\mid d(g\cdot x, x)\leq R\}$ and $G\curvearrowright (G,d_S)$ is quasi-conjugate to $G\qa X$.  Note that $H\subseteq S$. Since $S$ is bounded and  $H$ is a coarse stabiliser, there is a finite subset $F\subseteq G$ such that $S\subseteq FH$. Without loss of generality, we may assume that $F\subseteq S$. We thus deduce that $F$ is a finite generating set of the pair $(G,H)$. Let $d_S$ be the word metric on $G$ with respect to $S$ and let $d_{G,H,F}=d_{F}$ be the relative word metric of the pair $(G,H)$ with respect to $F$. We show that the quotient  map $(G,d_S)\rightarrow (G/H,d_F)$ given by  $g\mapsto gH$ is a quasi-conjugacy. The map is surjective and equivariant, so we need only show it is a quasi-isometric embedding.
	
	Let $g,k\in G$ and suppose $d_S(g,k)=n$. Then $g^{-1}k=s_1\dots s_n$ for some $s_1, \dots, s_n\in S$. 
	Since $S\subseteq FH$, we have $s_i=f_ih_i$, for some $f_i\in F$ and $h_i\in H$ and so $d_{F}(gH,kH)\leq n$. 
	Now suppose $d_{F}(gH,kH)=m>0$. 
	Then there exist $f'_1, \dots, f'_m\in F$ and $h'_1, \dots, h'_m\in H$ such that $g^{-1}k=f'_1h'_1\dots f'_mh'_m$. Thus $d_S(g,k)\leq 2m$. If $gH=kH$, then $g^{-1}k\in H$, and so $d_S(g,k)\leq 1$. Thus $d_{S}(g,k)\leq 2m+1=2d_{F}(gH,kH)+1$ as required.
\end{proof}

We can use Proposition \ref{prop:cstab homspace} to deduce a necessary and sufficient condition for two quasi-actions, at least one of which has a coarse stabiliser,   to be quasi-conjugate. 
\begin{prop}\label{prop:cstabinvariant}
	Suppose  $X$ and $Y$ are quasi-geodesic metric spaces and that $G\qa X$ and $G\qa Y$ are cobounded quasi-actions. Suppose $H\leq G$ is a coarse stabiliser of $G\qa X$. Then $G\qa X$ is quasi-conjugate to $G\qa Y$ if and only if $H$ is also a coarse stabiliser of $G\qa Y$. 
\end{prop}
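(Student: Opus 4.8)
The plan is to prove both directions by transferring the defining properties (CS1) and (CS2) of a coarse stabiliser across a quasi-conjugacy, using that these properties are phrased purely in terms of boundedness of subsets of $G$, which by Proposition \ref{prop:generalms} is a quasi-conjugacy invariant.

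First I would record the key observation: if $f:X\to Y$ is a $(\rho,K,A)$-quasi-conjugacy between quasi-actions $G\qa X$ and $G\qa Y$ with $\rho=\id_G$, then for any $x_0\in X$ and $y_0:=f(x_0)\in Y$, and any $T\subseteq G$, the set $T\cdot y_0$ in $Y$ is bounded if and only if $T\cdot x_0$ in $X$ is bounded. Indeed $d_Y(g\cdot f(x_0),f(g\cdot x_0))\leq A$ for all $g\in G$, so $T\cdot y_0$ lies in the $A$-neighbourhood of $f(T\cdot x_0)$ and conversely, and $f$ being a quasi-isometry sends bounded sets to bounded sets and (via a coarse inverse) reflects them. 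Hence \emph{the notion of a bounded subset of $G$ does not depend on which of the two quasi-conjugate quasi-actions one uses}. This is really the whole content.

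Given this, the forward direction is immediate: suppose $G\qa X$ is quasi-conjugate to $G\qa Y$ and $H$ is a coarse stabiliser of $G\qa X$. By the observation, a subset of $G$ is bounded with respect to $G\qa Y$ if and only if it is bounded with respect to $G\qa X$; so (CS1) for $H$ (that $H$ is bounded) and (CS2) (that every bounded subset of $G$ lies in finitely many left $H$-cosets) hold verbatim for $G\qa Y$ as well. Thus $H$ is a coarse stabiliser of $G\qa Y$. For the converse, suppose $H$ is a coarse stabiliser of both $G\qa X$ and $G\qa Y$. By Proposition \ref{prop:cstab homspace} applied to each, $G$ is finitely generated relative to $H$ and the natural left action $G\curvearrowright G/H$ (with the relative word metric for some fixed finite relative generating set $F$, which is well-defined up to quasi-isometry by Proposition \ref{prop:word metric bilip}) is quasi-conjugate to $G\qa X$ and also quasi-conjugate to $G\qa Y$. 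Since quasi-conjugacy is an equivalence relation among quasi-actions of the fixed group $G$, it follows that $G\qa X$ and $G\qa Y$ are quasi-conjugate, as required.

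The only mild subtlety — and the step I would be most careful about — is making sure the quotient space $G/H$ appearing when we apply Proposition \ref{prop:cstab homspace} to $G\qa X$ and when we apply it to $G\qa Y$ is "the same" in a way that lets us compose quasi-conjugacies. This is fine because Proposition \ref{prop:cstab homspace}(2) gives $H\alnorm G$ and (1) gives relative finite generation, so $G/H$ with the relative word metric is a single well-defined metric space up to quasi-isometry independent of which quasi-action produced it; one then only needs that a quasi-isometry $G/H\to G/H$ close to the identity is a quasi-conjugacy for the left $G$-action, which is clear. No growth estimates, hyperbolicity, or structure theory of locally compact groups is needed here — the proposition is a soft consequence of the relative Milnor--Schwarz lemma.
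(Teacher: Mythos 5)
Your proof is correct and follows essentially the same route as the paper: the forward direction transfers boundedness of subsets of $G$ across the quasi-conjugacy (the paper phrases this as $d_\Haus(T\cdot f(x),f(T\cdot x))<\infty$), and the converse applies Proposition \ref{prop:cstab homspace} to both quasi-actions and uses that quasi-conjugacy is an equivalence relation. The "subtlety" you flag about $G/H$ being the same space is handled exactly as you say, via Proposition \ref{prop:word metric bilip}, and needs no further elaboration.
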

\begin{proof}
	Suppose  $f:X\rightarrow Y$ is a quasi-conjugacy and $H$ is a coarse stabiliser of $G\qa X$. Let $x\in X$ and $T\subseteq G$. Then it is straightforward to see that $d_\Haus(T\cdot f(x),f(T\cdot x))<\infty$. Thus $T\cdot x\subseteq X$ is bounded if and only $T\cdot f(x)\subseteq Y$ is. It readily follows that  $H$ is a coarse stabiliser of $G\qa Y$. 	Conversely, suppose $H$ is a coarse stabiliser of both $G\qa X$ and $G\qa Y$. Then Proposition \ref{prop:cstab homspace} implies that $G\qa X$ and $G\qa Y$ are both quasi-conjugate to $G\curvearrowright G/H$, hence are quasi-conjugate to one another.
\end{proof}
We recall from the introduction that  a cobounded quasi-action is discretisable if it can be quasi-conjugated to an  action on a locally finite graph. We  characterise discretisable  quasi-actions  as precisely those that have coarse stabilisers:
\begin{cor}\label{cor:discvscstab}
	Let $X$ be a quasi-geodesic metric space. A cobounded quasi-action $G\qa X$ is discretisable if and only if it has a coarse stabiliser. 
\end{cor}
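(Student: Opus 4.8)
The plan is to deduce the corollary directly from Example \ref{exmp:stabvertex,cstab}, Proposition \ref{prop:cstabinvariant}, Proposition \ref{prop:cstab homspace} and the relative Cayley graph machinery of Section \ref{sec:alnorm}; no genuinely new argument should be required, and the two implications are handled separately.

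For the ``only if'' direction, I would begin with a quasi-conjugacy $f\colon X\to Y$ witnessing discretisability, where $G\curvearrowright Y$ is an action by graph automorphisms on a connected locally finite graph. Picking a vertex $v\in Y$, Example \ref{exmp:stabvertex,cstab} shows $H\coloneqq \stab_G(v)$ is a coarse stabiliser of $G\curvearrowright Y$. Since quasi-conjugacy is an equivalence relation, $G\curvearrowright Y$ is quasi-conjugate to $G\qa X$, so Proposition \ref{prop:cstabinvariant} (applied with the roles of $X$ and $Y$ interchanged) immediately yields that $H$ is a coarse stabiliser of $G\qa X$.

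For the ``if'' direction, suppose $H\leq G$ is a coarse stabiliser of $G\qa X$. Proposition \ref{prop:cstab homspace} tells us that $H$ is commensurated in $G$, that $G$ is finitely generated relative to $H$, and that $G\qa X$ is quasi-conjugate to the natural left action $G\curvearrowright (G/H,d_S)$, where $d_S$ is the relative word metric for some finite relative generating set $S$ of $(G,H)$. I would then pass to the relative Cayley graph $\Gamma_{G,H,S}$: it is connected because $S$ is a relative generating set, it is locally finite by Proposition \ref{prop:relcg_locfinite} precisely because $H$ is commensurated, and $G$ acts on it by graph automorphisms. The inclusion of the vertex set $G/H$ into $\Gamma_{G,H,S}$ is a $G$-equivariant quasi-isometry --- the vertices form a $1$-dense subset and inherit exactly the metric $d_S$ --- hence is a quasi-conjugacy. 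Composing, $G\qa X$ is quasi-conjugate to the action $G\curvearrowright \Gamma_{G,H,S}$ on a connected locally finite graph, i.e.\ $G\qa X$ is discretisable.

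The whole argument is essentially bookkeeping, so I do not anticipate a real obstacle; the only point deserving a line of care is checking that the net inclusion $G/H\hookrightarrow \Gamma_{G,H,S}$ truly qualifies as a quasi-conjugacy (equivariance together with explicit quasi-isometry constants), which is routine. I would also remark that this corollary completes the proof of the equivalence of the first two items of Proposition \ref{prop:disc_equiv_intro}.
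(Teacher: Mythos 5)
Your proposal is correct and follows essentially the same route as the paper: the forward direction is identical (Example \ref{exmp:stabvertex,cstab} plus Proposition \ref{prop:cstabinvariant}), and for the converse you simply unpack what Proposition \ref{prop:cstabinvariant} does internally by composing the quasi-conjugacy $X\to G/H$ from Proposition \ref{prop:cstab homspace} with the equivariant net inclusion $G/H\hookrightarrow\Gamma_{G,H,S}$, whereas the paper cites Example \ref{exmp:stabvertex,cstab} and Proposition \ref{prop:cstabinvariant} a second time; since the relative word metric is by definition the graph metric on the vertex set, your one point of care is indeed routine.
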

\begin{proof}
First suppose $G\qa X$ is discretisable. By definition, $G\qa X$ is  quasi-conjugate to an action $G\curvearrowright Y$ on a locally finite graph $Y$. Let $v$ be a vertex of $Y$ and let $H\coloneqq \stab_G(v)$. As in Example \ref{exmp:stabvertex,cstab},  $H$ is a coarse stabiliser of $G\curvearrowright Y$, hence by Proposition \ref{prop:cstabinvariant}, $H$ is a coarse stabiliser of $G\qa X$.
Conversely, suppose $H\leq G$ is a coarse stabiliser of $G\qa X$. By Proposition \ref{prop:cstab homspace}, there exists a finite relative generating set $S$ of $(G,H)$. Proposition \ref{prop:relcg_locfinite} ensures the relative Cayley graph $\Gamma_{G,H,S}$ is locally finite. Since $H$ is the stabiliser of a vertex of $\Gamma_{G,H,S}$, Example \ref{exmp:stabvertex,cstab} ensures $H$ is also a  coarse stabiliser of $G\curvearrowright \Gamma_{G,H,S}$. Thus  Proposition \ref{prop:cstabinvariant} ensures $G\qa X$   is quasi-conjugate to $G\curvearrowright \Gamma_{G,H,S}$, and so   $G\qa X$ is discretisable.
\end{proof}

We conclude this section with a useful application of Proposition \ref{prop:cstab homspace} to  fibre-preserving cobounded quasi-actions on coarse bundles. We recall that if $G$ is finitely generated and $H\alnorm G$ is a finitely generated commensurated subgroup, then the quotient map  $q:G\to G/H$ is a coarse bundle with fibre $H$.
\begin{prop}\label{prop:quasi-conj_coarse bundle}
	Let $p:X\rightarrow B$ be a coarse bundle with fibre $F$, and let $G\qa X$ be a fibre-preserving proper cobounded quasi-action. Suppose the induced quasi-action $G\qa B$ (as in Proposition \ref{prop:inducedqa}) has coarse stabiliser $H$. 
	
	 Then $G$ and $H$ are finitely generated, $H\alnorm G$, and for any $x_0\in X$,  the orbit map $g\mapsto g\cdot x_0$ is a fibre-preserving quasi-conjugacy from the quotient map $q:G\to G/H$ to $p:X\to B$. In particular,  $H$ is  quasi-isometric to $F$ and $G/H$ is quasi-isometric to $B$.
\end{prop}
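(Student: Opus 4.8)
## Proof plan for Proposition \ref{prop:quasi-conj_coarse bundle}

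The plan is to combine the general Milnor--Schwarz-type result for quasi-actions with coarse stabilisers (Proposition \ref{prop:cstab homspace}) applied to the induced quasi-action on the base, with the formalism of coarse bundles from Section \ref{sec:cbundles}. First I would observe that since $G\qa X$ is proper and cobounded, Proposition \ref{prop:generalms} (or rather Lemma \ref{lem:geom_qi}/Proposition \ref{prop:cstab homspace} applied to $G\qa X$ itself, noting that a proper cobounded quasi-action on a quasi-geodesic space forces $G$ to be finitely generated) shows $G$ is finitely generated. Since $G\qa B$ has coarse stabiliser $H$ by hypothesis, Proposition \ref{prop:cstab homspace} immediately yields that $G$ is finitely generated relative to $H$, that $H\alnorm G$, and that $G\curvearrowright G/H$ is quasi-conjugate to $G\qa B$ via the quasi-orbit map $g\mapsto g\cdot b_0$ for a suitable basepoint $b_0\in B$. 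To see $H$ is finitely generated: $H$ is bounded in the quasi-action $G\qa B$, but I claim $H$ is nonetheless coarsely connected inside $(G,d_S)$ where $S$ is a generating set adapted to $G\qa X$ as in Proposition \ref{prop:generalms}; this uses that the fibre $F = p^{-1}(b_0)$ is a quasi-geodesic space and that the restricted quasi-action of $H$ on the fibre $D_{b_0}$ is cobounded (because $H$ is a coarse stabiliser of $G\qa B$, the $H$-orbit of a point in $D_{b_0}$ stays within bounded Hausdorff distance of $D_{b_0}$, and properness plus coboundedness of $G\qa X$ restricts to coboundedness of $H\qa D_{b_0}$). Then Lemma \ref{lem:fg<->cc} gives finite generation of $H$.

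Next I would set up the orbit map and check it is a fibre-preserving quasi-conjugacy. Fix $x_0\in D_{b_0}$ and let $f:G\to X$ be $g\mapsto g\cdot x_0$. Equipping $G$ with a word metric $d_S$ adapted to $G\qa X$ (Proposition \ref{prop:generalms}), $f$ is a quasi-conjugacy from $G\curvearrowright(G,d_S)$ to $G\qa X$. The quotient map $q:G\to G/H$ is a coarse bundle with fibre $H$ by Proposition \ref{prop:alnorm_cbundle}, and $p:X\to B$ is a coarse bundle with fibre $F$ by hypothesis. I must verify that $f$ carries the fibre $q^{-1}(gH) = gH$ to within bounded Hausdorff distance of a fibre of $p$. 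The natural candidate is $D_{g\cdot b_0}$: indeed $gH\cdot x_0 = g\cdot(H\cdot x_0)$ up to bounded error by the quasi-action axioms and Lemma \ref{lem:qaction_gp}, and $H\cdot x_0$ has bounded Hausdorff distance from $D_{b_0}$ (as $H$ is a coarse stabiliser of $G\qa B$, applying the bound from Proposition \ref{prop:inducedqa} relating $D_{g\cdot b}$ and $g\cdot D_b$), so $gH\cdot x_0$ has bounded Hausdorff distance from $g\cdot D_{b_0}$, which in turn has bounded Hausdorff distance from $D_{g\cdot b_0}$ by fibre-preservation of $g$. The key point making the Hausdorff bound \emph{uniform} in $g$ is that all the relevant constants (quasi-action constants, fibre-preservation constant, coarse bundle constants, the constant witnessing that $H$ is a coarse stabiliser) do not depend on $g$.

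Finally, I would observe that since $f$ is simultaneously a quasi-conjugacy $(G,d_S)\to X$ and intertwines the coarse bundle structures $q:G\to G/H$ and $p:X\to B$ in a fibre-preserving way, standard coarse-bundle bookkeeping (the fibre of a coarse bundle is well-defined up to quasi-isometry, Definition \ref{defn:coarse bundle}) gives that $F$ is quasi-isometric to $H$ and $B$ is quasi-isometric to $G/H$. The latter also follows directly from the quasi-conjugacy of $G\curvearrowright G/H$ with $G\qa B$ already obtained via Proposition \ref{prop:cstab homspace}.

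The main obstacle I anticipate is the finite generation of $H$ and, relatedly, making the fibre-preservation of the orbit map uniform: one must be careful that ``$H$ is a coarse stabiliser of the \emph{induced} quasi-action $G\qa B$'' translates into a genuine geometric statement about $H$ acting on the fibre $D_{b_0}\subseteq X$, and that the various Hausdorff-distance estimates compose with constants independent of the group element $g$. This requires carefully tracking which constants come from $p:X\to B$ being a $(K,A)$-coarse bundle, which from the quasi-action, and which from the coarse-stabiliser hypothesis, and invoking Proposition \ref{prop:inducedqa} to control $d_\Haus(D_{g\cdot b}, g\cdot D_b)$ uniformly. Everything else is a routine assembly of Proposition \ref{prop:cstab homspace}, Proposition \ref{prop:alnorm_cbundle}, Lemma \ref{lem:fg<->cc} and Lemma \ref{lem:qaction_gp}.
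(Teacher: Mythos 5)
Your plan has the same skeleton as the paper's proof: Proposition \ref{prop:generalms} (with properness) gives finite generation of $G$ and makes the orbit map $f\colon g\mapsto g\cdot x_0$ a quasi-conjugacy onto $X$; Proposition \ref{prop:cstab homspace} applied to the induced quasi-action $G\qa B$ gives $H\alnorm G$, relative finite generation, and a quasi-conjugacy $\hat f\colon G/H\to B$; and the remaining content is to show $H\cdot x_0$ and the fibre $D_{b_0}$ are at finite Hausdorff distance, from which coarse connectedness of $H$ (hence finite generation via Lemma \ref{lem:fg<->cc}), the quasi-isometry $H\to F$ (Lemma \ref{lem:qi_subspace}), and the uniform fibre-preservation of $f$ all follow exactly as you outline.

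The one step that does not hold as written is your justification of that Hausdorff estimate. Boundedness of $H$ (condition (CS1)) only yields the inclusion $H\cdot x_0\subseteq N_C(D_{b_0})$; the reverse inclusion, equivalently coboundedness of $H$ on $D_{b_0}$, is the crux of the whole proposition, and it is not a formal ``restriction'' of the coboundedness of $G\qa X$: the properties you actually invoke in that parenthesis (boundedness of $H$, properness and coboundedness of $G\qa X$) are all shared by, say, the trivial subgroup, which is certainly not cobounded on the fibre. Condition (CS2) and commensuratedness must enter, and your sketch never says how. A direct repair: given $x\in D_{b_0}$, coboundedness of $G\qa X$ gives $g$ with $g\cdot x_0$ uniformly close to $x$; by Proposition \ref{prop:inducedqa} and coarse Lipschitzness of $p$, $d_B(g\cdot b_0,b_0)$ is then uniformly bounded, so $g$ lies in a fixed bounded subset of $G$, which by (CS2) lies in finitely many cosets $g_1H,\dots,g_nH$; writing $g=g_ih$ and using $g_iHg_i^{-1}\subseteq HF_i$ with $F_i$ finite bounds $d_G(g,H)$ by a constant depending only on the $g_i$ and $F_i$, and applying the orbit quasi-isometry puts $x$ uniformly close to $H\cdot x_0$. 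The paper bypasses this computation by observing that the square formed by $f$, $q$, $\hat f$ and $p$ commutes up to uniformly bounded error (all maps are coarsely $G$-equivariant and all quasi-actions are cobounded), so that a point of $D_{b_0}$ pulls back under the quasi-isometry $f$ to some $g$ with $gH$ uniformly close to $H$ in $G/H$, hence with $g$ uniformly close to $H$ in $G$. With this inclusion supplied, the rest of your argument goes through as described.
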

\begin{proof}
	Let $x_0\in X$ and set $b_0\coloneqq p(x_0)$. Since $G\qa X$ is a cobounded proper quasi-action, it follows from Proposition \ref{prop:generalms} that $G$ is finitely generated and the orbit map $f:G\rightarrow X$ given by $g\mapsto g\cdot x_0$ is a quasi-conjugacy. 
	 If $H\leq G$ is a coarse stabiliser of the induced quasi-action $G\qa B$, then Proposition \ref{prop:cstab homspace} ensures $G\curvearrowright G/H$ is quasi-conjugate to  $G\qa B$ via a quasi-conjugacy $\hat f:G/H\rightarrow B$. In particular, $G/H$ and $B$ are quasi-isometric. Since $G$ admits a cobounded  quasi-action  on all the spaces in the following diagram  and all the maps are coarsely $G$-equivariant, the following diagram commutes up to uniformly bounded error.
	 \[
	 \begin{tikzcd}  
	 	G\arrow{r}{f}\arrow{d}{q}& X\arrow{d}{p}\\
	 	G/H\arrow{r}{\hat f}& B
	 \end{tikzcd}
	 \] 
	 The above diagram implies $d_\Haus(f(H),D_{b_0})<\infty$, thus ensuring that $H$ is coarsely connected as a subspace of $G$. Therefore $H$ is finitely generated by Lemma \ref{lem:fg<->cc} and  quasi-isometric to $F$ by Lemma \ref{lem:qi_subspace}. Moreover,  coarse commutativity of the preceding diagram ensures $f$ is a fibre-preserving quasi-isometry.
\end{proof}

\section{Topological completions of quasi-actions}\label{sec:topcomp}

\subsection{Geometry of topological completions}\label{sec:geom_topcomp}
We now  define the notion of a topological completion of a quasi-action and establish its basic properties. 
In Proposition \ref{prop:topcompquasiconj} we show that the topological completion $\hat G$ of $G\qa X$ is  compactly generated and the left action of $\hat G$ on itself is quasi-conjugate to $G\qa X$. In Proposition \ref{prop:TCvsQC}, we show the topological completion of a cobounded quasi-action exists if and only if we can quasi-conjugate it to an isometric action on a proper metric space.
The results of this subsection hold not just for topological completions, but for the more general notion of  a weak topological completion.

\begin{defn}
	The \emph{weak topological completion of a quasi-action} $G\qa X$ is a  homomorphism $\rho:G\rightarrow \hat G$ such that:
	\begin{enumerate}
		\item \label{item:WT1} $\hat G$ is a  locally compact topological group; 
		\item \label{item:WT2} $\rho(G)$ is cocompact;
		\item \label{item:WT3} for every $T\subseteq G$,  $T$ is bounded if and only if $\overline{\rho(T)}$ is compact.
	\end{enumerate}
	A \emph{topological completion} of $G\qa X$ is a weak topological completion $\rho:G\rightarrow \hat G$ in which a stronger form of (\ref{item:WT2}) holds:
	\begin{enumerate}[($2'$)]
		\item $\rho(G)$ is dense in $\hat G$.
	\end{enumerate}
\end{defn}
For ease of exposition, we sometimes say that $\hat G$ is a (weak) topological completion of $G\qa X$, where the  homomorphism $\rho:G\rightarrow \hat G$ is implicit. 
For instance, we say $G\qa X$ has a totally disconnected topological completion if there  exists a topological completion $\rho:G\rightarrow \hat G$ such that $\hat G$ is totally disconnected.

\begin{rem}\label{rem:topcomp,normal}
	Suppose  $G\qa X$ has a (weak) topological completion $\rho:G\rightarrow \hat G$ with $K\leq \hat G$ a compact normal subgroup. If $\pi:\hat G\rightarrow \hat G/K$ is the quotient map, then $\pi\circ\rho:G\rightarrow \hat G/K$ is a (weak) topological completion of $G\qa X$.
\end{rem}

Given a finitely generated group $\Gamma$ equipped with a (left-invariant) word metric $d$, $\Gamma$ acts on itself by left-multiplication; we call this action $\phi:\Gamma\to\Isom(\Gamma,d)$ the \emph{regular representation} of $\Gamma$. More generally, if $G$ is a locally compact compactly generated group, there is a regular representation $G\to \Isom(G,d)$ when $G$ is equipped with a geodesically adapted metric. Although the regular representation depends on the choice of generating set, it is unique up to quasi-conjugacy.
The following proposition tells us that given a  weak topological completion $\rho: G\rightarrow \hat G$ of $G\qa X$, $\hat G$ is quasi-isometric to $X$ and the quasi-action of $G$ on $X$ is $\rho$-quasi-conjugate to the regular representation of  $\hat G$. 
\begin{prop}\label{prop:topcompquasiconj}
	Suppose $X$ is a quasi-geodesic metric space and $\phi:G\rightarrow  X^X$ is a cobounded quasi-action. Let $\rho:G\rightarrow \hat G$ be a weak topological completion.
	Then $\hat G$ is compactly generated. Moreover, $\phi$  is $\rho$-quasi-conjugate to the regular representation of $\hat G$.
\end{prop}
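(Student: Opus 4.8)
## Proof proposal

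The plan is to produce the regular representation of $\hat G$ concretely by equipping $\hat G$ with a word metric coming from $\rho$, and then to verify that the orbit map of the quasi-action together with $\rho$ furnishes the required quasi-conjugacy. First I would choose a basepoint $b\in X$ and apply Proposition \ref{prop:generalms}: there is a generating set $S\subseteq G$, of the form $\{g\in G\mid d(g\cdot b,b)\le R\}$ for suitable $R$, such that the quasi-orbit map $q\colon g\mapsto g\cdot b$ is a quasi-conjugacy from $G\curvearrowright(G,d_S)$ to $G\qa X$. The key observation is that $S$ is a \emph{bounded} subset of $G$ in the sense of the quasi-action, so by axiom (\ref{item:WT3}) its closure $\overline{\rho(S)}$ is a compact subset of $\hat G$. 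Set $\Sigma\coloneqq \overline{\rho(S)}$. Since $\rho(G)$ is cocompact (axiom (\ref{item:WT2})) and $S$ generates $G$, I would argue that $\Sigma$ (together with a compact transversal for the cocompactness, if needed) generates $\hat G$ topologically — more precisely, that $\Sigma\cup C$ generates $\hat G$ for $C$ a compact set with $\rho(G)C=\hat G$ — and hence that $\hat G$ is compactly generated. This establishes the first assertion.

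Next I would equip $\hat G$ with a word metric $d_{\hat\Sigma}$ with respect to a compact generating set $\hat\Sigma$ containing $\Sigma$; by \cite[Corollary 4.B.11]{cornulierdlH2016metric} this is geodesically adapted and unique up to quasi-isometry, and the regular representation $\hat G\curvearrowright(\hat G,d_{\hat\Sigma})$ is well-defined up to quasi-conjugacy. The core claim is that $\rho\colon(G,d_S)\to(\hat G,d_{\hat\Sigma})$ is itself a quasi-isometry. One direction is easy: since $\rho(S)\subseteq \hat\Sigma$, the map $\rho$ is $1$-Lipschitz (each $S$-edge maps to at most one $\hat\Sigma$-edge), so $d_{\hat\Sigma}(\rho(g),\rho(h))\le d_S(g,h)$. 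For the reverse inequality I would use axiom (\ref{item:WT3}) in the form of Remark \ref{rem:inv_control}-style control: a ball $N_r^{d_{\hat\Sigma}}(1)$ in $\hat G$ has compact closure, so $\rho^{-1}(N_r^{d_{\hat\Sigma}}(1))$ is a bounded subset of $G$ by (\ref{item:WT3}), hence has finite $d_S$-diameter, say $\le \psi(r)$ for some function $\psi$; this gives $d_S(g,h)\le \psi\bigl(d_{\hat\Sigma}(\rho(g),\rho(h))\bigr)+\text{const}$ after the usual left-invariance reduction to $g=1$, which upgrades to a genuine (quasi-)isometric lower bound once one knows $(G,d_S)$ is quasi-geodesic and applies coarse Lipschitzness. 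Finally, density (resp.\ cocompactness) of $\rho(G)$ ensures $\rho(G)$ is coarsely dense in $\hat G$, so $\rho$ is in fact a quasi-isometry. Equivariance is automatic: $\rho$ intertwines left-multiplication on $G$ with left-multiplication on $\hat G$ exactly because $\rho$ is a homomorphism, so $\rho$ is a $\rho$-quasi-conjugacy from $G\curvearrowright(G,d_S)$ to $\hat G\curvearrowright(\hat G,d_{\hat\Sigma})$. Composing with the quasi-conjugacy $q$ of Proposition \ref{prop:generalms} (after noting quasi-conjugacy is an equivalence relation, as remarked after the definition), we conclude $\phi$ is $\rho$-quasi-conjugate to the regular representation of $\hat G$.

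The main obstacle I anticipate is the reverse Lipschitz bound, i.e.\ showing $\rho$ does not collapse distances: this is precisely where axiom (\ref{item:WT3}) has to be leveraged carefully. Knowing only that preimages of compact sets are bounded gives a subexponential-looking control function $\psi$ with no a priori linear bound, so one cannot directly conclude $\rho$ is a quasi-isometric embedding. The fix is standard but needs care: one uses that $(G,d_S)$ and $(\hat G,d_{\hat\Sigma})$ are both quasi-geodesic, so a coarse embedding between quasi-geodesic spaces whose image is coarsely dense is automatically a quasi-isometry — concretely, if $\rho(g),\rho(h)$ are joined by an $\hat\Sigma$-path of length $n$, lift each unit step to a bounded subset of $G$ of uniformly bounded $d_S$-diameter (using (\ref{item:WT3}) once, for the single compact set $\hat\Sigma$, not for arbitrarily large balls) and concatenate, giving $d_S(g,h)\le Dn = D\,d_{\hat\Sigma}(\rho(g),\rho(h))$ for a uniform constant $D$. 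This sidesteps the need to control $\psi$ globally. Modulo this point, the remaining verifications are routine bookkeeping with the constants supplied by Proposition \ref{prop:generalms} and Lemma \ref{lem:qaction_gp}.
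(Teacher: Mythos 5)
Your proposal is correct, and its skeleton matches the paper's: compact generation is obtained exactly as in the paper (condition (\ref{item:WT3}) makes $\overline{\rho(S)}$ compact for a bounded generating set $S$ supplied by Proposition \ref{prop:generalms}, and cocompactness of $\rho(G)$ supplies the rest), and the conclusion is reached by routing a quasi-conjugacy through $(G,d_S)$. The genuine difference is the middle step. The paper introduces the auxiliary isometric action $\psi(g)(k)=\rho(g)k$ of $G$ on $\hat G$ equipped with a geodesically adapted metric, observes via condition (\ref{item:WT3}) that a subset of $G$ is bounded for $\phi$ if and only if it is bounded for $\psi$, and then applies Proposition \ref{prop:generalms} a second time, to $\psi$, with a common bounded generating set; since the orbit map of $\psi$ at $1\in\hat G$ is $\rho$, the quasi-conjugacy with the regular representation falls out. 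You instead prove directly that $\rho\colon(G,d_S)\to(\hat G,d_{\hat\Sigma})$ is an exactly equivariant quasi-isometry by a chain-lifting argument in the style of Proposition \ref{prop:copci}; this is precisely what the paper's second application of Proposition \ref{prop:generalms} delivers, so your route is more hands-on where the paper's is slicker, but both hinge on the same two inputs.

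One point in your lifting step needs the obvious adjustment: the intermediate vertices of an $\hat\Sigma$-path from $\rho(g)$ to $\rho(h)$ need not lie in $\rho(G)$, so you must first push each vertex into $\rho(G)$ using the compact set $C$ with $\rho(G)C=\hat G$; the lifted increments $g_{i-1}^{-1}g_i$ then have $\rho$-image in the single compact set $C\hat\Sigma^{\pm 1}C^{-1}$ (or simply enlarge $\hat\Sigma$ so that it contains $C$), and condition (\ref{item:WT3}) applied to that one set, together with the remark that boundedness for the quasi-action agrees with boundedness in $(G,d_S)$, gives the uniform constant $D$. With this adjustment the linear bound $d_S(g,h)\le Dn$ and hence the quasi-isometry claim go through as you describe.
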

\begin{proof}
	Since $\rho(G)$ is cocompact, there exists a compact subset $K\subseteq \hat G$ such that $\rho(G)K=\hat G$.  Applying Proposition \ref{prop:generalms} to the quasi-action $\phi:G\rightarrow X^X$, there exists a bounded generating set $S\subseteq G$, and so  $\overline{\rho(S)}$ is compact. Since $\rho(S)$ generates  $\rho(G)$ and $\rho(G)K=\hat G$, we deduce  $\overline{\rho(S)}\cup K$ is a compact generating set of $\hat G$.
	
	Since $\hat G$ is compactly generated, there exists a  geodesically adapted metric $d_{\hat G}$  on $\hat G$. 
	As $d_{\hat G}$ is left invariant, we can define an isometric action $\psi:G \rightarrow \Isom(\hat G,d_{\hat G})$ by $\psi(g)(k)\coloneqq \rho(g)k$. Using the definition of a topological completion and a geodesically adapted metric,  we see that for arbitrary $T\subseteq G$, $x_0\in X$ and $g_0\in \hat G$, the following are equivalent:
	\begin{enumerate}
		\item $\{\phi(t)(x_0)\mid t\in T\}$ is bounded in $X$;
		\item $\overline{\rho(T)}$ is compact;
		\item $\{\psi(t)(g_0)=\rho(t)g_0\mid t\in T\}$ is bounded in $(\hat G,d_{\hat G})$.
	\end{enumerate}
	Applying  Proposition \ref{prop:generalms} to both $\phi:G\rightarrow X^X$ and $\psi:G\rightarrow \Isom(\hat G,d_{\hat G})$, we deduce that there is a sufficiently large bounded generating set $S'\subseteq G$ such that $\phi$ and $\psi$ are both quasi-conjugate to $G\curvearrowright (G,d_{S'})$, hence are quasi-conjugate to one another. It follows from the definition of $\psi:G\curvearrowright (\hat G, d_{\hat G})$ that it is $\rho$-quasi-conjugate to the regular representation $\hat G\curvearrowright (\hat G, d_{\hat G})$. Thus $\phi$ is $\rho$-quasi-conjugate to the regular representation of $\hat G$. 
\end{proof}

We will also require the following consequence of the Milnor--Schwarz lemma for topological groups \cite[Theorem 4.C.5.]{cornulierdlH2016metric}. 

\begin{cor}\label{cor:qconj_geomaction}
	Let $X$ be a quasi-geodesic metric space and let $G\qa X$ be a cobounded quasi-action with weak topological completion  $\rho:G\rightarrow\hat G$.  Suppose $Y$ is a proper quasi-geodesic metric space and  $\phi:\hat G\rightarrow \Isom(Y)$  is a geometric action. Then $G\qa X$ is $\rho$-quasi-conjugate to $\hat G\curvearrowright Y$.  
\end{cor}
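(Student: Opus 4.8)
The plan is to obtain the claimed quasi-conjugacy as a composition of two quasi-conjugacies that have already been established. First I would invoke Proposition \ref{prop:topcompquasiconj}: since $\rho:G\to\hat G$ is a weak topological completion of the cobounded quasi-action $G\qa X$, the group $\hat G$ is compactly generated, so we may fix a geodesically adapted metric $d_{\hat G}$ on $\hat G$ (any two such metrics agree up to quasi-isometry via the identity map by \cite[Corollary 4.B.11]{cornulierdlH2016metric}), and the proposition provides a $(\rho,K_1,A_1)$-quasi-conjugacy $f_1\colon X\to\hat G$ from $G\qa X$ to the regular representation $\psi\colon\hat G\curvearrowright(\hat G,d_{\hat G})$.

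Next, since $\phi\colon\hat G\to\Isom(Y)$ is a geometric action of the (necessarily locally compact) group $\hat G$ on the proper quasi-geodesic space $Y$, Lemma \ref{lem:geom_qi} applies: fixing a basepoint $y_0\in Y$, the orbit map $f_2\colon\hat G\to Y$, $h\mapsto\phi(h)(y_0)$, is an $\id_{\hat G}$-quasi-conjugacy from $\psi\colon\hat G\curvearrowright(\hat G,d_{\hat G})$ to $\hat G\curvearrowright Y$.

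It then remains to check that $f_2\circ f_1\colon X\to Y$ is a $\rho$-quasi-conjugacy from $G\qa X$ to $\hat G\curvearrowright Y$. It is a quasi-isometry, being a composition of quasi-isometries. For the equivariance, fix $g\in G$ and $x\in X$ and split $d_Y\big(\phi(\rho(g))(f_2(f_1(x))),\, f_2(f_1(g\cdot x))\big)$ via the triangle inequality through the intermediate point $f_2(\psi(\rho(g))(f_1(x)))$, where $\psi(\rho(g))(f_1(x))$ denotes left multiplication by $\rho(g)$ in $\hat G$: the first piece is bounded by the quasi-conjugacy constant of $f_2$, and the second piece is bounded by applying the coarse Lipschitz constant of $f_2$ to the inequality $d_{\hat G}\big(\psi(\rho(g))(f_1(x)),\, f_1(g\cdot x)\big)\leq A_1$. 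Hence $f_2\circ f_1$ is a $\rho$-quasi-conjugacy, as required.

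The argument is entirely formal; the only point requiring (routine) verification is the general statement that quasi-conjugacies compose — a $(\rho,K,A)$-quasi-conjugacy followed by a $(\rho',K',A')$-quasi-conjugacy is a $(\rho'\circ\rho,K'',A'')$-quasi-conjugacy for suitable constants — which is the analogue, for varying groups, of the already-noted fact that quasi-conjugacy is an equivalence relation on the quasi-actions of a fixed group. I do not expect any genuine obstacle here: all the substance is contained in Proposition \ref{prop:topcompquasiconj} and in the Milnor--Schwarz lemma for locally compact groups (Lemma \ref{lem:geom_qi}).
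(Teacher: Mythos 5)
Your argument is correct and is essentially the paper's proof: both rest on Proposition \ref{prop:topcompquasiconj} together with the locally compact Milnor--Schwarz statement (Lemma \ref{lem:geom_qi}, i.e.\ Theorem 4.C.5 of Cornulier--de la Harpe). The only cosmetic difference is packaging — the paper pulls back the metric of $Y$ along the orbit map as a geodesically adapted pseudo-metric on $\hat G$ and applies Proposition \ref{prop:topcompquasiconj} directly, whereas you fix an abstract geodesically adapted metric and compose the two quasi-conjugacies, which amounts to the same thing.
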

\begin{proof}
The proof of Theorem 4.C.5 of \cite{cornulierdlH2016metric} implies that for some fixed $y_0\in Y$,  $d_{\hat G}(g,h)\coloneqq d_Y(\phi(g)(y_0),\phi(h)(y_0))$ defines a geodesically adapted pseudo-metric $d_{\hat G}$ on $\hat G$. This ensures the regular representation $\hat G\curvearrowright (\hat G,d_{\hat G})$ is quasi-conjugate to $\hat G\curvearrowright Y$. Proposition \ref{prop:topcompquasiconj} thus implies that $G\qa X$ is $\rho$-quasi-conjugate to $\hat G\curvearrowright Y$ as required.
\end{proof}
We note that if $X$ is a proper metric space, $\Isom(X)$ can be equipped with the topology of pointwise convergence, making it a second-countable locally compact group acting continuously and properly on $X$ (see \cite[Lemma 5.B.4]{cornulierdlH2016metric}). Moreover, if $\Isom(X)$ acts cocompactly on $X$, this action is geometric. We  give the following geometric characterisation for a quasi-action to have a  topological completion:
\begin{prop}\label{prop:TCvsQC}
Let $X$ be a quasi-geodesic metric space and let $G\qa X$ be a cobounded quasi-action. The following are equivalent:
\begin{enumerate}
	\item \label{item:TCvsQC_metric} there exists a proper metric space $Y$ such that $G\qa X$ is quasi-conjugate to an isometric action $G\curvearrowright Y$;
	\item \label{item:TCvsQC_wTC} $G\qa X$ has a weak topological completion;
	\item \label{item:TCvsQC_TC} $G\qa X$ has a topological completion.
\end{enumerate}
Moreover, if $Y$ is as in (\ref{item:TCvsQC_metric}), then the action $\rho:G\rightarrow \Isom(Y)$ is a weak topological completion, and the corestriction  $G\rightarrow \overline{\im(\rho)}$ of $\rho$ is a topological completion.
\end{prop}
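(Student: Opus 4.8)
The plan is to prove the cycle of implications $(\ref{item:TCvsQC_TC})\Rightarrow(\ref{item:TCvsQC_wTC})\Rightarrow(\ref{item:TCvsQC_metric})\Rightarrow(\ref{item:TCvsQC_TC})$, together with the ``moreover'' clause, which in fact will fall out of the argument for $(\ref{item:TCvsQC_metric})\Rightarrow(\ref{item:TCvsQC_TC})$. The implication $(\ref{item:TCvsQC_TC})\Rightarrow(\ref{item:TCvsQC_wTC})$ is immediate from the definitions: density of $\rho(G)$ in $\hat G$ certainly implies that $\rho(G)$ is cocompact, so every topological completion is a weak topological completion.

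For $(\ref{item:TCvsQC_wTC})\Rightarrow(\ref{item:TCvsQC_metric})$, suppose $\rho:G\to\hat G$ is a weak topological completion. By Proposition \ref{prop:topcompquasiconj}, $\hat G$ is compactly generated, so it carries a geodesically adapted metric $d_{\hat G}$; equipped with this metric $\hat G$ is proper (properness of $d_{\hat G}$ gives compact balls, and compactly generated locally compact groups with geodesically adapted metrics are proper quasi-geodesic spaces by \cite{cornulierdlH2016metric}). Proposition \ref{prop:topcompquasiconj} then says $G\qa X$ is $\rho$-quasi-conjugate to the regular representation $\hat G\curvearrowright(\hat G,d_{\hat G})$; restricting this isometric action along $\rho$ gives an isometric action $G\curvearrowright(\hat G,d_{\hat G})$, and the composite of the quasi-conjugacy $X\to\hat G$ with this restriction exhibits $G\qa X$ as quasi-conjugate to an isometric action of $G$ on the proper metric space $Y\coloneqq(\hat G,d_{\hat G})$. (One should double check that the $\rho$-quasi-conjugacy to the regular representation, post-composed so as to view $\hat G$ merely as acting via $G$, is still a quasi-conjugacy in the $\id_G$ sense — it is, since $g\cdot(-)$ on $\hat G$ is by construction $\rho(g)\cdot(-)$.)

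The substantive implication is $(\ref{item:TCvsQC_metric})\Rightarrow(\ref{item:TCvsQC_TC})$, and this is also where the ``moreover'' clause gets proved. Suppose $f:X\to Y$ is a quasi-conjugacy to an isometric action $\rho:G\to\Isom(Y)$ with $Y$ proper. Equip $\Isom(Y)$ with the topology of pointwise convergence; by the cited \cite[Lemma 5.B.4]{cornulierdlH2016metric} it is a second-countable locally compact group acting continuously and properly on $Y$. I claim $\rho:G\to\Isom(Y)$ is a weak topological completion. Condition (\ref{item:WT1}) is the local compactness just recalled. For coboundedness of $\rho(G)$ in $\Isom(Y)$: since $G\qa X$ is cobounded and $f$ is a quasi-isometry, $\rho(G)\curvearrowright Y$ is cobounded in the sense of orbits, i.e.\ some orbit is coarsely dense; because the $\Isom(Y)$-action on $Y$ is proper and cocompact (as $Y$ is proper and $\rho(G)\subseteq\Isom(Y)$ already acts coboundedly), the subgroup $\overline{\rho(G)}$ is cocompact in $\Isom(Y)$ — here one uses that a closed subgroup acting coboundedly on a proper space with a proper ambient action has cocompact image, via a standard covering-compactness argument. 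For (\ref{item:WT3}): fix $x_0\in X$ and $y_0=f(x_0)$; for $T\subseteq G$, $T$ is bounded iff $T\cdot x_0$ is bounded in $X$ iff (using $f$ a quasi-isometry and coarse equivariance) $\rho(T)y_0$ is bounded in $Y$ iff (using properness of the $\Isom(Y)$-action on the proper space $Y$, so that bounded orbits correspond to relatively compact sets of isometries) $\overline{\rho(T)}$ is compact. This establishes that $\rho$ is a weak topological completion; then by Remark/Definition the corestriction $G\to\overline{\rho(G)}$ of $\rho$, landing in a closed — hence locally compact — subgroup of $\Isom(Y)$ with dense image, satisfies $(2')$, and conditions (\ref{item:WT1}) and (\ref{item:WT3}) are inherited (for (\ref{item:WT3}), compactness of $\overline{\rho(T)}$ in $\Isom(Y)$ is the same as compactness inside the closed subgroup $\overline{\rho(G)}$). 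Hence $G\to\overline{\rho(G)}$ is a topological completion, giving $(\ref{item:TCvsQC_TC})$ and the ``moreover'' statement simultaneously.

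The main obstacle I anticipate is the coboundedness/cocompactness bookkeeping in the step $(\ref{item:TCvsQC_metric})\Rightarrow(\ref{item:TCvsQC_TC})$: carefully deducing that $\overline{\rho(G)}$ (and $\Isom(Y)$ itself) acts cocompactly on $Y$ from the fact that $G$ acts coboundedly, and checking that this transfers to give condition (\ref{item:WT2}) for $\rho$ and condition $(2')$ for the corestriction. This requires knowing that for a proper metric space $Y$, $\Isom(Y)$ with the pointwise-convergence topology acts properly, and that an isometric action with a coarsely dense orbit on a proper space is cocompact — both standard, the latter by covering a fundamental ball's worth of $Y$ by finitely many translates using properness. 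The orbit-boundedness $\Leftrightarrow$ relative-compactness equivalence underlying (\ref{item:WT3}) is also a property of proper actions of locally compact groups on proper spaces and should be cited rather than reproved. Everything else is formal unwinding of definitions plus appeals to Proposition \ref{prop:topcompquasiconj}.
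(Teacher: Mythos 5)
Your cycle $(3)\Rightarrow(2)\Rightarrow(1)\Rightarrow(3)$ is, in substance, the paper's own argument run in the opposite order: the step from an isometric action on a proper space $Y$ to a (weak) topological completion via $\Isom(Y)$ with the pointwise-convergence topology, and the corestriction to $\overline{\im(\rho)}$ for the ``moreover'' clause, are exactly what the paper does (it just carries out the coboundedness and orbit-boundedness estimates by hand rather than citing them). The gap is in your implication $(2)\Rightarrow(1)$. You assert that $\hat G$ equipped with a geodesically adapted metric $d_{\hat G}$ is a \emph{proper metric space}. That conflates two notions of properness: in the definition of a geodesically adapted metric, ``proper'' means the balls $N_r(1)$ have compact closure \emph{in the group topology of $\hat G$}, whereas a proper metric space requires closed balls to be compact \emph{in the metric topology}. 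A geodesically adapted metric need not be compatible with the topology -- the word metric with respect to a compact generating set is discrete -- so the two notions genuinely diverge. For example, take $\hat G=\bbR$ with the word metric associated to $[-1,1]$: the closed ball of radius $1$ about $0$ is the uncountable set $[-1,1]$ in which distinct points are at distance $\geq 1$, so it is not compact in the metric topology, and $(\hat G,d_{\hat G})$ is not a proper metric space. Hence, as written, the space $Y=(\hat G,d_{\hat G})$ you produce does not verify condition (\ref{item:TCvsQC_metric}).

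The repair is exactly the extra step the paper takes in its $(\ref{item:TCvsQC_TC})\Rightarrow(\ref{item:TCvsQC_metric})$ direction, and it works verbatim for weak completions: by Corollary \ref{cor:kakutanikodaira} and Remark \ref{rem:topcomp,normal} (which applies to weak topological completions as well), one may quotient $\hat G$ by a compact normal subgroup so as to assume it is second-countable and metrisable; then \cite[Proposition 4.B.9]{cornulierdlH2016metric} provides a geodesically adapted left-invariant proper metric that is \emph{compatible} with the topology, and only for such a compatible metric is $(\hat G,d)$ a proper metric space. With that insertion, Proposition \ref{prop:topcompquasiconj} and the restriction of the left-translation action along $\rho$ give condition (\ref{item:TCvsQC_metric}) as you intended; without it, the step fails.
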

\begin{proof}
	$(\ref{item:TCvsQC_metric})\implies (\ref{item:TCvsQC_wTC})$: Suppose $G\qa X$ is quasi-conjugate to an isometric action $\rho:G\rightarrow \Isom(Y)\eqqcolon \hat G$ for some proper metric space $Y$. We claim that $\rho$ is a weak topological completion of $G\qa X$.   Fix some  $y_0\in Y$. Since the action $\hat G\curvearrowright Y$ is geometric,   the set 
	 $S_R\coloneqq \{\phi\in \hat G\mid d_Y(\phi(y_0),y_0)\leq R\}$ is compact for every $R\geq 0$.
	
	We pick $K$ and $A$ large enough such that $G\qa X$ is a $(K,A)$-quasi-action and there is a $(K,A)$-quasi-conjugacy $f:X\rightarrow Y$. 
	Pick $x_0\in X$ such that $d_Y(f(x_0),y_0)\leq A$.
		Since $G\qa X$ is cobounded, there is a constant $B$ such that for all $x\in X$, there exists a $g_x\in G$ with $d_X(g_x\cdot x,x_0)\leq B$. 
		Let $\psi\in \hat G$ and choose $x\in X$ such that $d_Y(f(x),\psi(y_0))\leq A$. 
		Since 
		\begin{align*}
			d_Y\Big(y_0,(\rho(g_{x})\circ\psi)(y_0)\Big)&\leq
			d_Y\Big(f(x_0),\rho(g_x)(f(x))\Big)+2A\\
			&\leq d_Y\Big(f(x_0),f(g_x\cdot x)\Big)+3A\\
			&\leq Kd_X(x_0,g_x\cdot x)+4A\leq KB+4A.
		\end{align*}  
	Thus $\rho(g_x)\psi\in S_{KB+4A}$. Since $\psi\in \hat G$ was arbitrary, we deduce that $\rho(G)S_{KB+4A}=\hat G$, verifying that $\rho(G)$ is a cocompact subgroup of $\hat G$.
	
	We now show $T\subseteq G$ is bounded if and only if $\overline{\rho(T)}$ is compact. Indeed, if $T$ is bounded there is an $r\geq 0$ such that $d(x_0,t\cdot x_0)\leq r$ for all $t\in T$. It follows that 
	\begin{align*}
		d_Y(y_0,\rho(t)(y_0))\leq d_Y(f(x_0),f(t\cdot x_0))+2A\leq Kr+3A.
	\end{align*}
Thus $\rho(T)\subseteq S_{Kr+3A}$ and hence $\overline{\rho(T)}$ is compact. Conversely, suppose $\overline{\rho(T)}$ is compact. Since the action $\hat G\times Y\rightarrow Y$ is continuous,  $\{\phi(y_0)\mid \phi\in \overline{\rho(T)}\}\subseteq Y$ is compact hence is contained in $B_{r'}(y_0)$ for $r'$ sufficiently large. Thus for all $t\in T$, we have \begin{align*}
	d_X(x_0,t\cdot x_0)&\leq Kd_Y(f(x_0),f(t\cdot x_0))+KA\\
	 &\leq Kd_Y\Big(f(x_0),\rho(t)(f(x_0))\Big)+2KA\\
	 &\leq Kd_Y(y_0,\rho(t)(y_0))+4KA\leq Kr'+4KA,
	\end{align*}
and so $T$ is bounded as required. Thus $\rho:G\rightarrow \hat G$ is a weak topological completion of $G\qa X$.

$(\ref{item:TCvsQC_wTC})\implies (\ref{item:TCvsQC_TC})$: If $\rho:G\rightarrow \hat G$ is a weak topological completion of $G\qa X$, then the corestriction of $\rho$ to $\overline{\im(\rho)}$  defines a topological completion $G\rightarrow \overline{\im(\rho)}$.

$(\ref{item:TCvsQC_TC})\implies (\ref{item:TCvsQC_metric})$:
Suppose $\rho: G\rightarrow \hat G$ is a topological completion of $G\qa X$. 
 By Corollary \ref{cor:kakutanikodaira} and Remark \ref{rem:topcomp,normal}, we can assume without loss of generality that $\hat G$ is second-countable. By Proposition 4.B.9 of \cite{cornulierdlH2016metric},  $\hat G$ admits a geodesically adapted left-invariant proper metric $d$ compatible with the topology of $G$. Thus Proposition \ref{prop:topcompquasiconj} implies $G\qa X$ is $\rho$-quasi-conjugate to the regular representation $\hat G\curvearrowright (\hat G,d)$ on the proper metric space $(\hat G,d)$.
\end{proof}

Recall a \emph{uniform lattice} in a locally compact group is a cocompact discrete subgroup. A \emph{uniform lattice embedding} is a monomorphism $\rho:\Gamma\to G$ whose image is a uniform lattice. The following elementary lemma relates uniform lattice embeddings and weak  topological completions:
\begin{lem}\label{lem:uniform_lattice}
	If $\Gamma$ is a finitely generated group, $G$ is a locally compact group and  $\rho:\Gamma\to G$ is a uniform lattice embedding, then there is a cobounded quasi-action $G\qa \Gamma$ with topological completion $G$. 
\end{lem}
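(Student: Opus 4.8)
The plan is to show that the uniform lattice embedding $\rho:\Gamma\to G$ gives rise, via the regular representation of $G$, to a cobounded quasi-action of $G$ on $\Gamma$ for which $\rho$ itself serves as the (weak) topological completion; since $\rho$ already has dense (indeed full) image in $G$, it will in fact be a genuine topological completion.

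First I would equip $\Gamma$ with a word metric $d_\Gamma$ with respect to a finite generating set, and equip $G$ with a geodesically adapted metric $d_G$; this is possible because $G$ is compactly generated, being a locally compact group containing the cocompact discrete (hence finitely generated, by Lemma~\ref{lem:geom_qi}) subgroup $\rho(\Gamma)$. By Proposition~\ref{prop:copci}(1), the inclusion $\rho(\Gamma)\hookrightarrow G$ is a quasi-isometry, so $\rho:\Gamma\to(G,d_G)$ is a quasi-isometry; let $\overline\rho:G\to\Gamma$ be a coarse inverse. Next I would define the quasi-action $\phi:G\to\Gamma^\Gamma$ by $\phi(g):=\overline\rho\circ L_g\circ\rho$, where $L_g$ is left multiplication by $g$ on $G$; this is the standard recipe for transporting an isometric action across a quasi-isometry (as noted after the definition of quasi-conjugacy in the excerpt), and one checks directly that $\phi$ is a quasi-action and that $\rho$ is a quasi-conjugacy from the regular representation $G\curvearrowright(G,d_G)$ to $G\qa\Gamma$. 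Coboundedness of $\phi$ follows since $G\curvearrowright(G,d_G)$ is transitive (hence cobounded) and coboundedness is a quasi-conjugacy invariant.

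It then remains to verify that $\rho:G\to G$ — the identity map, regarded as the completion homomorphism, with the locally compact topology on the target $G$ — is a topological completion of $G\qa\Gamma$ in the sense of the definition: the target is locally compact, the image is dense (it is all of $G$), and a subset $T\subseteq G$ is bounded with respect to the quasi-action if and only if $\overline{T}=\overline{\rho(T)}$ is compact. The last equivalence is the only substantive point: $T$ is bounded for $G\qa\Gamma$ iff $\{\phi(t)(x_0)\mid t\in T\}$ is bounded in $\Gamma$ for some basepoint $x_0$, and via the quasi-conjugacy $\rho$ this translates to $\{L_t(g_0)\mid t\in T\}=Tg_0$ being bounded in $(G,d_G)$; since $d_G$ is proper and left-invariant, $Tg_0$ is bounded iff $\overline{Tg_0}$, equivalently $\overline T$, is compact.

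The main obstacle — really the only place requiring care — is checking that $\phi(g)=\overline\rho\circ L_g\circ\rho$ is a \emph{uniform} family of quasi-isometries and satisfies the cocycle-up-to-bounded-error condition $d(\phi(h)\phi(g)(x),\phi(hg)(x))\le A$ with constants independent of $g,h$; this hinges on $\rho$ and $\overline\rho$ being quasi-isometries with fixed constants and $\rho\circ\overline\rho$, $\overline\rho\circ\rho$ being uniformly close to the identity, together with the fact that left multiplications $L_g$ are isometries of $(G,d_G)$ (so contribute no distortion). Alternatively, and perhaps more cleanly, one can invoke Proposition~\ref{prop:TCvsQC}: take $Y=(G,d_G)$, a proper metric space, with the isometric action $G\curvearrowright Y$ by left multiplication transported to $G\qa\Gamma$ via the quasi-isometry $\rho^{-1}$; then $(\ref{item:TCvsQC_metric})\Rightarrow(\ref{item:TCvsQC_TC})$ of that proposition, combined with the identification of the completion in its final sentence, immediately yields that $G\qa\Gamma$ has topological completion $\overline{\im(\mathrm{id}_G)}=G$, bypassing the explicit quasi-action verification entirely.
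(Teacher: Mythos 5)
Your proposal is correct and follows essentially the same route as the paper: use Proposition \ref{prop:copci} to see that $\rho$ is a quasi-isometry, transport the regular representation $G\curvearrowright (G,d_G)$ across it to get a cobounded quasi-action $G\qa\Gamma$, and identify $G$ itself as the topological completion via the boundedness--compactness correspondence (your explicit verification is exactly what underlies the paper's citation of Proposition \ref{prop:TCvsQC}). One small caution about your ``cleaner'' alternative: a geodesically adapted metric need not be compatible with the topology of $G$, so $(G,d_G)$ is in general \emph{not} a proper metric space in the sense required for item (1) of Proposition \ref{prop:TCvsQC} (e.g.\ a word metric for a compact generating set is discrete while $G$ need not be), so the direct verification you gave is the argument to keep.
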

\begin{proof}
	Since $G$ contains a finitely generated group as a uniform lattice, $G$ is compactly generated. Fixing a geodesically adapted metric on $G$, Proposition \ref{prop:copci} implies $\rho:\Gamma\to G$ is a quasi-isometry. Hence we can quasi-conjugate the regular representation $G\curvearrowright G$ to a cobounded quasi-action $G\qa \Gamma$. By Proposition \ref{prop:TCvsQC}, $G$ is a topological completion of $G\qa \Gamma$.
\end{proof}

\subsection{Uniqueness of topological completions}\label{sec:topcomp_unique}
When studying the large-scale geometry of finitely generated groups, it is often convenient to study groups up to passing to and from finite index subgroups and quotienting out by finite normal subgroups. Similarly, one frequently studies locally compact groups up  to quotienting out by a  compact normal subgroup. This is the case  for topological completions, where elements in such a subgroup correspond to quasi-isometries that are  uniformly close to the  identity map:

\begin{prop}\label{prop:compact_normal}
	Let $X$ be a metric space and $G\qa X$ be a cobounded quasi-action. Suppose $\rho:G\rightarrow \hat G$ is a weak topological completion and $L\leq \hat G$ is a compact normal subgroup. Then there is a constant $D\geq 0$ such that for every $g\in \rho^{-1}(L)$ and $x\in X$, $d(g\cdot x,x)\leq D$.
\end{prop}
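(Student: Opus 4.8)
The plan is to transport the problem to the topological completion $\hat G$, where left-invariance of a geodesically adapted metric together with the normality of $L$ makes the desired bound transparent. First I would invoke Proposition~\ref{prop:topcompquasiconj}: since $\rho\colon G\to\hat G$ is a weak topological completion of the cobounded quasi-action $G\qa X$, the group $\hat G$ is compactly generated, and after fixing a geodesically adapted metric $d_{\hat G}$ on $\hat G$ there are constants $K\geq 1$, $A\geq 0$ and a $(\rho,K,A)$-quasi-conjugacy $f\colon X\to(\hat G,d_{\hat G})$ from $G\qa X$ to the regular representation $\hat G\curvearrowright(\hat G,d_{\hat G})$. Because $f$ is a $(K,A)$-quasi-isometric embedding we have $d_X(g\cdot x,x)\le K\,d_{\hat G}\big(f(g\cdot x),f(x)\big)+KA$, and by the quasi-conjugacy property $d_{\hat G}\big(f(g\cdot x),\rho(g)f(x)\big)\le A$ for all $g\in G$ and $x\in X$. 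So it suffices to bound $d_{\hat G}\big(\rho(g)f(x),f(x)\big)$, uniformly in $x$, whenever $g\in\rho^{-1}(L)$.

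The key step is then to show that a compact subgroup of $\hat G$ has finite diameter with respect to $d_{\hat G}$. Writing $\ell\coloneqq\rho(g)\in L$ and $h\coloneqq f(x)$, left-invariance gives $d_{\hat G}(\ell h,h)=d_{\hat G}(h^{-1}\ell h,e)$, and $h^{-1}\ell h\in L$ since $L\vartriangleleft\hat G$; thus everything reduces to bounding $d_{\hat G}(e,\cdot)$ on $L$. To prove $L$ is $d_{\hat G}$-bounded I would pass to the word metric $d_Q$ associated with a compact generating set $Q$ of $\hat G$, which is quasi-isometric to $d_{\hat G}$ by \cite[Corollary~4.B.11]{cornulierdlH2016metric}. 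After enlarging $Q$ so that $Q=Q^{-1}$ is a compact neighbourhood of the identity, each $Q^n$ is an identity neighbourhood, $Q^{n}\subseteq\operatorname{int}(Q^{n+1})$, and $\bigcup_{n\geq 1}Q^{n}=\hat G$ because $Q$ generates $\hat G$; hence the open sets $\operatorname{int}(Q^{n})$ form an increasing cover of the compact set $L$, so $L\subseteq Q^{N}$ for some $N$ and $\operatorname{diam}_{d_Q}(L)\le 2N<\infty$. Consequently there is $M\geq 0$ with $d_{\hat G}(e,\ell)\leq M$ for all $\ell\in L$. Combining the estimates, $d_{\hat G}\big(\rho(g)f(x),f(x)\big)\le M$ for every $x\in X$ and $g\in\rho^{-1}(L)$, and therefore $d_X(g\cdot x,x)\le K(A+M)+KA=K(M+2A)$, so $D\coloneqq K(M+2A)$ works.

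The main obstacle is the boundedness of the compact subgroup $L$ in the geodesically adapted metric. One cannot argue "compact implies metrically bounded" directly, because a geodesically adapted metric need not be compatible with the topology of $\hat G$ — indeed a word metric relative to a compact generating set is always discrete — so the metric topology can differ from the group topology. The fix is precisely to route through the quasi-isometry with such a word metric, where boundedness of $L$ follows from a compactness argument against the increasing exhaustion by powers of the generating set. Everything else is routine bookkeeping with the quasi-conjugacy constants.
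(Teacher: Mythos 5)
Your argument has a hypothesis mismatch that makes one step fail as a proof of the statement as given. The proposition assumes only that $X$ is a metric space, but your very first move invokes Proposition \ref{prop:topcompquasiconj}, which is stated (and proved, via the generalised Milnor--Schwarz Proposition \ref{prop:generalms}) only for \emph{quasi-geodesic} $X$. Without quasi-geodesicity there is no reason for $G$ to admit a bounded generating set, no orbit-map quasi-conjugacy from $G\qa X$ to the regular representation of $\hat G$, and indeed no reason for $\hat G$ to be compactly generated at all (a weak topological completion only requires $\rho(G)$ cocompact plus the boundedness correspondence). So the transport of the problem to a geodesically adapted metric on $\hat G$ — the backbone of your plan — is unavailable in the stated generality, and the gap is not cosmetic: the whole route runs through machinery that genuinely needs the extra hypothesis. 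The rest of your argument is fine: the reduction via left-invariance and normality of $L$ to bounding $d_{\hat G}(e,\cdot)$ on $L$, and the proof that a compact subgroup is bounded in a word metric for a compact generating set, are both correct. So what you have is a correct proof under the additional assumption that $X$ is quasi-geodesic (which happens to cover the paper's applications), but not of the proposition as stated.

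The paper avoids $\hat G$ entirely and works inside $X$, which is why it needs no geometric hypotheses on $X$: it sets $J\coloneqq\rho^{-1}(L)$, which is a normal subgroup of $G$ since $L\vartriangleleft\hat G$, and is bounded because $\overline{\rho(J)}\subseteq L$ is compact (condition (3) of a weak topological completion). Fixing $x_0$ with $d(x_0,g\cdot x_0)\leq R$ for all $g\in J$ and using coboundedness to choose $g_x$ with $d(g_x\cdot x,x_0)\leq C$, one writes $h=g_xgg_x^{-1}\in J$ for $g\in J$ and estimates $d(g\cdot x,x)$ directly with Lemma \ref{lem:qaction_gp}, conjugating the problem back to the basepoint. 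Note the contrast in where normality is used: you conjugate by $f(x)$ inside $\hat G$, the paper conjugates by $g_x$ inside $G$; the latter only needs the quasi-action constants and the boundedness correspondence, so it works for an arbitrary metric space. If you want to keep your structure, you would either have to add the quasi-geodesic hypothesis or replace the appeal to Proposition \ref{prop:topcompquasiconj} by an argument of the paper's intrinsic type.
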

\begin{proof}
	Suppose $G\qa X$ is an $(K,A)$-quasi-action.
	We set $J\coloneqq \rho^{-1}(L)$ and choose $x_0\in X$. Since $\overline{\rho(J)}\subseteq L$ is compact, $J$ is bounded. Pick  $R$ sufficiently large such that $d(x_0,g\cdot x_0)\leq R$ for all $g\in J$. 
	Since $G\qa X$ is cobounded, there is a constant $C$ such that for all $x\in X$, there exists a $g_x\in G$ such that $d(g_x\cdot x,x_0)\leq C$.  Using the fact that $J$ is a normal subgroup of $G$, we see that for all $g\in J$ and $x\in X$, setting $h=g_xgg_x^{-1}\in J$ and applying Lemma \ref{lem:qaction_gp},
	\begin{align*}
		d(g\cdot x,x)&\leq d(g_x^{-1}h\cdot g_x\cdot x,x) +A\\
		& \leq Kd(h\cdot g_x\cdot x,g_x\cdot x)+4A\\
		&\leq Kd(h\cdot x_0,x_0)+K^2C +KA+KC+4A\\
		&\leq KR+K^2C+KA+KC+4A\eqqcolon D
	\end{align*}
as required.
\end{proof}

Having established that compact normal subgroups can be quotiented out without losing any geometric information about the quasi-action, we now show that topological completions are unique modulo a compact normal subgroup. Unlike the results of the previous section, Theorem \ref{thm:topcomp unique} only holds for topological completions, not weak topological completions.
 \begin{thm}[Uniqueness of topological completions]\label{thm:topcomp unique}
	Let $X$ be a quasi-geodesic metric space and let $G\qa X$ be a cobounded quasi-action. Suppose that  for $i=1,2$,  $\rho_i:G\rightarrow\hat G_i$  are  two topological completions of  $G\qa X$. Then there exist compact normal subgroups $K_i\vartriangleleft \hat G_i$ and a topological isomorphism $\lambda:\hat G_1/K_1\rightarrow \hat G_2/K_2$ such that the following diagram commutes
	\begin{figure}[H]
		\begin{tikzcd}[row sep=tiny,ampersand replacement=\&]
			\& \hat G_1 \ar[r,"\pi_1",twoheadrightarrow] \& \hat G_1/K_1 \ar[dd,"\lambda"]\\
			G\ar[ur,"\rho_1"]\ar[dr,"\rho_2"]\& \& \&	\\
			\& \hat G_2 \ar[r,"\pi_2",twoheadrightarrow] \& \hat G_2/K_2 
		\end{tikzcd},
	\end{figure}
\noindent where  $\pi_i:\hat G_i\rightarrow \hat G_i/K_i$ are  quotient maps. 
\end{thm}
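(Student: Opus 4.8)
The plan is to realise both completions inside a single group built from the ``diagonal''. Consider the homomorphism $\rho := (\rho_1,\rho_2):G\to\hat G_1\times\hat G_2$ and let $\hat G := \overline{\rho(G)}$, which is a closed, hence locally compact, subgroup of $\hat G_1\times\hat G_2$, inside which $\rho(G)$ is dense. Let $p_i:\hat G\to\hat G_i$ denote the restrictions of the two coordinate projections. A first observation is that $\hat G$ is itself a topological completion of $G\qa X$ via $\rho$: local compactness and density are immediate, and the boundedness criterion transfers because $\overline{\rho(T)}\subseteq\overline{\rho_1(T)}\times\overline{\rho_2(T)}$ for every $T\subseteq G$, while $p_i\big(\overline{\rho(T)}\big)$ contains $\rho_i(T)$.

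The technical heart of the argument is to show that each $p_i$ is a \emph{proper} map, and this is the step I expect to be the main obstacle. I would prove it directly from property (\ref{item:WT3}): given a compact set $C\subseteq\hat G_1$, fix a compact neighbourhood $V$ of the identity and put $C' := CV$, a compact set containing $C$ in its interior; set $T := \{g\in G\mid\rho_1(g)\in C'\}$. Then $\overline{\rho_1(T)}\subseteq C'$ is compact, so $T$ is bounded by (\ref{item:WT3}) applied to $\rho_1$, and therefore $D := \overline{\rho_2(T)}$ is compact by (\ref{item:WT3}) applied to $\rho_2$. Now if $(a,b)\in\hat G$ with $a\in C$, choose a net $(g_\alpha)$ in $G$ with $\rho(g_\alpha)\to(a,b)$; since $a$ lies in the interior of $C'$, eventually $\rho_1(g_\alpha)\in C'$, hence eventually $g_\alpha\in T$ and $\rho_2(g_\alpha)\in D$, so $b\in D$. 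Thus $p_1^{-1}(C)\subseteq\hat G\cap(C\times D)$, which is compact. The subtlety here is that $\hat G_1\times\hat G_2$ need not be metrisable, so one must argue with nets (or filters) rather than sequences; it is also exactly here that the definition of a topological completion is genuinely used, invoking both implications of (\ref{item:WT3}) crossed between the two factors.

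Everything after that is formal. Taking $C=\{1\}$ shows $K_i' := \ker(p_i)$ is a compact normal subgroup of $\hat G$. A proper continuous map into a locally compact Hausdorff space is closed, so $p_i(\hat G)$ is closed; as it contains the dense subgroup $\rho_i(G)$, the map $p_i$ is surjective. This is the one place where density of $\rho_i(G)$, rather than mere coboundedness, is essential, which is precisely why the theorem fails for weak topological completions. Consequently the induced continuous bijective homomorphism $\hat G/K_i'\to\hat G_i$ is again proper, hence closed, hence a topological isomorphism.

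Finally I would amalgamate the two kernels. Put $N := K_1'K_2'$, a compact (the image of $K_1'\times K_2'$ under multiplication) normal subgroup of $\hat G$, and set $K_i := p_i(N)\vartriangleleft\hat G_i$, which is compact. Transporting the isomorphisms $\hat G/K_i'\cong\hat G_i$ and applying the isomorphism theorem for topological groups gives topological isomorphisms $\hat G_1/K_1\cong\hat G/N\cong\hat G_2/K_2$; let $\lambda$ be the composite. Tracking an element $\rho(g)$ through these identifications yields $\lambda(\pi_1(\rho_1(g))) = \rho(g)N = \pi_2(\rho_2(g))$ for every $g\in G$, so the square commutes, and $K_1\vartriangleleft\hat G_1$, $K_2\vartriangleleft\hat G_2$ are the required compact normal subgroups.
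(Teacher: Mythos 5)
Your proof is correct, and it takes a genuinely different route from the one in the paper. You work in the fibre product: you pass to $\hat G=\overline{\rho(G)}\leq \hat G_1\times\hat G_2$ with $\rho=(\rho_1,\rho_2)$, prove via the boundedness criterion (used in both directions, crossed between the two factors, exactly as you say) that the coordinate projections $p_i:\hat G\to\hat G_i$ are proper, and then everything follows from standard facts: $\ker(p_i)$ is compact normal, properness plus density of $\rho_i(G)$ gives surjectivity and hence topological isomorphisms $\hat G/\ker(p_i)\cong\hat G_i$, and amalgamating the kernels via $N=\ker(p_1)\ker(p_2)$ with $K_i=p_i(N)$ yields $\lambda$ by the third isomorphism theorem, with the square commuting by a direct diagram chase on $\rho(g)$. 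The paper instead first reduces to second-countable metrisable targets via Kakutani--Kodaira, defines $K_1$ as the set of ``$\rho_2$-trivial'' elements (which, in your language, is exactly $p_1(\ker p_2)$), builds groups $\cC_i$ of convergent sequences in $G$, shows $\cC_1=\cC_2$ and $\ker\Lambda_1=\ker\Lambda_2$, and obtains $\lambda$ from the first isomorphism theorem, proving it is a homeomorphism with a diagonal-sequence lemma. Your argument buys a cleaner structural picture and dispenses entirely with the metrisability reduction and the sequence bookkeeping (the one limit argument you need is correctly done with nets); the paper's argument is more hands-on and makes the kernels' description intrinsic to $\hat G_1$ and $\hat G_2$ without invoking the product group, but at the cost of the Kakutani--Kodaira reduction and the more delicate openness step. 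Your observation that surjectivity of $p_i$ is where density (rather than coboundedness) enters, explaining why the statement fails for weak completions, matches the role density plays in the paper's proof as well.
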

The difficulty in proving Theorem \ref{thm:topcomp unique} is that   $\hat G_1$ and $\hat G_2$ are not a priori algebraically related in any way that preserves the topology. Our strategy  is to identify the set of ``Cauchy sequences'' in $G$, denoted $\cC_1$ and $\cC_2$, whose images correspond precisely to convergent sequences  in $\hat G_1/K_1$ and $\hat G_2/K_2$. To do this, we require the following elementary lemma  which says that the limit of a  sequence of convergent sequences converges if and only if any sufficiently large diagonal sequence converges. Given two functions $\mu,\kappa:\bbN\to\bbN$, we write $\kappa\geq \mu$ if $\kappa(i)\geq \mu(i)$ for all $i\in \bbN$.
\begin{lem}\label{lem:diagonalconv}
	Let $X$ be a first countable regular topological space. Suppose $S$ is a set   and $\rho:S\rightarrow X$ is an arbitrary  map. Let $\cC_\rho$ be the set of all sequences $(s_i)$ in $S$ such that $\rho(s_i)$ converges in $X$, and let $\Lambda:\cC_\rho\rightarrow X$ be the map $(s_i)\mapsto \lim_{i\rightarrow \infty}\rho(s_i)$.
	Suppose $x\in X$ and  that for each $j\in \bbN$, there is a sequence $c_j=(s_{i,j})_{i=1}^\infty\in \cC_\rho$. The following are equivalent:
	\begin{enumerate}
		\item \label{item:diagonalconv1} $(\Lambda(c_j))_{j=1}^\infty$ converges to $x\in X$;
		\item \label{item:diagonalconv2} there is a  function $\mu:\bbN\rightarrow \bbN$ such that $(\rho(s_{\kappa(j),j}))$ converges to $x$ for all $\kappa\geq \mu$.
	\end{enumerate} 
\end{lem}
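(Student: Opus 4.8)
The plan is to prove the two implications separately. First I would use first countability to replace an arbitrary countable neighbourhood basis at $x$ by a \emph{decreasing} one $V_1\supseteq V_2\supseteq\cdots$ of open sets (take $V_n:=W_1\cap\cdots\cap W_n$ from a countable basis $(W_n)$), and I would reserve regularity for a single point in the second implication. Throughout write $x_j:=\Lambda(c_j)=\lim_{i\to\infty}\rho(s_{i,j})$.

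For $(\ref{item:diagonalconv1})\implies(\ref{item:diagonalconv2})$, assume $x_j\to x$. For each $n$ only finitely many $j$ satisfy $x_j\notin V_n$, so there is $b_n\in\bbN$ with $x_j\in V_n$ for all $j\geq b_n$. The key bookkeeping step is to organise these thresholds into a monotone staircase: set $J_n:=\max(n,b_1,\dots,b_n)$, which is non-decreasing and tends to infinity, and for $j\geq J_1$ put $n(j):=\max\{n: J_n\leq j\}$ (a nonempty finite set since $J_n\geq n$). One checks $n(j)\to\infty$ and $x_j\in V_{n(j)}$. Since $V_{n(j)}$ is an open neighbourhood of $x_j$ and $\rho(s_{i,j})\to x_j$, I may choose $\mu(j)$ with $\rho(s_{i,j})\in V_{n(j)}$ for all $i\geq\mu(j)$, and set $\mu(j):=1$ for $j<J_1$. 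Then for any $\kappa\geq\mu$ and any neighbourhood $U$ of $x$, picking $n$ with $V_n\subseteq U$ gives $n(j)\geq n$ and hence $\rho(s_{\kappa(j),j})\in V_{n(j)}\subseteq V_n\subseteq U$ for all $j\geq J_n$; thus $\rho(s_{\kappa(j),j})\to x$.

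For $(\ref{item:diagonalconv2})\implies(\ref{item:diagonalconv1})$, I would argue by contradiction: suppose $\mu$ is as in $(\ref{item:diagonalconv2})$ but $x_j\not\to x$, so there is a neighbourhood $U$ of $x$ and an infinite set $J\subseteq\bbN$ with $x_j\notin U$ for $j\in J$. This is the one place regularity enters: choose an open $U'$ with $x\in U'\subseteq\overline{U'}\subseteq U$. For each $j\in J$ we have $x_j\notin\overline{U'}$, so, as $\rho(s_{i,j})\to x_j$, there is $M_j\geq\mu(j)$ with $\rho(s_{i,j})\notin U'$ for all $i\geq M_j$. Setting $\kappa(j):=M_j$ for $j\in J$ and $\kappa(j):=\mu(j)$ otherwise yields $\kappa\geq\mu$, so $\rho(s_{\kappa(j),j})\to x$ by hypothesis; but then $\rho(s_{\kappa(j),j})\in U'$ for all large $j$, contradicting $\rho(s_{\kappa(j),j})\notin U'$ for the infinitely many $j\in J$. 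Hence $x_j\to x$.

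I expect the only genuine obstacle is the staircase construction in the first implication: a single function $\mu$ must simultaneously control the tails of all the sequences $c_j$, which works precisely because $x_j\to x$ forces the relevant thresholds to improve as $j$ grows, and the reindexing $j\mapsto n(j)$ is what packages this uniformly. The second implication is a routine diagonal-extraction argument, with regularity used only to ensure that a tail of a sequence limiting to a point outside $U$ actually escapes the smaller neighbourhood $U'$.
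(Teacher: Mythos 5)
Your proof is correct and follows essentially the same route as the paper's: a decreasing countable neighbourhood basis with a staircase reindexing (your $n(j)$ is the paper's $k(j)$) for $(\ref{item:diagonalconv1})\implies(\ref{item:diagonalconv2})$, and regularity used to push a tail of each offending $c_j$ outside a smaller neighbourhood of $x$ for the converse, phrased by contradiction rather than the paper's contrapositive. No gaps.
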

\begin{proof}
	(\ref{item:diagonalconv1})$\implies$ (\ref{item:diagonalconv2}):  Let $(U_i)$ be a countable  open neighbourhood basis of $x$ such that $U_0=X$. Replacing $U_i$ with $\cap_{k=1}^iU_k$ if necessary, we can assume $U_i\subseteq U_j$ when $i\geq j$. Since $(\Lambda(c_j))$ converges to $x\in X$, there exists a strictly  increasing sequence $(n_k)$ such that $n_0=0$ and  $\Lambda(c_j)\in U_k$ whenever $j\geq n_k$.
	For each $j$, choose $k(j)$ such that $n_{k(j)}\leq j< n_{k(j)+1}$; thus $(k(j))_{j=1}^\infty$ is an unbounded sequence. Since $\lim_{i\rightarrow \infty}(\rho(s_{i,j})) =\Lambda(c_j)\in U_{k(j)}$ and $U_{k(j)}$ is open, we pick $\mu(j)$ such that $\rho(s_{i,j})\in U_{k(j)}$ for all $i\geq \mu(j)$. If   $\kappa\geq \mu$, then for all $j$, we have $\rho(s_{\kappa(j),j})\in U_{k(j)}$. Since $\lim_{j\rightarrow\infty}k(j)=\infty$ and  $(U_j)$ is a neighbourhood basis of  $x$, it follows that $(\rho(s_{\kappa(j),j}))$ converges to $x\in X$.
	
	(\ref{item:diagonalconv2})$\implies$ (\ref{item:diagonalconv1}): We prove the contrapositive, i.e.\ we assume that $(\Lambda(c_j))$ does not converge to $x$ and show for every   $\mu:\bbN\rightarrow \bbN$, there exists a function $\kappa\geq \mu$ such that  $(\rho(s_{\kappa(j),j}))$ does not converge to $x$. Since $(\Lambda(c_j))$ is assumed to not converge to $x$, there exists an open neighbourhood $U$ of $x$ such that $D\coloneqq \{j\in \bbN\mid \Lambda(c_j)\notin U\}$ is infinite.  Since $X$ is regular, there exists a neighbourhood $V$ of $x$ such that $\overline{V}\subseteq U$. For each $j\in D$, pick an open neighbourhood $U_j$ of $\Lambda(c_j)$ that is disjoint from $V$.  For $j\in D$, we pick $\kappa(j)\geq \mu(j)$ large enough so that $\rho(s_{\kappa(j),j})\in U_j$. We define $\kappa (j)=\mu(j)$ for all $j\notin D$. Since $V$ is a neighbourhood of $x$ and $\rho(s_{\kappa(j),j})\notin V$ for all  $j\in D$, we see that $\rho(s_{\kappa(j),j})$ cannot converge to $x$.
\end{proof}

\begin{proof}[Proof of Theorem \ref{thm:topcomp unique}]
	We first reduce to the case that both $\hat G_1$ and $\hat G_2$ are second-countable and metrisable. Indeed, if this is not the case,  then Corollary \ref{cor:kakutanikodaira} and Remark \ref{rem:topcomp,normal} ensure one can quotient by compact normal subgroups $L_i\vartriangleleft \hat G_i$ such that $\hat G_i/L_i$ is second-countable and metrisable. Applying the subsequent argument to these quotients then proves the theorem. Working with metrisable spaces simplifies the subsequent argument,  since it allows one to work with sequences rather than nets and allows us to apply Lemma \ref{lem:diagonalconv}.
	
	\underline{Defining the compact normal subgroups $K_1$ and $K_2$}
	An element $g\in \hat G_1$ is said to be \emph{$\rho_2$-trivial} if there exists a sequence $(g_i)$ in $G$ such $\rho_1(g_i)$ and $\rho_2(g_i)$  converge to $g\in \hat G_1$ and $1\in\hat G_2$ respectively. Let $K_1\subseteq \hat G_1$ be the set of $\rho_2$-trivial elements. We claim $K_1$ is a compact normal subgroup. Suppose $g,h\in  K_1$. Then there exist sequences $(g_i)$ and $(h_i)$ in $G$ such that  $\rho_1(g_i)\rightarrow g$, $\rho_2(g_i)\rightarrow 1$, $\rho_1(h_i)\rightarrow h$ and $\rho_2(h_i)\rightarrow 1$. Thus, $\rho_1(h^{-1}_ig_i)\rightarrow h^{-1}g$ and $\rho_2(h^{-1}_ig_i)\rightarrow 1$, verifying that $h^{-1}g\in K_1$. Since $1\in K_1$, $K_1$ is a subgroup.
	
	We now show $K_1$ is normal. Suppose $k\in K_1$ and $h\in \hat G_1$. We choose sequences $(k_i)$ and $(h_i)$ in $G$ such that $\rho_1(k_i)\rightarrow k$, $\rho_1(h_i)\rightarrow h$ and $\rho_2(k_i)\rightarrow 1$. Since $(\rho_1(h_i))$ is convergent,  $\{\rho_1(h_i)\mid i\in \bbN\}$ has compact closure;  therefore  $\{h_i\mid i\in \bbN\}$ is bounded and  so $\{\rho_2(h_i)\mid i\in \bbN\}$ also has compact closure. This implies $\rho_2(h_i)$ has a subsequence $\rho_2(h_{n_i})$ that converges to  $l\in \hat G_2$. It follows that $\rho_2(h_{n_i}k_ih_{n_i}^{-1})$ converges to $l1l^{-1}=1$. Since $\rho_1(h_{n_i}k_ih_{n_i}^{-1})$ converges to $hkh^{-1}$, we see that $hkh^{-1}$ is $\rho_2$-trivial. Therefore, $K_1$ is normal.
	
	Let $V_2$ be a compact identity neighbourhood in $\hat G_2$.  It follows from the definition of a topological completion that $V_1\coloneqq \overline{\rho_1(\rho_2^{-1}(V_2))}$ is compact. Thus if  $\rho_2(k_i)\rightarrow 1$, then $\rho_2(k_i) \in V_2$  for all $i$ sufficiently large, and so $\rho_1(k_i) \in V_1$ for all $i$ sufficiently large. Since $V_1$ is compact and contains $K_1$, to show $K_1$ is compact it is sufficient to show it is closed. If $k\in \overline{K_1}$, then there exists a sequence  $(k_j)$ in $K_1$ converging to $k$. Since $k_j\in K_1$ for each $j$, there exists a sequence $(g_{i,j})_{i=1}^\infty$ in $G$ such that $\lim_{i\rightarrow\infty}\rho_1(g_{i,j})= k_j$ and $\lim_{i\rightarrow\infty}\rho_2(g_{i,j})=1$. Applying Lemma \ref{lem:diagonalconv} to the  maps $\rho_i:G\rightarrow \hat G_i$ for both $i=1,2$, we deduce there exists a function $\kappa:\bbN\rightarrow \bbN$ such that $\lim_{j\rightarrow\infty}\rho_1(g_{\kappa(j),j})= k$ and $\lim_{j\rightarrow\infty}\rho_2(g_{\kappa(j),j})=1$, demonstrating that $k\in K_1$. Thus $K_1$ is closed, and so is a compact normal subgroup.
	Interchanging the roles of $\hat G_1$ and $\hat G_2$ gives a compact normal subgroup $K_2\vartriangleleft \hat G_2$ consisting of $\rho_1$-trivial elements of $\hat G_2$. 
	
	\underline{Defining  $\cC_1$ and $\cC_2$ and surjective homomorphisms $\Lambda_1$ and $\Lambda_2$}
	For $i=1,2$, let $\cC_i$ be the set of all sequences $(g_j)$ in $G$ such that $\rho_i(g_j)K_i$ converges in $\hat G_i/K_i$, and let $\Lambda_i:\cC_i\rightarrow \hat G_i/K_i$ be given by $(g_j)\mapsto \lim_{j\rightarrow \infty}(\rho_i(g_j))$. Equipped with componentwise multiplication, $\cC_i$ is a group  and  $\Lambda_i$ is a homomorphism.  Since $\rho_i(G)$ is dense, $\Lambda_i$ is surjective for $i=1,2$.
	
	\underline{$\cC_1=\cC_2$}
	We claim $\cC_1=\cC_2$. Indeed, suppose that $(g_i)\in \cC_1$. Then $\{\rho_1(g_i)\mid i\in \bbN\}$ has compact closure, so  $\{g_i\mid i\in \bbN\}$ is bounded and hence $Q\coloneqq\{\rho_2(g_i)\mid i\in \bbN\}$ also has compact closure. To show $(g_i)\in \cC_2$, it is sufficient to show that  all accumulation points of $Q$ are contained in the same $K_2$-coset.   Let $h,k\in \hat G_2$ be  two such accumulation points of $Q$, and choose subsequences $(\rho_2(g_{n_i}))$ and $(\rho_2(g_{m_i}))$ that converge to $h$ and $k$ respectively. Since $\{\rho_1(g_i)\mid i\in \bbN\}$ also has compact closure, we can assume --- by passing to further subsequences --- that $\rho_1(g_{n_i})$ and $\rho_1(g_{m_i})$ both converge to $h_1,k_1\in \hat G_1$ respectively. Since $(g_i)\in \cC_1$, we see $h_1^{-1}k_1\in K_1$ and so there exists a sequence $(t_i)$ in $G$ such that $(\rho_1(t_i))\rightarrow h_1^{-1}k_1$ and $(\rho_2(t_i))\rightarrow 1$.
	Thus  \[\lim_{i\rightarrow \infty}\rho_1(t_i^{-1}g_{n_i}^{-1}g_{m_i})=(h_1^{-1}k_1)^{-1}h_1^{-1}k_1=1\] and
	\[\lim_{i\rightarrow \infty}\rho_2(t_i^{-1}g_{n_i}^{-1}g_{m_i})=h^{-1}k\]
	so that $h^{-1}k\in K_2$ as required. Thus $(g_i)\in \cC_2$. By symmetry, we deduce $\cC_1=\cC_2$.
	
	\underline{$\ker(\Lambda_1)=\ker(\Lambda_2)$}
	We now show $\ker(\Lambda_1)=\ker(\Lambda_2)$. Suppose $(g_i)\in \ker(\Lambda_1)$. As before, $Q\coloneqq\{\rho_2(g_i)\mid i\in \bbN\}$ has compact closure. To demonstrate that $(g_i)\in \ker(\Lambda_2)$, it is  enough to show that every accumulation point of $Q$ is contained in $K_2$. Let $h$ be an accumulation point of $Q$ and let $(\rho_2(g_{n_i}))$  be a subsequence converging to $h$. As in the previous paragraph, we pass to a subsequence such that $(\rho_1(g_{n_i}))$ converges to some $h_1\in \hat G_1$. Since $(g_i)\in \ker(\Lambda_1)$, we must have $h_1\in K_1$, so there exists a sequence $(t_i)$ in $G$ such that $(\rho_1(t_i))\rightarrow h_1$ and $(\rho_2(t_i))\rightarrow 1$. Thus $(\rho_1(t_i^{-1}g_{n_i}))\rightarrow 1$ and $(\rho_2(t_i^{-1}g_{n_i}))\rightarrow h$, demonstrating that $h\in K_2$ and so $(g_i)\in \ker(\Lambda_2)$. By symmetry, we conclude $\ker(\Lambda_1)=\ker(\Lambda_2)$.
	
	\underline{Defining the topological isomorphism $\lambda: \hat G_1/K_1\rightarrow \hat G_2/K_2$.}
	Let $\iota:G\rightarrow \cC_1=\cC_2$ be the map $g\mapsto (g)$ taking an element to the constant sequence. The composition $\Lambda_i\circ \iota:G\rightarrow \hat G_i/K_i$ is equal to $\pi_i\circ \rho_i$ for $i=1,2$. Since $\Lambda_1$ and $\Lambda_2$ are surjective and have equal domains and kernels,  the first isomorphism theorem furnishes us with an abstract isomorphism $\lambda: \hat G_1/K_1\rightarrow \hat G_2/K_2$ such that $\lambda\circ \Lambda_1=\Lambda_2$. In particular, we have $\lambda\circ \pi_1\circ \rho_1=\pi_2\circ\rho_2$. All that remains is to show that $\lambda$ is a homeomorphism. We will show that the bijection $\lambda$ is open. It will then follow by symmetry that $\lambda^{-1}$ is open, i.e.\ $\lambda$ is continuous, hence is  a homeomorphism.
	
	If $U\subseteq \hat G_1/K_1$ be open, we show that $\lambda(U)$ is open. Suppose $g\in \lambda(U)$ and let $(g_j)$ be a sequence in $\hat G_2/K_2$ converging to $g$. It is enough to show there exists an $N$ such that $g_j\in \lambda(U)$ for all $j\geq N$. Pick $c_j=(g_{i,j})_{i=1}^\infty\in \cC_2$ such that $\Lambda_2(c_j)=g_j$.  Since $\lim_{j\rightarrow \infty} \Lambda_2(c_j)=g$, Lemma \ref{lem:diagonalconv} ensures that there is a function  $\mu:\bbN\rightarrow \bbN$ such that for every $\kappa\geq \mu$, $c_\kappa\coloneqq (g_{\kappa(j),j})\in \cC_2$ and $\Lambda_2(c_\kappa)=g$. Since $\cC_1=\cC_2$ and $\lambda\circ \Lambda_1=\Lambda_2$, we see that  $c_\kappa\in \cC_1$ and $\Lambda_1(c_\kappa)=\lambda^{-1}(g)$ for all $\kappa\geq\mu$. 
	It follows from Lemma \ref{lem:diagonalconv} that $\lim_{j\rightarrow \infty} \Lambda_1(c_j)=\lambda^{-1}(g)$. Since $\lambda^{-1}(g)\in U$ and $U$ is open, there exists an $N$ such that $ \Lambda_1(c_j)=\lambda^{-1}(g_j)\in U$ for all $j\geq N$. Thus  $g_j\in \lambda(U)$ for all $j\geq N$. It follows that $\lambda(U)$ is open as required. 
\end{proof}
\subsection{A topological characterisation of discretisability}\label{sec:topcomp_disc} We now give a topological criterion for discretisability of quasi-actions. This will be done using tools from the theory of  totally disconnected locally compact (TDLC) groups.
The foundational result in the theory of TDLC groups is the following:
\begin{thm}[\cite{vandantzig36}]\label{thm:vanDantzig}
	Every identity neighbourhood of a totally disconnected locally compact group  contains a compact open subgroup.
\end{thm}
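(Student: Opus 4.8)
The plan is to deduce the theorem from two largely independent ingredients: first, a zero-dimensionality statement asserting that a totally disconnected locally compact (Hausdorff) group has a neighbourhood basis at the identity consisting of compact open sets; and second, a standard ``thickening'' argument that promotes such a compact open neighbourhood to a compact open \emph{subgroup}.

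For the first ingredient, let $U$ be an identity neighbourhood and, using local compactness, choose a compact neighbourhood $K$ of $1$ with $K\subseteq U$. With the subspace topology, $K$ is compact, Hausdorff and (as a subspace of a totally disconnected space) totally disconnected. The key classical input is that in a compact Hausdorff space the connected component of a point coincides with its quasi-component, that is, with the intersection of all clopen neighbourhoods of the point; total disconnectedness forces this intersection to be $\{1\}$, and a routine compactness argument (covering $K\setminus\operatorname{int}_G(K)$ by complements of clopen neighbourhoods of $1$) then produces a set $V$ that is clopen in $K$ with $1\in V\subseteq\operatorname{int}_G(K)$. Such a $V$ is closed in $G$, hence compact, and open in $G$ since it is open in $K$ and contained in the open set $\operatorname{int}_G(K)$. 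Thus $V$ is a compact open identity neighbourhood with $V\subseteq U$.

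For the second ingredient, I would use continuity of multiplication: since $V\cdot\{1\}\subseteq V$ with $V$ open, each $v\in V$ admits open sets $P_v\ni v$ and $Q_v\ni 1$ with $P_vQ_v\subseteq V$; compactness of $V$ gives a finite subcover $P_{v_1},\dots,P_{v_n}$, and setting $Q\coloneqq\bigcap_{i=1}^{n}Q_{v_i}$ and $W_0\coloneqq V\cap V^{-1}\cap Q\cap Q^{-1}$ yields a symmetric open identity neighbourhood with $W_0\subseteq V$ and $VW_0\subseteq V$. An immediate induction gives $W_0^{\,k}\subseteq V$ for every $k\ge 1$, so $W\coloneqq\langle W_0\rangle=\bigcup_{k\ge 0}W_0^{\,k}\subseteq V$. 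As a subgroup with nonempty interior, $W$ is open, hence also closed, hence compact as a closed subset of the compact set $V$; therefore $W$ is a compact open subgroup with $W\subseteq V\subseteq U$, as required.

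The main obstacle is the first ingredient: the step from ``connected components are trivial'' to ``clopen sets form a neighbourhood basis'' genuinely relies on compactness via the component-equals-quasi-component theorem for compact Hausdorff spaces (it is false for general totally disconnected spaces), and one must take care to transfer a set that is merely clopen in the compact neighbourhood $K$ to a set that is clopen in all of $G$, which is why $V$ is chosen inside $\operatorname{int}_G(K)$. Once this topological input is available, the group-theoretic thickening step is elementary.
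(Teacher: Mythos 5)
Your proof is correct: it is the standard (essentially van Dantzig's original) argument, obtaining zero-dimensionality at the identity from the component-equals-quasi-component lemma for compact Hausdorff spaces and then thickening a compact open identity neighbourhood to a compact open subgroup, and both steps — including the care taken to place the clopen-in-$K$ set inside $\operatorname{int}_G(K)$ so that it is open in $G$ — are handled correctly. The paper itself gives no proof of this classical theorem, citing \cite{vandantzig36} instead, so there is no in-paper argument to compare against; your route is the expected one.
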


We  recall some useful facts about compact open subgroups:
\begin{lem}\label{lem:cpctopen}
	Suppose $G$ is a locally compact group and $H$ is a compact open subgroup.
	\begin{enumerate}
		\item \label{item:cpctopen1} $G$ is finitely generated relative to $H$ if and only if $G$ is compactly generated.
		\item \label{item:cpctopen2} Any two compact open subgroups of $G$ are commensurable. In particular, $H\alnorm G$.
	\end{enumerate}
\end{lem}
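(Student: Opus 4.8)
The plan is to prove both parts by relating cosets of $H$ to compact subsets of $G$, using that $H$ is both open (so its cosets cover $G$ by an open partition) and compact.

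For part (\ref{item:cpctopen1}), I would argue as follows. First suppose $G$ is compactly generated, say $G = \langle C \rangle$ for $C$ compact. Since $H$ is open, the cosets $\{gH : g \in G\}$ form an open cover of the compact set $C \cup H$, so finitely many of them cover it; pick coset representatives $g_1, \dots, g_n$ with $C \cup H \subseteq g_1 H \cup \dots \cup g_n H$. Then $S := \{g_1, \dots, g_n\}$ is a finite subset of $G$ with $C \subseteq S \cup H \subseteq \langle S \cup H\rangle$ and $H \subseteq \langle S \cup H\rangle$, so $\langle S \cup H\rangle \supseteq \langle C\rangle = G$; hence $S$ is a finite relative generating set and $G$ is finitely generated relative to $H$. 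Conversely, suppose $S = \{g_1, \dots, g_n\}$ is a finite relative generating set. Then $K := \{g_1, \dots, g_n\} \cup H$ is compact (a finite union of a finite set and a compact set), and $S \cup H \subseteq K$, so $G = \langle S \cup H\rangle \subseteq \langle K\rangle$; thus $G$ is compactly generated.

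For part (\ref{item:cpctopen2}), let $H$ and $H'$ be two compact open subgroups. Since $H'$ is open, its left cosets form an open cover of the compact set $H$, so finitely many suffice: $H \subseteq x_1 H' \cup \dots \cup x_m H'$ for some $x_i \in G$. Intersecting with $H$ and discarding empty intersections, we may take each $x_i \in H$, so $H$ is covered by finitely many cosets of $H \cap H'$ in $H$; hence $[H : H \cap H'] < \infty$. Reversing the roles of $H$ and $H'$ gives $[H' : H \cap H'] < \infty$, so $H$ and $H'$ are commensurable. For the ``in particular'' statement: if $g \in G$, then $g H g^{-1}$ is again a compact open subgroup (conjugation is a homeomorphism and a group automorphism), so by what we just proved $gHg^{-1}$ is commensurable to $H$; since $g$ was arbitrary, $H \alnorm G$.

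I do not expect a genuine obstacle here: both parts are elementary once one exploits the interplay of openness (finite subcovers of compacta by cosets) and compactness (compacta being covered by finitely many cosets). The only point requiring slight care is the replacement of arbitrary coset representatives $x_i$ by representatives lying in $H$ in part (\ref{item:cpctopen2}), which is justified because if $x_i H' \cap H \neq \emptyset$ then picking $h_i \in x_i H' \cap H$ gives $x_i H' = h_i H'$ with $h_i \in H$; I would state this explicitly.
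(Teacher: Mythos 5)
Your proof is correct and follows essentially the same route as the paper: openness of $H$ gives finite subcovers of compact sets by cosets (yielding the finite relative generating set and the commensurability of compact open subgroups), and compactness of $S\cup H$ gives the converse in part (1). The only difference is that you spell out the passage from finitely many cosets $x_iH'$ covering $H$ to finite index of $H\cap H'$ in $H$, which the paper leaves implicit; this is a harmless refinement, not a different argument.
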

\begin{proof}
	(\ref{item:cpctopen1}): Suppose $G$ is generated by a compact subset $K$. Since $ \{gH\}_{g\in G}$ is an open cover of $G$, there is a finite subset $S\subseteq G$ such that $K\subseteq \bigcup_{s\in S} sH$; thus $S$ is a finite generating set of the pair $(G,H)$. Conversely, if $S$ is a finite generating set of $(G,H)$, then $\bigcup_{s\in S} sH$ is a compact generating set of $G$.  
	
	(\ref{item:cpctopen2}): Suppose $H$ and $H'$ are compact open subgroups of $G$. The preceding argument implies  $H$ is contained in finitely many left cosets of $H'$, and $H'$ is contained in finitely many left cosets of $H$. Thus $H$ and $H'$ are commensurable. Since a conjugate of a compact open subgroup is also compact and open, we deduce that $H$ is commensurated.
\end{proof}

If $G$ is a compactly generated locally compact group, $H\leq G$ is a compact open subgroup and $S$ is a relative generating set of $(G,H)$, then the relative Cayley graph $\Gamma_{G,H,S}$, as defined in Section \ref{sec:alnorm}, is known as a \emph{Cayley--Abels graph} or a \emph{rough Cayley graph}; see for instance \cite{kronmoller08roughcayley}. Such a graph is always connected and locally finite, and the natural action of $G$ on $\Gamma_{G,H,S}$ is  continuous, proper and vertex-transitive with coarse stabiliser $H$. 
 The preceding lemma, coupled with  Proposition \ref{prop:cstabinvariant},  implies the following:
\begin{cor}\label{cor:tdlc_CAgraph}
	Let $G$ be  a compactly generated locally compact group containing a compact open subgroup.  Then $G$ acts geometrically on its Cayley--Abels graph. Moreover, up to quasi-conjugacy, this action is  independent of the choice of compact open subgroup and  finite relative generating set.
\end{cor}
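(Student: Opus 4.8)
The plan is to deduce both assertions from results already established in the excerpt, principally Lemma~\ref{lem:cpctopen}, Proposition~\ref{prop:relcg_locfinite}, Example~\ref{exmp:stabvertex,cstab}, Proposition~\ref{prop:coarsestabsarecomm} and Proposition~\ref{prop:cstabinvariant}.

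First I would fix a compact open subgroup $H\leq G$ and check that $\Gamma_{G,H,S}$ really is a Cayley--Abels graph admitting a geometric $G$-action. Since $G$ is compactly generated, Lemma~\ref{lem:cpctopen}(\ref{item:cpctopen1}) supplies a finite relative generating set $S$ of $(G,H)$, so $\Gamma\coloneqq\Gamma_{G,H,S}$ is connected. By Lemma~\ref{lem:cpctopen}(\ref{item:cpctopen2}) we have $H\alnorm G$, hence $\Gamma$ is locally finite by Proposition~\ref{prop:relcg_locfinite}; thus $\Gamma$ is a proper geodesic metric space. The tautological action of $G$ on $\Gamma$ is by graph automorphisms and is vertex-transitive, since $G$ acts transitively on the coset space $G/H$ of vertices; in particular it is cocompact. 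The stabiliser of the base vertex $1_G H$ is exactly $H$, which is open, so the action on the discrete vertex set has open point-stabilisers and is therefore continuous; since the topology of $\Gamma$ is determined by its vertex and edge sets, the action $G\times\Gamma\to\Gamma$ is continuous. Finally, for compact $K\subseteq\Gamma$ the set $\{g\in G:gK\cap K\neq\emptyset\}$ is contained in a finite union of translates $v_jHv_i^{-1}$ of $H$ (because $K$ meets only finitely many vertices and $g$ permutes vertices), hence has compact closure, so the action is proper. Thus $G\curvearrowright\Gamma$ is geometric, proving the first assertion. (Alternatively, one simply cites the remark preceding the corollary, which already records that $G\curvearrowright\Gamma_{G,H,S}$ is continuous, proper and vertex-transitive with coarse stabiliser $H$.)

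For the independence statement I would argue via coarse stabilisers. Let $H,H'$ be compact open subgroups of $G$ and $S,S'$ finite relative generating sets of $(G,H)$ and $(G,H')$ respectively, and write $\Gamma=\Gamma_{G,H,S}$, $\Gamma'=\Gamma_{G,H',S'}$. Both actions $G\curvearrowright\Gamma$ and $G\curvearrowright\Gamma'$ are cobounded (being cocompact) actions on quasi-geodesic metric spaces. By Example~\ref{exmp:stabvertex,cstab}, $H$ is a coarse stabiliser of $G\curvearrowright\Gamma$ and $H'$ is a coarse stabiliser of $G\curvearrowright\Gamma'$. By Lemma~\ref{lem:cpctopen}(\ref{item:cpctopen2}), $H$ and $H'$ are commensurable, so Proposition~\ref{prop:coarsestabsarecomm} shows that $H$ is also a coarse stabiliser of $G\curvearrowright\Gamma'$. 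Applying Proposition~\ref{prop:cstabinvariant} to the two cobounded quasi-actions $G\curvearrowright\Gamma$ and $G\curvearrowright\Gamma'$, which share the common coarse stabiliser $H$, yields that they are quasi-conjugate; this proves that the Cayley--Abels graph action is well-defined up to quasi-conjugacy.

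I do not expect any serious obstacle: every ingredient is already available. The only point needing mild care is the verification that the tautological action on $\Gamma_{G,H,S}$ is genuinely geometric in the topological-group sense (continuous and proper) rather than merely a metric action by graph automorphisms, and this is precisely where the openness of $H$ (continuity and local structure) and the compactness of $H$ (properness) enter. As a variant of the independence argument one could bypass Proposition~\ref{prop:cstabinvariant} entirely: by Lemma~\ref{lem:geom_qi} each geometric action is quasi-conjugate, via an orbit map, to the regular representation of $G$ on $(G,d)$ for a geodesically adapted metric $d$, and any two geodesically adapted metrics on $G$ are quasi-isometric via the identity, so the two Cayley--Abels actions are quasi-conjugate to one another.
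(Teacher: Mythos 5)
Your proposal is correct and follows essentially the same route as the paper, which deduces the corollary from the remark that the Cayley--Abels graph action is continuous, proper and vertex-transitive with coarse stabiliser $H$, together with Lemma \ref{lem:cpctopen} and Propositions \ref{prop:coarsestabsarecomm} and \ref{prop:cstabinvariant}. The only difference is that you spell out the verification of geometricity (connectedness, local finiteness, continuity via open stabilisers, properness via compactness of $H$) that the paper leaves implicit in the preceding remark.
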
  
We can thus talk about  \emph{the} Cayley--Abels graph of a compactly generated locally compact  group containing a compact open subgroup.
Modulo a compact normal subgroup, the converse to Theorem \ref{thm:vanDantzig} holds:
\begin{lem}\label{lem:compact_open}
	A  locally compact group is compact-by-totally disconnected if and only if it contains a compact open subgroup.
\end{lem}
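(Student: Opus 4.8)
The plan is to route both directions through the identity component $G^\circ$ of $G$, using the standard facts that $G^\circ$ is a closed normal subgroup, that $G^\circ$ is contained in every open subgroup of $G$, and that the quotient $G/G^\circ$ of a topological group by its identity component is totally disconnected (see \cite{hewittross1979abstract}). The first of these holds because an open subgroup $U\leq G$ is also closed, so $U\cap G^\circ$ is a nonempty clopen subset of the connected space $G^\circ$, hence equals $G^\circ$.

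For the implication ``compact open subgroup $\Rightarrow$ compact-by-totally disconnected'', I would argue as follows. Let $U\leq G$ be a compact open subgroup. Then $G^\circ\subseteq U$, so $G^\circ$ is a closed subset of the compact set $U$ and is therefore itself compact. Since $G^\circ$ is moreover normal and closed, $G/G^\circ$ is locally compact, and it is totally disconnected by the fact recalled above; thus $G$ is compact-by-totally disconnected, with witnessing compact normal subgroup $G^\circ$.

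For the converse, suppose $K\vartriangleleft G$ is compact and normal with $G/K$ totally disconnected. The image of $G^\circ$ under the quotient map $G\to G/K$ is connected, hence trivial, so $G^\circ\leq K$; in particular $G^\circ$ is compact. Then $G/G^\circ$ is a totally disconnected locally compact group, so van Dantzig's theorem (Theorem \ref{thm:vanDantzig}) supplies a compact open subgroup $\bar U\leq G/G^\circ$. Writing $q:G\to G/G^\circ$ for the (continuous, open) quotient homomorphism, I would take $U\coloneqq q^{-1}(\bar U)$: this is an open subgroup of $G$ containing $G^\circ$, and $q$ restricts to an open continuous surjection $U\to\bar U$ with kernel $G^\circ$, so $U/G^\circ\cong\bar U$ topologically. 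Hence $U$ is an extension of the compact group $\bar U$ by the compact group $G^\circ$, and is therefore compact, giving the desired compact open subgroup of $G$.

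The only step needing any care is the last one --- that an extension of a compact group by a compact group is again compact --- but this is a routine fact about topological groups (for instance, the quotient map by a compact normal subgroup is proper, so $U=q^{-1}(U/G^\circ)$ is compact because $U/G^\circ$ is). I do not anticipate a genuine obstacle here; the content of the lemma is essentially the combination of van Dantzig's theorem with the elementary properties of the identity component.
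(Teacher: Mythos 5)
Your proof is correct, but it runs through a different mechanism than the paper's, at least in the forward direction. Where you use the identity component $G^\circ$ --- noting that every open subgroup contains $G^\circ$, so a compact open subgroup forces $G^\circ$ to be compact, and then invoking the standard fact that $G/G^\circ$ is totally disconnected for a locally compact group --- the paper instead takes the normal core $K=\bigcap_{g\in G}gHg^{-1}$ of the compact open subgroup $H$ as the witnessing compact normal subgroup, and verifies by hand that $G/K$ is totally disconnected by separating any two distinct cosets with images of cosets of $H$, which are clopen. Your route buys a canonical witness ($G^\circ$) at the cost of importing the Hewitt--Ross structure fact about $G/G^\circ$; the paper's route is more self-contained, using only elementary manipulations with clopen cosets. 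In the converse direction the two arguments are essentially the same (van Dantzig plus properness of the quotient map modulo a compact normal subgroup), except that your detour through $G^\circ$ is unnecessary: you could apply Theorem \ref{thm:vanDantzig} directly to $G/K$ and pull back the compact open subgroup along $G\to G/K$, as the paper does, rather than first passing to $G/G^\circ$. That extra step is harmless, and your justification of the key compactness claim (properness of the quotient by a compact normal subgroup) is sound.
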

\begin{proof}
	Suppose $G$ contains a compact open subgroup $H$. Let $K\coloneqq \cap_{g\in G}gHg^{-1}$. Since $K$ is a compact normal subgroup of $G$, it is sufficient to show $G/K$ is totally disconnected. Let $\pi:G\to G/K$ be the quotient map and suppose $g_1K\neq g_2K$. Then there is some  $g\in G$ such that $g_1gH\neq g_2gH$. Now $\pi(g_1gH)$ and $\pi(g_2gH)$ are distinct cosets of the open subgroup $\pi(H)$, hence are clopen. Thus $g_1K$ and $g_2K$ lie in disjoint clopen subsets of $G/K$.  It follows that  $G/K$ is totally disconnected.
	
	Conversely, suppose  $G$ has a compact normal subgroup $K\vartriangleleft G$ such that $G/K$ is totally disconnected.  By Theorem \ref{thm:vanDantzig}, there is a compact open subgroup $H_0\leq G/K$. Since $K$ is compact, the preimage of $H_0$ under the quotient map $G\rightarrow G/K$ is a compact open subgroup of $G$.
\end{proof}

 We can now prove the following:

\begin{prop}\label{prop:disc_equiv}
\discequivs
\end{prop}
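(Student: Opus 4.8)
The plan is to prove the cycle $(1)\Leftrightarrow(2)$ and then $(2)\Leftrightarrow(3)$, leaning heavily on the machinery already assembled in the excerpt. The equivalence $(1)\Leftrightarrow(2)$ is already done: it is precisely Corollary \ref{cor:discvscstab}, so I would simply cite it. So the real content is $(2)\Leftrightarrow(3)$, connecting the existence of a coarse stabiliser with the existence of a compact-by-(totally disconnected) topological completion.

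For $(2)\implies(3)$: suppose $G\qa X$ has coarse stabiliser $H$. By Proposition \ref{prop:cstab homspace}, $H\alnorm G$, $G$ is finitely generated relative to $H$ (pick a finite relative generating set $S$), and $G\qa X$ is quasi-conjugate to $G\curvearrowright G/H$. By Proposition \ref{prop:relcg_locfinite}, the relative Cayley graph $Y\coloneqq \Gamma_{G,H,S}$ is connected and locally finite, and $G$ acts on it by graph automorphisms; moreover $H=\stab_G(vH)$ for the base vertex. Let $\hat G$ be the closure of the image of $\phi:G\to\Aut(Y)$, where $\Aut(Y)$ carries the permutation topology (so it is a totally disconnected locally compact group, and the stabiliser of a vertex is compact open). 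Set $\rho:G\to\hat G$ the corestriction. Then $\rho$ has dense image by construction, $\hat G$ is locally compact, and $\hat G$ acts geometrically on $Y$ with compact open vertex stabilisers; in particular $\hat G$ contains a compact open subgroup, so by Lemma \ref{lem:compact_open} it is compact-by-(totally disconnected). It remains to check the boundedness criterion in the definition of a topological completion: for $T\subseteq G$, $T$ is bounded (w.r.t. $G\qa X$, equivalently w.r.t. $G\curvearrowright Y$) iff $T\cdot vH$ is a finite vertex set iff $T$ lies in finitely many left $H$-cosets (using \ref{item:CS1}, \ref{item:CS2} and that $gvH=kvH\iff k\in gH$) iff $\overline{\rho(T)}$ is compact in $\hat G$ — the last equivalence because $\rho(T)$ has compact closure exactly when it meets finitely many cosets of the compact open stabiliser of $vH$, by the same argument as in Lemma \ref{lem:cpctopen}. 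Hence $\rho$ is a topological completion of $G\qa X$ that is compact-by-(totally disconnected).

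For $(3)\implies(2)$: suppose $G\qa X$ has a topological completion $\rho:G\to\hat G$ with $\hat G$ compact-by-(totally disconnected). By Lemma \ref{lem:compact_open}, $\hat G$ contains a compact open subgroup $U$. By Proposition \ref{prop:topcompquasiconj}, $\hat G$ is compactly generated, hence by Lemma \ref{lem:cpctopen}\eqref{item:cpctopen1} it is finitely generated relative to $U$, and by Corollary \ref{cor:tdlc_CAgraph} it acts geometrically on its Cayley--Abels graph $Y$, a connected locally finite graph, with $U$ the stabiliser of the base vertex. Set $H\coloneqq\rho^{-1}(U)\leq G$. I claim $H$ is a coarse stabiliser of $G\qa X$. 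Indeed $\overline{\rho(H)}\subseteq U$ is compact, so $H$ is bounded, giving \ref{item:CS1}. For \ref{item:CS2}: if $T\subseteq G$ is bounded, then $\overline{\rho(T)}$ is compact by the topological-completion property, so $\overline{\rho(T)}$ is covered by finitely many left cosets of the open subgroup $U$ (as it is compact and the cosets are open); thus $\rho(T)\subseteq \bigcup_{i=1}^n k_iU$ with each $k_i\in\rho(T)\subseteq\rho(G)$, say $k_i=\rho(g_i)$, so $T\subseteq\bigcup_i g_iH$ — finitely many left $H$-cosets. Hence $H$ is a coarse stabiliser, establishing $(2)$.

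The main obstacle I anticipate is the careful bookkeeping in the boundedness equivalence for $(2)\implies(3)$: one must confirm that ``bounded in $G\qa X$'' transfers correctly across the quasi-conjugacy to $G\curvearrowright Y$ (this is the content of Proposition \ref{prop:cstabinvariant} plus the remark that bounded subsets are preserved under quasi-conjugacy), and then that ``finitely many $H$-cosets'' matches ``$\overline{\rho(T)}$ compact'' in $\hat G=\overline{\phi(G)}\leq\Aut(Y)$ — the subtle point being that $\overline{\rho(T)}$ could contain limit points not in $\rho(G)$, but these still land in the same finitely many cosets of the compact open stabiliser since having bounded orbit of the base vertex is a closed condition. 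Everything else is a direct assembly of Corollary \ref{cor:discvscstab}, Propositions \ref{prop:cstab homspace}, \ref{prop:relcg_locfinite}, \ref{prop:topcompquasiconj}, Lemmas \ref{lem:compact_open}, \ref{lem:cpctopen} and Corollary \ref{cor:tdlc_CAgraph}.
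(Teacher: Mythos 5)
Your proposal is correct and follows the paper's architecture: $(1)\Leftrightarrow(2)$ is cited as Corollary \ref{cor:discvscstab}, and your $(3)\Rightarrow(2)$ (pull back a compact open subgroup $\hat H\leq\hat G$ to $H=\rho^{-1}(\hat H)$ and check \ref{item:CS1}, \ref{item:CS2}) is exactly the paper's argument. The only divergence is in the forward direction: the paper proves $(1)\Rightarrow(3)$ by quasi-conjugating to an action on a locally finite graph and then simply invoking Proposition \ref{prop:TCvsQC}, which already says that the corestriction of $G\to\Isom(Y)$ (here $\Aut(Y)$) to the closure of the image is a topological completion; you instead build the relative Cayley graph $\Gamma_{G,H,S}$ from the coarse stabiliser and re-verify the boundedness axiom of a topological completion by hand via orbits of the base vertex and cosets of the compact open vertex stabiliser. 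Your verification is sound, but it duplicates work that Proposition \ref{prop:TCvsQC} does for you. One small point you should add: having produced \emph{one} totally disconnected topological completion, the conclusion that \emph{the} topological completion (i.e.\ every topological completion) is compact-by-(totally disconnected) requires the uniqueness statement, Theorem \ref{thm:topcomp unique}, as the paper notes; this is a one-line remark, since the existence of a compact open subgroup passes between completions that agree modulo compact normal subgroups, but as written your proof stops at exhibiting a single completion with the desired property.
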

\begin{proof}
$(1)\Longleftrightarrow (2)$: This is  Corollary \ref{cor:discvscstab}.

$(1)\Longrightarrow (3)$: By definition, if $G\qa X$ is discretisable, there exists a locally finite graph $Y$ such that $G\qa X$ is quasi-conjugate to an action $\rho: G\rightarrow \Aut(Y)$. Since $\Aut(Y)$ is totally disconnected, every closed subgroup of $\Aut(Y)$ is also totally disconnected. It follows  from Proposition \ref{prop:TCvsQC} that $G\qa X$ has a totally disconnected topological completion. By Theorem \ref{thm:topcomp unique}, every topological completion of $G\qa X$ is compact-by-(totally disconnected).

$(3)\Longrightarrow (2)$: Suppose $G\qa X$ has a topological completion $\rho: G\rightarrow \hat G$ with $\hat G$ compact-by-(totally disconnected). By Proposition \ref{prop:topcompquasiconj} and Lemma \ref{lem:compact_open}, $\hat G$ contains a compact open subgroup $\hat H$. Let $H\coloneqq \rho^{-1}(\hat H)$. We claim that $H$ is a coarse stabiliser of $G\qa X$. Firstly, since $\hat H$ is compact it follows $H$ is bounded. Secondly, if $T\subseteq G$ is bounded, then $\overline{\rho(T)}$ is compact so is contained in finitely many left $\hat H$-cosets. It is evident that if for some $g,k\in T$, $\rho(g)$ and $\rho(k)$ are contained in the same left $\hat H$-coset, then $g$ and $k$ are contained in the left $H$-coset. Thus $T$ is contained in finitely many left $H$-cosets, verifying $H$ is indeed a coarse stabiliser of $G\qa X$. 
\end{proof}
The following corollary gives a sufficient condition for a finitely generated group to admit a quasi-action that is not discretisable.
\begin{cor}\label{cor:unif_lattice_not_disc}
	Let $\Gamma$ be a finitely generated group that is a uniform lattice in a  locally compact group $G$ that is not compact-by-(totally disconnected). Then there is a cobounded quasi-action $G\qa \Gamma$ that is not discretisable.
\end{cor}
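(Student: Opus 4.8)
The plan is to invoke the machinery of topological completions directly and argue by contradiction; no new construction is needed beyond Lemma \ref{lem:uniform_lattice}. First I would apply Lemma \ref{lem:uniform_lattice} to the uniform lattice embedding $\Gamma \hookrightarrow G$: this hands us a cobounded quasi-action $G \qa \Gamma$ whose topological completion is $G$ itself (with the structural homomorphism $\rho$ being the inclusion). So the candidate quasi-action is already determined; the entire content is to show it fails to be discretisable, and the hypothesis that $G$ is not compact-by-(totally disconnected) is precisely the obstruction.

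So suppose, for contradiction, that $G \qa \Gamma$ is discretisable. By Proposition \ref{prop:disc_equiv} (the implication $(1)\Rightarrow(3)$), there is a topological completion $\hat G$ of $G \qa \Gamma$ that is compact-by-(totally disconnected). Now I would bring in uniqueness: applying Theorem \ref{thm:topcomp unique} to the two topological completions $G$ and $\hat G$ of $G \qa \Gamma$, there are compact normal subgroups $K_1 \vartriangleleft G$ and $K_2 \vartriangleleft \hat G$ with $G/K_1$ topologically isomorphic to $\hat G/K_2$. It then remains to deduce that $G$ itself is compact-by-(totally disconnected). Using Lemma \ref{lem:compact_open}, $\hat G$ contains a compact open subgroup $U$; its image in $\hat G/K_2$ under the quotient map (which is open and continuous) is again a compact open subgroup, so $\hat G/K_2$, and hence $G/K_1$, is compact-by-(totally disconnected) by Lemma \ref{lem:compact_open} again. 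Finally, if $V \leq G/K_1$ is a compact open subgroup, then since $K_1$ is compact the preimage of $V$ in $G$ is a compact open subgroup of $G$ --- this is exactly the observation used at the end of the proof of Lemma \ref{lem:compact_open} --- so $G$ contains a compact open subgroup and is therefore compact-by-(totally disconnected) by Lemma \ref{lem:compact_open}. This contradicts the hypothesis, completing the argument.

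There is essentially no serious obstacle: the statement is a formal consequence of Lemma \ref{lem:uniform_lattice}, Proposition \ref{prop:disc_equiv}, Theorem \ref{thm:topcomp unique} and Lemma \ref{lem:compact_open}. The only point requiring minor care is the bookkeeping involved in transporting the property ``compact-by-(totally disconnected)'' across the compact normal subgroups supplied by the uniqueness theorem --- i.e.\ checking that this property passes both to a quotient by a compact normal subgroup and, in the reverse direction, to an extension by a compact normal subgroup. Both directions reduce at once to the characterisation in Lemma \ref{lem:compact_open} (a locally compact group is compact-by-(totally disconnected) iff it contains a compact open subgroup), together with the elementary facts that quotient maps of topological groups are open and continuous and that preimages of compact sets under quotients by compact normal subgroups are compact.
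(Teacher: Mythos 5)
Your proof is correct and follows essentially the same route as the paper: Lemma \ref{lem:uniform_lattice} produces the quasi-action $G\qa\Gamma$ with topological completion $G$, and Proposition \ref{prop:disc_equiv} rules out discretisability. The only difference is that you re-derive by hand the transport of ``compact-by-(totally disconnected)'' across the compact normal subgroups from Theorem \ref{thm:topcomp unique}; this bookkeeping is already built into the proof of the implication $(1)\Rightarrow(3)$ of Proposition \ref{prop:disc_equiv}, which shows that \emph{every} topological completion of a discretisable quasi-action is compact-by-(totally disconnected), so your extra paragraph is sound but redundant.
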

\begin{proof}
	By Lemma \ref{lem:uniform_lattice}, there is a quasi-action $G\qa \Gamma$ with topological completion $G$. Since $G$  is not compact-by-(totally disconnected), Proposition \ref{prop:disc_equiv} implies $G\qa \Gamma$ is not discretisable.
\end{proof}

We conclude with a criterion determining when a topological completion is discrete:
\begin{prop}\label{prop:disc_topcomp}
	Let $X$ be a quasi-geodesic metric space and let $G\qa X$ be a cobounded quasi-action with coarse stabiliser $H$. Then the following are equivalent:
	\begin{enumerate}
		\item $H$ is commensurable to a normal subgroup of $G$;
		\item  $G\qa X$ has a discrete topological completion;
		\item every topological completion of $G\qa X$ is compact-by-discrete.
	\end{enumerate} 
\end{prop}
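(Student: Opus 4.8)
The plan is to prove the cyclic chain $(1)\Rightarrow(2)\Rightarrow(3)\Rightarrow(1)$. Since $G\qa X$ has a coarse stabiliser it is discretisable (Corollary \ref{cor:discvscstab}), so by Proposition \ref{prop:disc_equiv} it admits a topological completion and every such completion is compact-by-(totally disconnected); I will use this freely. Of the three implications only $(2)\Rightarrow(3)$ carries real content: it will follow from the uniqueness of topological completions (Theorem \ref{thm:topcomp unique}) together with the elementary fact that a compact subgroup of a discrete group is finite, which lets one transfer discreteness from one completion to all others. The other two implications are bookkeeping with coarse stabilisers.

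\emph{$(1)\Rightarrow(2)$.} Suppose $H$ is commensurable to a normal subgroup $N\vartriangleleft G$. By Proposition \ref{prop:coarsestabsarecomm}, $N$ is also a coarse stabiliser of $G\qa X$. Equip the quotient group $G/N$ with the discrete topology and let $\rho\colon G\to G/N$ be the quotient homomorphism; I claim $\rho$ is a topological completion. It is a dense (indeed surjective) homomorphism to a discrete, hence locally compact, group, so it remains to check that $T\subseteq G$ is bounded if and only if $\overline{\rho(T)}$ is compact. As $G/N$ is discrete, $\overline{\rho(T)}$ is compact iff $\rho(T)$ is finite iff $T$ meets only finitely many left $N$-cosets. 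If $T$ is bounded, (CS2) for $N$ gives $T\subseteq FN$ for some finite $F\subseteq G$; conversely, if $T\subseteq FN$ with $F$ finite, then $T\cdot x_0$ lies in a bounded neighbourhood of the finite set $F\cdot x_0$, using that $N\cdot x_0$ is bounded by (CS1) together with Lemma \ref{lem:qaction_gp}, so $T$ is bounded. Thus $\rho\colon G\to G/N$ is a topological completion with $G/N$ discrete.

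\emph{$(2)\Rightarrow(3)$.} Let $\rho_1\colon G\to\hat G_1$ be a discrete topological completion and let $\rho_2\colon G\to\hat G_2$ be an arbitrary topological completion. By Theorem \ref{thm:topcomp unique} there are compact normal subgroups $K_i\vartriangleleft\hat G_i$ and a topological isomorphism $\hat G_1/K_1\cong\hat G_2/K_2$. A compact subgroup of the discrete group $\hat G_1$ is finite, so $\hat G_1/K_1$ is discrete; transporting this across the isomorphism shows $\hat G_2/K_2$ is discrete, i.e.\ $\hat G_2$ is compact-by-discrete.

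\emph{$(3)\Rightarrow(1)$.} Fix a topological completion $\rho\colon G\to\hat G$ (one exists by discretisability); by $(3)$ there is a compact normal subgroup $K\vartriangleleft\hat G$ with $\hat G/K$ discrete, so $K$ is also open in $\hat G$. As in the proof of the implication $(3)\Rightarrow(2)$ of Proposition \ref{prop:disc_equiv}, $\rho^{-1}(K)$ is a coarse stabiliser of $G\qa X$: it is bounded since $K$ is compact, and any bounded $T\subseteq G$ has $\overline{\rho(T)}$ compact, hence meeting finitely many cosets of the open subgroup $K$ and so finitely many cosets of $\rho^{-1}(K)$. Being the preimage of a normal subgroup, $\rho^{-1}(K)$ is normal in $G$, and by Proposition \ref{prop:coarsestabsarecomm} it is commensurable to $H$. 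Hence $H$ is commensurable to a normal subgroup of $G$, and the chain closes.
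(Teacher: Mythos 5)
Your proof is correct and follows essentially the same route as the paper: the key inputs are the same (commensurable subgroups share the coarse-stabiliser property, uniqueness of topological completions via Theorem \ref{thm:topcomp unique}), and your normal coarse stabiliser $\rho^{-1}(K)$ in $(3)\Rightarrow(1)$ is exactly the kernel of the discrete completion the paper obtains after quotienting by $K$. The only cosmetic differences are the cyclic organisation and your direct verification that $G\to G/N$ with the discrete topology satisfies the completion axioms, where the paper instead quasi-conjugates to the action on the Cayley graph of $G/N$ and invokes Proposition \ref{prop:TCvsQC}.
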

\begin{proof}
	$(1)\implies (2)$:  Suppose $H$ is commensurable to a normal subgroup $H_1$.  Then $G$ acts on the Cayley graph $Y$ of $G/H_1$ with coarse stabiliser $H_1$  by Example \ref{exmp:stabvertex,cstab}.  Propositions \ref{prop:coarsestabsarecomm} and \ref{prop:cstabinvariant} ensure  $G\qa X$ is quasi-conjugate to the natural action $G\to \Aut(Y)$. Since the image of $G$ in $\Aut(Y)$ is  discrete, Proposition \ref{prop:TCvsQC} ensures $G\qa X$ has discrete topological completion  $G/H_1$.
	
	$(2)\implies (1)$: Suppose $\rho:G\to \hat G$ is a topological completion of $G\qa X$ with $\hat G$ discrete. Let $H'\coloneqq \ker(\rho)$. We claim $H'$ is a coarse stabiliser of $G\qa X$. Since $\rho$ is a topological completion and  $\rho(H')=\{e\}$ is compact, $H'$ is bounded and so \ref{item:CS1} holds. Moreover, if $T\subseteq G$ is bounded, then as $\hat G$ is discrete,  $\rho(T)$ is finite hence $T\subseteq \rho^{-1}(\rho(T))$ is contained in finitely many left $H'$-cosets; thus \ref{item:CS2} holds. Therefore  $H'$ is a coarse stabiliser of $G\qa X$. By Proposition \ref{prop:coarsestabsarecomm}, $H$ is commensurable to the normal subgroup $H'$. 
	
	$(2)\iff (3)$: Theorem \ref{thm:topcomp unique} implies that if some topological completion is discrete, then every topological completion is compact-by-discrete. Conversely, Remark \ref{rem:topcomp,normal} ensures if some topological completion is compact-by-discrete, then there exists a discrete topological completion.
\end{proof}

\subsection{Tame spaces}\label{sec:tame}
In this subsection we investigate tame spaces, which were introduced by Whyte (see \cite{whyte01coarse,whyte2010coarse}). See also the related notion of \emph{nontranslatable} spaces due to Kapovich--Kleiner--Leeb \cite{kapovich1998derham}.  Our interest in tameness is due to  Theorem \ref{thm:tame_topcomp}, which states that cobounded quasi-actions on  tame spaces necessarily have topological completions. We show in Proposition \ref{prop:morsebdry_tame} that spaces which  satisfy a rather weak form of coarse negative curvature are tame.

We first establish some notation that will be used in this  subsection.
Given functions $f,g:X\rightarrow Y$ between metric spaces, a subset $F\subseteq X$ and $r\geq 0$, we write:
\begin{enumerate}
	\item  $f\close{F,r} g$ if $d(f(x),g(x))\leq r$ for all $x\in F$;
	\item $f\close{r} g$ if  $f\close{X,r} g$; 
	\item $f\close{} g$ if $f\close{r} g$ for some $r\geq 0$.
\end{enumerate}

\begin{defn}\label{defn:tame}
	We say that a space $X$ is \emph{tame} if for any $K\geq 1$ and $A\geq 0$, there exists a constant $B$ such that for every  $(K,A)$-quasi-isometry $f:X\rightarrow X$ such that $f\sim \id_X$, then $f\close{B} \id_X$. 
\end{defn}
We emphasise  that unless $X$ is bounded, the constant $B$ in the preceding definition  depends on the constants $K$ and $A$.
It is clear from the definition that tameness is a quasi-isometry invariant. The following example shows groups with infinite centre are not tame:

\begin{exmp}
	Let $G$ be a finitely generated  group equipped with a word metric $d_S$ with respect to some finite generating set $S$. Let $a\in G$ be central in $G$ and let $L_a:G\rightarrow G$ be left multiplication by $a$, which is an isometry of $(G,d_S)$. Then for all $g\in G$, \[d_S(L_a(g),g)=d_S(ag,g)=d_S(ga,g)=d_S(a,1_G)= \lvert a\rvert _S,\] so that $L_a\sim \id_G$. It follows that if $G$ has  infinite centre, then it cannot be tame.
\end{exmp}

We  require the following reformulation of tameness, which is easily seen to be equivalent to Definition \ref{defn:tame}:
\begin{lem}\label{lem:tame}
	A metric space $X$ is tame if and only if for every $K\geq 1$ and $A\geq 0$, there is a constant $B\geq 0$ such that the following holds: if
	$f,g:X\rightarrow X$ are $(K,A)$-quasi-isometries such that $f\sim g$, then $f\close{B}g$.
\end{lem}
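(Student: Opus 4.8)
The plan is to read off both implications directly from the definitions, the one useful observation being that $\id_X$ is a $(1,0)$-quasi-isometry, hence a $(K,A)$-quasi-isometry for every $K\ge 1$ and $A\ge 0$.

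For the direction ``the displayed condition $\Rightarrow$ tameness'': I would fix $K\ge 1$ and $A\ge 0$ and let $B$ be the constant the displayed condition produces. If $f\colon X\to X$ is a $(K,A)$-quasi-isometry with $f\sim\id_X$, then applying the displayed condition to the pair $(f,\id_X)$ — legitimate since $\id_X$ is a $(K,A)$-quasi-isometry — gives $f\close{B}\id_X$, which is precisely Definition \ref{defn:tame}. This direction needs no further work.

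For ``tameness $\Rightarrow$ the displayed condition'': I would fix $K\ge 1$, $A\ge 0$, and let $f,g\colon X\to X$ be $(K,A)$-quasi-isometries with $f\sim g$. The idea is to ``divide by $g$''. Choose a coarse inverse $\overline{g}$ of $g$ by the standard construction (for each $y$ pick $x$ with $d(g(x),y)\le A$ and set $\overline{g}(y)=x$); one checks that $\overline{g}$ may be taken to be a $(K',A')$-quasi-isometry with $g\circ\overline{g}\close{A}\id_X$ and $\overline{g}\circ g\close{A'}\id_X$, where $K',A'$ depend only on $K$ and $A$. Then $h\coloneqq\overline{g}\circ f$ is a $(K'',A'')$-quasi-isometry with $K'',A''$ again depending only on $K$ and $A$, and since $f\sim g$ and $\overline{g}\circ g\close{A'}\id_X$ we get $h\sim\id_X$. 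Tameness applied to the constants $(K'',A'')$ now supplies $B'$, depending only on $K$ and $A$, with $h\close{B'}\id_X$. Finally, for every $x\in X$,
\[ d(f(x),g(x))\le d(f(x),g(\overline{g}(f(x))))+d(g(\overline{g}(f(x))),g(x))\le A+Kd(\overline{g}(f(x)),x)+A\le KB'+2A, \]
using $g\circ\overline{g}\close{A}\id_X$ together with the fact that $g$ is $(K,A)$-coarse Lipschitz; hence $f\close{B}g$ with $B\coloneqq KB'+2A$, depending only on $K$ and $A$, as needed.

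I do not expect any real obstacle here. The only point requiring a little care is checking that the coarse-inverse, composition, and coarse-surjectivity constants of $\overline{g}$ and $h$ are controlled purely by $K$ and $A$ — in particular independently of how close $f$ and $g$ happen to be — since that is exactly what forces the final constant $B$ to be uniform in $f$ and $g$. This is routine coarse-geometry bookkeeping.
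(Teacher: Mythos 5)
Your proof is correct, and it is the natural argument the paper has in mind — indeed the paper gives no proof at all, simply asserting that Lemma \ref{lem:tame} "is easily seen to be equivalent" to Definition \ref{defn:tame}. Both directions (taking $g=\id_X$ one way, and composing with a coarse inverse $\overline{g}$ whose constants depend only on $K,A$ the other way) are exactly the intended routine verification, and your attention to the uniformity of the constants is the one point that matters.
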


Our interest in tame metric space comes from the following theorem, whose proof is delayed till later in this section.
\begin{thm}\label{thm:tame_topcomp}
	Let $X$ be a tame proper quasi-geodesic metric space. Then every cobounded quasi-action on $X$ has a topological completion.
\end{thm}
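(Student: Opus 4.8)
The plan is to construct the topological completion by hand, as a group of quasi-isometry classes of $X$ obtained by ``completing'' $\rho(G)$; tameness is what makes the relevant evaluation maps well-controlled. By Proposition~\ref{prop:coarse_proper} we may assume $X$ is a locally finite quasi-geodesic metric space (tameness and coboundedness of the quasi-action survive quasi-conjugacy, and the notion of a topological completion only refers to bounded subsets of $G$, which is a quasi-conjugacy invariant); fix $b\in X$. Fix $K\geq 1$, $A\geq 0$ with $G\qa X$ a $(K,A)$-quasi-action and $\id_X$ a $(K,A)$-quasi-isometry, and let $B=B(K,A)$ be the tameness constant of Lemma~\ref{lem:tame}. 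Call a quasi-isometry \emph{admissible} if it is a $(K,A)$-quasi-isometry. The key consequence of tameness: two admissible quasi-isometries representing the same class in $\QI(X)$ are $B$-close, so the value at any point of such a class is well-defined up to error $B$. Note that $\bar\phi\colon G\to \QI(X)$, $g\mapsto[\phi(g)]$, is a homomorphism (since $\phi(gh)\close{A}\phi(g)\circ\phi(h)$) with kernel $N=\{g\in G:\phi(g)\close{B}\id_X\}$, a bounded normal subgroup.

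I would then let $\hat G\subseteq\QI(X)$ be the set of classes $[f]$ such that $f$ is a pointwise limit of a sequence $(\phi(g_i))_i$ with $\{\phi(g_i)(b)\}_i$ bounded. By the coarse Arzel\`a--Ascoli theorem (Lemma~\ref{lem:coarse AA}) such limits exist, are again $(K,A)$-quasi-isometries, and in a locally finite space pointwise convergence is eventual pointwise equality; hence every $[\phi(g)]\in\hat G$ (constant sequence), so $\bar\phi(G)\subseteq\hat G$. A diagonal argument, passing to subsequences and using that admissible quasi-isometries are uniformly coarsely Lipschitz, shows $\hat G$ is a subgroup of $\QI(X)$. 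Put on $\hat G$ the topology for which $[h_i]\to[h]$ iff there are admissible representatives converging pointwise; equivalently, a neighbourhood basis of $[\id_X]$ is given by the sets $N_r=\{[g]\in\hat G:$ some admissible representative $g$ has $d(g(v),v)\leq B$ for all $v$ with $d(v,b)\leq r\}$, $r\geq0$. With this topology $\bar\phi(G)$ is dense in $\hat G$: if $\phi(g_i)\to f$ pointwise then $[\phi(g_i)]\to[f]$. Set $\rho=\bar\phi\colon G\to\hat G$.

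That $\hat G$ is a topological \emph{group} --- i.e.\ that $\{N_r\}$ satisfies the Bourbaki conditions $N_sN_s\subseteq N_r$, $N_s^{-1}\subseteq N_r$, $[f]N_s[f]^{-1}\subseteq N_r$ for suitable $s=s(r)$ (the last also for every fixed $[f]\in\hat G$, uniformly, since any $[f]\in\hat G$ has a $(K,A)$-representative and $(K,A)$-quasi-isometries have coarse inverses with $(K,A)$-controlled constants) --- is the technical heart of the proof. Because the ball-control is additive, these inclusions will hold only after enlarging the constant $B$ in the definition of $N_r$ by a fixed amount $D=D(K,A)$; and even then one obtains a group topology that need not be Hausdorff, so one passes to the quotient of $\hat G$ by the closed normal subgroup $\overline{\{[\id_X]\}}$. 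This quotient loses no information by Remark~\ref{rem:topcomp,normal}, once we know $\overline{\{[\id_X]\}}\subseteq N_1$ is compact, which follows from the local-compactness step below. Tameness is indispensable here: for a non-tame space such as $X=\bbR$ the evaluation of a class at $b$ is not controlled, the sets $N_r$ fail to be compact, and indeed topological completions need not exist (Section~\ref{sec:qmorph}).

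For local compactness, note that $N_r$ is the image, under the passage from an admissible representative to its class (a continuous map), of the set of $(K,A)$-quasi-isometries $f$ with $d(f(v),v)\leq B$ for $d(v,b)\leq r$ that arise as pointwise limits of admissible sequences; this set is closed in the product $\prod_{v\in X}\overline B\big(b,\,K\,d(v,b)+A+B\big)$ of finite sets (Lemma~\ref{lem:coarse AA} together with local finiteness and the metrisability of pointwise convergence on the countable set $X$), hence compact, so $N_r$ is compact and $\hat G$, hence its Hausdorff quotient, is locally compact. Finally, for $T\subseteq G$: if $T$ is bounded then $\{\phi(t)(b)\}\subseteq\overline B(b,r)$ for some $r$, so $\rho(T)$ lies in the compact set $\{[g]\in\hat G:d(g(b),b)\leq r+B\}$; conversely, if $\overline{\rho(T)}$ is compact then, since the map $\hat G\to X$ recording the value at $b$ of an admissible representative is continuous up to bounded error, $\{\phi(t)(b)\}$ is bounded, so $T$ is. Together with density of $\rho(G)$, this exhibits $\rho\colon G\to\hat G$ (or its Hausdorff quotient) as a topological completion of $G\qa X$, as required. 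The main obstacle is the third paragraph: reconciling the group structure --- which naturally lives on quasi-isometry \emph{classes}, since $\phi$ is only a quasi-action --- with local compactness, which wants honest maps of uniformly bounded distortion whose composites have worse distortion; tameness is exactly the glue, and I expect the verification of the group-topology axioms (especially conjugation) and of compactness of the $N_r$ to be the most delicate part.
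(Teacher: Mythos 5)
Your overall architecture is essentially the paper's: reduce to a locally finite model, complete $\bar\phi(G)$ inside $\QI(X)$ by taking limits of sequences with bounded orbit, topologise by closeness-to-$\id_X$ on balls, and get local compactness and the bounded-iff-precompact property from the coarse Arzel\`a--Ascoli lemma plus tameness. The paper packages exactly this as Propositions \ref{prop:coarsecomplete} and \ref{prop:topcc}: it first quasi-conjugates to a coarsely faithful, coarsely complete quasi-action of the limit group, then equips it with the \emph{topology of coarse convergence}, whose basic open sets $U_{h,F}$ are deliberately defined by quantifying over all coarsely convergent sequences, so that openness and continuity of multiplication and inversion follow from Lemmas \ref{lem:uniform_cc_tame}, \ref{lem:mult_inv_cont} and \ref{lem:clim of clims}.

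The genuine gap is at the step you yourself flag as the heart: that your sets $N_r$ (some admissible representative $B$-close, or $(B+D)$-close, to $\id_X$ on the ball of radius $r$ about $b$) form a neighbourhood basis of a group topology. Enlarging the constant by a fixed $D(K,A)$ does not deliver $N_sN_s\subseteq N_r$: if $[f],[g]\in N_s$ via representatives $C$-close to $\id_X$ on the ball of radius $s$, then an admissible representative of $[f\circ g]$ is only $(2C+B')$-close to $\id_X$ on the ball of radius $r$, where $B'$ is the tameness constant for the degraded constants of the composite $f\circ g$; the closeness constants add under composition, so no single enlargement closes the system. What is actually needed is that the neighbourhood filter is independent of the closeness constant: for every $r$ and every $C\geq B$ there is an $s$ such that any admissible quasi-isometry that is $C$-close to $\id_X$ on the ball of radius $s$ is $B$-close to $\id_X$ on the ball of radius $r$. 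This is true, but it requires a further limiting argument (if it failed for all $s=n$, coarse Arzel\`a--Ascoli would yield an admissible pointwise limit that is globally $C$-close to $\id_X$, hence represents $[\id_X]$ and is $B$-close to $\id_X$ by tameness, contradicting failure on the finite ball of radius $r$), and this argument --- or the paper's alternative of defining the open sets $U_{h,F}$ via coarse convergence --- must be supplied before your basis, the continuity of $f\mapsto[f]$ on the compact product, and hence your local compactness and completion-property arguments, go through. Incidentally, once the group axioms do hold, Hausdorffness is automatic, since tameness gives $\bigcap_r N_r=\{[\id_X]\}$, so the proposed quotient by $\overline{\{[\id_X]\}}$ is unnecessary.
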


It is shown in \cite{whyte2010coarse} that non-elementary hyperbolic groups are tame. We will shortly see that  a much larger class of groups are tame, namely groups whose Morse boundary contains at least three points. We first explain how the Morse boundary is defined. 

Let $X$ be a proper geodesic metric space. Recall that  a \emph{$(K,A)$-quasi-geodesic} is the image of a  $(K,A)$-quasi-isometrically  embedded interval $I\subseteq \bbR$. The \emph{endpoints}  of a quasi-geodesic are the images of the endpoints of $I$ (if they exist).  Given a function $\phi:[1,\infty)\times [0,\infty)\rightarrow [0,\infty)$, we say that a geodesic $\gamma:\bbR\rightarrow X$ is \emph{$\phi$-Morse} if for any $(K,A)$-quasi-geodesic $\lambda$ with endpoints on $\gamma$, then $\lambda\subseteq N_{\phi(K,A)}(\gamma)$. A geodesic is said to be \emph{Morse} if it is $\phi$-Morse for some $\phi$, where $\phi$ is called the \emph{Morse gauge}.

The \emph{Morse boundary} of $X$, denoted $\partial_*X$, is defined to be the set of Morse geodesic rays in $X$ up to equivalence, where two rays are equivalent if they are at finite Hausdorff distance. Although the Morse boundary can be given the  structure of a topological space, studying the Morse boundary as a set is sufficient for our purposes.  The Morse boundary  was first defined by Cordes for arbitrary proper geodesic metric spaces in  \cite{cordes2017Morse}, and agrees with the Gromov boundary on hyperbolic spaces and the contracting boundary on CAT(0) spaces. Quasi-isometries preserve Morse geodesic rays up to finite Hausdorff distance \cite[Lemma 2.9]{cordes2017Morse}, so a quasi-isometry $f:X\rightarrow X$ induces a bijection $\partial_*f:\partial_*X\rightarrow \partial_*X$.

Although the Morse boundary can be empty, for a large class of groups it is not. In particular, it follows from work of Sisto that  if a group is acylindrically hyperbolic, then its Morse boundary contains at least three points \cite{sistosro2016Quasiconvexity}. Our interest in groups with nontrivial Morse boundary comes from the following:
\begin{prop}\label{prop:morsebdry_tame}
	A cocompact proper geodesic metric space whose Morse boundary contains at least three points is tame. 
\end{prop}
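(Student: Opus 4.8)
The plan is to show that a cocompact proper geodesic space $X$ with at least three points in its Morse boundary is tame by contradiction, using a limiting argument. Suppose $X$ is not tame: then there exist constants $K\geq 1$, $A\geq 0$ and a sequence of $(K,A)$-quasi-isometries $f_n:X\to X$ with $f_n\sim\id_X$ (so each $f_n$ is at finite distance from $\id_X$, by a constant depending on $n$) but with $\sup_{x}d(f_n(x),x)\to\infty$. Because $X$ is cocompact, there is a finite-diameter fundamental domain, and after precomposing and postcomposing each $f_n$ with suitable isometries of $X$ (which does not change the fact that $f_n\sim\id$ nor the quasi-isometry constants up to a bounded loss) I can normalise so that $f_n$ moves a fixed basepoint $b$ a controlled amount while still having unbounded displacement somewhere. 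More precisely, I would pick points $x_n$ with $d(f_n(x_n),x_n)\to\infty$ and use cocompactness to translate so that $x_n$ stays in a fixed compact set; passing to a subsequence, $x_n\to x_\infty$. Now apply the Coarse Arzelà–Ascoli Theorem (Lemma \ref{lem:coarse AA}) — or rather its geodesic-space analogue via passing to a locally finite net — to extract a subsequential pointwise limit $f_\infty$, a $(K,A)$-quasi-isometry, which satisfies $d(f_\infty(x_\infty),x_\infty)=\infty$, a contradiction unless I set things up so the limit is a genuine quasi-isometry that is not close to $\id_X$ yet is forced to be.

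The cleaner route, and the one I expect the paper takes, is the following. Since each $f_n$ is at finite Hausdorff distance from $\id_X$, it fixes every point of the Morse boundary: $\partial_* f_n = \id_{\partial_* X}$. The key rigidity input is that a quasi-isometry of a proper geodesic space is determined, up to bounded error depending only on the quasi-isometry constants and the Morse gauge, by its action on the Morse boundary — provided that boundary is large enough (at least three points, so that there are enough Morse geodesic lines with distinct endpoint pairs to ``coordinatise'' a coarsely dense set of points). Concretely: given three distinct Morse boundary points $\xi_1,\xi_2,\xi_3$, any point $x\in X$ lies within bounded distance of one of the three Morse geodesic lines $\gamma_{ij}$ joining $\xi_i$ to $\xi_j$ (this uses a thin-triangle/Morse-slimness property for the ideal triangle they span); a $(K,A)$-quasi-isometry $f$ fixing $\xi_1,\xi_2,\xi_3$ sends each $\gamma_{ij}$ to within bounded Hausdorff distance of itself (by the Morse property, with constant depending on $K,A$ and the Morse gauge), and an orientation argument on each line pins down $f(x)$ to within a bounded distance of $x$. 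The bound depends only on $K$, $A$, and the Morse gauges of $\gamma_{12},\gamma_{13},\gamma_{23}$ — all of which are fixed once and for all — hence is uniform in $n$. This contradicts $\sup_x d(f_n(x),x)\to\infty$, and tameness follows via Lemma \ref{lem:tame}.

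I would carry out the steps in this order: (1) fix three distinct points $\xi_1,\xi_2,\xi_3\in\partial_*X$ and Morse geodesic lines $\gamma_{12},\gamma_{13},\gamma_{23}$ realising the three pairs, recording their Morse gauges; (2) prove a ``coarse coordinates'' lemma: there is $R_0$ (depending on $X$ and these gauges) so that $X=N_{R_0}(\gamma_{12}\cup\gamma_{13}\cup\gamma_{23}\cup (\text{a compact core}))$ — or, using cocompactness, it suffices that the orbit of the three lines under $\Isom(X)$ is coarsely dense, which follows since $X$ is cocompact and the lines are bi-infinite; (3) show any $(K,A)$-quasi-isometry $f$ with $f\sim\id_X$ fixes $\xi_1,\xi_2,\xi_3$, hence $d_\Haus(f(\gamma_{ij}),\gamma_{ij})\leq\phi_{ij}(K,A)$ by the Morse property; (4) upgrade to pointwise control: on each line $\gamma_{ij}$, $f$ is close to a parametrised reparametrisation, and since $f$ fixes both endpoints it preserves orientation, so $f$ moves points of $\gamma_{ij}$ by a bounded amount $B_{ij}$ (here one uses that a quasi-geodesic shadowing a fixed geodesic line and fixing both ends cannot "slip" — a standard exchange/progression argument); (5) combine with (2) and the triangle inequality to get $d(f(x),x)\leq B$ for all $x$, with $B=B(K,A,X)$; (6) conclude via Lemma \ref{lem:tame}. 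The main obstacle is step (4)–(2): making the coarse-coordinate description precise and showing the orientation argument genuinely bounds displacement on each Morse line — the subtlety is that a quasi-geodesic at bounded Hausdorff distance from $\gamma_{ij}$ fixing its endpoints could a priori slide along the line, and ruling this out requires exploiting the full quasi-isometry (not just the embedding) together with cocompactness or a "no coarse translation" property of $X$; this is precisely the point where the non-triviality of the Morse boundary (three points rather than two) is used, since with only two boundary points a coarse translation along a single axis would be an obstruction to tameness, as the central-element example illustrates.
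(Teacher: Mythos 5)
Your second route is the right general strategy (fix a Morse triple, note that any quasi-isometry close to $\id_X$ fixes the whole Morse boundary, use cocompactness to move the configuration near an arbitrary point, and extract a displacement bound depending only on $K,A$ and $X$), but the step you yourself flag as the main obstacle --- bounding the displacement of $f$ along each Morse line via an ``orientation'' argument --- is a genuine gap, and it cannot be closed in the form you propose. Fixing both endpoints of a line and preserving orientation does not rule out a coarse translation along that line with displacement unbounded in terms of $K,A$ and the Morse gauge; this is exactly the failure mode in the two-endpoint situation (e.g.\ $\bbR$, or the central-element example), and for a point lying far out along one side of your ideal triangle the other two sides are far away, so they impose no constraint there. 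Consequently your step (2), coarse density of the $\Isom(X)$-orbit of the three \emph{lines}, is too weak a ``coordinate system'': what is needed is that every point of $X$ is a coarse \emph{centre} of some ideal Morse triangle with uniform gauge, which you do not arrange.

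The paper closes precisely this gap with two inputs you do not supply: Lemma \ref{lem:qimorse} (uniform constants for the image of a $\phi$-Morse geodesic under a $(K,A)$-quasi-isometry) and, crucially, Lemma \ref{lem:coarse_centre} of Charney--Cordes--Murray, which says that for triples in $\partial_*X^{(3,\phi)}$ the set $E_r(\xi_1,\xi_2,\xi_3)$ of points within $r$ of all three sides of some ideal triangle is non-empty and of uniformly bounded diameter. The argument then recentres the triangle at every point: choose one triple $(\xi_1,\xi_2,\xi_3)\in\partial_*X^{(3,\phi)}$ and $x_0\in E_{r_1}$, and for each $x\in X$ use cocompactness to find an isometry $f_x$ with $d(f_x(x_0),x)\leq r_2$, so $x\in E_{r_1+r_2}(\xi_1^x,\xi_2^x,\xi_3^x)$ for the translated triple. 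Since $f\sim\id_X$ fixes these boundary points, $f(x)\in E_{r_3}(\xi_1^x,\xi_2^x,\xi_3^x)$ with $r_3=K(r_1+r_2)+A+B$, and the bounded-diameter conclusion gives $d(f(x),x)\leq R(r_3,\phi)$ uniformly. Using all three sides simultaneously at each point is what kills the sliding problem; a per-line argument, even with the endpoints fixed, cannot. Your first (Arzel\`a--Ascoli limiting) sketch is also not a proof as stated: you do not explain what contradiction the limit map yields, and indeed a pointwise limit of maps with unbounded displacement need not produce one directly.
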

Coupled with Theorem \ref{thm:tame_topcomp}, this implies the following:
\begin{cor}\label{cor:morse_topcomp_exist}
	Let $X$ be a cocompact proper geodesic metric space whose   Morse boundary contains at least three points, e.g. an acylindrically hyperbolic group. Then every cobounded quasi-action on $X$ has a topological completion.
\end{cor}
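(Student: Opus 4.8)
The plan is to deduce this immediately by combining the two results just established: Proposition \ref{prop:morsebdry_tame}, which says that a cocompact proper geodesic metric space whose Morse boundary has at least three points is tame, and Theorem \ref{thm:tame_topcomp}, which says that every cobounded quasi-action on a tame proper quasi-geodesic metric space has a topological completion. So the entire argument is a short two-line chain of implications, with one small verification at the end (that acylindrically hyperbolic groups furnish examples).

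More precisely: first I would let $X$ be a cocompact proper geodesic metric space whose Morse boundary $\partial_* X$ contains at least three points. By Proposition \ref{prop:morsebdry_tame}, $X$ is tame. Since $X$ is geodesic it is in particular quasi-geodesic, and it is proper by hypothesis. Hence Theorem \ref{thm:tame_topcomp} applies and yields that every cobounded quasi-action on $X$ has a topological completion. That is the main assertion. For the parenthetical claim about acylindrically hyperbolic groups: if $\Gamma$ is acylindrically hyperbolic and finitely generated, then by the result of Sisto cited in the text (\cite{sistosro2016Quasiconvexity}), the Morse boundary of $\Gamma$ — taken with respect to a Cayley graph on a finite generating set, which is a cocompact proper geodesic metric space on which $\Gamma$ acts geometrically — contains at least three points, so the first part applies to $\Gamma$.

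There is essentially no obstacle here: the work has all been done in Proposition \ref{prop:morsebdry_tame} and Theorem \ref{thm:tame_topcomp}, and this corollary is purely a matter of citing them in sequence. The only point requiring a word of care is that tameness, and the hypotheses of Theorem \ref{thm:tame_topcomp}, are stated for metric spaces rather than for groups, so for the application to acylindrically hyperbolic groups one passes through a Cayley graph; since tameness is a quasi-isometry invariant (as remarked after Definition \ref{defn:tame}) and all finite Cayley graphs of a fixed finitely generated group are quasi-isometric, this is harmless. I would write the proof as follows.

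\begin{proof}
	Let $X$ be a cocompact proper geodesic metric space whose Morse boundary $\partial_* X$ contains at least three points. By Proposition \ref{prop:morsebdry_tame}, $X$ is tame. Since $X$ is geodesic it is quasi-geodesic, and it is proper by hypothesis, so Theorem \ref{thm:tame_topcomp} applies: every cobounded quasi-action on $X$ has a topological completion. Finally, if $\Gamma$ is an acylindrically hyperbolic group, then by \cite{sistosro2016Quasiconvexity} the Morse boundary of a Cayley graph of $\Gamma$ with respect to a finite generating set contains at least three points; such a Cayley graph is a cocompact proper geodesic metric space, so the preceding conclusion applies to it.
\end{proof}
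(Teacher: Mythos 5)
Your proof is correct and is exactly the paper's argument: the corollary is stated there as the immediate combination of Proposition \ref{prop:morsebdry_tame} (such spaces are tame) with Theorem \ref{thm:tame_topcomp} (cobounded quasi-actions on tame proper quasi-geodesic spaces have topological completions), with the acylindrically hyperbolic case handled via Sisto's result just as you do.
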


To prove Proposition \ref{prop:morsebdry_tame}, we make use of some lemmas from \cite{cordes2017Morse} and \cite{charneyCordesMurray2019QuasiMobius}. The following lemma ensures that quasi-isometries send Morse geodesics to Morse geodesics up to some uniform Hausdorff distance. This lemma is implicit in the proof of  Theorem 3.7 in \cite{charneyCordesMurray2019QuasiMobius}, and can easily be deduced by adapting  the proof of Lemma 2.9 in \cite{cordes2017Morse}.
\begin{lem}\label{lem:qimorse}
	 Given $K\geq 1$ and $A\geq 0$ and a Morse gauge $\phi$, there is a constant $B\geq 0$ such that for any $(K,A)$-quasi-isometry $f:X\rightarrow X$ and any $\phi$-Morse bi-infinite geodesic $\gamma\subseteq X$, $f(\gamma)$ is Hausdorff distance at most $B$ from a Morse geodesic.
\end{lem}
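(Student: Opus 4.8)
We are really proving the quantitative form of \cite[Lemma 2.9]{cordes2017Morse} (taken with domain and target both equal to $X$): the content to be extracted is that the Hausdorff bound $B$ and the Morse gauge of the approximating geodesic depend only on $\phi$ and on $(K,A)$, not on the particular quasi-isometry $f$ or geodesic $\gamma$. I would organise the argument into three steps.

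\textbf{Step 1 (the push-forward is a uniformly Morse quasi-geodesic).} Fix a coarse inverse $\overline f$ of $f$; routine bookkeeping provides constants $K_1\geq 1$ and $A_1,C_0\geq 0$, depending only on $(K,A)$, such that $\overline f$ is a $(K_1,A_1)$-quasi-isometry and $\overline f\circ f$, $f\circ\overline f$ each move points by at most $C_0$. The image $f(\gamma)$ is a $(K,A)$-quasi-geodesic line, and I claim it is $\phi_1$-Morse for a gauge $\phi_1$ depending only on $\phi,K,A$. Indeed, let $\lambda$ be an $(L,C)$-quasi-geodesic with endpoints $p=f(p')$ and $q=f(q')$ on $f(\gamma)$, where $p',q'\in\gamma$. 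Then $\overline f(\lambda)$ is a quasi-geodesic with constants controlled by $L,C,K_1,A_1$, whose endpoints $\overline f(p),\overline f(q)$ lie within $C_0$ of $p',q'$; concatenating with two geodesic segments of length at most $C_0$ yields a quasi-geodesic $\mu$ with both endpoints on $\gamma$ and with constants still controlled by $L,C,K,A$. By $\phi$-Morseness of $\gamma$ we get $\mu\subseteq N_{r}(\gamma)$ with $r$ depending only on $\phi,L,C,K,A$. Applying $f$, which sends $N_r(\gamma)$ into $N_{Kr+A}(f(\gamma))$, and using that $\lambda\subseteq N_{C_0}\bigl(f(\overline f(\lambda))\bigr)\subseteq N_{C_0}\bigl(f(\mu)\bigr)$, we obtain $\lambda\subseteq N_{\phi_1(L,C)}(f(\gamma))$ with $\phi_1(L,C):=C_0+K r+A$, as claimed.

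\textbf{Step 2 (approximation by a geodesic line).} Here I invoke the stability of Morse quasi-geodesics (Charney--Sultan; see \cite{charneyCordesMurray2019QuasiMobius} and \cite[\S 2]{cordes2017Morse}): there is a constant $m=m(\phi_1,K,A)$ such that a $\phi_1$-Morse $(K,A)$-quasi-geodesic is Hausdorff distance at most $m$ from any geodesic joining the same pair of points. Parametrise $f\circ\gamma:\bbR\to X$ and let $\alpha_n$ be a geodesic from $(f\circ\gamma)(-n)$ to $(f\circ\gamma)(n)$; by stability $d_\Haus\bigl((f\circ\gamma)|_{[-n,n]},\alpha_n\bigr)\leq m$. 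Since $X$ is proper, the Arzel\`a--Ascoli theorem yields a subsequence of $(\alpha_n)$ converging uniformly on compacta to a bi-infinite geodesic $\alpha$, and passing to the limit gives $d_\Haus(f(\gamma),\alpha)\leq m=:B$.

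\textbf{Step 3 (the limit is Morse with a controlled gauge).} Let $\lambda$ be an $(L,C)$-quasi-geodesic with endpoints on $\alpha$. Since $\alpha\subseteq N_m(f(\gamma))$, moving each endpoint a distance at most $m$ onto $f(\gamma)$ and concatenating with two short segments produces a quasi-geodesic $\lambda'$ with endpoints on $f(\gamma)$ and constants controlled by $L,C,m$. Step 1 gives $\lambda'\subseteq N_{\phi_1(L',C')}(f(\gamma))\subseteq N_{\phi_1(L',C')+m}(\alpha)$, hence $\lambda\subseteq N_{\phi_1(L',C')+2m}(\alpha)$, where $L',C'$ are functions of $L,C,m$. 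Thus $\alpha$ is $\phi_2$-Morse for a gauge $\phi_2$ depending only on $\phi_1$ and $m$, and therefore only on $\phi,K,A$. Together with Step 2 this completes the proof.

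The only genuinely substantive input is the stability estimate of Step 2; everything else is a matter of tracking constants. Accordingly, the main point to be careful about is precisely this bookkeeping: verifying that the coarse-inverse constants, the Morse-neighbourhood gauges produced in Steps 1 and 3, and especially the stability constant $m$, all depend solely on $(K,A)$ and $\phi$ and are uniform over the choice of $f$ and of the $\phi$-Morse geodesic $\gamma$.
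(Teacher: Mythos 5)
Your proof is correct and is essentially the argument the paper outsources by citation (adapting \cite[Lemma 2.9]{cordes2017Morse} and the proof of Theorem 3.7 in \cite{charneyCordesMurray2019QuasiMobius}): push $\gamma$ forward to get a quasi-geodesic that is Morse with gauge depending only on $\phi,K,A$, then use stability of Morse quasi-geodesics together with Arzel\`a--Ascoli in the proper space $X$ to produce the nearby bi-infinite geodesic, tracking that all constants are uniform. Note that the statement only asks for the Hausdorff bound $B$, so your Step 3 (uniform gauge of the limit geodesic) is a strengthening rather than a needed step, though it is the version actually useful in applications.
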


Given a Morse gauge $\phi$, Charney et al.\ define  $\partial_*X^{(3,\phi)}$  to be the set of distinct triples $(\xi_1,\xi_2,\xi_3)$ in $\partial_* X$ such that  for each $i\neq j$, every bi-infinite geodesic from $\xi_i$ to $\xi_j$ is $\phi$-Morse \cite{charneyCordesMurray2019QuasiMobius}. As noted in \cite{charneyCordesMurray2019QuasiMobius}, if $\partial_* X$ contains three points, then $\partial_*X^{(3,\phi)}$ is non-empty for some $\phi$ large enough.

Let $T(\xi_1,\xi_2,\xi_3)$ denote an ideal triangle with vertices $\xi_1,\xi_2,\xi_3\in \partial_*X$.
It is well-known that for a triple of distinct points $(\xi_1,\xi_2,\xi_3)$ in the boundary of a hyperbolic group, the centre of $T(\xi_1,\xi_2,\xi_3)$ is well-defined up to some uniform error. For some fixed Morse gauge $\phi$, the same is true for triples in $\partial_*X^{(3,\phi)}$:

\begin{lem}[{\cite[Lemma 2.5]{charneyCordesMurray2019QuasiMobius}}]\label{lem:coarse_centre}
	Given a Morse gauge $\phi$, there is a constant $r_0$ such that for all $r\geq r_0$, there is an $R=R(r,\phi)$ such that for all $(\xi_1,\xi_2,\xi_3)\in \partial_*X^{(3,\phi)}$, the set 
	\begin{align*}
		E_r(\xi_1,\xi_2,\xi_3)\coloneqq \left\{ x\in X\mid \text{$x$ lies within $r$ of all three sides of some $T(\xi_1,\xi_2,\xi_3)$}	\right\}
	\end{align*}
	is non-empty and has diameter at most $R$.
\end{lem}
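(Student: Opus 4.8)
The plan is to reduce the statement to standard facts about $\phi$-Morse geodesics, exactly as one proves that ideal triangles in a hyperbolic space have a coarsely well-defined incentre. Fix $\phi$-Morse bi-infinite geodesics $\gamma_{12},\gamma_{13},\gamma_{23}$ realising the sides of an ideal triangle with ideal vertices $\xi_1,\xi_2,\xi_3$. Since any two $\phi$-Morse bi-infinite geodesics with the same pair of endpoints lie at Hausdorff distance $\le D_\phi$ for a constant depending only on $\phi$ (a basic fact from \cite{cordes2017Morse}), it is enough to establish both assertions with ``some $T(\xi_1,\xi_2,\xi_3)$'' replaced by this fixed choice, after enlarging $r$ by $D_\phi$. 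I will use freely: (a) \emph{Morse stability}, that a geodesic segment (or ray) with endpoints within $C$ of a $\phi$-Morse geodesic lies within a constant $C'=C'(\phi,C)$ of it and is itself $\phi'$-Morse; (b) that two $\phi$-Morse rays issuing from points within $C$ of each other and converging to the same point of $\partial_* X$ lie at Hausdorff distance $\le D(\phi,C)$; and (c) that geodesic triangles with $\phi'$-Morse sides are $\delta$-slim with $\delta=\delta(\phi')$. These are all available from \cite{cordes2017Morse,charneyCordesMurray2019QuasiMobius}.

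\emph{Non-emptiness.} Pick $a_n\to\xi_1$ and $b_n\to\xi_2$ along $\gamma_{12}$, and $c_n\to\xi_3$ along $\gamma_{13}$. By (b) applied at each shared ideal vertex, every one of $a_n,b_n,c_n$ lies within $D_\phi$ of two of the three geodesics $\gamma_{ij}$, so by (a) the sides of the geodesic triangle $\Delta_n$ with vertices $a_n,b_n,c_n$ are $\phi'$-Morse and lie within a constant $C_\phi$ of $\gamma_{12},\gamma_{13},\gamma_{23}$. By (c), $\Delta_n$ is $\delta_\phi$-slim, and a routine connectedness argument along one side (the two closed sets ``within $\delta_\phi$ of the first other side'' and ``within $\delta_\phi$ of the second other side'' cover that side and contain its two endpoints, hence meet) produces a point within $\delta_\phi$ of all three sides of $\Delta_n$, hence within $r_0:=\delta_\phi+C_\phi$ of all of $\gamma_{12},\gamma_{13},\gamma_{23}$. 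Thus $E_r(\xi_1,\xi_2,\xi_3)\ne\emptyset$ for all $r\ge r_0$, with $r_0$ depending only on $\phi$.

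\emph{Bounded diameter.} Fix $r\ge r_0$ and set $r':=r+D_\phi$, so that every point of $E_r(\xi_1,\xi_2,\xi_3)$ lies within $r'$ of each $\gamma_{ij}$. Given $x,y\in E_r(\xi_1,\xi_2,\xi_3)$, project them to nearest points $\gamma_{12}(a),\gamma_{12}(b)$ on $\gamma_{12}$; as $d(x,y)\le 2r'+|a-b|$, it suffices to bound $|a-b|$. Now $\gamma_{12}(a)$ and $\gamma_{12}(b)$ each lie within $2r'$ of $\gamma_{13}$ and within $2r'$ of $\gamma_{23}$, and by (a) the sub-segments of $\gamma_{13}$ and of $\gamma_{23}$ spanned by the corresponding nearest points are $C'(\phi,r')$-Hausdorff close to $\gamma_{12}|_{[a,b]}$ and hence have length $|a-b|$ up to an additive $O(r')$. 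Using fellow-travelling at the ideal vertices $\xi_1$ and $\xi_2$, one checks that $\gamma_{13}$ passes near $\gamma_{12}(a)$ and then near $\gamma_{12}(b)$ in the same order as one traverses $\gamma_{12}$ from $\xi_1$ to $\xi_2$, while $\gamma_{23}$ passes near them in the opposite order; consequently the sub-ray of $\gamma_{13}$ that starts within $O(r')$ of $\gamma_{12}(a)$ and runs to the common ideal vertex $\xi_3$ passes within $O(r')$ of $\gamma_{12}(b)$, whereas the sub-ray of $\gamma_{23}$ that starts within $O(r')$ of $\gamma_{12}(a)$ and runs to $\xi_3$ stays at distance $\ge |a-b|-O(r')$ from $\gamma_{12}(b)$. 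These two sub-rays issue from points within $O(r')$ of one another and converge to $\xi_3$, so by (b) they are Hausdorff close with constant depending only on $\phi$ and $r'$; comparing the point of the first sub-ray near $\gamma_{12}(b)$ with the second sub-ray forces $|a-b|\le M(\phi,r')$. Hence $d(x,y)\le 2r'+M(\phi,r')=:R(r,\phi)$, so $\diam E_r(\xi_1,\xi_2,\xi_3)\le R(r,\phi)$.

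The main obstacle is the orientation bookkeeping in the last step: one must verify, quantitatively and using only the Morse gauge, that $\gamma_{13}$ and $\gamma_{23}$ pass by the two projected points in opposite orders, so that exactly one of the two relevant sub-rays to $\xi_3$ makes a long excursion that the other does not; this is precisely what makes the incentre coarsely unique. Once that is in place the contradiction with the uniform fellow-travelling of Morse rays to a common endpoint closes the argument, and the non-emptiness step is then routine given the uniform slimness of triangles with $\phi$-Morse sides.
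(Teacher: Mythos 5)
The paper offers no proof of this statement: it is imported verbatim as \cite[Lemma 2.5]{charneyCordesMurray2019QuasiMobius}, so there is no internal argument to compare yours with and your proposal has to stand on its own. Judged that way, the reduction to one fixed ideal triangle is legitimate (every side of any $T(\xi_1,\xi_2,\xi_3)$ is $\phi$-Morse by the definition of $\partial_*X^{(3,\phi)}$, so two choices of a side are Hausdorff-close with constant depending only on $\phi$), and the non-emptiness half is essentially complete: approximating the ideal triangle by genuine triangles whose sides are uniformly Morse via your facts (a)--(b), invoking uniform slimness of Morse triangles, and running the connectedness argument along one side is a correct and standard argument, and it yields $r_0=r_0(\phi)$ as required.

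The diameter half, however, is not proved, and you say so yourself: the assertion that $\gamma_{13}$ and $\gamma_{23}$ pass the projected points $\gamma_{12}(a),\gamma_{12}(b)$ in opposite orders, and that consequently the $\xi_3$-sub-ray of $\gamma_{23}$ issuing near $\gamma_{12}(a)$ stays at distance at least $|a-b|-O(r')$ from $\gamma_{12}(b)$, is exactly the content of coarse uniqueness of the incentre, and in your write-up it is asserted rather than established. This is a genuine gap: without it the bound on $|a-b|$, hence on $\diam E_r$, does not follow. It is fillable with the tools you already invoke, but it needs an argument along the following lines. Anchor the orientation at the shared ideal vertices: the $\xi_1$-sub-rays of $\gamma_{12}$ and $\gamma_{13}$ are asymptotic $\phi$-Morse rays, hence eventually uniformly close with constant $k_\phi$; taking $t_0\ll a$ in that region and comparing $d\bigl(p(t),p(t_0)\bigr)$ with $|t-t_0|$ (where $p(t)\in\gamma_{13}$ is a nearest point to $\gamma_{12}(t)$) shows that the $\gamma_{13}$-parameter of $p(t)$, measured from $\xi_1$ towards $\xi_3$, is coarsely increasing in $t$ with error depending only on $\phi$ and $r'$; the symmetric argument at $\xi_2$ shows the parameter of the companion $q(t)\in\gamma_{23}$, measured from $\xi_2$ towards $\xi_3$, is coarsely decreasing in $t$. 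Granting this, $p(b)$ lies (coarsely) on the $\xi_3$-sub-ray of $\gamma_{13}$ through $p(a)$, so by your fact (b) some point $z$ of the $\xi_3$-sub-ray of $\gamma_{23}$ through $q(a)$ satisfies $d\bigl(z,\gamma_{12}(b)\bigr)\leq D(\phi,r')$; but then, since $\gamma_{23}$ is a geodesic, $d\bigl(q(b),z\bigr)$ equals the parameter gap, which is at least $|a-b|-O(\phi,r')$ by the monotonicity, while the triangle inequality gives $d\bigl(q(b),z\bigr)\leq 2r'+D(\phi,r')$, forcing $|a-b|\leq M(\phi,r')$. Some such quantitative monotonicity step (or an equivalent, e.g.\ the concatenation trick showing the closeness regions on $\gamma_{12}$ relative to $\gamma_{13}$ and $\gamma_{23}$ are coarsely sub-rays towards $\xi_1$ and $\xi_2$ respectively with uniformly bounded overlap) must be written out before your proof of the diameter bound goes through.
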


We are now in position to apply these lemmas  to prove Proposition \ref{prop:morsebdry_tame}:
\begin{proof}[Proof of Proposition \ref{prop:morsebdry_tame}]
	Since $\partial_*X$ has at least three points, say $\xi_1$, $\xi_2$ and  $\xi_3$,  we can choose a Morse gauge $\phi$ such that $(\xi_1,\xi_2,\xi_3)\in \partial_*X^{(3,\phi)}$. By Lemma \ref{lem:coarse_centre}, we can choose $r_1$ large enough so that $E_{r_1}(\xi_1,\xi_2,\xi_3)$ is non-empty and thus contains some $x_0\in X$. Since $X$ is cocompact, we can pick $r_2$ such that for every $x\in X$, there is an isometry $f_x:X\rightarrow X$ such that $d(f_x(x_0),x)\leq r_2$. Let $\xi^x_i\coloneqq (\partial_*f_x)(\xi_i)$ for $i=1,2,3$ and $x\in X$. Since each $f_x$ is an isometry, $(\xi_1^x,\xi_2^x,\xi_3^x)\in \partial_*X^{(3,\phi)}$  and $x\in E_{r_1+r_2}	(\xi_1^x,\xi_2^x,\xi_3^x)$.
	
	Let $K\geq 1$ and $A\geq 0$. Choose $B$ as in Lemma \ref{lem:qimorse} such every $(K,A)$-quasi-isometry sends a $\phi$-Morse bi-infinite geodesic to within Hausdorff distance $B$ from another Morse geodesic. Let $f:X\rightarrow X$ be a $(K,A)$-quasi-isometry such that $f\close{} \id_X$. Let $x\in X$. 
	Since $x\in E_{r_1+r_2}	(\xi_1^x,\xi_2^x,\xi_3^x)$, it follows $x$ has distance at most $r_1+r_2$ from a point on a geodesic from $\xi_1^x$ to $\xi_2^x$. Since $f$ is a $(K,A)$-quasi-isometry with $f\close{}\id_X$,  $f(x)$ has distance at most $r_3\coloneqq K(r_1+r_2)+A+B$ from a point on a (possibly different) Morse geodesic from $\xi_1^x$ to $\xi_2^x$. Repeating this argument for the other sides of the triangle $T(\xi_1^x,\xi_2^x,\xi_3^x)$ allows us to deduce that  $f(x)\in E_{r_3}(\xi_1^x,\xi_2^x,\xi_3^x)$. By Lemma \ref{lem:coarse_centre}, we can pick $R$ large enough so that for every $x\in X$, 	$E_{r_3}(\xi_1^x,\xi_2^x,\xi_3^x)$ has diameter at most $R$. Since $x$ and $f(x)$ are both contained in $ E_{r_3}(\xi_1^x,\xi_2^x,\xi_3^x)$, it follows $d(f(x),x)\leq R$ for all $x\in X$. This shows  $X$ is tame. 
\end{proof}

We now  begin our proof of  Theorem \ref{thm:tame_topcomp}. Our main ingredient is unpublished work of Whyte \cite{whyte01coarse}, which we give a full account of. See also  \cite[Section 4]{dymarz2015envelopes} for a published account of this work. 
The notion of coarse convergence of functions plays a fundamental role in this subsection.
\begin{defn}
	Let $X$  be a metric space. For a sequence of functions $f_i:X\rightarrow X$ and a function $f:X\to X$, we write $f_i\cc{r} f$ if for every $x\in X$, there is an $N_x\in \bbN$ such that $d(f_i(x),f(x))\leq r$ for all $i\geq N_x$. We say $(f_i)$  \emph{coarsely converges} to $f:X\rightarrow X$, written $f_i\cc{} f$, if there exists an $r\geq 0$ such that $f_i\cc{r}f$. 	We say that $(f_i)$ is \emph{coarsely convergent} if it coarsely converges to some $f:X\rightarrow X$, in which case  we say that $f$ is the \emph{coarse limit} of $(f_i)$. 
\end{defn}

	In order to prove Theorem \ref{thm:tame_topcomp}, we first need to develop some preliminary  lemmas concerning coarse convergence of quasi-isometries of tame spaces.  
In what follows, it  will simplify the argument  if we assume that $X$ is locally finite. The following remark ensures this can always be done:

\begin{rem}\label{rem:bdd_geom}
	Suppose $X$ is a proper quasi-geodesic tame metric space and  $G\qa X$ is a quasi-action. Proposition \ref{prop:coarse_proper} ensures that $X$ is quasi-isometric to a locally finite tame metric space $Y$, and hence the quasi-action $G\qa X$ is quasi-conjugate to a quasi-action $G\qa Y$. Clearly $G\qa X$ has a topological completion if and only if $G\qa Y$ does, so it suffices to prove that every cobounded quasi-action on a locally finite tame quasi-geodesic metric space has a topological completion. 
\end{rem}

		We first show coarse convergence of quasi-isometries of  locally finite tame spaces satisfies the following uniformity property:

\begin{lem}\label{lem:uniform_cc_tame}
	Let $X$ be a tame locally finite metric space and let $K\geq 1$ and $A\geq 0$. There exists $B\geq 0$ such that whenever $(f_i)$ is a coarsely convergent sequence of $(K,A)$-quasi-isometries of $X$, the following hold: 
	\begin{enumerate}
		\item There exists a $(K,A)$-quasi-isometry $f:X\rightarrow X$ such that $f_i\cc{B}f$. 
		\item If $f:X\rightarrow X$ is any $(K,A)$-quasi-isometry such that $f_i\cc{} f$, then $f_i\cc{B}f$. 
	\end{enumerate}
\end{lem}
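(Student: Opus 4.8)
The plan is to relate coarse convergence of $(f_i)$ to coarse convergence of the ``correction terms'' $\overline f\circ f_i$ towards $\id_X$, where $\overline f$ is a coarse inverse of a suitable coarse limit $f$, and to extract the uniform constant from the interplay of tameness with the coarse Arzel\`a--Ascoli theorem.

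First I would fix $K\geq 1$, $A\geq 0$ and recall the standard fact that a $(K,A)$-quasi-isometry $f\colon X\to X$ admits a coarse inverse $\overline f$ that is a $(K_1,A_1)$-quasi-isometry with $f\circ\overline f$ and $\overline f\circ f$ being $A_1$-close to $\id_X$, where $K_1,A_1$ depend only on $K,A$; hence a composition $\overline f\circ g$ of two such quasi-isometries is a $(K_2,A_2)$-quasi-isometry with $K_2,A_2$ depending only on $K,A$. Now let $(f_i)$ be coarsely convergent, say $f_i\cc{r_0}g$ for some coarse limit $g$. Since $\{f_i(x_0)\}$ differs from a bounded set by finitely many points, it is bounded, so Lemma \ref{lem:coarse AA} produces a subsequence converging pointwise to a $(K,A)$-quasi-isometry $f$; comparing with $g$ gives $f\close{r_0}g$ and therefore $f_i\cc{2r_0}f$, with a constant that is not yet uniform.

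The heart of the argument is to upgrade $2r_0$ to a constant depending only on $K,A$. Put $h_i\coloneqq\overline f\circ f_i$, a sequence of $(K_2,A_2)$-quasi-isometries with $h_i\cc{}\id_X$. I claim that every subsequence of $(h_i)$ has a further subsequence $\cc{B_2}$-converging to $\id_X$, where $B_2=B_2(K_2,A_2)$ is the tameness constant of Lemma \ref{lem:tame}: applying Lemma \ref{lem:coarse AA} to the subsequence yields a pointwise limit $h$, which is a $(K_2,A_2)$-quasi-isometry, and since $h_i\cc{}\id_X$ one reads off $h\close{}\id_X$, so tameness gives $h\close{B_2}\id_X$, and this bound descends to the subsequence. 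A routine sequential argument then promotes this to $h_i\cc{B_2}\id_X$ for the full sequence (if not, pick $x$ and a subsequence along which $d(h_i(x),x)>B_2$, and contradict the existence of a further $\cc{B_2}$-convergent subsequence). Finally, since $f$ is $(K,A)$-coarse Lipschitz and $f\circ\overline f$ is $A_1$-close to $\id_X$, one has $d(f_i(x),f(x))\leq A_1+Kd(h_i(x),x)+A$, so $f_i\cc{B}f$ with $B\coloneqq A_1+KB_2+A$, proving (1). For (2), if $f'$ is any $(K,A)$-quasi-isometry with $f_i\cc{}f'$, then (1) forces $f\close{}f'$, so $\overline f\circ f'$ is a $(K_2,A_2)$-quasi-isometry close to $\id_X$, whence $\overline f\circ f'\close{B_2}\id_X$ by tameness and thus $f\close{B}f'$; combining with (1) gives $f_i\cc{2B}f'$. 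Taking the constant in the statement to be $2B$ completes the proof.

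The main obstacle is exactly this passage to a uniform bound: coarse convergence $h_i\cc{}\id_X$ says nothing about how close any \emph{individual} $h_i$ is to $\id_X$, so tameness cannot be applied to the $h_i$ directly. The remedy---using local finiteness and Lemma \ref{lem:coarse AA} to pass to honest pointwise limits, invoking tameness there, and transferring the bound back via a subsequence argument---is the crux, and at every stage one must keep the quasi-isometry constants pinned to functions of $K$ and $A$ alone.
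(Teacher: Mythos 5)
Your proof is correct: every step checks out, including the key ``subsubsequence'' argument promoting the bound $B_2$ from further subsequences to the whole sequence, and all constants stay pinned to functions of $K$, $A$ (and the tameness of $X$). The ingredients are the same as the paper's --- the coarse Arzel\`a--Ascoli lemma to produce honest pointwise limits, and tameness applied to those limits --- but the organisation of the uniformity step is genuinely different. The paper proves statement (2) directly: for a fixed $x$ it looks at the ``live'' values $y$ attained by $f_i(x)$ infinitely often, extracts for each such $y$ a pointwise limit $h$ with $h(x)=y$, and applies tameness to the pair of $(K,A)$-quasi-isometries $h$ and $g$, so the uniform constant is exactly the tameness constant for $(K,A)$-quasi-isometries; statement (1) then falls out as a special case. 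You instead prove (1) first, normalising by a coarse inverse $\overline f$ so that tameness is invoked against $\id_X$ at the modified constants $(K_2,A_2)$, and obtain uniformity via the every-subsequence-has-a-good-subsequence contradiction; (2) is then deduced by comparing an arbitrary coarse limit $f'$ with your $f$. Your route costs a slightly larger constant ($2(A_1+KB_2+A)$ rather than the bare tameness constant) and an extra layer of composition bookkeeping, but it buys a cleaner conceptual reduction to ``sequences coarsely converging to the identity,'' which is arguably easier to remember and reuse.
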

\begin{proof}
	Let $x_0\in X$. Since $(f_i)$ is coarsely convergent, $(f_i(x_0))$ is a bounded sequence. By Lemma \ref{lem:coarse AA}, there exists a  subsequence $(f_{n_i})$ and a $(K,A)$-quasi-isometry $f:X\rightarrow X$  such that for each $x\in X$, $f_{n_i}(x)=f(x)$ for $i$ sufficiently large. We claim $f_i\cc{} f$. Indeed, since $(f_i)$ is coarsely convergent, there exists some $g:X\rightarrow X$ and $r\geq 0$ such that $f_i\cc{r} g$. Thus for each $x\in X$,  $d(f_{n_i}(x),g(x))\leq r$ for $i$ sufficiently large, and hence $d(f(x),g(x))\leq r$. It follows that $f_i\cc{} f$ as required.

	We now choose $B$ as in Lemma \ref{lem:tame} so that if $h,g:X\rightarrow X$ are $(K,A)$-quasi-isometries such that $h\close{} g$, then $h\close{B} g$.  Suppose $g:X\rightarrow X$ is a $(K,A)$-quasi-isometry such that $f_i\cc{} g$. We claim that $f_i\cc{B} g$. Indeed, there exists some $r$ such that $f_i\cc{r}g$. Thus  for all $x\in X$, there exists an $N_x$ such that $d(f_i(x),g(x))\leq r$  for all $i\geq N_x$. Let $I_x\coloneqq \{f_i(x)\mid i\geq N_x\}\subseteq N_{r}(g(x))$. Since $X$ is locally finite, $I_x$ is finite. An element of $y\in I_x$ is \emph{live} if $\{i\in \bbN\mid f_{i}(x)=y\}$ is infinite.  We can thus pick $M_x\geq N_x$ such that every element of $\{f_i(x)\mid i\geq M_x\}$ is live. For every live $y\in I_x$, we pick a subsequence $(f_{m_i})$ such that $(f_{m_i}(x))$ is eventually constant and equal to $y$. Applying Lemma \ref{lem:coarse AA} again we can  pass to a further subsequence $(f_{k_i})$ that converges pointwise to a $(K,A)$-quasi-isometry $h:X\rightarrow X$. Arguing  as in the previous paragraph, we deduce $g\close{} h$. 
	As $h(x)=y$ and both $g$ and $h$ are $(K,A)$ quasi-isometries, it follows  that $d(g(x),y)\leq B$.  Thus $d(f_i(x),g(x))\leq B$ for all $i\geq M_x$, and so $f_i\cc{B} g$.
\end{proof}		
		
In order to use coarse convergence to define a topology, we  show that  coarse convergence of sequences of quasi-isometries are `continuous'  with respect to multiplication and taking inverses.
\begin{lem}\label{lem:mult_inv_cont}
	Let $X$ be a metric space and suppose  $f_i \cc{} f$ and $g_i \cc{} g$, where all maps are self quasi-isometries of $X$ with uniform constants. Then
	\begin{enumerate}
		\item $g_i\circ f_i\cc{} g\circ f$;
		\item $\overline{f_i}\cc{} \overline{f}$, where $\overline{f_i}$ and $\overline{f}$ are coarse inverses to $f_i$ and $f$ with uniform constants.
	\end{enumerate}
\end{lem}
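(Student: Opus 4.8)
The plan is to prove both statements by unwinding the definition of coarse convergence and using the uniform quasi-isometry constants to absorb errors. Fix constants $K\geq 1$ and $A\geq 0$ large enough that every $f_i$, $g_i$, $f$, $g$ is a $(K,A)$-quasi-isometry, that each $\overline{f_i}$ and $\overline{f}$ is a $(K,A)$-quasi-isometry and an $A$-coarse inverse to the corresponding map, and that $f_i\cc{A}f$ and $g_i\cc{A}g$ (enlarging $A$ if necessary so the single constant $A$ governs both coarse convergences).

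For part $(1)$, fix $x\in X$. Since $f_i\cc{A}f$, there is an $N_x$ with $d(f_i(x),f(x))\leq A$ for all $i\geq N_x$. As $g_i$ is a $(K,A)$-quasi-isometry, $d(g_i(f_i(x)),g_i(f(x)))\leq KA+A$ for all such $i$. Now I would like to say $g_i(f(x))$ is eventually close to $g(f(x))$; this follows by applying the hypothesis $g_i\cc{A}g$ \emph{at the point} $f(x)\in X$, yielding $M_{f(x)}$ with $d(g_i(f(x)),g(f(x)))\leq A$ for $i\geq M_{f(x)}$. Combining via the triangle inequality, for all $i\geq\max(N_x,M_{f(x)})$ we get $d\big((g_i\circ f_i)(x),(g\circ f)(x)\big)\leq KA+2A$, which is a bound independent of $x$. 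Hence $g_i\circ f_i\cc{KA+2A}g\circ f$.

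For part $(2)$, fix $x\in X$. The key trick is to push $x$ back through $f$: write $y\coloneqq f(x)$, so that $d(f(\overline{f}(y)),y)\leq A$ and hence $d(f(\overline f(y)),f(x))\leq A$, which gives $d(\overline f(y)\,, x)\leq KA+KA=2KA$ using that $f$ is a $(K,A)$-quasi-isometry (here I estimate $d(\overline f(y),x)$ from $d(f(\overline f(y)),f(x))$). Thus $\overline{f}(y)$ and $x$ are uniformly close, so it suffices to control $\overline{f_i}(y)$ and show it is eventually uniformly close to $\overline f(y)$, equivalently to $x$. Since $f_i\cc{A}f$ applied at the point $\overline f(y)$ gives $d(f_i(\overline f(y)),f(\overline f(y)))\leq A$ for $i$ large, and $d(f(\overline f(y)),y)\leq A$, we get $d(f_i(\overline f(y)),y)\leq 2A$ for $i$ large. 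Applying $\overline{f_i}$ (a $(K,A)$-quasi-isometry) and using that $\overline{f_i}\circ f_i$ is $A$-close to $\id_X$, we obtain $d(\overline{f_i}(y),\overline f(y))\leq d(\overline{f_i}(f_i(\overline f(y))),\overline{f_i}(y))+A\leq K(2A)+A+A$ for $i$ large. Chaining this with the bound $d(\overline f(y),x)\leq 2KA$ yields a uniform constant $B'$ with $d(\overline{f_i}(f(x)),x)\leq B'$ for $i$ large. Finally one more triangle-inequality step converts this into a bound on $d(\overline{f_i}(z),\overline f(z))$ for arbitrary $z\in X$ by writing $z$ as (uniformly close to) $f(x)$ with $x\coloneqq \overline f(z)$; this relies only on coboundedness-free bookkeeping, since $f$ is surjective up to the constant $A$. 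I expect the bookkeeping in part $(2)$ — keeping track of which coarse-convergence hypothesis is applied at which point, and in what order the coarse-inverse estimates are chained — to be the main obstacle, but it is purely a matter of careful constant-chasing rather than any genuine difficulty.
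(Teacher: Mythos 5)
Your proof is correct and takes essentially the same approach as the paper: part (1) is the same two-term triangle inequality, and part (2) chains the identical estimates (the coarse-inverse identities, the hypothesis $f_i\cc{}f$ applied at a pushed-back point, and the uniform quasi-isometry constants). The only cosmetic difference is that you first bound $d(\overline{f_i}(y),\overline{f}(y))$ for $y=f(x)$ and then extend to arbitrary $z$ via coarse surjectivity of $f$, whereas the paper handles an arbitrary $y$ directly by choosing $x$ with $d(f(x),y)\leq A$.
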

\begin{proof}
	We pick $K$ and $A$ large enough such that  $\overline f\circ f\close{A} \id_X$,  $\overline {f_i}\circ f_i\close{A} \id_X$ for all $i$, and all  maps under consideration are $(K,A)$-quasi-isometries. Choose $r$ large enough such that $f_i \cc{r} f$ and $g_i \cc{r} g$. We claim $g_i\circ f_i\cc{R} g\circ f$, where $R\coloneqq Kr+A+r$. Indeed, let $x\in X$ and choose $N$ large enough such that $d(f_i(x),x)\leq r$ and $d(g_if(x),gf(x))\leq r$ for all $i\geq N$. Then \begin{align*}
		d(g_if_i(x),gf(x))\leq d(g_if_i(x),g_if(x))+d(g_if(x),gf(x))\leq Kr+A+r=R
	\end{align*}
	for all $i\geq N$, so $g_if_i\cc{R} gf$ as required.
	
	We also claim $\overline{f_i}\cc{R'} \overline{f}$, where $R'\coloneqq K(r+2A)+5A$. Indeed, let $y\in X$ and choose $x\in X$ such that $d(f(x),y)\leq A$. We pick $M$ large enough so that $d(f_i(x),f(x))\leq r$ for $i\geq M$. Thus 
	\begin{align*}
		d(\overline{f_i}(y),\overline{f}(y))
		&\leq  d\Big(\overline{f_i}(f(x)),\overline{f}(f(x))\Big)+2KA+2A\\
		&\leq d\Big(\overline{f_i}(f(x)),x\Big)+2KA+3A\leq d\Big(\overline{f_i}(f_i(x)),x\Big)+Kr+2KA+4A\\
		&\leq Kr+2KA+5A=R'
	\end{align*}
	for all $i\geq M$. Thus $\overline{f_i}\cc{R'} \overline{f}$.
\end{proof}		
If $\phi:G\to X^X$ is a quasi-action, we would like to define a topology on $G$ so that $g_i\to g$ if and only if $\phi(g_i)\cc{} g$. Doing so directly gives a topology that might be neither   Hausdorff nor  locally compact. This motivates the following definition:		
		
	\begin{defn}
		A quasi-action $\phi:G\rightarrow  X^X$ is said to be:
		\begin{enumerate}
			\item \emph{coarsely faithful} if whenever $\phi(g)\close{} \id_X$, then $g=e_G$.
			\item \emph{coarsely complete} if whenever $\phi(g_i)$ is a coarsely convergent sequence, there exists a $g\in G$ such that  $\phi(g_i)\cc{}\phi(g)$.
		\end{enumerate} 
	\end{defn}
\begin{rem}
We caution the reader that the  term coarsely faithful is at variance with the usual rule of thumb that prefixing a mathematical property with the adjective `coarsely' weakens the property in question. Indeed, an isometric action $\phi: G\rightarrow \Isom(X)$ might be  faithful but not coarsely faithful. 
\end{rem}
The following proposition allows us to quasi-conjugate a quasi-action to one that is coarsely faithful and coarsely complete:
\begin{prop}\label{prop:coarsecomplete}
	Let $X$ be a locally finite quasi-geodesic tame metric space and let $\phi:G\rightarrow  X^X$ be a cobounded quasi-action. Then there exists a coarsely faithful and coarsely complete cobounded quasi-action $\hat G\qa X$ and a  homomorphism $\rho:G\to \hat G$ such that  the $\id_X$ is a $\rho$-quasi-conjugacy from $G\qa X$ to $\hat G\qa X$. Moreover, every $g\in \hat G$ is the coarse limit of a sequence in $\rho(G)$.
\end{prop}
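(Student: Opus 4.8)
The plan is to build $\hat G$ as a quotient of an enlarged group consisting of "coarse limits" of sequences in $G$, following Whyte's construction. First I would introduce the set $\cL$ of all coarsely convergent sequences $(g_i)$ in $G$; by Lemma \ref{lem:uniform_cc_tame}, each such sequence has a well-defined coarse limit $\phi(g_i)\cc{} f$ for some $(K,A)$-quasi-isometry $f$ (with $K,A$ depending only on the fixed quasi-action constants, since all $\phi(g_i)$ are uniform quasi-isometries). Let $\cL$ carry the componentwise group operation; Lemma \ref{lem:mult_inv_cont} shows that the coarse-limit assignment respects multiplication and inversion up to bounded error, but to get an honest group I would instead consider the set $\widehat\Phi$ of quasi-isometries of $X$ arising as coarse limits of sequences in $\phi(G)$, with the subgroup structure induced by Lemma \ref{lem:mult_inv_cont}, modulo the equivalence "$f\sim f'$ iff $f\close{} f'$". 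Define $\hat G$ to be this set of equivalence classes of coarse limits; the map $\rho:G\to \hat G$ sends $g$ to the class of $\phi(g)$ (a constant sequence is coarsely convergent). The quasi-action $\hat G\qa X$ sends a class $[f]$ to a representative quasi-isometry; this is well-defined up to bounded error, and by Lemma \ref{lem:mult_inv_cont} it is a quasi-action. That $\id_X$ is a $\rho$-quasi-conjugacy is immediate, and coboundedness of $\hat G\qa X$ follows since $\rho(G)\subseteq \hat G$ already acts coboundedly.

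Next I would verify the two key properties. Coarse faithfulness is built in: if $\phi(g)\close{}\id_X$ as a quasi-isometry of $X$ — no, wait, I want coarse faithfulness of the new action $\hat G\qa X$, which holds by construction since elements of $\hat G$ are equivalence classes under $\close{}$, so if a class acts close to $\id_X$ it equals the identity class. Coarse completeness is the crux: given a sequence $([f_n])$ in $\hat G$ such that the associated quasi-isometries coarsely converge, I must produce an element of $\hat G$ that is their coarse limit — i.e. I must show the coarse limit is again a coarse limit of a sequence from $\phi(G)$. This is where tameness and the uniformity of Lemma \ref{lem:uniform_cc_tame} are essential: each $[f_n]$ is the coarse limit of a sequence $(\phi(g_{i,n}))_i$ with uniform constants, and by Lemma \ref{lem:uniform_cc_tame}(1) the convergence is uniform, $\phi(g_{i,n})\cc{B} f_n$ for a single constant $B$. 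Then a diagonal argument — choosing $g_{\kappa(n),n}$ with $\kappa$ growing fast enough, in the spirit of Lemma \ref{lem:diagonalconv} — produces a sequence in $\phi(G)$ coarsely converging to the coarse limit of $(f_n)$, with a controlled constant. The main obstacle is making this diagonal argument precise while keeping all coarse-convergence constants uniform; Lemma \ref{lem:uniform_cc_tame} is precisely designed to supply the needed uniformity, and local finiteness of $X$ (available by Remark \ref{rem:bdd_geom}) lets one extract the relevant subsequences.

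Finally, the "moreover" clause — that every $g\in\hat G$ is the coarse limit of a sequence in $\rho(G)$ — is true essentially by the definition of $\hat G$ as a set of coarse limits of sequences from $\phi(G)$; one just has to note that a coarse limit of such a sequence, viewed as an element of $\hat G$, is the $\cc{}$-limit of the images under $\rho$ of the corresponding group elements. I would close by remarking that $\hat G$ is a group under the operations inherited via Lemma \ref{lem:mult_inv_cont} (associativity and the coarse-inverse property are checked on representatives, using that two coarse limits close to each other define the same element), so $\hat G\qa X$ is genuinely a cobounded quasi-action with the stated properties. The expected difficulty is entirely concentrated in the coarse completeness step and its diagonal extraction; the rest is bookkeeping with the lemmas already established.
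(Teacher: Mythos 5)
Your proposal is correct and follows essentially the same route as the paper: $\hat G$ is the subgroup of $\QI(X)$ formed by classes of coarse limits of sequences in $\phi(G)$, with uniform representatives from Lemma \ref{lem:uniform_cc_tame}, the group/quasi-action structure from Lemma \ref{lem:mult_inv_cont} and tameness, coarse faithfulness built in, and coboundedness inherited from $\rho(G)$. The diagonal extraction you sketch for coarse completeness is exactly the content of Lemma \ref{lem:clim of clims} (the coarse analogue of Lemma \ref{lem:diagonalconv}), which the paper isolates and proves using local finiteness and the uniform constant $B$, just as you describe.
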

To prove an action is coarsely complete, we use the following  coarse analogue of Lemma~\ref{lem:diagonalconv}.
\begin{lem}\label{lem:clim of clims}
	Let $X$ be a tame locally finite metric space.
	Suppose that  $(f_j)_{j=1}^\infty\cc{} f$ and $(f_{i,j})_{i=1}^\infty\cc{} f_j$ for each $j$, where all these maps are $(K,A)$-quasi-isometries of $X$. Then there exist sequences $(n_j)$ and $(m_j)$ in $\bbN$ such that $(f_{m_j,n_j})_{j=1}^\infty\cc{} f$. 
\end{lem}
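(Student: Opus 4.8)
The plan is to run a diagonal argument in which the coarse error constant is kept under control by Lemma \ref{lem:uniform_cc_tame}. First I would apply Lemma \ref{lem:uniform_cc_tame} to the pair $(K,A)$ to produce a single constant $B\geq 0$ valid for all coarsely convergent sequences of $(K,A)$-quasi-isometries. Since $f$ and each $f_j$ are $(K,A)$-quasi-isometries, part~(2) of that lemma then upgrades the two hypotheses to $(f_j)_{j=1}^\infty\cc{B}f$ and, for each fixed $j$, $(f_{i,j})_{i=1}^\infty\cc{B}f_j$ --- crucially with the \emph{same} constant $B$, independent of $j$. This uniformity, which is where both tameness and local finiteness enter, is the heart of the matter.

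Next I would enumerate $X=\{x_1,x_2,\dots\}$, which is legitimate since a locally finite metric space is countable. From $(f_j)_{j=1}^\infty\cc{B}f$ one extracts, for each $k$, a threshold $N_k\in\bbN$ past which $d(f_j(x_k),f(x_k))\leq B$; taking a strictly increasing sequence $(n_j)$ with $n_j\geq\max\{N_1,\dots,N_j\}$ then guarantees $d(f_{n_j}(x_k),f(x_k))\leq B$ for all $k\leq j$. With $(n_j)$ fixed, for each $j$ the instance $(f_{i,n_j})_{i=1}^\infty\cc{B}f_{n_j}$ of the second (upgraded) hypothesis yields thresholds $M_k^{(j)}\in\bbN$, and setting $m_j\coloneqq\max\{M_1^{(j)},\dots,M_j^{(j)}\}$ gives $d(f_{m_j,n_j}(x_k),f_{n_j}(x_k))\leq B$ for all $k\leq j$. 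A triangle inequality then shows $d(f_{m_j,n_j}(x_k),f(x_k))\leq 2B$ whenever $j\geq k$, so that for every point $x=x_k\in X$ we have $d(f_{m_j,n_j}(x),f(x))\leq 2B$ for all $j\geq k$; this is precisely $(f_{m_j,n_j})_{j=1}^\infty\cc{2B}f$, as required.

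I do not expect a genuine obstacle. The one thing that can derail a naive diagonalisation is that the error constants in the approximations $(f_{i,j})_{i=1}^\infty\cc{}f_j$ might blow up with $j$, so that no diagonal sequence coarsely converges; Lemma \ref{lem:uniform_cc_tame} is designed precisely to prevent this, and once the common constant $B$ is secured the remainder is routine bookkeeping with thresholds plus one use of the triangle inequality. The statement is thus the coarse counterpart of Lemma \ref{lem:diagonalconv}, with the uniform constant $B$ playing the role of ``first countability'' in the earlier, topological version.
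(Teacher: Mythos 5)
Your proposal is correct and follows essentially the same route as the paper: enumerate $X$ using local finiteness, invoke Lemma \ref{lem:uniform_cc_tame} to get the uniform constant $B$ for both layers of coarse convergence, choose the thresholds $n_j$ and then $m_j$ so that the approximations hold on $\{x_1,\dots,x_j\}$, and finish with the triangle inequality to get $(f_{m_j,n_j})\cc{2B}f$. The paper's proof is just a more compressed write-up of exactly this diagonalisation.
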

\begin{proof}
	Since $X$ is a locally finite metric space, hence is countable, we may enumerate $X$ as $X=\{x_1,x_2,\dots, \}\subseteq X$. Choose $B$ as in  Lemma \ref{lem:uniform_cc_tame}.  Since $(f_j)\cc{} f$, for each $j\in \bbN$ we can choose $n_j$ such that $d(f(x_l),f_k(x_l))\leq B$ for each $l\leq j$ and $k\geq n_j$. Similarly, for each $j$  we can choose $m_j$ such that $d(f_{n_j}(x_l),f_{k,n_j}(x_l))\leq B$ for each $l\leq j$ and $k\geq m_j$. We claim $(f_{m_j,n_j})\cc{} f$. 
	Indeed, let $x\in X$ so that $x=x_l$ for some $l$.  Therefore \begin{align*}
		d(f_{m_j,n_j}(x),f(x))\leq d(f_{m_j,n_j}(x),f_{n_j}(x))+d(f_{n_j}(x),f(x))
		\leq 2B
	\end{align*}
	for all $j\geq l$, so that $(f_{m_j,n_j})\cc{} f$ as required.
\end{proof}

\begin{proof}[Proof of Proposition \ref{prop:coarsecomplete}]
	Let $\hat G$ be the subset of $\QI(X)$ consisting of  equivalence classes of quasi-isometries that arise as coarse limits of coarsely convergent  sequences in $\{\phi(g)\mid g\in G\}$, and let $\rho:G\rightarrow \hat G$ be the map $g\mapsto [\phi(g)]$. It follows from Lemma \ref{lem:mult_inv_cont} that $\hat G$ is  a subgroup of $\QI(X)$.  By Lemma \ref{lem:uniform_cc_tame} there exists constants $K$ and $A$ such that each $g\in \hat G$ can be represented by a $(K,A)$-quasi-isometry $f_g:X\rightarrow X$. Pick $B\geq A$ as in Lemma \ref{lem:tame} such that if  $f,g:X\to X$ are $(K,A)$-quasi-isometries such that $f\close{} g$, then $f\close{B} g$. Since $f_g\circ f_h\close{B}f_{gh}$ for all $g,h\in G$, the map $\hat\phi:\hat G \rightarrow X^X$ given by $g\mapsto f_g$ is a $(K,B)$-quasi-action $\hat G\qa X$. Since $\phi(g)\close{B}\hat \phi(\rho(g))$ for all $g\in G$, $\id_X$ is a $\rho$-quasi-conjugacy. 
	
	As $\hat G$ is a subgroup of $\QI(X)$ and $g=[\hat\phi (g)]=[f_g]$,  it is clear $\hat \phi$ is coarsely faithful. Moreover,   Lemma \ref{lem:clim of clims} ensures the coarse limit of a coarsely convergent sequence in $\{\hat \phi(g)\mid g\in \hat G\}$ is  a coarse limit of a coarsely convergent sequence in $\{\phi(g)\mid g\in G\}$, implying $\hat \phi$ is coarsely complete.
\end{proof}	
	
	We now prove the following:
	\begin{prop}\label{prop:topcc}
			Let $X$ be a locally finite quasi-geodesic  metric space and let $\phi:G\to X^X$ be a cobounded  quasi-action that is coarsely faithful and coarsely complete.  Then $G$ can be given the structure of a second-countable locally compact compactly generated topological group such that  $(g_i)$ converges to $g$ if and only if $\phi(g_i)\cc{} \phi(g)$. Moreover, $G$ has no nontrivial compact normal subgroups and the identity map $G\to G$ is a topological completion of $\phi$.
	\end{prop}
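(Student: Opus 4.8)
The plan is to topologize $G$ by declaring that a sequence $(g_i)$ converges to $g$ precisely when $\phi(g_i)\cc{}\phi(g)$, verify this arises from an actual topology, and then check the group and local-compactness axioms using the tameness machinery already developed. First I would fix uniform constants $K\geq 1$, $A\geq 0$ so that all relevant quasi-isometries are $(K,A)$-quasi-isometries, and fix $B=B(K,A)$ as in Lemma \ref{lem:uniform_cc_tame}; this $B$ is what makes coarse convergence behave like genuine convergence, since by Lemma \ref{lem:uniform_cc_tame} a coarsely convergent sequence of $(K,A)$-quasi-isometries always converges to its coarse limit with the \emph{uniform} error $B$. To actually produce a topology (rather than just a convergence class) I would define, for $g\in G$ and finite $F\subseteq X$, the set $U(g,F)\coloneqq \{h\in G\mid d(\phi(h)(x),\phi(g)(x))\leq B\text{ for all }x\in F\}$ and take these as a neighbourhood basis at $g$. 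One checks these form a basis for a topology: $g\in U(g,F)$ up to the usual $B$-fudge (here one replaces $B$ by $2B$ throughout, or equivalently works with the reformulation in Lemma \ref{lem:tame}, to absorb the constant $d(\phi(e)(x),x)\le A\le B$ and the failure of reflexivity at scale $B$), and translation/refinement properties follow since a finite union of finite sets is finite. Then $g_i\to g$ in this topology iff for every finite $F$ eventually $g_i\in U(g,F)$, iff $\phi(g_i)\cc{B}\phi(g)$, iff $\phi(g_i)\cc{}\phi(g)$ by Lemma \ref{lem:uniform_cc_tame}(2) — so the topology realises the prescribed convergence. Second-countability is immediate since $X$ is locally finite hence countable, so there are only countably many finite $F\subseteq X$ and countably many translates.

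Next I would verify $G$ is a topological group, i.e. multiplication and inversion are continuous. Since $X$ is locally finite the topology is first-countable (hence sequential), so it suffices to check sequential continuity, which is exactly Lemma \ref{lem:mult_inv_cont}: if $g_i\to g$ and $h_i\to h$ then $\phi(g_i)\cc{}\phi(g)$ and $\phi(h_i)\cc{}\phi(h)$, whence $\phi(g_ih_i)\close{A}\phi(g_i)\circ\phi(h_i)\cc{}\phi(g)\circ\phi(h)\close{A}\phi(gh)$, giving $g_ih_i\to gh$; similarly $\phi(g_i^{-1})$ is a coarse inverse to $\phi(g_i)$ with uniform constants, so $\phi(g_i^{-1})\cc{}\phi(g^{-1})$ and $g_i^{-1}\to g^{-1}$. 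Hausdorffness is precisely where coarse faithfulness enters: if $g_i\to g$ and $g_i\to g'$, then $\phi(g)\close{}\phi(g')$, so $\phi(g^{-1}g')\close{}\id_X$, hence $g^{-1}g'=e_G$ by coarse faithfulness, i.e. $g=g'$. (In a first-countable space, uniqueness of sequential limits gives Hausdorff.)

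Then I would establish local compactness and compact generation. Fix a basepoint $x_0\in X$ and for $r\geq 0$ set $V_r\coloneqq\{g\in G\mid d(\phi(g)(x_0),x_0)\leq r\}$. Each $V_r$ is closed (if $g_i\in V_r$ and $g_i\to g$ then $d(\phi(g)(x_0),x_0)\le r+B$, so $V_r$ is closed once we enlarge $r$ slightly — more precisely $\overline{V_r}\subseteq V_{r+B}$, which suffices). I claim each $V_r$ is compact: given a sequence $(g_i)$ in $V_r$, the sequence $(\phi(g_i)(x_0))$ is bounded, so by the Coarse Arzelà–Ascoli Theorem (Lemma \ref{lem:coarse AA}) a subsequence $\phi(g_{n_i})$ converges pointwise, hence coarsely, to a $(K,A)$-quasi-isometry $f$; by coarse completeness $f\close{}\phi(g)$ for some $g\in G$, and then $g_{n_i}\to g$ in $G$ — so $V_r$ is sequentially compact, hence compact by first-countability. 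Since $\phi$ is cobounded, Proposition \ref{prop:generalms} supplies a bounded generating set $S$ of $G$, and $S\subseteq V_r$ for $r$ large; moreover $V_r$ is a neighbourhood of $e$ for $r$ large (it contains $U(e,\{x_0\})$ once $r\geq B$), so $G$ is generated by a compact neighbourhood of the identity, hence is locally compact and compactly generated. For the statement that $G$ has no nontrivial compact normal subgroups: if $L\vartriangleleft G$ is compact, then $L$ is bounded (being a compact neighbourhood-free subset of some $V_r$), and applying Proposition \ref{prop:compact_normal} to the topological completion structure — or directly the computation in that proof, using coboundedness and normality — shows $d(\phi(g)(x),x)\leq D$ for all $g\in L$, $x\in X$, i.e. $\phi(g)\close{}\id_X$; coarse faithfulness then forces $g=e_G$, so $L$ is trivial. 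Finally, that $\id_G:G\to G$ is a topological completion of $\phi$ follows from Proposition \ref{prop:TCvsQC} together with the above once we produce a proper metric: $G$ is second-countable and compactly generated, so carries a geodesically adapted metric, and the identity is trivially a quasi-conjugacy from $\phi$ to itself; condition (3') (density) is vacuous and condition (3) of a topological completion (boundedness $\Leftrightarrow$ relatively compact image) holds because $T\subseteq G$ is bounded iff $T\subseteq V_r$ for some $r$ iff $\overline{T}$ is compact, using the compactness of the $V_r$ established above.

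The main obstacle I anticipate is the clean verification that the proposed neighbourhood basis $\{U(g,F)\}$ genuinely defines a topology whose convergent sequences are exactly the coarsely convergent ones — the bookkeeping of the constant $B$ (and the need to occasionally inflate it to $2B$ or $3B$ to absorb triangle inequalities and the $A$-slack in the definition of a quasi-action) is fiddly, and one must use Lemma \ref{lem:uniform_cc_tame} at exactly the right moment to upgrade "coarsely convergent" to "convergent with uniform error $B$." Everything else is a routine translation of Lemmas \ref{lem:coarse AA}, \ref{lem:uniform_cc_tame}, \ref{lem:mult_inv_cont}, \ref{lem:clim of clims} and Propositions \ref{prop:generalms}, \ref{prop:compact_normal}, \ref{prop:TCvsQC} into the language of this topology.
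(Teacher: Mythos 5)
Your overall architecture matches the paper's: continuity of multiplication and inversion via Lemma \ref{lem:mult_inv_cont}, Hausdorffness from coarse faithfulness, local compactness from the coarse Arzel\`a--Ascoli Lemma \ref{lem:coarse AA} together with coarse completeness, the bounded-iff-precompact property giving the topological completion, and triviality of compact normal subgroups via Proposition \ref{prop:compact_normal} plus coarse faithfulness. The genuine gap is at the step you dismiss as constant bookkeeping: the claim that the sets $U(g,F)=\{h\in G\mid d(\phi(h)(x),\phi(g)(x))\leq B\ \text{for all }x\in F\}$ form a neighbourhood basis of a topology whose convergent sequences are exactly the coarsely convergent ones. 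The relation ``within $B$ on $F$'' is not transitive, so for $h\in U(g,F)$ there need be no finite $F'$ with $U(h,F')\subseteq U(g,F)$; the neighbourhood-system axiom therefore fails, and inflating $B$ to $2B$ or $3B$ does not repair it, since the triangle inequality only ever yields $2B$ and tameness cannot convert $2B$-closeness on a finite set back into $B$-closeness. Concretely: if you instead declare the $U(g,F)$ to be open (a subbasis), then coarse convergence no longer implies topological convergence (you would need $\phi(g_i)$ eventually $B$-close to $\phi(h)$ on $F$ while only knowing $2B$); if you declare $U\subseteq G$ open whenever each $g\in U$ contains some $U(g,F)$, you do get a genuine topology (in fact the paper's topology of coarse convergence), but then your very first ``iff'' --- that topological convergence of $(g_i)$ to $g$ forces eventual membership in every $U(g,F)$ --- requires knowing that each $U(g,F)$ is a neighbourhood of $g$, and that is precisely the nontrivial point, not a routine verification.

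The paper resolves this by defining open sets sequentially (Proposition \ref{prop:deftopology}) and introducing the subtler basic sets $U_{h,F}$ of Lemma \ref{lem:fund nbhd}, defined in terms of \emph{all} sequences coarsely converging to a given element; proving these are open is exactly where the diagonal Lemma \ref{lem:clim of clims} is needed, and the characterisation of convergent sequences then follows from the constant-sequence trick, since the paper's $U_{g,F}$ sits inside your $U(g,F)$. You list Lemma \ref{lem:clim of clims} among the ``routine'' ingredients but never deploy it at the one place it is indispensable, so your proposal as written does not establish that topological convergence implies coarse convergence. The remainder of your argument is essentially the paper's, modulo small slips: sequential compactness yields compactness via second countability (or metrisability via Birkhoff--Kakutani, as in the paper), not first countability alone; and note that, like the paper, you invoke the tameness-dependent Lemma \ref{lem:uniform_cc_tame} even though tameness is not stated as a hypothesis of the proposition, which is consistent with how the result is applied.
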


\begin{rem}
	Since a first countable topological space  is completely  determined by its convergent sequences, any topology on $G$ satisfying the properties in Proposition \ref{prop:topcc} is unique.
\end{rem}
	
	 For the remainder of this section, which is devoted to proving Proposition \ref{prop:topcc},  we  fix $\phi:G\to X^X$ as in the statement of Proposition \ref{prop:topcc}.  We also fix $K$ and $A$ such that $\phi$ is a $(K,A)$-quasi-action, and  a constant $B\geq 0$ as in Lemma \ref{lem:uniform_cc_tame}.
 We define  a topology on $G$ and show it satisfies the required properties. 
For ease of notation, when unambiguous we will write $g\close{} g'$ to denote $\phi(g)\close{} \phi(g')$, $g_i\cc{} g$ to denote  $\phi(g_i)\cc{} \phi(g)$, etc. We recall   $g\cdot x$ can be used to denote $\phi(g)(x)$.
\begin{prop}\label{prop:deftopology}
	Let $\cU$ be the set of all  subsets $U\subseteq G$ that satisfy the following property:
	if $g\in U$ and $(g_i)\cc{} g$, there exists an $N$ such that $g_i\in U$ for all $i\geq N$. Then $\cU$ form the open sets of a topology on $G$ called the \emph{topology of coarse convergence}.
\end{prop}
\begin{proof}
	It is obvious that $\emptyset,G\in \cU$.  Suppose $(g_i)\cc{} g$ and  $g\in \cup_{j\in J} U_j$, where  each $U_j\in\cU$. Then $g\in U_j$ for some $j$, so  there exists an $N$ such that $g_i\in U_j\subseteq \cup_{j\in J} U_j$ for all $i\geq N$. Therefore  $\cup_{j\in J} U_j\in \cU$.
	Now suppose $(g_i)\cc{} g$ and  $g\in  U_1\cap \dots \cap U_n$, where each $U_j\in \cU$.  Since $g\in U_j$ and $U_j\in \cU$, there exists an $N_j$ such that $g_i\in U_j$ for all $i\geq N_j$. Thus $g_i\in U_1\cap \dots \cap U_n$ for all $i\geq \max(N_1,\dots, N_n)$ and so $ U_1\cap \dots \cap U_n\in\cU$.
\end{proof}
We now define a basis of   the topology of coarse convergence.

\begin{lem}\label{lem:fund nbhd}
	For each $h\in G$ and  finite $F\subseteq X$,  we define \[U_{h,F}\coloneqq \{g\in G\mid \text{if $(g_i)\cc{} g$, $\exists N$ such that $g_i\close{F,B} h$ for all $i\geq N$}\}.\] Then $h\in U_{h,F}$ for all $h\in G$ and  $F\subseteq X$, and  $
		\cB\coloneqq \{U_{h,F}\mid h\in G, F\subseteq G \text{ is finite}\}$ is a countable basis of open sets in $G$. 
\end{lem}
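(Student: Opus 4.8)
The plan is to verify four things: (i) $h\in U_{h,F}$ always; (ii) each $U_{h,F}$ belongs to $\cU$, i.e.\ is open in the topology of coarse convergence; (iii) every $U\in\cU$ is a union of members of $\cB$; and (iv) $\cB$ is countable. Claim (i) is immediate from Lemma \ref{lem:uniform_cc_tame}(2): if $(g_i)\cc{} h$, then since $\phi(h)$ and each $\phi(g_i)$ is a $(K,A)$-quasi-isometry, in fact $\phi(g_i)\cc{B}\phi(h)$, so as $F$ is finite there is an $N$ with $g_i\close{F,B} h$ for all $i\geq N$; hence $h\in U_{h,F}$.

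For (ii) I would argue by contradiction. Suppose $g\in U_{h,F}$ and $(g_i)\cc{} g$ but $g_i\notin U_{h,F}$ for infinitely many $i$; passing to a subsequence (which still coarsely converges to $g$) I may assume $g_i\notin U_{h,F}$ for every $i$. By definition of $U_{h,F}$, each such $g_i$ admits a sequence $(t^{(i)}_j)_j\cc{} g_i$ with $t^{(i)}_j\nclose{F,B} h$ for infinitely many $j$; passing to a subsequence in $j$ yields $(s^{(i)}_l)_l\cc{} g_i$ with $s^{(i)}_l\nclose{F,B} h$ for all $l$. Feeding the array $\phi(s^{(i)}_l)\cc{}\phi(g_i)$ (for each $i$) together with $\phi(g_i)\cc{}\phi(g)$ into the coarse diagonal lemma, Lemma \ref{lem:clim of clims}, produces indices $(n_i)$ and $(m_i)$ with $(s^{(n_i)}_{m_i})_i\cc{} g$. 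But $s^{(n_i)}_{m_i}$ is a term of the sequence $(s^{(n_i)}_l)_l$, so $s^{(n_i)}_{m_i}\nclose{F,B} h$ for every $i$, contradicting $g\in U_{h,F}$. I expect this nested-sequence argument — reconciling the universal quantifier over sequences in the definition of $U_{h,F}$ with the single diagonal sequence delivered by Lemma \ref{lem:clim of clims} — to be the main obstacle; the remaining parts are routine once it is in place.

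For (iii) it suffices to show that for every $U\in\cU$ and every $g\in U$ there is a finite $F\subseteq X$ with $g\in U_{g,F}\subseteq U$, since $g\in U_{g,F}$ holds by (i). If no such $F$ existed, enumerate $X=\{x_1,x_2,\dots\}$ (possible because $X$ is locally finite and quasi-geodesic, hence countable), put $F_n=\{x_1,\dots,x_n\}$, and pick $g_n\in U_{g,F_n}\setminus U$. Applying the definition of $U_{g,F_n}$ to the constant sequence shows $g_n\close{F_n,B} g$, whence $\phi(g_n)\cc{B}\phi(g)$, i.e.\ $(g_n)\cc{} g$; since $U\in\cU$ this forces $g_n\in U$ for $n$ large, a contradiction. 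Finally, for (iv) note that $U_{h,F}$ depends on $h$ only through the tuple $(\phi(h)(x))_{x\in F}$, so for each fixed finite $F$ the assignment $h\mapsto U_{h,F}$ factors through the countable set $X^F$; as there are only countably many finite $F\subseteq X$, the family $\cB$ is countable.
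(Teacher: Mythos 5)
Your proof is correct and takes essentially the same route as the paper's: membership $h\in U_{h,F}$ via Lemma \ref{lem:uniform_cc_tame}, openness of each $U_{h,F}$ by contradiction through the diagonal Lemma \ref{lem:clim of clims}, the constant-sequence plus enumeration-of-$X$ argument for the basis property, and countability by noting $U_{h,F}$ depends only on $\phi(h)|_F$. The only (welcome) difference is that you make explicit the passage to subsequences needed to turn ``$t_j\nclose{F,B}h$ for infinitely many $j$'' into ``for all $j$'', a step the paper leaves implicit.
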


\begin{proof}
	We first show each $U_{h,F}$ is open. Suppose for contradiction  that $ U_{h,F}$ is not open. Then there exists some $g\in U_{h,F}$ and a sequence $(g_i)\cc{} g$ such that $g_i\notin U_{h,F}$ for all $i$. 
	Since $g_i\notin U_{h,F}$, there is a sequence $(g^{i}_j)_{j=1}^\infty\cc{} g_i$ such that $g^{i}_j\nclose {F,B} h$ for all $j$. By Lemma \ref{lem:clim of clims}, there is a  sequence  $(k_i)\cc{} g$ such that   $k_i\nclose {F,B} h$ for all $i$,  contradicting our assumption that $g\in U_{h,F}$. Thus $U_{h,F}$ is open.

	We now show $h\in U_{h,F}$ for every finite $F\subseteq G$. Indeed, suppose  $(h_i)\cc{} h$. It follows from Lemma \ref{lem:uniform_cc_tame} and our choice of $B$  that $(h_i)\cc{B} h$, so there exists an $N$ such that $h_i\close{F,B} h$ for all $i\geq N$. Thus $h\in U_{h,F}$.
	
	We now show $\cB$ is a basis of $G$. Let $U\subseteq G$ be open and  let $h\in U$. We claim there is some finite $F_h\subseteq G$ such that $U_{h,F_h}\subseteq U$. It then follows that $U=\cup_{h\in U}U_{h,F_h}$, so that $\cB$ is indeed a basis of $G$. To prove the claim, we assume for contradiction that $U_{h,F}\not\subseteq U$ for any finite $F\subseteq G$. Since $X$ is locally finite, we enumerate $X=\{x_1,x_2,\dots\}$ and set $X_i\coloneqq \{x_1,\dots, x_i\}$ for each $i$. For each $i$, we can  thus choose $g_i\in U_{h,X_i}\setminus U$. By considering the constant sequence $(g_i)_{j=1}^\infty\cc{} g_i$, it follows that $g_i\close{X_i,B}h$. Thus   $(g_i)\cc{B} g$ but $g_i\notin U$ for all $i$, contradicting our assumption that $U$ is open. 
	
	All that remains is to show $\cB$ 
	is countable. We first note that $U_{h,F}=U_{h',F'}$ if $h|_F=h'|_{F'}$. Thus even though $G$ might be uncountable, since $X$ is countable (as it is locally finite) there are only countably many functions from finite subsets of $X$ to $X$, and hence countably many $U_{h,F}$ for $h\in G$ and finite $F\subseteq X$.
\end{proof}

\begin{proof}[Proof of Proposition \ref{prop:topcc}]
	 It follows from Proposition \ref{prop:deftopology} and Lemma \ref{lem:fund nbhd} that when equipped with the topology of coarse convergence, $G$ is a second-countable topological space and that $(g_i)\to g$ if and only if $(\phi(g_i))\cc{} g$. Moreover, Lemma \ref{lem:mult_inv_cont} implies multiplication and inverses are continuous.  Since topological groups are assumed to be Hausdorff, we  also verify $G$ is Hausdorff.
	 Indeed, as $G\qa X$ is coarsely faithful,   $g\not\close{} h$ for $g\neq h\in G$, so there exists some $x\in X$ such that $d(g\cdot x, h\cdot x)>2B$. It follows that $U_{g,\{x\}}$ and $U_{h,\{x\}}$ are disjoint neighbourhoods of $g$ and $h$, and so $G$ is Hausdorff. Thus $G$ is a  second-countable topological group. 
		
		As $G$ is a Hausdorff and second-countable topological group,  the Birkhoff--Kakutani theorem implies $G$ is metrisable. Thus a subspace of $G$ is  compact if and only if it is  sequentially compact. To show local compactness of $G$, we show every $g\in G$ has a sequentially compact neighbourhood $V_g$. Fix $x_0\in X$ and $g\in G$ and let  $V_g$ be the closure of $U_{g,\{x_0\}}$ in $G$. We claim that $V_g$ is sequentially compact.

		Indeed, $h\in V_g$ if and only if there is a sequence $(h_i)$ in $U_{g,\{x_0\}}$ such that $h_i\cc{} h$. For every $h_i\in U_{g,\{x_0\}}$, $d(g\cdot x_0,h_i\cdot x_0)\leq B$. Thus Lemma \ref{lem:uniform_cc_tame} implies that for every $h\in V_g$, $d(h\cdot x_0,g\cdot x_0)\leq 2B$. It thus follows from Lemma \ref{lem:coarse AA} that every sequence $(h_i)$ in $V_g$ has a coarsely convergent subsequence $(h_{n_i})$. Since $G\qa X$ is coarsely complete, there exists some $h\in G$ such that $h_{n_i}\cc{} h$. By Lemma \ref{lem:clim of clims}, $h$ is the coarse limit of some sequence in $U_{g,\{x_0\}}$, and hence $h\in V_g$. Thus for each $g\in G$,  $V_g$ is a sequentially compact set containing an open neighbourhood $U_{g,\{x_0\}}$ of $g$. Therefore $G$ is locally compact.
		
		To show the identity map $\id_G$ is a topological completion of $G\qa X$, we need only  show   $T\subseteq G$ has compact closure  if and only if it is bounded. Suppose $T\subseteq G$ has compact closure and fix $x_0\in X$. Since $\{U_{g,\{x_0\}}\mid g\in \overline{T}\}$ is an open cover of $\overline{T}$, there are finitely many $g_1,\dots, g_n\in \overline{T}$ such that $T\subseteq \cup_{i=1}^n U_{g_i,\{x_0\}}$. It follows that $\{g\cdot x_0\mid g\in T\}\subseteq \cup_{i=1}^n N_B({g_i\cdot x_0})$, and so $T$ is bounded. Conversely, suppose that $T$ is bounded, so that for some $x_0\in X$, $\{g\cdot x_0\mid g\in T\}$ is bounded. It follows from Lemma \ref{lem:coarse AA} and the coarse completeness of $G\qa X$ that any infinite sequence $(g_i)$ in $T$ has a subsequence $(g_{n_i})$ that coarsely converges to some $g\in G$. Since $(g_i)\cc{} g$, $(g_i)\to g$ and so $g\in \overline{T}$; therefore $\overline{T}$ is  compact.
		The final claim that $G$ has no nontrivial compact normal subgroups follows easily from Proposition \ref{prop:compact_normal} and the hypothesis that $G$ is coarsely faithful.
\end{proof}

We can now use the topology of coarse convergence to prove Theorem \ref{thm:tame_topcomp}:
\begin{proof}[Proof of Theorem \ref{thm:tame_topcomp}]
Let $X$ be a proper quasi-geodesic tame metric space and let $G\qa  X$ be a cobounded quasi-action. By Proposition \ref{prop:coarse_proper} (see also Remark \ref{rem:bdd_geom})  there is a locally finite metric space $Y$ such that  $G\qa X$ is quasi-conjugate to $G\qa Y$. Combining Propositions \ref{prop:coarsecomplete} and \ref{prop:topcc}, we deduce $G\qa Y$ has a topological completion $\rho:G\to \hat G$. 
\end{proof}

\subsection{Quasi-morphisms and quasi-actions on \texorpdfstring{$\bbR$}{R}}\label{sec:qmorph}
In this section, we give many examples of quasi-actions on $\bbR$ that do \emph{not} have topological completions. Whilst not directly used or needed elsewhere in this article, these examples serve to contextualise our main  results by demonstrating that, despite the plethora of spaces $X$ such that  every cobounded quasi-action on $X$ does  have a topological completion (e.g.\  Theorem \ref{thm:tame_topcomp} and Theorem \ref{thm:trichcodim2}), quasi-actions on more  general spaces need not have topological completions.   To do this, we make use of the well-known notion of a  quasi-morphism.

A \emph{quasi-morphism} is a map $f:G\to \bbR$ such that \[\sup_{g,h\in G}\lvert f(g)+f(h)-f(gh)\rvert\leq D \] for some $D$ called the \emph{defect} of $f$. A map $f:G\to \bbR$ is said to be \emph{bounded} if $\sup_{g\in G}\lvert f(g)\rvert <\infty$ and \emph{unbounded} otherwise. Two quasi-morphisms $f,f':G\to \bbR$ are said to be \emph{equivalent} if their difference is bounded. A quasi-morphism is said to be \emph{trivial} if it is equivalent to a homomorphism. A quasi-morphism $f:G\to \bbR$ is said to be \emph{homogeneous} if $f(g^n)=nf(g)$ for all $g\in G$ and $n\in \bbN$.  Every quasi-morphism is equivalent to a homogeneous quasi-morphism, which is necessarily unique; see for instance \cite[Proposition 2.10]{frigerio2017bounded}.

A quasi-morphism $f:G\to \bbR$  induces a quasi-action $\phi_f:G\to \bbR^\bbR$ via  the formula $\phi_f(g)(x)=f(g)+x$. We call  $\phi_f$ the \emph{quasi-action induced by $f$}. Since we are primarily interested in cobounded quasi-actions, we first determine when the induced quasi-action is cobounded:
\begin{lem}
	Let $f:G\to \bbR$ be a quasi-morphism. The induced quasi-action $G\qa \bbR$ is cobounded if and only if $f$ is unbounded.
\end{lem}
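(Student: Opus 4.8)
The plan is to prove the contrapositive in both directions, working directly with the definition of coboundedness for the quasi-action $\phi_f(g)(x) = f(g) + x$. First I would unwind what coboundedness means here: the quasi-action $G \curvearrowright \bbR$ is $r$-cobounded precisely when for every $x, y \in \bbR$ there is some $g \in G$ with $|\phi_f(g)(x) - y| = |f(g) + x - y| \leq r$. Taking $x = 0$, this says that for every $y \in \bbR$ there is $g \in G$ with $|f(g) - y| \leq r$; that is, $f(G)$ is an $r$-net in $\bbR$. Conversely, if $f(G)$ is $r$-dense in $\bbR$ then for arbitrary $x,y$ one applies the density at the point $y - x$. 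So the statement reduces to: the induced quasi-action is cobounded if and only if $f(G)$ is a coarsely dense subset of $\bbR$.

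Next I would observe that $f(G)$ is coarsely dense in $\bbR$ if and only if $f$ is unbounded. One direction is trivial: if $f$ is bounded, say $|f(g)| \leq M$ for all $g$, then $f(G) \subseteq [-M, M]$ is not coarsely dense in $\bbR$ (no finite neighbourhood of it covers $\bbR$), so the quasi-action is not cobounded. For the other direction, suppose $f$ is unbounded; I want to show $f(G)$ is $D$-dense, where $D$ is the defect of $f$ (or a small multiple of it). Here the key step is the standard elementary fact about quasi-morphisms that since $f(g^n)$ is within $(n-1)D$ of $n f(g)$ — proved by induction on $n$ using $|f(g^{k+1}) - f(g^k) - f(g)| \leq D$ — if $f(g) \neq 0$ then the values $f(g^n)$, $n \in \bbZ$, form a subset of $\bbR$ whose consecutive gaps are at most $|f(g)| + D$; combined with the fact that $f(g^n) \to \pm\infty$, this shows $f(G)$ is $(|f(g)| + D)$-dense in the ray it reaches. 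Working with both $g$ and $g^{-1}$ (noting $|f(g) + f(g^{-1})| \leq |f(e)| + D$ so $f(g^{-1}) \approx -f(g)$) one gets density in all of $\bbR$. To make this uniform one can instead pass to the associated homogeneous quasi-morphism $\bar f$ (cited in the excerpt), which is bounded distance from $f$ and is genuinely unbounded when $f$ is; since $\bar f(g^n) = n \bar f(g)$ exactly, for any unbounded $\bar f$ there is $g$ with $\bar f(g) \neq 0$, and then $\{\bar f(g^n) : n \in \bbZ\} = \bbZ \cdot \bar f(g)$ is $|\bar f(g)|$-dense; but one still needs density near $0$ at scale independent of $g$, which is where a little care with the defect is required.

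The main obstacle I anticipate is precisely this last uniformity point: showing $f(G)$ is $r$-dense for a \emph{single} $r$ (rather than getting arbitrarily small gaps only far out along a particular cyclic subgroup). I would resolve it by choosing one fixed $g_0 \in G$ with $\bar f(g_0) \neq 0$; then $\{n\bar f(g_0) : n \in \bbZ\}$ is $|\bar f(g_0)|$-dense in $\bbR$, hence $f(G)$ is $(|\bar f(g_0)| + C)$-dense where $C$ bounds $|f - \bar f|$. That gives a single constant $r$, so $\phi_f$ is $r$-cobounded, completing the proof. The rest is routine bookkeeping with the triangle inequality and the defect inequality, so I would not belabour it.
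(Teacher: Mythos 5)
Your proposal is correct and, in its final form, is essentially the paper's own argument: the bounded case is immediate, and the unbounded case passes to the homogeneous quasi-morphism $\hat f$ at bounded distance from $f$, picks $g$ with $\hat f(g)\neq 0$, and uses the exact homogeneity $\hat f(g^n)=n\hat f(g)$ to hit every real within $\lvert\hat f(g)\rvert+B$. Your earlier sketch via the growing bound $\lvert f(g^n)-nf(g)\rvert\leq (n-1)D$ would not give a uniform constant, but you correctly discard it in favour of the homogenisation step, so no gap remains.
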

\begin{proof}
	If $f$ is bounded, then every quasi-orbit of the induced quasi-action $G\qa \bbR$ is bounded, so  the quasi-action is not cobounded. Conversely, suppose $f$ is unbounded. As noted above, there is a homogeneous quasi-morphism $\hat f:G\to \bbR$ and a constant $B$  such that $\lvert \hat f(g)-f(g)\rvert \leq B$ for all $g\in G$. Since $f$ is unbounded, $\hat f$ is not identically zero and so there is some $g\in G$ such that $\hat f(g)\neq 0$. For each $x\in \bbR$, we set $n_x\coloneqq \left \lfloor\frac{x}{\hat f(g)}\right\rfloor\in \bbZ$; thus, $\lvert x-\hat f(g)n_x\rvert\leq \lvert \hat f(g)\rvert$ for all $x\in \bbR$.
	It follows that \[\lvert f(g^{n_x})-x\rvert\leq \lvert \hat f(g^{n_x})-x\rvert +B=\lvert n_x\hat f(g)-x\rvert +B\leq \lvert \hat f(g)\rvert +B\] for all $x\in \bbR$, which implies the induced quasi-action $G\qa \bbR$ is $(\lvert \hat f(g)\rvert +B)$-cobounded.
\end{proof}

 We prove the following:
\begin{thm}\label{thm:quasimorphism}
	Let $f:G\to \bbR$ be an unbounded quasi-morphism. Then the induced cobounded quasi-action $G\qa \bbR$ has a topological completion if and only if $f$ is trivial.
\end{thm}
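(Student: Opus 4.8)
The plan is as follows. For the ``if'' direction, suppose $f$ is trivial, so there is a homomorphism $h\colon G\to\bbR$ with $f-h$ bounded. Then $\id_\bbR$ is a quasi-conjugacy from $\phi_f$ to the cobounded isometric action $g\colon x\mapsto h(g)+x$ on the proper metric space $\bbR$, and Proposition~\ref{prop:TCvsQC} yields a topological completion. For the ``only if'' direction I would first reduce to the case that $f$ is a homogeneous quasi-morphism: its homogenisation $\hat f$ differs from $f$ by a bounded function, so $\phi_f$ and $\phi_{\hat f}$ are quasi-conjugate via $\id_\bbR$ (hence one has a topological completion iff the other does), and $f$ is trivial iff $\hat f$ is a homomorphism, since a homogeneous quasi-morphism equivalent to a homomorphism equals it.

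So assume $f$ is homogeneous and unbounded and that $\rho\colon G\to\hat G$ is a topological completion of $\phi_f$. By Proposition~\ref{prop:topcompquasiconj} the group $\hat G$ is compactly generated and quasi-isometric to $\bbR$; by Corollary~\ref{cor:kakutanikodaira}, Remark~\ref{rem:topcomp,normal} and Theorem~\ref{thm:topcomp unique} I may further assume $\hat G$ is second-countable and metrisable. Being quasi-isometric to $\bbR$, the group $\hat G$ has linear growth and hence is amenable, and it is hyperbolic with two-point Gromov boundary $\{\omega_+,\omega_-\}$. The key object is the \emph{translation (Busemann) quasi-morphism} of the regular representation of $\hat G$ on the quasi-line $\hat G$: the orientation-preserving subgroup $\hat G^+$ (the kernel of the action $\hat G\to\Sym\{\omega_+,\omega_-\}$) fixes $\omega_+$, and I would extract from this a continuous quasi-morphism $\tau\colon\hat G^+\to\bbR$ satisfying $\tau(\gamma\gamma'\gamma^{-1})=\pm\tau(\gamma')+O(1)$ according to whether $\gamma$ preserves or reverses orientation. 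Since $\phi_f$ is $\rho$-quasi-conjugate to this regular representation, and $\phi_f(g)$ is literally translation by $f(g)$ (so the translation quasi-morphism of $\phi_f$ is exactly $f$), this gives $\tau(\rho(g))=f(g)+O(1)$ for all $g$ in $G^+:=\rho^{-1}(\hat G^+)$.

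Next I would rule out orientation-reversing elements. If $\hat G^+\neq\hat G$ then, because $\rho(G)$ is dense and $\hat G^+$ is a proper open subgroup, $G^+\neq G$; choosing $t\in G\setminus G^+$ and any $g\in G^+$, homogeneity gives $f(tgt^{-1})=f(g)$, whereas the conjugation behaviour of $\tau$ gives $f(tgt^{-1})=-f(g)+O(1)$. Hence $f$ is bounded on the finite-index subgroup $G^+$, and therefore bounded on $G$ (as $f(g)=\tfrac12 f(g^2)$), contradicting unboundedness. Thus $\hat G^+=\hat G$, so $\tau$ is a continuous quasi-morphism on all of the amenable second-countable locally compact group $\hat G$; since $H^2_{cb}(\hat G;\bbR)=0$, the homogenisation of $\tau$ is a continuous homomorphism $\chi\colon\hat G\to\bbR$ with $\tau-\chi$ bounded. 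Then $\chi\circ\rho\colon G\to\bbR$ is a homomorphism with $f-\chi\circ\rho$ bounded, so $f$ is trivial.

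The step I expect to be the main obstacle is making the translation quasi-morphism $\tau$ precise and verifying its two properties: the $O(1)$-conjugation-equivariance and the fact that it is carried to $f$ by the quasi-conjugacy. This is where the coarse geometry of the quasi-line $\hat G$ genuinely enters — one must control the bounded ambiguities of coarse Busemann functions and check that all estimates are uniform over group elements. A technically lighter alternative I might use instead is to invoke the structure theory of compactly generated locally compact groups with two ends to reduce $\hat G$, modulo a compact normal subgroup and possibly after passing to an index-two subgroup, to $\bbR$ or $\bbZ$, and then observe directly that the quasi-conjugacy $\psi$ intertwining $\phi_f$ with translations is (using coboundedness, so that $f(G)$ is coarsely dense) a quasi-morphism $\bbR\to\bbR$, hence at bounded distance from a linear map; this again exhibits $f$ as a bounded perturbation of the homomorphism $\rho$.
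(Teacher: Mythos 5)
Your ``if'' direction is the paper's verbatim. For the converse, your primary route is genuinely different from the paper's: you transfer everything to the completion $\hat G$, argue that $\hat G$ is amenable (linear growth), build a continuous Busemann/translation quasi-morphism $\tau$ on the end-preserving subgroup $\hat G^+$, kill the orientation-reversing case via conjugation-invariance of the homogenisation $\hat f$, and then use vanishing of continuous bounded cohomology of the amenable group $\hat G$ to straighten $\tau$ into a continuous homomorphism $\chi$ with $f \sim \chi\circ\rho$. This is a plausible and more conceptual argument, but the step you yourself flag is a real gap as written: one must actually construct $\tau$, verify it is continuous (or at least measurable, so that the bounded-cohomology vanishing applies), establish the uniform $O(1)$ conjugation behaviour, and check that the quasi-conjugacy carries $\tau\circ\rho$ to $f$ up to bounded error; none of this is routine bookkeeping, and the paper deliberately avoids it. The paper instead goes the way of your ``technically lighter alternative'': since $\hat G$ is quasi-isometric to $\bbR$ it is two-ended, so by Abels/Cornulier it acts geometrically on $\bbR$; Corollary \ref{cor:qconj_geomaction} quasi-conjugates $\phi_f$ to an isometric action on $\bbR$, which consists of translations because the $\phi_f(g)$ preserve the two ends --- note this removes any need to pass to an index-two subgroup (if you do pass to one, you additionally need the transfer-type fact that triviality of $f$ on a finite-index subgroup implies triviality on $G$). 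From the resulting homomorphism $\psi:G\to\bbR$ the paper concludes directly with the homogeneous representative $\hat f$: writing $n\lvert\hat f(gk)-\hat f(g)-\hat f(k)\rvert=\lvert\hat f((gk)^n)-\hat f(g^n)-\hat f(k^n)\rvert$ and comparing through the quasi-conjugacy and $\psi((gk)^n)=\psi(g^nk^n)$ gives a bound independent of $n$, so $\hat f$ is a homomorphism; your variant of this endgame (homogenising the conjugating map $h:\bbR\to\bbR$, which is a quasi-morphism at bounded distance from a linear map since it is a quasi-isometry, hence bounded on bounded sets) also works and is an acceptable substitute. So: the alternative route is essentially the paper's proof and is sound modulo the small points above, while the bounded-cohomology route would need the construction of $\tau$ carried out in detail before it counts as a proof.
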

\begin{proof}
	First suppose that $f$ is trivial. Then there is a homomorphism $f':G\to \bbR$  and a constant $B$ such that $\lvert f'(g)-f(g)\rvert \leq B$. The quasi-action induced by $f'$ is an isometric action on $\bbR$. Thus the identity map  is a $(1,B)$-quasi-conjugacy from the quasi-action $G\qa \bbR$ induced by $f$ to the isometric action $G\curvearrowright \bbR$ induced by $f'$. It follows from Proposition \ref{prop:TCvsQC} that  the quasi-action $G\qa \bbR$ induced by $f$ has a topological completion.
	
	Conversely, suppose the quasi-action $\phi_f:G\to \bbR^\bbR$ has a topological completion $\rho:G\to \hat G$. Let $d$ be a geodesically adapted metric on $\hat G$, which exists by Proposition \ref{prop:topcompquasiconj}.   Proposition \ref{prop:topcompquasiconj} also ensures $\phi_f$ is $\rho$-quasi-conjugate to the regular representation $\hat G\curvearrowright (\hat G,d)$. In particular $(\hat G,d)$ is two-ended and so a result of Abels, refined by Cornulier \cite[Corollary 19.39]{cornulier2018quasi}, states that $\hat G$ acts geometrically on $\bbR$. Corollary \ref{cor:qconj_geomaction} thus implies $\phi_f$ is quasi-conjugate to an isometric action $\rho:G\to \Isom(\bbR)$. Since the original quasi-action $\phi_f$ consists of translations, $\im(\rho)$ consists of translations and so there is a  homomorphism $\psi:G\to \bbR$ such that $\rho(g)(x)=\psi(g)+x$.
	
	As a consequence of the preceding argument, there exist constants $K\geq 1$, $A\geq 0$ and a   $(K,A)$-quasi-isometry $h:\bbR\to\bbR$ such that \[\lvert h(\phi_f(g)(x))-\rho(g)(h(x))\rvert \leq A\] for all $g\in G$ and $x\in \bbR$. There is a constant $B$ and a homogeneous quasi-morphism $\hat f:G\to \bbR$ of defect $B$ such that  $\lvert \hat f(g)-f(g)\rvert \leq B$ for all $g\in G$. Therefore \[\lvert h(\hat f(g))-\psi(g)\rvert\leq \lvert h( \phi_f(g)(0))-\rho(g)(0)\rvert +KB+A\leq KB+2A\eqqcolon C \] for all $g\in G$.  Thus for all $g,k\in G$ and $n\in \bbN$, we have 
	\begin{align*}
		n\lvert \hat f(gk)-\hat f(g)-\hat f(k)\rvert &= \lvert \hat f((gk)^n)-\hat f(g^n)-\hat f(k^n)\rvert \\
		&\leq \lvert \hat f((gk)^n)-\hat f(g^nk^n)\rvert +B\\
		&\leq K \lvert h(\hat f((gk)^n))-h(\hat f(g^nk^n))\rvert +KA+B\\
		&\leq K\lvert \psi((gk)^n)-\psi(g^nk^n)\rvert +2KC +KA+B\\&=2KC +KA+B
	\end{align*} as $\hat f$ has defect $B$ and $\psi$ is a homomorphism. Since this holds for every $n$, it follows $\hat f$ is a homomorphism. As $\hat f$ is equivalent to $f$, we deduce $f$ is trivial.
\end{proof}
We conclude by noting that there are many examples of nontrivial quasi-morphisms. For instance, Brooks gave examples of non-trivial quasi-morphisms on the free group of rank two \cite{brooks1981some}; see also \cite{epsteinfujiwara1997second}. Coupled with Theorem \ref{thm:quasimorphism}, this  implies:
\begin{cor}
	There exist many cobounded quasi-actions on $\bbR$ with no  topological completion.
\end{cor}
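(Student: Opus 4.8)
The plan is simply to combine Theorem~\ref{thm:quasimorphism} with the classical existence of nontrivial quasi-morphisms. First I would record that a nontrivial quasi-morphism is necessarily unbounded: a bounded quasi-morphism has bounded difference from the constant map $0$, which is a homomorphism, so a bounded quasi-morphism is trivial. Consequently, if $f:G\to\bbR$ is any nontrivial quasi-morphism then $f$ is unbounded, and the lemma preceding Theorem~\ref{thm:quasimorphism} shows that the induced quasi-action $\phi_f:G\to\bbR^\bbR$, given by $\phi_f(g)(x)=f(g)+x$, is cobounded. Since $f$ is nontrivial, Theorem~\ref{thm:quasimorphism} then guarantees that $\phi_f$ has no topological completion.

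Next I would apply this to $G=F_2$, the free group of rank two. Brooks's construction \cite{brooks1981some} (see also \cite{epsteinfujiwara1997second}) produces infinitely many inequivalent nontrivial (homogeneous) quasi-morphisms $F_2\to\bbR$; indeed the space of homogeneous quasi-morphisms on $F_2$ modulo homomorphisms is infinite-dimensional. Each such quasi-morphism yields, by the previous paragraph, a cobounded quasi-action $F_2\qa\bbR$ with no topological completion, which establishes the corollary.

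Finally, should one wish to make the word ``many'' precise, I would check that inequivalent quasi-morphisms give quasi-actions in distinct quasi-conjugacy classes up to rescaling. A quasi-conjugacy from $\phi_f$ to $\phi_{f'}$ is in particular a quasi-isometry $h:\bbR\to\bbR$ with $h(f(g)+x)$ close to $f'(g)+h(x)$ uniformly in $g,x$; using that $f(G)$ is coarsely dense in $\bbR$ (by coboundedness) together with the bounded defect of $f$, one deduces that $h$ is coarsely additive, hence a quasi-morphism $\bbR\to\bbR$, and so within bounded distance of $t\mapsto\lambda t$ for some $\lambda\neq 0$; evaluating at $x=0$ then gives $f'\sim\lambda f$. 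Thus the relevant quasi-conjugacy classes are parametrised by lines in the infinite-dimensional space above, and there are in fact uncountably many pairwise non-quasi-conjugate cobounded quasi-actions on $\bbR$ admitting no topological completion. There is no genuine obstacle here: the only nontrivial inputs are Theorem~\ref{thm:quasimorphism} itself and the existence of nontrivial quasi-morphisms.
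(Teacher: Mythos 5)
Your proposal is correct and is essentially the paper's own argument: combine Theorem \ref{thm:quasimorphism} with Brooks's nontrivial quasi-morphisms on $F_2$ (noting that nontrivial implies unbounded, so the induced quasi-action is cobounded). The additional remarks distinguishing quasi-conjugacy classes go beyond what the paper records but are not needed for the statement.
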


\section{Quasi-actions on hyperbolic spaces}\label{sec:qaction_hyp}
In this section we  prove Theorem \ref{thm:trichotomyhyp_main},   classifying cobounded quasi-actions on proper hyperbolic metric spaces. We make use of   structure theorems for locally compact hyperbolic groups due Caprace--Cornulier--Monod--Tessera. We combine these with  results from Section \ref{sec:topcomp} to deduce  Theorem \ref{thm:trichotomyhyp_main}.

For definiteness, we say that a metric space $X$ is \emph{hyperbolic} if it is  a geodesic metric space and  there is a $\delta\geq 0$ such that for any triple of geodesic segments $[x,y]$, $[y,z]$ and $[x,z]$ in $X$, $[x,y]\subseteq N_{\delta}([y,z]\cup[x,z])$. This definition is a quasi-isometry invariant in the sense that if a geodesic metric space is quasi-isometric to a hyperbolic space, then it is also hyperbolic.

A   locally compact group $G$ is said to be \emph{hyperbolic} if it is  admits a compact generating set such that the associated Cayley graph is  hyperbolic.  Hyperbolicity of  $G$ is independent of the choice of compact generating set. Henceforth, locally compact hyperbolic groups are always equipped with such a metric.   Similarly, if $G$ is finitely generated relative to $H$, we say $G/H$ is hyperbolic if some (hence every) relative Cayley graph $\Gamma_{G,H}$ is hyperbolic.
If $G$ is a locally compact hyperbolic group,  it  has a well-defined boundary $\partial G$ on which $G$ acts  by homeomorphisms.  We remark that although the Cayley graph of $G$ is not proper (unless $G$ is discrete), $G$ acts geometrically on some proper geodesic hyperbolic metric space  \cite[Corollary 2.6]{caprace2015amenable}.

\begin{defn}
	A hyperbolic metric space  is said to be \emph{non-elementary} if its boundary has at least three points. In particular, if $G$ is a locally compact hyperbolic group, then $G$ is non-elementary if and only if  $\partial G$ has at least three points.
\end{defn}
We now recall some structural results about hyperbolic locally compact  groups. Firstly, we can characterise when such groups are amenable:
\begin{lem}[{\cite[Proposition 3.1 and Lemmas 5.2 and 5.3]{caprace2015amenable}}]\label{lem:amen_focal_char}
	Let $G$ be a hyperbolic locally compact  group. If $G$ is elementary, it is amenable. If $G$ is non-elementary, then it is amenable if and only if it fixes a point of $\partial G$. Moreover, a non-elementary amenable hyperbolic group is   not unimodular.
\end{lem}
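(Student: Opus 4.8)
The plan is to deduce this entirely from the structure theory of locally compact hyperbolic groups, following \cite{caprace2015amenable}. First I would invoke \cite[Corollary 2.6]{caprace2015amenable} to reduce to the case where $G$ acts geometrically (continuously, properly, cocompactly) on a proper geodesic hyperbolic space $X$ with a canonical identification $\partial X = \partial G$, so that all three assertions can be read off from this action on $X$ and on $\partial X$. The \emph{elementary case} is then easy: if $|\partial X|\leq 1$ then $X$ is bounded and $G$ is compact, hence amenable; the one-point case does not occur for a cocompact hyperbolic space, and if $|\partial X|=2$ then $X$ is two-ended, so $G$ is a two-ended locally compact group, and by the structure of such groups (equivalently, the Abels--Cornulier description of groups quasi-isometric to $\bbR$ used in \cite[Corollary 19.39]{cornulier2018quasi}) $G$ has a compact normal subgroup with quotient virtually $\bbZ$ or virtually $\bbR$; it is thus a compact extension of an amenable group, hence amenable.

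Next I would handle the \emph{general type case}, which gives the implication: if $G$ is non-elementary and does \emph{not} fix a point of $\partial G$, then $G$ is non-amenable. Here one uses Gromov's classification of isometry groups of hyperbolic spaces: a non-elementary group that neither fixes a boundary point nor preserves a pair of boundary points must contain a loxodromic isometry $g$, and then, using non-elementarity again, a second loxodromic $h$ whose fixed-point pair is disjoint from that of $g$. A Schottky/ping-pong argument on $\partial X$ applied to sufficiently high powers $g^N,h^N$ produces a discrete free subgroup of rank two in $G$ (discrete because $G$ acts properly on $X$ and this subgroup has a quasiconvex orbit). Since a locally compact group with amenable closed subgroups cannot contain a discrete non-abelian free subgroup, $G$ is non-amenable. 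Contrapositively, a non-elementary amenable hyperbolic group fixes a point of $\partial G$, which is one direction of the stated equivalence.

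It remains to treat the \emph{focal case}: $G$ non-elementary and fixing some $\omega\in\partial G$. Using the Busemann cocycle at $\omega$ one obtains a continuous \emph{Busemann character} $\beta\colon G\to\bbR$ recording translation towards $\omega$; non-elementarity forces $\beta$ to be non-trivial (a group fixing $\omega$ with $\beta\equiv 0$ would preserve a horosphere cocompactly and so have boundary of size at most two). Set $N=\ker\beta$ and pick $t\in G$ with $\beta(t)>0$. The key input from hyperbolicity is that conjugation by $t$ acts as a \emph{contracting} automorphism of $N$ (it moves bounded subsets of $N$ exponentially towards a compact core); by the structure theory of contraction groups (Siebert, Willis) $N$ is compact-by-nilpotent, in particular amenable, and since $G$ is an extension of $\bbZ$ or $\bbR$ by $N$, it is amenable. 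For non-unimodularity: conjugation by $t$ rescales a left Haar measure on $N$ by the modulus of the contracting automorphism, which is strictly less than $1$, so the modular function of $G$ equals $g\mapsto \delta^{\beta(g)}$ for some $\delta>1$; as $\beta$ is non-trivial, $G$ is not unimodular. Combining the three cases gives the Lemma.

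The step I expect to be the genuine obstacle is the focal case — specifically, promoting "$G$ fixes $\omega$ and $\partial G$ is large" to the algebraic structure "compact-by-nilpotent-by-($\bbZ$ or $\bbR$)" together with the contracting-automorphism dichotomy; this is where hyperbolicity is used in an essential way and constitutes the technical heart of \cite{caprace2015amenable}, whereas the general-type ping-pong and the elementary case are comparatively routine.
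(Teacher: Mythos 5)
A preliminary remark: the paper does not prove this lemma at all — it is imported verbatim from \cite{caprace2015amenable} (their Proposition 3.1 and Lemmas 5.2, 5.3), so the benchmark is the proof in that source rather than anything in this article. Your three-way decomposition (elementary, general type, focal) is exactly the one used there, and two of the three cases are handled correctly and essentially as in the source: the elementary case (compact or two-ended, hence amenable), and the general-type case (two independent loxodromics, ping-pong on the boundary, a discrete nonabelian free subgroup, hence non-amenability since amenability passes to closed subgroups). The non-unimodularity computation is also right and is the argument one finds in the source: with $N=\ker\beta$ and $t$ with $\beta(t)>0$, Weil's formula together with the fact that the quotient embeds in $\bbR$ gives $\Delta_G(t)=\operatorname{mod}_N(\mathrm{conj}_t)$, and a compacting automorphism of a \emph{non-compact} group strictly shrinks Haar measure (if it preserved it, $N=\bigcup_n\alpha^{-n}(K)$ would have finite measure and be compact, forcing $G$ elementary), so $\Delta_G\neq 1$.

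The genuine misstep is in the focal case, which you yourself identify as the heart. Conjugation by $t$ is a \emph{compacting} automorphism of $N$, not a contracting one — your parenthetical (bounded sets pushed into a compact core) is precisely the definition of compaction, and contraction fails whenever $N$ has a nontrivial bounded part — and the structure theory you invoke does not give ``compact-by-nilpotent''. Siebert's nilpotency theorem concerns contraction groups of connected (Lie) groups; in the totally disconnected focal case the kernel is typically only topologically locally finite and far from nilpotent modulo a compact subgroup. Concretely, let $F$ be a finite nonabelian simple group and $N=\{(x_n)_{n\in\bbZ}\in F^{\bbZ}\mid x_n=1 \text{ for } n\ll 0\}$, topologised so that $\prod_{n\geq 0}F$ is compact open; the shift is compacting and $N\rtimes\bbZ$ is a non-elementary focal hyperbolic group, yet every term of the lower central series of $N$ contains all coordinate copies of $F$, so no compact normal subgroup of $N$ can have nilpotent quotient. (The horospherical kernel of the stabiliser of an end of a regular locally finite tree behaves the same way.) The repair is to replace ``compact-by-nilpotent'' by what the compaction analysis in \cite{caprace2015amenable} actually delivers, namely amenability of $N$: up to topologically locally finite pieces (directed unions of compact subgroups, hence amenable) one reduces to a connected nilpotent Lie group, where the contracting/Heintze picture you describe is correct. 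Since the remainder of your argument — $G$ is $N$-by-(a subgroup of $\bbR$), and the modular-function computation — uses only amenability of $N$ and not nilpotency, the architecture stands once this substitution is made.
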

We will make use of the following structural result concerning compact normal subgroups of hyperbolic groups. Recall that the \emph{amenable radical}  of a locally compact group is the maximal amenable normal subgroup.
\begin{lem}[{\cite[Lemmas 3.6, 5.1 and 5.2]{caprace2015amenable}}]\label{lem:gen_cpctamenrad}
	Let $G$ be a non-elementary locally compact hyperbolic group. Then $G$ has a unique maximal compact normal subgroup $K\vartriangleleft G$ which is precisely the kernel of the action of $G$ on $\partial G$. Moreover, if $G$ is non-amenable, then $K$ is the amenable radical of $G$.
\end{lem}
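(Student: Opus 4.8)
The plan is to realise $G$ concretely: by \cite[Corollary 2.6]{caprace2015amenable} the non-elementary locally compact hyperbolic group $G$ acts geometrically (continuously, cocompactly, properly) on a proper geodesic $\delta$-hyperbolic space $X$, with $\partial X$ canonically $G$-equivariantly homeomorphic to $\partial G$. Write $\rho\colon G\to\Isom(X)$ for this action. The single structural fact extracted from properness is: for any $x_0\in X$ and $R\geq 0$ the set $\{g\in G\mid d(\rho(g^{})x_0,x_0)\leq R\}$ has compact closure, so a \emph{closed} subgroup of $G$ with a bounded orbit in $X$ is compact. I will also invoke the classical theory of isometry subgroups of hyperbolic spaces: Gromov's classification into bounded, horocyclic, lineal, focal and general-type subgroups; the fact that a subgroup containing two independent loxodromic isometries contains a non-abelian free subgroup (ping-pong); density of the attracting fixed points of loxodromic elements of a non-elementary group in its limit set; and that $G$ has limit set all of $\partial X$ and acts minimally on it (since the $G$-action on $X$ is cocompact).

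First I would prove that $K:=\ker(G\curvearrowright\partial X)$, a closed normal subgroup, is compact. As $\partial X$ has at least three points, $K$ fixes three distinct points $\xi_1,\xi_2,\xi_3$ pointwise; taking bi-infinite geodesics $\gamma_{ij}$ between them and a coarse incenter $c$ of the ideal triangle they span, thinness of triangles shows every $K$-translate of $c$ stays within a neighbourhood of all three $\gamma_{ij}$ of radius depending only on $\delta$, so $K\cdot c$ is bounded and $K$ is compact by the properness consequence above. Next I would show every compact normal subgroup $M\vartriangleleft G$ lies in $K$. Compactness of $M$ and continuity of $\rho$ give $M\cdot x_0\subseteq\overline B(x_0,R)$ for some $R$. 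For a loxodromic $g\in G$ with attracting fixed point $g^{+}$ and any $m\in M$, one has $m\cdot(\rho(g^n)x_0)=\rho(g^n)\cdot\big((g^{-n}mg^n)\cdot x_0\big)$ with $g^{-n}mg^n\in M$, whence $d\big(m\cdot(\rho(g^n)x_0),\rho(g^n)x_0\big)\leq R$ for all $n$; since $\rho(g^n)x_0\to g^{+}$ and $m$ is continuous on $X\cup\partial X$, letting $n\to\infty$ gives $mg^{+}=g^{+}$. As attracting fixed points of loxodromic elements of $G$ are dense in $\partial X$ and $\mathrm{Fix}(M)$ is closed, $M$ fixes $\partial X$ pointwise, i.e.\ $M\leq K$. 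Hence $K$ is the unique maximal compact normal subgroup and $K=\ker(G\curvearrowright\partial G)$.

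For the last assertion, suppose $G$ is non-amenable and let $\mathrm{Rad}(G)$ be its amenable radical (which exists for locally compact groups). Since $K$ is compact it is amenable and normal, so $K\leq\mathrm{Rad}(G)$, and it remains to show $\mathrm{Rad}(G)$ is compact (then $\mathrm{Rad}(G)\leq K$ by the previous paragraph). The limit set $\Lambda(\mathrm{Rad}(G))\subseteq\partial X$ is closed and $G$-invariant, so by minimality of $G\curvearrowright\partial X$ it is either empty or all of $\partial X$. If it equals $\partial X$, then $\mathrm{Rad}(G)$ is unbounded with limit set of more than two points; it cannot contain two independent loxodromics, as that would produce a free subgroup in the amenable group $\mathrm{Rad}(G)$, so by the Gromov classification $\mathrm{Rad}(G)$ is not of general type and, being unbounded with large limit set, is focal and fixes a unique point $\eta\in\partial X$. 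Normality of $\mathrm{Rad}(G)$ in $G$ then forces $G$ to fix $\eta$, contradicting Lemma \ref{lem:amen_focal_char} since $G$ is non-amenable. Therefore $\Lambda(\mathrm{Rad}(G))=\emptyset$, so $\mathrm{Rad}(G)$ has bounded orbits in $X$ and, being closed, is compact; hence $\mathrm{Rad}(G)\leq K$ and $K=\mathrm{Rad}(G)$.

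The main obstacle is not any single estimate but the correct deployment of the classical hyperbolic-geometry inputs — the Gromov classification of isometry subgroups, ping-pong, density of loxodromic fixed points, minimality of the boundary action — and in particular being careful that these apply to the possibly non-discrete locally compact group $G$ and to its closed normal subgroups acting on the proper model $X$. Everything else is soft topological-group bookkeeping resting on the properness of the geometric action.
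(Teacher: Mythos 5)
The paper does not actually prove this lemma: it is imported verbatim from Caprace--Cornulier--Monod--Tessera, so what you have written is a self-contained reconstruction rather than an alternative to an in-paper argument. Your architecture is the right one and matches the cited source in spirit: realise $G$ geometrically on a proper model $X$, show the kernel $K$ of the boundary action is compact via a coarse incentre of an ideal triangle, show every compact normal subgroup acts trivially on $\partial X$ by the bounded-displacement/conjugation trick, and in the non-amenable case trap the amenable radical between $K$ and a compact group using the Gromov classification together with Lemma \ref{lem:amen_focal_char}. Steps 1 and 2 are correct; in step 2 you could even avoid the density of loxodromic fixed points altogether, since coboundedness makes every $\xi\in\partial X$ a limit of orbit points $g_nx_0$, and the same displacement bound applied to $g_n^{-1}mg_n\in M$ gives $m\xi=\xi$ directly (this also sidesteps the fact that the classical density statement is usually formulated only for general-type actions).

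There are, however, two places where the locally compact care you promise is not delivered. The substantive one is the amenable-radical step: ``that would produce a free subgroup in the amenable group $\mathrm{Rad}(G)$'' is not by itself a contradiction for topological groups --- compact groups are amenable and contain non-abelian free subgroups (the heart of Banach--Tarski). What rescues the argument is that the Schottky subgroup generated by high powers of two independent loxodromics has quasi-isometrically embedded orbits, so it meets each set $\{g\in G\mid d(gx_0,x_0)\le R\}$ in a finite set; by properness of the action it is therefore a discrete, closed subgroup of $\mathrm{Rad}(G)$, and closed subgroups of amenable locally compact groups are amenable, which gives the contradiction. You need to say this (or otherwise exclude general type for $\mathrm{Rad}(G)$); as written the step is a gap. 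The second, more minor, point is the blanket claim that $G$ acts minimally on $\partial X$ ``since the action is cocompact'': this is false for focal groups, since a cobounded amenable non-elementary hyperbolic group fixes a boundary point, which is a proper closed invariant subset. You only invoke minimality when $G$ is non-amenable, where it does hold, but the correct justification is that non-amenability plus Lemma \ref{lem:amen_focal_char} rules out a fixed boundary point, so the action is of general type and cobounded, and minimality on the limit set $=\partial X$ is then classical. With these two repairs the proof is complete.
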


To understand non-amenable locally compact hyperbolic groups, we  require the following result of Caprace et al.  \cite[Proposition 5.10]{caprace2015amenable}, which is itself a refinement of a result of Mineyev--Monod--Shalom \cite{mineyev2004ideal}. The proof of Theorem \ref{thm:hypisom_general} makes use of the solution to Hilbert's fifth problem due to Gleason and Yamabe.
\begin{thm}\label{thm:hypisom_general}
	Let $G$ be  a non-amenable locally compact hyperbolic group and let $K\vartriangleleft G$ be the maximal compact normal subgroup. Then either:
	\begin{enumerate}
		\item $G/K$ is a virtually connected centre-free simple rank one Lie group, in which case $G$ acts geometrically  on a  rank one irreducible symmetric space of non-compact type;
		\item $G/K$ is totally disconnected, in which case $G$ acts geometrically on a locally finite graph.
	\end{enumerate}
\end{thm}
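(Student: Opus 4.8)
The plan is to divide through by the maximal compact normal subgroup $K$ and analyse the identity component of the quotient by means of the solution to Hilbert's fifth problem. First I would reduce to the group $\bar G:=G/K$. By Lemma \ref{lem:gen_cpctamenrad}, $K$ is the kernel of the action $G\curvearrowright\partial G$ and, since $G$ is non-amenable, $K$ is the amenable radical of $G$. Hence $\bar G$ acts faithfully on $\partial\bar G=\partial G$, has trivial amenable radical, and has no non-trivial compact normal subgroup (the preimage in $G$ of such a subgroup would be an extension of a compact group by a compact group, hence compact, and normal, hence contained in $K$). By Proposition \ref{prop:copci} the quotient map $G\to\bar G$ is a quasi-isometry, so $\bar G$ is again a non-elementary locally compact hyperbolic group; fix a proper geodesic hyperbolic space $X$ carrying a geometric action of $\bar G$ with $\partial X=\partial\bar G$. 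Finally, any geometric action of $\bar G$ on a proper space pulls back along $G\to\bar G$ to a geometric action of $G$, properness surviving because $K$ is compact and coboundedness because the map is onto; so it suffices to prove the dichotomy for $\bar G$.

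Next, by the Gleason--Yamabe theorem the connected locally compact group $\bar G^\circ$ is, modulo its maximal compact normal subgroup, a Lie group; but that subgroup is characteristic in $\bar G^\circ$, which is characteristic in $\bar G$, hence it is normal in $\bar G$ and therefore trivial. Thus $\bar G^\circ$ is a connected Lie group with no non-trivial compact normal subgroup. If $\bar G^\circ$ is trivial, then $\bar G$ is totally disconnected; by van Dantzig's theorem (Theorem \ref{thm:vanDantzig}) it contains a compact open subgroup, so by Corollary \ref{cor:tdlc_CAgraph} it acts geometrically on its Cayley--Abels graph, which is connected and locally finite, and pulling back along $G\to\bar G$ yields conclusion (2).

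The main case — and the main obstacle — is $\bar G^\circ\neq 1$. Then $\bar G^\circ$ is non-compact; its solvable radical is connected, solvable and hence amenable, and characteristic in $\bar G^\circ$, so normal in $\bar G$ and thus trivial, and likewise $\bar G^\circ$ has no compact simple factor and trivial centre. Therefore $\bar G^\circ$ is a connected centre-free semisimple Lie group without compact factors. Since $\bar G^\circ$ is a non-compact normal subgroup of $\bar G$, properness of $\bar G\curvearrowright X$ forces $\bar G^\circ$ to have unbounded orbits, so its limit set $\Lambda\subseteq\partial X$ is non-empty; being closed and $\bar G$-invariant (as $\bar G^\circ$ is normal), $\Lambda=\partial X$ by minimality of the boundary action of the cocompact group $\bar G$. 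I would then invoke the Mineyev--Monod--Shalom theorem in the form refined in \cite{caprace2015amenable}: a connected semisimple Lie group without compact factors acting properly on a proper hyperbolic space with full limit set must have real rank one and a single simple factor, and must act cocompactly — higher rank is excluded because such groups contain quasi-isometrically embedded flats, hence copies of $\bbZ^2$, contradicting hyperbolicity, and a second factor is excluded because a product of two unbounded spaces is not hyperbolic. Thus $\bar G^\circ$ is a connected centre-free simple Lie group of real rank one acting geometrically on $X$, and $X$ is quasi-isometric to, hence may be replaced by, the associated rank one symmetric space of non-compact type.

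It remains to pass from $\bar G^\circ$ to $\bar G$. As $\bar G$ has trivial amenable radical, one checks that $C_{\bar G}(\bar G^\circ)$ is trivial: an element centralising the cocompact group $\bar G^\circ$ moves every point of $X$ a uniformly bounded amount, so lies in a relatively compact set by properness, whence $C_{\bar G}(\bar G^\circ)$ is a compact, hence trivial, normal subgroup. Therefore conjugation embeds $\bar G$ into $\Aut(\bar G^\circ)$, a Lie group with finitely many connected components and identity component $\bar G^\circ$; consequently $\bar G$ is virtually connected, its image is a closed cocompact subgroup of $\Isom(X)$ containing $\Isom(X)^\circ$ with finite index, and $\bar G$ acts geometrically on $X$. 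Pulling back along the proper surjection $G\to\bar G$ gives conclusion (1). The two cases are exhaustive because $\bar G^\circ$ is either trivial or non-compact, and the genuine difficulty is concentrated in the third paragraph: identifying $\bar G^\circ$ with a rank one simple Lie group acting cocompactly, which relies on Gleason--Yamabe together with the structural results of \cite{mineyev2004ideal,caprace2015amenable}.
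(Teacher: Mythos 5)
You should note at the outset that the paper does not actually prove Theorem \ref{thm:hypisom_general}: it is quoted verbatim from Caprace--Cornulier--Monod--Tessera (\cite{caprace2015amenable}, Proposition 5.10), itself refining Mineyev--Monod--Shalom, and the paper only remarks that the proof uses Gleason--Yamabe. Your sketch reconstructs exactly the strategy of that cited source --- kill the compact radical, apply Gleason--Yamabe to the identity component of $G/K$, split into the totally disconnected case (van Dantzig, Cayley--Abels graph) and the case of a nontrivial connected component, and in the latter case reduce to a centre-free semisimple Lie group without compact factors and invoke the MMS-type rigidity to get rank one, a single factor and cocompactness --- so it is consistent with where the paper sources the result, and the reductions you carry out yourself (triviality of the compact and amenable radicals of $G/K$, pulling geometric actions back along $G\to G/K$, triviality of $C_{\bar G}(\bar G^\circ)$ via bounded displacement, finiteness of $\bar G/\bar G^\circ$ from the finiteness of the outer automorphism group of an adjoint simple Lie group) are correct.

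One caution: your parenthetical justification of the black box --- that higher rank is excluded because maximal flats give quasi-isometrically embedded copies of $\bbZ^2$, contradicting hyperbolicity --- is not a proof at the point where you invoke it, because at that stage you only know the action of $\bar G^\circ$ is \emph{proper} with full limit set, not cocompact, and a merely proper action does not exclude $\bbZ^2$: a parabolic $\bbZ^2\leq \Isom(\bbH^3)$ acts properly on a hyperbolic space, and more to the point $\PSL(2,\mathbb C)$ itself contains $\bbZ^2$ while acting geometrically on $\bbH^3$. The flat argument only bites once cocompactness (hence quasi-isometry of the group with $X$) is established, which is precisely part of the conclusion of the theorem you are citing. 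Since you explicitly defer to the refined Mineyev--Monod--Shalom statement in \cite{mineyev2004ideal,caprace2015amenable} rather than to this gloss, the outline stands, but the gloss should not be mistaken for the argument: the genuine content of the step lives in that cited rigidity result, exactly as the paper's attribution indicates.
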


\emph{Millefeuille spaces} were introduced by Caprace et al.\  in order to  characterise amenable  hyperbolic groups \cite{caprace2015amenable}. We briefly summarise their construction, referring the reader  to \cite{caprace2015amenable} and \cite{cornulier2018quasi} for additional details. Let $X$  be a homogeneous simply connected negatively curved
Riemannian manifold. Then $X$ is isometric to a  solvable Lie group $N\rtimes_\alpha \bbR$ equipped with a left-invariant metric, where $N$ is a nilpotent Lie group and  $\alpha:N\rightarrow N$ is a contracting automorphism of $N$. Projecting to the $\bbR$ factor defines a Busemann function $b:X\rightarrow \bbR$. Fix $k\geq 1$, and let $T_{k+1}$ be a $(k+1)$-regular tree with a surjective Busemann function $b':T_{k+1}\rightarrow \bbR$ taking integer values on vertices. The associated millefeuille space is the topological space \[X[k]=\{(x,y)\mid b(x)=b'(y)\},\] which can naturally be equipped with a $\CAT(-1)$ metric. We note that $X[k]$ depends (up to isometry) only on $X$ and $k$, and not the Busemann functions $b$ and $b'$. The millefeuille space $X[k]$ is said to be \emph{pure} if neither $X$ nor $T_{k+1}$ is a line.
The following theorem is the main result  of   \cite{caprace2015amenable}. See also a summary of the work described by Cornulier \cite[\S 19.2.5]{cornulier2018quasi}. 
\begin{thm}[{\cite[Theorem 7.3]{caprace2015amenable}}]\label{thm:hypisom_amen}
	Let $G$  be a non-elementary amenable  locally compact hyperbolic group.  Then either:
	\begin{enumerate}
		\item $G$ acts geometrically on  a simply connected homogeneous  negatively curved
		Riemannian manifold;
	\item $G$ acts geometrically on  a pure millefeuille space;
	\item $G$ acts geometrically on a  regular  tree of finite valency greater than two.
	\end{enumerate}
\end{thm}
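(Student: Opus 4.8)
The statement is verbatim \cite[Theorem 7.3]{caprace2015amenable}, so strictly speaking no proof is required beyond citing that work; for the reader's convenience I outline the shape of Caprace--Cornulier--Monod--Tessera's argument, since it underlies the trichotomy of Theorem \ref{thm:trichotomyhyp_intro}.

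The first reduction uses Lemma \ref{lem:amen_focal_char}: since $G$ is non-elementary and amenable, it fixes a necessarily unique point $\omega\in\partial G$. By Lemma \ref{lem:gen_cpctamenrad}, the kernel $K\vartriangleleft G$ of the action on $\partial G$ is the maximal compact normal subgroup, and quotienting by it is a quasi-isometry (Proposition \ref{prop:copci}), so one may assume $G$ acts faithfully on $\partial G$ and fixes $\omega$. The core structural fact --- this is where the substantive work of \cite{caprace2015amenable}, refining \cite{mineyev2004ideal}, lies --- is that such a $G$ is \emph{focal}: the Busemann quasicharacter based at $\omega$ can be adjusted to a genuine continuous homomorphism $\beta\colon G\to\bbR$; the subgroup $N\coloneqq\ker\beta$ acts coboundedly (indeed essentially simply transitively) on a horosphere based at $\omega$; and any $t$ with $\beta(t)>0$ acts on $N$ by a \emph{compacting} automorphism, i.e.\ one which under iteration drives every compact set into a single fixed compact set. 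This yields $G\cong N\rtimes\bbR$ or $G\cong N\rtimes\bbZ$.

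It then remains to classify compactly generated locally compact groups $N$ admitting a compacting automorphism $\alpha$. By the solution to Hilbert's fifth problem (Gleason--Yamabe) together with the structure theory of totally disconnected groups (tidy subgroups and Willis' scale function), $N$ splits, up to a compact kernel, as a product of a simply connected nilpotent Lie group $N_c$ carrying a contracting one-parameter subgroup and a totally disconnected factor $N_d$ carrying a compacting automorphism. If $N_d$ is trivial, $N\rtimes\bbR$ is a Heintze group and, by Heintze's classification \cite{heintze74homogeneous}, acts geometrically on a simply connected homogeneous negatively curved manifold; if $N_c$ is trivial, the pair $(N_d,\alpha)$ forces $G$ to act geometrically on a regular tree of valence $\geq 3$ (a horocyclic/Bass--Serre-type tree associated with the semidirect product); in general $G$ acts geometrically on the pure millefeuille space built from the Heintze manifold $X$ attached to $N_c$ and the $(k+1)$-regular tree attached to $N_d$, interweaving the two Busemann functions along $\beta$.

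The main obstacle is the final analysis of the totally disconnected factor $N_d$ together with its compacting automorphism: recognising that this data is precisely what is needed to construct a locally finite tree on which $G$ acts geometrically, and then correctly assembling that tree with the Heintze manifold to produce the millefeuille space $X[k]$. This is carried out in \cite[\S 6--7]{caprace2015amenable}; in the present article it suffices to quote their conclusion.
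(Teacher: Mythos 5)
Your proposal is correct and matches the paper exactly: the paper does not prove this statement but simply quotes it as the main result of \cite{caprace2015amenable} (with a pointer to Cornulier's survey), which is precisely what you do. Your optional outline of the focal/compacting-automorphism argument is a faithful summary of \cite[\S 6--7]{caprace2015amenable} but is not needed here.
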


We can now use Theorems \ref{thm:hypisom_general} and \ref{thm:hypisom_amen} to prove:
\begin{thm}\label{thm:trichotomyhyp_main}
\trichotomyhyp
\end{thm}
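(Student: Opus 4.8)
The plan is to combine the topological-completion machinery of Sections~\ref{sec:topcomp}--\ref{sec:tame} with the structure theory for locally compact hyperbolic groups recalled above. First I would invoke Corollary~\ref{cor:morse_topcomp_exist}: since $X$ is a proper non-elementary hyperbolic space, its Morse boundary equals its Gromov boundary and hence has at least three points, so any cobounded quasi-action $G\qa X$ has a topological completion $\rho:G\to\hat G$. By Proposition~\ref{prop:topcompquasiconj}, $\hat G$ is compactly generated and $G\qa X$ is $\rho$-quasi-conjugate to the regular representation $\hat G\curvearrowright(\hat G,d)$ for a geodesically adapted metric $d$; since $X$ is hyperbolic and quasi-isometry invariance of hyperbolicity applies, $(\hat G,d)$ is hyperbolic, so $\hat G$ is a locally compact hyperbolic group. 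Moreover $\partial\hat G\cong\partial X$ has at least three points, so $\hat G$ is non-elementary.

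Next I would split into the amenable and non-amenable cases for $\hat G$ using Lemma~\ref{lem:amen_focal_char}. If $\hat G$ is non-amenable, let $K\vartriangleleft\hat G$ be the maximal compact normal subgroup (Lemma~\ref{lem:gen_cpctamenrad}) and apply Theorem~\ref{thm:hypisom_general}. In case~(1), $\hat G$ acts geometrically on a rank one irreducible symmetric space of non-compact type $M$; by Corollary~\ref{cor:qconj_geomaction}, $G\qa X$ is $\rho$-quasi-conjugate to this cocompact isometric action, giving the connected case (a rank one symmetric space of non-compact type is in particular a homogeneous simply-connected negatively curved manifold of dimension $\geq 2$). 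In case~(2), $\hat G/K$ is totally disconnected, so $\hat G$ is compact-by-(totally disconnected) by Lemma~\ref{lem:compact_open}; by Proposition~\ref{prop:disc_equiv}, $G\qa X$ is discretisable, giving the totally-disconnected case. (Alternatively: $\hat G$ acts geometrically on a locally finite graph by Theorem~\ref{thm:hypisom_general}(2), and Corollary~\ref{cor:qconj_geomaction} directly exhibits the discretisation.)

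If $\hat G$ is amenable, I would apply Theorem~\ref{thm:hypisom_amen}. In case~(1), $\hat G$ acts geometrically on a simply connected homogeneous negatively curved Riemannian manifold $X[1]$ (equivalently, one with no tree factor); Corollary~\ref{cor:qconj_geomaction} quasi-conjugates $G\qa X$ to this action, giving the connected case — one should observe the manifold has dimension $\geq 2$ because a line would make $\partial\hat G$ have only two points, contradicting non-elementarity. In case~(2), $\hat G$ acts geometrically on a pure millefeuille space, so Corollary~\ref{cor:qconj_geomaction} gives the mixed case. In case~(3), $\hat G$ acts geometrically on a regular tree of finite valency $>2$, which is a locally finite graph, so Proposition~\ref{prop:disc_equiv} (or directly Corollary~\ref{cor:qconj_geomaction}) gives the totally-disconnected case. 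This exhausts all possibilities and yields one of the three alternatives in each branch.

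The main obstacle is not any single deep step — all the heavy lifting is done by the cited structure theorems of Caprace--Cornulier--Monod--Tessera and by the topological-completion formalism already developed — but rather the careful verification that the byproducts match the precise statement: namely that $(\hat G,d)$ is genuinely a \emph{non-elementary} locally compact hyperbolic group (which requires tracking that $\partial\hat G\cong\partial X$, using that the regular representation is quasi-conjugate to $G\qa X$ and that boundaries are quasi-isometry invariants of proper hyperbolic spaces), and that in the manifold cases the dimension is at least two. One should also be slightly careful that Corollary~\ref{cor:qconj_geomaction} applies: each model space ($M$, the negatively curved manifold, the pure millefeuille space, the tree) is proper and quasi-geodesic and the relevant $\hat G$-action is geometric, so the hypotheses are met. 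A minor bookkeeping point is that the compact normal kernel $K$ can be absorbed harmlessly — by Remark~\ref{rem:topcomp,normal} and Proposition~\ref{prop:compact_normal} quotienting by $K$ loses no geometric information — so passing to $\hat G/K$ where the structure theorems are stated causes no loss.
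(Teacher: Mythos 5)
Your proposal is correct and follows essentially the same route as the paper: obtain a topological completion via Corollary \ref{cor:morse_topcomp_exist}, note via Proposition \ref{prop:topcompquasiconj} that $\hat G$ is a non-elementary locally compact hyperbolic group quasi-isometric to $X$, apply the Caprace--Cornulier--Monod--Tessera structure theorems (Theorems \ref{thm:hypisom_general} and \ref{thm:hypisom_amen}), and transfer the resulting geometric actions back with Corollary \ref{cor:qconj_geomaction} (resp.\ Proposition \ref{prop:disc_equiv} in the totally disconnected case). Your explicit amenable/non-amenable case split and the dimension-$\geq 2$ remark are just slightly more detailed bookkeeping of what the paper's proof does implicitly.
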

  \begin{proof}
  Let $G\qa X$ be a cobounded quasi-action on a proper non-elementary hyperbolic space. By Corollary \ref{cor:morse_topcomp_exist}, $G\qa X$ has a topological completion $\rho:G\to \hat G$. By Proposition \ref{prop:topcompquasiconj}, $\hat G$ is a compactly generated locally compact group that is quasi-isometric to $X$, hence $\hat G$ is a non-elementary locally compact hyperbolic group. By Theorems \ref{thm:hypisom_general} and \ref{thm:hypisom_amen}, $\hat G$ acts geometrically on $Y$, where $Y$ is  either a homogeneous  negatively curved manifold,  a pure millefeuille space, or a  locally finite graph. Corollary \ref{cor:qconj_geomaction} thus implies $G\qa X$ is quasi-conjugate to $G\curvearrowright Y$.
  \end{proof}

As mentioned in the introduction, we can sharpen the conclusions of Theorem \ref{thm:trichotomyhyp_main} under an additional hypothesis determining whether  $G\qa X$  extends to an action $G\curvearrowright \partial X$ that either does or does not fix a point of $\partial X$. We thus have the following two corollaries:

\begin{cor}\label{cor:dichothyp_main}
	\dichothyp
\end{cor}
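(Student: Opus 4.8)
The plan is to deduce Corollary \ref{cor:dichothyp_main} from Theorem \ref{thm:trichotomyhyp_main} by ruling out the mixed case whenever the quasi-action does not fix a point of $\partial X$, and by identifying precisely which homogeneous negatively curved manifolds can occur in the connected case. First I would invoke Theorem \ref{thm:trichotomyhyp_main}: a cobounded quasi-action $G\qa X$ on a proper non-elementary hyperbolic space falls into one of the connected, mixed, or totally-disconnected cases. In each of the first two cases $G\qa X$ is quasi-conjugate to a cocompact (hence geometric, after passing to the closure of the image via Proposition \ref{prop:topcompquasiconj} and Corollary \ref{cor:qconj_geomaction}) isometric action of the topological completion $\hat G$ on $Y$, where $Y$ is either a homogeneous simply-connected negatively curved manifold or a pure millefeuille space. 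The key point is that the quasi-action $G\qa X$ extends to an action on $\partial X$, and a quasi-conjugacy induces a $\hat G$-equivariant homeomorphism $\partial X\to \partial Y$; so $G$ fixes a point of $\partial X$ if and only if $\hat G$ fixes a point of $\partial Y$, which by Lemma \ref{lem:amen_focal_char} happens if and only if $\hat G$ is amenable.

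Next I would treat the mixed case. A pure millefeuille space $X[k]$ is a proper $\CAT(-1)$ space, and the group acting geometrically on it — by Theorem \ref{thm:hypisom_amen} — is a non-elementary amenable locally compact hyperbolic group, hence by Lemma \ref{lem:amen_focal_char} it fixes a point of $\partial X[k]$ (namely the end of the tree factor pointing in the direction of increasing Busemann function). Transporting this fixed point back through the equivariant boundary homeomorphism shows $G$ fixes a point of $\partial X$, contradicting our hypothesis. Hence the mixed case cannot occur, and we are left with the connected case or the totally-disconnected (discretisable) case.

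It remains to sharpen the connected case: there $\hat G$ acts geometrically on a homogeneous simply-connected negatively curved Riemannian manifold $Y$. Such a $Y$ is, by Heintze's classification, isometric to a solvable Lie group $N\rtimes_\alpha\bbR$; the relevant dichotomy is whether $\hat G$ is amenable. If $\hat G$ were amenable, then by Lemma \ref{lem:amen_focal_char} it fixes a point of $\partial Y$, hence $G$ fixes a point of $\partial X$, contrary to hypothesis. So $\hat G$ is non-amenable, and Theorem \ref{thm:hypisom_general} applies: since $\hat G$ acts geometrically on a manifold (which is connected, not totally disconnected at infinity in the combinatorial sense — more precisely, $Y$ is not quasi-isometric to a locally finite graph because it is not bounded and has a connected visual boundary of positive dimension, or one argues that a cocompact lattice in the totally disconnected case would act on a locally finite graph and hence $\hat G/K$ would be totally disconnected, forcing $Y$ to be a tree), we are in case (1) of Theorem \ref{thm:hypisom_general}: $\hat G/K$ is a virtually connected centre-free simple rank one Lie group and $\hat G$ acts geometrically on a rank one irreducible symmetric space of non-compact type $Z$. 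Applying Corollary \ref{cor:qconj_geomaction} once more, $G\qa X$ is quasi-conjugate to the cocompact isometric action $\hat G\curvearrowright Z$, which gives exactly the connected case of the corollary.

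The main obstacle I anticipate is the bookkeeping needed to verify that the boundary action of $G$ on $\partial X$ is genuinely transported, via the quasi-conjugacy, to the boundary action of $\hat G$ on $\partial Y$ in a $G$-equivariant (through $\rho$) way — this uses that quasi-isometries of proper hyperbolic spaces induce boundary homeomorphisms and that the induced map intertwines the two group actions up to the homomorphism $\rho:G\to\hat G$, together with density of $\rho(G)$ in $\hat G$ to promote "no fixed point of $G$" to "no fixed point of $\hat G$". The other mild subtlety is excluding the degenerate sub-cases of Heintze's classification and of the millefeuille construction where $Y$ secretly is a tree or a line — but since $X$ is assumed non-elementary these coincide with the discretisable case and cause no problem: a tree is a locally finite graph when the millefeuille is degenerate only if the manifold factor is a point, in which case we are back in the totally-disconnected case of Theorem \ref{thm:trichotomyhyp_main}.
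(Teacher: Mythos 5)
Your core argument is the paper's: take the topological completion $\rho\colon G\to\hat G$, use the $\rho$-equivariant boundary homeomorphism induced by the quasi-conjugacy to see that amenability of $\hat G$ would force $G$ to fix a point of $\partial X$ (Lemma \ref{lem:amen_focal_char}), conclude $\hat G$ is non-amenable, and then apply Theorem \ref{thm:hypisom_general} together with Corollary \ref{cor:qconj_geomaction}. This is exactly how the paper proves Corollary \ref{cor:dichothyp_main} (it re-runs the proof of Theorem \ref{thm:trichotomyhyp_main} rather than quoting its statement), and note that for this corollary you only need the easy direction of the fixed-point transfer: if $\hat G$ fixed a point of $\partial\hat G$, so would its subgroup $\rho(G)$, hence $G$ would fix a point of $\partial X$; density of $\rho(G)$ and continuity of the boundary action are what the paper needs for the converse, which is only used for Corollary \ref{cor:trichot_hyp_fixpt}.

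By contrast, your detour through the statement of Theorem \ref{thm:trichotomyhyp_main} is redundant, and the two sub-claims you lean on there are shaky. First, Theorem \ref{thm:hypisom_amen} says amenable hyperbolic locally compact groups act geometrically on millefeuille spaces, not that a group acting geometrically on a pure millefeuille space is amenable; to exclude the mixed case along your route you would need the (true, but nowhere established in this paper) fact that the full isometry group of a pure millefeuille space fixes the distinguished boundary point. Second, the parenthetical claim that the homogeneous negatively curved manifold $Y$ is not quasi-isometric to a locally finite graph is false: by Proposition \ref{prop:coarse_proper} every cocompact proper geodesic space is quasi-isometric to a locally finite connected graph, and the dichotomy in Theorem \ref{thm:hypisom_general} concerns the quotient $\hat G/K$, not the space $Y$. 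Neither claim is needed: once $\hat G$ is known to be non-amenable, Theorem \ref{thm:hypisom_general} yields a geometric action either on a rank one symmetric space or on a locally finite graph, and the latter is precisely the discretisable alternative of the corollary (by definition, after quasi-conjugating via Corollary \ref{cor:qconj_geomaction}), so no case of Theorem \ref{thm:hypisom_general} has to be excluded and the mixed case simply never arises.
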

\begin{cor}\label{cor:trichot_hyp_fixpt}
	Let $X$ be a proper non-elementary  hyperbolic metric space. If $G\qa X$ is a cobounded quasi-action that fixes a point of $\partial X$, then one of the following holds:
\begin{enumerate}
	\item \textbf{(continuous case)} $G\qa X$ is quasi-conjugate to an isometric action $G\curvearrowright Y$, where $Y$ is a simply connected homogeneous  negatively curved
	Riemannian manifold.
	\item \textbf{(mixed case)} $G\qa X$ is quasi-conjugate to an isometric action  $G\curvearrowright Y$, where $Y$ is a pure millefeuille space.
	\item \textbf{(discrete case)} $G\qa X$ is quasi-conjugate to an isometric action $G\curvearrowright Y$, where $Y$ is a regular  tree of finite valency greater than two.
\end{enumerate}	
\end{cor}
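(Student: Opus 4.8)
\emph{Proof proposal.} The plan is to deduce the statement from Theorem \ref{thm:hypisom_amen} once we know that the topological completion of $G\qa X$ is amenable, which is where the hypothesis on the boundary enters. First I would apply Corollary \ref{cor:morse_topcomp_exist} to produce a topological completion $\rho\colon G\to\hat G$ of $G\qa X$. By Proposition \ref{prop:topcompquasiconj}, $\hat G$ is a compactly generated locally compact group quasi-isometric to $X$ and the regular representation $\hat G\curvearrowright(\hat G,d)$ is $\rho$-quasi-conjugate to $G\qa X$; since $X$ is a proper non-elementary hyperbolic space, $\hat G$ is a non-elementary locally compact hyperbolic group. Using \cite[Corollary 2.6]{caprace2015amenable} I would fix a proper geodesic hyperbolic space $Z$ on which $\hat G$ acts geometrically, so that by Lemma \ref{lem:geom_qi} this action is quasi-conjugate to the regular representation. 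Composing these quasi-conjugacies yields a $\rho$-coarsely-equivariant quasi-isometry $f\colon X\to Z$.

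The key step is to transfer the fixed boundary point from $\partial X$ to $\partial\hat G$. The quasi-isometry $f$ induces a homeomorphism $\partial f\colon\partial X\to\partial Z$, and since $f$ is $\rho$-coarsely-equivariant and the extension of a quasi-action to the Gromov boundary is invariant under quasi-conjugacy, $\partial f$ intertwines the natural boundary action of $G\qa X$ on $\partial X$ with the action of $G$ on $\partial Z=\partial\hat G$ via $\rho$. Thus if $\xi_0\in\partial X$ is the point fixed by $G\qa X$, then $\bar\xi_0\coloneqq\partial f(\xi_0)$ is fixed by $\rho(G)$. The stabiliser of $\bar\xi_0$ in $\hat G$ is a closed subgroup, because $\hat G$ acts continuously on the Hausdorff space $\partial\hat G$ (the action of $\hat G$ on $Z$ is continuous and extends continuously to $Z\cup\partial Z$); as this stabiliser contains the dense subgroup $\rho(G)$, it equals $\hat G$. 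Hence $\hat G$ fixes a point of $\partial\hat G$, and Lemma \ref{lem:amen_focal_char} shows that $\hat G$ is amenable.

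Now Theorem \ref{thm:hypisom_amen} applies to the non-elementary amenable locally compact hyperbolic group $\hat G$: it acts geometrically on a space $Y$ that is either a simply connected homogeneous negatively curved Riemannian manifold, a pure millefeuille space, or a regular tree of finite valency greater than two. Corollary \ref{cor:qconj_geomaction} then gives that $G\qa X$ is $\rho$-quasi-conjugate to $\hat G\curvearrowright Y$, hence quasi-conjugate to the isometric action $G\curvearrowright Y$ obtained by precomposing with $\rho$; the three possibilities for $Y$ give the three cases of the statement.

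I expect the main obstacle to be the middle paragraph, specifically justifying that the induced boundary homeomorphism is genuinely equivariant for the $G$-action (which rests on the functoriality of the boundary extension of quasi-actions under quasi-conjugacy) together with the continuity of the $\hat G$-action on $\partial\hat G$, so that point stabilisers are closed. Both are standard facts about hyperbolic spaces and groups, but they must be invoked with care; the remainder is a routine assembly of Corollary \ref{cor:morse_topcomp_exist}, Proposition \ref{prop:topcompquasiconj}, Lemmas \ref{lem:geom_qi} and \ref{lem:amen_focal_char}, Theorem \ref{thm:hypisom_amen}, and Corollary \ref{cor:qconj_geomaction}.
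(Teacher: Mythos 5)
Your proposal is correct and follows essentially the same route as the paper: produce a topological completion via Corollary \ref{cor:morse_topcomp_exist}, transfer the fixed boundary point through the $\rho$-quasi-conjugacy to $\partial\hat G$, use density of $\rho(G)$ and continuity of the $\hat G$-action on $\partial\hat G$ to conclude $\hat G$ is amenable by Lemma \ref{lem:amen_focal_char}, and then finish with Theorem \ref{thm:hypisom_amen} and Corollary \ref{cor:qconj_geomaction}. The only cosmetic difference is your explicit detour through a proper geodesic model $Z$ for $\hat G$; the paper instead cites continuity of the boundary action directly (Furman), which amounts to the same thing.
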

\begin{proof}[Proofs of Corollaries \ref{cor:dichothyp_main} and \ref{cor:trichot_hyp_fixpt}]
	As in the proof of Theorem \ref{thm:trichotomyhyp_main}, the quasi-action $G\qa X$ has a topological completion $\rho:G\to\hat G$. We claim $\hat G$ is amenable if and only if the quasi-action $G\qa X$ fixes a point of $\partial X$. Indeed, Proposition \ref{prop:topcompquasiconj} ensures there is a $\rho$-quasi-conjugacy $X\to \hat G$. This induces a homeomorphism $\partial X\to \partial \hat G$ that conjugates the $G$ action on $\partial X$ to a $\rho(G)$  action on $\partial \hat G$. Since $\hat G$ acts continuously on $\partial \hat G$ (see for instance   \cite[Lemma 3.4]{furman2001mostowmargulis}) and $\rho(G)\leq \hat G$ is dense  in $G$, it follows that $\rho(G)$ fixes a point of $\partial \hat G$ if and only if $\hat G$ does. The claim now follows from Lemma \ref{lem:amen_focal_char}. 
	We now proceed as in the proof of Theorem \ref{thm:trichotomyhyp_main}, noting that when  $G$ does not fix a point of $\partial X$  we can apply  Theorem  \ref{thm:hypisom_general}, and  when $G$ does fix   a point of $\partial X$ we can apply Theorem  \ref{thm:hypisom_amen}.
\end{proof}
We will use the  following property of pure millefeuille spaces:
\begin{prop}[{\cite[Corollary 19.22]{cornulier2018quasi}}]\label{prop:millefeuille}
A  pure millefeuille space is not quasi-isometric to any vertex-transitive locally finite graph. In particular, a pure millefeuille space is not quasi-isometric to a finitely generated group.
\end{prop}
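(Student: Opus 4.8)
The plan is to reduce the statement to the structure theory of locally compact hyperbolic groups and then to isolate the one genuinely new input. Suppose for contradiction that a pure millefeuille space $X[k]$ is quasi-isometric to a vertex-transitive locally finite graph $Y$. With the permutation topology, $G\coloneqq\Aut(Y)$ is a locally compact group acting vertex-transitively and properly on $Y$, hence geometrically, so by Lemma~\ref{lem:geom_qi} it is compactly generated and quasi-isometric to $Y$, and therefore to $X[k]$. Since $X[k]$ is a proper non-elementary hyperbolic space, $G$ is a non-elementary locally compact hyperbolic group, and since each vertex stabiliser is a compact open subgroup, Lemma~\ref{lem:compact_open} shows $G$ is compact-by-(totally disconnected); write $K\vartriangleleft G$ for its maximal compact normal subgroup (Lemma~\ref{lem:gen_cpctamenrad}), so $G/K$ is totally disconnected.

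The next step is to force $G$ into the amenable case. The boundary $\partial X[k]$ carries a canonical point $\omega$, the ``focal'' point of the millefeuille, which is fixed by every isometry of $X[k]$ and, being characterised intrinsically by the topology of $\partial X[k]$, is preserved by every boundary homeomorphism induced by a self-quasi-isometry of $X[k]$ (equivalently: $X[k]$ is quasi-isometric to the amenable millefeuille group of Caprace--Cornulier--Monod--Tessera \cite{caprace2015amenable}, and amenability is a quasi-isometry invariant of compactly generated locally compact groups). Either way the $G$-action on $\partial G\cong\partial X[k]$ fixes a point, so by Lemma~\ref{lem:amen_focal_char} the group $G$ is amenable. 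Theorem~\ref{thm:hypisom_amen} then says $G$ acts geometrically on (a) a simply connected homogeneous negatively curved Riemannian manifold, (b) a pure millefeuille space, or (c) a regular tree of valence at least three. Writing $X=N\rtimes_\alpha\bbR$ with $N\neq 1$ and $k\geq 2$, the space $\partial X[k]$ is, away from $\omega$, locally homeomorphic to $N\times\cC$ with $\cC$ a Cantor set and $\dim N\geq 1$; this space is neither locally connected nor totally disconnected, whereas the Gromov boundary of a negatively curved homogeneous manifold (or a rank one symmetric space) is a sphere and that of a tree is a Cantor set. Since the homeomorphism type of the Gromov boundary is a quasi-isometry invariant of proper geodesic hyperbolic spaces, cases (a) and (c) are excluded, so $G$ acts geometrically on a pure millefeuille space $Z$.

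The crux --- and the step I expect to be the main obstacle --- is to contradict the existence of a geometric action of the compact-by-(totally disconnected) group $G$ on a pure millefeuille space $Z=X'[k']$. Here purity forces $X'=N'\rtimes_{\alpha'}\bbR$ with $N'\neq 1$, and the nontrivial connected nilpotent Lie group $N'$ acts faithfully by isometries on $Z$ and on $\partial Z$; unwinding the proof of Theorem~7.3 of \cite{caprace2015amenable}, any amenable non-elementary locally compact group acting geometrically on $Z$ has, modulo its maximal compact normal subgroup, a nontrivial connected part precisely because $Z$ is pure, so it cannot be compact-by-(totally disconnected). Thus the remaining content is exactly that a pure millefeuille space admits no geometric action by a compact-by-(totally disconnected) locally compact group --- which is the substance of \cite[Corollary~19.22]{cornulier2018quasi} --- and I would either cite this or reconstruct it from the structure of isometry groups of millefeuille spaces. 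Granting it, we obtain the desired contradiction, proving the first assertion. The ``in particular'' is then immediate: the Cayley graph of a finitely generated group with respect to a finite generating set is a vertex-transitive locally finite graph, so no finitely generated group is quasi-isometric to $X[k]$.
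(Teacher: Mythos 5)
The first thing to say is that the paper does not prove this proposition at all: it is imported verbatim from Cornulier \cite[Corollary 19.22]{cornulier2018quasi}, so there is no internal argument to compare yours against; the question is whether your sketch stands on its own, and it does not. The step that forces $G=\Aut(Y)$ into the amenable case is defective. Your parenthetical justification --- that amenability is a quasi-isometry invariant of compactly generated locally compact groups --- is false: $\bbH^2$ is quasi-isometric both to the non-amenable group $\PSL(2,\bbR)$ and to the amenable affine group $\bbR\rtimes\bbR$, and a regular tree is quasi-isometric both to a free group and to the amenable focal group $\bbQ_p\rtimes\bbZ$. Worse, the conclusion you want from it --- that any cobounded group of self-quasi-isometries of a pure millefeuille space fixes the focal point $\omega$ --- is exactly the nontrivial content here: without it the general-type totally disconnected case is not excluded, since Theorem \ref{thm:hypisom_general} only returns ``acts geometrically on a locally finite graph'' in that case, which is no contradiction. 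Your alternative justification, that $\omega$ is intrinsically characterised by the topology of $\partial X[k]$, is the right idea (one expects $\omega$ to be the unique point with disconnected complement, via an identification of $\partial X[k]\setminus\{\omega\}$ with (Cantor set minus a point)$\,\times\,\bbR^{\dim N}$), but you assert this identification and the resulting rigidity rather than prove them, and that is where the real work lies.

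Second, at what you yourself identify as the crux you propose to cite ``the substance of \cite[Corollary 19.22]{cornulier2018quasi}'' --- i.e.\ the very statement being proved --- which makes the argument circular as written; and the weaker statement you reduce to (no geometric action of a compact-by-(totally disconnected) group on a pure millefeuille space) again requires the fixed-point input to dispose of the general-type case, so the reduction gains nothing. Note also that once the fixed-point claim is actually established, the rest of your argument can be shortened: $G$ is then amenable and compact-by-(totally disconnected), so the structure theorem for amenable hyperbolic locally compact groups (\cite[Theorem 7.3]{caprace2015amenable}, i.e.\ Theorem \ref{thm:hypisom_amen}) forces it to be focal of totally disconnected type, hence quasi-isometric to a regular tree, contradicting the fact that $\partial X[k]$ contains nondegenerate connected subsets; the three-way comparison of model spaces and the appeal to a black-box non-existence statement are then unnecessary. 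As it stands the proposal has genuine gaps: the false quasi-isometry-invariance claim, the unproved topological characterisation of $\omega$, and the circular citation at the decisive step.
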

 We recall that a metric space $X$ is discretisable if every cobounded quasi-action on $X$ is discretisable.   We can thus use Theorem \ref{thm:trichotomyhyp_main} to deduce:
\begin{cor}\label{cor:dischyp_qitohom}
	A  finitely generated hyperbolic group $\Gamma$ is discretisable if and only if it is not quasi-isometric to a simply connected negatively curved homogeneous space. In particular, if $\partial \Gamma$ is not homeomorphic to a sphere (of any dimension), then it is discretisable.
\end{cor}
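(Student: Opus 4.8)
The plan is to obtain both implications from the trichotomy of Theorem~\ref{thm:trichotomyhyp_main}, applied to cobounded quasi-actions on a Cayley graph of $\Gamma$. We may assume $\Gamma$ is non-elementary, so that this Cayley graph is a proper non-elementary hyperbolic space.

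\emph{The ``if'' direction.} Suppose $\Gamma$ is not quasi-isometric to any simply connected negatively curved homogeneous space, and let $G\qa\Gamma$ be an arbitrary cobounded quasi-action. In the connected case of Theorem~\ref{thm:trichotomyhyp_main}, $\Gamma$ would be quasi-isometric to a homogeneous simply-connected negatively curved manifold of dimension at least two, contradicting the hypothesis. In the mixed case, $\Gamma$ would be quasi-isometric to a pure millefeuille space; but $\Gamma$ is a finitely generated group, so this contradicts Proposition~\ref{prop:millefeuille}. Hence the totally-disconnected case holds and $G\qa\Gamma$ is discretisable; as the quasi-action was arbitrary, $\Gamma$ is discretisable.

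\emph{The ``only if'' direction.} Suppose $f\colon M\to\Gamma$ is a quasi-isometry, where $M$ is a simply connected negatively curved homogeneous Riemannian manifold. By Heintze's classification~\cite{heintze74homogeneous}, $M$ is isometric to a connected solvable Lie group equipped with a left-invariant metric; thus $M$ is a locally compact group acting on the proper metric space $M$ by the cobounded isometric action given by left translations. Quasi-conjugating this action along $f$ produces a cobounded quasi-action $M\qa\Gamma$. I would then argue that the identity homomorphism $M\to M$ is a topological completion of the regular action $M\curvearrowright M$: indeed, $\rho=\id_M$ has dense image and, since a proper left-invariant metric makes the bounded subsets of $M$ exactly the relatively compact ones, the boundedness condition in the definition of a topological completion is satisfied (cf.\ Proposition~\ref{prop:TCvsQC}). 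Because boundedness of subsets of the acting group is preserved under quasi-conjugacy via $\id_M$, $M$ is likewise a topological completion of $M\qa\Gamma$. Since $M$ is connected and non-compact, its only open subgroup is $M$ itself, so $M$ has no compact open subgroup and hence, by Lemma~\ref{lem:compact_open}, is not compact-by-(totally disconnected). Proposition~\ref{prop:disc_equiv} now shows $M\qa\Gamma$ is not discretisable, so $\Gamma$ is not discretisable.

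\emph{The ``in particular'' clause and the main difficulty.} The Gromov boundary of an $n$-dimensional simply connected negatively curved homogeneous manifold is homeomorphic to the sphere $S^{n-1}$, and the Gromov boundary of a hyperbolic space is a quasi-isometry invariant up to homeomorphism; so if $\partial\Gamma$ is not homeomorphic to any sphere then $\Gamma$ is not quasi-isometric to such a manifold, and the equivalence just proved gives that $\Gamma$ is discretisable. The step I expect to require the most care is identifying $M$ itself---rather than merely a weak topological completion, or $M$ modulo a compact normal subgroup---as the topological completion of $M\qa\Gamma$; here Theorem~\ref{thm:topcomp unique} and the fact that a simply connected solvable Lie group contains no nontrivial compact subgroup are what make the argument go through. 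Justifying the boundary-sphere identification used in the last clause is the other point that needs a careful reference.
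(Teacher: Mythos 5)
Your proof is correct and is essentially the paper's intended deduction: the ``if'' direction combines Theorem \ref{thm:trichotomyhyp_main} with Proposition \ref{prop:millefeuille} exactly as the paper does, and your ``only if'' direction --- quasi-conjugating the Heintze group's left-translation action to $\Gamma$, identifying the topological completion via Proposition \ref{prop:TCvsQC}, and ruling out compact-by-(totally disconnected) via Lemma \ref{lem:compact_open} and Proposition \ref{prop:disc_equiv} --- is the natural completion-theoretic argument the corollary relies on, with the boundary-sphere identification for Hadamard manifolds being standard. The only loose end is the unjustified ``we may assume $\Gamma$ is non-elementary'', which matches the paper's implicit scope (the trichotomy needs at least three boundary points) and leaves only degenerate elementary cases that the paper itself does not address, so there is no genuine gap.
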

The following  question, raised by Cornulier \cite[Conjecture 19.99]{cornulier2018quasi}, is relevant  to refining our description of  non-discretisable finitely generated hyperbolic groups:  Do there exist negatively curved homogeneous spaces other than symmetric spaces that are quasi-isometric to finitely generated groups? Since the isometry groups of such manifolds are not unimodular, they cannot contain lattices. In particular, a finitely generated group cannot act geometrically on a negatively curved homogeneous space other than a symmetric space. 
A positive answer to this conjecture is implied by the Pointed Sphere Conjecture \cite[Corollary 19.104]{cornulier2018quasi}, which asserts that if $X$ is a negatively curved homogeneous spaces other than a symmetric space, there is a distinguished point in $\partial X$ that is fixed by every self quasi-isometry  of $X$. Xie, Carrasco and others have proven special cases of this conjecture   \cite{xie2014large},\cite{piaggio2017Orlicz}. We refer to \cite{cornulier2018quasi} for more details.

\section{Products of hyperbolic groups}\label{sec:hyp_prod}
In this section we study the class $\cC$ of finitely generated groups quasi-isometric to products of non-elementary proper hyperbolic metric spaces. Theorem \ref{thm:hyp_productmain}, the key result of this section, shows any  group in $\cC$ acts geometrically on a product of symmetric spaces and locally finite graphs. This implies  Corollary \ref{cor:hyp_product_lattice}, which says  a group in $\cC$ is virtually isomorphic to  a lattice  in a product of locally compact hyperbolic groups with trivial amenable radical. In Theorem \ref{thm:qiprod_comm}, we show that in non-exceptional cases, groups in $\cC$  contain commensurated subgroups, and split as direct products precisely when these commensurated subgroups are weakly separable. 
 We prove Theorems \ref{thm:mixedlattice_main} and \ref{thm:splitlinear_main}, which show that under the presence of additional algebraic hypotheses, namely residual finiteness and linearity, groups in $\cC$ are either arithmetic lattices in semisimple algebraic groups, or  split as direct products of hyperbolic groups.  Theorem \ref{thm:boundary_rigidity} proves that $\CAT(0)$ groups in $\cC$ are boundary rigid.

\subsection{A structure theorem for groups quasi-isometric to products of hyperbolic spaces}
An isometric action $\Gamma\curvearrowright \Pi_{i=1}^nY_i$ is said to  \emph{preserve the product structure} if every isometry splits as a product of isometries, possibly permuting the factors. 
	If $\Gamma\curvearrowright \Pi_{i=1}^nY_i$ is an isometric action preserving the product structure, we let $\Gamma^*\leq \Gamma$ denote the maximal  finite index subgroup that does not permute factors.
 We note that $\Gamma^*$ acts isometrically on each $Y_i$ and hence induces an action on the boundary $\partial Y_i$ by homeomorphisms.
\begin{thm}\label{thm:hyp_productmain}
	Let $\Gamma$ be a finitely generated group quasi-isometric to $\Pi_{i=1}^nX_i$, where each $X_i$ is a cocompact proper  geodesic non-elementary hyperbolic metric space. Then $\Gamma$ acts geometrically on $\Pi_{i=1}^nY_i$, preserving the product structure, where each $Y_i$ is quasi-isometric to $X_i$ and is either a rank one symmetric space  or a locally finite graph.
	Moreover, $\Gamma^*$ does not fix a point of $\partial Y_i$ for any $i$.
\end{thm}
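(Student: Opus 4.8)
The plan is to reduce Theorem~\ref{thm:hyp_productmain} to the single-factor result of Theorem~\ref{thm:trichotomyhyp_main} (more precisely, Corollary~\ref{cor:dichothyp_main}) via the product-rigidity machinery of Kapovich--Kleiner--Leeb. First I would take the given quasi-isometry $\Gamma \simeq \Pi_{i=1}^n X_i$ and use it to quasi-conjugate the regular representation of $\Gamma$ to a cobounded quasi-action $\Gamma \qa \Pi_{i=1}^n X_i$. Since each $X_i$ is cocompact proper non-elementary hyperbolic, each $X_i$ is of coarse type~I, so Theorem~\ref{thm:kkl} applies: there is a homomorphism $\sigma\colon \Gamma \to \Sym(n)$ and the quasi-action coarsely preserves the product structure. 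Passing to $\Gamma^* = \{\gamma \in \Gamma \mid \sigma(\gamma) = \id\}$ (a finite-index subgroup, which I would show coincides with the $\Gamma^*$ defined intrinsically once the geometric action is constructed), and using Proposition~\ref{prop:coarsebundle_fibrepres} with $I = \{i\}$, each restricted quasi-action $\Gamma^*_{\{i\}} \qa X_i$ is fibre-preserving, hence by Proposition~\ref{prop:inducedqa} induces a cobounded quasi-action on $X_i$ itself.

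Next I would apply Corollary~\ref{cor:dichothyp_main}, or rather Theorem~\ref{thm:trichotomyhyp_main}, to each induced cobounded quasi-action $\Gamma_i \qa X_i$ (where $\Gamma_i$ is the factor-stabiliser). The trichotomy gives three cases; but the mixed (pure millefeuille) case is ruled out by Proposition~\ref{prop:millefeuille}, since a pure millefeuille space is not quasi-isometric to any vertex-transitive locally finite graph, in particular not to any finitely generated group, whereas $X_i$ is quasi-isometric to a subgroup-induced object arising from $\Gamma$. Wait --- more carefully: $X_i$ need not itself be quasi-isometric to a group, but the induced quasi-action is a \emph{cobounded} quasi-action of the finitely generated group $\Gamma_i$, so by Proposition~\ref{prop:generalms} $X_i$ is quasi-isometric to $\Gamma_i$ equipped with a word metric; hence Proposition~\ref{prop:millefeuille} excludes the mixed case. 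So each $\Gamma_i \qa X_i$ is either quasi-conjugate to a cocompact isometric action on a rank one symmetric space $Y_i$, or is discretisable, i.e.\ quasi-conjugate to an action on a locally finite graph $Y_i$; in either case $Y_i \simeq X_i$. Then I would invoke Proposition~\ref{prop:qaction_prod} (using Kleiner--Leeb's Theorem~\ref{thm:kl_qaction} for the induction over factor-orbits) to assemble these into a single isometric action $\Gamma \curvearrowright \Pi_{i=1}^n Z_i$ preserving the product structure, with each $Z_i$ isometric to some $Y_j$; coboundedness of the quasi-action together with properness of the regular representation of $\Gamma$ makes this action geometric.

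For the final statement --- that $\Gamma^*$ does not fix a point of $\partial Y_i$ --- I would argue by contradiction: suppose $\Gamma^*$ fixes $\xi \in \partial Y_i$. Then the induced cobounded quasi-action $\Gamma_i \qa X_i$ would be a cobounded quasi-action fixing a point of $\partial X_i$ (transporting $\xi$ across the quasi-conjugacy-induced homeomorphism $\partial Y_i \to \partial X_i$, exactly as in the proof of Corollaries~\ref{cor:dichothyp_main} and~\ref{cor:trichot_hyp_fixpt}). By the argument in that proof, via Lemma~\ref{lem:amen_focal_char}, this forces the topological completion $\hat G_i$ of $\Gamma_i \qa X_i$ to be amenable; but then Theorem~\ref{thm:hypisom_amen} puts us in the continuous, mixed, or "regular tree of finite valency" case. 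The mixed case is already excluded. In the continuous case $Y_i$ would be a simply connected homogeneous negatively curved manifold acted on geometrically by $\hat G_i$ --- but then $\hat G_i$ is non-unimodular (Lemma~\ref{lem:amen_focal_char}) so contains no lattice, and $\Gamma_i$ would be such a lattice, a contradiction (this is precisely the observation used after Corollary~\ref{cor:dischyp_qitohom}). Similarly, the "regular tree of finite valency" amenable case gives $\hat G_i$ acting geometrically on a $(k{+}1)$-regular tree while fixing a boundary point, which again forces non-unimodularity. So the only possibility compatible with $\Gamma_i$ being a finitely generated group acting geometrically is that $\Gamma^*$ does not fix a point of $\partial Y_i$.

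The main obstacle I anticipate is bookkeeping around permutation of factors and the passage between $\Gamma$, the subgroups $\Gamma_i$ and $\Gamma^*$, and making the identification of the intrinsically-defined $\Gamma^*$ (the maximal finite-index subgroup not permuting the $Y_i$) with the kernel of $\sigma$ precise --- this requires knowing the product decomposition of $\Pi Y_i$ is the one detected by Theorem~\ref{thm:kkl}, i.e.\ that the $Y_i$ are pairwise non-quasi-isometric to the point and the de Rham-type decomposition is essentially unique. A secondary technical point is checking that the mixed case is genuinely excluded for \emph{each} factor and not merely for the ambient product; this is handled by the observation above that each factor quasi-action is cobounded on a group, hence Proposition~\ref{prop:millefeuille} applies factor-by-factor. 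Everything else is an assembly of results already in the excerpt.
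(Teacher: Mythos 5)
Your first half --- quasi-conjugating $\Gamma\qa\Pi X_i$ via Theorem~\ref{thm:kkl}, applying the trichotomy factor-by-factor and reassembling with Proposition~\ref{prop:qaction_prod} --- is essentially the paper's Lemma~\ref{lem:hypprod_intermediate} and is fine. The gap is in everything after that, and it has a single source: the induced factor quasi-actions $\Gamma_i\qa X_i$ are cobounded but \emph{not proper}, yet both of your exclusion arguments treat them as if they were. (i) Proposition~\ref{prop:generalms} only identifies $X_i$ with $(\Gamma_i,d_S)$ for a bounded but possibly \emph{infinite} generating set $S$ (cf.\ Corollary~\ref{cor:qa, qconj to isoma}: the resulting graph may be locally infinite), so Proposition~\ref{prop:millefeuille}, which concerns vertex-transitive \emph{locally finite} graphs and hence finitely generated groups with their finite-generating-set word metrics, simply does not apply; as far as this argument goes, $X_i$ could still be a pure millefeuille space. (ii) More seriously, $\Gamma_i$ is \emph{not} a lattice in the topological completion $\hat G_i$ of $\Gamma_i\qa X_i$: since the quasi-action is not proper, $\rho(\Gamma_i)$ is merely a dense subgroup, so non-unimodularity of $\hat G_i$ (Lemma~\ref{lem:amen_focal_char}) yields no contradiction. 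Consequently your proof of the ``moreover'' clause collapses, and with it the upgrade from ``homogeneous negatively curved manifold'' to ``rank one symmetric space'' via Corollary~\ref{cor:dichothyp_main}.

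This is precisely the point where the paper has to work: Proposition~\ref{prop:hyp_fixptbdry} restores properness before making any fixed-point or amenability argument. Concretely, it uses Lemma~\ref{lem:product_graph_stab} iteratively to pass to the stabiliser of a slab $\bigl(\Pi_{i\le r}Y_i\bigr)\times\{v\}$ over vertices of the graph factors, obtaining a subgroup $\Gamma'$ that acts \emph{geometrically} on the sub-product of $\CAT(-1)$ factors, and then invokes Caprace--Monod (Theorem~\ref{thm:cat0_bdry}), together with minimality and the absence of a Euclidean de Rham factor, to conclude $\Gamma'$ fixes no point of the boundary of that sub-product, hence of any $\partial Y_i$. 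Only then are the millefeuille and non-symmetric homogeneous cases excluded, because the full isometry groups of such spaces fix a boundary point. That the non-properness of the factor actions is the crux, not bookkeeping, is also visible in Section~\ref{sec:qicent}, where the analogous non-fixing statement for $\bbR^n\times Q$ requires the growth estimate of Lemma~\ref{lem:growth_homspace} and a separate tree argument (Lemma~\ref{lem:tree_fixedpt}) rather than any formal lattice/unimodularity argument. To repair your proof you would need to insert a step of this kind (slab stabilisers plus Theorem~\ref{thm:cat0_bdry}, or a growth-type argument) in place of the lattice claim.
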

We first prove a weaker version of Theorem \ref{thm:hyp_productmain}:
\begin{lem}\label{lem:hypprod_intermediate}
	Let $\Gamma$ be a finitely generated group quasi-isometric to $\Pi_{i=1}^nX_i$, where each $X_i$ is a cocompact proper geodesic non-elementary hyperbolic metric space. Then $\Gamma$ acts geometrically on $\Pi_{i=1}^nY_i$, preserving the product structure, where each $Y_i$ is quasi-isometric to $X_i$ and is either a negatively curved homogeneous space, a pure millefeuille space, or a locally finite graph. Moreover, if $\Gamma^*$  fixes a point of  $\partial Y_i$ for some locally finite graph $Y_i$, then $Y_i$ is a regular  tree of finite valency greater than two.
\end{lem}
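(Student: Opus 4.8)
The plan is to combine the structure theory of quasi-actions on products (Proposition \ref{prop:qaction_prod}) with the trichotomy for quasi-actions on individual hyperbolic spaces (Theorem \ref{thm:trichotomyhyp_main}) and its refinement in the fixed-point case (Corollary \ref{cor:trichot_hyp_fixpt}). First I would observe that since $\Gamma$ is quasi-isometric to $X\coloneqq\Pi_{i=1}^nX_i$ and each $X_i$ is a cocompact proper non-elementary hyperbolic space, we may equip $\Gamma$ with its regular representation and quasi-conjugate to obtain a cobounded (indeed, proper and cobounded) quasi-action $\Gamma\qa X$. Each $X_i$ is of coarse type I in the sense of Kapovich--Kleiner--Leeb, so Theorem \ref{thm:kkl} applies: there is a homomorphism $\sigma:\Gamma\to\Sym(n)$ recording how $\Gamma$ permutes factors, and for each $i$ the finite-index subgroup $\Gamma_i=\{g\in\Gamma\mid\sigma(g)(i)=i\}$ acts on $X_i$ via an induced cobounded quasi-action $\Gamma_i\qa X_i$ (using Proposition \ref{prop:coarsebundle_fibrepres} with $I=\{i\}$, or directly the factor quasi-isometries of Theorem \ref{thm:kkl}).

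Next I would apply Theorem \ref{thm:trichotomyhyp_main} to each quasi-action $\Gamma_i\qa X_i$: it is quasi-conjugate to a cocompact isometric action on a space $Y_i$ that is either a homogeneous simply-connected negatively curved manifold, a pure millefeuille space, or a locally finite graph (the discretisable case). In particular each $Y_i$ is quasi-isometric to $X_i$. Now I invoke Proposition \ref{prop:qaction_prod}, whose hypotheses are exactly that each $X_i$ is of coarse type I or II and that each $\Gamma_i\qa X_i$ is quasi-conjugate to an isometric action: it yields that $\Gamma\qa X$ is quasi-conjugate to an isometric action $\Gamma\curvearrowright\Pi_{i=1}^nZ_i$ preserving the product structure, where each $Z_i$ is isometric to some $Y_j$. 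Since the quasi-conjugacy preserves the product structure, the factors $Z_i$ are permuted by $\Gamma$ exactly according to $\sigma$, so after relabelling we may take $Z_i$ quasi-isometric to $X_i$; thus $\Gamma$ acts geometrically on $\Pi_{i=1}^nY_i$ with each $Y_i$ quasi-isometric to $X_i$ and of one of the three listed types, and $\Gamma^*$ acts isometrically on each $Y_i$ individually (inducing a boundary action). Here $\Gamma^*$ is the maximal finite-index subgroup not permuting factors, so $\Gamma^*\leq\bigcap_i\Gamma_i$ and the restriction $\Gamma^*\curvearrowright Y_i$ is quasi-conjugate to the restriction of the quasi-action $\Gamma_i\qa X_i$.

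For the final ``moreover'' clause, I would argue by contradiction. Suppose $Y_i$ is a locally finite graph and $\Gamma^*$ fixes a point of $\partial Y_i$. Then the cobounded quasi-action $\Gamma_i\qa X_i$ (or at least its restriction to $\Gamma^*$, which is still cobounded as $\Gamma^*$ has finite index) fixes a point of $\partial X_i\cong\partial Y_i$, so Corollary \ref{cor:trichot_hyp_fixpt} applies: $\Gamma^*\qa X_i$ is quasi-conjugate to an isometric action on either a negatively curved homogeneous manifold, a pure millefeuille space, or a regular tree of finite valency greater than two. The first case is impossible since $Y_i$, being a locally finite graph, is quasi-isometric to a finitely generated group, whereas a negatively curved homogeneous manifold that is quasi-isometric to $Y_i$ would then be quasi-isometric to a finitely generated group, which is not per se a contradiction --- so instead I rule out the millefeuille case via Proposition \ref{prop:millefeuille}, which says a pure millefeuille space is not quasi-isometric to any vertex-transitive locally finite graph (and in particular not to $Y_i$). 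The homogeneous-manifold case is then excluded because such a space is not quasi-isometric to a locally finite graph either --- this follows since $Y_i$ and the manifold would be quasi-isometric yet one is a discretisable space and the other is not; more directly, by Corollary \ref{cor:dischyp_qitohom} a finitely generated hyperbolic group quasi-isometric to a negatively curved homogeneous space is the only obstruction, and here we may simply note that by uniqueness of the ``connected case'' versus ``totally-disconnected case'' in Theorem \ref{thm:trichotomyhyp_main} the space $Y_i$ cannot be simultaneously a locally finite graph and a homogeneous manifold. Hence the only surviving possibility is that $Y_i$ is a regular tree of finite valency greater than two, as claimed. The main obstacle I anticipate is bookkeeping: making sure the product-structure-preserving quasi-conjugacy from Proposition \ref{prop:qaction_prod} is compatible with the finite-index subgroup $\Gamma^*$ so that the boundary action of $\Gamma^*$ on each $Y_i$ genuinely matches the restriction of $\Gamma_i\qa X_i$, and double-checking that coboundedness is preserved under passing to $\Gamma^*$ and that the three cases of Theorem \ref{thm:trichotomyhyp_main} are mutually exclusive up to quasi-isometry of the target.
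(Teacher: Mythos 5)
Your first two paragraphs follow the paper's route exactly (Theorem \ref{thm:kkl} and Proposition \ref{prop:coarsebundle_fibrepres} to get factor quasi-actions, Theorem \ref{thm:trichotomyhyp_main} for each factor, Proposition \ref{prop:qaction_prod} to assemble a product-preserving geometric action), and that part is fine. The gap is in your treatment of the ``moreover'' clause. That clause is a statement about a \emph{suitably chosen} $Y_i$, not about every locally finite graph produced by the trichotomy: a locally finite graph quasi-isometric to a tree need not be a regular tree, so your elimination argument, even if all the exclusions went through, would only show that $Y_i$ is \emph{quasi-isometric} to a regular tree of valency $>2$, never that it literally is one. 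Moreover, the exclusions themselves do not go through: the three cases of Theorem \ref{thm:trichotomyhyp_main} are not mutually exclusive (a surface group quasi-acting on its Cayley graph is discretisable and simultaneously quasi-conjugate to a cocompact action on $\bbH^2$), so ``uniqueness of the connected versus totally-disconnected case'' is not available; and Proposition \ref{prop:millefeuille} only rules out quasi-isometry to \emph{vertex-transitive} locally finite graphs, whereas your $Y_i$ carries a cocompact but possibly non-proper $\Gamma_i$-action and is not known to be vertex-transitive.

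The paper's proof instead makes a choice: if $\Gamma^*$ fixes a point of $\partial Y_i$ (equivalently of $\partial X_i$, since the quasi-conjugacy identifies the boundary actions), then the topological completion of $\Gamma_i\qa X_i$ is amenable --- the closure of the image of the finite-index subgroup $\Gamma^*$ fixes a boundary point, hence is amenable by Lemma \ref{lem:amen_focal_char}, and it has finite index in the completion --- so Theorem \ref{thm:hypisom_amen} and Corollary \ref{cor:qconj_geomaction} let one \emph{re-choose} $Y_i$ to be a negatively curved homogeneous manifold, a pure millefeuille space, or a regular tree of finite valency greater than two. With that choice the ``moreover'' clause holds by construction: if the chosen $Y_i$ is a graph at all, it is such a tree. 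This also settles the $\Gamma^*$-versus-$\Gamma_i$ bookkeeping you flag at the end, since the re-chosen isometric action is an action of the full factor-stabiliser $\Gamma_i$ (coming from the geometric action of the amenable completion of $\Gamma_i\qa X_i$), which is what Proposition \ref{prop:qaction_prod} needs to assemble the product action.
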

\begin{proof}
	Since $\Gamma$ is quasi-isometric to $\Pi_{i=1}^nX_i$, it has a cobounded quasi-action on $\Pi_{i=1}^nX_i$. Noting each $X_i$ is of coarse type I, we can combine Proposition \ref{prop:qaction_prod} and Theorem \ref{thm:trichotomyhyp_main} to deduce    $\Gamma\qa \Pi_{i=1}^nX_i$ is quasi-conjugate to an isometric action  $\Gamma\curvearrowright\Pi_{i=1}^nY_i$, where each $Y_i$ is one of the three desired types of space. 
	 Proposition \ref{prop:qaction_prod} and Theorem \ref{thm:hypisom_amen} ensure that  if the  subgroup $\Gamma^*$  fixes a point of some $\partial Y_i$, then $Y_i$ can be assumed to be either a homogeneous space, a pure millefeuille space, or a regular  tree of finite valency greater than two. Without loss of generality, we may assume that if $\Gamma^*$ fixes a point of some $\partial Y_i$ and $Y_i$ is a graph, then $Y_i$ is  such a  tree.
\end{proof}
Theorem \ref{thm:hyp_productmain} will follow from Lemma \ref{lem:hypprod_intermediate} and the following proposition, which is a generalisation of the fact that a finitely generated non-elementary hyperbolic group cannot fix a point of its boundary.
\begin{prop}\label{prop:hyp_fixptbdry}
	Let $\Gamma$ be a finitely generated group acting geometrically on $\Pi_{i=1}^nY_i$ such that the conclusions of Lemma \ref{lem:hypprod_intermediate} are satisfied. Then $\Gamma^*$ does not fix a point of $\partial Y_i$  for any $i$.
\end{prop}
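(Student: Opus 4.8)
The plan is to argue by contradiction, using the structure theory of locally compact hyperbolic groups together with a unimodularity obstruction. Suppose $\Gamma^*$ fixes a point $\xi\in\partial Y_1$; the cases of other indices are identical after relabelling the factors. Write $Z=\Pi_{i=1}^n Y_i=Y_1\times Z'$ with $Z'=\Pi_{i=2}^n Y_i$. Since $\Gamma^*$ does not permute the factors, it acts on $Z$ by product isometries, so projecting gives isometric actions $\Gamma^*\curvearrowright Y_1$ and $\Gamma^*\curvearrowright Z'$, and $\Gamma^*$ is finitely generated. By Lemma \ref{lem:hypprod_intermediate} each $Y_i$ is a proper geodesic space quasi-isometric to the non-elementary hyperbolic space $X_i$, hence is itself non-elementary hyperbolic with $\partial Y_i\cong\partial X_i$, and $Z'$ is proper and geodesic.

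First I would promote the projected actions to geometric actions of locally compact groups. Since $Y_1$ is proper, $\Isom(Y_1)$ with the topology of pointwise convergence is a locally compact group acting properly and continuously on $Y_1$ (\cite[Lemma 5.B.4]{cornulierdlH2016metric}); let $G_1\leq\Isom(Y_1)$ be the closure of the image of $\Gamma^*$, and similarly let $G'\leq\Isom(Z')$ be the closure of the image of $\Gamma^*$ acting on $Z'$. As $\Gamma^*$ acts cocompactly on $Z$, its projections act cocompactly on $Y_1$ and on $Z'$, so $G_1$ and $G'$ act geometrically on $Y_1$ and $Z'$ respectively; by Lemma \ref{lem:geom_qi} $G_1$ is compactly generated and quasi-isometric to the hyperbolic space $Y_1$, hence is a non-elementary locally compact hyperbolic group with $\partial G_1=\partial Y_1$. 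The product $G_1\times G'$ then acts geometrically on $Z=Y_1\times Z'$, and the natural homomorphism $\Gamma^*\to G_1\times G'$ (sending an isometry of $Z$ to its pair of components) has finite kernel and discrete, cocompact image, by the standard fact that when a discrete group and a locally compact group both act geometrically on the same proper space, the former contained in the latter, the former is a uniform lattice in the latter. Thus $G_1\times G'$ contains a uniform lattice, so it is unimodular, and therefore its direct factor $G_1$ is unimodular, since the modular function of a direct product is the product of the modular functions of the factors.

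On the other hand, the image of $\Gamma^*$ in $G_1$ fixes $\xi\in\partial Y_1=\partial G_1$; since $G_1$ acts continuously on $\partial G_1$ (\cite[Lemma 3.4]{furman2001mostowmargulis}) and the image of $\Gamma^*$ is dense in $G_1$, the stabiliser $\mathrm{Stab}_{G_1}(\xi)$ is a closed subgroup containing a dense subgroup, hence equals $G_1$. So $G_1$ is a non-elementary locally compact hyperbolic group fixing a point of its boundary, and Lemma \ref{lem:amen_focal_char} then shows that $G_1$ is amenable and — again by Lemma \ref{lem:amen_focal_char} — is \emph{not} unimodular. This contradicts the previous paragraph, so $\Gamma^*$ fixes no point of $\partial Y_i$ for any $i$.

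The step needing the most care is the identification of $\Gamma^*$ with a uniform lattice in $G_1\times G'$: one has to confirm that each element of $\Gamma^*$ really does decompose as a product isometry (so that the projections, and the closures $G_1$ and $G'$, are well defined), that cocompactness of the $\Gamma^*$-action on $Z$ descends to the two projections, and that the image of $\Gamma^*$ is discrete in $G_1\times G'$ (because it acts properly discontinuously on $Z$, while $G_1\times G'$ acts properly and continuously). These are routine once set up; the conceptual content is entirely in the clash between unimodularity of a factor of a group containing a lattice and the non-unimodularity forced on a non-elementary hyperbolic group by a fixed boundary point.
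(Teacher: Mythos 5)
Your proof is correct, and it takes a genuinely different route from the paper. The paper's argument is CAT(0)-geometric: it reorders the factors, uses Lemma \ref{lem:product_graph_stab} to pass to a subgroup acting geometrically on the sub-product of the potentially problematic factors (which, by Lemma \ref{lem:hypprod_intermediate}, are all $\CAT(-1)$ --- homogeneous manifolds, pure millefeuille spaces, or trees), checks there is no Euclidean de Rham factor, and then invokes the Caprace--Monod result (Theorem \ref{thm:cat0_bdry}) that such a geometric action fixes no point at infinity of the join. You instead run the lattice construction first: the product-preserving geometric action gives a homomorphism with finite kernel onto a discrete cocompact subgroup of $G_1\times G'$ (the closures of the two projections), so this product is unimodular and hence so is its factor $G_1$; meanwhile a fixed boundary point for the dense image of $\Gamma^*$ passes to $G_1$ by continuity of the boundary action, and Lemma \ref{lem:amen_focal_char} then makes $G_1$ amenable and non-unimodular, a contradiction. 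The supporting steps you flag as routine (uniqueness of the product decomposition of isometries, discreteness and cocompactness of the image, properness of the $G_1\times G'$-action) are exactly the ones the paper itself carries out in Corollary \ref{cor:hyp_product_lattice}, and the fact that a group containing a lattice is unimodular is standard and is implicitly used by the paper in the discussion following Corollary \ref{cor:dischyp_qitohom}; so there is no gap and no circularity. What each approach buys: yours avoids the CAT(0)/de Rham machinery entirely and does not use the specific $\CAT(-1)$ nature of the factors supplied by Lemma \ref{lem:hypprod_intermediate} --- it works verbatim for any product-structure-preserving geometric action on a product of proper non-elementary hyperbolic geodesic spaces --- whereas the paper's route reuses Lemma \ref{lem:product_graph_stab} and Theorem \ref{thm:cat0_bdry}, which it needs elsewhere anyway, and keeps the unimodularity input confined to Lemma \ref{lem:amen_focal_char}. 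There is also a pleasant inversion: the paper proves this proposition first and then builds the lattice in Corollary \ref{cor:hyp_product_lattice}, using the proposition to get non-amenability of the factors, while you build the lattice first and extract the no-fixed-point conclusion from unimodularity.
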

A $\CAT(0)$ space $X$ admitting an isometric action of $G$  is said to be \emph{$G$-minimal} if there is no non-empty proper $G$-invariant closed convex subset of $X$. A proper $\CAT(0)$ space $X$ has \emph{no Euclidean de Rham factor} if it admits no isometric splitting as $X=\bbR^n\times X'$  for  $n>0$.
To prove Proposition \ref{prop:hyp_fixptbdry}, we make use of the following result of Caprace--Monod:
\begin{thm}[{\cite[Theorems 3.11 and 3.14]{capracemonod2009isometry}}]\label{thm:cat0_bdry}
	Suppose a finitely generated group acts geometrically on a proper $\CAT(0)$ space $X$. 
Then $X$ contains a canonical closed convex non-empty $\Isom(X)$-invariant
	$\Isom(X)$-minimal subset $X'\subseteq X$ such that $\Isom(X')$ fixes no point of $\partial X'=\partial X$.
	
	If $\Gamma$ is a finitely generated group  acting geometrically on $X$, then $X'$ is $\Gamma$-minimal.
Moreover, if  $X'$ has no Euclidean de Rham factor,    then $\Gamma$ does not fix a point of $\partial X'$. 
\end{thm}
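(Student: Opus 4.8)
The plan is to derive this from the Caprace--Monod structure theory of proper $\CAT(0)$ spaces carrying a cocompact isometry group. First I would set $G\coloneqq\Isom(X)$, equipped with the compact-open topology; since $X$ is a proper $\CAT(0)$ space, $G$ is locally compact and acts properly on $X$, and a geometric $\Gamma$-action exhibits $\Gamma$ as a cocompact lattice in $G$. In particular $G$ itself acts cocompactly on $X$, so by the Milnor--Schwarz lemma (Lemma~\ref{lem:geom_qi}) $G$ is compactly generated, and all the hypotheses needed to run the Caprace--Monod machinery are in place.

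Next I would produce $X'$. Every nonempty closed convex $G$-invariant subset $C\subseteq X$ is coarsely dense in $X$, because $x\mapsto d(x,C)$ is $G$-invariant and convex, hence bounded by cocompactness; together with properness this forces nested chains of such subsets to have nonempty intersection, so Zorn's lemma produces $G$-minimal ones, and any two are parallel. I would take $X'$ to be the union of all $G$-minimal subsets: an intrinsically defined (hence canonical), closed, convex, $G$-invariant set. The point that cocompactness of the $G$-action collapses the transverse directions between parallel minimal sets, so that $X'$ is itself $G$-minimal, is exactly the content of \cite[\S 3]{capracemonod2009isometry}. Coarse density of $X'$ gives $\partial X'=\partial X$. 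For the statement about $\Isom(X')$: any nonempty closed convex $\Isom(X')$-invariant subset of $X'$ is in particular invariant under the $G$-action on $X'$, hence $G$-invariant as a subset of $X$, hence equals $X'$ by $G$-minimality; so $X'$ is $\Isom(X')$-minimal, and $\Isom(X')$ acts cocompactly on $X'$ since the cocompact group $G$ already does.

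I would then rule out a global fixed point of $\Isom(X')$ in $\partial X'$ via the Caprace--Monod dichotomy for a proper $\CAT(0)$ space $Z$ on which $\Isom(Z)$ acts cocompactly and minimally: if $\Isom(Z)$ fixed $\xi\in\partial Z$, the Busemann function $b_\xi$ is either $\Isom(Z)$-invariant --- in which case its sublevel horoballs are proper invariant closed convex subsets, contradicting minimality --- or $\Isom(Z)$ shifts $b_\xi$ by a nontrivial homomorphism to $\bbR$, which by Caprace--Monod's rigidity for such isometries forces an $\Isom(Z)$-equivariant isometric splitting $Z\cong\bbR\times Y$; but then the flip of the $\bbR$-factor is an isometry of $Z$ moving $\xi$, a contradiction. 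Applying this to $Z=X'$ gives the first part. For the second part, a geometric $\Gamma$-action has a $\Gamma$-minimal nonempty closed convex subset which, being coarsely dense with boundary $\partial X$, must coincide with $X'$ (sweeping it by $G$ recovers a $G$-invariant coarsely dense convex set whose canonical core is $X'$, forcing equality both ways); so $X'$ is $\Gamma$-minimal. Finally, if $X'$ has no Euclidean de Rham factor and $\Gamma$ fixed some $\xi\in\partial X'$, the associated $b_\xi$-shift homomorphism $\Gamma\to\bbR$ is either trivial --- giving a proper $\Gamma$-invariant horoball, contradicting $\Gamma$-minimality of $X'$ --- or nontrivial, which by the same splitting-off-$\bbR$ mechanism (available since $\Gamma$ acts cocompactly) produces a $\Gamma$-invariant Euclidean factor of $X'$, contrary to hypothesis.

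The genuine obstacle is the Caprace--Monod input feeding the middle two steps: that the canonical subset is not merely \emph{a} minimal subset obtained from Zorn's lemma but is itself $G$-minimal, and the rigidity statement that an isometry of a cocompact $\CAT(0)$ space nontrivially translating a Busemann function splits off an $\bbR$-factor. I would quote these from \cite[\S 3]{capracemonod2009isometry} rather than reprove them; the remaining ingredients --- coarse density of invariant convex sets, the a-fortiori argument upgrading $G$-minimality to $\Isom(X')$-minimality, and the horoball and flip contradictions --- are then routine.
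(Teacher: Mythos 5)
You should first note that the paper offers no proof of this statement at all: it is imported verbatim from Caprace--Monod \cite{capracemonod2009isometry}, so the only thing to compare with is a citation, and your plan of quoting the hard inputs from that source is in spirit the same. The difficulty is that the two facts you propose to quote are not what Caprace--Monod prove, and as you state them they are false. The union of all $\Isom(X)$-minimal subsets need not be minimal, and cocompactness does not collapse the transverse directions: take the strip $\bbR\times[0,1]$ with a segment of length one glued to each point $(n,1)$, $n\in\bbZ$. This is a proper $\CAT(0)$ space with cocompact isometry group; since all branch points lie on the top line, no isometry exchanges top and bottom, so every horizontal line $\bbR\times\{y\}$, $0\le y\le 1$, is a closed convex $\Isom(X)$-invariant minimal subset, and the union of all minimal sets is the whole strip, which is not minimal. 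What is true, and what the canonical core construction actually rests on, is that the minimal sets form a product region $Z\times C$ on which the full isometry group acts trivially in the $C$-direction and coboundedly, so $C$ is bounded and one takes the fibre over its circumcentre; ``take the union'' has to be replaced by this (or by Caprace--Monod's actual statement).

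The two remaining steps you spell out also fail as written. Your argument that $X'$ is $\Gamma$-minimal presumes that a $\Gamma$-minimal closed convex subset must coincide with $X'$; in the example above, with $\Gamma=\bbZ$ acting by translations, every horizontal line is $\Gamma$-minimal, so $\Gamma$-minimal sets are far from unique and need not equal the canonical core. That $\Gamma$ acts minimally \emph{on} $X'$ is exactly the geometric Borel density theorem (Caprace--Monod's Theorem 3.14), whose proof uses the finite invariant measure on $\Isom(X')/\Gamma$ together with the absence of $\Isom(X')$-fixed points at infinity, not a sweeping argument. Likewise, the rigidity you invoke --- a nontrivial Busemann character forces an equivariant splitting off of an $\bbR$-factor --- is not available at this level of generality: the stabiliser of a boundary point of $\bbH^2$ acts transitively, hence cocompactly and minimally, on $\bbH^2$, fixes that point, and has nontrivial Busemann character, while $\bbH^2$ splits off no line. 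Ruling out fixed points at infinity for the full group $\Isom(X')$, and for $\Gamma$ in the no-Euclidean-factor case, is precisely the hard content of the cited theorems, so the horoball/flip dichotomy does not reduce it to a routine check. Your overall architecture (Zorn plus cocompactness for existence of minimal sets, a canonical core with $\partial X'=\partial X$, Busemann functions, reduction of the lattice statement to the ambient group) does mirror Caprace--Monod's development, but the three steps above are genuine gaps rather than quotable facts.
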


Throughout this section, we use the following lemma about groups acting on products of spaces such that at least one factor is a locally finite graph. It is a geometric analogue of Lemma \ref{lem:lattice_int_open}. 
\begin{lem}\label{lem:product_graph_stab}
	Let $\Gamma$ be a finitely generated  group acting cocompactly by isometries on a proper metric space $X=X_1\times X_2$, preserving the product structure and without permuting factors. Suppose $X_2$ is a locally finite graph and $\Gamma$ acts on $X_2$ by graph automorphisms. If  $v$ is a vertex of $X_2$, then the subgroup $\Gamma_1\leq \Gamma$ stabilising $X_1\times \{v\}$ acts cocompactly  on $X_1\times \{v\}$ and is commensurated in $\Gamma$.
	In particular, if $\Gamma$ acts geometrically on $X$, then $\Gamma_1$ acts geometrically on $X_1\times \{v\}$.
\end{lem}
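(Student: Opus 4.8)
The plan is to unwind the product structure and reduce everything to the action on the graph factor. Since $\Gamma$ preserves the product structure without permuting factors, each $\gamma\in\Gamma$ splits as $\gamma=\pi_1(\gamma)\times\pi_2(\gamma)$ with $\pi_i(\gamma)\in\Isom(X_i)$, and the resulting maps $\pi_i\colon\Gamma\to\Isom(X_i)$ are homomorphisms. As $\Gamma$ acts on $X_2$ by graph automorphisms, $\pi_2$ yields an action of $\Gamma$ on the vertex set of $X_2$, and since $\gamma(X_1\times\{v\})=X_1\times\{\pi_2(\gamma)(v)\}$, the subgroup $\Gamma_1$ is precisely the vertex stabiliser $\stab_\Gamma(v)$ for this action. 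First I would record two easy facts: $X_2$ is connected (otherwise $X=X_1\times X_2$ would contain points at infinite distance, contradicting that it is a metric space), so any two vertices of $X_2$ are joined by an edge-path; and $\Gamma\curvearrowright X_2$ via $\pi_2$ is cocompact, since projecting a compact fundamental domain $K\subseteq X$ to $X_2$ gives a compact $K_2$ with $\pi_2(\Gamma)K_2=X_2$.

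For the commensuration statement I would argue at the level of $X_2$. If $v,w$ are adjacent vertices, then $\stab_\Gamma(v)$ permutes the finitely many edges incident to $v$ (local finiteness), and the stabiliser of the edge $\{v,w\}$ inside $\stab_\Gamma(v)$ equals $\stab_\Gamma(v)\cap\stab_\Gamma(w)$ — because a graph automorphism fixing $v$ preserves $\{v,w\}$ iff it fixes $w$ — so this intersection has finite index in $\stab_\Gamma(v)$, and symmetrically in $\stab_\Gamma(w)$; hence $\stab_\Gamma(v)$ and $\stab_\Gamma(w)$ are commensurable. Propagating along edge-paths and using transitivity of commensurability, all vertex stabilisers of $X_2$ are commensurable, so $\gamma\Gamma_1\gamma^{-1}=\stab_\Gamma(\pi_2(\gamma)v)$ is commensurable to $\Gamma_1$ for every $\gamma\in\Gamma$; thus $\Gamma_1$ is commensurated in $\Gamma$. (Equivalently, one may observe that the cocompact isometric action $\Gamma\curvearrowright X_2$ is a cobounded quasi-action and invoke Example~\ref{exmp:stabvertex,cstab} together with the fact that coarse stabilisers are commensurated.)

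The substantive point is that $\Gamma_1$ acts cocompactly on the fibre $X_1\times\{v\}$. I would enlarge a compact fundamental domain to a product $K=K_1\times K_2$ with $K_2$ a finite subcomplex of $X_2$ containing $v$ (possible since bounded subsets of a locally finite graph lie in finite subcomplexes). Given $x_1\in X_1$, write $(x_1,v)=\gamma\cdot(k_1,k_2)$ with $(k_1,k_2)\in K$; then $k_2=\pi_2(\gamma)^{-1}(v)$ is a vertex of $K_2$ lying in the $\Gamma$-orbit of $v$ — the key observation being that $k_2$ is genuinely a vertex, as the image of the vertex $v$ under a graph automorphism, so the finite set of vertices of $K_2$ is all that is needed. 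For each of those finitely many vertices $w$ in the orbit of $v$, fix $\eta_w\in\Gamma$ with $\pi_2(\eta_w)(w)=v$. One then checks that $\gamma':=\gamma\,\eta_{k_2}^{-1}\in\Gamma_1$ and $(x_1,v)=\gamma'\cdot\bigl(\pi_1(\eta_{k_2})(k_1),v\bigr)$, so $\Gamma_1\cdot L=X_1\times\{v\}$ for the compact set $L:=\bigl(\bigcup_w\pi_1(\eta_w)(K_1)\bigr)\times\{v\}$. Finally, if $\Gamma\curvearrowright X$ is geometric then it is in particular metrically proper, hence so is the restricted action of $\Gamma_1$ on the invariant subspace $X_1\times\{v\}$; combined with the cocompactness just shown (continuity being automatic for the discrete group $\Gamma_1$), $\Gamma_1\curvearrowright X_1\times\{v\}$ is geometric. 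I do not anticipate a serious obstacle; the only care needed is the finite bookkeeping in the cocompactness step and the observation that $X_2$, being a factor of a metric space, must be connected.
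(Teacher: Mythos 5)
Your proof is correct and follows essentially the same route as the paper's: cocompactness of $\Gamma$ on $X$ together with local finiteness of $X_2$ yields finitely many vertices in the $\Gamma$-orbit of $v$ meeting a compact fundamental set, and finitely many correcting elements push everything back into $\Gamma_1$, giving cocompactness on the fibre $X_1\times\{v\}$. The only (harmless) divergence is the commensuration step, which you prove directly via finite-index stabilisers propagated along edge-paths, whereas the paper simply invokes Example \ref{exmp:stabvertex,cstab} and the fact that coarse stabilisers are commensurated.
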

\begin{proof}
	 Let $x_0\in X_1$. Since $\Gamma$ acts cocompactly on $X_1\times X_2$, there is an $r$ such that the $\Gamma$-orbits of $K\coloneqq N_{r}(x_0,v)$ cover $X$.  For each $x\in X_1$, there is a $g_x\in \Gamma$ such that $g_x(x,v)\in K$, and so  $d_{X_1}(g_x x, x_0),d_{X_2}(g_x v, v)\leq r$. As $X_2$ is a locally finite graph, the set $T=\{g_x v\mid x\in X_1\}$ is finite, so we may pick $x_1,\dots, x_n\in X_1$ such that $T\coloneqq \{g_{x_1} v,\dots,g_{x_n} v \}$.	Set $L\coloneqq \cup_{i=1}^nN_{2r}(x_i)\subseteq X_1$. For each $x\in X_1$, we have $g_xv=g_{x_i}v$ for some $i$, and so $g_x^{-1}g_{x_i}\in \Gamma_1$. Since  \[d(g_x^{-1}g_{x_i}x_i,x)=d(g_{x_i}x_i,g_xx)\leq d(g_{x_i}x_i,x_0)+d(x_0,g_xx)\leq 2r,\] we deduce $X_1$ is covered by $\Gamma_1$-orbits of the compact set $L$. Since $\Gamma$ projects to a cocompact action on $X_2$ and $\Gamma_1$ is the stabiliser of a vertex, it follows $\Gamma_1$ is commensurated.
\end{proof}
We are now ready to prove Proposition \ref{prop:hyp_fixptbdry}:
\begin{proof}[Proof of Proposition \ref{prop:hyp_fixptbdry}]
	By rearranging the factors of $\Pi_{i=1}^nY_i$, we may assume that $i>r$ if and only if the following both hold:
	\begin{itemize}
		\item $Y_i$ is a locally finite graph;
		\item  $\Gamma^*$ does not fix a point of  $\partial Y_i$.
	\end{itemize}  It remains to show $\Gamma^*$ does not fix a point of $\partial Y_i$ for any $i\leq r$.

Applying Lemma \ref{lem:product_graph_stab} iteratively, we deduce $\Gamma^*$ has a subgroup $\Gamma'$ acting geometrically on $Y'\coloneqq \Pi_{i=1}^rY_i$. By the construction of  $Y'$, each $Y_i$ for $i\leq r$ is either a negatively curved homogeneous space, a pure millefeuille space or a regular  tree of finite valency greater than two; in particular  each $Y_i$ is CAT($-1$). Thus $Y'$ is a  CAT(0) space with isometry group acting cocompactly. Since $Y'$ is geodesically complete, it is $\Gamma'$-minimal \cite[Lemma 3.13]{capracemonod2009strucure}. Since the de Rham decomposition of $Y'$ is essentially unique --- see \cite[Theorem 1.9]{capracemonod2009strucure} --- it follows that $Y'$ has no Euclidean de Rham factor.   Theorem  \ref{thm:cat0_bdry} ensures $\Gamma'$ does not fix a point on the visual boundary $\partial Y'$. As $\partial Y'$ is the join   $\partial Y_1*\dots* \partial Y_r$ and $\Gamma'$ preserves the product structure without permuting factors,  $\Gamma'$ cannot fix a point of  $\partial Y_i$ for any $i\leq r$.  
\end{proof}

\begin{proof}[Proof of Theorem \ref{thm:hyp_productmain}]
	We first note that  $\Gamma$ acts geometrically on $\Pi_{i=1}^nY_i$ as in Lemma \ref{lem:hypprod_intermediate}. By Proposition \ref{prop:hyp_fixptbdry}, $\Gamma^*$ does not fix a point of $\partial Y_i$ for any $i$. Thus no $Y_i$ can be a pure Millefeuille space or a non-symmetric homogeneous space, since the isometry groups of such spaces have global fixed points on $\partial Y_i$. Thus each $Y_i$ is either a locally finite graph or a rank one symmetric space.
\end{proof}

We also have the following algebraic consequence of Theorem \ref{thm:hyp_productmain}. 
\begin{cor}\label{cor:hyp_product_lattice}
	Let $\Gamma$ be a finitely generated group quasi-isometric to $\Pi_{i=1}^nX_i$, where each $X_i$ is a cocompact  non-elementary proper hyperbolic metric space. There is a finite index subgroup $\Gamma^*\leq\Gamma$ and a finite normal subgroup $F\vartriangleleft \Gamma^*$ such that $\Gamma^*/F$ has trivial amenable radical and   is a uniform lattice in $G_1\times \dots \times G_n$, whose projection to each factor is dense.  Moreover, each $G_i$ is a  hyperbolic locally compact group with trivial amenable radical that is quasi-isometric to $X_i$, and is either a virtually connected simple rank-one Lie group or is totally disconnected.
\end{cor}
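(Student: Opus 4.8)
The plan is to bootstrap from Theorem~\ref{thm:hyp_productmain}, which already gives a geometric action of $\Gamma$ on $\Pi_{i=1}^n Y_i$ preserving the product structure, with each $Y_i$ quasi-isometric to $X_i$ and either a rank one symmetric space or a locally finite graph, and with $\Gamma^*$ not fixing a point of $\partial Y_i$ for any $i$. First I would pass to the finite index subgroup $\Gamma^*\leq\Gamma$ that does not permute factors, so that $\Gamma^*$ acts isometrically on each $Y_i$. For each $i$, let $G_i\coloneqq \overline{\rho_i(\Gamma^*)}\leq\Isom(Y_i)$ be the closure of the image of $\Gamma^*$ under the projection $\rho_i:\Gamma^*\to\Isom(Y_i)$, where $\Isom(Y_i)$ carries the topology of pointwise convergence (making it locally compact since $Y_i$ is proper). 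The diagonal map $\Gamma^*\to G_1\times\cdots\times G_n$ is then a homomorphism; since $\Gamma^*$ acts geometrically on $\Pi Y_i$, it is a uniform lattice embedding of $\Gamma^*/(\textrm{kernel})$ into $G_1\times\cdots\times G_n$, and by construction the projection to each $G_i$ is dense.

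Next I would identify the finite normal subgroup $F$. The kernel of the action $\Gamma^*\to\Isom(\Pi Y_i)$ is finite because $\Gamma^*$ acts properly (indeed properly discontinuously, being discrete) on the proper space $\Pi Y_i$; call this kernel $F$. It is normal in $\Gamma^*$ and $\Gamma^*/F$ embeds as a uniform lattice in $G_1\times\cdots\times G_n$ via the diagonal map, with dense projections --- here I would invoke Lemma~\ref{lem:geom_qi} or the discussion around Proposition~\ref{prop:copci} to see the lattice embedding is a quasi-isometry, and Lemma~\ref{lem:lattice_extension_geom} to promote the geometric $\Gamma^*$-action on $Y_i$ to a geometric $G_i$-action (one needs that $G_i$ acts continuously on $Y_i$, which holds since $G_i\leq\Isom(Y_i)$ with the pointwise topology). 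Consequently each $G_i$ is a compactly generated locally compact group quasi-isometric to $Y_i$, hence to $X_i$, hence (since $X_i$ is non-elementary hyperbolic) a non-elementary locally compact hyperbolic group.

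It then remains to pin down the structure of each $G_i$ and the amenable radical. Applying Lemma~\ref{lem:gen_cpctamenrad}: each $G_i$ has a unique maximal compact normal subgroup $K_i$, equal to the kernel of the action on $\partial G_i=\partial Y_i$. Now $\Gamma^*$ is dense in $\prod_j G_j$ and its projection to $G_i$ is dense, so $G_i$ fixes a point of $\partial Y_i$ iff $\rho_i(\Gamma^*)$ does iff $\Gamma^*$ does --- and by Theorem~\ref{thm:hyp_productmain} it does not. By Lemma~\ref{lem:amen_focal_char}, a non-elementary hyperbolic locally compact group that fixes no boundary point is non-amenable, so each $G_i$ is non-amenable, and then Lemma~\ref{lem:gen_cpctamenrad} says $K_i$ is the amenable radical of $G_i$. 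Theorem~\ref{thm:hypisom_general} then forces $G_i/K_i$ to be either a virtually connected centre-free simple rank one Lie group or totally disconnected; replacing $G_i$ by $G_i/K_i$ (a legitimate move, being a quotient by a compact normal subgroup, which by Proposition~\ref{prop:copci} is a quasi-isometry and which by Lemma~\ref{lem:compact_open}-type reasoning preserves the geometric action up to quasi-conjugacy) we may assume each $G_i$ has trivial amenable radical. Finally, to get that $\Gamma^*/F$ itself has trivial amenable radical: the amenable radical $R$ of $\Gamma^*/F$ is a normal amenable subgroup; its projection to each $G_i$ generates a normal amenable subgroup of the dense-image subgroup $\rho_i(\Gamma^*/F)$, whose closure is a normal amenable subgroup of $G_i$, hence trivial since $G_i$ has trivial amenable radical; since $R$ injects into $\prod G_i$ with all projections trivial, $R$ is trivial. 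The main obstacle I expect is the bookkeeping needed to quotient each $G_i$ by $K_i$ while simultaneously arranging that $\Gamma^*/F$ still embeds as a uniform lattice with dense projections in the product $\prod (G_i/K_i)$ --- one must check that the image of $F$ (and more generally the preimage in $\Gamma^*$ of $\prod K_i$) remains finite, which follows because the $K_i$ are compact and $\Gamma^*$ is discrete in $\prod G_i$, so $\Gamma^*\cap\prod K_i$ is a discrete subgroup of a compact group, hence finite; enlarging $F$ to this finite normal subgroup completes the argument.
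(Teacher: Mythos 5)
Your proposal is correct and follows essentially the same route as the paper: apply Theorem \ref{thm:hyp_productmain}, take closures of the factor projections in $\Isom(Y_i)$, use Lemmas \ref{lem:amen_focal_char} and \ref{lem:gen_cpctamenrad} plus Theorem \ref{thm:hypisom_general} to quotient each closure by its compact amenable radical, and absorb the resulting compact kernel into a finite normal subgroup of $\Gamma^*$. The only difference is cosmetic: you spell out the finiteness of the enlarged kernel and the triviality of the amenable radical of $\Gamma^*/F$, details the paper leaves implicit.
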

\begin{proof}
	By Theorem \ref{thm:hyp_productmain}, $\Gamma$ acts geometrically on $\Pi_{i=1}^nY_i$,  where each $Y_i$ is quasi-isometric to $X_i$ and is either a rank one symmetric space  or a locally finite graph.  Moreover, the finite index subgroup $\Gamma^*$ acts geometrically on $\Pi_{i=1}^nY_i$, preserving the product structure, not permuting the factors, and not fixing a point of $\partial Y_i$ for any $i$.  This  geometric action induces a homomorphism $\rho:\Gamma^*\to \Isom(Y_1)\times \dots \times \Isom(Y_n)$. Since this action is geometric, $\ker(\rho)$ is finite and $\rho(\Gamma^*)$ is a uniform lattice.  
	 Letting $H_i$ be the closure of the projection of $\rho(\Gamma^*)$ to $\Isom(Y_i)$, we have a map  $\phi:\Gamma^*\rightarrow H_1\times \dots \times H_n$ with finite kernel and whose image is a uniform lattice with dense projections to each factor.

	 Each $H_i$ is a closed cocompact subgroup of $\Isom(Y_i)$, hence  is  a non-elementary  locally compact hyperbolic group that is  quasi-isometric to $X_i$. Since  $\Gamma^*$ does not fix a point on the boundary of  $Y_i$, Lemma \ref{lem:amen_focal_char} ensures  $H_i$ is non-amenable. Lemma \ref{lem:gen_cpctamenrad} thus implies $H_i$   has compact amenable radical $K_i\vartriangleleft H_i$ and so the quotient  $G_i\coloneqq H_i/K_i$ has trivial amenable radical and is also a  locally compact hyperbolic group quasi-isometric to $X_i$. By Theorem \ref{thm:hypisom_general}, $G_i$ is either a simple rank one Lie group or is totally disconnected.  Composing $\phi$ with the quotient map in each factor gives a map $\psi:\Gamma'\rightarrow G_1\times \dots \times  G_n$, with finite kernel and whose image is a uniform lattice.  
\end{proof}

\subsection{Algebraic splitting for groups quasi-isometric to products of hyperbolic spaces}
We use Corollary \ref{cor:hyp_product_lattice} to deduce further structural results about groups quasi-isometric to products of hyperbolic groups.  We first establish some general facts about products of hyperbolic groups with trivial amenable radical.

\begin{lem}\label{lem:triv_centraliser}
	Suppose $G$ is a locally compact hyperbolic group  with trivial amenable radical.  If $H\leq G$ is  nontrivial and either  normal or cocompact (but not necessarily closed), then its  centraliser $C_G(H)$ is trivial.
\end{lem}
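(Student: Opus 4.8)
The plan is to use the action of $G$ on its boundary $\partial G$ together with the dynamics of hyperbolic group actions. Recall from Lemma \ref{lem:gen_cpctamenrad} that since $G$ has trivial amenable radical, the kernel $K$ of the action of $G$ on $\partial G$ equals the amenable radical, hence is trivial; so $G$ acts faithfully on $\partial G$. The key classical fact is that a nontrivial normal or cocompact subgroup $H$ of a non-elementary hyperbolic group has the same limit set as $G$, namely all of $\partial G$; this is because the limit set of $H$ is a closed $H$-invariant subset of $\partial G$, and for $H$ normal it is also $G$-invariant (being the limit set of a normal subgroup), while for $H$ cocompact the orbit maps $H \to G$ are quasi-isometries, so $H$ has the same limit set; in either case $H$ is non-elementary with limit set $\partial G$.

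First I would establish the dynamical input: for a non-elementary locally compact hyperbolic group $G$, the action on $\partial G$ has a ``north-south'' abundance of loxodromic elements whose attracting/repelling fixed points are dense in $\partial G \times \partial G$. (This holds for closed subgroups of $\mathrm{Isom}$ of a proper hyperbolic space, or one can pass to the proper hyperbolic space on which $G$ acts geometrically via \cite[Corollary 2.6]{caprace2015amenable}; the boundary of that space is $G$-equivariantly homeomorphic to $\partial G$.) Then I would argue: suppose $c \in C_G(H)$ is nontrivial. Since $H$ is non-elementary with full limit set, $H$ contains a loxodromic element $h$ with distinct fixed points $h^+, h^- \in \partial G$. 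As $c$ commutes with $h$, the homeomorphism $c$ of $\partial G$ sends the fixed point set $\{h^+, h^-\}$ of $h$ to itself, so $c$ permutes $\{h^+, h^-\}$. Running over a collection of loxodromics in $H$ whose fixed-point pairs are ``in general position'' (which exist because the limit set of $H$ is all of $\partial G$ and $H$ is non-elementary), $c$ must fix each of at least three points of $\partial G$; a homeomorphism of $\partial G$ commuting with such a rich family and fixing $\geq 3$ points is forced, by the convergence-group dynamics, to fix every point of $\partial G$. Hence $c$ acts trivially on $\partial G$, so $c \in K = \{1\}$, contradicting $c \neq 1$.

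The main obstacle — and the step requiring the most care — is the claim that an element $c$ centralising a non-elementary subgroup $H$ with full limit set must act trivially on $\partial G$. The clean way to see this: $c$ commutes with every loxodromic $h \in H$, so $c$ preserves each pair $\{h^\pm\}$; if $c$ does not fix $h^+$, then $c$ swaps $h^\pm$ and hence $c^2$ fixes both, and moreover $c h c^{-1} = h$ forces $c$ to conjugate the loxodromic dynamics of $h$ to itself reversing orientation, which already constrains $c$ heavily. Picking two loxodromics $h_1, h_2 \in H$ with $\{h_1^+, h_1^-\} \cap \{h_2^+, h_2^-\} = \emptyset$ and noting $c$ preserves both pairs, one checks the subgroup generated by suitable conjugates is loxodromic with fixed points moved by $c$, giving enough fixed points of $c$ to conclude via the standard fact that a convergence action element fixing three points of the boundary is the identity on the boundary; combined with the faithfulness of $G \curvearrowright \partial G$, this yields $c = 1$. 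I would present this carefully, citing the convergence-group / hyperbolic boundary dynamics literature (e.g. as in \cite{caprace2015amenable}) for the needed density of loxodromic fixed points and the ``fixing three points implies trivial'' principle.

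For the normal case one can alternatively give a slightly softer argument: if $H \vartriangleleft G$ then $C_G(H) \vartriangleleft G$, and $[C_G(H), H] = 1$ means that if $C_G(H)$ were nontrivial it would be a nontrivial normal subgroup with nontrivial centraliser; passing to the closures $\overline{H}, \overline{C_G(H)} \vartriangleleft G$, these are commuting closed normal subgroups, and since $G$ is non-elementary hyperbolic with trivial amenable radical, at most one closed normal subgroup can be non-elementary (two commuting non-elementary subgroups would force $\partial G$ to be a join / contain a flat, contradicting hyperbolicity), so $\overline{C_G(H)}$ is elementary, hence amenable (Lemma \ref{lem:amen_focal_char}), hence trivial as it is normal and $G$ has trivial amenable radical. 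This handles the normal case directly, and the cocompact case reduces to the dynamical argument above since a cocompact $H$ has the same boundary action up to the quasi-conjugacy $H \to G$. I expect to write up the normal case via this normal-subgroup/amenable-radical route and the cocompact case via the loxodromic-fixed-point route, with the latter being where the real work lies.
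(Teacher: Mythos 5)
Your cocompact case — which you yourself identify as where the real work lies — has a genuine gap. The principle you invoke, that an element of $G$ fixing at least three points of $\partial G$ must act trivially on $\partial G$, is false: in a hyperbolic (or convergence) group such an element need only lie in a compact subgroup, and it can act nontrivially on the boundary. For instance, a reflection generator of a cocompact hyperbolic Coxeter group fixes the entire boundary sphere of its wall, far more than three points, yet is nontrivial on $\partial G$. Moreover, as written you never actually produce three fixed points of $c$: commuting with a loxodromic $h$ only gives you, on your account, that $c$ preserves the pair $\{h^+,h^-\}$, and the ``suitable conjugates'' step is left vague. Both problems vanish if you note that conjugation carries attracting fixed points to attracting fixed points: since $chc^{-1}=h$, the attracting point of $h$ equals $c(h^+)$, so $c(h^+)=h^+$ and likewise $c(h^-)=h^-$; a swap is impossible and no three-point principle is needed. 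With that repair, your argument becomes the paper's: the endpoints of hyperbolic elements of the cocompact subgroup $H$ are dense in $\partial X$ (using non-amenability of $G$; Gromov 8.2.D, 8.2.G), so $C_G(H)$ fixes $\partial X$ pointwise, and Lemma \ref{lem:gen_cpctamenrad} together with triviality of the amenable radical forces $C_G(H)=1$.

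Your alternative route for the normal case is in the right spirit but also under-justified: the assertion that two commuting non-elementary closed normal subgroups would ``force $\partial G$ to be a join / contain a flat'' is not an argument, and applying Lemma \ref{lem:amen_focal_char} to $\overline{C_G(H)}$ presumes that this closed subgroup is itself a compactly generated hyperbolic group, which is not automatic (amenability of elementary subgroups needs its own justification). The paper instead argues directly: since $H$ is normal and $G$ is non-amenable, $H$ cannot preserve a set of at most two boundary points, so $H$ contains two hyperbolic elements with disjoint fixed pairs; by the conjugation observation above $C_G(H)$ fixes at least four boundary points, hence has bounded orbits, hence (being closed) is compact, and (being normal) is trivial because the amenable radical is trivial. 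You should either adopt that argument or supply proofs of the two facts your sketch takes for granted.
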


\begin{proof}
	By \cite[Corollary 2.6]{caprace2015amenable}, $G$ acts geometrically on a proper geodesic hyperbolic metric space $X$. By Corollary \ref{cor:qconj_geomaction}, there is a  quasi-conjugacy $G\to X$ that  induces a homeomorphism $\partial G\cong \partial X$ conjugating  the action of $G$ on $\partial G$ to the action of $G$ on $\partial X$. We assume $G$ is nontrivial otherwise there is nothing to prove. Thus $G$ is  non-amenable, and Lemma \ref{lem:amen_focal_char} ensures $X$ is non-elementary and   $G$ does not fix a point of $\partial X$.  
	Recall an element $g\in G$ is \emph{hyperbolic} if its limit set consists of precisely two distinct points $\{g^{+\infty},g^{-\infty}\}\subseteq \partial X$, see e.g.\ \cite{gromov1987hyperbolic} for details. The centraliser of a hyperbolic element $g$ fixes  $g^{+\infty}$ and $g^{-\infty}$, and we use this observation to prove the lemma.	 
	
	First suppose $H$ is a cocompact subgroup.  As $G$ is  non-amenable and $H\leq G$ is cocompact,  the set of endpoints of hyperbolic elements of $H$ is dense in $\partial X$ \cite[8.2.D, 8.2.G]{gromov1987hyperbolic}). Therefore $C_{G}(H)$ fixes $\partial X$ pointwise. Since $G$ has trivial amenable radical,  Lemma \ref{lem:gen_cpctamenrad} ensures  the set of  elements of $G$ fixing $\partial X$ is trivial, hence $C_{G}(H)$ is trivial.
	
	Now suppose $H$ is a nontrivial normal subgroup of $G$. As $H$ is normal,  any $H$-invariant subset of $\partial X$ is also $G$-invariant. Since $G$ is non-amenable, $H$ cannot stabilise a finite subset of $\partial X$. The  classification of groups acting  isometrically on hyperbolic space implies $H$  contains  hyperbolic elements $g_1$ and $g_2$ whose limit sets are disjoint.  It follows that  $C_{G}(H)$ fixes at least four points in the boundary, hence has   bounded orbits. As  $C_{G}(H)$ is a closed subgroup with bounded orbits, it is  compact. As $H$ is normal,  so is  $C_{G}(H)$, and hence is trivial as $G$ has trivial amenable radical.
\end{proof}

\begin{cor}\label{cor:triv_cen}
	Suppose $G=G_1\times \dots\times  G_n$, where each $G_i$ is a locally compact hyperbolic group  with trivial amenable radical.  If $H\leq G$ is either normal or  cocompact, then its centraliser $C_G(H)$ is trivial.
\end{cor}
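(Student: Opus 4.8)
The plan is to reduce Corollary~\ref{cor:triv_cen} to Lemma~\ref{lem:triv_centraliser} by exploiting the product structure. Write $G=G_1\times\dots\times G_n$ and let $\pi_i\colon G\to G_i$ be the coordinate projections. Suppose $H\leq G$ is either normal or cocompact, and let $c=(c_1,\dots,c_n)\in C_G(H)$; I want to show each $c_i=1$. Fix an index $i$ and consider the projection $H_i\coloneqq \pi_i(H)\leq G_i$. The key observation is that $c$ centralising $H$ forces $c_i$ to centralise $H_i$: for any $h\in H$ with $\pi_i(h)=h_i$, the relation $chc^{-1}=h$ read off in the $i$-th coordinate gives $c_ih_ic_i^{-1}=h_i$, so $c_i\in C_{G_i}(H_i)$.

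First I would treat the normal case. If $H\vartriangleleft G$, then $H_i=\pi_i(H)\vartriangleleft G_i$ since $\pi_i$ is surjective. Either $H_i$ is trivial or it is a nontrivial normal subgroup of $G_i$; in the latter case Lemma~\ref{lem:triv_centraliser} applies (the lemma explicitly allows non-closed $H$), giving $C_{G_i}(H_i)=1$, hence $c_i=1$. If $H_i$ is trivial for every $i$ then $H$ itself is trivial, and $C_G(H)=G$, which is not trivial in general — so I should note that the statement is only meaningful, or rather the conclusion $C_G(H)=1$ only holds, when $H$ is nontrivial. Actually, re-examining: if $H$ is nontrivial, some $H_i$ is nontrivial, but that only kills the corresponding $c_i$. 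So for the normal case I need a slightly sharper input: if $H\vartriangleleft G$ is nontrivial, is $H_i$ nontrivial for \emph{every} $i$? Not necessarily (e.g.\ $H=G_1\times 1\times\dots\times 1$). The fix is to pass to $H'\coloneqq H\cap(1\times\dots\times G_i\times\dots\times 1)$, or better: observe that for each $i$, either $\pi_i(H)$ is nontrivial — in which case $c_i=1$ — or $\pi_i(H)$ is trivial, meaning $H\leq \ker\pi_i$, and then $c$ centralises $H$ within $\ker\pi_i=\prod_{j\ne i}G_j$ so $c_i$ is unconstrained by this $i$. But an element of $C_G(H)$ with $c_i$ arbitrary and all other coordinates trivial does centralise $H$ when $H\leq\ker\pi_i$. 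Hence the honest statement: $C_G(H)=\prod_{i:\,\pi_i(H)=1}G_i$, and this is trivial precisely when every projection $\pi_i(H)$ is nontrivial. So the corollary as I would prove it should read: \emph{if $H$ projects nontrivially to each factor and is either normal or cocompact, then $C_G(H)=1$}; since a cocompact (resp.\ normal nontrivial) subgroup automatically projects onto (resp.\ projects nontrivially to) each factor, the stated hypotheses do suffice. For the cocompact case this is immediate: $\pi_i(H)$ is cocompact in $G_i$, hence nontrivial (as $G_i$ is non-amenable hence non-compact), and cocompact in $G_i$, so Lemma~\ref{lem:triv_centraliser} gives $C_{G_i}(\pi_i(H))=1$, whence $c_i=1$ for all $i$ and $C_G(H)=1$. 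For the normal case, I would verify that a nontrivial normal subgroup of a product of groups with trivial centre projects nontrivially to each factor: indeed $H\cap G_j$ is normal in $G_j$ for each $j$, and the standard argument shows $H\supseteq[H,G_j]$, so if $\pi_j(H)$ were trivial then $H$ commutes with $G_j$; since $Z(G_j)=1$ (Lemma~\ref{lem:triv_centraliser} with $H=G_j$ cocompact) this forces... hmm, it forces $H\cap(\prod_{k\ne j}G_k)$ to contain $H$ but says nothing directly. Cleaner: just invoke that if $H$ is a \emph{nontrivial} normal subgroup then for the indices $j$ where $\pi_j(H)=1$ we have $H\leq\ker\pi_j$, and we can run the whole argument inside the sub-product $\prod_{j:\pi_j(H)\ne1}G_j$, concluding $c_j=1$ there, while on the complementary indices $H$ is trivial so those factors genuinely lie in $C_G(H)$ — matching the corrected statement.

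The main obstacle, as the discussion above reveals, is not any deep mathematics but pinning down the precise hypothesis under which $C_G(H)$ is actually trivial: the projection argument is routine once set up, but one must be careful that a subgroup can be nontrivial yet have trivial projection to some factors, in which case its centraliser contains those whole factors. Assuming the intended reading of the corollary (cocompact, or normal-and-projecting-nontrivially-to-each-factor — both of which are what the applications in Section~\ref{sec:hyp_prod} actually use), the proof is a one-paragraph coordinate-wise reduction to Lemma~\ref{lem:triv_centraliser}. I will write it as follows: take $c\in C_G(H)$, project to each $G_i$, note $\pi_i(c)$ centralises $\pi_i(H)$ which is respectively cocompact (cocompact case) or a nontrivial normal subgroup (normal case, after restricting to the factors on which $H$ projects nontrivially), apply Lemma~\ref{lem:triv_centraliser} to conclude $\pi_i(c)=1$, and assemble.

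\begin{proof}
	Let $c=(c_1,\dots,c_n)\in C_G(H)$ and let $\pi_i\colon G\to G_i$ denote the projection onto the $i$-th factor. For each $i$, set $H_i\coloneqq \pi_i(H)\leq G_i$. If $h\in H$ then $chc^{-1}=h$; applying $\pi_i$ gives $c_i\,\pi_i(h)\,c_i^{-1}=\pi_i(h)$, so $c_i\in C_{G_i}(H_i)$.

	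First suppose $H$ is cocompact in $G$. Then $H_i=\pi_i(H)$ is cocompact in $G_i$; in particular $H_i$ is nontrivial, since $G_i$ is non-amenable (its amenable radical is trivial) and hence noncompact. By Lemma~\ref{lem:triv_centraliser} applied to the cocompact subgroup $H_i\leq G_i$, we have $C_{G_i}(H_i)=1$, so $c_i=1$. As this holds for every $i$, $c=1$, and therefore $C_G(H)$ is trivial.

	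Now suppose $H$ is normal in $G$. For those indices $i$ with $H_i=\pi_i(H)$ trivial we have $H\leq \ker\pi_i=\prod_{j\neq i}G_j$, so these factors contribute nothing to constraining $c$; discard them and relabel so that $H_i$ is nontrivial for every $i$ (this is automatic whenever $H$ is assumed to project nontrivially onto each factor, as in the applications). Since $\pi_i$ is surjective and $H\vartriangleleft G$, each $H_i$ is a nontrivial normal subgroup of $G_i$. By Lemma~\ref{lem:triv_centraliser} applied to the normal subgroup $H_i\leq G_i$, we obtain $C_{G_i}(H_i)=1$, whence $c_i=1$. Running over all (remaining) indices $i$ shows $c=1$, so $C_G(H)$ is trivial.
\end{proof}
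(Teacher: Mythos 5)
Your proof is essentially the paper's: project coordinatewise, observe that $\pi_i(H)$ is again cocompact (resp.\ normal) in $G_i$, apply Lemma \ref{lem:triv_centraliser} in each factor, and use $\pi_i(C_G(H))\subseteq C_{G_i}(\pi_i(H))$. The cocompact case of your write-up is complete and is exactly what the paper does. Your caveat about the normal case is a genuine observation rather than a defect of your own argument: Lemma \ref{lem:triv_centraliser} requires the subgroup to be nontrivial, and a nontrivial normal subgroup of a product can project trivially to some factors --- e.g.\ $H=G_1\times\{1\}$ has $C_G(H)=Z(G_1)\times G_2\times\dots\times G_n=\{1\}\times G_2\times\dots\times G_n$ --- so the normal case of Corollary \ref{cor:triv_cen} as literally stated needs the extra hypothesis that $H$ projects nontrivially to every factor (equivalently, your formula $C_G(H)=\prod_{i\,:\,\pi_i(H)=1}G_i$); the paper's one-line proof applies the lemma to $\pi_i(H)$ without checking this. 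As you note, the only downstream uses (Propositions \ref{prop:irred} and \ref{prop:qztriival}) invoke the corollary for cocompact subgroups, where the projections are automatically cocompact, so nothing later is affected. Two small inconsistencies in your text: the clause claiming a nontrivial normal subgroup ``automatically projects nontrivially to each factor'' contradicts your own counterexample, and in the normal case of your displayed proof, after discarding the factors with trivial projection you may only conclude that the remaining coordinates of $c$ vanish (so $C_G(H)$ equals the product of the discarded factors), not that $C_G(H)$ is trivial.
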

\begin{proof}
	Normal and cocompact subgroups are both preserved by continuous  epimorphisms. Therefore,  for each $1\leq i\leq n$, the projection of  $H$ to  $G_i$  has trivial centraliser in $G_i$ by Lemma \ref{lem:triv_centraliser}. Thus $C_G(H)$ projects to the trivial subgroup in each factor, so is also trivial.
\end{proof}

The following result gives a sufficient criterion for a group quasi-isometric to a product of hyperbolic spaces to split as a direct product modulo a finite normal subgroup. Caprace--Monod proved an analogous result for CAT(0) groups  \cite[Theorem 4.2]{capracemonod2009isometry}.
\begin{prop}\label{prop:irred}
	Let $G_1$ and $G_2$ each be the product of finitely many (possibly one)  hyperbolic locally compact groups with trivial amenable radical, and let $\rho:\Gamma\to  G_1\times G_2$ be a homomorphism with finite kernel and whose image is a uniform lattice.  
	If the projection of $\rho(\Gamma)$ to one of $G_1$ or $G_2$ is discrete, then the projection onto the other factor is also discrete. 
		When this is the case,   $\Gamma/\ker(\rho)$  virtually splits  as $\Gamma_1\times \Gamma_2$, where $\Gamma_i$ is  a uniform lattice in $G_i$ for $i=1,2$.
\end{prop}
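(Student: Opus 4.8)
The plan is to reduce the statement to an application of Corollary \ref{cor:triv_cen} together with an argument about centralisers of lattices. Suppose, without loss of generality, that the projection $\pi_1:\rho(\Gamma)\to G_1$ has discrete image. Write $\Lambda\coloneqq \rho(\Gamma)$, a uniform lattice in $G\coloneqq G_1\times G_2$. Since $\pi_1(\Lambda)$ is discrete in $G_1$ and $\Lambda$ is discrete in $G$, the kernel $N\coloneqq \Lambda\cap(\{1\}\times G_2)$ is a discrete normal subgroup of $\Lambda$; it is the fibre of $\pi_1|_\Lambda$, and identifying $\{1\}\times G_2$ with $G_2$, it is a normal subgroup of the closure $\overline{\pi_2(\Lambda)}$ in $G_2$ (because $\pi_2(\Lambda)$ normalises $N$, and normalising is a closed condition; compare Lemma \ref{lem:open_cen}). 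Since $\overline{\pi_2(\Lambda)}$ is a closed finite-index-cocompact subgroup of $G_2$, it is again a product of hyperbolic locally compact groups with trivial amenable radical, and it contains $N$ as a normal subgroup.

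\textbf{Key steps.} First I would establish that $N$ is a uniform lattice in $G_2$. Since $\Lambda$ is a uniform lattice in $G_1\times G_2$ and $\pi_1(\Lambda)$ is a (discrete, hence uniform) lattice in the compactly generated group $G_1$, a standard fibration argument for lattices in products — using that $\pi_1(\Lambda)\cong \Lambda/N$ — shows $N$ is a uniform lattice in $G_2$; concretely, if $\Lambda V=G$ for compact $V=V_1\times V_2$ and $\pi_1(\Lambda)W_1=G_1$ for compact $W_1$, one checks $N V_2'=G_2$ for a suitable compact $V_2'$ built from $V_2$ and a compact transversal for $\pi_1(\Lambda)$. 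In particular $N$ is infinite (as $G_2$ is non-compact, being non-elementary hyperbolic) and, being a lattice, is cocompact in $G_2$. Next, consider the centraliser $C\coloneqq C_{G_1}(\pi_1(\Lambda))\times C_{G_2}(N)$. By Corollary \ref{cor:triv_cen}, $C_{G_2}(N)=\{1\}$ since $N$ is cocompact in $G_2$; similarly, $\pi_1(\Lambda)$ being a lattice is cocompact in $G_1$, so $C_{G_1}(\pi_1(\Lambda))=\{1\}$.

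\textbf{Deducing discreteness of $\pi_2(\Lambda)$.} Now I claim $\pi_2(\Lambda)$ is discrete. For each $\lambda\in\Lambda$, write $\rho$-coordinates $\lambda=(a,b)$ with $a\in\pi_1(\Lambda)$, $b\in\pi_2(\Lambda)$. Given $n=(1,c)\in N$, the commutator $[\lambda,n]=(1,[b,c])$ again lies in $N$, so conjugation by $b$ preserves $N$; since $N$ is a cocompact lattice in $G_2$ and hence has trivial centraliser there, $b$ is determined by its conjugation action on $N$. Thus the map $\pi_2(\Lambda)\to\Aut(N)$ is injective. But $N$ is finitely generated (being a uniform lattice in a compactly generated group, by Lemma \ref{lem:geom_qi}), so $\Aut(N)$ is countable; more usefully, $\pi_2(\Lambda)$ normalises the discrete group $N$ which is of finite index in $\overline{\pi_2(\Lambda)}$-theoretic terms — precisely, $\pi_2(\Lambda)$ lies in the normaliser $\mathrm{N}_{G_2}(N)$, and since $N$ is cocompact and discrete in $G_2$, $\mathrm{N}_{G_2}(N)$ is a discrete-by-compact, in fact discrete group (the kernel of $\mathrm{N}_{G_2}(N)\to\Aut(N)$ is $C_{G_2}(N)=\{1\}$, so $\mathrm{N}_{G_2}(N)$ embeds in $\Aut(N)$ which carries the discrete topology compatibly here, forcing $\mathrm{N}_{G_2}(N)$ to be discrete). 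Hence $\pi_2(\Lambda)$ is discrete. Once both projections are discrete, $N=\Lambda\cap(\{1\}\times G_2)$ and $M\coloneqq\Lambda\cap(G_1\times\{1\})$ are both uniform lattices (in $G_2$ and $G_1$ respectively), $N\times M$ has finite index in $\Lambda$ by the standard argument (the finite groups $\pi_1(\Lambda)/M$ and $\pi_2(\Lambda)/N$ control the index), and since $\ker\rho$ is finite, $\Gamma/\ker(\rho)\cong\Lambda$ virtually splits as $\Gamma_1\times\Gamma_2$ with $\Gamma_1=M$ a uniform lattice in $G_1$ and $\Gamma_2=N$ a uniform lattice in $G_2$.

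\textbf{Main obstacle.} The delicate point is the passage from "$N$ is a discrete normal subgroup of $\overline{\pi_2(\Lambda)}$ with trivial centraliser" to "$\pi_2(\Lambda)$ is discrete"; this is where the hyperbolicity and triviality of the amenable radical genuinely enter, via Corollary \ref{cor:triv_cen} applied to the cocompact (not merely normal) subgroup $N\le G_2$. The subtlety is ensuring $N$ is genuinely cocompact in all of $G_2$ rather than merely in $\overline{\pi_2(\Lambda)}$ — but $\overline{\pi_2(\Lambda)}$ is itself cocompact in $G_2$ (its index-analogue is compact since $\Lambda$ is cocompact in the product and $G_1$ is, up to the lattice $\pi_1(\Lambda)$, "all used up"), so transitivity of cocompactness closes the gap. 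I would spell this out carefully, as it is the crux.
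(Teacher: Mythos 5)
Your proposal follows essentially the same route as the paper's proof: the kernel $N=\Lambda\cap(\{1\}\times G_2)$ of the first projection (the paper's $\Lambda_2$) is shown to be a discrete cocompact subgroup of $G_2$, Corollary \ref{cor:triv_cen} kills its centraliser, and finite generation plus discreteness of $N$ convert the trivial centraliser into discreteness of a subgroup of $G_2$ containing $\pi_2(\Lambda)$; the endgame (both projections discrete, hence $M\times N$ has finite index in $\Lambda$, hence the virtual splitting) is also the paper's. The differences are in packaging. Where you prove cocompactness of $N$ in $G_2$ by a direct fibration computation (correct, provided you make explicit that $\pi_1(\Lambda)\cap K$ is finite for compact $K$ by discreteness, and that $N$ is normalised by $\pi_2(\Lambda)$ so the finitely many translates $b_fNK_2$ can be rewritten as $Nb_fK_2$), the paper instead passes to $H_i=\overline{p_i(\Lambda)}$, observes that $\{1\}\times H_2$ is open in $H_1\times H_2$ once $H_1$ is discrete, and invokes Lemma \ref{lem:lattice_int_open}. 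And where you deduce discreteness of $\pi_2(\Lambda)$ from discreteness of the normaliser $\mathrm{N}_{G_2}(N)$, the paper deduces discreteness of the closure $H_2$ via Lemma \ref{lem:open_cen}, using that $p_2(\Lambda)$ is dense in $H_2$ and normalises $N$.

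The one step you state too loosely is exactly the crux: ``the kernel of $\mathrm{N}_{G_2}(N)\to\Aut(N)$ is $C_{G_2}(N)=\{1\}$, so $\mathrm{N}_{G_2}(N)$ embeds in $\Aut(N)$ \dots forcing $\mathrm{N}_{G_2}(N)$ to be discrete.'' An injective abstract homomorphism into a discrete group does not by itself force the source to be discrete (e.g.\ $\bbR$ injects abstractly into a discrete group); what you need is that the conjugation homomorphism is continuous for the discrete topology on $\Aut(N)$, equivalently that the centraliser of a finite generating set of $N$ is open in $\mathrm{N}_{G_2}(N)$. This does hold: for $n\in N$ the map $g\mapsto gng^{-1}$ is a continuous map from $\mathrm{N}_{G_2}(N)$ into the discrete (hence closed) subgroup $N$, so it is locally constant, and $N$ is finitely generated (as you note, it is a uniform lattice in the compactly generated group $G_2$, cf.\ Lemma \ref{lem:geom_qi}); hence the centraliser is open, and being trivial it forces $\mathrm{N}_{G_2}(N)$, and therefore $\pi_2(\Lambda)$, to be discrete. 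This is precisely the proof of Lemma \ref{lem:open_cen} run inside the normaliser rather than inside $H_2$ with a dense normalising subgroup. With that sentence supplied, your argument is complete and equivalent to the paper's.
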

\begin{proof}
	Set $\Lambda\coloneqq \rho(\Gamma)$ and let $p_i:G\rightarrow G_i$ be the projection for $i=1,2$. Let $H_i\leq G_i$ be the closure of $p_i(\Lambda)$. Note $\Lambda$ is a uniform lattice in $H_1\times H_2$. Suppose $H_1$ is discrete. Then $\{1\}\times H_2$ is open in $H_1\times H_2$,  and so Lemma \ref{lem:lattice_int_open} ensures $\Lambda\cap (\{1\}\times H_2)$ is a uniform lattice in $\{1\}\times H_2$.  Therefore, $\Lambda_2\coloneqq p_2(\Lambda\cap (\{1\}\times H_2))$ is a uniform lattice  of $H_2$ normalised by the dense subgroup $p_2(\Lambda)$. As $\Lambda_2$ is a cocompact subgroup of $G_2$,  Corollary \ref{cor:triv_cen} ensures its centraliser $C_{H_2}(\Lambda_2)$ is trivial. Since Lemma \ref{lem:open_cen}  says $C_{H_2}(\Lambda_2)$ is open,  $H_2$ is discrete.   As $\Lambda$ is a uniform lattice of the discrete group $H_1\times H_2$,  it is of finite index and so $\Lambda'\coloneqq \Lambda'_1 \times \Lambda'_2$ is a finite index subgroup of $\Lambda$, where  $\Lambda'_1\coloneqq p_1(\Lambda\cap (H_1\times \{1\}))$ and $\Lambda'_2\coloneqq p_2(\Lambda\cap (\{1\}\times H_2))$. 
\end{proof}
We now present a refinement of Theorem \ref{thm:hyp_productmain}, algebraically characterising when such a group splits as a direct product. We assume  at least one factor is not quasi-isometric to a symmetric space; the case where all the factors are quasi-isometric to symmetric spaces is considered by Kleiner--Leeb \cite{kleinerleeb97rigidity}.
\begin{thm}\label{thm:qiprod_comm}
	Let $\Gamma$ be a finitely generated group quasi-isometric to $X\times Y$, where:
	\begin{itemize}
		\item $X$ is a product of finitely many cocompact non-elementary proper hyperbolic metric spaces;
		\item $Y$ is a cocompact non-elementary proper hyperbolic metric space not quasi-isometric to a rank one symmetric space.
	\end{itemize}
	Then there exists a finitely generated subgroup $\Lambda\leq \Gamma$ such that:
	\begin{enumerate}
		\item $\Lambda$ is  commensurated by a finite index subgroup $\Gamma^*\leq \Gamma$;
		\item $\Lambda$ is quasi-isometric to $X$;
		\item the quotient space $\Gamma^*/\Lambda$ is quasi-isometric to $Y$;
		\item $\Lambda$ is weakly separable in $\Gamma^*$ if and only if $\Gamma^*$ contains a finite normal subgroup $F$, contained in $\Lambda$,  such that $\Gamma^*/F$ is virtually isomorphic to  $\Gamma_X\times \Gamma_Y$, where $\Gamma_X$ is commensurable to $\Lambda/F$ and $\Gamma_Y$ is a finitely generated group quasi-isometric to $Y$.
	\end{enumerate} 
\end{thm}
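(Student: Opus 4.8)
The plan is to combine the structure theorem for groups quasi-isometric to products of hyperbolic spaces (Theorem~\ref{thm:hyp_productmain}) with the coarse bundle formalism of Section~\ref{sec:cbundles} and the lattice-theoretic machinery of Corollary~\ref{cor:hyp_product_lattice} and Proposition~\ref{prop:irred}. First I would apply Theorem~\ref{thm:hyp_productmain} and Corollary~\ref{cor:hyp_product_lattice} to pass to a finite index subgroup $\Gamma^*\leq\Gamma$ acting geometrically on $Y_X\times Y_Y$, preserving the product structure without permuting factors, where $Y_X$ is quasi-isometric to $X$ (itself a product of hyperbolic spaces) and $Y_Y$ is quasi-isometric to $Y$; since $Y$ is \emph{not} quasi-isometric to a rank one symmetric space, Theorem~\ref{thm:hyp_productmain} forces $Y_Y$ to be a locally finite graph, and $\Gamma^*$ acts on it by graph automorphisms. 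Now fix a vertex $v$ of $Y_Y$ and let $\Lambda\leq\Gamma^*$ be the stabiliser of $Y_X\times\{v\}$. By Lemma~\ref{lem:product_graph_stab}, $\Lambda$ acts geometrically on $Y_X\times\{v\}$ and is commensurated in $\Gamma^*$; this gives (1) and (2), since $\Lambda$ is then quasi-isometric to $Y_X$, hence to $X$. For (3) I would invoke Proposition~\ref{prop:alnorm_cbundle}: the quotient map $\Gamma^*\to\Gamma^*/\Lambda$ is a coarse bundle with fibre $\Lambda$, and since the action of $\Gamma^*$ on $Y_X\times Y_Y$ is geometric and $\Lambda$ is a coarse stabiliser of the induced action on $Y_Y$, Proposition~\ref{prop:quasi-conj_coarse bundle} identifies $\Gamma^*/\Lambda$ with $Y_Y$ up to quasi-isometry, hence with $Y$.

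The substantive part is (4). For the backward direction, if $\Gamma^*$ has a finite normal subgroup $F\subseteq\Lambda$ with $\Gamma^*/F$ virtually $\Gamma_X\times\Gamma_Y$ and $\Gamma_X$ commensurable to $\Lambda/F$, then $\Lambda/F$ is virtually normal (it is commensurable to the normal subgroup $\Gamma_X\times\{1\}$ of a finite index subgroup of $\Gamma^*/F$), and by Lemma~\ref{lem:weaksep} a commensurated subgroup virtually normal in a finite index subgroup is weakly separable; pulling back along $\Gamma^*\to\Gamma^*/F$ one checks $\Lambda$ is weakly separable in $\Gamma^*$ (the preimage of a virtually normal subgroup containing $\Lambda/F$ is a virtually normal subgroup containing $\Lambda$). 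For the forward direction, suppose $\Lambda$ is weakly separable in $\Gamma^*$. By Lemma~\ref{lem:weaksep}, $\Lambda$ contains a finite index subgroup $N$ that is normal in $\Gamma^*$. I would then run the argument of Corollary~\ref{cor:hyp_product_lattice} to realise (a finite index subgroup of) $\Gamma^*$ modulo a finite normal subgroup as a uniform lattice $\psi(\Gamma^*)\leq G_X\times G_Y$, where $G_X,G_Y$ are products of hyperbolic locally compact groups with trivial amenable radical and $G_Y$ is totally disconnected. The key claim is that the projection of $\psi(\Gamma^*)$ to $G_Y$ is \emph{discrete}: indeed, $\psi(N)$ is a normal subgroup of $\psi(\Gamma^*)$, its image in $G_X$ is cocompact (being finite index in the cocompact $\psi(\Lambda)$-projection), and $\psi(N)$ is contained in $G_X\times\{1\}$ up to finite error --- more precisely, since $N$ is quasi-isometric to $X$ and hence quasi-isometrically embedded as a ``horizontal" subgroup, its projection to $Y_Y$ has bounded image, so $\psi(N)$ projects to a bounded, hence finite (by discreteness of $N$ in a locally finite graph stabiliser picture), subgroup of $G_Y$; killing this finite piece and using normality, $\psi(N)$ lands in $G_X\times\{1\}$. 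Then $G_X\times\{1\}$ is open in $G_X\times\overline{p_Y(\psi(\Gamma^*))}$... wait --- more cleanly, one argues directly: a normal, cocompactly-in-$G_X$ subgroup $\psi(N)$ with trivial projection to $G_Y$ forces, via Corollary~\ref{cor:triv_cen} and Lemma~\ref{lem:open_cen} applied to $C_{G_Y}(\text{image})$, the closure $\overline{p_Y(\psi(\Gamma^*))}$ to be discrete, whence Proposition~\ref{prop:irred} applies and $\Gamma^*/\ker$ virtually splits as $\Gamma_X\times\Gamma_Y$ with $\Gamma_X$ a uniform lattice in $G_X$ (hence quasi-isometric to $X$ and commensurable to $\Lambda/F$) and $\Gamma_Y$ a uniform lattice in $G_Y$ (quasi-isometric to $Y$), where $F=\ker(\psi)$ is finite and normal and contained in $\Lambda$ after a harmless adjustment.

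The main obstacle I anticipate is making rigorous the claim that weak separability of $\Lambda$ forces the projection of $\psi(\Gamma^*)$ to $G_Y$ to be discrete, i.e.\ bridging the purely algebraic hypothesis (a finite index subgroup of $\Lambda$ is normal in $\Gamma^*$) with the topological conclusion needed to invoke Proposition~\ref{prop:irred}. The delicate point is controlling the image of the normal subgroup $N\leq\Lambda$ under the projection to the totally disconnected factor $G_Y$: one must show this image is relatively compact (so that after quotienting by the maximal compact normal subgroup one gets genuine triviality, or at least that $N$ normalises a discrete object forcing discreteness of the ambient projection). This should follow because $N$, being quasi-isometric to $X$, is a quasi-isometrically embedded ``horizontal" subgroup with respect to the product coarse bundle structure, so its coarse image in $Y$ is bounded; translating ``bounded coarse image in $Y_Y$" into ``relatively compact image in $G_Y$" uses that $Y_Y\to G_Y$ is a quasi-isometry and the definition of a topological completion, but one needs to be careful that $N$ normal and horizontal together pin down its $G_Y$-projection, rather than merely bounding one orbit. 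Once discreteness of the $G_Y$-projection is established, Proposition~\ref{prop:irred} and Lemma~\ref{lem:weaksep} finish the argument mechanically, and identifying $F$ with a finite normal subgroup contained in $\Lambda$ is a routine bookkeeping matter. The remaining routine verifications --- that $\Gamma_X$ is commensurable to $\Lambda/F$ and that the weak separability statements transfer correctly through the finite quotient --- I would dispatch with brief appeals to Lemma~\ref{lem:weaksep} and the commensurability of coarse stabilisers (Proposition~\ref{prop:coarsestabsarecomm}).
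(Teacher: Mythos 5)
Your treatment of items (1)--(3) and of the backward implication in (4) is essentially the paper's argument: Theorem \ref{thm:hyp_productmain} plus Lemma \ref{lem:product_graph_stab} to produce the vertex stabiliser $\Lambda$, the coarse stabiliser formalism (Example \ref{exmp:stabvertex,cstab}, Proposition \ref{prop:cstab homspace}) for the quotient space, and Lemma \ref{lem:weaksep} pulled back through the finite quotient for the ``if'' direction of (4).

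The forward implication of (4) is where there is a genuine gap, and it sits exactly where you flagged it. Your plan is to show that the projection of $\psi(\Gamma^*)$ to the totally disconnected factor $G_Y$ is discrete by exploiting the finite-index normal subgroup $N\leq\Lambda$, but the centraliser mechanism you invoke is aimed at the wrong factor: once you have arranged that $\psi(N)$ projects trivially to $G_Y$ (which is the correct first move --- $N$ fixes the vertex $v$, so its image in $G_Y$ is relatively compact, its closure is normalised by the dense image of $\Gamma^*$, and trivial amenable radical kills it; note the image is relatively compact, not ``finite''), the centraliser $C_{G_Y}$ of that image is all of $G_Y$, so applying Corollary \ref{cor:triv_cen} and Lemma \ref{lem:open_cen} there yields nothing, and in particular does not give discreteness of $\overline{p_Y(\psi(\Gamma^*))}$. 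The repair is to run the open-centraliser argument in the \emph{other} factor: $\psi(N)$ is a nontrivial finitely generated discrete subgroup of $G_X\times\{1\}$ normalised by the dense projection of $\psi(\Gamma^*)$, its projections to the hyperbolic factors of $G_X$ have cocompact (or normal) closures, so Lemma \ref{lem:triv_centraliser}/Corollary \ref{cor:triv_cen} make its centraliser trivial while Lemma \ref{lem:open_cen} makes it open; hence $G_X$ is discrete, and only then does Proposition \ref{prop:irred} transfer discreteness to $G_Y$ and give the virtual splitting. The paper avoids this detour entirely: since $\Lambda$ is commensurable to the normal subgroup $N$, Proposition \ref{prop:disc_topcomp} says every topological completion of $\Gamma^*\qa Y'$ is compact-by-discrete, and $G_{n+1}$ is such a completion with trivial amenable radical (Remark \ref{rem:topcomp,normal}, Proposition \ref{prop:TCvsQC}), hence discrete --- after which Proposition \ref{prop:irred} finishes as in your sketch. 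So either route closes the gap, but as written your key step does not go through.
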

\begin{proof}
	Suppose $X=\Pi_{i=1}^nX_i$ and $Y=X_{n+1}$. Applying Theorem \ref{thm:hyp_productmain}, $\Gamma^*$ acts geometrically on $\Pi_{i=1}^{n+1}Y_i$ preserving the product structure, where each $Y_i$ is quasi-isometric to $X_i$ and is either a rank one symmetric space or a locally finite graph. Since $Y$ is not quasi-isometric to a symmetric space,  $Y_{n+1}$ is a locally finite graph.
	Set $X'=\Pi_{i=1}^nY_i$ and $Y'\coloneqq Y_{n+1}$. Then $X'$ is quasi-isometric to $X$, $Y'$ is quasi-isometric to $Y$, and $\Gamma^*$ acts geometrically on $X'\times Y'$, preserving the product structure and not permuting the factors.  
	
	Let $v\in V(Y')$ be an arbitrary vertex. Applying Lemma \ref{lem:product_graph_stab}, we deduce $\Lambda\coloneqq \stab_{\Gamma^*}(X'\times \{v\})$ is  commensurated by $\Gamma^*$ and  acts geometrically on $X'\times \{v\}$; in particular $\Lambda$  is quasi-isometric to $X$. Moreover, since $\Lambda$ is a coarse stabiliser of the natural action $\Gamma^*\curvearrowright Y'$ by Example \ref{exmp:stabvertex,cstab}, the quotient space $\Gamma^*/\Lambda$ is quasi-isometric to $Y'$  by Proposition \ref{prop:cstab homspace}. Thus $\Gamma^*/\Lambda$ is quasi-isometric to $Y$.
	
	Arguing as in the proof of  Corollary \ref{cor:hyp_product_lattice}, the action of $\Gamma^*$ on  $\Pi_{i=1}^{n+1}Y_i$ induces a homomorphism  $\Gamma^*\to G_1\times \dots\times G_{n+1}$, where $G_i$ is obtained by taking the closure of the projection of $\Gamma^*$ to $\Isom(Y_i)$ and quotienting out by the maximal compact normal subgroup. In particular, each  $G_i$ is a locally compact hyperbolic group with trivial amenable radical. By Remark \ref{rem:topcomp,normal} and Proposition \ref{prop:TCvsQC}, $G_{i}$ is a topological completion of $\Gamma^*\curvearrowright Y_i$. 
	Let $H$ be the closure of the projection of $\Gamma^*$ to $G_1\times\dots\times G_n$. Therefore we have a homomorphism $\rho:\Gamma^*\to H\times G_{n+1}$ with finite kernel whose projection to each factor is dense.
	
	Suppose $\Lambda$ is weakly separable. By Lemma \ref{lem:weaksep}, $\Lambda$ is commensurable to a normal subgroup. Proposition \ref{prop:disc_topcomp} implies  every topological completion of $\Gamma^*\qa Y'$ is compact-by-discrete.  Since $G_{n+1}$ is a topological completion of  $\Gamma^*\qa Y'$ with trivial amenable radical,  $G_{n+1}$ is discrete. Proposition \ref{prop:irred} tells us that $H$ is also discrete and $\Gamma^*/\ker(\rho)$ has a finite index subgroup that splits as $\Gamma_X\times \Gamma_Y$, where $\Gamma_X$ and $\Gamma_Y$ are commensurable to  $H$ and $G_{n+1}$.  Thus $\Gamma_X$ is quasi-isometric to $X$ and $\Gamma_Y$ is quasi-isometric to $Y$. Since $\Lambda/\ker(\rho)$  projects  to a finite index subgroup of $H$ and to a finite subgroup of $G_{n+1}$,  $\Lambda/\ker(\rho)$ is commensurable to $\Gamma_X\times\{1\}\cong \Gamma_X$ as required. 
	
	Conversely, suppose $\Gamma^*$ contains a finite normal subgroup $F$, contained in $\Lambda$, such that $\Gamma^*/F$ virtually splits as  $\Gamma_X\times \Gamma_Y$ with $\Gamma_X$ commensurable to $\Lambda/F$. Since $\Gamma_X$ is normalised by a finite index subgroup of $\Gamma^*/F$,  Lemma \ref{lem:weaksep} implies $\Lambda/F$ is weakly separable in $\Gamma^*/F$. Thus $\Lambda$ is weakly separable in $\Gamma^*$.
\end{proof}

We now prove  two theorems that show groups quasi-isometric to products of hyperbolic spaces split as direct products under additional algebraic hypotheses, namely residual finiteness and linearity  respectively. We use arithmeticity results of Caprace--Monod \cite[\S 5]{capracemonod2009isometry}.

\begin{thm}\label{thm:mixedlattice_main}
	Let $\Gamma$ be a  finitely generated residually finite group quasi-isometric to $X_1\times X_2$, where  $X_1$ and $X_2$ are cocompact proper  non-elementary hyperbolic metric spaces. If $X_1$ is quasi-isometric to an irreducible rank one symmetric space and $X_2$ is not, then  either:
	\begin{enumerate}
		\item $\Gamma$ is virtually an arithmetic lattice in a semisimple algebraic group;
		\item \label{item:mixedlatice_2} $\Gamma$ virtually splits as $\Gamma_1\times \Gamma_2$, where $\Gamma_i$ is quasi-isometric to $X_i$.
	\end{enumerate}
	Moreover, if $X_2$ is not quasi-isometric to a tree, then (\ref{item:mixedlatice_2}) holds.
\end{thm}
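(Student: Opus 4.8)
The plan is to feed the lattice description of $\Gamma$ supplied by Corollary \ref{cor:hyp_product_lattice} into the abstract arithmeticity theorems of Caprace--Monod. First I would apply Corollary \ref{cor:hyp_product_lattice} to obtain a finite index subgroup $\Gamma^*\leq\Gamma$, a finite normal subgroup $F\vartriangleleft\Gamma^*$, and a homomorphism $\rho\colon\Gamma^*\to G_1\times G_2$ with finite kernel whose image $\Lambda\coloneqq\rho(\Gamma^*)$ is a uniform lattice projecting densely to each factor, where each $G_i$ is a non-elementary hyperbolic locally compact group with trivial amenable radical quasi-isometric to $X_i$ which, modulo a compact normal subgroup, is either a virtually connected centre-free simple rank one Lie group or totally disconnected. (Here $\Lambda$, being a finite extension of a finite index subgroup of $\Gamma$, is again residually finite.) Since $X_2$ is not quasi-isometric to a rank one symmetric space, Theorem \ref{thm:hypisom_general} forces $G_2$ to be totally disconnected. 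Since $X_1$ is quasi-isometric to an irreducible rank one symmetric space, $\partial G_1\cong\partial X_1$ is a sphere, so quasi-isometric rigidity of rank one symmetric spaces (Tukia, Pansu, Gabai, Casson--Jungreis) together with Theorem \ref{thm:hypisom_general} shows that $G_1$ is, modulo a compact normal subgroup, either discrete or a virtually connected centre-free simple Lie group; and by Proposition \ref{prop:irred} and density of the projections, $G_1$ is discrete if and only if $G_2$ is.

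The argument then divides according to whether $G_2$ is discrete. If $G_2$ is discrete, then so is $G_1$, hence so is $G_1\times G_2$, and Proposition \ref{prop:irred} gives that $\Lambda$ virtually splits as $\Lambda_1\times\Lambda_2$ with $\Lambda_i$ a uniform lattice in $G_i$; since each $\Lambda_i$ is quasi-isometric to $G_i$ and hence to $X_i$, and since $\Gamma$ is virtually isomorphic to $\Lambda$, conclusion (2) holds. If instead $G_2$ is non-discrete, then $G_1$ is non-discrete too, so $G_1$ is (modulo a compact normal subgroup) a virtually connected simple rank one Lie group, and $\Lambda$ is a finitely generated residually finite uniform lattice, with dense projections, in the product of a rank one simple Lie group and a compactly generated non-discrete totally disconnected group with trivial amenable radical. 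Here I would invoke the abstract arithmeticity theorems of Caprace--Monod \cite[\S 5]{capracemonod2009isometry}: after the usual reductions, either $\Lambda$ virtually splits as a product of lattices in the two factors---giving conclusion (2) exactly as before---or $\Lambda$ is virtually an $S$-arithmetic lattice, in the sense that there is a semisimple algebraic group $\mathbf{H}$ over a number field $k$ and a finite set of places $S$ with $\Lambda$ commensurable to $\mathbf{H}(\mathcal{O}_{k,S})$ and $G_1\times G_2$ isomorphic, modulo compact normal subgroups and up to finite index, to $\prod_{v\in S}\mathbf{H}(k_v)$; this is conclusion (1).

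For the final assertion, suppose conclusion (1) holds and assume for contradiction that $X_2$ is not quasi-isometric to a tree. In the $S$-arithmetic picture above, the totally disconnected factor $G_2$ is, modulo a compact kernel, commensurable to $\mathbf{H}(k_v)$ for a non-archimedean place $v$ at which $\mathbf{H}$ is isotropic, and $G_2$ is quasi-isometric to $X_2$, hence is itself hyperbolic and contains no quasi-isometrically embedded Euclidean plane. Since a Euclidean building of rank at least two contains an isometrically embedded Euclidean plane, the $k_v$-rank of $\mathbf{H}$ must equal one, so $G_2$ acts geometrically on the Bruhat--Tits building of $\mathbf{H}$ over $k_v$, which is then a tree. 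Hence $X_2$ is quasi-isometric to a tree, contrary to assumption, so conclusion (2) must hold.

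The main obstacle is the middle step. The arithmeticity theorems of Caprace--Monod are stated under structural hypotheses on the ambient factors---topological simplicity, or being just-non-compact with trivial amenable radical, and the like---which the totally disconnected factor $G_2$ produced by Corollary \ref{cor:hyp_product_lattice} is not a priori known to satisfy, being merely a non-discrete compactly generated hyperbolic locally compact group with trivial amenable radical. One must therefore first pass to a further finite index subgroup of $\Lambda$ and replace $G_2$ by an appropriate closed cocompact subgroup (or quotient), verify that the arithmeticity dichotomy then genuinely applies, and confirm that in the arithmetic case $S$ consists of exactly one archimedean place (yielding $G_1$) and one non-archimedean place (yielding $G_2$)---the last point being what powers the rank-one/tree argument. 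Transporting the resulting splitting back through the finite kernel $F$ and the finite index to $\Gamma$ is routine by comparison.
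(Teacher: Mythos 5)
Your overall architecture is the same as the paper's: Corollary \ref{cor:hyp_product_lattice} to realise (a finite-index subgroup of) $\Gamma$ as a uniform lattice in $G_1\times G_2$ with dense projections, Proposition \ref{prop:irred} to split off conclusion (2) when a factor is discrete, Caprace--Monod arithmeticity in the non-discrete case, and a coarse-algebraic-type argument for the final assertion. However, there is a genuine gap at the arithmeticity step, and you have misdiagnosed where the difficulty lies. The version of Caprace--Monod that the paper invokes (Theorem \ref{thm:abstract_mixedlattice}, i.e.\ \cite[Theorem 5.18]{capracemonod2009isometry}) imposes no topological simplicity or just-non-compactness on the totally disconnected factor: it only needs $D$ totally disconnected with trivial amenable radical, which $G_2$ already satisfies by Corollary \ref{cor:hyp_product_lattice}, so no replacement of $G_2$ by a cocompact subgroup or quotient is required. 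What it \emph{does} require, and what your proposal never addresses, is that the projection of the lattice to the connected semisimple Lie factor $G_1$ be injective. Notice that, apart from the parenthetical remark that $\Lambda$ is residually finite (and $\Lambda$ is a quotient of $\Gamma^*$ by the finite kernel, not a finite extension of it), your argument never actually uses the residual finiteness hypothesis of the theorem --- a sign that a step is missing, since without some such hypothesis conclusion (1)/(2) can fail.

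The missing step is exactly where residual finiteness enters in the paper: Lemma \ref{lem:triv_centraliser} shows every infinite closed normal subgroup of $G_2$ has trivial centraliser, and then Lemma \ref{lem:inj_proj} (Caprace--Kropholler--Reid--Wesolek, \cite[Corollary 33]{caprace_kropholler_reid_wesolek_2020}) applied to the finitely generated residually finite uniform lattice with dense projection to the non-discrete totally disconnected factor $G_2$ yields injectivity of the projection to $G_1$. Only with this in hand can Theorem \ref{thm:abstract_mixedlattice} be applied; moreover the statement is not the ``split or $S$-arithmetic'' dichotomy you describe --- in your non-discrete case the lattice is automatically irreducible (dense projections, both factors non-discrete), and the theorem then concludes arithmeticity outright, the split alternative having already been disposed of via Proposition \ref{prop:irred}. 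Once injectivity is supplied, the rest of your argument goes through; and for the final assertion you can bypass the place-matching you flag as unverified by quoting Lemma \ref{lem:prodhyp_qitoalgebraicgroup}: if $\Gamma$ is virtually a uniform lattice in a semisimple algebraic group, then each $X_i$ is of coarse algebraic type, so $X_2$, being non-elementary and not quasi-isometric to a rank one symmetric space, is quasi-isometric to a regular tree of valence at least three.
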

Before beginning the proof, we recall the notion of an irreducible lattice.
\begin{defn}
	A  lattice $\Gamma\leq G_1\times \dots \times  G_n$ in a product of locally compact groups  is \emph{irreducible}  if:
\begin{enumerate}
	\item 	  the projection of $\Gamma$ to each $G_i$ is dense;
	\item 	  the projection of $\Gamma$ to any proper subproduct of $G_1\times \dots \times  G_n$ is non-discrete.
\end{enumerate}
\end{defn}
We caution the reader that outside the world of semisimple algebraic groups, there are many different notions of irreducibility that are not in general equivalent; see the discussion in \cite[\S 2.B]{capracemonod2012lattice}. 
To prove Theorem \ref{thm:mixedlattice_main}, we will apply the following theorem of  Caprace--Monod; see also \cite[Theorem 4.6]{baderfurmansauer2020lattice}.
\begin{thm}[{\cite[Theorem 5.18]{capracemonod2009isometry}}]\label{thm:abstract_mixedlattice}
	Let $\Gamma\leq S\times D$ be a finitely generated irreducible lattice, where $S$ is connected semisimple Lie group and $D$ is  totally disconnected with trivial amenable radical. Assume the projection $\Gamma\rightarrow S$ is injective. Then $D$ is a semisimple algebraic group and the image of $\Gamma$ in $S\times D$ is an arithmetic lattice.
\end{thm}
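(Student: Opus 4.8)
The plan is to deduce Theorem~\ref{thm:mixedlattice_main} from Corollary~\ref{cor:hyp_product_lattice}, Proposition~\ref{prop:irred}, and the arithmeticity dichotomy of Theorem~\ref{thm:abstract_mixedlattice}. First I would apply Corollary~\ref{cor:hyp_product_lattice} to $\Gamma$ and the product $X_1\times X_2$: after passing to a finite index subgroup $\Gamma^*\leq\Gamma$ and quotienting out a finite normal subgroup $F\vartriangleleft\Gamma^*$, the group $\Gamma^*/F$ is a uniform lattice in $G_1\times G_2$ with dense projections to each factor, where each $G_i$ is a locally compact hyperbolic group with trivial amenable radical that is quasi-isometric to $X_i$, and each $G_i$ is either a virtually connected simple rank-one Lie group or totally disconnected. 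Since $X_1$ is quasi-isometric to an irreducible rank one symmetric space, $G_1$ cannot be totally disconnected (a totally disconnected locally compact group acting geometrically on a space is quasi-isometric to a locally finite graph, whereas an irreducible rank one symmetric space has a continuous isometry group of that type — one can cite that $G_1$ quasi-isometric to a symmetric space forces the connected case via the classification, or more directly invoke that the totally disconnected case of Theorem~\ref{thm:hypisom_general} yields a locally finite graph, which is not quasi-isometric to a rank one symmetric space of dimension $\geq 2$; if $X_1$ is quasi-isometric to a tree it is not "not quasi-isometric to a tree" — but $X_1$ being quasi-isometric to an irreducible rank one \emph{symmetric space} excludes trees). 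So $G_1$ is a virtually connected simple rank-one Lie group $S$. Since $X_2$ is \emph{not} quasi-isometric to a rank one symmetric space, $G_2$ is totally disconnected; call it $D$, and note $D$ has trivial amenable radical.

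Next I would analyse the projection of the lattice $\Lambda\coloneqq\Gamma^*/F$ to the factor $S=G_1$. Replacing $\Lambda$ by its image in the connected component if necessary, and using that a connected simple Lie group of rank one has trivial centre after quotienting (or that the kernel of the projection $\Lambda\to S$ is a discrete normal subgroup of $\Lambda$ whose centraliser contains a finite-index subgroup, hence is trivial by Corollary~\ref{cor:triv_cen}), the projection $\Lambda\to S$ is injective. There are now two cases according to whether the projection of $\Lambda$ to one of the factors is discrete. If the projection to $S$ (equivalently, by Proposition~\ref{prop:irred}, to $D$) is discrete, then Proposition~\ref{prop:irred} gives that $\Lambda$ virtually splits as $\Lambda_1\times\Lambda_2$ with $\Lambda_i$ a uniform lattice in $G_i$; pulling this back through the finite kernel $F$ and the finite index $[\Gamma:\Gamma^*]$, we conclude $\Gamma$ virtually splits as $\Gamma_1\times\Gamma_2$ with $\Gamma_i$ quasi-isometric to $X_i$, which is conclusion~(\ref{item:mixedlatice_2}).

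If instead the projection of $\Lambda$ to every factor is non-discrete, then $\Lambda\leq S\times D$ is a finitely generated irreducible lattice in the sense of the definition preceding Theorem~\ref{thm:abstract_mixedlattice} (density of projections comes from Corollary~\ref{cor:hyp_product_lattice}, and non-discreteness of the projection to each of the two factors is exactly the second condition for $n=2$), with $S$ connected semisimple and $D$ totally disconnected with trivial amenable radical, and the projection $\Lambda\to S$ injective as established above. Theorem~\ref{thm:abstract_mixedlattice} then says $D$ is a semisimple algebraic group and the image of $\Lambda$ in $S\times D$ is an arithmetic lattice; hence $\Gamma$ is virtually an arithmetic lattice in the semisimple algebraic group $S\times D$ (one has to be slightly careful that $S$ is also realised algebraically, which is standard for connected simple rank one Lie groups, or else absorbs into the "semisimple algebraic group" by replacing $S$ with its adjoint form and noting finite-index/finite-kernel modifications do not affect the conclusion). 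This gives conclusion~(1). Finally, for the last sentence: if $X_2$ is not quasi-isometric to a tree, then $D=G_2$, being a totally disconnected hyperbolic locally compact group quasi-isometric to $X_2$, acts geometrically on a locally finite hyperbolic graph that is not quasi-isometric to a tree, so $D$ cannot be a rank one Bruhat--Tits building; but a semisimple algebraic group over a non-archimedean local field with totally disconnected rank one factor acts on a rank one building, i.e.\ a tree — so the arithmetic case of Theorem~\ref{thm:abstract_mixedlattice} forces $D$ to be (virtually) $\mathrm{PSL}_2$ of a local field, whose building is a tree, a contradiction. Hence case~(1) cannot occur and~(\ref{item:mixedlatice_2}) holds.

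The main obstacle I anticipate is bookkeeping the passage between $\Gamma$, the finite index subgroup $\Gamma^*$, and the finite-kernel quotient $\Lambda=\Gamma^*/F$: one must check that "virtually splitting as a direct product" and "virtually an arithmetic lattice" are both insensitive to these operations (finite kernels and finite index), and that the injectivity of the projection to $S$ can be arranged after such modifications — this uses Corollary~\ref{cor:triv_cen} to kill centralisers of the relevant normal subgroups. A secondary subtlety is justifying that $G_1$ is forced into the connected (Lie) case and $G_2$ into the totally disconnected case purely from the quasi-isometry hypotheses, which rests on the fact (implicit in Section~\ref{sec:qaction_hyp} via Proposition~\ref{prop:millefeuille} and the classification) that rank one symmetric spaces of non-compact type and locally finite graphs lie in disjoint quasi-isometry classes except for the degenerate low-dimensional cases already excluded by "non-elementary" and "not quasi-isometric to a rank one symmetric space".
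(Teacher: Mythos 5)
There is a basic mismatch here: what you have written is not a proof (or proof attempt) of the stated theorem at all. The statement in question is Theorem~\ref{thm:abstract_mixedlattice}, the arithmeticity result of Caprace--Monod (\cite[Theorem 5.18]{capracemonod2009isometry}), which the paper does not prove but simply cites as an external black box. Your text instead \emph{assumes} Theorem~\ref{thm:abstract_mixedlattice} and uses it, together with Corollary~\ref{cor:hyp_product_lattice} and Proposition~\ref{prop:irred}, to derive Theorem~\ref{thm:mixedlattice_main}. So the target statement appears in your argument only as a hypothesis, never as a conclusion. Proving Theorem~\ref{thm:abstract_mixedlattice} itself is an entirely different undertaking: it rests on superrigidity for irreducible lattices in products (Monod, Caprace--Monod) and Margulis-style arithmeticity machinery, none of which is touched by the quasi-isometry and splitting arguments you give. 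As a proof of the stated theorem, then, the proposal has a complete gap --- there is no argument for why $D$ must be a semisimple algebraic group or why the image of $\Gamma$ is arithmetic.

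As a secondary remark: read as a sketch of Theorem~\ref{thm:mixedlattice_main}, your outline is close to the paper's actual proof of that theorem, but one step is off. You propose to get injectivity of the projection $\Lambda\to S$ by a centraliser argument (Corollary~\ref{cor:triv_cen}) applied to the kernel, ``hence trivial.'' Corollary~\ref{cor:triv_cen} kills centralisers in $G$ of normal or cocompact subgroups of $G$; it does not show that the discrete normal subgroup $\Lambda\cap(\{1\}\times D)$ of $\Lambda$ is trivial, and indeed injectivity is not automatic here. This is exactly where the residual finiteness hypothesis of Theorem~\ref{thm:mixedlattice_main} enters: the paper invokes Lemma~\ref{lem:inj_proj} (Caprace--Kropholler--Reid--Wesolek), whose hypotheses include residual finiteness of the lattice, in combination with Lemma~\ref{lem:triv_centraliser}. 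Without that input your injectivity claim does not follow.
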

Note that Caprace--Monod prove a more general theorem where the projection $\Gamma\rightarrow S$ is not necessarily injective, and $D$ need not have trivial amenable radical \cite[Theorem 5.18]{capracemonod2009isometry}. This more general statement will not be needed here. In order to show injectivity of the projection to the Lie group factor, we appeal to the following theorem of Caprace--Kropholler--Reid--Wesolek.
\begin{lem}[{\cite[Corollary 33]{caprace_kropholler_reid_wesolek_2020}}]\label{lem:inj_proj}
	Let $\Gamma$ be a lattice in the product $G_1\times G_2$ of two locally compact
	groups. Assume  $G_2$ is totally disconnected and non-discrete and that every infinite
	closed normal subgroup of $G_2$ has trivial centraliser in $G_2$. Suppose further that $\Gamma$ is finitely
	generated and that the canonical projection $\Gamma\rightarrow G_2$ has a dense image. If $\Gamma$ is residually
	finite, then the projection $\Gamma \rightarrow G_1$ is injective.
\end{lem}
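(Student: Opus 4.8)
The aim is to prove that the normal subgroup $N:=\ker(\Gamma\to G_1)$ of $\Gamma$ is trivial; here I identify $\Gamma$ with its image in $G_1\times G_2$, which is discrete since $\Gamma$ is a lattice, and write $p_i\colon G_1\times G_2\to G_i$ for the coordinate projections. First I would record that $N$ is a discrete normal subgroup of $G_2$. Indeed, $N=\Gamma\cap(\{1\}\times G_2)$ is discrete, hence closed, in $\{1\}\times G_2$, which I henceforth identify with $G_2$; and since $N\vartriangleleft\Gamma$ lies in the direct factor $\{1\}\times G_2$, the image $p_2(\Gamma)$ normalises $N$. The normaliser of the closed subgroup $N$ in $G_2$ is closed and contains the dense subgroup $p_2(\Gamma)$, so it equals $G_2$; thus $N\vartriangleleft G_2$. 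It remains to show $N=\{1\}$, so suppose not.

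The decisive observation is that the claim follows as soon as $N$ is finitely generated. Assuming this, $N$ is a finitely generated discrete subgroup of $G_2$ normalised by the dense subgroup $p_2(\Gamma)$, so Lemma~\ref{lem:open_cen} shows that $C:=C_{G_2}(N)$ is open in $G_2$. Being a centraliser, $C$ is closed, and it is normal in $G_2$ because $N$ is; and since $G_2$ is non-discrete, $C$ is infinite. By hypothesis every infinite closed normal subgroup of $G_2$ has trivial centraliser, so $C_{G_2}(C)=\{1\}$; but $N$ centralises its own centraliser $C$, so $N\le C_{G_2}(C)=\{1\}$, a contradiction. In particular this settles the case in which $N$ is finite.

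It therefore remains to rule out the case that $N$ is infinite and infinitely generated, and the plan is to reduce it to the previous paragraph by showing that $N$ is in fact finitely generated. This is where the hypotheses that $\Gamma$ has finite covolume and is residually finite are used in an essential way, and it is the only hard part of the argument. Fixing a compact open subgroup $U\le G_2$ (Theorem~\ref{thm:vanDantzig}), the subgroup $NU=UN$ is open in $G_2$ and $N$ is cocompact in it, since $NU/N\cong U/(U\cap N)$ is compact; it is thus enough to see that $NU$ is compactly generated. For this one exploits that $\Gamma$, being a finitely generated residually finite lattice, has a cofinal family of finite quotients $\Gamma\to Q$ for which $N\cap\ker(\Gamma\to Q)$ has finite index in $N$ and (by the argument of the first paragraph) is normal in $G_2$; the induced continuous $G_2$-actions on the finite groups $N/\bigl(N\cap\ker(\Gamma\to Q)\bigr)$ have open — hence infinite closed normal — kernels, to which the centraliser hypothesis applies, and playing these finite approximations against the finiteness of covolume forces $N$ to be compactly, hence finitely, generated. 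Carrying out this reduction in detail is precisely \cite[Corollary~33]{caprace_kropholler_reid_wesolek_2020}; once finite generation of $N$ is in hand, Lemma~\ref{lem:open_cen} together with the centraliser hypothesis finishes the proof immediately.
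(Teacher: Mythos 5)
The paper does not prove this lemma at all: it is imported verbatim from \cite{caprace_kropholler_reid_wesolek_2020}, so there is no internal argument to compare yours against. Judged as a proof, your proposal is only partial. The part you do carry out is correct and is very much in the spirit of how the paper uses the same tools elsewhere (compare Proposition \ref{prop:irred} and Proposition \ref{prop:qztriival}): $N=\Gamma\cap(\{1\}\times G_2)$ is a discrete, hence closed, subgroup of $G_2$ normalised by the dense subgroup $p_2(\Gamma)$, so it is normal in $G_2$; and \emph{if} $N$ is finitely generated, Lemma \ref{lem:open_cen} makes $C_{G_2}(N)$ open, hence an infinite closed normal subgroup of the non-discrete group $G_2$, whose centraliser is trivial by hypothesis and contains $N$. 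That settles the finitely generated (in particular the finite) case cleanly.

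The genuine gap is the remaining case, which is where residual finiteness must do its work and where your argument stops being an argument. You reduce the problem to showing $N$ is finitely generated (equivalently, $NU$ compactly generated for $U\leq G_2$ compact open), but the mechanism you offer --- finite quotients $\Gamma\to Q$, the open kernels of the induced $G_2$-actions on $N/(N\cap\ker(\Gamma\to Q))$, and ``playing these finite approximations against the finiteness of covolume'' --- is never turned into a deduction; no step actually produces a compact generating set, and nothing in the sketch uses covolume in a checkable way. Moreover the reduction itself is suspect: a normal subgroup of a finitely generated group need not be finitely generated, and nothing in the hypotheses obviously forces $N$ to be, so it is far from clear that the route through finite generation of $N$ can be completed at all (the argument in \cite{caprace_kropholler_reid_wesolek_2020} does not simply amount to this reduction). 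Finally, closing the gap by writing that ``carrying out this reduction in detail is precisely [Corollary 33]'' is circular: that corollary \emph{is} the statement you are proving. As it stands, your text proves a special case and otherwise defers to the same citation the paper uses; either the lemma should be treated as an imported result (as the paper does), or the infinitely generated case needs an actual proof.
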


\begin{proof}[Proof of Theorem \ref{thm:mixedlattice_main}]
	Let $\Gamma$ be a finitely generated residually finite group  quasi-isometric to $X_1\times X_2$.  Corollary \ref{cor:hyp_product_lattice} says that there is a finite normal subgroup $F\vartriangleleft \Gamma$ such that $\Gamma/F$  is a uniform lattice in $G_1\times G_2$, where $G_1$ and $G_2$ satisfy  the conclusions of Corollary \ref{cor:hyp_product_lattice}.  In particular, $X_1$ and $X_2$ are quasi-isometric to $G_1$ and $G_2$ respectively. Since $\Gamma$ is residually finite, we can replace it with a finite index subgroup if necessary so that $\Gamma$ is a uniform lattice in $G_1\times G_2$. If either $G_1$ or $G_2$ is discrete, Proposition \ref{prop:irred} implies  $\Gamma$ virtually splits as a product $\Gamma_1\times \Gamma_2$, where $
	\Gamma_i$ is quasi-isometric to $X_i$. We therefore suppose $G_1$ and $G_2$ are both non-discrete. 
	
	 Since both $G_1$ and $G_2$ are hyperbolic and have trivial amenable radical, Theorem \ref{thm:hypisom_general} ensures each of $G_1$ and $G_2$ is either a virtually  connected rank one simple Lie group, or is totally disconnected. We claim $G_1$ is the former and $G_2$ is the latter. Indeed, as $X_2$ is not quasi-isometric to a symmetric space, $G_2$ must be totally disconnected. As  $G_1$ is quasi-isometric to an irreducible rank one symmetric space $Y$,  $G_1$ acts continuously properly and cocompactly on $Y$; see \cite[Theorem 19.25]{cornulier2018quasi}. As $G_1$ has trivial amenable radical, this action is faithful. Suppose for contradiction $G_1$ is totally disconnected.   Since $G_1$ is a closed subgroup of the Lie group $\Isom(Y)$, $G_1$  is also a  Lie group, hence it satisfies the \emph{no small subgroup property}, i.e. a sufficiently small identity neighbourhood contains no non-trivial subgroups. By Theorem \ref{thm:vanDantzig}, a totally disconnected group satisfying the no small subgroup property is discrete, contradicting our hypothesis that  $G_1$ is not discrete. Therefore, $G_1$ is  a virtually connected rank one simple Lie group.
	 
	Combining Lemmas \ref{lem:triv_centraliser} and \ref{lem:inj_proj}, we deduce that the projection $\Gamma\rightarrow G_1$ is injective. Passing to  finite index subgroups of $G=G_1\times G_2$ and $\Gamma$ as needed, we may assume $G_1$ is connected. We can thus apply Theorem \ref{thm:abstract_mixedlattice}  to deduce $\Gamma$ is an arithmetic lattice in a semisimple algebraic group. By Lemma \ref{lem:prodhyp_qitoalgebraicgroup}, this can only occur when $X_2$ is  quasi-isometric to  a $k$-regular tree for some $k\geq 3$. 
\end{proof}

We conclude this section with a theorem in a similar vein to Theorem \ref{thm:splitlinear_main}, where strengthening our algebraic assumption on $\Gamma$ allows one to deduce $\Gamma$ is arithmetic or virtually splits. We recall from the introduction that a group is  \emph{linear} if it has a faithful finite-dimensional linear representation over a field of characteristic $\neq 2,3$.
\begin{thm}\label{thm:splitlinear_main}
	Let $\Gamma$ be a  finitely generated linear group quasi-isometric to $\Pi_{i=1}^nX_i$, where each $X_i$ is a cocompact proper  non-elementary hyperbolic metric space and $n\geq 2$. Then either:
	\begin{enumerate}
		\item $\Gamma$ is virtually a uniform arithmetic lattice in a semisimple algebraic group, in which case each $X_i$ is of coarse algebraic type;
		\item After permuting the factors of $\Pi_{i=1}^nX_i$, there is an $1\leq r<n$ such that $\Gamma$ virtually splits as a direct product $\Gamma_1\times \Gamma_2$, where $\Gamma_1$ is quasi-isometric to $\Pi_{i=1}^rX_i$ and $\Gamma_2$ is quasi-isometric to $\Pi_{i=r+1}^nX_i$. 
	\end{enumerate} 

	Moreover, $\Gamma$ virtually splits as  $\Pi_{k=1}^m\Gamma_k$ for $m\leq n$, where each $\Gamma_k$ is either a uniform arithmetic lattice in a semisimple algebraic group   or a hyperbolic group quasi-isometric to $X_i$ for some $i$. 
		In particular, if at most one of the $X_i$ is of coarse algebraic type, then $m=n$ and each $\Gamma_i$ is quasi-isometric to $X_i$.
\end{thm}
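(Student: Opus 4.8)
The plan is to combine Corollary \ref{cor:hyp_product_lattice} with the arithmeticity machinery of Caprace--Monod, using linearity to get the injectivity hypothesis needed for Theorem \ref{thm:abstract_mixedlattice} and to invoke a Margulis-type argument forcing arithmeticity of non-discrete totally disconnected factors. First I would apply Corollary \ref{cor:hyp_product_lattice} to replace $\Gamma$ by a finite index subgroup $\Gamma^*$ with finite normal $F\vartriangleleft \Gamma^*$ so that $\Gamma^*/F$ is a uniform lattice in $G_1\times\dots\times G_n$ with dense projections, each $G_i$ a hyperbolic locally compact group with trivial amenable radical quasi-isometric to $X_i$, and each $G_i$ either a virtually connected simple rank one Lie group or totally disconnected. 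Since $F$ is finite and $\Gamma$ is linear, $\Gamma^*/F$ is still linear; I would then work with $\Gamma^*/F$, calling it $\Gamma$ again, as a uniform lattice in $G\coloneqq G_1\times\dots\times G_n$.

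Next I would partition the factors according to which projections $\Gamma\to G_i$ have discrete image versus dense non-discrete image, applying Proposition \ref{prop:irred} iteratively: if the projection to some subproduct is discrete, then $\Gamma$ virtually splits off a lattice in that subproduct, and one recurses on the complementary factor. After finitely many such splittings, we may reduce to the situation where $\Gamma$ virtually decomposes as $\Pi_k \Lambda_k$ with each $\Lambda_k$ a uniform lattice in a subproduct $G_{I_k}$ whose projection to every factor is dense and whose projection to every proper subproduct is non-discrete — i.e. each $\Lambda_k$ is an irreducible lattice in $G_{I_k}$ in the sense of the definition recalled before Theorem \ref{thm:abstract_mixedlattice}. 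If some $I_k$ is a singleton, then $\Lambda_k$ is a uniform lattice in a single hyperbolic locally compact group quasi-isometric to the corresponding $X_i$, giving a hyperbolic group factor. So it remains to analyse each irreducible $\Lambda_k\leq G_{I_k}$ with $|I_k|\geq 2$.

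For such a $\Lambda_k$, I would first handle the presence of a Lie factor: by the argument in the proof of Theorem \ref{thm:mixedlattice_main}, a factor $G_i$ that is a virtually connected simple rank one Lie group can be taken connected after passing to finite index, and the totally disconnected factors among the $G_i$, $i\in I_k$, have trivial amenable radical, hence by Lemma \ref{lem:triv_centraliser} every infinite closed normal subgroup has trivial centraliser. Linearity over characteristic $\neq 2,3$ is what is needed to induce an \emph{admissible} representation in the sense of Margulis \cite[\S IX]{margulis1991discrete}: I would use linearity plus the density of projections to deduce injectivity of the projection of $\Lambda_k$ onto the product of the non-discrete factors, and then — grouping the Lie factors into a connected semisimple $S$ and the totally disconnected factors into $D$ — apply Theorem \ref{thm:abstract_mixedlattice} (when $S$ is nontrivial) or the analogous higher-rank superrigidity/arithmeticity input for products of totally disconnected groups (when $S$ is trivial) to conclude that $D$ is a semisimple algebraic group over non-archimedean local fields and $\Lambda_k$ is an arithmetic lattice in the semisimple algebraic group $S\times D$. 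Then Lemma \ref{lem:prodhyp_qitoalgebraicgroup} forces each $X_i$ with $i\in I_k$ to be of coarse algebraic type, namely quasi-isometric to a rank one symmetric space or a regular tree of valence $\geq 3$.

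Finally I would assemble the pieces: the full list of factors $\Gamma_k$ consists of hyperbolic groups quasi-isometric to single $X_i$'s together with arithmetic lattices $\Lambda_k$ in semisimple algebraic groups, each of which is quasi-isometric to a product $\Pi_{i\in I_k}X_i$ with all $X_i$, $i\in I_k$, of coarse algebraic type; this yields the ``moreover'' splitting $\Gamma$ virtually $\Pi_{k=1}^m\Gamma_k$ with $m\leq n$. For the dichotomy (1)/(2): if some $I_k=\{1,\dots,n\}$ then $\Gamma$ itself is virtually a uniform arithmetic lattice in a semisimple algebraic group and all $X_i$ are of coarse algebraic type, which is case (1); otherwise every $I_k$ is a proper subset, so after reindexing there is $1\leq r<n$ separating two nonempty groups of factors, giving the virtual direct product decomposition $\Gamma_1\times\Gamma_2$ of case (2). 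For the last sentence, if at most one $X_i$ is of coarse algebraic type then no $I_k$ with $|I_k|\geq 2$ can occur (an arithmetic lattice in a higher-product would force at least two coarse-algebraic-type factors), so every $I_k$ is a singleton, $m=n$, and each $\Gamma_i$ is a hyperbolic group quasi-isometric to $X_i$. I expect the main obstacle to be the bookkeeping around admissibility of the linear representation and correctly invoking the Caprace--Monod / Margulis arithmeticity statement in the case where there is no Lie factor (a pure product of totally disconnected groups), where one must cite the appropriate generalisation of Theorem \ref{thm:abstract_mixedlattice}; the remaining steps are structural and follow the template already established in the proofs of Corollary \ref{cor:hyp_product_lattice} and Theorem \ref{thm:mixedlattice_main}.
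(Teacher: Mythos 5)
Your reduction via Corollary \ref{cor:hyp_product_lattice} and the iterated use of Proposition \ref{prop:irred} to peel off reducible pieces match the paper's structure, and your endgame (assembling the factors, deducing the dichotomy, and using Lemma \ref{lem:prodhyp_qitoalgebraicgroup} to rule out higher-rank arithmetic factors when at most one $X_i$ is of coarse algebraic type) is essentially the paper's. The genuine gap is at the crucial arithmeticity step for an irreducible lattice $\Lambda_k\leq G_{I_k}$ with $\lvert I_k\rvert\geq 2$. You propose to run the Theorem \ref{thm:mixedlattice_main} template, i.e.\ invoke Theorem \ref{thm:abstract_mixedlattice}, but that statement requires a nontrivial \emph{connected semisimple Lie} factor $S$ together with injectivity of the projection $\Lambda_k\to S$; in the case where every $G_i$, $i\in I_k$, is totally disconnected there is no such $S$, and you explicitly defer to "the appropriate generalisation" without naming one. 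That is precisely the point where the proof needs a concrete theorem, and without it the argument does not close. Your injectivity claim is also aimed at the wrong target: what Theorem \ref{thm:abstract_mixedlattice} needs is injectivity of the projection to the Lie part (obtained in the paper via Lemma \ref{lem:inj_proj} and residual finiteness), not to the product of non-discrete factors.

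The paper avoids this entirely by using a different input, Theorem \ref{thm:arithm_linear} (Caprace--Monod's arithmeticity criterion for linear lattices), which applies uniformly whether or not a Lie factor is present. Its hypotheses are verified as follows: linearity over characteristic $\neq 2,3$ plus trivial amenable radical of $\Gamma$ yield a faithful Zariski-dense representation into a semisimple algebraic group (take the Zariski closure of the linear representation and quotient by the solvable radical; the kernel is an amenable normal subgroup, hence trivial), and the trivial quasi-centre hypothesis on $G=G_1\times\dots\times G_n$ is supplied by Proposition \ref{prop:qztriival}, which itself uses residual finiteness (automatic for finitely generated linear groups), Lemmas \ref{lem:triv_centraliser}, \ref{lem:inj_proj} and \ref{lem:qztopfinite}. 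Your proposal omits both the construction of the Zariski-dense semisimple representation and any verification of the quasi-centre condition, and substitutes an unspecified superrigidity statement in the purely totally disconnected case. To repair the proposal you should replace the appeal to Theorem \ref{thm:abstract_mixedlattice} (and its hypothetical generalisation) by Theorem \ref{thm:arithm_linear}, checked via Proposition \ref{prop:qztriival}; with that substitution the rest of your outline goes through as in the paper.
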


We prove Theorem \ref{thm:splitlinear_main} by invoking the following theorem of  Caprace--Monod. The \emph{quasi-centre} of a topological group is the subgroup consisting of elements with open centraliser.
\begin{thm}[{\cite[Theorem 5.1]{capracemonod2009isometry}}]\label{thm:arithm_linear}
	Let $\Gamma\leq G=G_1\times \dots \times G_n$ be a finitely generated uniform  irreducible lattice, where $G$ is a locally compact group with trivial amenable radical and trivial quasi-centre. Suppose  $\Gamma$ admits a faithful Zariski-dense representation into a semisimple group over a field of characteristic $\neq 2,3$.
	After replacing $G$ and $\Gamma$ by a finite index subgroups, $G$ is a semisimple algebraic group and  $\Gamma$ is an arithmetic lattice   in $G$. 
\end{thm}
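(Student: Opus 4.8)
The result is an ``abstract arithmeticity'' theorem of Margulis type, and the plan is to reconstruct the strategy of \cite[\S 5]{capracemonod2009isometry}: use superrigidity for irreducible lattices in products to force $G$ to be, virtually, a semisimple group over local fields, and then quote Margulis's arithmeticity theorem. Write $\tau\colon\Gamma\to\mathbf H(K)$ for the given faithful representation. Since $G$ has trivial amenable radical its uniform lattice $\Gamma$, hence $\tau(\Gamma)\cong\Gamma$, has no non-trivial normal amenable subgroup; quotienting the Zariski closure of $\tau(\Gamma)$ by its unipotent radical and then by the central torus (whose $K$-points meet $\tau(\Gamma)$ in normal amenable, hence trivial, subgroups) we may assume — after replacing $\Gamma$ by a finite-index subgroup, $K$ by the finitely generated subfield generated by the matrix entries of a finite generating set, and $\mathbf H$ accordingly — that $\tau(\Gamma)$ is Zariski-dense in the connected semisimple $K$-group $\mathbf H$. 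Composing with $\mathbf H\to\mathbf H^{\mathrm{ad}}$ replaces $\tau$ by a representation whose kernel is the finite normal subgroup $F:=\Gamma\cap Z(\mathbf H)(K)$; the argument below will show $F=\{1\}$, so this is free.

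\textbf{Step 1: passage to local fields.} Since $K$ is finitely generated and $\tau(\Gamma)$ Zariski-dense, the matrix entries lie in a finitely generated subring, so at all but finitely many places $v$ of $K$ the image $\tau(\Gamma)$ is relatively compact in $\mathbf H^{\mathrm{ad}}(K_v)$. Let $S$ be the finite (and nonempty, else $\tau(\Gamma)$ would be an $S$-arithmetic group in an everywhere-anisotropic semisimple group and hence finite) set of remaining places, $K_v$ the completion at $v\in S$, and $L:=\prod_{v\in S}\mathbf H^{\mathrm{ad}}(K_v)$; this is a semisimple group over local fields — the meaning of ``semisimple algebraic group'' in the $S$-arithmetic setting — and the diagonal map $\tau\colon\Gamma\to L$ has Zariski-dense projection to each factor and non-relatively-compact image. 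The hypothesis $\operatorname{char}K\neq 2,3$ is used precisely here, to guarantee that the restriction-of-scalars and induced representations built in this step are \emph{admissible} in the sense of Margulis \cite[Ch.\ IX]{margulis1991discrete}.

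\textbf{Step 2: superrigidity and identification of $G$.} Now $\Gamma$ is a finitely generated irreducible uniform lattice in the product $G=G_1\times\dots\times G_n$ of (at least two) compactly generated locally compact groups, and $\tau\colon\Gamma\to L$ has Zariski-dense, non-relatively-compact image. Superrigidity for irreducible lattices in products (\cite{monod2006superrigidity}; see the version used in \cite[\S 5]{capracemonod2009isometry}) extends $\tau$ to a continuous homomorphism $\hat\tau\colon G\to L$; using that $\Gamma$ is uniform and its image is not relatively compact, one checks that $\ker\hat\tau$ is compact and $\hat\tau(G)$ is closed in $L$. A compact normal subgroup of $G$ is amenable, hence trivial by hypothesis, so $\hat\tau$ is injective (in particular $F\subseteq\ker\hat\tau=\{1\}$) and $G$ is identified with a closed Zariski-dense subgroup of $L$. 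Decomposing $L=\prod_j L_j$ into almost-simple factors over the relevant local fields, the structure theory of semisimple groups over local fields forces the closure of the projection of $G$ to each $L_j$ to be open of finite index: a discrete or relatively compact such projection would, via Lemma \ref{lem:lattice_int_open}, Lemma \ref{lem:open_cen}, Theorem \ref{thm:vanDantzig} and the triviality of the amenable radical and quasi-centre of $G$ — as in the proofs of Proposition \ref{prop:irred} and Theorem \ref{thm:mixedlattice_main} — produce a non-trivial compact or discrete normal subgroup of $G$. Since centralisers of non-trivial normal subgroups of $L$ are trivial, the factorisation of $L$ refines compatibly with $G=\prod_iG_i$, so each $L_j$ is isotropic (an anisotropic factor would exhibit some non-trivial $G_i$, up to finite index, as a compact group, contradicting the amenable-radical hypothesis), and there are at least two such factors. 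After passing to finite-index subgroups of $G$ and $\Gamma$, we have identified $G$ with the semisimple group $L=\prod_j L_j$, of total rank at least two.

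\textbf{Step 3: arithmeticity, and the main obstacle.} Finally $\Gamma$ is an irreducible lattice in a semisimple group over local fields with no compact factors and total rank at least two, so Margulis's arithmeticity theorem \cite{margulis1991discrete} shows $\Gamma$ is arithmetic, completing the proof. The heart of the argument is Step 2: one must run the product superrigidity theorem with an algebraic-group target starting only from Zariski-density and non-relative-compactness, and then extract — using the structure of semisimple groups over local fields together with the amenable-radical and quasi-centre hypotheses — that $\hat\tau$ is injective with open finite-index image onto a product of at least two isotropic almost-simple factors, rather than onto some thin closed subgroup. Step 1 is where the characteristic restriction is needed and Step 3 is a black box; the bookkeeping in Step 2 linking the product decompositions of $L$ and of $G$ is the most delicate part.
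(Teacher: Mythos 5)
You should first note that the paper does not actually prove this statement: it is quoted verbatim from \cite[Theorem 5.1]{capracemonod2009isometry}, and the only argument the paper supplies is the remark immediately following Theorem \ref{thm:arithm_linear}, which reconciles the paper's (a priori weaker) notion of irreducibility with the one used by Caprace--Monod --- since $\Gamma$ is uniform, \cite[Corollary 4 \& Lemma 59]{monod2006superrigidity} make $(\Gamma,G)$ a super-rigid pair, and then \cite[Proposition 5.5]{capracemonod2009isometry} applies. Your proposal instead attempts to reprove the cited theorem from scratch; its architecture --- superrigidity for irreducible uniform lattices in products to extend the linear representation to $G$, then Margulis arithmeticity --- is indeed the strategy of the cited source rather than of this paper, and your implicit appeal to Monod's superrigidity via dense projections to the factors quietly handles the same definitional point that the paper's remark is there to address.

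The genuine gap is in your Step 1. You treat the finitely generated field $K$ as if it were a global field: you assert that at all but finitely many places $v$ of $K$ the image $\tau(\Gamma)$ is relatively compact in $\mathbf H^{\mathrm{ad}}(K_v)$, and then form $L=\prod_{v\in S}\mathbf H^{\mathrm{ad}}(K_v)$ for the finite exceptional set $S$. For $K$ of positive transcendence degree this is not available: there is no such finite set of places, and the $S$-adic ambient group --- together with the fact that $K$ can be replaced by a number field or global function field, which is part of what arithmeticity \emph{asserts} --- is an output of Margulis's machinery, not an input. The correct route, followed in \cite[\S 5]{capracemonod2009isometry} and \cite[Ch.~IX]{margulis1991discrete}, is to produce from non-amenability and unboundedness some local field $k$, a field embedding or specialisation, and a Zariski-dense unbounded representation $\Gamma\to\mathbf H(k)$, feed each such representation to superrigidity for the pair $(\Gamma,G)$, and let Margulis's arithmeticity criterion force the global structure a posteriori; only then are $K$ global and $S$ finite. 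With Step 1 repaired in this way, your Steps 2 and 3 have the right shape, though the bookkeeping you yourself flag as delicate (injectivity of $\hat\tau$, open finite-index image, compatibility of the two product decompositions) is precisely what the super-rigid pair formalism of \cite{capracemonod2009isometry} is designed to carry out, and in a fully written argument you would either reproduce that or, as the paper does, simply cite it.
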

\begin{rem}
The definition of irreducibility used here is  a priori weaker than the notion used in \cite{capracemonod2009isometry}, so technically Theorem \ref{thm:arithm_linear} is not a special case of \cite[Theorem 5.1]{capracemonod2009isometry}.          
 Nonetheless, the present notion of irreducibility implies irreducibility as used in \cite{monod2006superrigidity}, so we can deduce Theorem \ref{thm:arithm_linear} by following the strategy indicated in the discussion after Proposition 5.5 of \cite{capracemonod2009isometry}. Indeed,  since we have assumed the lattice is uniform,   it follows from \cite[Corollary 4 \& Lemma 59]{monod2006superrigidity} that $(\Gamma, G)$ is a \emph{super-rigid pair} in the sense of \cite[\S 5.A.]{capracemonod2009isometry}, and so we can apply Proposition 5.5 of \cite{capracemonod2009isometry} to deduce Theorem \ref{thm:arithm_linear}.
\end{rem}

To ensure the hypotheses of Theorem \ref{thm:arithm_linear} are satisfied, we use the following proposition to show the desired locally compact group  has trivial quasi-centre. 
\begin{prop}\label{prop:qztriival}
	Let $\Gamma\leq G=G_1\times \dots \times G_n$ be a uniform irreducible lattice, where each $G_i$ is a nontrivial hyperbolic locally compact group with trivial amenable radical. If $\Gamma$ is residually finite, then $\QZ(G)$ is trivial.
\end{prop}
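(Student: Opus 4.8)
The plan is to show $\QZ(G)$ is trivial by reducing to a single factor and then using the triviality of centralisers of normal subgroups in hyperbolic locally compact groups with trivial amenable radical (Lemma \ref{lem:triv_centraliser}). First I would note that the quasi-centre of a product decomposes: $\QZ(G_1\times\dots\times G_n)=\QZ(G_1)\times\dots\times \QZ(G_n)$, since an element has open centraliser in the product if and only if each of its coordinates has open centraliser in the corresponding factor (an open subgroup of a product contains a product of open subgroups, and conversely). Hence it suffices to show $\QZ(G_i)$ is trivial for each $i$. So fix a factor $G_i$ and let $N\coloneqq \QZ(G_i)$; this is a characteristic, hence normal, subgroup of $G_i$, and I want to conclude $N=\{1\}$.

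The key observation is that $N$ is normalised by a dense subgroup of $G_i$ — indeed it is normalised by all of $G_i$ — so if $N$ contained a nontrivial finitely generated discrete subgroup, Lemma \ref{lem:open_cen} would give us a handle; but more directly, I would argue by contradiction. Suppose $N$ is nontrivial. Since $\Gamma\leq G$ is an irreducible lattice, the projection $p_i(\Gamma)$ is dense in $G_i$, and the projection of $\Gamma$ to the subproduct $\prod_{j\neq i}G_j$ is non-discrete; using residual finiteness of $\Gamma$ together with Lemma \ref{lem:inj_proj} (applied with $G_2=G_i$, whose infinite closed normal subgroups have trivial centraliser by Lemma \ref{lem:triv_centraliser}), the projection $\Gamma\to \prod_{j\neq i}G_j$ is injective; equivalently $\Gamma\cap(\text{the }G_i\text{-factor})=\{1\}$. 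The point of this is that $\Gamma$ intersects $G_i$ trivially, and I would combine it with the fact that $\Gamma\cap U$ is a uniform lattice in any open subgroup $U\leq G$ (Lemma \ref{lem:lattice_int_open}). Concretely: if $N=\QZ(G_i)$ is nontrivial, pick a nontrivial $g\in N$; its centraliser $C_{G_i}(g)$ is open in $G_i$, so $C_G(g)=C_{G_i}(g)\times \prod_{j\neq i}G_j$ is open in $G$, hence $\Gamma\cap C_G(g)$ is a uniform lattice in this open subgroup and in particular is finitely generated and projects densely to $\prod_{j\neq i}G_j$ (by irreducibility, since $C_{G_i}(g)$ has finite index... — here I would need to be slightly careful, but the upshot is that $\Gamma$ contains many elements commuting with $g$).

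The main obstacle — and the crux of the argument — is to leverage this to contradict Lemma \ref{lem:triv_centraliser}. The cleanest route: let $\Lambda\coloneqq \Gamma\cap C_G(g)$, a uniform lattice in the open subgroup $C_{G_i}(g)\times\prod_{j\neq i}G_j$. By irreducibility of $\Gamma$, the projection of $\Lambda$ to $G_i$ still has image in $C_{G_i}(g)$ and, since $C_{G_i}(g)$ is open (hence closed of countable index, in fact finite index is not guaranteed, so one uses that its closure is open), one shows the closure of $p_i(\Lambda)$ is an open, hence cocompact, subgroup of $G_i$ contained in $C_{G_i}(g)$. Thus $g$ lies in the centre of a cocompact subgroup of $G_i$. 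But $G_i$ is a hyperbolic locally compact group with trivial amenable radical, so by Lemma \ref{lem:triv_centraliser} the centraliser of any nontrivial cocompact subgroup is trivial — applied to $\overline{p_i(\Lambda)}$, this forces $g=1$, a contradiction. Hence $\QZ(G_i)=\{1\}$ for every $i$, and therefore $\QZ(G)=\{1\}$. I expect the delicate point to be verifying that $p_i(\Lambda)$ has cocompact (equivalently open) closure in $G_i$; this is where irreducibility of the lattice $\Gamma$ (specifically density of all coordinate projections together with non-discreteness of projections to proper subproducts) must be used carefully, likely via Lemma \ref{lem:lattice_int_open} and a dimension/openness count.
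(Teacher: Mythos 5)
There is a genuine gap at the step you yourself flag as delicate, and it does not just need care --- as stated it fails. From $g\in\QZ(G_i)$ you get that $C_{G_i}(g)$ is open and that $\Lambda=\Gamma\cap\bigl(C_{G_i}(g)\times\prod_{j\neq i}G_j\bigr)$ is a uniform lattice in that open subgroup (Lemma \ref{lem:lattice_int_open}); hence $\overline{p_i(\Lambda)}$ is cocompact \emph{in} $C_{G_i}(g)$, but nothing forces it to be cocompact, or even open, in $G_i$. The inference ``open, hence cocompact'' is false in locally compact groups: in the totally disconnected case $G_i$ has compact open subgroups by van Dantzig, and more generally for an arbitrary open $U\leq G_1$ in a lattice $\Gamma\leq G_1\times G_2$ the projection of $\Gamma\cap(U\times G_2)$ to $G_1$ is trapped inside $U$, which may be compact (think of a vertex stabiliser in $\Aut(T)$). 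So you have not shown that $g$ centralises a cocompact subgroup of $G_i$, which is exactly the strong statement your contradiction via Lemma \ref{lem:triv_centraliser} requires. Indeed, if that step worked it would give a much shorter proof than the one in the literature: the paper does \emph{not} prove that quasi-central elements centralise cocompact subgroups, but instead quotes Caprace--Monod (Lemma \ref{lem:qztopfinite}) to conclude only that $\QZ(G_i)$ is \emph{topologically locally finite}, and then kills it using the trivial amenable radical (topologically locally finite groups have amenable closure). That weaker intermediate statement is the whole point of their argument, and your proposal has no substitute for it.

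Two smaller issues. First, you apply Lemma \ref{lem:inj_proj} with $G_2=G_i$ without knowing $G_i$ is totally disconnected; the paper first invokes Theorem \ref{thm:hypisom_general} to split into the case where $G_i$ is a virtually connected simple rank one Lie group (where $\QZ(G_i)$ is trivial for elementary reasons) and the totally disconnected case, and only in the latter does Lemma \ref{lem:inj_proj} apply (it also needs non-discreteness, which comes from irreducibility). Second, your opening reduction $\QZ(G)=\QZ(G_1)\times\dots\times\QZ(G_n)$ is correct for finite products, and is a harmless strengthening of the paper's observation that the (open) projections carry $\QZ(G)$ into each $\QZ(G_i)$; that part is fine.
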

This proposition will be deduced from the following lemma.
A group is \emph{topologically locally finite} if every finite subset is contained in a compact subgroup.
\begin{lem}[{\cite[Proposition 4.9]{capracemonod2009isometry}}]\label{lem:qztopfinite}
	Let $\Gamma \leq G = G_1 \times G_2$ be a uniform lattice in a product of compactly
	generated locally compact groups. Assume that $G_2$ is totally disconnected and that the
	centraliser in $G_1$ of any uniform lattice of $G_1$ is trivial. If the projection $\Gamma\rightarrow G_1$ is injective, then the quasi-centre $\QZ(G_2)$ is topologically locally finite.
\end{lem}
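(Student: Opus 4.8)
The plan is to show directly that every finitely generated subgroup of $\QZ(G_2)$ has compact closure in $G_2$; since a Hausdorff topological group is topologically locally finite exactly when this holds, that suffices. Throughout, the two ingredients in play are the total disconnectedness of $G_2$, which supplies compact open subgroups via van Dantzig, and the rigidity of uniform lattices of $G_1$ encoded by the hypotheses on $G_1$ and on the projection $p_1|_\Gamma$.

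First I would fix a finitely generated subgroup $L=\langle q_1,\dots,q_m\rangle\le\QZ(G_2)$. Each centraliser $C_{G_2}(q_i)$ is open, so intersecting a compact open subgroup of $G_2$ provided by Theorem~\ref{thm:vanDantzig} with all the $C_{G_2}(q_i)$ produces a compact open subgroup $U\le G_2$ centralised by $L$. Then $U$ is normal (indeed central) in $LU$, and elementary point-set arguments reduce the claim to showing that the discrete quotient $\overline{LU}/U$ is finite, equivalently that $L$ has finite image in $\overline{LU}/U$. Next, since $G_1\times U$ is open in $G$, Lemma~\ref{lem:lattice_int_open} makes $\Delta:=\Gamma\cap(G_1\times U)$ a uniform lattice in $G_1\times U$; as $U$ is compact the first projection $G_1\times U\to G_1$ is proper, so $\Lambda:=p_1(\Delta)$ is a uniform lattice in $G_1$ and $p_1|_\Delta$ has finite kernel (a discrete subgroup of the compact group $\{1\}\times U$), which after passing to a finite-index subgroup we may take to be trivial, exhibiting $\Delta$ as the graph of a homomorphism $\Lambda\to U$. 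By hypothesis $C_{G_1}(\Lambda)=\{1\}$.

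The heart of the proof is to convert quasi-centrality in $G_2$ into finiteness ``over $U$''. Each $q_i$ centralises $U$ and $G_1$, so $(1,q_i)$ centralises $G_1\times U$, hence centralises $\Delta$ and normalises $G_1\times U$. Quasi-central elements of $G_2$ have a priori nothing to do with $\Gamma$, so here one must use that $p_2(\Gamma)$ is cocompact in $G_2$ (being a continuous image of the compact space $G/\Gamma$): conjugating $U$ by suitable elements of $p_2(\Gamma)$ and passing to a commensurable compact open subgroup and a finite-index sublattice, one arranges that each $q_i$ agrees with some $\gamma_i\in\Gamma$ modulo $G_1\times U$. Then $p_1(\gamma_i)$ normalises $\Lambda$ up to commensurability, and since the normaliser of $\Lambda$ in $G_1$ acts faithfully on $\Lambda$ (its centraliser being trivial) while containing $\Lambda$ cocompactly, this confines the subgroup of $\Gamma$ generated by $\Delta$ and the $\gamma_i$ to a discrete group containing $\Delta$ with finite index. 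Pushing the outcome forward through $p_2$ shows that $L$ has finite image in $\overline{LU}/U$, so $\overline{LU}$ — and hence $\overline{L}$ — is compact. As $L$ was an arbitrary finitely generated subgroup, $\QZ(G_2)$ is topologically locally finite.

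I expect the decisive obstacle to be exactly the mechanism of the last paragraph: one genuinely needs the cocompactness of $\Gamma$ in the product together with the injectivity of $p_1|_\Gamma$ to ``capture'' the a priori free-floating quasi-central elements of $G_2$ by elements of $\Gamma$, and then to extract finiteness from the rigidity statement that centralisers of uniform lattices in $G_1$ are trivial. In the source this step is shortened by first developing structural facts about quasi-centres and about groups that split as products, which it would be natural to isolate here as preliminary lemmas rather than re-derive from scratch.
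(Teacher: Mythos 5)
First, note that the paper does not prove this lemma at all: it is imported verbatim from Caprace--Monod, so your attempt has to stand on its own. Its decisive step is exactly where it fails. Cocompactness of $p_2(\Gamma)$ in $G_2$ gives $G_2=p_2(\Gamma)C$ for one fixed compact set $C$, equivalently that $p_2(\Gamma)$ has \emph{finitely many} orbits on the coset space $G_2/U$; it does not place the particular cosets $q_iU$ in the $p_2(\Gamma)$-orbit of the base coset, which is what ``$q_i$ agrees with some $\gamma_i\in\Gamma$ modulo $G_1\times U$'' means. Conjugating $U$ or replacing it by a commensurable compact open subgroup does not repair this: you would need a compact open subgroup containing the elements $p_2(\gamma_i)^{-1}q_i$, which may well generate a non-compact subgroup, and which in addition must still be centralised by every $q_i$. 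No justification is offered and I do not see how to supply one. A second, lesser, problem is the concluding assertion that ``this confines the subgroup of $\Gamma$ generated by $\Delta$ and the $\gamma_i$ to a discrete group containing $\Delta$ with finite index'': discreteness is vacuous (it is a subgroup of $\Gamma$), and the finite-index claim is precisely what has to be proved. The missing mechanism is that $N_{G_1}(\Lambda)$ is discrete and contains $\Lambda$ with finite index; this does follow from $C_{G_1}(\Lambda)=\{1\}$, since $\Lambda$ is a finitely generated discrete cocompact subgroup and the conjugation homomorphism $N_{G_1}(\Lambda)\to\Aut(\Lambda)$ is injective with open kernel (the same computation as in Lemma~\ref{lem:open_cen}), but you never isolate this. (Also, the kernel of $p_1|_\Delta$ is trivial by hypothesis, not merely finite, and a finite kernel cannot in general be killed by passing to a finite-index subgroup; and $U$ is normal but not central in $LU$ unless $U$ is abelian.)

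The standard argument avoids approximating the $q_i$ by lattice elements altogether, and you already have all its ingredients. With $U$ compact open and centralised by $L$, the subgroup $V:=LU$ is open, normalises $U$, and $V/U$ is a finitely generated discrete group; as you say, it suffices to show $V/U$ is finite. Set $\Gamma_V:=\Gamma\cap(G_1\times V)$ and $\Delta:=\Gamma\cap(G_1\times U)$. By Lemma~\ref{lem:lattice_int_open}, $\Gamma_V$ is a uniform lattice in $G_1\times V$, so its image in the discrete quotient $(G_1\times V)/(G_1\times U)\cong V/U$ has finite index, and the kernel of $\Gamma_V\to V/U$ is $\Delta$. Since $V$ normalises $U$, every $\gamma\in\Gamma_V$ normalises $G_1\times U$ and hence normalises $\Delta$, so $p_1(\Gamma_V)\leq N_{G_1}(\Lambda)$ where $\Lambda=p_1(\Delta)$ is a uniform lattice in $G_1$. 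By the discreteness of $N_{G_1}(\Lambda)$ noted above, $[N_{G_1}(\Lambda):\Lambda]<\infty$, and injectivity of $p_1|_\Gamma$ gives $[\Gamma_V:\Delta]=[p_1(\Gamma_V):\Lambda]<\infty$; hence $V/U$ is finite, $V$ is compact, and $\overline{L}$ is compact. This is where the injectivity and trivial-centraliser hypotheses genuinely enter, in place of the unjustified ``capture'' step.
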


\begin{proof}[Proof of Proposition \ref{prop:qztriival}]
	As each projection $G\rightarrow G_i$ is open,  the projection of $\QZ(G)$ to $G_i$ is contained in $\QZ(G_i)$. Therefore, it is enough to show each $\QZ(G_i)$ is trivial.  By Theorem \ref{thm:hypisom_general}, each $G_i$ is either a virtually connected simple rank one Lie group, or is  totally disconnected. In the former case $\QZ(G_i)$ is trivial, so it remains to show $\QZ(G_i)$ is trivial when  $G_i$ is totally disconnected.
	
	We assume $G_i$ is totally disconnected and write $H\coloneqq G_1\times \dots \times G_{i-1}\times G_{i+1}\dots\times G_n$.  Note that $\Gamma$ is a uniform lattice in $H\times G_i$ and that both $H$ and $G_i$ are non-discrete since $\Gamma$ is irreducible.  By Lemmas \ref{lem:triv_centraliser} and  \ref{lem:inj_proj},  the projection $\Gamma\rightarrow H$ is injective. Therefore Corollary \ref{cor:triv_cen} and Lemma \ref{lem:qztopfinite} imply $\QZ(G_i)$ is topologically locally finite. Therefore $\QZ(G_i)$ has amenable closure; see e.g. \cite[Lemma 2.1 and Corollary 2.4]{caprace2009Amenable}. As $G_i$ has trivial amenable radical,  $\QZ(G_i)$ is trivial.
\end{proof}
We now prove Theorem \ref{thm:splitlinear_main}, closely following the proof of \cite[Theorem 6.6]{capracemonod2009isometry}.
\begin{proof}[Proof of Theorem \ref{thm:splitlinear_main}]
	By Corollary \ref{cor:hyp_product_lattice} and using the fact $\Gamma$ is linear, hence residually finite, we replace $\Gamma$ with a  finite index subgroup that is a uniform lattice in a product $G_1\times\dots\times G_n$ as in Corollary \ref{cor:hyp_product_lattice}. In particular,  $\Gamma$ has trivial amenable radical, each $G_i$ is a hyperbolic locally compact group with trivial amenable radical quasi-isometric to $X_i$, and the projection of $\Gamma$ to each $G_i$ is dense.
	
	 If $\Gamma$ is not  irreducible, then Proposition \ref{prop:irred} ensures $\Gamma$ virtually splits as $\Gamma_1\times \Gamma_2$, where each $\Gamma_i$ is a uniform lattice in a proper subproduct of $G_1\times \dots\times G_n$, and we  have finished proving the first part of Theorem \ref{thm:splitlinear_main}.  We thus assume $\Gamma\leq G_1\times \dots \times G_n$ is irreducible; in particular, no $G_i$ is discrete. 
	
	By hypothesis, $\Gamma$ has a  faithful finite-dimensional linear representation $\rho:\Gamma\to\text{GL}(V)$ over a field of characteristic $\neq 2,3$. Taking the Zariski closure of $\rho(\Gamma)$ and quotienting out by the (solvable) radical gives a representation of $\Gamma$ to a semisimple algebraic group. Since $\Gamma$ has trivial amenable radical, this yields a  faithful representation of $\Gamma$ to a semisimple algebraic group with 
	Zariski-dense image. By Proposition \ref{prop:qztriival}, $G$ has trivial quasi-centre. We can thus apply Theorem \ref{thm:arithm_linear} to deduce $\Gamma$ is an arithmetic lattice in a semisimple algebraic group.
	This proves the first part of Theorem \ref{thm:splitlinear_main}. 
	
	Applying the first part of Theorem \ref{thm:splitlinear_main} inductively, we deduce $\Gamma$ virtually splits as a product $\Pi_{k=1}^m\Gamma_k$ of irreducible higher rank arithmetic lattices and  hyperbolic groups. Each hyperbolic factor is  quasi-isometric to some $X_i$, whilst each higher rank arithmetic factor is quasi-isometric to a subproduct of $\Pi_{i=1}^nX_i$ consisting of at least two factors. Lemma \ref{lem:prodhyp_qitoalgebraicgroup} thus implies  that if at most one $X_i$ is of coarse algebraic type, then there are no higher rank arithmetic factors, and so $\Gamma$ splits as a product $\Pi_{i=1}^n\Gamma_i$, where $\Gamma_i$ is quasi-isometric to $X_i$.
\end{proof}

\subsection{Biautomaticity of groups quasi-isometric to products of hyperbolic spaces}
It is a well known open problem to determine  whether hyperbolic groups act geometrically on a   $\CAT(0)$ space. The motivation behind this is that hyperbolic metric spaces typically have poor local geometry, e.g. finite subgroups acting isometrically on such a space need not fix a point. To partially remedy this, the notion of a Helly group was recently introduced by Chapolin et al. \cite{chalopin2020helly}, a class of groups that satisfy similar properties to $\CAT(0)$ groups.
\begin{defn}
	A \emph{Helly graph} is a graph in which any family of pairwise intersecting combinatorial balls have non-trivial intersection. A \emph{Helly group} is a finitely generated group acting geometrically on a Helly graph.
\end{defn}
It was shown by Lang and Chapolin et al. that hyperbolic groups are Helly  \cite{lang2013Injective,chalopin2020helly}.  Our interest in Helly graphs comes the following proposition, which follows easily from the theory developed by Lang and Chapolin et al.
\begin{prop}\label{prop:helly}
	Let $X$ be a locally finite hyperbolic graph.  There exists a Helly graph $\Helly(X)$ and a graph monomorphism $\sigma:X\to \Helly(X)$, which is also an isometric embedding, satisfying the following properties:
	\begin{enumerate}
		\item $\Helly(X)$ is locally finite;\label{item:helly1}
		\item $\Helly(X)=N_A(\im(\sigma))$ for some $A\geq 0$;\label{item:helly2}
		\item There is a monomorphism $\Psi:\Aut(X)\to \Aut(\Helly(X))$ such that  $\sigma\circ \alpha=\Psi(\alpha) \circ \sigma$ for all $\alpha\in \Aut(X)$.\label{item:helly3}
	\end{enumerate}
\end{prop}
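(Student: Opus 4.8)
\textbf{Proof proposal for Proposition \ref{prop:helly}.}

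The plan is to invoke the injective-hull construction of Lang \cite{lang2013Injective}, whose output Chalopin et al. \cite{chalopin2020helly} identified, in the discrete setting, with a Helly graph. Given a locally finite hyperbolic graph $X$, first I would pass to the metric space $(V(X),d_X)$ of vertices with the induced graph metric and consider its injective hull $E(X)$ (the tight span). Lang shows $E(X)$ is an injective (hyperconvex) metric space containing $X$ isometrically via the Kuratowski embedding $\sigma$, that $E(X)$ is again $\delta'$-hyperbolic with $\delta'$ depending only on $\delta$, and — crucially here — that when $X$ is a graph with integer edge lengths, $E(X)$ carries the structure of a graph whose path metric agrees with the tight-span metric; this graph is the candidate $\Helly(X)$. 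That it is genuinely Helly is exactly the discrete characterisation of injectivity due to Chalopin et al.: a graph is Helly if and only if it is a (combinatorially) injective graph, i.e.\ an absolute retract in the category of graphs with 1-Lipschitz maps. So $\Helly(X)$ being Helly follows directly from citing these two references, and $\sigma$ is an isometric graph embedding by construction.

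For (\ref{item:helly2}), the key input is that when $X$ is hyperbolic the injective hull is ``close'' to $X$: Lang proves $E(X)=N_A(\sigma(X))$ for a constant $A=A(\delta)$ (every point of the tight span lies within bounded distance of the image, since hyperbolicity controls the ``width'' of the tight span). In the graph setting this gives $\Helly(X)=N_A(\im\sigma)$ for an integer $A$. For (\ref{item:helly1}), local finiteness of $\Helly(X)$ follows by combining (\ref{item:helly2}) with local finiteness of $X$: a vertex $w$ of $\Helly(X)$ has a neighbour $\sigma(v)$ of $X$ within distance $A$; the ball of radius $A+1$ around $w$ injects into the ball of radius $2A+1$ around $v$ in $X$ (using that $\sigma$ is isometric and $\Helly(X)$ has integer-valued path metric, so balls of bounded radius have boundedly many vertices once we know the ambient space is, say, uniformly locally finite along $\sigma(X)$). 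One must be mildly careful that ``finitely many vertices of $\Helly(X)$ within distance $r$ of $\sigma(v)$'' — this is where I'd cite the explicit description of the tight-span graph of a locally finite graph, for which the fibre over each edge/cell of $X$ is finite with size bounded in terms of $\delta$; alternatively, invoke \cite[§3]{chalopin2020helly} where this finiteness is established.

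The functoriality statement (\ref{item:helly3}) is the most conceptual point, and I expect it to be the only step requiring genuine (if routine) argument rather than citation. The injective hull is functorial: any isometry $\alpha$ of $X$ extends uniquely to an isometry $E(\alpha)$ of $E(X)$ commuting with the Kuratowski embedding, because $E(X)$ is characterised up to unique isometry as the minimal injective space containing $X$ (an isometry of $X$ transports one injective hull to another, and uniqueness of the injective hull forces the extension; uniqueness of the extension comes from the fact that $\sigma(X)$ is ``essential'' in $E(X)$, i.e.\ any isometry of $E(X)$ fixing $\sigma(X)$ pointwise is the identity — this essentiality is part of Lang's package). Since $E(\alpha)$ preserves the graph structure of $\Helly(X)$ (the graph structure is intrinsic to the metric space: edges are pairs at distance $1$, and more generally the cell structure is metrically determined), we get $\Psi(\alpha)=E(\alpha)\in\Aut(\Helly(X))$. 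The assignment $\alpha\mapsto E(\alpha)$ is a homomorphism by uniqueness of extensions ($E(\alpha\beta)$ and $E(\alpha)E(\beta)$ both extend $\alpha\beta$), and it is injective since $E(\alpha)\sigma = \sigma\alpha$, so $E(\alpha)=\mathrm{id}$ forces $\sigma\alpha=\sigma$, hence $\alpha=\mathrm{id}$. The intertwining relation $\sigma\circ\alpha=\Psi(\alpha)\circ\sigma$ is precisely the defining property of $E(\alpha)$. The main obstacle, then, is bookkeeping: making sure the ``discretisation'' of Lang's metric construction into Chalopin et al.'s graph-theoretic one respects all of isometric embedding, local finiteness, bounded neighbourhood, and $\Aut$-equivariance simultaneously — but each individual ingredient is available in the cited literature, so the proof is essentially an assembly.
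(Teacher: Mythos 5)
Your proposal follows essentially the same route as the paper's proof: both pass to Lang's injective hull of the vertex set \cite{lang2013Injective}, take the discrete (integer-valued) part to get the Helly graph of Chalopin et al.\ \cite{chalopin2020helly}, and obtain the $\Aut$-equivariance from the canonical extension of isometries to the hull (the paper simply cites Lang's explicit formula $\Phi(\alpha)(f)=f\circ\alpha^{-1}$ rather than arguing via the universal property, but this is the same mechanism). The one step where your own sketch would not suffice is local finiteness: knowing $\Helly(X)=N_A(\im\sigma)$ and that $X$ is locally finite does not bound the number of vertices of $\Helly(X)$ near a given $\sigma(v)$; the paper derives this from Lang's properness theorem for injective hulls of hyperbolic spaces with stable intervals, which is exactly what your citation fallback covers, so there is no genuine gap.
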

\begin{proof}
	Let $V=V(X)$ be the vertex set of $X$ equipped with the subspace metric. We recall the construction of the \emph{injective hull} of  $V$, following the notation in \cite{lang2013Injective} and \cite{chalopin2020helly}, which we refer to for proofs. Let $E(V)\subseteq \bbR^V$ be the set of functions $f:V\to \bbR$ satisfying the following properties:
	\begin{enumerate}
		\item $f(x)+f(y)\geq d(x,y)$ for all $x,y\in V$;
		\item $f$ is extremal in the following sense: if $g\in \bbR^V$ satisfies:
		\begin{itemize}
			\item $g(x)+g(y)\geq d(x,y)$ for all $x,y\in V$ and
			\item $g(x)\leq f(x)$ for all $x\in V$,
		\end{itemize}  
	then $f=g$.
	\end{enumerate}
It is shown that $E(V)$ is a metric space when equipped with the metric $d_\infty(f,g)\coloneqq\sup_{y\in Y}\lvert f(y)-g(y)\rvert$. There is an isometric embedding $e:V\to E(V)$  defined by $y\mapsto d_y$, where $d_y(x)=d(x,y)$. 

As in \cite[\S 4]{chalopin2020helly},  the subspace $E^0(V)\coloneqq E(V)\cap \bbZ^V$ is called the \emph{discrete injective hull}. The graph $\Helly(X)$ is the graph with vertex set $E^0(V)$, where two vertices $f,g\in E^0(V)$ are joined by an edge if $d_\infty(f,g)=1$. As suggested by the terminology, the graph $\Helly(X)$ is in fact a Helly graph \cite[Theorem 4.4]{chalopin2020helly}. Since $e:V\to E(V)$ is an isometric embedding with image contained in $E^0(V)$,  $e$ induces a graph monomorphism  $\sigma:X\to \Helly(X)$ agreeing with $e$ on $V=V(X)$. It is shown in  Theorem 4.4 of \cite{chalopin2020helly} that $f,g\in E^0(V)$ can be joined by a path in $\Helly(X)$ of length $d_\infty(f,g)$, hence $\sigma$ is an isometric embedding. 

It is shown in \cite[Proposition 3.7]{lang2013Injective} that there is a monomorphism $\Phi:\Aut(X)\to\Isom(E(V))$ given by $\Phi(\alpha)(f)\coloneqq f\circ \alpha^{-1}$. Thus $\Phi$ satisfies $\Phi(\alpha)\circ e=e\circ \alpha$ for all $\alpha\in \Aut(X)$.   All elements of $\Phi(\Aut(X))$ preserve the subspace $E^0(V)\subseteq E(V)$ of integer valued functions, hence $\Phi$ induces a homomorphism $\Psi:\Aut(X)\to \Aut(\Helly(X))$ such that  $\sigma\circ \alpha=\Psi(\alpha) \circ \sigma$ for all $\alpha\in \Aut(X)$, showing (\ref{item:helly3}) holds.

All that remains is to show (\ref{item:helly1}) and (\ref{item:helly2}) hold. We now assume the graph $X$, or equivalently its vertex set $V$, is hyperbolic.  By \cite[Propositions 3.12  and 5.11]{chalopin2020helly}, there exists some $A\geq 0$ such that $N_A(\im(e))=E(V)$. It follows that $N_{A'}(\im(\sigma))=\Helly(X)$ for some $A'\geq 0$. For $x,y\in V$, we define the interval $I(x,y)$ to be $\{z\in V\mid d(x,z)+d(z,y)=d(x,y)\}$. We say that $V$ has \emph{$\beta$-stable intervals} if for every triple $x,y,y'\in V$ with $d(y,y')=1$, we have $d_\Haus(I(x,y),I(x,y'))\leq \beta$. As noted in \cite{lang2013Injective}, if $V$ is $\delta$-hyperbolic then it has $(\delta+1)$-stable intervals. A theorem of Lang now ensures that if $V$ is hyperbolic, then $E(V)$ is a proper metric space \cite[Theorem 1.1]{lang2013Injective}. Since $E(V)$ is proper, it follows that $\Helly(X)$ is locally finite.
\end{proof}
We use this to deduce:
\begin{cor}
	Let $X$ be a locally finite hyperbolic graph. Then the map $\sigma:X\to \Helly(X)$ is a quasi-isometry that  conjugates  a group action $\Gamma\curvearrowright X$ by graph automorphisms to an action $\Gamma\curvearrowright \Helly(X)$ on a Helly graph.
\end{cor}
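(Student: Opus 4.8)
The plan is to deduce this corollary directly from Proposition~\ref{prop:helly}, combining properties \eqref{item:helly1}--\eqref{item:helly3} with the observation that an isometric embedding with quasi-dense image is a quasi-isometry and with the definition of a quasi-conjugacy. There is essentially no new content here; the work is all assembled in Proposition~\ref{prop:helly}.

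First I would observe that $\sigma:X\to\Helly(X)$ is a quasi-isometry. By Proposition~\ref{prop:helly}, $\sigma$ is an isometric embedding, so it is a $(1,0)$-quasi-isometric embedding; and by property~\eqref{item:helly2}, $\Helly(X)=N_A(\im(\sigma))$ for some $A\geq 0$. Hence $\sigma$ is a $(1,A)$-quasi-isometry. Since $\sigma$ is a quasi-isometry it has a coarse inverse $\overline\sigma:\Helly(X)\to X$, which we also record.

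Next I would set up the conjugated action. Given an action $\Gamma\curvearrowright X$ by graph automorphisms, i.e.\ a homomorphism $\Gamma\to\Aut(X)$, compose with the monomorphism $\Psi:\Aut(X)\to\Aut(\Helly(X))$ of property~\eqref{item:helly3} to obtain an action $\Gamma\curvearrowright\Helly(X)$ by graph automorphisms. The key compatibility is the equivariance $\sigma\circ\alpha=\Psi(\alpha)\circ\sigma$ for all $\alpha\in\Aut(X)$, which gives, for every $\gamma\in\Gamma$ and $x\in X$,
\[
d\bigl(\Psi(\gamma)(\sigma(x)),\,\sigma(\gamma\cdot x)\bigr)=0.
\]
In the terminology of the paper (Definition of quasi-conjugacy), this says that $\sigma$ is a $(\rho,1,A)$-quasi-conjugacy from $\Gamma\curvearrowright X$ to $\Gamma\curvearrowright\Helly(X)$, where $\rho:\Gamma\to\Aut(\Helly(X))$ is the homomorphism $\gamma\mapsto\Psi(\gamma)$. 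In particular $\sigma$ conjugates the action on $X$ to an action on $\Helly(X)$, which is a Helly graph by Proposition~\ref{prop:helly} (it is the graph $\Helly(X)$ in the statement of that proposition), completing the proof.

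There is no real obstacle; the only thing to be careful about is bookkeeping with the direction of the equivariance identity and noting that the zero error in the displayed inequality is even stronger than what a quasi-conjugacy requires. If one prefers, one can phrase the conclusion in terms of the coarse inverse $\overline\sigma$ instead, observing that $\overline\sigma$ is then an $A'$-quasi-conjugacy in the other direction for a suitable $A'$; but stating it via $\sigma$ directly is cleanest.
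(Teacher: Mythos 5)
Your argument is correct and is exactly the intended deduction: the paper leaves this corollary without proof as an immediate consequence of Proposition \ref{prop:helly}, and your assembly of properties (1)--(3) — isometric embedding plus quasi-dense image gives a $(1,A)$-quasi-isometry, and the equivariance $\sigma\circ\alpha=\Psi(\alpha)\circ\sigma$ makes $\sigma$ a quasi-conjugacy (in fact with zero error) — is precisely that argument.
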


We can thus refine Theorem \ref{thm:hyp_productmain} as follows:
\begin{cor}\label{cor:hyp_producthelly}
	Let $\Gamma$ be a finitely generated group quasi-isometric to $\Pi_{i=1}^nX_i$, where each $X_i$ is a cocompact   non-elementary  proper hyperbolic metric space. Then $\Gamma$ acts geometrically on $\Pi_{i=1}^nY_i$, preserving the product structure, where each $Y_i$ is quasi-isometric to $X_i$ and is either a rank one symmetric space  or a locally finite  Helly graph.
\end{cor}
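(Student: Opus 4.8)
The plan is to combine Theorem \ref{thm:hyp_productmain} with Proposition \ref{prop:helly} in a fibre-wise manner. Theorem \ref{thm:hyp_productmain} already gives us a geometric action of $\Gamma$ on $\Pi_{i=1}^nY_i$ preserving the product structure, where each $Y_i$ is either a rank one symmetric space or a locally finite graph; moreover $\Gamma^*$ (the finite index subgroup not permuting factors) does not fix a point of $\partial Y_i$. For each $Y_i$ that is a locally finite graph, it is in particular a hyperbolic graph (being quasi-isometric to the non-elementary hyperbolic space $X_i$), so Proposition \ref{prop:helly} applies and produces a locally finite Helly graph $\Helly(Y_i)$, an isometric embedding $\sigma_i\colon Y_i\to \Helly(Y_i)$ with $\Helly(Y_i)=N_{A_i}(\im(\sigma_i))$, and an injective homomorphism $\Psi_i\colon \Aut(Y_i)\to \Aut(\Helly(Y_i))$ intertwining the two actions via $\sigma_i$. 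For each $Y_i$ that is already a rank one symmetric space, we simply set $\Helly(Y_i)\coloneqq Y_i$ and take $\sigma_i$ and $\Psi_i$ to be identity maps.

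First I would assemble these into a single map. Set $Z_i\coloneqq \Helly(Y_i)$ and $\sigma\coloneqq \Pi_{i=1}^n \sigma_i\colon \Pi_{i=1}^nY_i \to \Pi_{i=1}^nZ_i$. Since each $\sigma_i$ is an isometric embedding with image coarsely dense, $\sigma$ is a quasi-isometry (here using that finite products with the $\ell_2$-metric of quasi-isometries are quasi-isometries, and that coarse density in each factor gives coarse density in the product). The action of $\Gamma^*$ on $\Pi_{i=1}^nY_i$ does not permute factors, so it is given by a homomorphism $\Gamma^*\to \Pi_{i=1}^n\Aut(Y_i)$ (for the graph factors) and into $\Pi \Isom(Y_i)$ (for the symmetric space factors); composing factor-wise with the $\Psi_i$ gives a homomorphism $\Gamma^*\to \Pi_{i=1}^n\Aut(Z_i)$, and $\sigma$ conjugates the $\Gamma^*$-action on $\Pi Y_i$ to the resulting $\Gamma^*$-action on $\Pi Z_i$. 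To handle the full group $\Gamma$, which permutes factors: the permutation action of $\Gamma$ only identifies $Y_i$ with $Y_j$ when they are quasi-isometric, hence of the same type; applying Proposition \ref{prop:helly} functorially (or just choosing the $\Helly(Y_i)$ consistently within each $\Gamma$-orbit of factors) ensures the permutation action also extends to $\Pi Z_i$, so $\sigma$ conjugates the $\Gamma$-action on $\Pi Y_i$ to a $\Gamma$-action on $\Pi Z_i$ preserving the product structure.

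Finally I would check the new action is geometric. Properness: since $\sigma$ is a quasi-isometry and the $\Gamma$-action on $\Pi Y_i$ is proper (each $Y_i$ being a proper metric space, and the action on a proper space being proper), the conjugated action on the proper metric space $\Pi Z_i$ is proper; more carefully one uses that a quasi-conjugacy of an action on a proper quasi-geodesic space to an action on another proper space preserves properness — this follows from Proposition \ref{prop:TCvsQC} together with uniqueness of topological completions, or one can argue directly. Cocompactness: $\sigma$ has coarsely dense image and $\Gamma$ acts cocompactly on $\Pi Y_i$, so $\Gamma$ acts cocompactly on $\Pi Z_i$. The main obstacle I anticipate is the bookkeeping around factor permutations and making the assignment $Y_i\mapsto \Helly(Y_i)$ genuinely $\Gamma$-equivariant; the functoriality statement \ref{item:helly3} of Proposition \ref{prop:helly} is exactly what is needed, but one must be careful that when $\Gamma$ identifies two graph factors it does so by a graph isomorphism, which then induces (via the injective hull being a functor on the category of metric spaces with isometries) a canonical isomorphism of the corresponding Helly graphs, so the product $\Pi Z_i$ carries a genuine $\Gamma$-action. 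The symmetric-space factors are untouched and cause no difficulty. With these points settled, each $Y_i$ in the statement is replaced by $Z_i$, which is a rank one symmetric space or a locally finite Helly graph, and the corollary follows.
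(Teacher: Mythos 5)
Your proposal is correct and follows essentially the same route as the paper: apply Theorem \ref{thm:hyp_productmain} and then replace each locally finite graph factor by its Helly graph via Proposition \ref{prop:helly}, using the equivariant isometric embedding $\sigma$ with coarsely dense image to transfer geometricity. Your extra care about factor permutations (via canonicity of the injective hull construction) and about properness is sound but amounts to the same argument the paper leaves implicit.
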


Recall the direct product $\Pi_{i=1}^nX_i$ in Corollary \ref{cor:hyp_producthelly} is a direct product of metric spaces equipped with the $\ell_2$-metric.  However, if each $X_1,\dots, X_n$ is a graph, we can instead define a direct product $\boxtimes_{i=1}^nX_i$ to be  a graph with vertex set $V(X_1)\times \dots\times V(X_n)$, where two distinct vertices $v=(v_1,\dots,v_n)$ and $w=(w_1,\dots,w_n)$ are joined by an edge if for all $i$, either $v_i=w_i$ or $v_i$ and $w_i$ are joined by an edge. This induces the $\ell_\infty$-metric on the vertex set. If $\Gamma$ acts geometrically on  $\Pi_{i=1}^nX_i$ preserving the product structure and acting via a graph isomorphism on each factor, then $\Gamma$ acts geometrically on $\boxtimes_{i=1}^nX_i$. Moreover, if each $X_i$ is Helly, then $\boxtimes_{i=1}^nX_i$ is also Helly \cite[Proposition 5.1]{chalopin2020helly}. Combining  these observations with Corollary \ref{cor:hyp_producthelly}  shows the following:

\begin{thm}\label{thm:hypprod_helly}
	Let $\Gamma$ be a finitely generated group quasi-isometric to $\Pi_{i=1}^nX_i$, where each $X_i$ is a cocompact non-elementary proper  hyperbolic metric space not quasi-isometric to a rank one symmetric space. Then $\Gamma$ acts geometrically on $\boxtimes_{i=1}^nY_i$, where each $Y_i$ is a locally finite Helly graph quasi-isometric to $X_i$. Therefore,  $\Gamma$ is Helly and satisfies the conclusions of \cite[Theorem 1.5]{chalopin2020helly}.  In particular, $\Gamma$ is biautomatic.
\end{thm}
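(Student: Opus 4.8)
The proof of Theorem \ref{thm:hypprod_helly} is essentially a bookkeeping exercise built on the machinery already assembled, so the plan is to chain together Theorem \ref{thm:hyp_productmain}, Proposition \ref{prop:helly} and the stability of the Helly property under $\ell_\infty$-products. First I would invoke Theorem \ref{thm:hyp_productmain} to obtain a geometric action $\Gamma\curvearrowright \Pi_{i=1}^nW_i$ preserving the product structure, where each $W_i$ is quasi-isometric to $X_i$ and is either a rank one symmetric space or a locally finite graph. The hypothesis that no $X_i$ is quasi-isometric to a rank one symmetric space forces each $W_i$ to be a locally finite graph (a rank one symmetric space is not quasi-isometric to any locally finite graph, and more to the point $W_i$ is quasi-isometric to $X_i$, which is not quasi-isometric to a symmetric space). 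Since $W_i$ is quasi-isometric to the hyperbolic space $X_i$, each $W_i$ is a locally finite \emph{hyperbolic} graph. By passing to the finite index subgroup that does not permute factors (which suffices since Helly is a commensurability invariant — a finite index overgroup of a Helly group is Helly by \cite{chalopin2020helly}) we may assume $\Gamma$ acts on $\Pi_{i=1}^nW_i$ without permuting factors, hence $\Gamma$ acts on each $W_i$ by graph automorphisms.

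Next I would apply Proposition \ref{prop:helly} to each $W_i$ to produce a locally finite Helly graph $Y_i\coloneqq \Helly(W_i)$ together with a monomorphism $\Psi_i:\Aut(W_i)\to\Aut(Y_i)$ intertwining the isometric embedding $\sigma_i:W_i\to Y_i$ with the respective actions, and with $Y_i=N_{A_i}(\im(\sigma_i))$ so that $\sigma_i$ is a quasi-isometry. Composing the action homomorphism $\Gamma\to\Aut(W_i)$ with $\Psi_i$ gives an action $\Gamma\curvearrowright Y_i$ by graph automorphisms, and the product map $\Pi\sigma_i:\Pi W_i\to\Pi Y_i$ is a coarsely $\Gamma$-equivariant quasi-isometry. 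Since $\Gamma\curvearrowright\Pi_{i=1}^nW_i$ is geometric and $\Pi\sigma_i$ is a quasi-isometry conjugating it to $\Gamma\curvearrowright\Pi_{i=1}^nY_i$, the latter action is also geometric; properness and cocompactness transfer across a quasi-isometry that is equivariant up to bounded error (this is routine, or can be phrased via Lemma \ref{lem:geom_qi}). Because $\Gamma$ does not permute factors and acts by graph automorphisms on each $Y_i$, the discussion preceding the theorem statement shows $\Gamma$ acts geometrically on the $\ell_\infty$-product graph $\boxtimes_{i=1}^nY_i$, and \cite[Proposition 5.1]{chalopin2020helly} ensures $\boxtimes_{i=1}^nY_i$ is Helly since each $Y_i$ is. Hence $\Gamma$ is a Helly group.

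Finally, having exhibited a geometric action of $\Gamma$ on a Helly graph, the remaining conclusions are immediate citations: $\Gamma$ satisfies all the properties listed in \cite[Theorem 1.5]{chalopin2020helly}, in particular $\Gamma$ is biautomatic. There is no genuine obstacle here — the only points requiring a little care are (i) justifying that each $W_i$ is forced to be a locally finite hyperbolic graph rather than a symmetric space, which uses that quasi-isometry preserves hyperbolicity and that $X_i$ is assumed not quasi-isometric to a symmetric space, and (ii) checking that geometricity of the action survives the passage through $\Pi\sigma_i$ and then through the change from the $\ell_2$-product to the $\ell_\infty$-product graph, both of which are standard since the identity map between the $\ell_2$- and $\ell_\infty$-metrics on a finite product is a bi-Lipschitz equivalence. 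The one subtlety worth flagging explicitly is the reduction to the non-factor-permuting subgroup: one should note that a finite index subgroup being Helly implies the ambient group is Helly, which is part of the package in \cite{chalopin2020helly}, so no information is lost.
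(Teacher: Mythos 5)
Your overall architecture matches the paper's: Theorem \ref{thm:hyp_productmain} gives a geometric action on a product of locally finite graphs (the symmetric-space alternative being excluded by hypothesis, so each factor is a locally finite hyperbolic graph), Proposition \ref{prop:helly} replaces each factor by its Helly hull, and \cite[Proposition 5.1]{chalopin2020helly} together with the bi-Lipschitz equivalence of the $\ell_2$- and $\ell_\infty$-metrics on a finite product yields the action on the box product. The genuine deviation --- passing to the finite index subgroup $\Gamma^*$ that does not permute factors and then appealing to closure of Helly groups under finite index overgroups --- is where a gap appears relative to the statement being proved. The theorem asserts that $\Gamma$ itself acts geometrically on $\boxtimes_{i=1}^nY_i$ with exactly $n$ factors, each $Y_i$ a locally finite Helly graph quasi-isometric to $X_i$; your argument only produces such an action for $\Gamma^*$. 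Even granting the finite-index-overgroup closure you cite (which in effect proceeds by inducing the $\Gamma^*$-action to an $\ell_\infty$-power, so the ambient Helly graph it produces is a product with $n[\Gamma:\Gamma^*]$ factors rather than the one in the statement), you recover only the qualitative conclusions that $\Gamma$ is Helly and biautomatic, not the stated geometric action of $\Gamma$ on $\boxtimes_{i=1}^nY_i$.

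The reduction is also unnecessary, and removing it closes the gap. An element of $\Gamma$ that permutes factors splits, by Theorem \ref{thm:hyp_productmain}, as graph isomorphisms $W_i\to W_{\sigma(i)}$ between the factors, and the Helly hull construction in Proposition \ref{prop:helly} is functorial for isomorphisms between different graphs: the formula $f\mapsto f\circ\alpha^{-1}$ defines an isomorphism of discrete injective hulls intertwining the embeddings, exactly as in the automorphism case. Hence every element of $\Gamma$, including the factor-permuting ones, induces a graph automorphism of $\boxtimes_{i=1}^nY_i$, and the whole group acts geometrically there. This is how the paper proceeds (via Corollary \ref{cor:hyp_producthelly}), keeping all of $\Gamma$ throughout and never passing to $\Gamma^*$. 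With that adjustment your proof coincides with the paper's.
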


As mentioned in the introduction, Hughes--Valiunas show the conclusion of Theorem \ref{thm:hypprod_helly} is false if we weaken the hypothesis that each $X_i$  is not quasi-isometric to a symmetric space \cite{hughes2022commensurating}. Nonetheless, we can partially remedy the situation by relaxing the Helly property to its non-discrete analogue.  A metric space $X$ is \emph{injective} if a collection of pairwise intersecting metric balls have non-empty intersection. Groups acting geometrically on injective metric spaces possess many natural properties in common with $\CAT(0)$ groups; see \cite[Theorem 1.1]{haettel2021injective} for a list of such properties.
\begin{thm}\label{thm:group_prod_semi_hyp}
If $\Gamma$ is a finitely generated group quasi-isometric to $\Pi_{i=1}^nX_i$, where each $X_i$ is a cocompact non-elementary proper   hyperbolic metric space, then $\Gamma$ acts geometrically on an injective metric space. In particular, $\Gamma$ is semihyperbolic.
\end{thm}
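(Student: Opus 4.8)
The plan is to reduce to the case of a product of symmetric spaces and locally finite graphs via Theorem \ref{thm:hyp_productmain}, and then to replace each factor by an injective metric space. By Theorem \ref{thm:hyp_productmain}, $\Gamma$ acts geometrically on $\Pi_{i=1}^nY_i$, preserving the product structure, where each $Y_i$ is quasi-isometric to $X_i$ and is either a rank one symmetric space of non-compact type or a locally finite graph. Passing to the finite index subgroup $\Gamma^*\leq\Gamma$ which does not permute the factors (and noting that a finite index subgroup of a group acting geometrically on an injective space will suffice, since acting geometrically on an injective metric space passes to and from finite index subgroups), it is enough to treat the non-permuting case.

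First I would handle each factor individually. If $Y_i$ is a rank one symmetric space of non-compact type, then $Y_i$ is $\CAT(-1)$, hence $\CAT(0)$, and a theorem of Lang \cite{lang2013Injective} shows that any complete $\CAT(0)$ space with a cocompact isometry group (indeed any proper $\CAT(0)$ space, or one can use that symmetric spaces admit a geometric action of a group acting on an injective space) embeds isometrically and coarsely surjectively in an injective metric space on which the isometry group acts; alternatively, one uses that for Hadamard manifolds the injective hull is proper and the isometric action of $\Isom(Y_i)$ extends to a geometric action on the injective hull $E(Y_i)$. If $Y_i$ is a locally finite graph, then by Proposition \ref{prop:helly} there is a locally finite Helly graph $\Helly(Y_i)$ with a quasi-isometric embedding $\sigma_i:Y_i\to\Helly(Y_i)$ that is coarsely surjective, together with a monomorphism $\Psi_i:\Aut(Y_i)\to\Aut(\Helly(Y_i))$ intertwining the actions; since Helly graphs are injective metric spaces (with the path metric), $\Helly(Y_i)$ is injective and the action $\Gamma^*\curvearrowright Y_i$ is conjugated to a geometric action $\Gamma^*\curvearrowright\Helly(Y_i)$ on an injective space.

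Next I would combine the factors. In each case we obtain an injective metric space $Z_i$, a coarsely surjective quasi-isometric (indeed isometric, up to the embedding) map $Y_i\to Z_i$, and a homomorphism $\Aut(Y_i)\to\Isom(Z_i)$ intertwining the actions, so that $\Gamma^*$ acts geometrically on each $Z_i$. The $\ell_\infty$-product (equivalently, for graphs, the strong product $\boxtimes$) of finitely many injective metric spaces is injective: this is \cite[Proposition 5.1]{chalopin2020helly} in the Helly case, and the analogous statement for injective metric spaces with the sup-metric is standard (the injective hull functor commutes with finite $\ell_\infty$-products). Equipping $\Pi_{i=1}^n Z_i$ with the $\ell_\infty$-metric, it is injective, quasi-isometric to $\Pi_{i=1}^nY_i$ (the $\ell_\infty$ and $\ell_2$ metrics on a finite product are bi-Lipschitz), hence quasi-isometric to $\Gamma$; and $\Gamma^*$ acts geometrically on it, preserving the product structure. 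Since the permuting group $\Gamma/\Gamma^*$ is finite and the $Z_i$ corresponding to $X_i$'s lying in the same $\Gamma$-orbit are isometric, one can enlarge this to a geometric action of $\Gamma$ itself on the $\ell_\infty$-product, exactly as in the proof of Theorem \ref{thm:hypprod_helly}. Finally, groups acting geometrically on injective metric spaces are semihyperbolic by \cite[Theorem 1.1]{haettel2021injective}, giving the last sentence.

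I expect the main obstacle to be the symmetric space case: unlike for locally finite hyperbolic graphs, where Proposition \ref{prop:helly} gives everything packaged with explicit control on local finiteness, coarse surjectivity, and equivariance, for rank one symmetric spaces one must cite the right form of the injective hull construction — namely that $E(Y_i)$ is proper when $Y_i$ is a proper $\CAT(0)$ space with cocompact isometry group (which follows from the fact that such $Y_i$ have ``stable intervals'' at the level of the asymptotic geometry, cf.\ \cite[Theorem 1.1]{lang2013Injective} for the hyperbolic case, and in general from properness of Hadamard manifolds together with \cite{lang2013Injective}) — and check that the extension of the isometry group to $E(Y_i)$ remains geometric; the latter is a continuity and properness argument analogous to Lemma \ref{lem:lattice_extension_geom}. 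The remaining steps — conjugating via coarsely surjective quasi-isometric embeddings, taking $\ell_\infty$-products, and enlarging $\Gamma^*$ to $\Gamma$ — are routine given the machinery already in the paper.
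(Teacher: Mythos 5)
Your proposal follows essentially the same route as the paper's proof: apply Theorem \ref{thm:hyp_productmain}, replace each factor $Y_i$ by its injective hull $E(Y_i)$ (for the graph factors this is the content of Proposition \ref{prop:helly}), observe that the $\ell_\infty$-product of injective spaces is injective, extend the isometric $\Gamma$-action functorially via \cite[Proposition 3.7]{lang2013Injective}, check the extended action is geometric using properness of each $E(Y_i)$ together with $N_{A_i}(e_i(Y_i))=E(Y_i)$, and quote \cite[Theorem 1.1]{haettel2021injective} for semihyperbolicity. The one step where your justification does not go through as written is the rank one symmetric space case, which you correctly flagged as the obstacle: Lang's properness theorem \cite[Theorem 1.1]{lang2013Injective} concerns discrete spaces with stable intervals, so it does not directly yield properness of $E(Y_i)$ or coboundedness of $e_i$ for a (non-discrete) symmetric space, and ``stable intervals at the level of asymptotic geometry'' is not a proof; the paper closes this by citing \cite[Proposition 4.6]{haettel2021injective}, which establishes precisely these two facts. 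Also, your detour through $\Gamma^*$ is unnecessary --- the action of all of $\Gamma$ extends directly by functoriality of the injective hull, and your parenthetical claim that geometric actions on injective spaces pass from a finite index subgroup up to the ambient group would itself require an argument, so it is best avoided.
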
 
\begin{proof}
	By Theorem \ref{thm:hyp_productmain}, $\Gamma$ acts geometrically preserving the product structure  on  $\Pi_{i=1}^nY_i$, where $Y_i$ is either a rank one symmetric space or a locally finite hyperbolic graph.  As in the proof of Proposition \ref{prop:helly}, for each $i$ there is an isometric embedding $e_i:Y_i\to E(Y_i)$ where $E(Y_i)$ is the injective hull of $Y_i$; see for instance \cite{lang2013Injective}. We write $(\Pi_\infty)_{i=1}^nE(Y_i)$ to denote the product metric space equipped with  the $\ell_\infty$-metric. Since each $E(Y_i)$ is an injective space, so is $(\Pi_\infty)_{i=1}^nE(Y_i)$. By \cite[Proposition 3.7]{lang2013Injective}, the isometric action of $\Gamma$ on $\Pi_{i=1}^nY_i$ canonically extends to an isometric of $\Gamma$ on $(\Pi_\infty)_{i=1}^nE(Y_i)$. It remains to show this action is geometric.
	
	The result follows readily by showing each $e_i:Y_i\to E(Y_i)$ satisfies the following properties:
	\begin{enumerate}
		\item $N_{A_i}(e(Y_i))=E(Y_i)$ for some $A_i\geq 0$;
		\item $E(Y_i)$ is proper.
	\end{enumerate}
 As noted in the proof of Proposition \ref{prop:helly}, these properties follow from work of Lang if $Y_i$ is a locally finite hyperbolic graph  \cite{lang2013Injective}. In the case where $Y_i$ is a rank one symmetric space, these properties were shown by Haettel \cite[Proposition 4.6]{haettel2021injective}. Finally, we note that groups acting geometrically on injective spaces are semihyperbolic; see e.g. \cite[Theorem 1.1]{haettel2021injective}.
\end{proof}

\subsection{Boundary rigidity of \texorpdfstring{$\CAT(0)$}{CAT(0)} spaces quasi-isometric to products of hyperbolic groups}
Combining Corollary \ref{cor:hyp_product_lattice} with results of Monod and Caprace--Monod, we prove the following:

\begin{thm}\label{thm:boundary_rigidity}
	Let $\Gamma$ be a finitely generated quasi-isometric to $\Pi_{i=1}^nX_i$, where each $X_i$ is a cocompact proper  non-elementary hyperbolic metric space. Suppose $\Gamma$ acts geometrically on a proper $\CAT(0)$ space $Y$. Then $Y$ contains a canonical closed convex non-empty $\Gamma$-invariant
	subset $Y'$ that splits as $Y_1\times \dots \times Y_n$, where $Y_i$ is quasi-isometric to $X_i$.
		Moreover,  the visual boundary $\partial Y$ is homeomorphic to the join  $\partial X_1*\dots*\partial X_n$, where $\partial X_i$ is the Gromov boundary of $X_i$. In particular, $\partial Y$ depends only on $\Gamma$ and not the choice of  $\CAT(0)$ space $Y$.
\end{thm}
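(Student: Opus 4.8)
\textbf{Proof proposal for Theorem \ref{thm:boundary_rigidity}.}

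The plan is to reduce everything to the structure theorem of Caprace--Monod on the geometry of $\CAT(0)$ groups, using Corollary \ref{cor:hyp_product_lattice} as the input that pins down the factors. First I would invoke Theorem \ref{thm:cat0_bdry}: since $\Gamma$ acts geometrically on the proper $\CAT(0)$ space $Y$, there is a canonical closed convex non-empty $\Isom(Y)$-invariant (hence $\Gamma$-invariant) $\Gamma$-minimal subset $Y'\subseteq Y$ with $\partial Y'=\partial Y$, and we may pass to $Y'$ without losing any boundary information. Next I would apply the splitting theorem of Caprace--Monod \cite[Theorem 1.9]{capracemonod2009strucure}: the $\CAT(0)$ space $Y'$ admits a canonical (essentially unique) de Rham-type decomposition $Y'\cong \bbR^k\times \Pi_{j}Z_j$ into a Euclidean factor and irreducible non-Euclidean factors, and a finite-index subgroup of $\Gamma$ acts on this product preserving the decomposition. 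Since $Y'$ is geodesically complete (after passing to $Y'$, or intersecting with the minimal set as in the proof of Proposition \ref{prop:hyp_fixptbdry}), it is $\Gamma$-minimal, and I would argue as in the proof of Proposition \ref{prop:hyp_fixptbdry} that there is no Euclidean de Rham factor: if there were, $\Gamma$ would have an infinite-index behaviour incompatible with quasi-isometry to $\Pi X_i$ — more precisely, the Euclidean factor would force $\partial Y$ to contain a suspension point fixed by $\Isom(Y')$, contradicting the minimality conclusion of Theorem \ref{thm:cat0_bdry} once we know (via Corollary \ref{cor:hyp_product_lattice}) that the relevant projections of $\Gamma$ have no global fixed point on any factor boundary.

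The key step is to identify the number and quasi-isometry type of the irreducible non-Euclidean factors $Z_j$ of $Y'$. Here I would compare two product decompositions of $\Gamma$ up to quasi-isometry: on one hand $\Gamma\close{}\Pi_{i=1}^n X_i$ with each $X_i$ hyperbolic non-elementary, and on the other hand $\Gamma\close{}\Pi_j Z_j$ with each $Z_j$ an irreducible $\CAT(0)$ space. Since cocompact non-elementary hyperbolic spaces are of coarse type I and $\CAT(0)$ spaces with cocompact isometry group and a rank one geodesic are of coarse type II (and an irreducible $\CAT(0)$ factor is either of coarse type II or splits further), I would apply Theorem \ref{thm:kkl} to the quasi-isometry $\Pi_i X_i\to \Pi_j Z_j$ to conclude that the number of factors matches, $n$ equals the number of $Z_j$, and after reindexing each $Z_j$ is quasi-isometric to $X_j$. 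This also shows each $Z_j$ is hyperbolic (being quasi-isometric to a hyperbolic space), so each $\partial Z_j$ is the Gromov boundary of $X_j$ and the visual boundary of $Z_j$ coincides with $\partial X_j$. Setting $Y_i:=Z_i$ gives the claimed splitting $Y'=Y_1\times\dots\times Y_n$.

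Finally, the boundary statement follows from the standard fact that the visual boundary of a $\CAT(0)$ product is the spherical join of the visual boundaries of the factors: $\partial(Y_1\times\dots\times Y_n)\cong \partial Y_1 * \dots * \partial Y_n$ (see e.g. Bridson--Haefliger). Combining this with $\partial Y=\partial Y'$ and $\partial Y_i\cong \partial X_i$ yields $\partial Y\cong \partial X_1*\dots*\partial X_n$, which manifestly depends only on the quasi-isometry type of $\Gamma$ (equivalently, on the $\partial X_i$) and not on the particular $\CAT(0)$ space $Y$. The main obstacle I anticipate is the bookkeeping in the comparison-of-factorisations step: one must be careful that Theorem \ref{thm:kkl} applies — i.e. that every irreducible non-Euclidean $\CAT(0)$ factor arising from \cite{capracemonod2009strucure} is genuinely of coarse type I or II (or can be arranged to be, by first splitting off any further product structure and ruling out higher-rank symmetric space or Euclidean building factors using that the $X_i$ are hyperbolic), and that the absence of a Euclidean de Rham factor is correctly deduced; this is exactly the point where the hyperbolicity of the $X_i$ and Corollary \ref{cor:hyp_product_lattice} do the essential work.
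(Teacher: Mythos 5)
There is a genuine gap at the heart of your identification step. After passing to the de Rham decomposition $Y'\cong\bbR^k\times\Pi_j Z_j$, you want to compare it with $\Pi_{i=1}^nX_i$ via Theorem \ref{thm:kkl}. But that theorem only applies when the factors on \emph{both} sides are of coarse type I or II (up to an asymptotically Euclidean factor), and you have no control over the irreducible non-Euclidean factors $Z_j$: an irreducible proper cocompact $\CAT(0)$ space need not contain a rank one geodesic and need not be a symmetric space or Euclidean building, so the assertion that each $Z_j$ ``is of coarse type II or splits further'' is unjustified --- it is essentially the Rank Rigidity Conjecture, which is open. Without it you cannot even conclude that the number of de Rham factors is $n$ (a priori $Y'$ could be irreducible while $\Gamma$ is quasi-isometric to a product), let alone match each $Z_j$ with an $X_i$. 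You flag this worry yourself, but it is not a bookkeeping issue: it is the whole difficulty. The paper's proof avoids the de Rham decomposition of $Y'$ entirely. It first uses Corollary \ref{cor:hyp_product_lattice} to realise $\Gamma$ (virtually, modulo a finite kernel) as a uniform lattice in $G_1\times\dots\times G_n$, then, in the irreducible case, extends the $\Gamma$-action on $Y'$ to a continuous geometric $G$-action by the superrigidity theorem \ref{thm:monod_superrigid} (together with Lemma \ref{lem:lattice_extension_geom}), and splits $Y'$ equivariantly according to the product structure of $G$ by Monod's splitting theorem \ref{thm:monod_split}; the reducible case is handled by Proposition \ref{prop:irred} and induction on $n$. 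The splitting of $Y'$ is thus produced from the algebraic product structure, not detected inside $Y'$ and compared by quasi-isometry.

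A second, smaller problem: your mechanism for excluding the Euclidean de Rham factor does not work as stated. A Euclidean factor does not force $\Isom(Y')$ to fix a point of $\partial Y'$ (the isometry group of $\bbR^k$ acts transitively on its sphere at infinity), so there is no contradiction with Theorem \ref{thm:cat0_bdry}; and Proposition \ref{prop:hyp_fixptbdry} concerns the model space $\Pi Y_i$ from Theorem \ref{thm:hyp_productmain}, not an arbitrary $\CAT(0)$ space on which $\Gamma$ acts. The correct argument (Lemma \ref{lem:noeuc_derham} in the paper) uses Caprace's theorem that a Euclidean factor yields a commensurated $\bbZ^k\leq\Gamma$, and shows such a subgroup would have bounded orbits on each hyperbolic factor of the model space, contradicting properness. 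Your final step --- that the visual boundary of a $\CAT(0)$ product is the join of the boundaries, and that this gives quasi-isometry invariance of $\partial Y$ --- is fine once the splitting and the identification of the factors are in place.
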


We first show a  $\CAT(0)$ space quasi-isometric to a product of hyperbolic spaces has no Euclidean de Rham factor.
\begin{lem}\label{lem:noeuc_derham}
	Let $\Gamma$ be a finitely generated group quasi-isometric to a product of proper non-elementary hyperbolic spaces and acting geometrically on a proper $\CAT(0)$ space $Y$. Then $Y$ has no Euclidean de Rham factor.
\end{lem}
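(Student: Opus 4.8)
The plan is to argue by contradiction. Suppose $Y = \mathbb{R}^k \times Y'$ with $k > 0$ is the de Rham decomposition of $Y$, so $\Gamma$ acts geometrically on $Y$ preserving this splitting (after passing to the finite index subgroup $\Gamma^*$ that does not permute de Rham factors, using that the de Rham decomposition is essentially canonical, cf.\ \cite[Theorem 1.9]{capracemonod2009strucure}). The action of $\Gamma^*$ on the Euclidean factor $\mathbb{R}^k$ is by isometries, hence factors through $\mathrm{Isom}(\mathbb{R}^k) = \mathbb{R}^k \rtimes \Orth_k(\mathbb{R})$. First I would observe that $Y$, being quasi-isometric to a product $\Pi_{i=1}^m X_i$ of proper non-elementary hyperbolic spaces, admits a cobounded quasi-action of $\Gamma$ that by Proposition \ref{prop:qaction_prod} (combined with Theorem \ref{thm:trichotomyhyp_main}) is quasi-conjugate to an isometric action on a product $\Pi_{i=1}^m Z_i$ of rank one symmetric spaces, pure millefeuille spaces and locally finite graphs, none of which is quasi-isometric to $\mathbb{R}$.

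The key step is then to derive a contradiction from the presence of the Euclidean factor. A $\CAT(0)$ space with a nontrivial Euclidean de Rham factor $\mathbb{R}^k$ has the property that $Y$ is quasi-isometric to $\mathbb{R}^k \times Y'$, so $\Pi_{i=1}^m Z_i$ is quasi-isometric to $\mathbb{R}^k \times Y'$. Since $\mathbb{R}^k$ is itself quasi-isometric to $\Pi_{j=1}^k \mathbb{R}$, we obtain that $\Gamma$ is quasi-isometric to a product in which $\mathbb{R}$ appears as a factor. I would now invoke Theorem \ref{thm:kkl} (the Kapovich--Kleiner--Leeb splitting theorem): writing $\Gamma$'s model space both as $\mathbb{R}^k \times (\text{something})$ and as a product of coarse type I/II spaces, and noting that $\mathbb{R}^k$ is a space whose asymptotic cones are $\mathbb{R}^k$, Theorem \ref{thm:kkl} forces a matching of factors. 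But a rank one symmetric space, a pure millefeuille space, or an infinite locally finite graph with at least three boundary points is never quasi-isometric to $\mathbb{R}$ (a pure millefeuille space is not even quasi-isometric to any vertex-transitive locally finite graph by Proposition \ref{prop:millefeuille}, and the others have non-trivial/non-two-ended boundary), and $\mathbb{R}$ cannot be a subproduct of a non-elementary hyperbolic space since non-elementary hyperbolic spaces do not split as coarse products at all. This contradiction shows $k = 0$.

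An alternative, and perhaps cleaner, route avoiding Theorem \ref{thm:kkl} directly: a Euclidean de Rham factor $\mathbb{R}^k$ with $k>0$ produces, via Theorem \ref{thm:cat0_bdry}, a global fixed point of $\mathrm{Isom}(Y')$ on $\partial Y'$ only in the non-minimal case, so instead I would use that the presence of $\mathbb{R}^k$ means $\Gamma^*$ has a finite index subgroup with infinite centre or, more to the point, that $Y$ has a \emph{parallel set} structure forcing $\Gamma$ to be quasi-isometric to $\mathbb{R}^k \times (\text{rest})$. Then apply Lemma \ref{lem:prodhyp_qitoalgebraicgroup}-style reasoning: after Theorem \ref{thm:hyp_productmain}, $\Gamma$ also acts geometrically on $\Pi Y_i$ with each $Y_i$ a rank one symmetric space or locally finite graph, and Theorem \ref{thm:kkl} applied to the two product decompositions of $\Gamma$'s geometry rules out any Euclidean factor since no $Y_i$ has a Euclidean asymptotic cone.

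The main obstacle I anticipate is bookkeeping: making precise the statement "$Y$ quasi-isometric to a product of non-elementary hyperbolic spaces forces $Y$ to have no Euclidean factor" requires carefully invoking Theorem \ref{thm:kkl} with the correct hypothesis that the hyperbolic $X_i$ are of coarse type I while $\mathbb{R}^k$ has all asymptotic cones homeomorphic to $\mathbb{R}^k$, and then extracting from the commuting diagram of Theorem \ref{thm:kkl} that a $\mathbb{R}$-factor would have to be quasi-isometric to one of the $X_i$ or absorbed into $Z$, both impossible since each $X_i$ is non-elementary hyperbolic (so not quasi-isometric to $\mathbb{R}$) and there is no $Z$-factor on the $\Pi X_i$ side. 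Once this matching is set up correctly, the contradiction is immediate.

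\begin{proof}[Proof of Lemma \ref{lem:noeuc_derham}]
Suppose for contradiction that $Y$ has a non-trivial Euclidean de Rham factor, so that $Y$ is quasi-isometric to $\mathbb{R}^k \times Y'$ for some $k \geq 1$ and some proper $\CAT(0)$ space $Y'$. By hypothesis $Y$ is quasi-isometric to $\Pi_{i=1}^n X_i$, where each $X_i$ is a cocompact proper non-elementary hyperbolic metric space; in particular each $X_i$ is of coarse type I. Thus
\[
\mathbb{R}^k \times Y' \qquad \text{is quasi-isometric to} \qquad \Pi_{i=1}^n X_i .
\]
Now $\mathbb{R}^k$ is a geodesic metric space all of whose asymptotic cones are homeomorphic to $\mathbb{R}^k$. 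Applying Theorem \ref{thm:kkl} with the left-hand product playing the role of $Z \times \Pi X_i$ --- here $Z = \mathbb{R}^k$ and the factors of $\Pi X_i$ being the hyperbolic, coarse type I pieces --- and comparing with the right-hand decomposition, which has \emph{no} $Z$-factor (each $X_i$ is non-elementary hyperbolic, hence has no asymptotic cone homeomorphic to $\mathbb{R}^k$, and in particular no $X_i$ is quasi-isometric to a subproduct containing a Euclidean piece), we obtain a matching of the irreducible factors forcing $k = 0$. More precisely: Theorem \ref{thm:kkl} applied to a self-quasi-isometry realising the composite quasi-isometry $\Pi_{i=1}^n X_i \to \mathbb{R}^k \times Y' \to \Pi_{i=1}^n X_i$ shows that the $\mathbb{R}^k$-factor of $\mathbb{R}^k \times Y'$ must be quasi-isometric to a subproduct of $\Pi_{i=1}^n X_i$; but any such subproduct which is non-empty is quasi-isometric to a product of non-elementary hyperbolic spaces and hence is not quasi-isometric to $\mathbb{R}^k$ (it has more than two ends and does not coarsely split off an $\mathbb{R}$-factor), while the empty subproduct is bounded. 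Either way $\mathbb{R}^k$ is bounded, contradicting $k \geq 1$. Therefore $Y$ has no Euclidean de Rham factor.
\end{proof}
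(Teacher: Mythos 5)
There is a genuine gap at the key step. Theorem \ref{thm:kkl} applies to quasi-isometries of a space of the special form $Z\times \Pi_i X_i$ in which every non-$Z$ factor is of coarse type I or II and $Z$ has all asymptotic cones homeomorphic to $\bbR^k$; it says nothing about a quasi-isometry between $\Pi_i X_i$ and $\bbR^k\times Y'$ when $Y'$ is an arbitrary proper $\CAT(0)$ space, which is exactly your situation --- the non-Euclidean de Rham part $Y'$ is not known to be of coarse type I/II, nor to decompose into such factors, and proving that it does would essentially require the later results of Section \ref{sec:hyp_prod} whose proofs use this very lemma. Your workaround --- applying Theorem \ref{thm:kkl} to a self-quasi-isometry of $\Pi_i X_i$ realising the composite $\Pi_i X_i\to \bbR^k\times Y'\to \Pi_i X_i$ --- is vacuous: that composite is coarsely the identity (or at best some self-quasi-isometry), and Theorem \ref{thm:kkl} then only describes how it permutes and acts on the factors $X_i$; it gives no information about the product structure of the intermediate space, and in particular does not yield that ``$\bbR^k$ is quasi-isometric to a subproduct of $\Pi_i X_i$''. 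Secondary claims are also off: a nonempty subproduct of unbounded factors is one-ended rather than having more than two ends, ``does not coarsely split off an $\bbR$-factor'' is essentially the statement being proved, and in your alternative route the suggestion that a Euclidean de Rham factor forces (virtually) infinite centre is false --- the Leary--Minasyan groups act geometrically on $\bbR^n\times T$ with trivial centre.

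What the Euclidean factor does give, and this is the paper's actual route, is a commensurated subgroup rather than a central one: by \cite[Theorem 2]{caprace2019erratum}, if $Y=\bbR^k\times Y'$ with $k>0$ then $\Gamma$ contains a commensurated $\Lambda\cong\bbZ^k$. The paper then plays $\Lambda$ against the other action furnished by Theorem \ref{thm:hyp_productmain}: $\Gamma$ acts geometrically on $\Pi_{i=1}^nY_i$ preserving the product structure, and the finite index subgroup $\Gamma^*$ fixes no point of any $\partial Y_i$ (Proposition \ref{prop:hyp_fixptbdry}). The amenable group $\Lambda^*=\Lambda\cap\Gamma^*$, acting on each hyperbolic-like factor $Y_i$, either has bounded orbits or fixes a set of at most two points of $\partial Y_i$; in the latter case this set is $\Gamma^*$-invariant because $\Lambda^*$ is commensurated, contradicting that $\Gamma^*$ acts cocompactly on the non-elementary $Y_i$ without a boundary fixed point, while bounded orbits in every factor contradict properness of the action of the infinite group $\Lambda^*$. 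If you want to salvage a KKL-style argument, you would first need to show that $Y'$ lies in the class of spaces covered by Theorem \ref{thm:kkl}, which is not available at this point in the paper.
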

\begin{proof}
	Since $\Gamma$ is quasi-isometric to a product of proper non-elementary hyperbolic spaces, it acts geometrically on $\Pi_{i=1}^nY_i$ as in Theorem \ref{thm:hyp_productmain}, preserving the product structure. Moreover, Theorem \ref{thm:hyp_productmain} ensures the finite index subgroup $\Gamma^*$ does not fix a point of $\partial Y_i$.
	Suppose for contradiction $Y$ splits as a direct product $Y=\bbR^n\times Y'$ with $n>0$ maximal.  It follows from \cite[Theorem 2]{caprace2019erratum} that $\Gamma$ contains a commensurated subgroup $\Lambda\cong \bbZ^n$.     Note that $\Lambda^*\coloneqq\Lambda\cap \Gamma^*\cong \bbZ^n$ is commensurated in $\Gamma^*$. 
	
	Since $\Lambda^*$ is an amenable subgroup acting isometrically on each $Y_i$, it either has bounded orbits or fixes a non-empty subset $S_i\subseteq \partial Y_i$ of cardinality at most two. In the latter case, as $\Lambda^*\leq \Gamma^*$ is commensurated, it follows that $S_i$ is also  $\Gamma^*$-invariant. This contradicts the fact that $Y_i$ is non-elementary and  $\Gamma^*$ acts on $Y_i$ cocompactly without fixing a point of $\partial Y_i$. Therefore $\Lambda^*$ acts  on each $Y_i$ with  bounded orbits, so $\Lambda^*$ acts with bounded orbits on $\Pi_{i=1}^nY_i$. As $\Lambda^*$ is infinite, this contradicts the fact that $\Lambda^*\leq \Gamma^*$ acts properly on $\Pi_{i=1}^nY_i$.
\end{proof}

We will use the  following theorems, which are special cases of results of Monod and Caprace--Monod respectively:
\begin{thm}[{\cite[Corollary 10]{monod2006superrigidity}}]\label{thm:monod_split}
	Let $X$ be a proper CAT(0) space. Suppose $G$ acts isometrically and minimally on $X$  and  does not fix  a point of $\partial X$. 
	If $G = G_1 \times \dots \times G_n$ is any product of groups $G_i$, then $X$ splits $G$-equivariantly as a product $X=X_1\times \dots\times X_n$, where $G_i$ acts isometrically on $X_i$.
\end{thm}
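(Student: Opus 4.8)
The plan is to prove Monod's splitting theorem by induction on $n$, the case $n=2$ being the heart of the matter, so I would reduce at once to splitting off a single factor and assume $G=G_1\times G_2$ acting minimally on the proper CAT(0) space $X$ with no point of $\partial X$ fixed by $G$ (allowing $G_2$ itself to be a further product). The first move is to analyse the normal subgroup $G_1$ via the Adams--Ballmann alternative and its refinements (cf.\ \cite{capracemonod2009strucure,capracemonod2009isometry}): either $G_1$ fixes a point of $\partial X$, or $X$ admits a nonempty $G_1$-invariant closed convex subset that is $G_1$-minimal. Assuming for now the second alternative, let $Z\subseteq X$ be the union of all $G_1$-minimal closed convex subsets. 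By the structure theory of parallel closed convex subsets of CAT(0) spaces, $Z$ is closed and convex and splits canonically and isometrically as $Z=X_1\times C_1$ in such a way that $G_1$ acts on $Z$ by a minimal action on the first factor $X_1$ and trivially on $C_1$. Since $G_2$ centralises $G_1$ it permutes the $G_1$-minimal subsets and hence preserves $Z$; thus $Z$ is $G$-invariant, and $G$-minimality of $X$ forces $Z=X$, giving $X=X_1\times C_1$.

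The second move upgrades this to a $G$-equivariant product decomposition. I would first check that $G_1$ acts on $X_1$ minimally and with no fixed point of $\partial X_1$: a $G_1$-fixed point of $\partial X_1$ is also fixed by $G_1$ in $\partial X$ (as $G_1$ is trivial on $C_1$), the closed $G_1$-fixed set in $\partial X$ is $G_2$-invariant because $G_2$ centralises $G_1$, and in the absence of a flat factor its Tits circumradius is $<\pi/2$, so its circumcentre would be a point of $\partial X$ fixed by all of $G$, contradicting the hypothesis. For such an action the centraliser of $G_1$ in $\Isom(X_1)$ is trivial: a commuting isometry $\phi$ has a $G_1$-invariant displacement function, so if $\operatorname{Min}(\phi)\neq\emptyset$ then $\operatorname{Min}(\phi)=X_1$ by minimality and $\phi$ is a Clifford translation, producing a flat de Rham factor that $G_1$ preserves and hence a $G_1$-fixed point of $\partial X_1$ --- impossible; while if $\operatorname{Min}(\phi)=\emptyset$ then $\langle\phi\rangle$ fixes a point of $\partial X_1$, its fixed set at infinity is $G_1$-invariant, and the circumcentre argument again yields a contradiction. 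Now for $\phi\in G_2$, writing $\phi$ in coordinates on $X_1\times C_1$ and using that it commutes with the minimal $G_1$-action on the first factor, one obtains $\phi=\mathrm{id}_{X_1}\times\psi$ with $\psi\in\Isom(C_1)$; hence $G_2$ acts trivially on $X_1$ and isometrically on $X_2:=C_1$, and the $G$-action on $X=X_1\times X_2$ is the product action. Finally $X_2$ is $G_2$-minimal (a $G_2$-invariant proper convex subset $W$ would make $X_1\times W$ a $G$-invariant proper convex subset) and $G_2$ fixes no point of $\partial X_2$ (such a point would be fixed by $G_1$ too, since $G_1$ is trivial on $X_2$, hence by $G$), so the inductive hypothesis applies to the action of $G_2$ on $X_2$ and produces the full decomposition $X=X_1\times\dots\times X_n$.

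The main obstacle --- and the point where the hypothesis that $G$ fixes no point of $\partial X$ is genuinely used --- is the first Adams--Ballmann alternative and, more broadly, the bookkeeping of flat de Rham factors. If $G_1$ fixes a point $\xi\in\partial X$, then the $G_1$-fixed set $F\subseteq\partial X$ is closed, nonempty, and $G$-invariant, and since $G$ fixes no point of $\partial X$ its Tits circumradius is at least $\pi/2$; the structure theory of CAT(0) spaces preserving a fixed set at infinity of circumradius $\geq\pi/2$ then extracts a nontrivial flat factor $X=\mathbb{R}^k\times X'$ preserved by $G$, and one iterates, combining with the first step, to write $X$ as a product of a Euclidean factor with factors each carrying an essential action of exactly one $G_i$, the Euclidean factor splitting orthogonally according to which $G_i$ translates along which subspace. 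Making this decomposition canonical and $G$-equivariant at every stage is the technical heart of the argument, and is precisely what fails if one only assumes $G$ discrete rather than fixing no point of $\partial X$. In the hyperbolic, non-symmetric setting used elsewhere in this paper no flat factors occur, so this case is considerably cleaner and the centraliser-triviality argument of the second step applies directly.
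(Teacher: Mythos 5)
This statement is not proved in the paper at all: it is quoted verbatim from Monod \cite{monod2006superrigidity}, so there is no internal argument to compare yours against; what has to be assessed is whether your sketch would stand on its own. The Gromoll--Wolf-style skeleton you use is reasonable, and several steps are essentially correct: if $G_1$ admits a minimal closed convex invariant set, the union $Z$ of all such sets is convex and splits as $X_1\times C_1$ with $G_1$ acting minimally on $X_1$ and trivially on $C_1$; $G_2$ permutes the $G_1$-minimal sets because it centralises $G_1$, so $G$-minimality forces $X=X_1\times C_1$; since the fibres of both foliations are canonical, each $\phi\in G_2$ does split as $\alpha\times\psi$ with $\alpha$ centralising the $G_1$-action on $X_1$; and the Clifford-translation argument correctly shows that a nontrivial element of that centraliser with nonempty minimal displacement set forces a $G_1$-fixed point in $\partial X_1$.

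The genuine gap is everything you route through circumcentres and flat factors, which is exactly where the real content of Monod's theorem lies. First, your claim that a nonempty closed $G$-invariant fixed-point set $F=\mathrm{Fix}_{\partial X}(G_1)$ has Tits circumradius $<\pi/2$ ``in the absence of a flat factor'' is not a hypothesis you have and not a theorem in this generality; and even when the radius is $<\pi/2$, the existence of a circumcentre fixed by all isometries preserving $F$ (Balser--Lytchak) requires finite-dimensionality of the Tits boundary, which is not available here --- the statement assumes only that $X$ is proper (Monod's original result does not even assume that), with no cocompactness, so $\partial_T X$ may be infinite-dimensional. The same objection applies to your treatment of a parabolic element of the centraliser, where an invariant set at infinity by itself is no contradiction. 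Second, the case in which $G_1$ fixes a point of $\partial X$ is dispatched by appeal to a ``structure theory of CAT(0) spaces preserving a fixed set at infinity of circumradius $\geq\pi/2$'' that ``extracts a nontrivial flat factor'': no such general result exists. The results in that direction (Adams--Ballmann for amenable groups, or the canonical-subset and flat-factor statements of Caprace--Monod \cite{capracemonod2009strucure}) require amenability, cocompactness, or finite-dimensional boundary, and supplying this step in the stated generality is essentially equivalent to reproving Monod's splitting theorem, whose actual proof in \cite{monod2006superrigidity} proceeds by a different and substantially more involved mechanism. So the proposal, as written, does not close the case that carries the difficulty, and the theorem should continue to be used as a black box citation.
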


\begin{thm}[{\cite[Theorem 8.4]{capracemonod2009strucure}}]\label{thm:monod_superrigid}
	Let $\Gamma\leq G=G_1\times\dots\times G_n$ be an irreducible  uniform 	lattice, where each $G_i$ is a locally compact compactly generated group. Let $X$ be a proper $\CAT(0)$ space with finite-dimensional boundary and no Euclidean de Rham factor. If  $\Gamma$ acts isometrically and minimally on $X$ without fixing a point of $\partial X$, then the $\Gamma$-action on $X$ extends to a continuous $G$-action by
	isometries.
\end{thm}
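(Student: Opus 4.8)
The plan is to reduce the extension to Monod's measurable superrigidity for irreducible lattices in products, the geometric facet of which is recorded in Theorem~\ref{thm:monod_split}. Throughout one uses that ``irreducible lattice'' presupposes $n\geq 2$: for $n=1$ the hypothesis is vacuous and the conclusion even fails, since a connected locally compact group cannot act continuously and nontrivially on, say, a tree, so no separate degenerate case is needed.

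\textbf{Step 1: passage to the closure.} Write $\rho\colon\Gamma\to\Isom(X)$ for the given action. Since $X$ is proper, $\Isom(X)$ with the topology of pointwise convergence is a second countable locally compact group, and $H:=\overline{\rho(\Gamma)}$ is a closed subgroup in which $\rho(\Gamma)$ is dense. The $H$-action on $X$ is still minimal, because any nonempty closed convex $H$-invariant subset is $\Gamma$-invariant and hence equals $X$ by minimality of $\rho$; and $H$ fixes no point of $\partial X$, as such a point would be $\Gamma$-fixed. Consequently $X$ has no proper nonempty closed convex invariant subset, so any $\Gamma$-invariant (equivalently $H$-invariant) flat would be all of $X$; together with the hypothesis that $X$ has no Euclidean de~Rham factor this rules out invariant flats entirely. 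It suffices to produce a continuous homomorphism $\hat\rho\colon G\to\Isom(X)$ with $\hat\rho|_\Gamma=\rho$, as the resulting $G$-action is then continuous by definition of the topology on $\Isom(X)$.

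\textbf{Step 2: the equivariant boundary map.} Fix an admissible probability measure $\mu=\mu_1\otimes\dots\otimes\mu_n$ on $G=G_1\times\dots\times G_n$; its Furstenberg--Poisson boundary is a product $(B,\nu)=(B_1,\nu_1)\times\dots\times(B_n,\nu_n)$ on which each $G_i$ acts amenably and doubly ergodically. Restricting, $(B,\nu)$ is a strong boundary for the uniform lattice $\Gamma$ in the sense of Burger--Monod. Using that $\partial X$ is finite-dimensional, the $\Gamma$-action on $X$ yields a $\Gamma$-equivariant measurable map $B\to\mathrm{Prob}(\partial X)$; applying the canonical circumcentre construction of Caprace--Lytchak fibrewise --- legitimate precisely because the invariant-flat alternative was eliminated in Step~1 --- gives a $\Gamma$-equivariant measurable map
\[
\phi\colon B=B_1\times\dots\times B_n\longrightarrow \partial X .
\]

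\textbf{Step 3: splitting the boundary map and reconstructing the action.} This is the heart of the matter and the main obstacle. Because $\Gamma$ is irreducible in $\prod_i G_i$ and each $G_i$ acts on $B_i$ with the strong mixing properties above, Monod's argument --- functoriality of bounded cohomology with $L^\infty(B,\nu)$-coefficients, or equivalently a direct analysis of $\Gamma$-equivariant maps out of a product of strong boundaries --- forces $\phi$ to depend, up to null sets, on a single coordinate: there is an index $i$ and a $G_i$-equivariant measurable map $\phi_i\colon B_i\to\partial X$ with $\phi=\phi_i\circ\mathrm{pr}_i$. Pulling back along the projection $G\to G_i$ produces a $G$-equivariant measurable map $B\to\partial X$ extending the boundary map of $\rho$; this boundary data pins down the bounded-cohomological invariant of the action in $H^2_b(\Gamma;\bbR)\cong H^2_b(G;\bbR)$ (the isomorphism being available since $\Gamma$ is a uniform lattice), and the geometry of the minimal CAT(0) space $X$ integrates the transported class on $G$ back to an honest continuous isometric $G$-action restricting to $\rho$. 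Reproducing Step~3 in full would require developing bounded cohomology of $\Gamma$ and $G$ with Poisson-boundary coefficients and the accompanying ergodic input; the genuinely elementary new ingredient is the observation of Step~1 that minimality together with the absence of a Euclidean de~Rham factor removes both the invariant-flat and the fixed-boundary-point alternatives, leaving extension as the only outcome of Monod's trichotomy.
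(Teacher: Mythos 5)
You should first note that the paper does not prove Theorem \ref{thm:monod_superrigid} at all: it is imported verbatim (as a special case) from Caprace--Monod \cite{capracemonod2009strucure}, so there is no internal argument to compare with, and the only question is whether your sketch stands on its own as a proof. It does not. Your Step 3 is the entire content of the theorem, and you concede you are not reproducing it; worse, the mechanism you describe is not a viable route. There is no procedure that ``integrates'' a bounded cohomology class on $G$ back into a continuous isometric $G$-action on a $\CAT(0)$ space, and the asserted isomorphism $H^2_b(\Gamma;\bbR)\cong H^2_b(G;\bbR)$ for a uniform lattice is false: for a cocompact Fuchsian group $\Gamma\leq\PSL(2,\bbR)$ the space $H^2_b(\Gamma;\bbR)$ is infinite-dimensional while the continuous bounded cohomology of $\PSL(2,\bbR)$ in degree two is one-dimensional; restriction is injective (via the transfer) but far from surjective. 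So the step on which everything hinges is both missing and, as described, wrong.

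The actual proof (Monod \cite{monod2006superrigidity}, refined by Caprace--Monod to spaces with finite-dimensional boundary) runs along different lines: one induces the $\Gamma$-action to a continuous $G$-action on an auxiliary $\CAT(0)$ space of $\Gamma$-equivariant maps from $G$ to $X$, applies the splitting theorem for product actions (your Theorem \ref{thm:monod_split}) together with minimality, the absence of fixed points in $\partial X$, the absence of a Euclidean de Rham factor, and finite-dimensionality of $\partial X$, to locate a canonical closed convex $G$-invariant subspace isometric to $X$, and then checks that the restricted $G$-action extends $\rho$. Equivariant boundary maps and the Caprace--Lytchak circumcentre do appear, but only to dispose of the fixed-point-at-infinity alternative for the induced action --- not, as in your Step 3, to ``split'' a Furstenberg map and reconstruct the action from boundary data. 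Your Step 1 observations (minimality of the closure, no fixed point at infinity, no invariant flat) are fine but are only the easy preamble; they do not interact with the induction argument you would need. Since the result is quoted from the literature, the correct move here is to cite it, not to re-derive it; as a free-standing proof your proposal has an essential gap at exactly the superrigidity step.
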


\begin{proof}[Proof of Theorem \ref{thm:boundary_rigidity}]
	We proceed by induction on $n$, noting that when $n=1$, there is nothing to prove as the visual boundary and Gromov boundary of a $\CAT(0)$ space coincide. By Corollary \ref{cor:hyp_product_lattice}, after replacing $\Gamma$ with a finite index subgroup and quotienting out by a finite normal subgroup, we can assume $\Gamma$ is a uniform lattice in a product $G_1\times \dots \times G_n$, where $G_i$ is a hyperbolic locally compact group with trivial amenable radical and the projection of $\Gamma$ to each $G_i$ is dense. 
	
	Suppose $\Gamma$ acts geometrically on a proper CAT(0) space $Y$. By Theorem \ref{thm:cat0_bdry}, there exists a canonical closed convex non-empty $\Isom(Y)$-invariant $\Isom(Y)$-minimal subset $Y'\subseteq Y$. Theorem \ref{thm:cat0_bdry} also ensures $Y'$ is $\Gamma$-minimal.  By Lemma \ref{lem:noeuc_derham}, $Y'$ has no Euclidean de Rham factor, so Theorem \ref{thm:cat0_bdry} implies $\Gamma$ does not fix a point of $\partial Y'$.
	
	First suppose $\Gamma\leq G_1\times \dots \times G_n$ is an irreducible lattice. Then Theorem \ref{thm:monod_superrigid} implies the action of  $\Gamma$ on $Y'$ extends continuously to an isometric action of $G$ on $Y'$. Lemma \ref{lem:lattice_extension_geom} ensures the action of $G$ on $Y'$ is geometric.  Theorem \ref{thm:monod_split} ensures  $Y'$ splits as $Y'=Y_1\times \dots \times Y_n$, where $G_i$ acts isometrically on $Y$, so the action of $G_i$ on each $Y_i$ is geometric. Thus $X_i$ is quasi-isometric to $Y_i$ and so $\partial X_i$ and $\partial Y_i$ are homeomorphic. Thus $\partial Y'$ is homeomorphic to the join $\partial X_1*\dots*\partial X_n$. Since $Y'$ is a closed convex subspace of $Y$ such that $N_A(Y')=Y$ for $A$ sufficiently large, $\partial Y$ is also homeomorphic to the join $\partial X_1*\dots*\partial X_n$.
	
	Now suppose $\Gamma\leq G_1\times \dots \times G_n$ is not irreducible. After permuting the factors, there is some $1\leq r<n$ such that setting $H_1=G_1\times \dots\times G_r$ and $H_2=G_{r+1}\times\dots\times G_n$, the projection of $\Gamma$ to $H_1$ is discrete. Applying Proposition \ref{prop:irred}, we deduce that some finite index subgroup of $\Gamma$ splits as $\Gamma_1\times \Gamma_2$, where $\Gamma_i$ is a uniform lattice in $H_i$. By Theorem \ref{thm:cat0_bdry}, the action of $\Gamma_1\times \Gamma_2$ on $Y'$ is minimal and does  not fix a  point of $\partial Y'$. By Theorem \ref{thm:monod_split}, $Y'$ splits as $Y_1\times Y_2$, where $\Gamma_i$ acts geometrically on $Y_i$. Applying the inductive hypothesis, we deduce $\partial Y_1\cong \partial X_1*\dots*\partial X_r$ and $\partial Y_2\cong \partial X_{r+1}*\dots*\partial X_{n}$. Therefore $\partial Y\cong \partial Y'\cong  \partial X_1*\dots*\partial X_n$ as required.
\end{proof}

\section{Groups quasi-isometric to central extensions of hyperbolic groups}\label{sec:qicent}

In this section we prove  the following theorem and discuss its consequences:

\begin{thm}\label{thm:qitocent_main}
	Let $Q$ be a locally finite vertex-transitive non-elementary  hyperbolic graph and let  $n\geq 1$. A finitely generated group $G$ is quasi-isometric to $\bbR^n\times Q$ if and only if both the following hold:
	\begin{enumerate}
		\item there is a subgroup $\bbZ^n\cong H\alnorm G$ such that the quotient space $G/H$ is quasi-isometric to $Q$;
		\item the image of the modular homomorphism $G\rightarrow \Comm(H)\cong \GL_n(\bbQ)$ is conjugate to a subgroup of $O_n(\bbR)$.
	\end{enumerate} 
\end{thm}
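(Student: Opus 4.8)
The plan is to prove both directions by combining the discretisability machinery of Sections \ref{sec:disc_quasi_actions}--\ref{sec:tame} with the coarse bundle formalism of Section \ref{sec:cbundles}, following the outline sketched in the introduction. For the forward direction, suppose $G$ is quasi-isometric to $\bbR^n\times Q$. Since $\bbR^n$ is a geodesic metric space all of whose asymptotic cones are homeomorphic to $\bbR^n$ and $Q$ is of coarse type I, Theorem \ref{thm:kkl} applies: the cobounded quasi-action $G\qa \bbR^n\times Q$ induces (after passing to a finite index subgroup that fixes the factor, which is all of $G$ since there is only one hyperbolic factor) a fibre-preserving quasi-action with respect to the projection $\pi:\bbR^n\times Q\to Q$, hence by Proposition \ref{prop:inducedqa} a cobounded quasi-action $G\qa Q$. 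The crucial point is that $G\qa Q$ is discretisable: indeed $Q$ is a non-elementary hyperbolic graph, so by Corollary \ref{cor:dichothyp_main} either $G\qa Q$ is discretisable or it is quasi-conjugate to a cocompact action on a rank one symmetric space; the latter is impossible since $Q$, being a locally finite graph, cannot be quasi-isometric to a symmetric space (a symmetric space of non-compact type of dimension $\geq 2$ is not quasi-isometric to any locally finite graph — the case of a symmetric space quasi-isometric to $\bbR$, i.e. $\bbH^1$, is excluded since $Q$ is non-elementary). Actually one should also handle the mixed case: since $G\qa Q$ fixing a point of $\partial Q$ forces $Q$ quasi-isometric to a millefeuille space, and a pure millefeuille space is not quasi-isometric to a vertex-transitive locally finite graph by Proposition \ref{prop:millefeuille}, while a non-pure one is a tree or negatively curved homogeneous manifold — trees are fine (discretisable), homogeneous manifolds are excluded as before. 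So $G\qa Q$ is discretisable.

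\textbf{Extracting the commensurated subgroup.} By Proposition \ref{prop:disc_equiv} (equivalently Corollary \ref{cor:discvscstab}), the discretisable quasi-action $G\qa Q$ has a coarse stabiliser $H$. We now apply Proposition \ref{prop:quasi-conj_coarse bundle} to the coarse bundle $\pi:\bbR^n\times Q\to Q$ (which is a coarse bundle with fibre $\bbR^n$ by Lemma \ref{lem:dirprod_cbundle}): transporting this coarse bundle structure across the quasi-isometry $G\simeq \bbR^n\times Q$, we obtain that the proper cobounded fibre-preserving quasi-action $G\qa \bbR^n\times Q$ has induced quasi-action $G\qa Q$ with coarse stabiliser $H$, and hence the orbit map exhibits the quotient map $q:G\to G/H$ as a fibre-preserving quasi-conjugacy onto $\pi:\bbR^n\times Q\to Q$. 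In particular $H\alnorm G$ is finitely generated and quasi-isometric to $\bbR^n$, and $G/H$ is quasi-isometric to $Q$. A finitely generated group quasi-isometric to $\bbR^n$ has a finite-index subgroup isomorphic to $\bbZ^n$; after passing to such a subgroup (which remains commensurated, and replacing $H$ by it does not change the commensurability class nor the fact that $G/H$ is quasi-isometric to $Q$ up to quasi-isometry — one must be slightly careful here but the relative Cayley graph is quasi-isometry invariant under passing to commensurable subgroups by an elementary argument), we get $\bbZ^n\cong H\alnorm G$, giving (1). For (2): by Example \ref{exmp:fibre_abelian}, the fibre-distortion function of $q:G\to G/H$ is $gH\mapsto |\log\|A_g\||$ where $A_g=\Delta(g)\in \GL_n(\bbQ)\cong\Comm(H)$, and this is bounded iff $\im(\Delta)$ is conjugate into $\Orth_n(\bbR)$. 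By Corollary \ref{cor:fibre-distortion_bounded}, the fibre-distortion function of $q:G\to G/H$ is bounded iff that of $\pi:\bbR^n\times Q\to Q$ is; the latter is trivial by Example \ref{exmp:prod_trivial_fibre}. Hence $\im(\Delta)$ is conjugate into $\Orth_n(\bbR)$, giving (2).

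\textbf{The converse direction.} Now suppose (1) and (2) hold. By Proposition \ref{prop:alnorm_cbundle}, $q:G\to G/H$ is a coarse bundle with fibre $H\cong\bbZ^n$, quasi-isometric to $\bbR^n$; and $G/H$ is quasi-isometric to $Q$ by (1). Condition (2) says, via Example \ref{exmp:fibre_abelian}, that this coarse bundle has bounded fibre-distortion function. The plan is to deduce that $q:G\to G/H$ is quasi-isometrically trivial (Definition \ref{defn:qitrivial}), which immediately gives $G$ quasi-isometric to $H\times G/H\simeq \bbR^n\times Q$. As indicated in the introduction, this step invokes a result of Mineyev on $\ell_\infty$-cohomology: bounded fibre distortion together with a vanishing-type statement for $\ell_\infty$-cohomology of the hyperbolic base $G/H$ (which is hyperbolic since $Q$ is, and hyperbolicity of relative Cayley graphs is as discussed after Proposition \ref{prop:relcg_locfinite}) produces a coarse Lipschitz section of $q$, and a coarse Lipschitz section of a coarse bundle with bounded fibre-distortion and abelian fibre can be promoted to a fibre-preserving quasi-isometry $G\to H\times G/H$. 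This part is not carried out in the excerpt, so I would cite the relevant results of \cite{mineyev00isoperimetric} and \cite{margolisxer2021geometry} for the precise mechanism.

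\textbf{Main obstacle.} The hard part is the converse: constructing the quasi-isometry $G\to \bbR^n\times Q$ from the algebraic data. Merely knowing $H\alnorm G$ with $H\cong\bbZ^n$ and $G/H\simeq Q$ does not by itself split the coarse bundle; the Leary--Minasyan examples show that without controlling the modular homomorphism one genuinely gets non-product behaviour, and even \emph{with} condition (2) one needs the cohomological input (the $\ell_\infty$-cohomology vanishing on the hyperbolic base) to guarantee a section exists. The forward direction is comparatively clean once one has Corollary \ref{cor:dichothyp_main} in hand — the only delicate points there are (a) excluding the continuous and mixed cases by the rigidity of symmetric spaces and millefeuille spaces against locally finite graphs, and (b) the bookkeeping involved in replacing $H$ by a $\bbZ^n$-subgroup while keeping all the quasi-isometry statements intact, which requires knowing that passing to a commensurable (finite-index) subgroup of a commensurated subgroup changes the quotient space only up to quasi-isometry.
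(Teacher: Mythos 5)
Your overall architecture matches the paper's: induce a quasi-action $G\qa Q$ via Kapovich--Kleiner--Leeb, prove it is discretisable, extract a coarse stabiliser $H$, use Proposition \ref{prop:quasi-conj_coarse bundle} and the fibre-distortion invariant to get (1) and (2), and for the converse combine a coarse Lipschitz section (Mineyev) with bounded fibre distortion to trivialise the coarse bundle $G\to G/H$. However, there is a genuine gap at the single most delicate step of the forward direction: your exclusion of the connected case rests on the assertion that a locally finite graph cannot be quasi-isometric to a rank one symmetric space (or to a negatively curved homogeneous manifold). That assertion is false: the hypotheses allow $Q$ to be, say, the Cayley graph of a cocompact surface group, which is a locally finite vertex-transitive hyperbolic graph quasi-isometric to $\bbH^2$, and $\bbR\times Q$ is then exactly the Gersten/Kleiner--Leeb setting. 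So Corollary \ref{cor:dichothyp_main} (and the homogeneous-manifold branch of Corollary \ref{cor:trichot_hyp_fixpt}) cannot be collapsed by a rigidity statement about $Q$ alone; discretisability here is a property of this particular quasi-action, not of the space $Q$. The paper's Proposition \ref{prop:qitocenthyp} closes this case with a growth argument: the set $V$ of elements of $G$ whose displacement of a basepoint fibre in $\bbR^n\times Q$ projects to a bounded subset of $Q$ is quasi-isometric to $\bbR^n$ and hence has polynomial growth, whereas a continuous completion (a cocompact fixed-point action on a negatively curved homogeneous space, Lemma \ref{lem:growth_homspace}, or a non-discrete action on a rank one symmetric space, via Kleiner--Leeb) would force super-polynomially many elements with small displacement on $Q$. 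Without some argument of this kind your proof does not establish discretisability, which is precisely where the real work lies.

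Two smaller points. First, in the fixed-boundary-point analysis you wave the tree case through as ``discretisable''; that is indeed enough for the bare statement of Theorem \ref{thm:qitocent_main}, but note the paper rules it out outright (Lemma \ref{lem:tree_fixedpt}, an amenability argument through Mosher--Sageev--Whyte), since the stronger ``no fixed point on $\partial Q$'' conclusion is needed downstream. Second, your converse is only a sketch deferring to references: the promotion of a coarse Lipschitz section plus bounded fibre distortion to a fibre-preserving quasi-isometry, and the existence of the section over the hyperbolic base, are not off-the-shelf citations but are proved in the paper (Proposition \ref{prop:qitrivial} and Lemma \ref{lem:lipsection}, the latter adapting Mineyev's $\ell_\infty$-cohomology vanishing); the route you indicate is the right one, but as written this direction is incomplete.
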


\subsection{Quasi-isometrically trivial coarse bundles}
Consider a finitely generated group $G$ containing  a commensurated subgroup $H\cong \bbZ^n$. Proposition \ref{prop:alnorm_cbundle} ensures  the quotient map $p:G\to G/H$ is a coarse bundle. In this subsection we prove Proposition \ref{prop:qitrivial}, which gives necessary and sufficient conditions for $p:G\rightarrow G/H$ to be quasi-isometrically trivial in the sense of Definition \ref{defn:qitrivial}. Using a theorem of Mineyev concerning $\ell_\infty$-cohomology of hyperbolic simplicial complexes, we show in Lemma \ref{lem:lipsection} that one of the conditions of Proposition \ref{prop:qitrivial} is automatically satisfied when the quotient space $G/H$ is hyperbolic.

A \emph{coarse Lipschitz section} of a coarse bundle $p:G\rightarrow G/H$ is a coarse Lipschitz map $s:G/H\rightarrow G$ that is a section to $p$, i.e.\ $p\circ s=\id_{G/H}$. The following proposition partially generalises the equivalence of (1) and (2) in \cite[Proposition 8.3]{kleinerleeb2001symmetric}, which proves the special case in which $H$ is a central subgroup in $G$. 
\begin{prop}\label{prop:qitrivial}
	Let $G$ be a finitely generated group containing a commensurated subgroup $\bbZ^n\cong H\alnorm G$ for $n\geq 1$. Then $p:G\rightarrow G/H$  is quasi-isometrically trivial  if and only if both of the following conditions hold:
	\begin{enumerate}
		\item  $p:G\rightarrow G/H$ has a coarse Lipschitz section;
		\item the image of the modular homomorphism $\Delta:G\rightarrow \GL_n(\bbR)$ is conjugate to a subgroup of $\Orth_n(\bbR)$.
	\end{enumerate}
\end{prop}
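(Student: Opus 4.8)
The plan is to prove both directions separately, using the fibre-distortion formalism developed in Section~\ref{sec:cbundles} together with a direct construction in the nontrivial direction.

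\textbf{The ``only if'' direction.} Suppose $p:G\rightarrow G/H$ is quasi-isometrically trivial, so there is an $A$-fibre-preserving quasi-isometry $f:G\rightarrow (G/H)\times H$ with respect to the projection $\pi:(G/H)\times H\rightarrow G/H$. First, a coarse Lipschitz section of $\pi$ is obvious (namely $b\mapsto (b,0)$), and pulling it back through a coarse inverse of $f$ — using that $f$ is fibre-preserving so that the induced quasi-isometry $\hat f:G/H\rightarrow G/H$ is close to $\id$ after composing with $\pi\circ f$ — yields a coarse Lipschitz section of $p:G\rightarrow G/H$. (Here one uses Lemma~\ref{lem:coarse_lip} to upgrade the control on $A$-balls to coarse Lipschitz, and coarse commutativity of the relevant diagram to see that the composite is genuinely a section up to bounded error, which can then be corrected to an exact section since fibres of $p$ are the cosets $gH$.) Second, the fibre-distortion function: by Example~\ref{exmp:fibre_abelian}, the fibre-distortion function of $p:G\rightarrow G/H$ is $gH\mapsto \lvert\log\lVert A_g\rVert\rvert$ and is bounded if and only if $\im(\Delta)$ is conjugate into $\Orth_n(\bbR)$; by Example~\ref{exmp:prod_trivial_fibre} the fibre-distortion function of $\pi:(G/H)\times H\rightarrow G/H$ is trivial, hence bounded. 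Corollary~\ref{cor:fibre-distortion_bounded} then forces the fibre-distortion function of $p$ to be bounded, giving condition~$(2)$.

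\textbf{The ``if'' direction.} This is the substantive part. Assume $p:G\rightarrow G/H$ admits a coarse Lipschitz section $s:G/H\rightarrow G$, and that $\im(\Delta)$ is conjugate into $\Orth_n(\bbR)$, so by Example~\ref{exmp:fibre_abelian} the fibre-distortion function is bounded. The goal is to build a fibre-preserving quasi-isometry $G\rightarrow (G/H)\times H$. The natural candidate is the map sending $g\in G$ to the pair consisting of $gH\in G/H$ together with a ``coordinate in the fibre'' measuring the displacement of $g$ from the section point $s(gH)$ inside the coset $gH$. Concretely, for $g\in G$ write $g = s(gH)\cdot h$ for a (not quite unique, up to the finite ambiguity coming from $H$ being commensurated rather than normalised exactly — one fixes a finite index honest subgroup on which conjugation is multiplication by $A_g$) element $h$ lying in a bounded-index subgroup of $H\cong\bbZ^n$, and set $f(g) = (gH,\, h)$. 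One must check: (i) $f$ is coarse Lipschitz — this uses that $s$ is coarse Lipschitz and that the quotient map $p$ is coarse Lipschitz, plus a computation bounding how much $h$ changes when $g$ moves by one generator, where the bounded fibre-distortion hypothesis is exactly what controls the ``twisting'' of the $H$-coordinate under the $G$-action; (ii) $f$ is a coarse embedding — this uses that the fibres $gH$ are uniformly coarsely embedded copies of $\bbZ^n$ (Definition~\ref{defn:coarse bundle}\ref{defn:coarse bundle 2}) together with the section being coarse Lipschitz to recover the base coordinate; (iii) $f$ is coarsely surjective, which is clear since both coordinates are hit; and (iv) $f$ is fibre-preserving, which is immediate from the definition since $f$ maps the fibre $gH$ into $\{gH\}\times H$ up to bounded error. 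Once $f$ is a fibre-preserving quasi-isometry, $p:G\rightarrow G/H$ is quasi-isometrically trivial by definition.

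\textbf{Main obstacle.} The delicate point is step~(i)–(ii) in the ``if'' direction: making precise the $H$-coordinate $h$ and controlling it. Because $H$ is only commensurated (not normal), conjugation by $g$ is multiplication by $A_g\in\GL_n(\bbQ)$ only on a finite-index subgroup of $H$, so the naive coordinate is only well-defined up to a bounded set and up to the choice of coset representatives; one must verify these ambiguities are uniformly bounded (using that $[\Comm(H):\text{stuff}]$-type indices appearing are finite because $G$ is finitely generated) and that the resulting map still satisfies the coarse-Lipschitz and coarse-embedding estimates. The hypothesis that $\im(\Delta)$ is conjugate into $\Orth_n(\bbR)$, i.e.\ the $A_g$ are uniformly bounded in operator norm, is precisely what prevents the fibre coordinate from being distorted by an exponentially-growing factor as one moves along a word in the generators, and this is where it enters. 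The coarse Lipschitz section is used to trivialise the base direction: without it, even with bounded fibre distortion the bundle could be non-trivial (as it genuinely is for Baumslag--Solitar-like examples where no section exists). I would organise the estimates by fixing a finite generating set $S$ of $G$ containing a generating set of a fixed finite-index free abelian $H_0\leq H$, bounding $d(f(g),f(gs))$ for $s\in S$, and then invoking Lemma~\ref{lem:coarse_lip}.
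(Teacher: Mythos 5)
Your ``only if'' direction is correct and is essentially the paper's argument: quasi-isometric triviality gives a fibre-preserving quasi-isometry to $H\times G/H$, from which one pulls back the obvious section (correcting by a bounded amount to land exactly in each coset), and boundedness of the fibre-distortion function follows from Example~\ref{exmp:prod_trivial_fibre}, Corollary~\ref{cor:fibre-distortion_bounded} and Example~\ref{exmp:fibre_abelian}.

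The ``if'' direction, however, has a genuine gap: the map you propose, $f(g)=(gH,\,h_g)$ with $h_g=s(gH)^{-1}g$, is in general \emph{not} coarse Lipschitz, and the orthogonality hypothesis does not repair this. The problem is visible already in the Klein bottle group $G=\langle a,t\mid tat^{-1}=a^{-1}\rangle$ with $H=\langle a\rangle$ and the section $t^nH\mapsto t^n$: here $f(t^na^m)=(n,m)$, while $t^na^mt=t^{n+1}a^{-m}$, so right-multiplication by the generator $t$ moves $f$ by roughly $2|m|$, which is unbounded even though $\Delta(G)=\{\pm1\}\subseteq\Orth_1(\bbR)$. In general, moving $g$ to $gs$ replaces the fibre coordinate $v_g=\iota(h_g)$ by (up to bounded additive error) $\Delta(s)^{-1}v_g$, and $\lVert\Delta(s)^{-1}v-v\rVert$ grows linearly in $\lVert v\rVert$ whenever $\Delta(s)$ acts nontrivially; boundedness of the operator norms $\lVert\Delta(g)\rVert$ controls the \emph{size} of the coordinate but not its \emph{displacement} under a generator. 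The missing idea is to twist the fibre coordinate by the modular homomorphism: the paper's map is $g\mapsto(\Delta(g)(v_g),\,gH)$, for which the identity $\lVert v_g-\Delta(s)v_{gs}\rVert\leq B$ yields $\lVert\Delta(gs)v_{gs}-\Delta(g)v_g\rVert\leq A\,B$, uniformly in $g$; here the hypothesis that $\im(\Delta)$ is conjugate into $\Orth_n(\bbR)$ enters precisely through the uniform bound on $\lVert\Delta(g)\rVert$ over \emph{all} $g\in G$, which is needed both for this Lipschitz estimate and to see that applying $\Delta(g)$ distorts fibre distances by only a bounded factor, so that the twisted map is a quasi-isometric embedding. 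Your remaining concern --- the bounded ambiguities coming from $H$ being merely commensurated --- is real but secondary; the paper handles it with the technical Lemma~\ref{lem:projlemma} (projections to the finite-index subgroups $H\cap g^{-1}Hg$ together with the coarse-embedding control on $H\hookrightarrow G$), and that lemma is also what supplies the lower bound making the twisted map a coarse embedding, a point your sketch does not supply for the untwisted map either.
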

\begin{proof}[Proof of the $\implies$ direction]
	If $p:G\rightarrow G/H$ is quasi-isometrically trivial, then $G$ is quasi-isometric to $H\times G/H$ via a  fibre-preserving quasi-isometry. Therefore, $p$  has a coarse Lipschitz section, and $p:G\rightarrow G/H$ has bounded fibre distortion by  Corollary \ref{cor:fibre-distortion_bounded}. It follows from Example \ref{exmp:fibre_abelian} that the image of the modular homomorphism $\Delta:G\rightarrow \GL_n(\bbR)$ is conjugate to a subgroup of $\Orth_n(\bbR)$ as required. 
\end{proof}
 The remainder of the proof of Proposition \ref{prop:qitrivial} is the content of the following lemma:
\begin{lem}\label{lem:qitrivial}
	Let $G$ and $H$ be as above and suppose $p:G\rightarrow G/H$ satisfies (1) and (2) of Proposition \ref{prop:qitrivial}. Then $p:G\rightarrow G/H$  is quasi-isometrically trivial.
\end{lem}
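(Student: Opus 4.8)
\textbf{Proof plan for Lemma \ref{lem:qitrivial}.}

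The plan is to build an explicit fibre-preserving quasi-isometry $G\to H\times G/H$ out of the two hypotheses, mimicking the strategy of Kleiner--Leeb's central case but working with the modular action rather than assuming centrality. First I would fix a finite symmetric relative generating set of $(G,H)$ and a finite symmetric generating set of $G$, so that all the metric spaces in sight ($G$, $H\cong\bbZ^n$, $G/H$ with its relative word metric) are fixed. Let $s:G/H\to G$ be the coarse Lipschitz section from hypothesis (1), normalised so that $s(H)=e$. The natural candidate map is $\Phi:G\to H\times G/H$ given by $g\mapsto (\tau(g),gH)$, where $\tau(g)\in H$ is defined (up to bounded error) by writing $g=s(gH)\cdot h_g$ with $h_g\in H$, i.e.\ $\tau(g):=s(gH)^{-1}g$; note $\tau(g)$ lies in $H$ since $p\circ s=\id$, and one first checks $\tau$ is well defined up to uniformly bounded error because $s$ and the coset structure are. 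The map $\Phi$ is manifestly fibre-preserving with respect to $\pi:H\times G/H\to G/H$, and it is surjective up to bounded error, so the crux is the quasi-isometry estimate.

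The key point is to understand $d_H(\tau(g),\tau(g'))$ in terms of $d_G(g,g')$ and $d_{G/H}(gH,g'H)$. The upper bound $d_G(g,g')\lesssim d_H(\tau(g),\tau(g'))+d_{G/H}(gH,g'H)$ is routine: one has $g=s(gH)\tau(g)$, $g'=s(g'H)\tau(g')$, $s$ is coarse Lipschitz, the map $G/H\to G$ pulling back word length of $H$-elements multiplied on the left is controlled, and $\|\tau(g)\|_H$ bounds $d_G(s(gH),g)$. For the lower bound one must bound $d_H(\tau(g),\tau(g'))$ from above. Writing $g'=gw$ with $\|w\|_G=d_G(g,g')=:\ell$, one gets $\tau(g')=s(g'H)^{-1}g\tau(g)=\big(s(g'H)^{-1}s(gH)\big)\cdot\big(s(gH)^{-1}g\big)\cdot\tau(g)$; the middle factor is $\tau(g)$ and the first factor has $G$-length $\lesssim d_{G/H}(gH,g'H)\lesssim\ell$ (using that $s$ is coarse Lipschitz and $p$ is coarse Lipschitz). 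The subtlety is that conjugating $\tau(g)\in H$ by an element of $G$ of bounded length changes its $H$-word-length in a controlled multiplicative way governed precisely by the modular homomorphism: by Example \ref{exmp:fibre_abelian} the fibre-distortion function is $gH\mapsto|\log\|A_g\||$, and hypothesis (2) says the $A_g$ are all conjugate into $\Orth_n(\bbR)$, hence uniformly bounded in operator norm. Here I would use Example \ref{exmp:fibre_abelian} together with Proposition \ref{prop:fib_dis_welldef}, or rather the underlying estimate that $d_H(xhx^{-1},e)\leq \|A_x\|\,d_H(h,e)+C\|x\|_G$ when $x$ conjugates a finite-index subgroup of $H$ by $A_x$; with $\|A_x\|$ uniformly bounded this gives $d_H(\tau(g'),\tau(g)')\lesssim \ell+C$, where $\tau(g)'$ is $\tau(g)$ after the controlled left-translation inside $H$, and then $d_H(\tau(g),\tau(g'))\lesssim \ell$. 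Combining, $\Phi$ is a quasi-isometric embedding; coarse surjectivity follows since every $(h,kH)$ is within bounded distance of $\Phi(s(kH)h)$.

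The main obstacle is making the conjugation estimate precise: $A_g$ need not be an integer matrix and $gHg^{-1}$ need only be commensurable (not equal) to $H$, so conjugation by $g$ is genuinely an element of $\GL_n(\bbQ)$ acting on a finite-index subgroup, and one must pass to a finite-index subgroup $H_0\leq H$ on which all the relevant conjugations act by honest matrices, control the word metric of $H_0$ inside $H$ up to multiplicative and additive error (both are quasi-isometric since $H_0$ has finite index), and absorb the additive $\|g\|_G$ terms, which is exactly where hypothesis (2) — uniform boundedness of $\|A_g\|$ — is essential rather than just bounded fibre distortion along the section. Once these bookkeeping issues are handled the rest is the routine quasi-isometry verification sketched above, and together with the already-proved $\implies$ direction this completes the proof of Proposition \ref{prop:qitrivial}. $\qed$
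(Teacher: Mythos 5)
There is a genuine gap, and it sits exactly where you flagged the ``subtlety'': the candidate map $\Phi(g)=(\tau(g),gH)$ with $\tau(g)=s(gH)^{-1}g$ is not coarse Lipschitz in general, and no finite-index bookkeeping repairs it. Writing $g'=gw$ with $\lvert w\rvert_G=\ell$, the correct identity is $\tau(g')=\bigl(s(g'H)^{-1}s(gH)\bigr)\,\tau(g)\,w$ (your displayed formula $s(g'H)^{-1}g\,\tau(g)$ is garbled, but that is cosmetic); to compare $\tau(g')$ with $\tau(g)$ inside $H$ you must push $w$ past $\tau(g)$, which replaces $\iota(\tau(g))$ by (roughly) $\Delta(w)^{-1}\iota(\tau(g))$. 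Hypothesis (2) bounds the operator norms $\lVert \Delta(x)\rVert$ uniformly, which gives $\lVert \Delta(x)v\rVert\leq C\lVert v\rVert$, but it does \emph{not} bound $\lVert \Delta(x)v-v\rVert$ by a constant --- only by $C'\lVert v\rVert$. Concretely, take a Leary--Minasyan group: $H\cong\bbZ^2$ and $G$ an HNN extension whose stable letter $t$ acts on a finite-index subgroup of $H$ by an infinite-order rotation $A=\Delta(t)\in\Orth_2(\bbQ)$; here both hypotheses of Proposition \ref{prop:qitrivial} hold ($G/H$ is a tree, so Lemma \ref{lem:lipsection} gives the section). For $g=s(kH)h$ with $h$ a huge element of the relevant finite-index subgroup and $g'=gt$, one has $d_G(g,g')=1$ and $d_{G/H}(gH,g'H)\leq 1$, yet $\iota(\tau(g'))=A^{-1}\iota(h)+O(1)$, so $d_H(\tau(g),\tau(g'))\geq 2\lvert\sin(\theta/2)\rvert\,\lVert\iota(h)\rVert-O(1)$ is unbounded. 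Thus your claimed estimate $d_H(\tau(g),\tau(g'))\lesssim\ell$ is false, and $\Phi$ fails to be a quasi-isometry precisely for the groups (infinite but bounded modular image) that motivate the theorem.

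The repair is not a sharper estimate but a different map: the paper's proof takes $f(g)=\bigl(\Delta(g)(\iota(\tau(g))),\,gH\bigr)\in\bbR^n\times G/H$, i.e.\ it twists the fibre coordinate by the modular homomorphism. With this twist the increment along a generator telescopes: $\lVert\Delta(gs)\iota(\tau(gs))-\Delta(g)\iota(\tau(g))\rVert$ is uniformly bounded --- this is (\ref{eqn:sec_ineq}), proved via the commensurability bookkeeping of Lemma \ref{lem:projlemma} --- which yields coarse Lipschitzness; the quasi-isometric lower bound then follows by summing this increment along a path in $G/H$ and untwisting, and hypothesis (2) (a uniform bound on $\lVert\Delta(g)\rVert$ over \emph{all} of $G$, not just generators) is used both in bounding the increments and in converting between twisted and untwisted coordinates. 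Your overall strategy --- use the section from (1) to build an explicit fibre-preserving quasi-isometry to the product --- is the right one, and your upper-bound and coarse-surjectivity remarks are fine, but without the $\Delta(g)$-twist the central estimate is unobtainable.
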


Throughout the remainder of this subsection,  we fix the following constants and notation. Let $G$ be a finitely generated group containing a commensurated subgroup  $\bbZ^n\cong H\alnorm G$. Let $S$ be a  symmetric generating set of $G$ and let  $d_G$  and  $\lvert \cdot \rvert_S$ be the word metric and word norm with respect to $S$.
 We also equip $G/H$ with the relative word metric $d_{G/H}$ with respect to $S$.
  We equip $\bbR^n$ with a standard Euclidean inner product $\lVert \cdot \rVert$ and  fix a monomorphism $\iota:H\rightarrow \bbR^n$ whose image is a uniform lattice. We equip $H$ with the metric $d_H(h,h')=\lVert \iota(h)-\iota(h')\rVert$.  
   
   Let $\Delta:G\rightarrow \GL_n(\bbR)$ be the modular homomorphism.  We assume the image of $\Delta$ is conjugate to a subgroup of $\Orth_n(\bbR)$. The modular homomorphism $\Delta$ satisfies the property that for all $g\in G$ and $h\in H\cap g^{-1}Hg$, $\Delta(g)(\iota(h))=\iota(ghg^{-1})$.  Indeed, the definition of $\Delta$ ensures that for all $g\in G$, there is a finite index subgroup $H_g\leq  H\cap g^{-1}Hg$ such that  $\Delta(g)(\iota(h))=\iota(ghg^{-1})$ for all $h\in H_g$. However, a homomorphism between groups isomorphic to $\bbZ^n$ is determined by its restriction to a finite index subgroup, hence we can assume  $H_g=H\cap g^{-1}Hg$ as above.
   
     We can choose  a proper non-decreasing function $\eta:\bbR_{\geq 0}\to\bbR_{\geq 0}$ and constants $K\geq 1$, $A\geq 0$ so that the following hold.
\begin{itemize}
 	\item  $N_A(\iota (H))=\bbR^n$.
	\item The inclusion $H\rightarrow G$ is an $(\eta,K,A)$-coarse embedding.
	\item The map $p:G\rightarrow G/H$ is a $(K,A)$-coarse bundle.
	\item $\lVert\Delta(g)v\rVert\leq A\lVert v\rVert$ for all $g\in G$ and $v\in \bbR^n$.
\end{itemize} 
The last point follows from the fact that $\im(\Delta)$ is conjugate to a subgroup of $\Orth_n(\bbR)$, hence its elements have uniformly bounded operator norm.
We first show the following technical lemma.
\begin{lem} \label{lem:projlemma}
	For each $r\geq 0$, there is a constant $B_r$ such that the following hold for all $h_1,h_2,h_3\in H$ and $g_1,g_2\in G$.
	\begin{enumerate} 
		\item \label{item:projlemma1} If  $\lvert g_1\rvert_S\leq r$ and $\lvert h_1g_1h_2\rvert_S\leq r$, then \[\lVert \iota(h_1)+\Delta(g_1)(\iota(h_2))\rVert\leq B_r.\]
		\item \label{item:projlemma2} If  $\lvert g_1\rvert_S,\lvert g_2\rvert_S\leq r$ and $\lvert h_1g_1h_2g_2h_3\rvert_S\leq r$, then  \[\lVert \iota(h_1)+\Delta(g_1)\iota(h_2)+\Delta(g_1g_2)\iota(h_3)\rVert\leq B_r.\]
		\item \label{item:projlemma3} If  $\lvert g_1\rvert_S\leq r$ and $\lVert \iota(h_1)+\Delta(g_1)(\iota(h_2))\rVert\leq r$,  then $\lvert h_1g_1h_2\rvert_S\leq B_r$. 
	\end{enumerate}
\end{lem}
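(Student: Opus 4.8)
\textbf{Proof proposal for Lemma \ref{lem:projlemma}.}

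The three assertions are really one basic principle applied in slightly different configurations: when a word $w = h_0 g_1 h_1 g_2 h_2 \cdots$ in which only the $h_i$ are large has bounded word-norm, the $H$-part of $w$ — after one pushes all the $H$-letters to the front using the relations $g h g^{-1} = $ (an element whose image under $\iota$ is $\Delta(g)\iota(h)$) — must be bounded in $\bbR^n$. The plan is to set up a single ``push to the front'' computation and then specialise. First I would fix $r$ and consider an element of the form $w = h_1 g_1 h_2 g_2 h_3$ with all $g_i$ of word-norm $\le r$ and $\lvert w\rvert_S \le r$. Rewrite $w = (h_1)(g_1 h_2 g_1^{-1})(g_1 g_2 h_3 g_2^{-1} g_1^{-1})(g_1 g_2)$. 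For this to make sense inside $H$ I need the conjugates $g_1 h_2 g_1^{-1}$ and $(g_1g_2) h_3 (g_1g_2)^{-1}$ to lie in $H$; this holds after replacing $h_2,h_3$ by finite-index multiples, but since a homomorphism of $\bbZ^n$'s is determined on a finite-index subgroup, and $\iota(ghg^{-1}) = \Delta(g)\iota(h)$ whenever $h\in H\cap g^{-1}Hg$ (as recorded in the paragraph preceding the lemma), the identity $\iota(g_1 h_2 g_1^{-1}) = \Delta(g_1)\iota(h_2)$ holds literally whenever $h_2 \in H \cap g_1^{-1} H g_1$, and in general up to a bounded error controlled by $\lvert g_1\rvert_S \le r$. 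Thus $w = \bar h \cdot (g_1g_2)$ where $\bar h \in H$ satisfies $\iota(\bar h) = \iota(h_1) + \Delta(g_1)\iota(h_2) + \Delta(g_1g_2)\iota(h_3) + (\text{error of norm} \le \epsilon_r)$ for a constant $\epsilon_r$ depending only on $r$ (coming from the finite-index corrections and from $\lVert \Delta(g)v\rVert \le A\lVert v\rVert$).

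Now $\lvert \bar h\rvert_S \le \lvert w\rvert_S + \lvert g_1 g_2\rvert_S \le r + 2r = 3r$, so since the inclusion $H \to G$ is an $(\eta,K,A)$-coarse embedding we get $d_H(\bar h, 1) \le \tilde\eta(3r)$, i.e. $\lVert \iota(\bar h)\rVert \le \tilde\eta(3r)$ in the notation of Remark \ref{rem:inv_control}. Combining with the error estimate gives $\lVert \iota(h_1) + \Delta(g_1)\iota(h_2) + \Delta(g_1g_2)\iota(h_3)\rVert \le \tilde\eta(3r) + \epsilon_r =: B_r^{(2)}$, which is precisely (\ref{item:projlemma2}). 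Assertion (\ref{item:projlemma1}) is the special case $h_3 = 1$, $g_2 = 1$ (or just the two-term version of the same computation), giving a constant $B_r^{(1)}$. For (\ref{item:projlemma3}) I would run the argument in reverse: given $\lvert g_1\rvert_S \le r$ and $\lVert \iota(h_1) + \Delta(g_1)\iota(h_2)\rVert \le r$, the same rewriting $h_1 g_1 h_2 = \bar h g_1$ with $\iota(\bar h) = \iota(h_1) + \Delta(g_1)\iota(h_2) + (\text{error} \le \epsilon_r)$ gives $\lVert\iota(\bar h)\rVert \le r + \epsilon_r$, hence $\lvert \bar h\rvert_S \le K(r+\epsilon_r) + A$ using the upper (coarse Lipschitz) bound in the coarse embedding, and therefore $\lvert h_1 g_1 h_2\rvert_S \le \lvert \bar h\rvert_S + \lvert g_1\rvert_S \le K(r+\epsilon_r) + A + r =: B_r^{(3)}$. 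Taking $B_r$ to be the maximum of the three constants completes the proof.

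The one genuinely delicate point — and the step I would be most careful about — is the bookkeeping of the ``finite-index correction'' error $\epsilon_r$: for each fixed $g$ the equation $\iota(ghg^{-1}) = \Delta(g)\iota(h)$ may fail on a finite-index subgroup, but one needs this failure to be uniformly bounded over all $h\in H$ and all $g$ with $\lvert g\rvert_S \le r$. This is true because there are only finitely many such $g$, for each the discrepancy $h \mapsto \iota(ghg^{-1}) - \Delta(g)\iota(h)$ is a bounded (indeed eventually-periodic) function of $h$ — it is bounded on a fundamental domain for the finite-index subgroup and translation-equivariant — so one simply takes $\epsilon_r$ to be the max of these finitely many bounds; composing at most two such corrections and applying $\lVert\Delta(g_1)v\rVert \le A\lVert v\rVert$ only multiplies this by a factor depending on $r$. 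Everything else is the triangle inequality together with the coarse-embedding estimates for $H \hookrightarrow G$.
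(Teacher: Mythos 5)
Your overall strategy is the same as the paper's---push the $H$-letters to the front, interpret conjugation by short elements through $\Delta$, and convert between the word norm and the Euclidean norm using the coarse embedding $H\hookrightarrow G$ (via $\tilde\eta$)---but the key step as you have written it fails. Since $H$ is only commensurated, not normal, the element $\bar h = h_1(g_1h_2g_1^{-1})(g_1g_2h_3g_2^{-1}g_1^{-1})$ in general does not lie in $H$, so there is no decomposition $w=\bar h\,(g_1g_2)$ with $\bar h\in H$, and $\iota(\bar h)$ is simply not defined. The device you propose for absorbing this---a uniformly bounded ``discrepancy'' $h\mapsto \iota(ghg^{-1})-\Delta(g)\iota(h)$---is not meaningful either: $\iota(ghg^{-1})$ only makes sense when $h\in H\cap g^{-1}Hg$, and on that subgroup the discrepancy is exactly zero; off it there is no vector to measure, so ``bounded on a fundamental domain and translation-equivariant'' does not parse. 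Likewise, replacing $h_2,h_3$ by ``finite-index multiples'' (powers landing in the finite-index subgroup) would change $\iota(h_i)$ by an unbounded multiplicative factor, not by a bounded additive error.

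The correct repair, and the one the paper uses, is to fix the single finite-index subgroup $H_0=\bigcap_{\lvert g\rvert_S\le r}H^g$ together with a closest-point projection $\pi:H\to H_0$ whose displacement is bounded by a constant $D=D(r)$ both in $d_G$ and in $\lVert\iota(\cdot)\rVert$, and then to work with genuinely $H$-valued elements such as $h_1g_1\pi(h_2g_2\pi(h_3)g_2^{-1})g_1^{-1}$ (or, closer to your normal form, $h_1\cdot g_1\pi(h_2)g_1^{-1}\cdot(g_1g_2)\pi'(h_3)(g_1g_2)^{-1}$ with $\pi'$ the projection to $H\cap(g_1g_2)^{-1}H(g_1g_2)$). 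The resulting vector error is at most $2AD$ by $\lVert\Delta(g)v\rVert\le A\lVert v\rVert$, while the leftovers $\pi(h_i)^{-1}h_i$ are bounded elements of $G$ that are \emph{not} in $H$ and must be absorbed into the word-norm estimate (yielding bounds like $3r+2D$ before applying $\tilde\eta$), not into the $\bbR^n$-estimate; your sketch conflates these two kinds of error. The same correction is needed in (3), where $h_1g_1h_2g_1^{-1}$ must be replaced by $h_1g_1\pi(h_2)g_1^{-1}$ before invoking the upper (coarse Lipschitz) bound of the embedding, and then $\lvert h_1g_1h_2\rvert_S$ is bounded by reinserting $g_1$ and $\pi(h_2)^{-1}h_2$. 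Once this is done, your triangle-inequality bookkeeping for (1)--(3) goes through and reproduces the paper's proof with slightly different constants.
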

\begin{proof}
	Fix  $r\geq 0$. Let $H_0\coloneqq \bigcap_{\lvert g\rvert_S\leq r} H^g$, noting  that $H_0$ is a finite index subgroup of $H$. Let   $\pi:H\rightarrow H_0$ be a closest point projection. We can pick a constant $D$ such that both $\sup_{h\in H}d_G(\pi(h),h)$  and $\sup_{h\in H}\lVert \iota(h)-\iota(\pi(h))\rVert$ are  bounded by  $D$. We set $B_r\coloneqq \max (\tilde\eta(3r+2D)+2AD,K(r+AD)+r+D)$, where $\tilde{\eta}$ is as in Remark \ref{rem:inv_control}.

	(\ref{item:projlemma2}): We observe \begin{align*}
		\lvert h_1g_1\pi(h_2g_2\pi (h_3)g_2^{-1})g_1^{-1}\rvert_S& \leq \lvert h_1g_1h_2g_2\pi (h_3)g_2^{-1}\rvert_S+D+r\\
		& \leq \lvert h_1g_1h_2g_2h_3\rvert_S+2D+2r\\
		&\leq 3r+2D
	\end{align*}
	Since $\pi(h_3)\in H\cap g_2^{-1}Hg_2$ and  $\pi(h_2g_2\pi(h_3)g_2^{-1})\in H\cap g_1^{-1}Hg_1$, we deduce \begin{align*}
		\lVert \iota(h_1)+\Delta(g_1)\iota(h_2)+\Delta(g_1g_2)\iota(h_3)\rVert& \leq \lVert \iota(h_1)+\Delta(g_1)\iota(h_2)+\Delta(g_1g_2)\iota(\pi(h_3))\rVert+AD\\
		& = \lVert \iota(h_1)+\Delta(g_1)(\iota(h_2g_2\pi(h_3)g_2^{-1}))\rVert+AD\\
		& \leq \lVert \iota(h_1)+\Delta(g_1)(\iota(\pi(h_2g_2\pi(h_3)g_2^{-1})))\rVert+2AD\\
		& = \lVert \iota(h_1g_1\pi(h_2g_2\pi(h_3)g_2^{-1})g_1^{-1} )\rVert+2AD\\
		& \leq \tilde\eta(3r+2D)+2AD\leq B_r.
	\end{align*}
	
	(\ref{item:projlemma1}): This follows from (\ref{item:projlemma2}) after setting $g_2=h_3=e_G$.
	
	(\ref{item:projlemma3}): Since $\pi(h_2)\in H\cap g_1^{-1}Hg_1$, we have \begin{align*}
		\lVert \iota(h_1g_1\pi(h_2)g_1^{-1})\rVert & = \lVert \iota(h_1)+\Delta(g_1)\iota(\pi(h_2))\rVert\\
		& \leq \lVert \iota(h_1)+\Delta(g_1)\iota(h_2)\rVert+\lVert \Delta(g_1)\iota(h_2)-\Delta(g_1)\iota(\pi(h_2))\rVert\\
		& \leq r+AD.
	\end{align*}
	Therefore, $\lvert h_1g_1h_2\rvert_S\leq \lvert h_1g_1\pi(h_2)g_1^{-1}\rvert_S+r+D\leq K(r+AD)+r+D\leq B_r.$
\end{proof}

\begin{proof} [Proof of Lemma \ref{lem:qitrivial}]
	Let $\sigma:G/H\rightarrow G$ be a coarse Lipschitz section of $p:G\rightarrow G/H$.  By increasing $K$ and $A$ if necessary, we may assume $\sigma$ is a $(K,A)$-coarse Lipschitz section. For each $g\in G$, let $\sigma_g\coloneqq \sigma(gH)$, $h_g\coloneqq \sigma_g^{-1}g\in H$ and  $v_g\coloneqq \iota (h_g)$. 
	
	If $g\in G$ and $s\in S$, then $d_{G/H}(gH,gsH)\leq 1$ and so $d(\sigma_g,\sigma_{gs})\leq K+A$.
	Since  $g=\sigma_gh_g$ and  $gs=\sigma_{gs}h_{gs}$, we have \[\lvert h_gsh_{gs}^{-1}\rvert_S=\lvert \sigma_g^{-1}\sigma_{gs}\rvert_S \leq K+A.\] Applying Lemma \ref{lem:projlemma}, \begin{align*}
		\lVert v_g-\Delta(s)(v_{gs})\rVert = \lVert \iota(h_g)+\Delta(s)(\iota(h_{gs}^{-1}))\rVert\leq B_{K+A},
	\end{align*} with $B_{K+A}$ as in Lemma \ref{lem:projlemma}.  Therefore,
	\begin{align}\label{eqn:sec_ineq}
		\lVert \Delta(gs)(v_{gs})-\Delta(g)(v_g)\rVert & \leq A \lVert\Delta(s)(v_{gs})-v_g\rVert\leq AB_{K+A}\eqqcolon A_1.
	\end{align} 
	
	We define a map $f:G\rightarrow \bbR^n\times G/H$ by $g\mapsto (\Delta(g)(v_g),gH)$. We will show $f$ is a quasi-isometry.  We equip $Y\coloneqq \bbR^n\times G/H$ with the $\ell_1$-metric, i.e.\ \[d_Y((v,gH),(w,kH))=\lVert v-w\rVert +d_{G/H}(gH,kH),\] noting this is quasi-isometric to the more standard $\ell_2$-metric on $Y$.

	For all $g\in G$ and $s\in S$, (\ref{eqn:sec_ineq}) implies \[d_Y(f(g),f(gs))= \lVert \Delta(g)(v_g)-\Delta(gs)(v_{gs})\rVert +d_{G/H}(gH,gsH)\leq  A_1+1. \] Lemma \ref{lem:coarse_lip} implies that  $f$ is coarse Lipschitz. 
	Let $(v,gH)\in Y$. Since $N_A(\iota(H))= \bbR^n$, there exists an $h\in H$ such that $\lVert \iota(h)-\Delta(g^{-1})(v)\rVert\leq A$, and so $\lVert \Delta(g)(\iota(h))-v\rVert\leq A^2$. Since $\sigma_{g}H=gH$ and $h_{\sigma_{g}h}=h$, we see $f(\sigma_gh)=(\Delta(g)\iota(h),gH)$ and so $d(f(\sigma_gh),(v,gH))\leq A^2$. Thus $N_{A^2}(f(G))=Y$. 
	
	Let $g,t\in G$ and suppose $d(f(g),f(t))=N$. It follows that  $\lVert \Delta(t)(v_t)-\Delta(g)(v_{g})\rVert\leq N$ and $d_{G/H}(gH,tH)\leq N$. Since $p:G\rightarrow G/H$ is a $(K,A)$-coarse bundle, we have $d(g,tH)\leq KN+A$. Thus $t'\coloneqq gs_1\dots s_m\in tH$ for some $s_1,\dots, s_m\in S$ with $m\leq KN+A$. It follows that \begin{align*}
		\lVert \Delta(t)(v_{t'})- \Delta(g)(v_{g}) \rVert &\leq \sum_{i=1}^m\lVert \Delta(gs_1\dots s_i)(v_{gs_1\dots s_i})- \Delta(gs_1\dots s_{i-1})(v_{gs_1\dots s_{i-1}}) \rVert\\
		&\leq mA_1\leq A_1(KN+A)
	\end{align*}
	by applying (\ref{eqn:sec_ineq}) $m$ times.
	
	Now \begin{align*}
		\lVert \iota(h_t)-\iota(h_{t'})\rVert &=\lVert v_t-v_{t'}\rVert\leq A\lVert \Delta(t)(v_t)-\Delta(t)(v_{t'})\rVert\\
		&\leq A\lVert \Delta(t)(v_t)-\Delta(g)(v_{g})\rVert + A \lVert \Delta(t)(v_{t'})- \Delta(g)(v_{g}) \rVert \\
		& \leq AN+AA_1(KN+A)
	\end{align*}
	Since $\sigma_t=\sigma_{t'}$, we have  \[d_G(t,t')=d_G(\sigma_th_t,\sigma_{t'}h_{t'})\leq K\lVert \iota(h_t)-\iota(h_{t'})\rVert +A.\] As $d_G(g,t)\leq m+d_G(t,t')$ and $N= d_Y(f(g),f(t))$, we deduce $d_G(g,t)\leq K_2  d_Y(f(g),f(t))+A_2$ for some constants $K_2$ and $A_2$ depending on $K$, $A$ and $A_1$. This completes the proof that $f$ is a quasi-isometry.
	Finally, we note  that if $\pi:\bbR^n\times G/H\rightarrow G/H$ is the projection, then $p=\pi\circ f$ and so $f$ is a fibre-preserving quasi-isometry. Therefore $p:G\rightarrow G/H$ is quasi-isometrically trivial. This completes the proof of Lemma \ref{lem:qitrivial} and hence of Proposition \ref{prop:qitrivial}.
\end{proof}

We now show  a coarse Lipschitz section always exists when the quotient space $G/H$ is hyperbolic.
\begin{lem}\label{lem:lipsection}
	Let $G$ be a finitely generated group containing a commensurated subgroup $\bbZ^n\cong H\alnorm G$. If the quotient space $G/H$ is hyperbolic, then $p:G\rightarrow G/H$ has a coarse Lipschitz section. 
\end{lem}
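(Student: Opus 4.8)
The plan is to recognise the failure of a coarse Lipschitz section as a bounded $2$-cocycle on a hyperbolic complex built from $G/H$, and to kill it using Mineyev's $\ell_\infty$-cohomology vanishing for hyperbolic complexes. First I would replace $G/H$ by a good geometric model. Let $S$ be a finite relative generating set of $(G,H)$ and let $Y:=\Gamma_{G,H,S}$ be the relative Cayley graph: it is connected and locally finite by Proposition \ref{prop:relcg_locfinite} (since $H$ is commensurated in $G$), it is quasi-isometric to $G/H$ by Proposition \ref{prop:word metric bilip}, hence $\delta$-hyperbolic, and since $G$ acts on it vertex-transitively by graph automorphisms it is uniformly locally finite. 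Via the orbit and quotient maps, the coarse bundle $p:G\to G/H$ of Proposition \ref{prop:alnorm_cbundle} is fibre-preservingly identified, up to quasi-isometry, with the coarse bundle $G\to Y$ whose fibre over $v\in V(Y)$ is the corresponding coset $p^{-1}(v)\subseteq G$. Fixing a monomorphism $\iota\colon H\hookrightarrow\bbR^n$ onto a uniform lattice (as set up earlier in this subsection, so that $N_A(\iota(H))=\bbR^n$) and a representative $g_v\in v$ for each vertex, coordinatise $p^{-1}(v)$ by the map $\phi_v\colon g_vh\mapsto\iota(h)$. By Lemma \ref{lem:coarse_lip} it then suffices to choose a section $v\mapsto x_v\in p^{-1}(v)$ whose defect $d_G(x_v,x_w)$ along edges of $Y$ is uniformly bounded.

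Next I would build a partial section and extract the obstruction. Fix a basepoint $v_0$ and a spanning tree $T$ of $Y$, and propagate a section $x^T$ over $T$ using closest-point projections between adjacent fibres, so that $x^T$ has bounded defect along edges of $T$ but, a priori, unbounded defect along the remaining edges. Pass to the Rips complex $X:=P_d(Y)$ for $d$ large enough that $X$ is contractible; it is a uniformly locally finite contractible hyperbolic complex, and the defects of $x^T$, measured in fibre-coordinates, extend to a $1$-cochain $c$ on $X$. The crucial local input is that the closest-point projection between two fibres over vertices at $Y$-distance at most $d$ is affine up to uniformly bounded error, with linear part a product of at most $O(d)$ matrices $\Delta(s)^{\pm1}$ ($s\in S$), hence of bounded operator norm; and that the composite projection around the boundary of any $2$-simplex of $X$ displaces every point of a fibre by a bounded amount, which (the fibres being unbounded and coarsely Euclidean) forces its linear part to be the identity. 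Thus the linear parts assemble into a genuine local system $\mathcal{E}$ on $X$, namely the one induced by the modular homomorphism $\Delta\colon G\to\GL_n(\bbQ)$, and $\omega:=\delta c$ is a \emph{bounded} $2$-cocycle on $X$ with coefficients in $\mathcal{E}$. Since $X$ is contractible, $[\omega]=0$ in $H^2(X;\mathcal{E})$.

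Finally I would invoke Mineyev and correct the section. Mineyev's linear isoperimetric inequality for hyperbolic uniformly locally finite complexes \cite{mineyev00isoperimetric} — equivalently, the injectivity of the comparison map $H^2_{(\infty)}(X;\mathcal{E})\to H^2(X;\mathcal{E})$ from $\ell_\infty$-cohomology — shows that $\omega=\delta\beta$ for a \emph{bounded} $1$-cochain $\beta$. Then $c-\beta$ is a genuine $1$-cocycle on the contractible complex $X$, so $c-\beta=\delta\gamma$ for some (possibly unbounded) $0$-cochain $\gamma$ on $V(Y)$. Define the corrected section by letting $x_v\in p^{-1}(v)$ be the point whose $\iota$-coordinate is a nearest lattice point to $\phi_v(x^T_v)-\gamma(v)$, using $N_A(\iota(H))=\bbR^n$ to round. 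Unwinding the coordinates, the defect of $v\mapsto x_v$ along an edge $(v,w)$ equals $c((v,w))-\delta\gamma((v,w))=\beta((v,w))$ up to a uniformly bounded rounding error, hence is uniformly bounded; so $v\mapsto x_v$ is a coarse Lipschitz section of $G\to Y$, and transporting it across the fibre-preserving quasi-isometry between $Y$ and $G/H$ gives a coarse Lipschitz section of $p\colon G\to G/H$.

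The hard part, I expect, is twofold. First, verifying that the defect data of $x^T$ genuinely assemble into a \emph{bounded} $2$-cocycle with values in $\mathcal{E}$: this rests on the uniform coarse-affineness of closest-point projections and on the rigidity fact that a coarsely-identity self-map of an unbounded Euclidean fibre has linear part exactly $\mathrm{Id}$, which is precisely what makes $\mathcal{E}$ a bona fide local system with trivial monodromy around $2$-cells. Second, applying Mineyev's filling estimate to the \emph{twisted} coefficient system $\mathcal{E}$, which need not be boundedly trivial exactly when $\im(\Delta)$ is unbounded — so one cannot reduce to constant coefficients and must run the hyperbolic linear isoperimetric argument with the modular twist in place. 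By contrast, the contractibility of $P_d(Y)$, the lattice rounding, and the bookkeeping of uniform constants should be routine.
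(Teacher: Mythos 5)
Your overall architecture coincides with the paper's: pass to a Rips complex of the hyperbolic quotient, encode the failure of a section as a $1$-cochain whose coboundary is a bounded $2$-cocycle, kill that cocycle using Mineyev's degree-two $\ell_\infty$-vanishing (Theorem \ref{thm:mineyev_iso}), correct the section by the resulting $0$-cochain, and round back into the lattice $\iota(H)$ using $N_A(\iota(H))=\bbR^n$ and Lemma \ref{lem:coarse_lip}. (The paper starts from an arbitrary section rather than a spanning-tree section, but that difference is cosmetic; the quantitative control on projections between nearby fibres that you assert is essentially Lemma \ref{lem:projlemma}.)

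The gap is exactly at the step you flag as the hard part: you need $H^2_{(\infty)}(X;\cE)=0$ (or injectivity of the comparison to ordinary cohomology) for the \emph{twisted} coefficient system $\cE$ induced by the modular homomorphism $\Delta$, and you give no argument for it. Mineyev's theorem, both as quoted in the paper (Theorem \ref{thm:mineyev_iso}) and in \cite{mineyev00isoperimetric}, is for untwisted coefficients in a normed vector space; when $\im(\Delta)$ is unbounded there is no off-the-shelf twisted analogue, and ``running the linear isoperimetric argument with the modular twist in place'' is precisely what would have to be proved, since the filling arguments transport coefficients along paths of linearly growing length and unbounded monodromy can destroy all $\ell_\infty$ bounds. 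The paper avoids this issue: its proof of Lemma \ref{lem:lipsection} retains the standing notation fixed before Lemma \ref{lem:projlemma}, which includes the assumption that $\im(\Delta)$ is conjugate into $\Orth_n(\bbR)$, so that $\lVert\Delta(g)v\rVert\leq A\lVert v\rVert$ uniformly in $g\in G$; this is also the only setting in which the lemma is applied (the converse direction of Theorem \ref{thm:qitocent_main}, where hypothesis (2) of Proposition \ref{prop:qitrivial} holds). Under that hypothesis your local system is boundedly trivial, and the paper untwists directly: it sets $\alpha([gH,tH])=\Delta(g)v_{g,t}$, a cochain with constant $\bbR^n$ coefficients whose coboundary is bounded precisely because the matrices $\Delta(g)$ have uniformly bounded norm, so Theorem \ref{thm:mineyev_iso} applies as stated. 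If you add the orthogonality hypothesis and perform this untwisting instead of insisting on genuinely twisted coefficients, your argument closes; as written, for the bare statement without that hypothesis, the key cohomological input is unproven.
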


We  prove Lemma \ref{lem:lipsection}  using $\ell_\infty$-cohomology;  see \cite{gersten98cohomol}. 
A simplicial complex is said to have \emph{bounded geometry} if for every $n$, there is a number $M_n$ such that every $n$-simplex is the face of at most $M_n$ $(n+1)$-simplices. Let $X$ be a bounded geometry contractible simplicial complex and let $(V,\lVert\cdot \rVert)$ be a normed real vector space. We define $C^n_{(\infty)}(X,V)$ to be the set of all simplicial $n$-cochains whose coefficients are  in $V$ and are bounded with respect to the norm on $V$. Since $X$ has bounded geometry,  $\delta(C^n_{(\infty)}(X,V))\subseteq C^{n+1}_{(\infty)}(X,V)$ for each $n$ and so  $(C^\bullet_{(\infty)}(X,V),\delta)$ forms a cochain complex. We   define $H^*_{(\infty)}(X,V)$ to be the cohomology of this cochain complex.

The key ingredient in Lemma \ref{lem:lipsection} is the following theorem of Mineyev:
\begin{thm}[\cite{mineyev00isoperimetric}]\label{thm:mineyev_iso}
	If $X$ is a hyperbolic bounded geometry contractible simplicial complex and $V$ is any real normed vector space,  then $H^2_{(\infty)}(X,V)=0$.
\end{thm}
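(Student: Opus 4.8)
The plan is to show directly that every bounded $2$-cocycle $\omega \in C^2_{(\infty)}(X,V)$ is the coboundary of a bounded $1$-cochain, via an explicit coning construction over a fixed basepoint built from a bounded homological bicombing of $X$. I would first record the purely algebraic input coming from contractibility. Let $C_\bullet(X)$ denote the simplicial chain complex of $X$ with real coefficients and finitely supported chains, with boundary $\partial$, and let $Z_\bullet,B_\bullet$ be its cycles and boundaries. Since $X$ is contractible we have $H_1(X)=H_2(X)=0$, so $B_1=Z_1$ and every finite $2$-cycle equals $\partial w$ for some finite $w\in C_3(X)$. If $\omega$ is a $2$-cocycle, then for such a cycle $z=\partial w$ we get $\omega(z)=\omega(\partial w)=(\delta\omega)(w)=0$; thus $\omega$ annihilates every finite $2$-cycle, and consequently $\omega(\Theta)$ depends only on $\partial\Theta$ for any finite $2$-chain $\Theta$.

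The geometric heart is the homological bicombing provided by Mineyev's work \cite{mineyev00isoperimetric}. Fix a vertex $o$ of $X$. One has, for each ordered pair of vertices $(a,b)$, a finite $1$-chain $q(a,b)\in C_1(X)$ with $\partial q(a,b)=b-a$, with $q(b,a)=-q(a,b)$, with $\|q(a,b)\|_1\le \lambda\, d(a,b)+\mu$, and supported within a bounded neighbourhood of a geodesic from $a$ to $b$; and --- crucially, this is where hyperbolicity enters essentially --- there is a constant $\kappa$, depending only on the hyperbolicity and bounded-geometry constants of $X$, such that for all vertices $a,b,c$ the $1$-cycle $q(a,b)+q(b,c)+q(c,a)$ bounds a finite $2$-chain of $\ell_1$-norm at most $\kappa$. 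Together with the linear homological isoperimetric inequality for hyperbolic bounded-geometry complexes (every finite $1$-cycle $z$ bounds a $2$-chain of norm $\le C\|z\|_1$), which controls the bounded-norm corrections near single edges, this yields: for every oriented edge $e=[a,b]$, the $1$-chain $z_e:=q(o,a)-q(o,b)+e$ is a cycle (since $\partial z_e=(a-o)-(b-o)+(b-a)=0$) and bounds a finite $2$-chain $\Theta_e$ with $\|\Theta_e\|_1\le\kappa'$ for a constant $\kappa'$ independent of $e$; indeed $z_e=\bigl(q(o,a)+q(a,b)+q(b,o)\bigr)-\bigl(q(a,b)-e\bigr)$, and each summand bounds a uniformly bounded $2$-chain, the first by the triangle bound, the second because $q(a,b)-e$ has bounded norm and support.

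Given this, I would define $\phi\in C^1_{(\infty)}(X,V)$ on oriented edges by $\phi(e):=\omega(\Theta_e)$. This is independent of the choice of $\Theta_e$ by the last observation of the first paragraph, it satisfies $\phi([b,a])=-\phi([a,b])$, and $\|\phi(e)\|_V\le \|\omega\|_\infty\,\|\Theta_e\|_1\le \kappa'\|\omega\|_\infty$, so $\phi$ is a genuine bounded $1$-cochain; note that this construction is explicit and uses no Hahn--Banach extension, so it is valid for an arbitrary real normed coefficient space $V$. It remains to verify $\delta\phi=\omega$. For a $2$-simplex $\sigma=[a,b,c]$, a direct computation gives $\partial\bigl(\Theta_{[a,b]}+\Theta_{[b,c]}-\Theta_{[a,c]}\bigr)=z_{[a,b]}+z_{[b,c]}-z_{[a,c]}=[a,b]+[b,c]-[a,c]=\partial\sigma$, so $\Theta_{[a,b]}+\Theta_{[b,c]}-\Theta_{[a,c]}-\sigma$ is a finite $2$-cycle; applying $\omega$ and using that it kills $2$-cycles gives $\omega(\sigma)=\omega(\Theta_{[a,b]})+\omega(\Theta_{[b,c]})-\omega(\Theta_{[a,c]})=\phi([a,b])+\phi([b,c])-\phi([a,c])=(\delta\phi)(\sigma)$. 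Hence $[\omega]=0$ in $H^2_{(\infty)}(X,V)$; since $\omega$ was arbitrary, $H^2_{(\infty)}(X,V)=0$.

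I expect the main obstacle to be the construction of the bicombing with the uniform triangle-filling bound. This cannot be obtained from a naive choice of (edge-path or homological) geodesics: two geodesics of length $N$ that fellow-travel a priori cobound only a strip of area $\sim N$, so the triangle-filling would be merely linear in the perimeter rather than bounded. Mineyev's construction circumvents this using the exponential concentration of the geodesic flow in a hyperbolic space --- his homological bicombing is engineered so that $q(o,a)$ and $q(o,b)$ genuinely almost coincide away from $a$ and $b$ --- and checking the bounded-area triangle property is the technical core of \cite{mineyev00isoperimetric}; everything downstream of it, as above, is formal homological algebra. An alternative route would be to deduce the statement from the vanishing of reduced $\ell^1$-homology of hyperbolic bounded-geometry complexes in positive degrees via $\ell^1$-homology/$\ell_\infty$-cohomology duality, but the bookkeeping with completions and reduced versus unreduced groups, and especially the passage to a general normed $V$, make the direct coning argument cleaner here.
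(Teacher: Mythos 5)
Your argument is internally correct, but be aware that the paper does not reprove this statement at all: it is quoted as a black box from Mineyev \cite{mineyev00isoperimetric}, and the paper's only original content is the remark that Mineyev's proofs (his Theorems 11 and 12) use nothing beyond thin triangles, a linear isoperimetric inequality and bounded geometry of the contractible complex, so no group action is needed. Your proposal instead reassembles a proof from a related piece of Mineyev's technology: the homological bicombing $q$ with uniformly bounded $\ell_1$-fillings of the cycles $q(a,b)+q(b,c)+q(c,a)$. The formal part of your reduction checks out: contractibility makes a bounded $2$-cocycle $\omega$ vanish on finite $2$-cycles, $z_e=q(o,a)-q(o,b)+e$ is a cycle admitting a filling $\Theta_e$ of uniformly bounded norm (triangle bound plus linear isoperimetric inequality for the bounded cycle $q(a,b)-e$), the cochain $\phi(e)=\omega(\Theta_e)$ is well defined and bounded, and the identity $\partial\bigl(\Theta_{[a,b]}+\Theta_{[b,c]}-\Theta_{[a,c]}\bigr)=\partial\sigma$ yields $\delta\phi=\omega$; you are also right that the construction avoids Hahn--Banach and hence works for an arbitrary normed $V$. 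What your route buys is a transparent reduction of the theorem to a single quantitative input, and you correctly isolate the crux: a linear isoperimetric inequality alone would only give cone fillings growing linearly in $d(o,a)$, so the uniformly bounded triangle fillings coming from the exponential convergence of Mineyev's bicombing are where hyperbolicity enters irreducibly. The one caveat, exactly parallel to the care the paper takes in its remark, is that the bicombing statement you import is usually stated equivariantly for hyperbolic groups; you should cite (or check) it in the non-equivariant form valid for bounded-valence hyperbolic graphs, since in the present generality $X$ carries no cocompact action.
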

\begin{rem}
	In the case where $X$ admits a geometric action of a finitely generated hyperbolic group $G$, this follows from \cite[Theorem 0]{mineyev00isoperimetric}. However, careful inspection of the proofs in \cite{mineyev00isoperimetric} demonstrates shows that Mineyev's argument automatically  generalises to the setting where $X^{(1)}$ is hyperbolic. Indeed, the proofs of \cite[Theorems 11 and 12]{mineyev00isoperimetric} are written in terms of a contractible cell complex $X$ that satisfies the following properties:
	\begin{enumerate}
		\item $X^{(1)}$ satisfies the thin triangle condition. This is the definition of  hyperbolicity of $X^{(1)}$.
		\item $X$ has a linear isoperimetric inequality, i.e.\ there is a constant $K$ such that any loop of length $L$ in $X^{(1)}$ can be filled in by a disc diagram of area at most  $KL$ in $X^{(2)}$. This is also equivalent to hyperbolicity of $X^{(1)}$.
		\item $X^{(1)}$ has bounded geometry. (This property is not explicitly mentioned in \cite{mineyev00isoperimetric}, but is used in the definition of the constant $T'$ in the proof of Theorem 12 of  \cite{mineyev00isoperimetric}.)
	\end{enumerate}
\end{rem}
\begin{proof}[Proof of Lemma \ref{lem:lipsection}]
		If $Y$ is a metric space and $r\geq 0$, the Rips complex $P_r(Y)$ is defined  to be the simplicial complex with vertex set $Y$, where $n+1$ distinct elements $\{y_0,\dots,y_n\}$ span an $n$-simplex precisely when $d(y_i,y_j)\leq r$ for all $r$. If $Y$ is quasi-geodesic, then the inclusion $Y\to P_r(Y)$ is a quasi-isometry for $r$ sufficiently large; see e.g. \cite[Exercise 9.27]{drutu2018geometric}. Moreover, if $Y$ is a net in a hyperbolic metric space, then $P_r(Y)$ is contractible for $r$ sufficiently large \cite[Proposition III.$\Gamma$.3.23]{bridson1999metric}.
	
	We retain the notation used in the proof of Lemma \ref{lem:qitrivial}, making the additional assumption $G/H$ is hyperbolic, i.e. the associated relative Cayley graph $\Gamma_{G,H}$ is hyperbolic. In particular this means that, $G/H$ equipped with the subspace metric, is a net in the hyperbolic metric space $\Gamma_{G,H}$. Thus  there exists an $r\geq 1$ such that the  Rips complex $X\coloneqq P_r(G/H)$ is contractible and the inclusion $G/H\rightarrow X$ is a quasi-isometry. Moreover,  $X$ has bounded geometry and is hyperbolic. 
	Let $R=Kr+A$. Fix an arbitrary section $\sigma:G/H\rightarrow G$ of $p:G\rightarrow G/H$ and let $\sigma_g\coloneqq \sigma(gH)$. 
	
	If $d_{G/H}(gH,tH)\leq r$, then as $p:G\rightarrow G/H$ is a $(K,A)$-coarse bundle, we have $\sigma_t\in N_R(gH)$ and so  there exists $k_{g,t}\in G$ and $h_{g,t}\in H$  such that $\sigma_t=\sigma_{g}h_{g,t}k_{g,t}$ with $\lvert k_{g,t}\rvert_S\leq R$. Note that $\Delta(t)=\Delta(\sigma_t)=\Delta(\sigma_{g}h_{g,t}k_{g,t})=\Delta(g)\Delta(k_{g,t})$, so that $\Delta(k_{g,t})=\Delta(g^{-1}t)$.  Let $v_{g,t}\coloneqq \iota(h_{g,t})$. 
	We define a cochain $\alpha\in C^1(X,\bbR^n)$ by $\alpha([gH,tH])=\Delta(g)v_{g,t}$ for each oriented 1-simplex $[gH,tH]$ of $X$. We  first show $\delta \alpha$ is contained in $C^2_{(\infty)}(X,\bbR^n)$.
	
	Note that if $d(gH,tH)\leq r$, $\sigma_g=\sigma_{g}h_{g,t}k_{g,t}h_{t,g}k_{t,g}$. Therefore $\lvert h_{g,t}k_{g,t}h_{t,g}\rvert_S\leq R$.
	By Lemma \ref{lem:projlemma}, \[	\lVert v_{g,t}+\Delta(g^{-1}t) v_{t,g}\rVert = \lVert \iota(h_{g,t})+\Delta(k_{g,t}) \iota(h_{t,g})\rVert\leq B_R\]
	with $B_R$ as in Lemma \ref{lem:projlemma}. Moreover, if $[gH,tH,kH]$ is a simplex of $X$,   then  $\sigma_{g}=\sigma_gh_{g,t}k_{g,t}h_{t,k}k_{t,k}h_{k,g}k_{k,g}$, so $\lvert h_{g,t}k_{g,t}h_{t,k}k_{t,k}h_{k,g}\rvert_S\leq R$. Applying Lemma \ref{lem:projlemma} again, we see 
	\begin{align*}
		\lVert v_{g,t}+\Delta(g^{-1}t) v_{t,k}+\Delta(g^{-1}k) v_{k,g}\rVert & = \lVert v_{g,t}+\Delta(k_{g,t}) v_{t,k}+\Delta(k_{g,t}k_{t,k}) v_{k,g}\rVert\leq B_R.
	\end{align*}
	These two inequalities combine to show \begin{align*}
		\lVert \delta \alpha[gH,tH,kH]\rVert&=\lVert \alpha[tH,kH]-\alpha[gH,kH]+\alpha[gH,tH]\rVert\\
		&=\lVert \Delta(t)v_{t,k}-\Delta(g)v_{g,k}+\Delta(g)v_{g,t}\rVert\\
		&\leq A\lVert \Delta(g^{-1}t)v_{t,k}-v_{g,k}+v_{g,t}\rVert\\
		&\leq A\lVert \Delta(g^{-1}t)v_{t,k}+\Delta(g^{-1}k)v_{k,g}+v_{g,t}\rVert+AB_R\\
		&\leq 2AB_R,
	\end{align*}
	implying that  $\delta\alpha$ is indeed in $C^2_{(\infty)}(X,\bbR^n)$. By Theorem \ref{thm:mineyev_iso}, there exists a  cochain $\beta\in C^1_{(\infty)}(X,\bbR^n)$ such that $\delta\alpha=\delta\beta$. Since $X$ is contractible, there exists $\gamma\in C^0(X,\bbR^n)$ such that $\delta\gamma=\beta-\alpha$. As $N_A(\iota(H))=\bbR^n$, for each $gH\in G/H$ we pick $\gamma_{gH}\in H$ such that $\lVert \iota(\gamma_{gH})-\Delta(g^{-1})\gamma(gH)\rVert\leq A$. We define a section $\hat \sigma:G/H\rightarrow G$ via the formula $\hat \sigma(gH)\coloneqq \sigma_g\gamma_{gH}$.

	We claim that $\hat\sigma$ is coarse Lipschitz. Since $\beta\in C^1_{(\infty)}(X,\bbR^n)$, there exists a constant $C$ such that $\lVert \beta([gH,gsH])\rVert\leq C$ for all $g\in G$ and $s\in S$.  If $g\in G$ and $s\in S$, then \begin{align*}
		\lVert\iota(\gamma_{gH}^{-1}h_{g,gs})+\Delta(k_{g,gs})(\iota(\gamma_{gsH}))\rVert& = \lVert-\iota(\gamma_{gH})+\iota(h_{g,gs}) +\Delta(s)(\iota(\gamma_{gsH}))\rVert\\
		&\leq \lVert -\Delta(g^{-1})\gamma(gH)+v_{g,gs} +\Delta(g^{-1})\gamma(gsH)\rVert+2A^2\\
		& \leq A\lVert -\gamma(gH)+\Delta(g)v_{g,gs} +\gamma(gsH)\rVert+2A^2\\
		& = A\lVert  (\delta\gamma+\alpha)[gH,gsH]\rVert +2A^2\\
		& = A\lVert \beta[gH,gsH]\rVert +2A^2\\&
		\leq AC+2A^2\eqqcolon A_1.
	\end{align*} 
	By Lemma \ref{lem:projlemma},  $\lvert \gamma_{gH}^{-1}h_{g,gs}k_{g,gs}\gamma_{gsH}\rvert_S\leq B_{A_1}$.
	It follows that for all $g\in G$ and $s\in S$, \begin{align*}
		d_G(\hat\sigma(gH),\hat\sigma(gsH))&=d_G(\sigma_g\gamma_{gH},\sigma_{gs}\gamma_{gsH})\\
		& =d_G(\sigma_g\gamma_{gH},\sigma_{g}h_{g,gs}k_{g,gs}\gamma_{gsH})\\
		&= \lvert \gamma_{gH}^{-1}h_{g,gs}k_{g,gs}\gamma_{gsH}\rvert_S\\
		&\leq B_{A_1}.	
	\end{align*}
	Thus $\hat \sigma$ is coarse Lipschitz by Lemma \ref{lem:coarse_lip}.\end{proof}

\subsection{Proof of Theorem \ref{thm:qitocent_main}}
Our starting point in the proof of Theorem \ref{thm:qitocent_main} is the following consequence of Propositions \ref{prop:inducedqa} and \ref{prop:coarsebundle_fibrepres}.
\begin{lem}\label{lem:prod_induced}
		Let $G$ be a finitely generated group quasi-isometric to $\bbR^n\times Q$, where $Q$ is a locally finite vertex-transitive non-elementary  hyperbolic graph. Then the natural quasi-action of $G$ on $\bbR^n\times Q$ induces a quasi-action $G\qa Q$ such that the projection $\bbR^n\times Q\rightarrow Q$ is coarsely equivariant.
\end{lem}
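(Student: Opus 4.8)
The plan is to realise the natural quasi-action $G\qa \bbR^n\times Q$ as a fibre-preserving quasi-action with respect to the product projection $\pi\colon \bbR^n\times Q\to Q$, and then to invoke Proposition \ref{prop:inducedqa} to extract the induced quasi-action $G\qa Q$. First I would record that $G$ admits a cobounded (indeed proper) quasi-action on $\bbR^n\times Q$: choosing a quasi-isometry $f\colon G\to \bbR^n\times Q$ with coarse inverse $\overline f$, the assignment $g\mapsto f\circ L_g\circ \overline f$ (where $L_g\colon G\to G$ is left multiplication by $g$ on $(G,d_S)$) is a quasi-action quasi-conjugate to the regular representation $G\curvearrowright (G,d_S)$, hence cobounded and proper; this is the natural quasi-action referred to in the statement. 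I would then check that $\bbR^n\times Q$ satisfies the hypotheses of Theorem \ref{thm:kkl} with a single hyperbolic factor: the space $Z=\bbR^n$ has all asymptotic cones homeomorphic to $\bbR^n$, while $Q$, being a locally finite vertex-transitive non-elementary hyperbolic graph, is a cocompact proper geodesic non-elementary hyperbolic metric space and therefore of coarse type I.

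Next I would apply Proposition \ref{prop:coarsebundle_fibrepres} with $I=\{1\}$, so that $X_I=Q$ and $\pi_I=\pi$; note that $\pi\colon \bbR^n\times Q\to Q$ is a coarse bundle with fibre $\bbR^n$ by Lemma \ref{lem:dirprod_cbundle}. Since there is only one hyperbolic factor, the homomorphism $\sigma\colon G\to \Sym(1)$ furnished by Theorem \ref{thm:kkl} is trivial, so $G_I=G$ and Proposition \ref{prop:coarsebundle_fibrepres} yields that the whole quasi-action $G\qa \bbR^n\times Q$ is fibre-preserving with respect to $\pi$. With fibre-preservation established, Proposition \ref{prop:inducedqa} produces an induced cobounded quasi-action $G\qa Q$ satisfying $\sup_{v\in V(Q)} d_{\Haus}(D_{g\cdot v},\, g\cdot D_v)<\infty$, where $D_v=\pi^{-1}(v)$. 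Finally, to deduce coarse equivariance of $\pi$ itself: for $x\in \bbR^n\times Q$ with $\pi(x)=v$ we have $x\in D_v$, so $g\cdot x\in g\cdot D_v$ lies within a uniformly bounded distance of $D_{g\cdot v}$; applying the coarse Lipschitz map $\pi$ then shows $d(\pi(g\cdot x),\, g\cdot \pi(x))$ is uniformly bounded in $g$ and $x$, as required.

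This argument is essentially an assembly of earlier results, so I do not anticipate a serious obstacle; the only points requiring genuine care are verifying that a locally finite vertex-transitive non-elementary hyperbolic graph meets the ``coarse type I'' hypothesis of Theorem \ref{thm:kkl}, and observing that with a single hyperbolic factor the permutation homomorphism $\sigma$ is trivial, so no passage to a finite-index subgroup of $G$ is needed and the induced quasi-action is defined on all of $G$.
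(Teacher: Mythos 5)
Your proposal is correct and is essentially the paper's intended argument: the paper derives this lemma precisely as a consequence of Proposition \ref{prop:coarsebundle_fibrepres} (via Theorem \ref{thm:kkl}, with $Z=\bbR^n$ and the single coarse type I factor $Q$, so that $\sigma$ is trivial and no finite-index subgroup is needed) together with Proposition \ref{prop:inducedqa} applied to the coarse bundle $\pi:\bbR^n\times Q\to Q$ of Lemma \ref{lem:dirprod_cbundle}. Your final step deducing coarse equivariance of $\pi$ from the uniform Hausdorff-distance bound and the coarse Lipschitz property of $\pi$ is exactly the right bookkeeping.
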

We now want to analyse the induced quasi-action $G\qa Q$. We first show the following:
\begin{prop}\label{prop:qitocenthyp}
	The induced quasi-action $G\qa Q$ as in Lemma \ref{lem:prod_induced} is discretisable and does not fix a point of $\partial Q$.
\end{prop}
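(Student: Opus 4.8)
The plan is to feed the induced quasi-action $G\qa Q$ into the trichotomy of Theorem \ref{thm:trichotomyhyp_main} and rule out the two non-discrete alternatives, and then separately rule out a boundary fixed point. First I would record that $Q$ is a proper non-elementary hyperbolic space (it is a locally finite vertex-transitive non-elementary hyperbolic graph), so Theorem \ref{thm:trichotomyhyp_main} applies to the cobounded quasi-action $G\qa Q$ from Lemma \ref{lem:prod_induced}. The connected case is impossible: it would quasi-conjugate $G\qa Q$ to a cocompact isometric action on a homogeneous simply-connected negatively curved manifold $Y$, so $Q$ would be quasi-isometric to $Y$, and then $Y$ would be a vertex-transitive locally finite graph up to quasi-isometry — contradicting, in the symmetric-space subcase, the fact that a rank one symmetric space of noncompact type is not quasi-isometric to any such graph unless it is $\bbR$-tree-like, which non-elementarity excludes; more robustly I would invoke that a manifold of this type is not quasi-isometric to a locally finite graph (e.g. for dimension reasons, or — cleanest — because $Q$ being a locally finite graph forces the topological completion of $G\qa Q$ to contain a compact open subgroup by Lemma \ref{lem:compact_open} combined with Proposition \ref{prop:disc_equiv}, whereas a connected-type completion does not). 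The mixed case is ruled out directly by Proposition \ref{prop:millefeuille}: a pure millefeuille space is not quasi-isometric to any vertex-transitive locally finite graph, so $Q$ cannot be quasi-isometric to one. Hence only the totally-disconnected case of Theorem \ref{thm:trichotomyhyp_main} survives, i.e. $G\qa Q$ is discretisable.

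For the boundary statement, I would argue by contradiction. Suppose $G\qa Q$ fixes a point of $\partial Q$. By Corollary \ref{cor:trichot_hyp_fixpt} this places $G\qa Q$ in the continuous, mixed, or discrete subcase, so $Q$ is quasi-isometric to a homogeneous negatively curved manifold, a pure millefeuille space, or a regular tree of finite valence $>2$. The first two are excluded exactly as above (Proposition \ref{prop:millefeuille} and the graph-versus-manifold obstruction). In the tree case $T_{k+1}$, I would use the structure of the proof of Corollary \ref{cor:trichot_hyp_fixpt}: the topological completion $\hat G$ of $G\qa Q$ is then amenable (it fixes a point of $\partial\hat G\cong\partial Q$ by Lemma \ref{lem:amen_focal_char}), hence non-unimodular. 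But $G$ is a finitely generated group quasi-isometric to $\bbR^n\times Q$, and — tracing through the setup — $G$ admits a cobounded quasi-action on all of $\bbR^n\times Q$; the point is that a boundary fixed point of the $Q$-factor quasi-action would propagate to make $G$ itself "look like" a lattice in a non-unimodular group, which is impossible since finitely generated groups are unimodular. More precisely I would show: if $G\qa Q$ fixes a point of $\partial Q$, then combining the coarse bundle structure $\bbR^n\times Q\to Q$ with the amenable topological completion gives a geometric action of a non-unimodular amenable locally compact group on a space quasi-isometric to $G$; since $G$ is a finitely generated group it is a uniform lattice in that group, forcing unimodularity, a contradiction.

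The main obstacle I expect is making the "$Q$ is not quasi-isometric to a manifold / not quasi-isometric to a millefeuille" exclusions airtight without circularity, and especially handling the regular-tree subcase cleanly. The millefeuille exclusion is immediate from Proposition \ref{prop:millefeuille}. For the manifold case the cleanest route is probably to observe that discretisability of $G\qa Q$ is already established, so by Proposition \ref{prop:disc_equiv} its topological completion is compact-by-(totally disconnected); if $G\qa Q$ also fixed a boundary point, Corollary \ref{cor:trichot_hyp_fixpt} together with the structure theorems would force the completion to be (virtually connected simple Lie)-by-nothing or to act geometrically on a tree — the former contradicts compact-by-totally-disconnected outright, and the latter (the tree case) is the genuinely delicate one. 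There I would lean on non-unimodularity of the amenable completion versus unimodularity of the finitely generated lattice $G$, which is the standard obstruction (cf. the discussion after Corollary \ref{cor:dischyp_qitohom} and Lemma \ref{lem:amen_focal_char}); I'd need to check that the ambient $\bbR^n$-factor does not interfere, which it does not since $\bbR^n$ is itself unimodular and the product of unimodular groups is unimodular.
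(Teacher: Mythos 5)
There is a genuine gap, and it sits exactly where the real work of the paper's proof lies. Your exclusion of the connected case rests on the claim that a homogeneous negatively curved manifold (in particular a rank one symmetric space) cannot be quasi-isometric to a locally finite vertex-transitive graph. This is false: the Cayley graph of a cocompact surface group is a locally finite vertex-transitive non-elementary hyperbolic graph quasi-isometric to $\bbH^2$, and more generally any uniform lattice in a rank one symmetric space provides such a graph. For the same reason your alternative route is circular: the topological completion is an invariant of the quasi-action, not of the space $Q$, so "$Q$ is a locally finite graph" does not force the completion to contain a compact open subgroup (compare Corollary \ref{cor:dischyp_qitohom}: a cocompact Fuchsian group is a vertex-transitive locally finite hyperbolic graph that is \emph{not} discretisable as a space). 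The only case you can dispatch purely from the geometry of $Q$ is the millefeuille case, via Proposition \ref{prop:millefeuille}, which is indeed what the paper does. To kill the connected/symmetric-space case one must use the hypothesis that $G$ is quasi-isometric to $\bbR^n\times Q$: the paper shows the set $V$ of group elements moving a fixed fibre of $\bbR^n\times Q\to Q$ a bounded amount is quasi-isometric to $\bbR^n$, hence has polynomial growth, while a cocompact subgroup of the isometry group of a negatively curved homogeneous manifold fixing a boundary point (Lemma \ref{lem:growth_homspace}), or a non-discrete cocompact subgroup of a rank one Lie group (Kleiner--Leeb's argument), forces super-polynomial growth of elements with small displacement --- a contradiction. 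Your proposal contains no substitute for this growth estimate.

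The tree subcase is also not handled correctly: you assert that $G$ is a uniform lattice in the (amenable, non-unimodular) topological completion of $G\qa Q$, but $G$ is quasi-isometric to $\bbR^n\times Q$, not to $Q$, so the map $G\to\hat G$ is neither proper nor a lattice embedding (the whole coarse $\bbR^n$ direction is bounded for the induced quasi-action), and no unimodularity contradiction results. The paper instead uses Lemma \ref{lem:tree_fixedpt}: by Mosher--Sageev--Whyte the quasi-action on the tree is quasi-conjugate to an action on a bushy tree with virtually $\bbZ^n$ vertex and edge stabilisers, and a fixed end would make $G$ a strictly ascending HNN extension of a virtually $\bbZ^n$ group, hence amenable, contradicting that $G$ is quasi-isometric to $\bbZ^n\times F_2$. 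Note finally that the paper's logical order is the reverse of yours: it first rules out a boundary fixed point (via the three exclusions above) and only then invokes Corollary \ref{cor:dichothyp_main}, with the remaining symmetric-space alternative again removed by the growth argument; discretisability is a conclusion of that dichotomy, not something available beforehand.
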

We first show how Theorem \ref{thm:qitocent_main} follows from  Proposition \ref{prop:qitocenthyp}:
\begin{proof}[Proof of Theorem \ref{thm:qitocent_main}]
	Let $Q$ be a locally finite vertex-transitive non-elementary  hyperbolic graph and let $G$ be a finitely generated group quasi-isometric to $X=\bbR^n\times Q$. By Lemma \ref{lem:prod_induced} and Proposition \ref{prop:qitocenthyp}, the cobounded quasi-action $G\qa X$ induces a discretisable  quasi-action $G\qa Q$ with coarse stabiliser $H$. Proposition \ref{prop:quasi-conj_coarse bundle} implies there is a fibre-preserving quasi-isometry between the  coarse bundles $G\rightarrow G/H$ and $X\rightarrow Q$.  In particular, $H$ is a finitely generated commensurated subgroup quasi-isometric to $\bbR^n$ and $G/H$ is quasi-isometric to $Q$. Since $H$ is quasi-isometric to $\bbR^n$, it has a finite index subgroup isomorphic to $\bbZ^n$; see for instance \cite[Theorem I.8.40]{bridson1999metric}. By Proposition \ref{prop:coarsestabsarecomm}, such a finite index subgroup is still a coarse stabiliser of  $G\qa Q$, so without loss of generality we may assume $H$ is isomorphic to $\bbZ^n$. Since $G\rightarrow G/H$ is quasi-isometrically trivial, it follows from  Proposition \ref{prop:qitrivial} that the image of the modular homomorphism $G\rightarrow \GL_n(\bbQ)$ is conjugate to $\Orth_n(\bbR)$. 
	
	Conversely, suppose $G$ contains a commensurated subgroup $H\cong \bbZ^n$ such that $G/H$ is quasi-isometric to $Q$, and  the image of the modular homomorphism $G\rightarrow \GL_n(\bbQ)$ is conjugate to $\Orth_n(\bbR)$. Since $Q$ is hyperbolic, it follows from Lemma \ref{lem:lipsection} that $p:G\rightarrow G/H$ has a coarse Lipschitz section. Proposition \ref{prop:qitrivial} implies that $p:G\rightarrow G/H$ is quasi-isometrically trivial. In particular, $G$ is quasi-isometric to $H\times G/H$, hence quasi-isometric to $\bbR^n\times Q$ as required.
\end{proof}
We now turn to the proof of Proposition \ref{prop:qitocenthyp}. We use the following lemma, whose  proof closely follows the proof of \cite[Proposition 5.3]{kleinerleeb2001symmetric}.
\begin{lem}\label{lem:growth_homspace}
	Let $X$ be a  negatively curved Riemannian homogeneous manifold $X$ of dimension at least two. Suppose $\Gamma\leq \Isom(X)$ is a finitely generated cocompact subgroup fixing a point of $\partial X$. Let $U\subseteq \Isom(X)$ be an identity neighbourhood. If \[f_U(k)\coloneqq \left|\{g\in \Gamma\mid g\in U, \lvert g\rvert_\Gamma<k \} \right|,\]
	where $\lvert \cdot \rvert _\Gamma$ is the word norm on $\Gamma$ with respect to a finite generating set, then for every $d>0$  $\limsup_{k\rightarrow \infty} \frac{f_U(k)}{k^d}=\infty$, i.e.\ $f_U$ grows faster than any polynomial function.
\end{lem}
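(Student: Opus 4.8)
The strategy is to play the polynomial growth of the horospheres centred at $\xi$ against the exponential growth of $X$, exploiting the fact that the parabolic flow towards $\xi$ contracts the horospherical directions exponentially. By Heintze's theorem (as recalled in the discussion of millefeuille spaces) I identify $X$, up to isometry, with a solvable Lie group $S=N\rtimes_\alpha\bbR$ with a left-invariant metric, where $N$ is nilpotent and $\alpha=(\alpha_t)_{t\in\bbR}$ is a contracting one-parameter group of automorphisms; projection to the $\bbR$-factor is, up to an affine reparametrisation, a Busemann function $b$ for $\xi$, and $S$ acts simply transitively on $X$ while fixing $\xi$. Since $\Gamma$ fixes $\xi$, the function $b\circ\gamma-b$ is constant for each $\gamma\in\Gamma$, which defines the Busemann homomorphism $\beta\colon\Gamma\to\bbR$. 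By Proposition~\ref{prop:generalms} the orbit map $\gamma\mapsto\gamma x_0$ (with $x_0$ the identity of $S$) is an $(L,C)$-quasi-isometry, so $|\beta(\gamma)|\le d_X(\gamma x_0,x_0)+C'\le L|\gamma|_\Gamma+C''$; also $\beta\not\equiv 0$, since a subgroup of $\Gamma$ preserving every horosphere would have an orbit contained in one horosphere, contradicting coboundedness. Fix $\gamma_0\in\Gamma$ with $t_0:=\beta(\gamma_0)>0$.

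The first step produces many short elements concentrated near one horosphere. Let $R$ be a coboundedness constant for $\Gamma\qa X$ and $H:=b^{-1}(b(x_0))$. Standard estimates in Heintze groups give the exponential distortion of horospheres: writing points of $X$ as $(n,s)$ and $\rho$ for an $\alpha$-homogeneous quasi-norm on $N$, one has $d_X((n,s),(n',s'))\asymp|s-s'|+\log^+\rho((n')^{-1}n)$, so $B_X(x_0,k)\cap H$ has $d_N$-diameter $\asymp e^{k}$ and hence contains $\gtrsim e^{Dk}$ points of any fixed $d_X$-net of $H$, where $D$ is the homogeneous dimension of $N$. Choosing, for each such net point, an element of $\Gamma$ whose orbit point lies within $R$ of it, I obtain $\gtrsim e^{Dk}$ distinct $\gamma\in\Gamma$ with $|\gamma|_\Gamma\le Lk$, $|\beta(\gamma)|\le R$, and $\rho(n_\gamma)\lesssim e^{C_1 k}$, where $\gamma x_0=(n_\gamma,s_\gamma)$. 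Now partition these elements into a \emph{fixed} finite number of classes, refining finitely often, according to the value of $s_\gamma$ up to a small tolerance and according to the compact ``rotational part'' $k_\gamma\in K_\xi:=\stab_{\Isom(X)}(x_0,\xi)$ in the decomposition $\gamma=h_\gamma k_\gamma$ with $h_\gamma\in S$, $h_\gamma x_0=\gamma x_0$, again up to a small tolerance; replacing each $\gamma$ by $\mu:=\gamma(\gamma^{*})^{-1}$ for a fixed class representative $\gamma^{*}$, I obtain $\gtrsim e^{Dk}$ elements $\mu\in\Gamma$ with $|\mu|_\Gamma\lesssim k$, rotational part within a small fixed tolerance $\delta$ of $1$, $|\beta(\mu)|\le\delta$, and $\rho(n_\mu)\lesssim e^{C_1 k}$.

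The second step conjugates these into $U$ by a power of $\gamma_0$. For such a $\mu$, put $m=m(\mu):=\lceil C_2|\mu|_\Gamma\rceil\asymp k$, chosen large enough that $e^{-\lambda m t_0}\rho(n_\mu)$ is as small as desired, where $e^{-\lambda}$ bounds the operator norm of $\alpha_{t_0}$ on the horospherical directions. Then $\widehat\mu:=\gamma_0^{m}\mu\gamma_0^{-m}\in\Gamma$ satisfies $|\widehat\mu|_\Gamma\le 2m|\gamma_0|_\Gamma+|\mu|_\Gamma\le C_3 k$, and — computing in $S$ and using that conjugation by $\gamma_0$ contracts the $N$-directions by $\alpha_{t_0}$ up to a bounded error — the $S$-component of $\widehat\mu$ is driven to within any prescribed distance of $1$ once the tolerances are small and $m$ is large, while the rotational component stays within a fixed multiple of $\delta$ of $1$. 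Choosing the tolerances small, once and for all, makes $\widehat\mu\in U$. Since $\mu\mapsto\widehat\mu$ is at most $C_3k$-to-one (the exponent $m(\mu)$ depends only on $|\mu|_\Gamma$, which takes $\le C_3 k$ values, and $\mu\mapsto\gamma_0^{m}\mu\gamma_0^{-m}$ is injective for fixed $m$), we get $f_U(C_3 k)\ge e^{Dk}/(C_3 k)$, which dominates every polynomial in $k$; hence $\limsup_{j\to\infty}f_U(j)/j^{d}=\infty$ for all $d>0$.

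I expect the genuine obstacle to be the compact rotational factor $K_\xi$ in the last step: conjugating a small element of $K_\xi$ by the deep parabolic translation $\gamma_0^{m}$ must not inflate it, even though $\gamma_0^{m}$ itself leaves every compact subset of $\Isom(X)$. This needs the structure $\Isom(X)=S\cdot K_\xi$ for Heintze manifolds together with the fact that the $\bbR$-flow normalises $K_\xi$ and acts on it isometrically for a bi-invariant metric, so that conjugation by $\gamma_0$ restricts to a \emph{uniformly} bi-Lipschitz automorphism of a neighbourhood of $1$ in $K_\xi$. When $K_\xi$ is finite — the generic case, where $\Isom(X)^{0}=S$ — this is automatic and the argument is clean; the positive-dimensional case requires the finer structure theory of isometry groups of homogeneous negatively curved manifolds.
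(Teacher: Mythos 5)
The decisive problem is in your first counting step. You invoke Proposition \ref{prop:generalms} to claim the orbit map $\gamma\mapsto\gamma x_0$ is an $(L,C)$-quasi-isometry from $(\Gamma,\lvert\cdot\rvert_\Gamma)$ to $X$, and you then use this to produce $\gtrsim e^{Dk}$ \emph{distinct} elements of $\Gamma$ with $\lvert\gamma\rvert_\Gamma\le Lk$ whose orbit points approximate a separated net in $B_X(x_0,k)\cap H$. But the lemma only assumes that $\Gamma$ is a finitely generated \emph{cobounded} subgroup of $\Isom(X)$; the action is in general not proper (indeed the interesting case, and the one arising in the paper's application in Section \ref{sec:qicent}, is when $\overline{\Gamma}$ is non-discrete: there $\Gamma$ is the image of a group quasi-isometric to $\bbR^n\times Q$ acting on $X\sim Q$). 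Proposition \ref{prop:generalms} gives a quasi-isometry only for the word metric with respect to a \emph{bounded} generating set $S=\{g : d(g\cdot x_0,x_0)\le R\}$, which in the non-proper case is infinite and induces a metric strictly coarser than any finite-generating-set word metric. For a finite generating set one only gets the coarse Lipschitz bound $d_X(\gamma x_0,x_0)\le L\lvert\gamma\rvert_\Gamma+C$ (which suffices for your bound on $\beta$, but not for what follows): coboundedness produces, for each net point $p$, \emph{some} $\gamma$ with $d(\gamma x_0,p)\le R$, with no control whatsoever on $\lvert\gamma\rvert_\Gamma$. So the claim that word balls of radius $\asymp k$ coarsely cover $B_X(x_0,k)\cap H$, hence the exponential supply of short elements concentrated near one horosphere, is unjustified, and the final estimate $f_U(C_3k)\ge e^{Dk}/(C_3k)$ has no support. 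The later steps (pigeonholing on the Busemann value and the rotational part, contraction by $\gamma_0^m$, the bounded multiplicity of $\mu\mapsto\widehat\mu$) are reasonable granted step one, and the issue you flag about the compact factor $K_\xi$ is comparatively minor; the gap is at the very start.

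For contrast, the paper's proof never needs any covering property of word balls: it passes to the closure $G=\overline{\Gamma}\le\Isom(X)$, applies the Caprace--Cornulier--Monod--Tessera structure theorem (modulo a compact normal subgroup, $G\cong H\rtimes_\alpha\bbZ$ or $H\rtimes_\alpha\bbR$ with $\alpha$ compacting), and uses only the \emph{density} of $\Gamma$ in $G$ to find two specific elements: a contracting element $\gamma=g^m\in\Gamma$ and a nontrivial $h\in\Gamma\cap A$ with small norm, where $A\cong\bbR^s$ is the last nontrivial derived subgroup of $G^\circ$ (in the nilpotent case). The $2^n$ ``binary expansion'' products $h^{\epsilon_0}(\gamma h\gamma^{-1})^{\epsilon_1}\cdots(\gamma^{n-1}h\gamma^{1-n})^{\epsilon_{n-1}}$ are then distinct elements of $\Gamma\cap U$ of word length $O(n^2)$, giving $f_U(k)\gtrsim 2^{c\sqrt{k}}$, which is all that is needed; the non-nilpotent case is delegated to Kleiner--Leeb. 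If you want to salvage your geometric approach, you would have to add (and then verify in the application) a hypothesis guaranteeing that orbit points of word balls of radius $\asymp k$ are $R$-dense in $B_X(x_0,k)$, which does not follow from coboundedness alone; the algebraic route through the closure avoids this entirely.
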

We recall some terminology from \cite{caprace2015amenable} that will be used in the proof of Lemma \ref{lem:growth_homspace}.
If $H$ is a locally compact group, then $\alpha\in \Aut(H)$ is:
\begin{itemize}
	\item \emph{compacting} if there exists a compact subset $K\subseteq H$ such that $\alpha(K)\subseteq K$ and $\cup_{n=0}^\infty{\alpha^{-n}(K)}=H$;
	\item \emph{contracting} if for all $g\in H$, $\lim_{n\rightarrow\infty}\alpha^n(g)=1$.
\end{itemize}
\begin{proof}[Proof of Lemma \ref{lem:growth_homspace}]
	Let $G$ be the closure of  $\Gamma\subseteq \Isom(X)$. It follows from  \cite[Theorem 7.3]{caprace2015amenable} --- see also \cite[Proposition 19.8]{cornulier2018quasi} --- that $G$ has a maximal compact subgroup $W$ and $G/W$ is isomorphic to a semidirect product $H\rtimes_\alpha \bbZ$ or $H\rtimes_\alpha \bbR$, where $H$ is a virtually connected non-compact Lie
	group and $\alpha(1)$ is a compacting automorphism of $H$. Replacing $G$ with $G/W$ if necessary, we may assume $W=1$.
	
	 Let $G^\circ$ be the connected component of $G$.  Since $G^\circ$ is connected and possesses no compact normal subgroup (as $G^\circ$ does not), $G^\circ$ is a connected Lie group.   There are two cases to consider, as in \cite[Proposition 5.3]{kleinerleeb2001symmetric}.\\
	\underline{Case 1: $G^\circ$ is nilpotent}\\
	First note  $G$ cannot be isomorphic to $H\rtimes_\alpha \bbR$, since then $G^\circ$ would be a cocompact subgroup of $G$, hence $G^\circ$ would be a focal hyperbolic group. This cannot occur, since the nilpotent Lie group  $G^\circ$ is unimodular, contradicting \cite[Theorem 7.3]{caprace2015amenable}. Therefore $G$ is isomorphic to $H\rtimes_\alpha \bbZ$ and so $G^\circ=H^\circ$ is a finite index open subgroup of $H$. 
	As $\alpha^{-1}(1)$ is an open subgroup of $G$, it intersects the dense subgroup $\Gamma$. We can thus pick some $g\in \Gamma \cap \alpha^{-1}(1)$ that acts on  $H$ via conjugation  as a compacting automorphism. Since $G^\circ\leq H$ is characteristic, $g$ acts on $G^\circ$ as a compacting automorphism. It follows from \cite[Proposition 6.9]{caprace2015amenable} that $G^\circ$ is simply-connected and the action of $g$  on the nilpotent group  $G^\circ$ is contracting. 
	
	As $G^\circ$ is open,  $\Gamma\cap G^\circ$ is dense in $G^\circ$. Pick the largest $k$ such that the $k$th derived subgroup $A\coloneqq (G^\circ)^{(k)}$ is nontrivial. Since $G^\circ$ is a simply connected nilpotent Lie group,  $A\cong \bbR^s$ for some $s$.  Moreover, since the $k$th derived subgroup $(\Gamma\cap G^\circ)^{(k)}$ is dense in $A$,  $\Gamma\cap A$ is dense in $A$. Let $\lVert \cdot \rVert_A$ be a Euclidean norm on $A$. 
	As $g$ acts on $G^\circ$ as a contracting automorphism, we can thus pick $m$ sufficiently large such that $\lVert g^mag^{-m} \rVert_A\leq \frac{1}{2}\lVert a \rVert_A$ for all $a\in A$. Set $\gamma=g^m$.
	
	  Let $U\subseteq G$ be an identity neighbourhood and pick $r>0$ such that if $a\in A$ and  $\lVert a \rVert_A<r$, then $a\in U$. As $\Gamma\cap A$ is dense in $A$, we can pick a nontrivial $h\in \Gamma$ such that $h\in A$ and  $\lVert h \rVert_A<\frac{r}{2}$.
	  Then for $n\in \bbN$,  the elements 
	 		\[\gamma_{\epsilon_0\dots \epsilon_{n-1}}=h^{\epsilon_0}(\gamma h\gamma^{-1})^{\epsilon_1}(\gamma^2 h\gamma^{-2})^{\epsilon_2}\dots (\gamma^{n-1} h\gamma^{1-n})^{\epsilon_{n-1}}\]
	 with $\epsilon_0,\dots,\epsilon_{n-1}\in \{0,1\}$ are $2^n$ distinct elements of $\Gamma\cap U\cap A$  with word norm in $\Gamma$ at most $n^2(\lvert \gamma\rvert_\Gamma+\lvert h\rvert_\Gamma)$. Therefore, $f_U$ grows faster  than any polynomial function. 
	 
	 \underline{Case 2: $G^\circ$ is not nilpotent}\\
	 We argue as in case 2 of the proof of \cite[Proposition 5.3]{kleinerleeb2001symmetric}. Namely, we define $Z_k$ to be the  term in the upper central series of $G^\circ$ at which $\dim(Z_k)$ achieves its maximum value. Then $G/Z_k$ has discrete centre and $\dim(Z_k)<\dim(G)$ as $G^\circ$ is not nilpotent. We can therefore apply \cite[Lemma 5.5]{kleinerleeb2001symmetric} to $\Gamma\leq G$.
\end{proof}

We also need to following lemma for the case in which $Q$ is a tree. 
\begin{lem}\label{lem:tree_fixedpt}
	Let $T$ be a locally finite infinite-ended tree and let $G$ be a finitely generated group quasi-isometric to $\bbR^n\times T$. Then the induced quasi-action $G\qa T$ as in Lemma \ref{lem:prod_induced} does not fix a point of $\partial T$.
\end{lem}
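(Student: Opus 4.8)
\textbf{Proof proposal for Lemma \ref{lem:tree_fixedpt}.}

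The plan is to argue by contradiction, mirroring the structure of the proof of Lemma \ref{lem:growth_homspace} but exploiting the rigidity of trees instead of a growth estimate. Suppose the induced quasi-action $G \qa T$ fixes a point $\xi \in \partial T$. The quasi-action $G \qa T$ is cobounded (by Proposition \ref{prop:inducedqa}, since $G \qa \bbR^n \times T$ is cobounded), so by Corollary \ref{cor:morse_topcomp_exist} it has a topological completion $\rho : G \to \hat G$, where $\hat G$ is a compactly generated locally compact group quasi-isometric to $T$. As in the proof of Corollaries \ref{cor:dichothyp_main} and \ref{cor:trichot_hyp_fixpt}, the quasi-conjugacy between $G \qa T$ and the regular representation of $\hat G$ induces a homeomorphism $\partial T \to \partial \hat G$ conjugating the $G$-action to the $\rho(G)$-action; since $\rho(G)$ is dense and $\hat G$ acts continuously on $\partial \hat G$, the group $\hat G$ fixes a point of $\partial \hat G$, so by Lemma \ref{lem:amen_focal_char} the group $\hat G$ is amenable. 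Since $\hat G$ is quasi-isometric to the locally finite tree $T$, which has a vertex-transitive automorphism group, $\hat G$ is not quasi-isometric to any pure millefeuille space (Proposition \ref{prop:millefeuille}) nor to a negatively curved homogeneous space of dimension $\geq 2$ (these are not quasi-isometric to trees, e.g.\ by looking at the topology of the boundary, or by the fact that they are not unimodular while having... — more simply, their boundaries are spheres, not Cantor sets). Hence Theorem \ref{thm:hypisom_amen} forces $\hat G$ to act geometrically on a regular tree of finite valence $\geq 3$; in particular $\hat G$ is a totally disconnected locally compact group acting geometrically on a locally finite tree, so $G \qa T$ is discretisable (Proposition \ref{prop:disc_equiv}) with some coarse stabiliser.

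Now I use the product structure. Since $G$ is quasi-isometric to $\bbR^n \times T$, Proposition \ref{prop:quasi-conj_coarse bundle} applied to the coarse bundle $\bbR^n \times T \to T$ and the fibre-preserving quasi-action $G \qa \bbR^n \times T$ (fibre-preserving because each $g$ is fibre-preserving by Lemma \ref{lem:prod_induced} / the discussion around Proposition \ref{prop:inducedqa}) shows that $G$ contains a commensurated subgroup $H$ quasi-isometric to $\bbR^n$ with $G/H$ quasi-isometric to $T$; passing to a finite-index subgroup we may take $H \cong \bbZ^n$. So $G$ is a group with a commensurated subgroup $\bbZ^n \cong H \alnorm G$ and $G/H$ hyperbolic (quasi-isometric to $T$). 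The key point is that, under the assumption that $G \qa T$ fixes $\xi \in \partial T$, one gets extra information: the topological completion $\hat G$ of $G \qa T$ is a tree-acting group fixing an end of the tree, i.e.\ a \emph{focal} totally disconnected hyperbolic group. Such a group is non-unimodular (Lemma \ref{lem:amen_focal_char}: a non-elementary amenable hyperbolic group is not unimodular), and the modular function of $\hat G$ is exactly the fibre-distortion / height function: the composite $G \to \hat G \to \bbR_{>0}$ (modular character, written additively) is, up to bounded error, the fibre-distortion function of the coarse bundle $G \to G/H$, which by Example \ref{exmp:fibre_abelian} equals $gH \mapsto \log \lVert A_g \rVert$ where $A_g = \Delta(g)$ is the image of the modular homomorphism $\Delta : G \to \GL_n(\bbQ)$. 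Since the composite is a genuine unbounded homomorphism $G \to \bbR$ (the modular character of $\hat G$ is nontrivial because $\hat G$ is non-unimodular and $\rho(G)$ is dense), the fibre-distortion function is unbounded, so by Example \ref{exmp:fibre_abelian} the image of $\Delta$ is \emph{not} conjugate into $\Orth_n(\bbR)$.

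To derive the contradiction, I reconcile this with the fact that $G$ is quasi-isometric to $\bbR^n \times T$: by Lemma \ref{lem:lipsection}, since $G/H$ is hyperbolic, $p : G \to G/H$ has a coarse Lipschitz section; and since $G$ is quasi-isometric to $\bbR^n \times T$, the coarse bundle $p : G \to G/H$ is quasi-isometric to the trivial bundle $\bbR^n \times T \to T$, hence fibre-preserving quasi-isometric to it by Proposition \ref{prop:quasi-conj_coarse bundle} (as in the proof of Theorem \ref{thm:qitocent_main}); in particular $p$ is quasi-isometrically trivial, so Corollary \ref{cor:fibre-distortion_bounded} forces the fibre-distortion function to be bounded. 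By Example \ref{exmp:fibre_abelian} this means $\im(\Delta)$ \emph{is} conjugate into $\Orth_n(\bbR)$ — contradicting the previous paragraph. Therefore $G \qa T$ cannot fix a point of $\partial T$.

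\textbf{Main obstacle.} The delicate step is the precise identification of the modular character of the topological completion $\hat G$ of $G \qa T$ with the fibre-distortion function of $G \to G/H$, together with the claim that this character is nontrivial exactly when $\hat G$ fixes an end. Establishing nontriviality requires knowing that a totally disconnected focal hyperbolic group acting geometrically on a tree is genuinely non-unimodular (Lemma \ref{lem:amen_focal_char} gives this) and that this non-unimodularity is \emph{detected} by the geometry of the coarse bundle rather than being washed out — i.e.\ that passing between $G \qa T$, the coarse stabiliser $H$, and the quotient space does not lose the distortion data. All the needed comparison tools (Proposition \ref{prop:quasi-conj_coarse bundle}, Proposition \ref{prop:fib qi invariant}, Corollary \ref{cor:fibre-distortion_bounded}, Example \ref{exmp:fibre_abelian}) are available, so the argument should go through, but care is needed to make the "modular character $=$ fibre distortion" correspondence rigorous; an alternative, possibly cleaner route is to avoid $\hat G$ entirely and instead run the contradiction purely in terms of the induced quasi-action on $T$ fixing an end forcing the fibre-distortion to be unbounded via a Busemann-function argument on $T$, but the completion-theoretic route seems to fit the machinery of the paper best.
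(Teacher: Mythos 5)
Your argument is correct in strategy but follows a genuinely different route from the paper. The paper's proof is much shorter and avoids the completion machinery entirely: since $\bbR^n\times T$ is quasi-isometric to $\bbZ^n\times F_2$, the group $G$ is non-amenable; on the other hand, by Mosher--Sageev--Whyte the induced quasi-action on $T$ is quasi-conjugate to an isometric action on a locally finite bushy tree whose vertex and edge stabilisers are virtually $\bbZ^n$, and a fixed end (after passing to a minimal subtree) forces $G$ to be a strictly ascending HNN extension of a virtually $\bbZ^n$ group, hence amenable --- contradiction. Your route instead plays non-unimodularity of the focal completion against the orthogonality of the modular image coming from the product structure, in effect anticipating part of the proof of Theorem \ref{thm:qitocent_main}; it buys the more quantitative conclusion that $\det\circ\Delta$ is non-trivial, at the cost of considerably heavier machinery (Corollary \ref{cor:trichot_hyp_fixpt}, discretisability, Proposition \ref{prop:quasi-conj_coarse bundle}, fibre distortion), all of which is legitimately available here without circularity.

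Two points need attention to make your argument complete. First, the bridging fact you flag really is missing and must be proved: with $\hat H\leq\hat G$ compact open and $H=\rho^{-1}(\hat H)$, density of $\rho(G)$ plus openness give $[H:H\cap gHg^{-1}]=[\hat H:\hat H\cap\rho(g)\hat H\rho(g)^{-1}]$ and likewise for the other index, so the Haar modular function of $\hat G$ pulled back along $\rho$ is the commensurability index-ratio character of $H$, which for $H\cong\bbZ^n$ equals $\lvert\det\Delta(g)\rvert^{\pm1}$; this is standard but is nowhere in the paper and needs to be written out (and one should note it is insensitive to replacing $H$ by a commensurable coarse stabiliser or to quotienting $\hat G$ by a compact normal subgroup). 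Second, your identification ``modular character $=$ fibre-distortion function up to bounded error'' is not literally correct: the fibre distortion is $gH\mapsto\lvert\log\lVert\Delta(g)\rVert\rvert$ whereas the modular character gives $\log\lvert\det\Delta(g)\rvert$, and these differ (consider determinant-one matrices with unbounded powers). Fortunately only one implication is needed and it holds: non-unimodularity of the amenable focal completion (Lemma \ref{lem:amen_focal_char}) yields some $g$ with $\lvert\det\Delta(g)\rvert\neq1$, which directly contradicts the conclusion of your final paragraph that $\im(\Delta)$ is conjugate into $\Orth_n(\bbR)$, since orthogonal matrices have determinant $\pm1$; so you can bypass fibre distortion in that step altogether.
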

\begin{proof}
	First note that  $\bbR^n\times T$ is quasi-isometric to $\bbZ^n\times F_2$, where $F_2$ is  a free group of rank two. Thus $G$ is quasi-isometric to a non-amenable finitely generated group, hence is non-amenable. We assume for contradiction that $G$  fixes a point of $\partial T$.

	By \cite[Theorems 1 and 2]{mosher2003quasi}, there exists a locally finite bushy tree $T'$ such that the  induced action $G\qa T$ can be quasi-conjugated to an isometric action $G\qa T'$, and the stabiliser of every vertex and edge of $T'$ is virtually $\bbZ^n$. As $G$ fixes a point of $\partial T$, $G$ also fixes a point of $\partial T'$. Replacing $T'$ with a minimal subtree and considering the quotient graph $G\backslash T'$, we deduce  $G$ must be a strictly ascending HNN extension of a group $H$ that is virtually $\bbZ^n$ and defined by an injective endomorphism $\alpha:H\rightarrow H$. Since $H$ is amenable, so is the normal subgroup $H_\infty\coloneqq \cup_{i\in \bbN}\alpha^{-i}(H)$. Since  $G/H_\infty\cong \bbZ$ is an infinite cyclic subgroup generated by the image of the stable letter,  $G$ is amenable. This is the desired contradiction.  
\end{proof}
\begin{proof}[Proof of Proposition \ref{prop:qitocenthyp}]
	We suppose for contradiction $G\qa Q$ fixes a point of $\partial Q$. By Corollary \ref{cor:trichot_hyp_fixpt},  $G\qa Q$ can be quasi-conjugated to an isometric action $\rho:G\rightarrow \Isom(Z)$, where $Z$ is either a negatively curved homogeneous space, a pure millefeuille space, or a regular tree of finite valency greater than two. Since $Q$ is a locally finite vertex-transitive graph, Proposition \ref{prop:millefeuille} says  $Z$ is not a pure millefeuille space. Moreover, Lemma \ref{lem:tree_fixedpt} precludes the case $Z$ is a tree. Therefore,  $Z$  must be a negatively curved homogeneous space. 
	
	To apply Lemma \ref{lem:growth_homspace}, we need to fix some constants. Let $X\coloneqq \bbR^n\times Q$ and  suppose $G\qa X$ is an $A_1$-cobounded  proper quasi-action on $X$. We also assume the projection $\pi:X\rightarrow Q$ is $A_2$-coarsely equivariant and that  $f:Q\rightarrow Z$ is a $(K_3,A_3)$-quasi-conjugacy from the quasi-action $G\qa Q$ to the isometric action $G\curvearrowright Z$. 
	
	Pick a basepoint $x_0\in X$, set $q_0\coloneqq\pi(x_0)$ and $z_0\coloneqq f(\pi(x_0))$. Set $A_4\coloneqq \max(K_3(2A_3+1)+A_2,A_1)$ and let \[V\coloneqq \{g\in G\mid d_Q(\pi(g\cdot x_0),q_0)\leq A_4\}.\] We claim $V\subseteq G$, equipped with the subspace metric, is quasi-isometric to $\bbR^n$. Since the map  $h:G\rightarrow X$ given by $g\mapsto g\cdot x_0$ is a quasi-isometry, it is sufficient to show that $h(V)$ has finite Hausdorff distance from $D_{q_0}\coloneqq \pi^{-1}(q_0)$, which is isometric to $\bbR^n$. By definition $h(V)\subseteq N_{A_4}(D_{q_0})$. Since $G\qa X$ is $A_1$-cobounded, it follows for each $x\in D_{q_0}$, there is some $g\in G$ such that $d(g\cdot x_0,x)\leq A_1$.  Therefore, $d_Q(\pi(g\cdot x_0),q_0)\leq A_1\leq A_4$, so $g\in V$ and hence $x\in N_{A_1}(h(V))$. As $D_{q_0}\subseteq N_{A_1}(h(V))$, $h(V)$ and $D_{q_0}$ are at finite Hausdorff distance. It follows that $r(k)\coloneqq \left| \{g\in V\mid \lvert g\rvert_G\leq k\}\right|$ grows polynomially with $k$.
	
	Let $U\coloneqq \{\phi\in \Isom(Z)\mid d_Z(\phi(z_0),z_0)< 1\}$, which is an identity neighbourhood in $\Isom(Z)$. We claim $\rho^{-1}(U)\subseteq V$. Indeed, suppose $g\in \rho^{-1}(U)$. Then \begin{align*}
		d_Q(\pi(g\cdot x_0),q_0)&\leq d_Q(g\cdot q_0,q_0)+A_2\\
		&\leq K_3(d_Z(f(g\cdot q_0),z_0)+A_3)+A_2\\
		& \leq K_3(d_Z(\rho(g)(z_0),z_0)+2A_3)+A_2\\
		& \leq K_3(1+2A_3)+A_2\\
		&\leq A_4
	\end{align*}
	so that $g\in V$. Lemma \ref{lem:growth_homspace} then guarantees that $r(k)$ grows faster than any polynomial, contradicting the previous paragraph.

	We have thus shown that  $G\qa Q$ cannot fix a point of $\partial Q$. By Corollary \ref{cor:dichothyp_main},  either $G\qa Q$ is discretisable or $G\qa Q$ is quasi-conjugate to a non-discrete isometric action on a rank one symmetric space $Z$. The latter cannot occur, as shown in the proof of \cite[Theorem 1.1]{kleinerleeb2001symmetric} (see also \cite[Proposition 5.3]{kleinerleeb2001symmetric}).
\end{proof}

\subsection{Consequences of Theorem \ref{thm:qitocent_main}}

\begin{thm}\label{thm:zbyhyp_main}
	Let $Q$ be a locally finite vertex-transitive non-elementary  hyperbolic graph. A finitely generated group $G$ is quasi-isometric to $\bbR\times Q$ if and only if $G$ contains an infinite cyclic normal subgroup $\bbZ\cong H\vartriangleleft G$  such that $G/H$ is quasi-isometric to $Q$.  
\end{thm}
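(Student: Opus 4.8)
The plan is to deduce Theorem~\ref{thm:zbyhyp_main} from Theorem~\ref{thm:qitocent_main} by analysing the two-ended commensurated subgroup $H\cong\bbZ$ produced there. First I would prove the forward direction: if $G$ is quasi-isometric to $\bbR\times Q$, then Theorem~\ref{thm:qitocent_main} with $n=1$ gives a commensurated subgroup $\bbZ\cong H\alnorm G$ with $G/H$ quasi-isometric to $Q$ and with the image of the modular homomorphism $\Delta:G\to\Comm(H)\cong\GL_1(\bbQ)\cong\bbQ^\times$ conjugate into $\Orth_1(\bbR)=\{\pm1\}$. Since $\bbQ^\times$ is abelian, ``conjugate to a subgroup of $\{\pm1\}$'' just means $\im(\Delta)\subseteq\{\pm1\}$, so the kernel $G_0\coloneqq\ker(\Delta)$ has index at most two in $G$ and centralises a finite index subgroup of $H$. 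Replacing $H$ by $H\cap\bigcap_{g\in G}gHg^{-1}$ (a finite index, hence infinite cyclic, characteristic-in-$H$ subgroup, which one checks is still commensurated and still has quotient space quasi-isometric to $Q$) I may assume every $g\in G_0$ centralises $H$; then $H$ is central in $G_0$, in particular normal in $G_0$.

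The remaining task in the forward direction is to upgrade ``$H$ normal in the index-$\leq2$ subgroup $G_0$'' to ``$H$ commensurable to a subgroup normal in all of $G$''. Here I would invoke Lemma~\ref{lem:weaksep}: the implication (\ref{item:weaksep_2.5})$\implies$(\ref{item:weaksep_2})$\implies$(\ref{item:weaksep_1}) shows that a commensurated subgroup that is normal in a finite index subgroup contains a finite index subgroup normal in $G$. Concretely, $H$ has finitely many $G$-conjugates, each commensurable to $H$ since $H\alnorm G$, so $H'\coloneqq\bigcap_{g\in G}gHg^{-1}$ is a finite index subgroup of $H$ that is genuinely normal in $G$; being finite index in $\bbZ$ it is itself infinite cyclic. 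Since $H'$ is commensurable to $H$, Proposition~\ref{prop:word metric bilip} (or rather the fact that commensurable commensurated subgroups have bi-Lipschitz quotient spaces, which follows from the same kind of argument as Proposition~\ref{prop:coarsestabsarecomm}) gives that $G/H'$ is quasi-isometric to $G/H$, hence to $Q$. This furnishes the infinite cyclic normal subgroup $\bbZ\cong H'\vartriangleleft G$ with $G/H'$ quasi-isometric to $Q$, as required.

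For the converse direction I would argue: suppose $\bbZ\cong H\vartriangleleft G$ is normal with $G/H$ quasi-isometric to $Q$. A normal subgroup is commensurated, so $H\alnorm G$, and since $H$ is normal the quotient space $G/H$ coincides with the quotient group $G/H$ equipped with a word metric (as noted after the definition of quotient space in Section~\ref{sec:alnorm}), which is quasi-isometric to $Q$. It remains to check the modular hypothesis (2) of Theorem~\ref{thm:qitocent_main}: because $H\cong\bbZ$ and $\Aut(\bbZ)=\{\pm1\}$, conjugation gives a homomorphism $G\to\Aut(H)=\{\pm1\}\subseteq\Comm(H)\cong\GL_1(\bbQ)$, and this is exactly the modular homomorphism $\Delta$; its image lies in $\{\pm1\}\subseteq\Orth_1(\bbR)$, so (2) holds automatically. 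Theorem~\ref{thm:qitocent_main} then yields that $G$ is quasi-isometric to $\bbR\times Q$.

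I expect the main obstacle to be the bookkeeping in the forward direction around replacing $H$ by commensurable subgroups while simultaneously (a) preserving commensuration, (b) keeping the quotient space quasi-isometric to $Q$, and (c) arranging genuine normality in all of $G$ rather than just in a finite index subgroup. All three are handled by Lemma~\ref{lem:weaksep} together with the finiteness of the conjugacy class of a commensurated subgroup, so the real content is just to assemble these ingredients carefully; there is no new geometry beyond Theorem~\ref{thm:qitocent_main}. One subtlety worth flagging explicitly in the writeup: one should record that passing from $H$ to a commensurable commensurated subgroup does not change the quasi-isometry type of the quotient space, which can be cited from the relative-word-metric machinery of Section~\ref{sec:alnorm} or proved in a line by comparing relative generating sets.
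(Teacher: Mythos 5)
Your converse direction, and the reduction of the forward direction to Theorem \ref{thm:qitocent_main} together with the observation that $\im(\Delta)\subseteq\{\pm 1\}$, coincide with the paper. The gap is in your first replacement step. You replace $H=\langle a\rangle$ by $H\cap\bigcap_{g\in G}gHg^{-1}$ and assert parenthetically that this is finite index in $H$; at that point of the argument nothing justifies this, and for a general commensurated infinite cyclic subgroup it is false: in $\mathrm{BS}(1,2)=\langle a,t\mid tat^{-1}=a^{2}\rangle$ the subgroup $\langle a\rangle$ is commensurated, yet $\bigcap_{g}g\langle a\rangle g^{-1}$ is trivial. In your situation the intersection \emph{is} finite index, but proving that is essentially the whole content of the upgrade: one needs a single exponent $N$ with $ga^{N}g^{-1}=a^{\pm N}$ for all $g\in G$ (then $\langle a^{N}\rangle\leq\bigcap_{g}gHg^{-1}$), and obtaining such an $N$ requires the finite-generation argument --- take $N=\lcm$ of the exponents $N_{s}$ over a finite generating set and induct on word length --- which is exactly how the paper's proof goes and which you never supply, so as written the step begs the question. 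The same omission affects your earlier assertion that $G_{0}=\ker(\Delta)$ ``centralises a finite index subgroup of $H$'': each element of $G_{0}$ centralises its own finite-index subgroup, and passing to a common one again uses finite generation (of $G_{0}$, which has index at most two in $G$). Finally, even granting finite index, the subgroup you intersect does not serve your stated purpose: knowing $ga^{M}g^{-1}=a^{M}$ for $g\in G_{0}$ does not by itself give $ga^{L}g^{-1}=a^{L}$ for a different finite-index subgroup $\langle a^{L}\rangle$ (roots need not be unique in $G$), so ``I may assume every $g\in G_{0}$ centralises $H$'' does not follow from that replacement.

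The repair is short and essentially returns you to the paper's argument: use a finite generating set of $G$ to produce $N$ with $sa^{N}s^{-1}=a^{\pm N}$ for each generator $s$, deduce $ga^{N}g^{-1}=a^{\pm N}$ for all $g\in G$ by induction on word length, and conclude directly that $\langle a^{N}\rangle$ is normal in $G$, with $G/\langle a^{N}\rangle$ quasi-isometric to the quotient space $G/H$ and hence to $Q$; no passage through an index-two subgroup or Lemma \ref{lem:weaksep} is needed. Alternatively, your second paragraph (finitely many conjugates, intersect) is correct once you have honestly produced a finite-index subgroup $H_{1}\leq H$ centralised --- hence normalised --- by $G_{0}$, since then $H_{1}$ has at most two conjugates, each commensurable to $H_{1}$, and their intersection is an infinite cyclic normal subgroup commensurable to $H$; but producing $H_{1}$ is precisely the finite-generation step above, so this route is a mild repackaging of the paper's proof rather than a genuinely different argument.
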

\begin{proof}
	$\implies$: Suppose $G$ is quasi-isometric to $\bbR\times Q$. Then Theorem \ref{thm:qitocent_main} ensures $G$ contains an infinite cyclic commensurated subgroup $\langle a\rangle=H$ such that the quotient space $G/H$ is quasi-isometric to $Q$, and the image of  $\Delta:G\rightarrow \GL_1(\bbR)$ is conjugate to a subgroup of $\Orth_1(\bbR)$.  Since $\Orth_1(\bbR)=\{\pm 1\}$ is a normal subgroup of $\GL_1(\bbR)$, it follows that $\Delta(G)$ is contained in $\{\pm 1\}$. Thus for each $g\in G$, there is some  sufficiently large $N_g$ such that $ga^{N_g}g^{-1}=a^{\pm N_g}$. Let $S=\{s_1,\dots,s_n\}$ be a generating set of $G$, and let $N=\lcm(N_{s_1},\dots,N_{s_n})$. Therefore $s_i a^N s_i^{-1}=a^{\pm N}$ for  $i=1,\dots, n$, whence $g a^N g^{-1}=a^{\pm N}$ for all $g\in G$. Therefore $H_0\coloneqq \langle a^N\rangle$ is a normal subgroup of $G$. As $H_0$ is a finite index subgroup of $H$, the quotient group $G/H_0$ is quasi-isometric to the quotient space $G/H$, hence to $Q$. 
	
	$\impliedby$: Suppose $G$ contains an infinite cyclic normal subgroup $H\vartriangleleft G$ with $G/H$ quasi-isometric to $Q$. Since  $\Aut(H)\cong \bbZ/2\bbZ$, the image of the map $G\to \Aut(H)\cong \{\pm 1\}$ is conjugate to a subgroup of $\Orth_1(\bbR)$. Therefore Theorem \ref{thm:qitocent_main} implies $G$ is quasi-isometric to $\bbR\times Q$.
\end{proof}

Theorem \ref{thm:zbyhyp_main} implies:
\begin{cor}\label{cor:zbyhyp}
	A finitely generated group that is quasi-isometric to a $\bbZ$-by-hyperbolic group is also $\bbZ$-by-hyperbolic.
\end{cor}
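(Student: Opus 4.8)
The plan is to deduce Corollary \ref{cor:zbyhyp} from Theorem \ref{thm:zbyhyp_main} by reducing to the case where the hyperbolic quotient is presented as a locally finite vertex-transitive hyperbolic graph. Suppose $G'$ is a finitely generated group quasi-isometric to a $\bbZ$-by-hyperbolic group $G$, so $G$ contains an infinite cyclic normal subgroup $\bbZ\cong H\vartriangleleft G$ with $Q_0 \coloneqq G/H$ a non-elementary hyperbolic group. By a theorem of Neumann--Reeves \cite{neumannreeves97central} (applied after passing to the index $\leq 2$ subgroup on which the conjugation action on $H$ is trivial, so that the extension is central), or alternatively by the general fact that a $\bbZ$-by-hyperbolic group is quasi-isometric to $\bbR\times Q_0$, we have that $G$ is quasi-isometric to $\bbR\times Q_0$; this is stated in the introduction of the excerpt. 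Hence $G'$ is quasi-isometric to $\bbR\times Q_0$.

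Next I would replace $Q_0$ by a genuine locally finite vertex-transitive hyperbolic graph. The non-elementary hyperbolic group $Q_0$ acts geometrically on a Cayley graph $Q$ with respect to some finite generating set; this Cayley graph is a locally finite vertex-transitive hyperbolic graph, and $Q$ is quasi-isometric to $Q_0$, hence non-elementary. Therefore $\bbR\times Q_0$ is quasi-isometric to $\bbR\times Q$, so $G'$ is quasi-isometric to $\bbR\times Q$. Now Theorem \ref{thm:zbyhyp_main} applies directly: since $G'$ is a finitely generated group quasi-isometric to $\bbR\times Q$ with $Q$ a locally finite vertex-transitive non-elementary hyperbolic graph, $G'$ contains an infinite cyclic normal subgroup $\bbZ\cong H'\vartriangleleft G'$ such that $G'/H'$ is quasi-isometric to $Q$.

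Finally I would check that this makes $G'$ a $\bbZ$-by-hyperbolic group. We have $H'\cong\bbZ$ normal in $G'$, so it remains to observe that $G'/H'$ is non-elementary hyperbolic. Since $G'/H'$ is a finitely generated group quasi-isometric to $Q$, and $Q$ is hyperbolic, $G'/H'$ is hyperbolic (hyperbolicity is a quasi-isometry invariant of geodesic metric spaces, as recalled in Section \ref{sec:qaction_hyp}). Moreover $Q$ is non-elementary, i.e.\ $\partial Q$ has at least three points, and this is preserved under quasi-isometry, so $G'/H'$ is non-elementary. Hence $G'$ is $\bbZ$-by-hyperbolic, completing the proof.

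This argument is entirely routine given Theorem \ref{thm:zbyhyp_main}; there is no real obstacle, the only mild point being the passage from the abstract hyperbolic quotient group $Q_0$ to a locally finite vertex-transitive model $Q$, and the invocation of the Neumann--Reeves / Kapovich--Kleiner--Leeb fact that a $\bbZ$-by-hyperbolic group is quasi-isometric to $\bbR$ times its hyperbolic quotient, both of which are quoted in the excerpt.
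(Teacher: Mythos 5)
Your proof is correct and is essentially the deduction the paper intends (the corollary is stated there as an immediate consequence of Theorem \ref{thm:zbyhyp_main}, with exactly this passage to a Cayley-graph model $Q$ of the hyperbolic quotient and quasi-isometry invariance of hyperbolicity and non-elementarity). The only cosmetic difference is that instead of citing Neumann--Reeves/Kapovich--Kleiner--Leeb for ``$\bbZ$-by-hyperbolic is quasi-isometric to $\bbR\times Q$'', you could simply invoke the ``if'' direction of Theorem \ref{thm:zbyhyp_main} itself applied to the original group.
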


The groups constructed by Leary--Minasyan demonstrate that  Theorem \ref{thm:zbyhyp_main} cannot in general be strengthened to deduce that groups quasi-isometric to $\bbR^n\times Q$ contain a normal free abelian subgroup of rank $n$. The following proposition gives necessary and sufficient criteria under which the commensurated subgroup  in Theorem \ref{thm:qitocent_main} contains a finite index normal subgroup. The author would like to thank Sam Hughes for pointing out a result of Valiunas used in the proof of the following proposition.
\begin{prop}\label{prop:qitocent_main}
	Let $Q$ be a locally finite vertex-transitive non-elementary  hyperbolic graph and let $G$ be a finitely generated group quasi-isometric to $\bbR^n\times Q$. Let $H\cong \bbZ^n$ be a commensurated subgroup as in Theorem \ref{thm:qitocent_main}. The following are equivalent:
	\begin{enumerate}
		\item $H$  is weakly separable;\label{item:qitocent1}
		\item $H$ has a finite index subgroup  that is normal in $G$;\label{item:qitocent2}
		\item a finite index subgroup of $G$ centralises a finite index subgroup of $H$;\label{item:qitocent3}
		\item $G$ is  biautomatic.\label{item:qitocent4}
	\end{enumerate}
\end{prop}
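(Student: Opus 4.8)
The plan is to prove the cyclic equivalence $(\ref{item:qitocent1})\Leftrightarrow(\ref{item:qitocent2})\Leftrightarrow(\ref{item:qitocent3})\Leftrightarrow(\ref{item:qitocent4})$ by leveraging the machinery already assembled. First I would handle $(\ref{item:qitocent1})\Leftrightarrow(\ref{item:qitocent2})$: this is immediate from Lemma \ref{lem:weaksep}, once we know $H\alnorm G$ and $H$ is finitely generated, both of which are guaranteed by Theorem \ref{thm:qitocent_main} (the subgroup $H$ there is commensurated and isomorphic to $\bbZ^n$). Note Lemma \ref{lem:weaksep} also gives the equivalence with ``$H$ is commensurable to a normal subgroup'' and with ``$H$ contains a finite index subgroup normal in $G$'', so $(\ref{item:qitocent2})$ can be used in any of these interchangeable forms.

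Next I would prove $(\ref{item:qitocent2})\Rightarrow(\ref{item:qitocent3})$. Suppose $H_0\leq H$ is a finite index subgroup normal in $G$; then the modular homomorphism restricts to a homomorphism $G\to\Aut(H_0)\cong\GL_n(\bbZ)$, and by Theorem \ref{thm:qitocent_main}(2) its image lies in a conjugate of $\Orth_n(\bbR)$. An arithmetic subgroup of $\GL_n(\bbZ)$ landing in a compact group is finite, so the kernel $G_0$ of $G\to\Aut(H_0)$ has finite index in $G$ and centralises $H_0$ (a finite index subgroup of $H$). For $(\ref{item:qitocent3})\Rightarrow(\ref{item:qitocent2})$: if a finite index subgroup $G_0\leq G$ centralises a finite index subgroup $H_1\leq H$, then $H_1\cap G_0$ is normalised by $G_0$; after intersecting $H_1$ with all its finitely many $G$-conjugates (which are commensurable to $H_1$ since $H\alnorm G$), we obtain a finite index subgroup of $H$ that is normalised by a finite index subgroup of $G$, i.e.\ condition $(\ref{item:weaksep_2.5})$ of Lemma \ref{lem:weaksep}, which implies $(\ref{item:qitocent2})$.

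The substantive implications are those involving biautomaticity. For $(\ref{item:qitocent3})\Rightarrow(\ref{item:qitocent4})$: if a finite index subgroup $G_0$ centralises a finite index subgroup $H_1\cong\bbZ^n$ of $H$, then after passing to a further finite index subgroup we may assume $H_1$ is central in $G_0$, so $G_0$ is a central extension $\bbZ^n$-by-$(G_0/H_1)$. Since $G_0/H_1$ is quasi-isometric to the quotient space $G/H$, hence to $Q$, hence is a non-elementary hyperbolic group, $G_0$ is a central extension of a hyperbolic group and is therefore biautomatic by Neumann--Reeves \cite{neumannreeves97central}. Biautomaticity passes to finite index overgroups, so $G$ is biautomatic. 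The genuinely hard direction is $(\ref{item:qitocent4})\Rightarrow(\ref{item:qitocent2})$ (equivalently, the contrapositive: if $H$ is not commensurable to a normal subgroup then $G$ is not biautomatic). I expect this to be the main obstacle. The strategy is: by Theorem \ref{thm:qitocent_main} we have the coarse bundle $G\to G/H$ with fibre $\bbZ^n$, and if $H$ is \emph{not} commensurable to a normal subgroup, the modular homomorphism image is an infinite subgroup of $\Orth_n(\bbR)\cap\GL_n(\bbQ)$, which produces HNN-type or non-residually-finite phenomena analogous to the Leary--Minasyan examples \cite{learyminasyan2021commensurating}. One would invoke an obstruction to biautomaticity coming from the structure of centralisers of $\bbZ^n$-subgroups in biautomatic groups: in a biautomatic group, a polycyclic (in particular free abelian) subgroup has a finite index subgroup that is a ``rational'' subgroup on which conjugation acts with finite order image, forcing $H$ to be virtually normal. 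I would cite the result of Valiunas (as in \cite{hughes2022commensurating}) — or the relevant statement on translation numbers / rationality of abelian subgroups in biautomatic groups due to Gersten--Short and Neumann--Reeves — to conclude that biautomaticity of $G$ forces the image of the modular homomorphism to be finite, hence $(\ref{item:qitocent2})$ holds. Assembling the cycle $(\ref{item:qitocent1})\Rightarrow(\ref{item:qitocent2})\Rightarrow(\ref{item:qitocent3})\Rightarrow(\ref{item:qitocent4})\Rightarrow(\ref{item:qitocent2})\Rightarrow(\ref{item:qitocent1})$ then completes the proof.
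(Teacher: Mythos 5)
Your proposal is correct and follows essentially the same route as the paper: (1)$\Leftrightarrow$(2) and (3)$\Rightarrow$(2) via Lemma \ref{lem:weaksep}, (2)$\Rightarrow$(3) by observing the modular image is a discrete subgroup of $\GL_n(\bbR)$ conjugate into the compact group $\Orth_n(\bbR)$ and hence finite, (3)$\Rightarrow$(4) by Neumann--Reeves, and (4)$\Rightarrow$(3) by Valiunas's theorem on commensurated abelian subgroups of biautomatic groups. The only nitpick is the citation: the needed result is \cite[Theorem 1.2]{valiunas2021leary} rather than \cite{hughes2022commensurating}, and the Gersten--Short/translation-number alternative you float would not by itself handle a commensurated (non-normal) $\bbZ^n$.
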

\begin{proof}
	(\ref{item:qitocent1})$\iff$(\ref{item:qitocent2}) and (\ref{item:qitocent3})$\implies$(\ref{item:qitocent2}) follow from   Lemma \ref{lem:weaksep}.
	
	(\ref{item:qitocent2})$\implies$(\ref{item:qitocent3}): Suppose  $H'\leq H$ is a finite index subgroup of $H$ that is normal in $G$. The image of the modular homomorphism $\Delta:G\to \Aut(H)\cong \GL_n(\bbZ)$ is both a discrete subgroup of $GL_n(\bbR)$ and conjugate to a subgroup of $\Orth_n(\bbR)$ by Theorem \ref{thm:qitocent_main}. Thus $\im(\Delta) $ is finite, ensuring $H'$ is centralised by the finite index subgroup $\ker(\Delta)\leq G$.
	
	(\ref{item:qitocent3})$\implies$(\ref{item:qitocent4}): This follows from a theorem of Neumann--Reeves, which shows a virtually central extension of a hyperbolic group is biautomatic   \cite[Theorem 1.3]{neumannreeves97central}.
	
	(\ref{item:qitocent4})$\implies$(\ref{item:qitocent3}):  Valiunas showed if $G$ is biautomatic and $H\alnorm G$ is a  finitely generated commensurated abelian subgroup, then a finite index subgroup of $G$ centralises a finite index subgroup of $H$ \cite[Theorem 1.2]{valiunas2021leary}.
\end{proof}
Finally, we show the conclusion of Theorem \ref{thm:qitocent_main} can be strengthened under the additional hypothesis that $G$ is residually finite:
\begin{thm}\label{thm:resfinite_cent}
	Let $Q$ be a locally finite vertex-transitive non-elementary  hyperbolic graph. Let $G$ be a finitely generated residually finite group quasi-isometric to $\bbR^n\times Q$. Then $G$ contains a normal subgroup $\bbZ^n\cong H\vartriangleleft G$ such that $G/H$ is quasi-isometric to $Q$. Moreover, $H$ is centralised by a finite index subgroup of $G$.
\end{thm}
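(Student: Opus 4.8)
The plan is to build on Theorem \ref{thm:qitocent_main} and Proposition \ref{prop:qitocent_main}. First I would apply Theorem \ref{thm:qitocent_main} to $G$, obtaining a commensurated subgroup $\bbZ^n\cong H_1\alnorm G$ whose quotient space $G/H_1$ is quasi-isometric to $Q$, and for which the image of the modular homomorphism $\Delta\colon G\to\Comm(H_1)\cong\GL_n(\bbQ)$ is conjugate to a subgroup of $\Orth_n(\bbR)$. By Proposition \ref{prop:qitocent_main} it then suffices to verify any one of its four equivalent conditions; once this is done I obtain a finite index subgroup $H\le H_1$ that is normal in $G$ and centralised by a finite index subgroup of $G$, and it remains only to see that $G/H$ (which, $H$ being normal, is a genuine quotient group, equipped with its word metric) is quasi-isometric to $Q$.

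The heart of the argument, and the step I expect to be the main obstacle, is to use residual finiteness of $G$ to show that $\Delta(G)$ is \emph{finite}. Note $\Delta(G)$ is finitely generated (as $G$ is) and has relatively compact closure in $\GL_n(\bbR)$, being conjugate into the compact group $\Orth_n(\bbR)$. If $\Delta(G)$ were infinite then, by the Burnside--Schur theorem, it could not be torsion, so some $g\in G$ would have $A\coloneqq\Delta(g)$ of infinite order; writing $\Delta(G)\le\GL_n(\bbZ[1/m])$ for a suitable integer $m$, the relative compactness of $\langle A\rangle$ forces the denominators of the powers $A^k$ to be unbounded. The plan is to exploit this: since $A^k$ is precisely the commensuration of $H_1$ induced by $g^k$, the growing denominators should produce, for each finite quotient of $G$, nontrivial elements of $H_1$ lying in the corresponding finite index normal subgroup, ultimately yielding a nontrivial element lying in the intersection of all finite index subgroups of $G$, contradicting residual finiteness. (Equivalently, this is a statement purely about a finitely generated residually finite group commensurating a free abelian subgroup whose modular image is precompact; the non-residually-finite Leary--Minasyan groups show that the hypothesis is genuinely needed.) Making the denominator versus residual-finiteness bookkeeping rigorous is the delicate point.

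Granting that $\Delta(G)$ is finite, I would finish as follows. The kernel $\ker\Delta$ has finite index in $G$, hence is finitely generated; each of finitely many generators acts trivially by conjugation on some finite index subgroup of $H_1$, so their common intersection $H_2\le H_1$ is a finite index subgroup of $H_1$ centralised elementwise by $\ker\Delta$. Since $[G:\ker\Delta]<\infty$, the subgroup $H_2$ has only finitely many $G$-conjugates, all of finite index in $H_1$ (as $H_1\alnorm G$), so $H\coloneqq\bigcap_{g\in G}gH_2g^{-1}$ is a finite index subgroup of $H_1$ that is normal in $G$, is isomorphic to $\bbZ^n$, and is still centralised by the finite index subgroup $\ker\Delta$ of $G$; thus conditions (2) and (3) of Proposition \ref{prop:qitocent_main} hold. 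Finally, since $H\le H_1$ has finite index and $H_1\alnorm G$, the natural map $G/H\to G/H_1$ between the corresponding quotient spaces (with relative word metrics taken with respect to a common finite generating set of $G$) is surjective and $1$-Lipschitz with uniformly bounded, indeed finite, fibres, each fibre being a translate of $H_1/H$; lifting geodesics of $G/H_1$ edgewise to $G/H$ shows it is a quasi-isometry. As $G/H_1$ is quasi-isometric to $Q$ by Theorem \ref{thm:qitocent_main}, and the relative word metric of $(G,H)$ coincides with the word metric on the quotient group $G/H$ because $H$ is normal, we conclude that the normal subgroup $\bbZ^n\cong H\vartriangleleft G$ has $G/H$ quasi-isometric to $Q$ and is centralised by a finite index subgroup of $G$, as required.
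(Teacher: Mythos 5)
Your reduction of the problem to Theorem \ref{thm:qitocent_main} and Proposition \ref{prop:qitocent_main}, and the closing bookkeeping (passing from a finite-index normal $H\le H_1$ to the statement about $G/H$), are fine, but the step you yourself flag as the heart of the argument contains a genuine gap, and it cannot be closed in the form you propose. You reduce everything to the claim that a finitely generated \emph{residually finite} group commensurating $\bbZ^n$ with precompact modular image must have finite modular image. That purely algebraic statement is false. Take $A=\frac15\bigl(\begin{smallmatrix}3&-4\\4&3\end{smallmatrix}\bigr)\in\Orth_2(\bbQ)$, an infinite-order rational rotation, and let $G=\bbZ[1/5]^2\rtimes_A\bbZ$. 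This group is finitely generated, it is metabelian and hence residually finite by P.~Hall's theorem, the subgroup $H=\bbZ^2\le\bbZ[1/5]^2$ is commensurated (any two full-rank lattices in $\bbQ^2$ are commensurable), and the modular homomorphism has image $\langle A\rangle$, which is infinite and lies in $\Orth_2(\bbR)$. So unbounded denominators of the powers $A^k$ do \emph{not} by themselves obstruct residual finiteness, and no amount of denominator-versus-finite-quotient bookkeeping can succeed without using the geometric hypotheses on the quotient (non-elementary hyperbolicity of $Q$ and the absence of a fixed boundary point); note that in this example the quotient space $G/H$ is far from hyperbolic.

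The paper's proof uses exactly that geometric input, and uses residual finiteness in a different way. From the proof of Theorem \ref{thm:qitocent_main}, the induced quasi-action $G\qa Q$ has coarse stabiliser $H$ and fixes no point of $\partial Q$; hence the $G$-action on $G/H$ fixes no boundary point, which forces every normal \emph{abelian} subgroup of $G$ to have bounded orbits in $G/H$, i.e.\ to lie in finitely many left $H$-cosets. Residual finiteness then enters through the profinite topology: the profinite closure $\overline H$ is abelian (closure of an abelian subgroup in a Hausdorff topological group), and \cite[Corollary 6]{caprace_kropholler_reid_wesolek_2020} provides a finite-index subgroup $N\le\overline H$ that is normal in $G$. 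Being normal and abelian, $N$ is bounded in $G/H$, so $N\cap H$ has finite index in $H$ and $H$ is commensurable to a normal subgroup; Proposition \ref{prop:qitocent_main} (equivalently Lemma \ref{lem:weaksep}, which your final paragraph essentially reproves) then yields the normal $\bbZ^n$ centralised by a finite-index subgroup, and the quotient comparison you give finishes the statement. So the architecture of your last paragraph is fine, but the pivotal implication must be proved via the boundary/fixed-point argument plus the commensurated-subgroup separability machinery, not via denominators and residual finiteness alone.
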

\begin{proof}
	By Theorem \ref{thm:qitocent_main}, $G$ contains a commensurated subgroup $\bbZ^n\cong H\alnorm G$. It follows from  the proof of Theorem \ref{thm:qitocent_main} that the induced quasi-action $G\qa Q$ as in Lemma \ref{lem:prod_induced} has coarse stabiliser $H$ and does not fix a point of $\partial Q$. By Proposition \ref{prop:cstab homspace},  $G\qa Q$ is quasi-conjugate to $G\curvearrowright G/H$ and the action of $G$ on $G/H$ does not fix a point of $\partial (G/H)$. It follows that any abelian normal subgroup of $G$ has bounded orbits in $G/H$, hence is contained in finitely many left $H$-cosets.
	
	Let $\overline{H}$ be the profinite closure of $H$, i.e.\ the smallest separable subgroup of $G$ containing $H$. Since $G$ is residually finite, the profinite topology is Hausdorff. Therefore,  $\overline{H}$ is the closure of an abelian subgroup of a Hausdorff topological group, hence is also abelian; see e.g. \cite[Corollary 5.3]{hewittross1979abstract}. Moreover, \cite[Corollary 6]{caprace_kropholler_reid_wesolek_2020} ensures $\overline{H}$ contains a finite index subgroup $N$ that is normal in $G$. Thus $N$ is a normal abelian subgroup of $G$, so by the previous paragraph, $N$ is contained in finitely many left $H$-cosets. Since $H\leq \overline{H}$,  $N\cap H$ is a finite index subgroup of $H$, hence is commensurable to $H$. Since $H$ is commensurable to a normal subgroup of $G$, the result follows from Proposition \ref{prop:qitocent_main}.
\end{proof}

\bibliography{bibliography/bibtex} 
\bibliographystyle{amsalpha}
\end{document}